\theoremstyle{plain}
\newtheorem{thm}{Theorem}[section]
\newtheorem{lemma}[thm]{Lemma}
\newtheorem{claim}[thm]{Claim}
\newtheorem*{claim*}{Claim}
\newtheorem{corollary}[thm]{Corollary}
\newtheorem*{prop*}{Proposition}
\newtheorem{prop}[thm]{Proposition}
\newtheorem*{thm*}{Theorem}
\newtheorem*{lemma*}{Lemma}
\newtheorem{exmp}{Example}[section]
\newtheorem*{theorem*}{Theorem}
\newtheorem*{proposition*}{Proposition}
\newcommand{\e}{e}
\newtheorem{rem}[thm]{Remark}
\newcommand{\pl}{\partial}
\newcommand{\wt}{\widetilde}
\newcommand{\T}{\mathbb{T}}
\newcommand{\R}{\mathbb{R}}
\newcommand{\ud}{\ensuremath{\mathrm{d}}}
\renewcommand{\div}{\textup{div}}
\newcommand{\comment}[1]{}
\newcommand{\textcalor}[2]{}
\newcommand{\pseudosection}[1]{
\vspace{5pt}

\textbf{#1}

\vspace{5pt}
}
\title{Eigenfunctions with double exponential rate of localization}
\author{S. Krymskii\thanks{Université de Genève, Département de Mathématiques, 1205, Geneva,
Switzerland/\\  Stanford University, Department of Mathematics, Stanford, California 94305, \url{krymskiy.stas@gmail.com}}
\and
A. Logunov\thanks{Department of Mathematics, Massachusetts Institute of Technology, Simons Building (Building 2), Room 2-478,
77 Massachusetts Avenue,
Cambridge, MA 02139-4307, \url{log239@yandex.ru}}
\and 
F. Pagano\thanks{Université de Genève, Département de Mathématiques, 1205, Geneva,
Switzerland, \url{francois.pagano@unige.ch}}
}
\date{}
\begin{document}

\maketitle

\begin{abstract}
    
    We construct  a real-valued solution to the eigenvalue problem $-\div(A\nabla u)=\lambda u$, $\lambda>0,$ in the cylinder $\T^2\times \R$ with a real, uniformly elliptic, and uniformly $C^1$  matrix $A$  such that $|u(x,y,t)|\leq C e^{-c\e^{c|t|}}$ for some $c,C>0$. We also construct a complex-valued solution to the heat equation $u_t=\Delta u + B \nabla u$ in a half-cylinder with continuous and uniformly bounded $B$, which also decays with double exponential speed. Related classical ideas, used in the construction of counterexamples to the unique continuation by Plis and Miller, are reviewed.
    
\end{abstract}

\tableofcontents

\comment{
\begin{center}
    \textbf{Abstract}
\end{center}
We construct a real, smooth, elliptic  matrix $A$ with bounded coefficients and a real solution $u$ to $\div(A\nabla u)=0$ on the cylinder $\T^2\times \R^+$ which decays like $e^{-c\e^{dt}}$ at infinity. We also construct a real solution to the eigenvalue problem $-\div(A\nabla u)=\lambda u$ with the same rate of decay on the whole cylinder $\T^2\times \R$. Finally, we construct a complex-valued solution with the same decay, to the heat equation $u_t=\Delta u + B \nabla u$ on a half-cylinder with bounded $B$. These constructions are new and optimal: non-trivial solutions cannot decay faster.
}

\section{Introduction}

\subsection{Landis conjecture}

In the late 60's, Landis asked the following question: how rapidly can a non-zero solution to  
\begin{align}\label{landis}
    -\Delta u + Vu=0, \hspace{0.5cm} V \in L^{\infty}(\R^d)
\end{align}
decay to zero at infinity? According to \cite{KSW15}, he conjectured that if $u$ decays faster than exponentially, i.e., if $|u(x)|\leq \e^{-|x|^{1+\epsilon}},$ for some $\epsilon>0,$ then $u \equiv 0.$ A stronger version \cite{KL88} of Landis conjecture suggests that if $|u(x)|\leq \e^{-C|x|}$ for sufficiently large $C$ depending on $\|V\|_\infty$, then $u\equiv 0$.

Landis' conjecture and its versions for linear elliptic PDE of second order were extensively studied (see \cite{D23}, \cite{EKPV06}, \cite{K06}, \cite{KSW15}, \cite{K98}, \cite{LMNN20}, \cite{M92}, \cite{Z16} and references therein). 
Our motivation for questions of quantitative unique continuation such as the quantitative side of Landis' conjecture arises from problems of spectral theory.
The article of Bourgain and Kenig \cite{BK05} brought a lot of attention to Landis' conjecture in connection to the Anderson-Bernoulli model. The artilce \cite{BK05} has a setting in $\R^d$, but the Anderson-Bernoulli model has a natural discrete setting in $\mathbb{Z}^d$. So far, Anderson localization in the discrete setting has only been proven near the edge of the spectrum and only in dimension $d=2$ and $d=3$  by Ding, Smart, Li and Zhang, see  \cite{DS20}, \cite{LZ22}, \cite{S22} and also \cite{BLMS22}.  The main difficulty in proving Anderson localization in $\mathbb{Z}^d$ is the lack of theorems in discrete quantitative unique continuation.

Another question related to Landis conjecture and popularised by Kuchment \cite{K12} concerns periodic operators. Let $Lu=\div(A\nabla u)+ V u$ be an elliptic differential operator of second order, whose coefficients are real, bounded, $C^1$-smooth (or even $C^{\infty}$ smooth) in $\mathbb{R}^d$ and periodic with respect to $\mathbb{Z}^d$ translations. Is it true that if $u$ is a solution to $Lu=0$ and 
 \begin{align}\label{kuchment}
     |u(x)| \leq \e^{-|x|^{\gamma}},
 \end{align}
where $\gamma>1$, then $u\equiv 0$? According to Kuchment (see \cite[Theorem 4.1.5]{K12}, \cite[Theorem 4.1.6]{K12} and \cite[Corollary 6.15]{K16}) a positive answer to this question would help towards the folklore conjecture that every such periodic operator only has absolutely continuous spectrum. 

\vspace{5pt}

\subsection{Main results}
The main results of this paper are the following theorems.
\begin{thm}
\label{theoremforhalfcylinder}
    In the 3-dimensional cylinder $\mathbb{T}^2\times\mathbb{R}^+$, there exists a $C^1$-uniformly smooth and uniformly elliptic real-valued matrix $A$ and a non-zero uniformly $C^2$  real function $u$ such that \begin{equation} \label{eigen} \div(A\nabla u)=0\end{equation}and  $u$ has double exponential decay: for any $T\gg 1,$
$$\sup_{\T^2 \times \{t\geq T\}}|u(x,y,t)|\leq Ce^{-ce^{c T}}$$
for some numericals $c, C>0$.
\end{thm}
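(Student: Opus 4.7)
The plan is to follow the Plis--Miller philosophy mentioned in the abstract: partition the half-cylinder into a sequence of horizontal slabs $S_n = \T^2 \times [t_n, t_{n+1}]$ of uniformly bounded height $L$, and on each $S_n$ engineer the pair $(A, u)$ so that $u$ behaves like a Fourier mode with wavevector $k_n \in \Z^2$ of norm $N_n \sim 2^n$. Since the building block $\cos(k \cdot (x,y))\, e^{-|k| t}$ is an exact solution of $\Delta u = 0$ on $\T^2 \times \R^+$ that decays by a factor $e^{-|k| L}$ across a slab of height $L$, the product of these decays over the first $n$ slabs is of order $\exp\bigl(-L \sum_{j \le n} 2^j\bigr) \sim \exp(-c\, 2^n) \sim \exp(-c\, e^{c' t_n})$, which is exactly the double exponential rate claimed.

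The core technical step is the construction of a single \emph{transition block}: on a slab $\T^2 \times [0, L]$ of uniformly bounded height, one needs a real, $C^1$-bounded, uniformly elliptic matrix $A^{(N)}$ and a real solution $v_N$ of $\div(A^{(N)} \nabla v_N) = 0$ such that $v_N(\cdot, 0)$ is a Fourier mode of wavenumber $N$, $v_N(\cdot, L)$ is a Fourier mode of wavenumber $2N$, and
\[
  \sup_{\T^2} |v_N(\cdot, L)| \;\lesssim\; e^{-c N} \sup_{\T^2} |v_N(\cdot, 0)|,
\]
with ellipticity constant and $C^1$-norm of $A^{(N)}$ bounded independently of $N$. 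A natural ansatz is $v_N(x, y, t) = f(t) \cos(\alpha(t) x + \beta(t) y)$ with smooth real paths $\alpha, \beta : [0, L] \to \R$ that take integer values at the endpoints (so that the mode is $\T^2$-periodic at the interfaces); substituting into the equation, one solves componentwise for the entries of $A^{(N)}$. Away from $t = 0, L$ the wavevector is allowed to be non-integer, which leaves enough room to rotate and rescale $(\alpha, \beta)$ smoothly from norm $N$ to norm $2N$ while forcing $f$ to decay exponentially.

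Step~2 glues the blocks together. Setting $t_n = nL$ and $N_n = 2^n$, define the global matrix $A$ on $S_n$ by a translated copy of $A^{(N_n)}$, and build the global solution $u$ piecewise from the corresponding $v_{N_n}$, with amplitudes matched at $t = t_n$ so that $u$ stays continuous and the divergence form equation holds distributionally across each interface. Because the ellipticity constant and the $C^1$-norm of $A^{(N)}$ are uniform in $N$, they remain uniform globally, and accumulating the decay factors from each slab yields the asserted double exponential bound at any height $T \gg 1$.

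The main difficulty is the transition block in Step~1: producing a matrix $A^{(N)}$ whose $C^1$-norm does \emph{not} grow with $N$, even though the solution $v_N$ carried on the slab has wavenumber that increases from $N$ to $2N$. The heart of the Plis--Miller trick is that the high-frequency oscillations should live in $v_N$ itself (through the rotation of its wavevector) and not leak into the derivatives of the coefficients; a careful choice of the phase $\alpha(t) x + \beta(t) y$ and of the amplitude $f(t)$, together with the freedom in choosing the off-diagonal entries of $A^{(N)}$ to cancel the unwanted lower-order terms arising in $\div(A^{(N)} \nabla v_N)$, is what must be arranged to realise this delicate balance while preserving real-valuedness, uniform ellipticity, and uniform $C^1$-regularity.
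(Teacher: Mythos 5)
Your global architecture (slabs of bounded height, frequency doubling $N_n\sim 2^n$, amplitude matching at interfaces, product of per-slab decay factors giving $e^{-c\,2^n}\sim e^{-ce^{ct}}$) coincides with the paper's skeleton, but your key transition block has a genuine gap, in fact two. First, the ansatz $v_N(x,y,t)=f(t)\cos(\alpha(t)x+\beta(t)y)$ with a wavevector that "rotates and rescales" through non-integer values is not defined on the torus: $\cos(\alpha x+\beta y)$ is $2\pi$-periodic in $x$ and $y$ only when $(\alpha,\beta)\in\Z^2$, so you cannot let the wavevector leave the lattice "away from $t=0,L$". Any admissible solution on $\T^2\times[0,L]$ must, at each fixed $t$, be a superposition of genuine lattice modes; this is exactly why the paper works instead with two fixed modes, $u=f(t)\cos(kx)+g(t)\cos(k'y)$, and transfers amplitude from one to the other rather than deforming a single phase.

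Second, even after reformulating with lattice modes, the heart of the matter — doubling the frequency in time $O(1)$ while keeping $A$ uniformly elliptic and uniformly $C^1$ — is precisely what your proposal leaves to "a careful choice of the phase and the off-diagonal entries". The naive amplitude-transfer estimate (the paper's Lemma \ref{PML1}) forces $\|\dot A\|\lesssim w^{-1}$ and $k'-k\lesssim w^{-1}$, so within a uniform $C^1$ bound one can only shift the frequency by $O(1)$ per unit time, which yields at best the Gaussian decay $e^{-ct^2}$ of the Corollary in Section \ref{counterexamplesection}, not double exponential decay; and transferring directly from $\cos(kx)e^{-kt}$ to $c\cos(2ky)e^{-2kt}$ in bounded time destroys the uniform $C^1$ bound, as the paper points out in Section \ref{informalexplanationfullproof}. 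The paper's resolution, which is absent from your proposal, is the non-monotone decay trick: first change a diagonal coefficient so that the target mode $\cos(2ky)$ can be carried with the \emph{slower} decay rate $e^{-2kt/3}$, introduce it as a small perturbation, wait until the original mode is negligible relative to it, remove the original mode, and only then accelerate the decay of the surviving mode back to $e^{-2kt}$ (Propositions \ref{lemmachangingacoeff}, \ref{slownew}, \ref{accelnew}). Without this (or some substitute mechanism), the uniform $C^1$ bound on $A^{(N)}$ claimed in your Step~1 is not established, and the construction does not go through.
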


Note that we cannot construct a non-trivial smooth $A$-harmonic solution on the whole cylinder $\T^2 \times \R $ with double exponential decay in both directions. Indeed, by periodicity, we would get a bounded $A$-harmonic solution on $\R^3$ which decays to zero in some directions. But then, Liouville's Theorem implies that this solution is necessarily trivial.

However, solutions to the eigenvalue equation do not satisfy the Liouville property and we can indeed construct an eigenfunction with double exponential decay in both directions. This is our second main result.

\begin{thm}
\label{EigenTheorem}
    In the 3-dimensional cylinder $\mathbb{T}^2\times\mathbb{R}$, for every $\mu>0$, there exists a $C^1$-uniformly smooth and uniformly elliptic real-valued matrix $A=A_{\mu}(x,y,t)$ and  a non-zero uniformly $C^2$ real function $u=u_\mu(x,y,t)$ such that \begin{equation} \label{eigen2} \div(A\nabla u)=-\mu u\end{equation}and  $u$ has double exponential decay: for any $T\gg 1,$
$$\sup_{\T^2 \times \{|t|\geq T\}}|u(x,y,t)|\leq Ce^{-ce^{cT}}$$
for some numericals $c(\mu), C(\mu)>0$.
\end{thm}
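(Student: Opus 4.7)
The plan is to derive Theorem~\ref{EigenTheorem} by adapting the slab-based construction behind Theorem~\ref{theoremforhalfcylinder} to produce a symmetric eigenfunction on the full cylinder. The remark following Theorem~\ref{theoremforhalfcylinder} rules out a divergence-free solution with double-exponential decay in both $t$-directions via Liouville's theorem for periodic $A$-harmonic functions on $\R^3$. The eigenvalue equation $\div(A\nabla u)=-\mu u$ is not subject to this obstruction: on a slab where $A$ is nearly constant, the Fourier coefficient of a mode $e^{i(nx+my)}$ satisfies an ODE whose right-hand side has sign $\sign(a_{11}n^2+a_{22}m^2-\mu)$, so the solution is exponential for $(n,m)$ large relative to $\mu$ and trigonometric for $(n,m)$ small, letting a low mode ``turn around'' at $t=0$ rather than grow or decay monotonically.

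My first step would be to reproduce the Plis--Miller-type cascade of Fourier modes underlying Theorem~\ref{theoremforhalfcylinder}, but with the zero-order term $-\mu u$ added to the equation. Outside a central region only high-frequency modes (with $n^2+m^2\gg\mu/\Lambda$, where $\Lambda$ is a lower ellipticity bound) are active; on them the $-\mu u$ term is a lower-order perturbation that does not affect the leading exponential rates, so the frequency cascade still delivers double-exponential decay as $t\to+\infty$.

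The second step is to run the same cascade on $t<0$ mirror-symmetrically and to bridge the two halves through a central slab $\T^2\times[-T_0,T_0]$, where the eigenvalue term becomes essential. On this central slab I would activate a single low-frequency mode whose radial ODE is oscillatory, so its trigonometric solution $c(t)=\cos(\beta t)$ automatically satisfies $c'(0)=0$ and feeds into the first high-frequency slab on each side as an even bridge. To make $u$ globally $C^2$ and $A$ globally $C^1$, I would impose that the whole construction be even in $t$: symmetric initial data, and off-diagonal entries $A_{13},A_{23}$ vanishing at $t=0$ so that reflecting across $t=0$ yields a smooth matrix.

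The main obstacle I expect is the design of the central slab: one must choose $T_0$, the lowest active mode $(n_0,m_0)$, and the matrix there so that the trigonometric bridge satisfies the $C^1$-matching conditions with the first high-frequency slab on each side and carries a nonzero amplitude. Since $\mu$ is an arbitrary positive parameter, this tuning depends on $\mu$, which is why the matrix $A_\mu$ in the statement is allowed to depend on $\mu$. A secondary, more technical obstacle is the $C^1$-smoothing of the piecewise matrix across slab boundaries without destroying either the decay estimates or the eigenvalue equation, but this is expected to go through by the same regularization arguments used in the proof of Theorem~\ref{theoremforhalfcylinder}.
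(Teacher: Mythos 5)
Your plan is essentially the paper's proof: a frequency cascade yields double-exponential decay for the eigenvalue equation on each half-cylinder (the paper keeps the $A$-harmonic cascade of Theorem \ref{theoremforhalfcylinder} and absorbs $-\mu u$ exactly by adding the diagonal matrix $\operatorname{diag}(\mu/k_n^2,\,\mu/k_{n+1}^2)$, smoothed near block boundaries where only one cosine is active), and the full-cylinder eigenfunction is obtained by an even reflection through a central region in which the bridging mode is made oscillatory in $t$, so its $t$-derivative vanishes at $t=0$ (the Symmetrization Lemma). The one caution is that your ``low-frequency mode'' cannot literally be a small integer spatial frequency under a $\mu$-independent ellipticity bound when $\mu$ is small: the paper instead keeps the cascade's frequency $k\gg\sqrt{\mu}$ and shrinks the coefficient in front of $\partial_x^2$ to $\mu/(2k^2)$ (ellipticity of order $k^2/\mu$, permitted since the constants may depend on $\mu$), which is the same turnaround mechanism you describe once you allow the central-slab matrix to degenerate with $\mu$.
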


Surprisingly, we can also achieve a double exponential decay for a parabolic equation with a constant higher-order term and a continuous and uniformly bounded first-order coefficient. However, this construction is complex-valued and we don't know whether the example below can be made real-valued. In the next Theorem, $\dot{u}$ denotes the $t$-derivative and $ \nabla, \Delta$ only involve spatial derivatives.

\begin{thm}\label{maintheoremparabolicbeg}
    In the 3-dimensional cylinder $\T^2 \times \R^+$,  there exists a complex vector field $B \in \mathbb C^2$ which is continuous and uniformly bounded and there exists a non-zero, uniformly $C^2$ complex-valued function $u$ such that 
    \begin{align}
        \dot{u} = \Delta u + B\nabla u
    \end{align}
    and $u$ has double exponential decay: for any $T\gg 1,$
    \begin{align}
       \sup_{\T^2 \times \{t \geq T\} }|u(x,y,t)| \leq e^{-ce^{cT}}
    \end{align}
    for some numerical $c>0.$
\end{thm}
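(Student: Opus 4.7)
The construction adapts the classical Plis--Miller non-uniqueness machinery, reviewed in the paper, to a parabolic setting on the cylinder $\T^2\times\R^+$. The Fourier modes of $\T^2$ serve as a discrete reservoir of ``carriers'' into which the solution is transferred as $t$ grows, while parabolic dissipation at rate $|k|^2$ on the mode $e^{ik\cdot(x,y)}$ provides the decay. The plan is to try the ansatz
\begin{equation*}
  u(x,y,t) \;=\; \sum_{n\ge 0} \phi_n(t)\, e^{i k_n\cdot (x,y)},\qquad k_n\in\Z^2,\ |k_n|^2\asymp 2^n,
\end{equation*}
with each $\phi_n$ essentially supported in a unit-length stage $[t_n,t_{n+1}]$, $t_n=n$. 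On the bulk of a stage the drift $B$ is zero and $u$ is a single Fourier mode, so the pure heat equation forces $\phi_n(t)=\epsilon_n e^{-|k_n|^2(t-t_n)}$, whence
\begin{equation*}
  \epsilon_{n+1}\;\asymp\;\epsilon_n e^{-|k_n|^2}\;\Longrightarrow\;\epsilon_n\;\asymp\;e^{-c\,2^n}\;=\;e^{-ce^{ct_n}},
\end{equation*}
which is the advertised double exponential decay.

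The drift $B$ is then supported only in short transition windows around each $t_n$ and handles the hand-off from mode $k_{n-1}$ to mode $k_n$. Writing $B=\sum_j \beta_j(t)\,e^{ij\cdot(x,y)}$ and projecting the PDE onto Fourier modes gives the coupled ODE system
\begin{equation*}
  \dot a_m \;=\; -|m|^2 a_m \;+\; \sum_k i\,\beta_{m-k}(t)\cdot k\, a_k(t).
\end{equation*}
The $n$-th window activates $\beta_{k_n-k_{n-1}}(t)$ in order to drive the amplitude of mode $k_n$ up to $\epsilon_n$, together with additional corrective Fourier components of $B$ designed to cancel the parasitic modes (such as $e^{i(2k_n-k_{n-1})\cdot(x,y)}$) produced by the interaction of $B$ with the subdominant part of $u$. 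Smooth cutoffs in $t$ at the stage boundaries then yield a continuous $B$ and a $C^2$-smooth $u$.

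The principal obstacle is ensuring that $\|B\|_\infty$ is uniformly bounded in $n$. A direct single-mode one-shot transfer of amplitude $\epsilon_n$ from the source mode $k_{n-1}$ to the target mode $k_n$ requires coupling strength $\asymp |k_n|^2/|k_{n-1}|$, which diverges. The Plis--Miller remedy is to split each transfer into a carefully orchestrated interference between a finite collection of auxiliary Fourier components of $B$ and of $u$, using the phase freedom of complex coefficients to arrange constructive addition in the target mode and destructive cancellation in every parasitic mode, at coupling strength that remains $O(1)$. This is the technical heart of the proof and is where the complex-valuedness of $u$ and $B$ is used essentially: a real-valued ansatz would impose conjugate-symmetry constraints on the $\beta_j$'s that we do not know how to reconcile with uniform boundedness of $|B|$, which is why the theorem is stated only over $\C$ and the real case is left open.

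With bounded $B$ in place, the remaining verifications are routine. The uniform $C^2$-norm of $u$ is controlled by the doubly exponentially small quantity $|k_n|^2\epsilon_n$, since the spatial derivatives in each term of the ansatz bring down at most $|k_n|^2\asymp 2^n$, and the decay estimate follows from $|u(\cdot,\cdot,t)|\le \epsilon_n+\epsilon_{n+1}\asymp e^{-ce^{ct}}$ for $t\in[t_n,t_{n+1}]$.
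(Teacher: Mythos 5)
Your high-level skeleton (a cascade of Fourier modes, drift supported in hand-off windows, dissipation providing the decay) is in the right spirit, but the proposal has a genuine gap exactly at the point you yourself identify as "the technical heart": the transfer of amplitude between consecutive modes with a drift $B$ that stays uniformly bounded. You assert that a "Plis--Miller interference" among finitely many auxiliary Fourier components of $B$ and $u$, exploiting complex phases, achieves the hand-off with $O(1)$ couplings, but you neither construct these components nor estimate anything; no lemma, no formula for $B$, no cancellation computation is given. Worse, within your chosen framework (unit-length stages, $|k_n|^2\asymp 2^n$, one hand-off per stage) there is a quantitative obstruction your sketch does not address: in any window of length $w$ in which the old mode is removed while the new mode carries the solution, the natural drift has size of order $\frac{1}{w\,|k_{n+1}|}\cdot\frac{|a_{k_n}|}{|a_{k_{n+1}}|}$, so boundedness forces $w\gtrsim |k_{n+1}|^{-1}$, while keeping the amplitude ratio under control forces $w\lesssim (|k_{n+1}|^2-|k_n|^2)^{-1}$ (the ratio grows like $e^{(|k_{n+1}|^2-|k_n|^2)w}$ during the window); with a geometric frequency gap these two requirements are incompatible, so a single hand-off per unit stage cannot be done by any direct two-mode mechanism, and the multi-mode interference you invoke remains entirely unproved. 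Your explanation of why complex-valuedness is essential ("phase freedom for destructive cancellation") is likewise speculative and is not the reason used in the paper.

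The paper's proof resolves precisely this difficulty by different structural choices. It takes frequencies growing \emph{linearly}, $k_n=n$ (more generally $k\le k'\le k+10$), and hand-off blocks of length $\asymp 1/k_n$; inside a block it never fights the dissipation: $u$ is a combination of two exact caloric modes $u_1=e^{ikx}e^{-k^2t}$, $u_2=e^{ik'y}e^{-(k')^2t}$ with a cutoff $\alpha(t)$ on one of them, so the drift only has to absorb the term $\dot\alpha$ times the subdominant mode, and is given explicitly by $B_1=-\dot\alpha\,u_2/\partial_x u_1$ (first introducing the \emph{faster-decaying} mode, so $|u_2/u_1|\le1$) and then $C_2=\dot\alpha\,u_1/\partial_y u_2$ (removal, bounded because $(k')^2-k^2\lesssim k$ and the block has length $\lesssim 1/k$). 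Complex exponentials enter only because $\partial_x e^{ikx}=ik\,e^{ikx}$ never vanishes, so these divisions are harmless. The double exponential then comes not from frequency doubling per unit time but from the fact that after $n$ blocks only time $\asymp\sum_{l\le n} 1/l\asymp\ln n$ has elapsed while the amplitude has dropped like $e^{-cn^2}$. As written, your proposal does not contain a proof of the key boundedness step, and repairing it would essentially require rebuilding the argument along these lines (small frequency increments and blocks of length $\asymp 1/k$), i.e.\ chaining many unit-increment hand-offs inside each of your unit stages rather than performing one large jump.
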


Theorems 1.1 and 1.2 are proved in sections \ref{toolboxnew}---\ref{proofofpropslabel}, and Theorem \ref{maintheoremparabolicbeg} is proved in section \ref{parab}.

\begin{rem}
  We emphasize that the solutions that we construct in our three main Theorems have the fastest possible decay. Non-trivial solutions to these equations cannot decay strictly faster than double-exponentially. We refer to \cite{L63} for a proof in the elliptic case and to  \cite{CM2022} for a proof in the parabolic case. 
\end{rem}

\subsubsection{Interpretation of our results}

The results that we prove have a negative meaning. They show that there is an obstacle to generalize (at least directly) the approach for the continuous Anderson-Bernoulli model by Bourgain and Kenig to operators in divergence form $Lu=\div(A\nabla u)+ V u$ because one of the ingredients of the approach fails: quantitative unique continuation properties for operators in divergence form (with variable coefficients of higher order) are substantially weaker than of the operators in the form $Lu=\Delta u + V u$. Our results also show that we cannot hope for a positive answer to Kuchment's question in the periodic setting without using periodicity in all directions: a cylinder is periodic in all but one direction, in which the solution can decay super-exponentially fast.

 In both questions however, there is additional information on the coefficients: the randomness of $V$ (and $A$) in the Anderson-Bernoulli model and the periodicity of $A$ and $V$ in the periodic setting. The current methods of quantitative unique continuation such as Carleman inequalities and monotonicity formulas do not take these informations into account, and it would be interesting to find a method to use randomness or periodicity of the coefficients to prove quantitative results ensuring slow decay on large scales. We mention a non-exhaustive list of results related to unique continuation and to the spectrum of random and periodic operators that uses these extra pieces of information: \cite{AKS21}, \cite{AKS23}, \cite{DG23}, \cite{KZZ22}.

\subsection{Few subtleties related to Landis' conjecture}
\label{subtle}

In dimension $d=1$, the strong Landis conjecture has been solved in \cite{R21} (see also \cite{LB20}) for a general second-order elliptic equation with real bounded coefficients. In higher dimensions, the situation is quite different and strongly depends on the type of potential $V$ that we consider.

\pseudosection{Complex vs real solutions} Landis' conjecture \eqref{landis}  has a real  and a complex setting. In the complex setting, the problem was solved by   Meshkov \cite{M92}, who constructed a bounded complex-valued potential $V$ and a  non-zero complex-valued solution $u$ to $\Delta u + Vu=0$ in $\R^2$ satisfying $|u(x)| \leq \e^{-c |x|^{4/3}}.$ Using the method of Carleman's inequalities, Meshkov also proved that if $V$ is a bounded complex potential in $\mathbb{R}^d$, $d\geq 2$, then any solution to $\Delta u + Vu=0$ in $\mathbb{R}^d$ with $$|u(x)| \leq \exp(-|x|^{4/3+\varepsilon}),\,\,\varepsilon>0$$ is identically zero. 

In the case of real bounded potential $V$ and dimension $d=2$, the weak Landis conjecture was recently proved in \cite{LMNN20}. Namely, it was shown that if $u$ solves $\Delta u+ Vu=0$ with real bounded $V$ and $u$ decays at infinity faster than exponentially:$|u(x)|\leq C \exp(-|x|^{1+\varepsilon}) $, then $u$ is identically zero. The Landis conjecture for real potentials is still open in dimension $3$ and higher, but a recent article \cite{FK23} raises a doubt that Landis conjecture for real potentials is true in dimension $4$ and higher as it essentially disproves a local quantitative version of Landis conjecture in dimension $4$.

\pseudosection{Local version}
A local version of Landis conjecture has been studied and used by Bourgain and Kenig in connection to Anderson localization \cite{BK05}. They proved the following fact. 

Let $u$ be a solution to $-\Delta u + Vu=0$ in $B(0, 2R)\subset \R^n$ for sufficiently large $R>10$ and let $V$ be a bounded  potential. Suppose that $|u(0)|=\sup_{B(0,2R)} |u(x)|=1$ and define $$M(R):=\inf_{|x_0|=R} \sup_{B(x_0,1)}|u(x)|.$$ Then
\begin{align}\label{locallandisbk}
    M(R) \geq  \e^{-C R^{4/3} \log(R)}.
\end{align}
This quantitative result of Bourgain and Kenig has been generalized to more general second-order elliptic PDEs with lower-order terms in \cite{D14} and \cite{LW14}.  All of these results are proved using Carleman inequalities, which do not care whether the potential is real or complex. In the case of real bounded potentials on the plane, \eqref{locallandisbk} has been recently improved in \cite{LMNN20}: with the same notation, when $d=2$ and $V$ is real and bounded, it holds that
\begin{align}\label{locallandislog}
    M(R) \geq \e^{-C R \log^{3/2}(R)}.
\end{align} 
 The approach in \cite{LMNN20} uses quasi-conformal mappings and a new method of holes. Unfortunately, the part of the approach involving quasi-conformal mappings doesn't work in higher dimensions. We also refer to \cite{KSW15} where quasi-conformal mappings were also used to study Landis conjecture in the plane for the equation $\Delta u + B\nabla u - Vu=0$ with $B,V$ bounded, $V \geq 0$ and to \cite{DKW17}  for the equation $\div(A\nabla u) - Vu=0$ with $V\geq 0$ and bounded. Finally, a recent work \cite{LB24}
 shows that real solutions to $\Delta u + \textup{div}(W_1 u) + W_2\nabla u + Vu=0$ on the plane with bounded $W_1,W_2,V$ cannot decay like $\exp(-|x|^{1+\delta})$, $\delta > 0$.

\pseudosection{Constant vs variable coefficients}
We want to stress that solutions to elliptic PDEs with constant coefficients for the higher order derivatives behave \textit{drastically different} from elliptic PDEs with variable coefficients. Indeed, Theorem \ref{theoremforhalfcylinder} and Theorem \ref{EigenTheorem} show that in the cylinder $\T^2 \times \R^+$, solutions to elliptic PDEs with variable coefficients can decay at double exponential speed in the non-periodic direction. On the other hand, Elton \cite{E20} proved that solutions to $-\Delta u + Vu=0$ in $\T^2 \times \R^+$ with bounded potential $V$, can decay at most exponentially fast in the non-periodic direction. This result is sharp by considering $u=\e^{-t}$ where $t \in \R^+$. When the torus $\T^d$ has dimension $d \geq 3,$ the speed of decay is governed by the local result of Bourgain and Kenig \eqref{locallandisbk}. Recently, this result is shown to be sharp by constructing a real-valued example \cite{FK23}.

\vspace{5pt}

\textbf{Regularity of the coefficients and non-unique continuation}
\vspace{5pt}

A differential operator $L$ is said to have the unique continuation property, 
if any solution to $Lu=0$ in a connected domain $\Omega$ vanishing in an open subset of $\Omega$ is zero in the whole $\Omega$.

Solutions to linear elliptic differential equations with sufficiently regular coefficients satisfy the unique continuation property, see \cites{GL86,GL87}, but Plis and Miller described in their articles \cites{P63, M74} 3-dimensional counterexamples to the unique continuation property for elliptic second-order PDEs with Hölder continuous coefficients. In particular, there is a matrix $A$ with Hölder coefficients and a non-zero solution $u$ to the elliptic equation  $\div(A\nabla u)=0$ such that $u$ vanishes on an open set. The work \cite{F01} constructed a compactly supported solution to $\div(A\nabla u)+\lambda u=0$ in $\mathbb{R}^3$ where the elliptic matrix $A$ can be chosen $\alpha$-Hölder for any $\alpha\in (0,1)$. For the parabolic equation, Miller \cite{M74} constructed a counterexample to unique continuation with coefficients in some Hölder space for the time variable.

While writing this article, we discovered a paper by Mandache \cite{M98} which does not seem very well known.  In his article, he constructed counterexamples to unique continuation for both elliptic $\div(A\nabla u)=0$ and parabolic $u_t = \div(A\nabla u)$ equations generalizing the constructions of Miller. His coefficients are in all Hölder spaces $C^{1-\epsilon}$ with $\epsilon>0.$ We refer to section \ref{PML1} for more details on counterexamples to unique continuation. 

\pseudosection{Coefficients and speed of decay}
At the same moment, if the coefficients of the matrix $A$ are Lipschitz and if $A$ is elliptic, then the unique continuation property holds, see \cites{GL86,GL87}. The proof usually relies on the frequency function approach or on Carleman inequalities. We would like to mention that Gusarov announced in \cite{G79} the construction of a real solution $u$ to a differential equation $$\div(A\nabla u) + B\nabla u + C u =0 $$ in $\mathbb{T}^2\times \mathbb{R}^+$ with the following properties: $A(x,y,t)$ is uniformly elliptic and Lipschitz, $B(x,y,t)$ and $C(x,y,t)$ are bounded and decay to zero exponentially fast as $t \to +\infty$, but $|u(x,y,t)|\leq e^{-ce^{ct}}$ as $t \to +\infty$. Our first main result, Theorem \ref{theoremforhalfcylinder}, allows one to have zero $B$ and $C$, while our second main result, Theorem \ref{EigenTheorem}, allows one to obtain a function decaying in both directions and have a constant $C.$

\subsection{Notation}
We present the notation that will be used in the remainder of this article.
\begin{itemize}        
        \item For 2-dimensional matrices we will use the Latin letter $A$ and denote its entries as $A_{xx},A_{xy},A_{yx},A_{yy},$ whereas  3-dimensional matrices will be denoted by the calligraphic letter $\mathcal{A}.$
        \item The 3-dimensional matrix $\mathcal{A}$ is obtained from the 2-dimensional matrix $A$ by \begin{align}\label{threedmatrixaugmented}
            \mathcal{A} = \begin{pmatrix}A_{xx}&A_{xy}&0\\A_{yx}&A_{yy}&0\\0&0&1\end{pmatrix}.
        \end{align} In other words, $A$ is the top $2\times 2$ minor of $\mathcal{A}.$
        \item We denote the $\alpha$-Hölder semi-norm by 
        \begin{align}\label{semi-norm}|f|_{C^{0, \alpha}} := \sup_{x\neq y}\frac{|f(x)-f(y)|}{|x-y|^\alpha}.
        \end{align}
        \item Unless specified otherwise, the functions are defined in the cylinder $\T^2\times \R^+$ or $\T^2\times \R$. The first two (periodic) coordinates in the cylinder will be denoted by $x,y$ and the third coordinate will be denoted by $t$. 
        \item The derivative of a function $u$ with respect to $t$ will be denoted by $\dot u$ while the spatial derivatives are denoted as $\nabla u.$ 
        \item Here and further, unless noted otherwise, we consider only equations of the type  $\ddot{u}+\div (A\nabla u) = -\mu u$ with a 2-dimensional matrix $A$ and $\mu \in \R.$ 
        \item By $c,C,C_1,C_2...$ we will usually denote numerical constants, whose value may be different from line to line, and sometimes these constants will depend on additional parameters and this dependence can be reflected as $c=c(\mu), C=C(\mu)$. 
        The constants $k,k'...$ in trigonometric polynomials will be assumed to be integers. By assuming  $A\lesssim B$ we mean that $A\leq C B$ for some fixed constant $C$, by assuming $A\ll B$ we mean that $A\leq C B$ for some sufficiently large constant $C$. By claiming  $A\lesssim B$ we mean that $A\leq C B$ for some sufficiently large constant $C$. 
    \end{itemize}

\subsection{Structure of the paper} 
In Section \ref{counterexamplesection}, we present an idea that plays an important role, both in the construction of fast-decaying solutions and in the construction of counterexamples to unique continuation: switching to faster-decaying solutions infinitely many times. We illustrate this idea by explicitly constructing a non-trivial solution to $\ddot{u}+\div(A\nabla u)=0$ which vanishes on an open set and with $A$ belonging to the Hölder class. Sections  \ref{toolboxnew} to \ref{proofofpropslabel} are devoted to the proof of  Theorem \ref{theoremforhalfcylinder}, about the construction of a solution to $\ddot{u}+\div(A\nabla u)=0$ in a half-cylinder with double-exponential decay and to the proof of Theorem \ref{EigenTheorem}, about a double-exponentially decaying eigenfunction to a divergence form elliptic operator on the full cylinder $\T^2 \times \R$. In Section \ref{thebuildingblocknewlabel} we explain the main ideas and constructive  steps in the proof of Theorem \ref{theoremforhalfcylinder}. In Section \ref{parab}, we present our last main result, Theorem \ref{maintheoremparabolicbeg}, which explains the construction of a (complex-valued) double-exponentially decaying solution to a parabolic equation with constant higher-order terms. Finally, to ease the reading of this article, we relegate some technical parts of the proofs in the appendix.

\subsection{Acknowledgments}
FP was funded by NCCR Swissmap and by the SNSF.\\
AL was supported by Packard Fellowship and by the SNSF.




\section{Transforming solutions}\label{counterexamplesection}

This introductory section is formally not used to prove the main results, but it is an attempt to explain an idea from the works
\cite{M74} of Miller and \cite{P63} of Plis. This idea was originally used to construct a solution to an elliptic PDE with Hölder continuous coefficients vanishing on an open set. A closely related idea will be used to construct a fast-decaying solution to an \textit{elliptic} and a \textit{parabolic} equation. We start with an abstract problem. 

\textbf{Problem of transforming solutions.} Suppose we are given two solutions $u_1,u_2$ to two different partial differential equations: 
$$ L_1 u_1 =0  \quad \& \quad L_2 u_2 =0$$
in the cylinder $\T^2\times \R$. Can we find a solution $u$ and  a partial differential equation $Lu=0$ in $\T^2\times \R$  within a certain class of equations such that
\begin{equation} \label{gluing}
u=u_1, \quad L=L_1  \textup{ on  }  \{(x,y,t): t \leq 0\} \hspace{0.5cm} \textup{ and } \hspace{0.5cm} u=u_2, \quad L=L_2 \textup{ on   } \{(x,y,t): t \geq T\}
\end{equation}
for some $T>0$?

\textbf{Example of transformation.} Let $k$ be a large integer number. The functions
$$f(x,y,t) = \cos(kx)e^{-k t} \hspace{0.5cm} \textup{ and } \hspace{0.5cm} g(x,y,t) = \cos((k+1)y)e^{-(k+1)t}$$
are both harmonic in $\T^2\times \R$: $\Delta f=\Delta g=0$.
Lemma \ref{PML1} given below shows that there are numerical constants $T,C>0$ and a function $u$ (transforming $f$ into $g$), which solves an elliptic equation $\ddot u + \div\left(A\nabla u\right )=0$ in the cylinder and
 $$u(x,y,t)=f(x,y,t) \textup{ for }  t \leq 0 \hspace{0.5cm} \textup{ and } \hspace{0.5cm} u(x,y,t)=g(x,y,t) \textup{ for } t\geq T,$$
where $A$ is the identity matrix for $t\notin (0,T)$, $A$ is elliptic in $\T^2\times \R$ with ellipticity constant smaller than $C$ and the derivatives of the coefficients of $A$ are bounded by $C$ in absolute value. 
\begin{figure}[H] 
\includegraphics[scale=0.22]{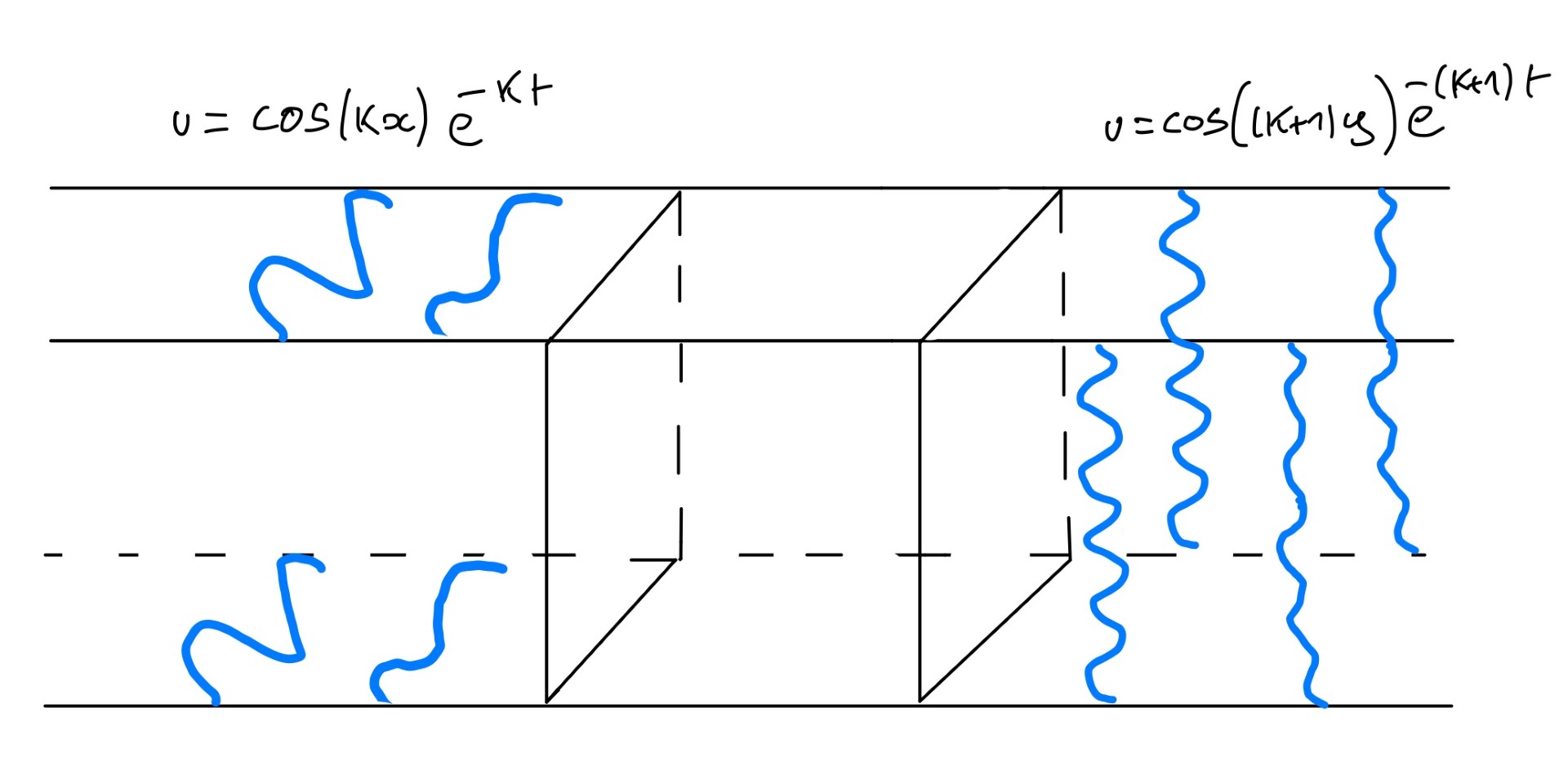}
	\centering
	\caption{Transforming $f$ into $g$}
\label{fig:theta}
\end{figure}

Let $\theta(t)$ be a $C^\infty$-smooth step function which is equal to 1 in  $\{t\leq 0\}$ and to zero in   $\{t\geq 1\}$ and which monotonically decreases for $t\in(0, 1)$, see Figure \ref{fig:theta} and footnote \footnote{We choose $\theta(t):= 1-G(\tan(\pi(t-1/2)))$ for $t\in[0,1]$ where $G(x)=\frac{1}{\sqrt{\pi}}\int_{-\infty}^x \e^{-\eta^2} \ud{\eta}$. \label{footnoteblablablaintro}}).

\begin{figure}[H] 
\includegraphics[scale=3]{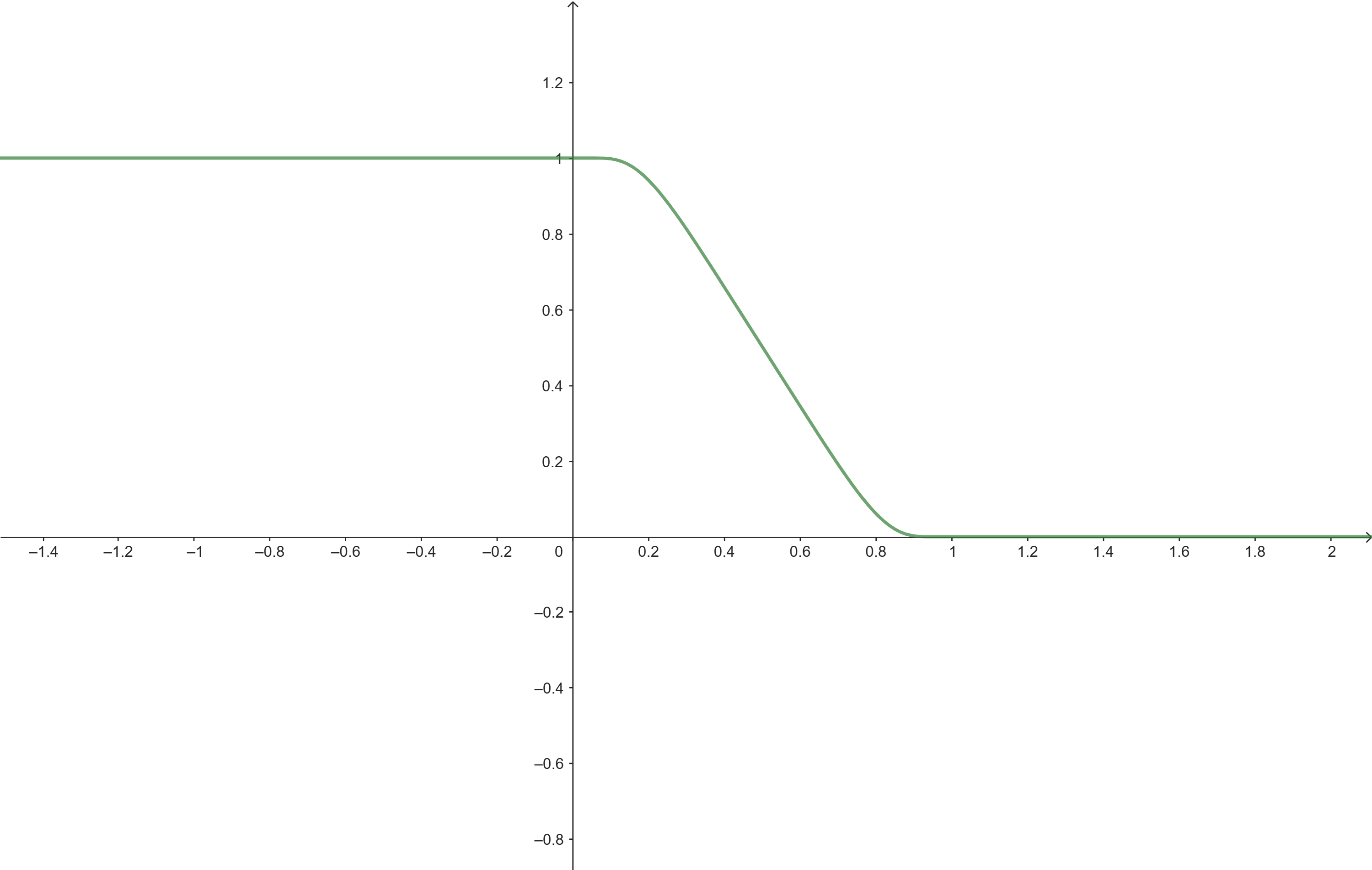}
	\centering
	\caption{A graph of the function $\theta$}
\label{fig:theta}
\end{figure}
\begin{lemma}

\label{PML1}
Consider two harmonic functions in $\T^2\times \mathbb{R}$ $$f:=\cos(k x)e^{-k t} \textup{\quad and \quad} g=\cos(k'y)e^{-k' t}$$
and assume that the positive numbers $k, k',w >0$ satisfy
\begin{equation} \label{assumptions}
0<k'-k\lesssim w^{-1}\lesssim k.
\end{equation}
Put $\alpha(t) := \theta(t/w)$ and
\begin{equation}
    \label{defn_u}
    u := \begin{cases} f+\big(1-\alpha(t)\big)g, & t\leq w\\
 \alpha(t-w)f+g,&t \geq w.
 \end{cases}
\end{equation}

Then the following holds:
\begin{enumerate}
    \item[a)] There exists a $C^1$ smooth matrix-valued function $A$ such that $u$ solves  $$\ddot u+ \div(A\nabla u)=0 \hspace{0.5cm} \textup{ on } \quad  \T^2 \times \R$$
    \item[b)] and
\begin{equation}
\label{props}
    \|A-Id\|\lesssim \frac{1}{wk}, \hspace{0.5cm} \|\nabla A\|\lesssim \frac{1}{w}, \hspace{0.5cm} \| \dot A \| \lesssim \frac{1}{w} \quad \textup{ on } \quad \T^2 \times \R. 
\end{equation}

\item[c)] The solution $u$ satisfies the following estimates on $\T^2\times [0, 2w]$: For any multi-index $\alpha \in \mathbb N^3$ of order $|\alpha|\leq 2$,
$$
|\partial^{\alpha} u(x,y,t)| \lesssim (k')^{|\alpha|} \sup_{\T^2 \times [0, 2w]} |u|.
$$

\comment{

$$|\nabla u(x,y,t)|, \, |\dot u(x,y,t)| \lesssim \left(\lambda+\lambda' \right)\sup_{t\in[0,2w]}|u|$$ and
$$
|\nabla^2 u(x,y,t)|, |\nabla \dot u(x,y,t)|, |\ddot u(x,y,t)| \lesssim \bigg(\lambda^2+(\lambda')^2 \bigg)\sup_{t\in[0,2w]}|u|.$$}

\end{enumerate}

\end{lemma}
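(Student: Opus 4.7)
I would construct $A$ piecewise. On the exterior regions $t \leq 0$ and $t \geq 2w$, the function $u$ coincides with $f$ or $g$ respectively, both of which are harmonic in $\T^2 \times \R$, so taking $A = \mathrm{Id}$ satisfies the equation trivially. The substance of the proof lies in the two transition intervals $[0, w]$ and $[w, 2w]$. On $[0, w]$, where $u = f + (1-\alpha(t))g$, a direct ODE computation using $f,g$ harmonic together with $\dot f = -kf$, $\dot g = -k'g$ yields
\begin{align*}
\ddot u + \Delta u \;=\; (2k'\dot\alpha - \ddot\alpha)\, g \;=:\; \Phi(t)\, g(y,t),
\end{align*}
so writing $A = \mathrm{Id} + B$ with $B$ symmetric, the PDE $\ddot u + \mathrm{div}(A\nabla u) = 0$ reduces to the single linear condition $\mathrm{div}(B\nabla u) = -\Phi g$.

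I would then solve this by the off-diagonal ansatz $B_{12} = c(t)\sin(kx)\sin(k'y)$, choosing $c(t) \sim \Phi(t)/(kk')$ so that the identity $\sin^2(kx) = \tfrac12(1 - \cos(2kx))$ makes the contribution $(B_{12})_y u_x$ in $\mathrm{div}(B\nabla u)$ produce a $-\Phi\cos(k'y)e^{-k't}$ term as required. This leading choice also generates ``spurious'' Fourier modes of types $(2k, k')$, $(k, 2k')$, and $(k, 0)$, carrying exponential $t$-factors that remain of order $1$ on $[0, 2w]$ thanks to $|k' - k|\lesssim 1/w$. These residuals are cancelled by further diagonal and higher-mode corrections to $B$; since each correction is smaller than the previous one by a factor $\lesssim 1/(wk)$ (using $1/w \lesssim k$), the iterative cancellation converges to a smooth symmetric $B$. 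A mirror-symmetric construction handles $[w, 2w]$, where $u = \alpha(t-w)f + g$ has defect $(2k\dot\alpha - \ddot\alpha) f$.

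The estimates in (b) then follow from $|\dot\alpha|\lesssim 1/w$, $|\ddot\alpha|\lesssim 1/w^2$, and $k'\sim k$: these give $|\Phi|\lesssim k/w$, hence $\|B\|\lesssim 1/(wk)$; differentiating $B$ in $x$ or $y$ contributes a factor $k$ or $k'$, giving $\|\nabla A\|\lesssim 1/w$; and the time derivative uses $|\dot\Phi|\lesssim k/w^2$ to give $\|\dot A\|\lesssim 1/w$. The $C^1$ regularity at the interfaces $t \in \{0, w, 2w\}$ is immediate from the flatness of $\theta$ at its endpoints: every derivative of $\alpha$, and hence of $\Phi$, vanishes at these points, so $B$ may be extended by zero outside $[0, 2w]$ smoothly. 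Part (c) follows at once from the explicit form of $u$: spatial derivatives contribute $k$ or $k'$, time derivatives at most $k$, $k'$, or $1/w$, and since $1/w \lesssim k \leq k'$ every mixed derivative of order $\leq 2$ is bounded by $(k')^{|\alpha|}\sup|u|$. The main obstacle I anticipate is the algebraic bookkeeping of the iterative cancellation of spurious Fourier modes; the crucial quantitative input making this iteration close is precisely the smallness of $|k'-k|$ relative to $k$, combined with $k \gtrsim 1/w$.
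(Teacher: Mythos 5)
Your construction follows essentially the same route as the paper's: compute the defect $\ddot u+\Delta u=(2k'\dot\alpha-\ddot\alpha)\,g$ on $[0,w]$ (and its mirror with $f$ on $[w,2w]$), absorb it into $\div(B\nabla u)$ with a trigonometric-polynomial correction matrix supported in the transition layers, and get the bounds in (b), the $C^1$ matching at $t\in\{0,w,2w\}$, and (c) from $|\dot\alpha|\lesssim w^{-1}$, $|\ddot\alpha|\lesssim w^{-2}$, $0<k'-k\lesssim w^{-1}\lesssim k$, the flatness of $\theta$ at its endpoints, and the fact that $\sup_{\T^2\times[0,2w]}|u|\gtrsim 1$ (the paper uses the maximum principle to say it equals $1$). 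The only real difference is how the equation $\div(B\nabla u)=-\Phi g$ is solved: the paper does it in closed form via Lemma \ref{claim2Dtwopointoneproof} (and Lemma \ref{lemmabistwopointnineinproofoftwopointne} on the second layer), choosing a vector field $V$ with $\div V=\cos(k'y)$ whose components are divisible by $\partial_x u=-k\sin(kx)$; this produces exactly your leading off-diagonal term $\propto\sin(kx)\sin(k'y)/(kk')$ together with the two diagonal entries $-\cos(kx)\cos(k'y)/k^2$ and $2s\sin^2(k'y)/k^2$, in one shot. Where you should be careful is your justification of the "iterative cancellation": the spurious $(2k,k')$, $(k,2k')$ and $(k,0)$ modes are of the same order as the original defect, and the diagonal corrections needed to kill them are of the same size $\lesssim 1/(wk)$ as the leading off-diagonal entry — they are not smaller by a factor $1/(wk)$, so there is no geometric smallness making an infinite iteration converge. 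What saves the scheme is that it terminates exactly: since $u_x=-k\sin(kx)e^{-kt}$ is a single pure $x$-mode with no $y$-dependence, corrections of the form $B_{11}=\gamma(t)\cos(kx)\cos(k'y)+\delta(t)\sin^2(k'y)$ (with the factor $e^{(k-k')t}$ folded into $\gamma,\delta$) cancel all three residual modes and generate no new ones, landing precisely on the paper's matrix $A_s$. With the cancellation argued this way rather than by claimed smallness, the rest of your outline goes through.
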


  We note that $u=f$ in $\T^2\times(-\infty,0]$
 and $u=g$ in $\T^2\times[2w,\infty)$. We will say that Lemma \ref{PML1} allows to transform $f$ into $g$ within the set $\T^2\times[0,2w]$ via the solution $u$.

  Lemma \ref{PML1} is well known to specialists (we include the proof in Appendix \ref{theboundsforaanducounterexampleappendix} for the reader's convenience). We formally do not use Lemma \ref{PML1} in the proof of the main results, but we would like to include it here for building intuition and explaining a natural idea of how it can be used.  This lemma can be used to construct solutions to elliptic PDE with uniformly smooth coefficients, which decay faster than exponentially. It can also be used to construct solutions to elliptic PDE with Hölder coefficients, which vanish on an open set.

 One can apply the lemma to transform 
 $$f_{2n}= \cos(2nx)e^{-2nt} \hspace{0.5cm} \mbox{ into } \hspace{0.5cm} f_{2n+1}= \cos((2n+1)y)e^{-(2n+1)t}$$
 using a function $u_{2n}$ within $\T^2\times[0,T]$ with $T=2$ (i.e. $w=1)$ and all $n\geq n_0$
 and similarly transform $f_{2n+1}$ and $f_{2n+2}$ using a function $u_{2n+1}$.

A natural idea is to consequently transform $c_nf_n$ into $c_{n+1}f_{n+1}$ on $\T^2\times[nT,(n+1)T)$ for some 
constants $c_n>0$ so that
$$ c_n \sup_{x,y}|f_n(x,y,nT)|= c_{n+1} \sup_{x,y}|f_{n+1}(x,y,nT)|,$$
which is equivalent to $$ c_n\sup_{x,y}|\cos(nx)e^{-n^2T}|= c_{n+1} \sup_{x,y}|\cos((n+1)y)e^{-(n+1)nT}|,$$ 
$$ c_{n}e^{-n^2T}=c_{n+1}e^{-n(n+1)T}.$$
We make the choice 
$$ c_{n+1}= e^{n(n+1)T/2}$$
and put $u(x,y,t)=c_n \e^{-n^2T}u_n(x,y,t-nT) $ on $\T^2\times[nT,(n+1)T)$. 
The function $u$ is a solution to the PDE in divergence form
$$\ddot u+ \div(A\nabla u)=0 \quad \textup{in} \quad \T^2\times \{t>n_0T\}.$$ 
The matrix $A$ satisfies \eqref{props} with $w=1$.
In particular, $\|A-Id\|\lesssim \frac{1}{n}$ and the equation $\ddot u+ \div(A\nabla u)=0$ is elliptic with uniformly $C^1$ smooth coefficients  in the half-cylinder $\{t>n_0T\}$ if $n_0$ is sufficiently large. 
This demonstrates how one can apply a shifted version of lemma \ref{PML1} to construct a solution to a uniformly smooth and elliptic PDE, which is decaying like $e^{-ct^2}$.

\begin{corollary}
There exists a non-zero, real-valued solution $u$ to an elliptic equation in divergence form $\ddot u+ \div(A\nabla u)=0$ in the half cylinder $\T^2\times \mathbb{R}^+$ such that 
$A$ is uniformly $C^1$ smooth and uniformly elliptic in the half-cylinder and for all $t \gg 1,$
 $|u(x,y,t)| \leq e^{-ct^2}, t > 0$.    
\end{corollary}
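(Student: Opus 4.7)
The plan is to iterate Lemma \ref{PML1} as sketched in the paragraphs immediately preceding the statement. Fix $T = 2$ (so $w = 1$) and a large integer $n_0$, to be chosen at the end. For each $n \geq n_0$ I apply Lemma \ref{PML1}, with the cutoff $\theta$ shifted so that the transition region is $[nT, (n+1)T]$, to transform $c_n f_n$ into $c_{n+1} f_{n+1}$, where $f_n := \cos(nx)\e^{-nt}$ and $f_{n+1} := \cos((n+1)y)\e^{-(n+1)t}$ (with the roles of $x$ and $y$ swapped for odd $n$), and the positive constants $c_n$ are chosen by the recursion $c_{n+1}/c_n = \e^{nT}$, giving $c_n \asymp \e^{n(n-1)T/2}$. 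Denote by $u_n$ the resulting function on block $n$ and by $A_n$ the associated matrix. The global function $u$ is then defined by $u := u_n$ on each block, and matches automatically at the interfaces, since at $t = nT$ both $u_{n-1}$ and $u_n$ reduce to $c_n f_n(x, y, nT)$.

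For the regularity of $A$ at the interfaces, the key point is that the $C^\infty$ cutoff $\theta$ is identically $1$ on $\{t \leq 0\}$ and identically $0$ on $\{t \geq 1\}$, so each $A_n$ equals the identity in an \emph{open} neighborhood of both endpoints of its block, and $u_n$ agrees with the harmonic function $c_n f_n$ (respectively $c_{n+1} f_{n+1}$) there. Hence the concatenated matrix $A$ and function $u$ are $C^\infty$ across every interface $t = nT$. Part (b) of Lemma \ref{PML1} applied with $w = 1$ and $k = n$ gives $\|A - Id\| \lesssim 1/n$ and $\|\nabla A\|, \|\dot A\| \lesssim 1$ uniformly on $\T^2 \times \{t \geq n_0 T\}$, so taking $n_0$ large yields the uniform $C^1$-smoothness and uniform ellipticity required by the statement.

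For the decay, on block $n$ part (c) of Lemma \ref{PML1} together with $|f_n|, |f_{n+1}| \lesssim \e^{-n^2 T}$ gives $|u(x, y, t)| \lesssim c_n \e^{-n^2 T} \asymp \e^{-n(n+1)T/2}$ for $t \in [nT, (n+1)T]$, which translates into $|u(x, y, t)| \lesssim \e^{-ct^2}$ for $t \gg 1$, with $c = 1/(4T)$, say. Finally, to extend from $\{t \geq n_0 T\}$ to the full half-cylinder, I set $A := Id$ and $u(x, y, t) := c_{n_0} \cos(n_0 x)\e^{-n_0 t}$ on $\T^2 \times [0, n_0 T]$; this function is harmonic, matches the starting value of block $n_0$ at $t = n_0 T$, and glues smoothly with $A$ since both sides equal $Id$ there.

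The only delicate point is the smoothness of $A$ across the interfaces, which hinges on $\theta$ being constant (with the appropriate constant value) in \emph{open} neighborhoods of the transition endpoints, not merely at a single point. Once this is granted, the rest is bookkeeping: iterating Lemma \ref{PML1}, matching amplitudes via the $c_n$, and comparing the accumulated decay factor $\e^{-n(n+1)T/2}$ with the target $\e^{-ct^2}$ for $t \in [nT, (n+1)T]$.
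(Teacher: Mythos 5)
Your proposal follows the paper's own route essentially verbatim: iterating a shifted Lemma \ref{PML1} on blocks of length $T=2$ ($w=1$), matching amplitudes via $c_{n+1}=c_n e^{nT}$ (so $c_n\asymp e^{n(n-1)T/2}$), getting uniform $C^1$ bounds and ellipticity from \eqref{props} for $n\geq n_0$ large, and reading off the $e^{-ct^2}$ decay from $c_n e^{-n^2T}=e^{-n(n+1)T/2}$ on the $n$-th block; your extra remarks on the smooth gluing near the interfaces and the harmonic extension to $[0,n_0T]$ are correct and consistent with the paper. One small citation slip: the bound $\sup|u|\lesssim c_n e^{-n^2T}$ on a block does not come from part (c) of Lemma \ref{PML1} (which only controls derivatives in terms of $\sup|u|$) but follows directly from the explicit form \eqref{defn_u} of $u$ as a cutoff combination together with the amplitude matching (or from the maximum principle), so this does not affect the argument.
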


\begin{rem}
The rate of decay $e^{-ct^2}$ is already non-trivial. By \cite{E20}, in $\T^2 \times \R^+$, non-zero solution to $-\Delta u + Vu=0$ with bounded $V$ can decay at most exponentially fast. However, in this non-constant coefficient setting, the Gaussian rate of decay given in the Corollary is not optimal and can be improved to double exponential decay using a different idea  (see Theorem \ref{theoremforhalfcylinder} and Section \ref{toolboxnew}).
\end{rem}

\pseudosection{Counterexample to unique continuation.}
This section is another application of the previous lemma and it explains the counterexample of Miller \cite{M74} to unique continuation of elliptic PDE with $\alpha$-Hölder coefficients for $\alpha \in (0,1/2)$. 

 We would like to construct a solution $u$ to $\ddot{u}+\div(A \nabla u)=0$ in $\T^2\times \R$ with elliptic $A$ such that $u=\cos(k_0x)\e^{- k_0 t}$ when $t \leq 0$  and $u= 0$ for $t\geq T$ for some $T>0$ and $k_0\gg 1$. To do that, we will choose an increasing sequence of positive numbers $\{a_n\}$  with a finite limit $T:=\lim_{n\to\infty} a_n$.  Then the interval  $[0,T]$ is split into infinitely many sub-intervals $[a_n,a_{n+1}]$.
The choice of $\{a_n\}$ is to be specified later.

 Consider the functions 
    \begin{align*}
        f_{2n}:=\cos(k_{2n}x) \e^{-k_{2n}t}, \hspace{0.5cm}   f_{2n+1}:=\cos(k_{2n+1}y) \e^{-k_{2n+1}t},
    \end{align*}
 where $\{k_n\}$ is an increasing sequence of integer numbers to be chosen later. Put $w_n= \frac{a_{n+1}-a_n}{2}$ and note that $\sum_{n=1}^\infty 2w_n =T$. 
If 
\begin{equation} \label{eq: we need}
 k_{n+1}-k_n \lesssim w_n^{-1} \lesssim k_n,
\end{equation}
 then we can apply 
Lemma \ref{PML1} to transform $f_n$ into $f_{n+1}$ using a function $u_n$ on $\T^2 \times [0,2w_n]$, 
and therefore transform $c_nf_n$ into $c_{n+1}f_{n+1}$ using a function $\tilde u_n$ on $\T^2 \times [a_n,a_{n+1}]$, where the sequence $\{c_n\}$ should satisfy 
    \begin{align}\label{seqeuncecn}
     c_ne^{-a_nk_n}=c_{n+1}e^{-a_nk_{n+1}}
    \end{align}
and $c_1$ can be chosen to be $1$.

 Our goal is to choose the sequences $\{a_n\}$ and $\{k_n\}$ so that the function 
\begin{align}\label{ulemmatwopointonetildeunzeroforbigt}
u=\tilde u_n \quad \textup{ on } \quad \T^2 \times[a_n, a_{n+1}], \hspace{0.5cm} u=0 \quad \textup{ on } \quad \T^2 \times[T, \infty)    
\end{align}

 is a solution to a divergence-form elliptic equation with $\alpha$-Hölder continuous coefficients through $\T^2 \times\{t=T\}$, where $\alpha$ can be chosen in $(0,1/2)$.

\pseudosection{The choice of the sequences ensuring the Hölder condition and the ellipticity} 
Recall that we denote the $\alpha$-Hölder semi-norm  by $|f|_{C^{0, \alpha}} := \sup_{x\neq y}\frac{|f(x)-f(y)|}{|x-y|^\alpha}$ (see definition \ref{semi-norm}).
\begin{claim}\label{claimholderfailuniquecont}
    If $a,b>0$ are real positive numbers and $\|\nabla f\|_{\infty} \leq a,$ $\|f\|_{\infty} \leq b$, then $|f|_{C^{0, \alpha}} \leq a^{\alpha} (2b)^{1-\alpha}.$
\end{claim}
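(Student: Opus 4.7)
The plan is to combine two elementary bounds on $|f(x)-f(y)|$ --- one from the gradient bound, good when $x,y$ are close, and one from the $L^\infty$ bound, good when they are far apart --- and then optimize the crossover scale.

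More precisely, for any $x\neq y$ in the domain of $f$, the mean value theorem (applied along the segment from $x$ to $y$) together with the hypothesis $\|\nabla f\|_\infty\leq a$ gives
\begin{equation*}
|f(x)-f(y)|\leq a\,|x-y|,
\end{equation*}
while the triangle inequality together with $\|f\|_\infty\leq b$ gives
\begin{equation*}
|f(x)-f(y)|\leq 2b.
\end{equation*}
Hence $|f(x)-f(y)|\leq \min\bigl(a|x-y|,\,2b\bigr)$ for every pair $x\neq y$.

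Next I would split into two cases according to the natural crossover length $r := 2b/a$, at which the two bounds coincide. If $|x-y|\leq r$, the gradient bound yields
\begin{equation*}
\frac{|f(x)-f(y)|}{|x-y|^\alpha}\leq a\,|x-y|^{1-\alpha}\leq a\cdot r^{1-\alpha}=a^\alpha(2b)^{1-\alpha},
\end{equation*}
since $1-\alpha\geq 0$. If instead $|x-y|\geq r$, the $L^\infty$ bound gives
\begin{equation*}
\frac{|f(x)-f(y)|}{|x-y|^\alpha}\leq \frac{2b}{|x-y|^\alpha}\leq \frac{2b}{r^\alpha}=a^\alpha(2b)^{1-\alpha}.
\end{equation*}
Taking the supremum over $x\neq y$ in the definition \eqref{semi-norm} yields $|f|_{C^{0,\alpha}}\leq a^\alpha(2b)^{1-\alpha}$, as desired.

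There is no serious obstacle here; the only point worth checking is that the two cases really do exhaust all pairs $(x,y)$ and that the same numerical value $a^\alpha(2b)^{1-\alpha}$ emerges from both --- which is guaranteed by the choice $r=2b/a$ as the point where the Lipschitz and sup-norm bounds agree. Equivalently, one could invoke the one-line interpolation $\min(A,B)\leq A^\alpha B^{1-\alpha}$ for positive $A,B$ applied to $A=a|x-y|$ and $B=2b$, but the case split above makes the proof entirely self-contained.
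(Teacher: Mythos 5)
Your proof is correct. The only difference from the paper is one of presentation: the paper proves the bound in a single line by writing $|f(x)-f(y)| = |f(x)-f(y)|^{\alpha}\,|f(x)-f(y)|^{1-\alpha}$ and bounding the first factor by $(a|x-y|)^{\alpha}$ (gradient bound) and the second by $(2b)^{1-\alpha}$ (sup bound), with no case analysis. Your argument instead splits at the crossover scale $r=2b/a$ and checks the two regimes separately; the alternative you mention at the end, $\min(A,B)\leq A^{\alpha}B^{1-\alpha}$ applied to $A=a|x-y|$ and $B=2b$, is essentially the paper's argument in disguise. Both routes use exactly the same two elementary inequalities $|f(x)-f(y)|\leq a|x-y|$ and $|f(x)-f(y)|\leq 2b$, and yield the same constant; the paper's version is shorter, while yours makes the optimization over the crossover length explicit, which some readers may find more transparent.
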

\begin{proof}[Proof of the claim]
The following inequalities hold:
\begin{align*}
    |f(x)-f(y)| = |f(x)-f(y)|^{\alpha} |f(x)-f(y)|^{1-\alpha} \leq a^{\alpha} |x-y|^{\alpha} (2b)^{1-\alpha}.
\end{align*} Therefore, $$|f|_{C^{0,\alpha}} = \sup_{x \neq y} \frac{|f(x)-f(y)|}{|x-y|^\alpha}\leq a^\alpha (2b)^{1-\alpha}.$$
\end{proof}

In order to apply Lemma \ref{PML1} and transform $c_nf_n$ into $c_{n+1}f_{n+1}$ in $\T^2 \times [a_n,a_{n+1}]$ we need 
 $$k_{n+1}-k_n \lesssim w_n^{-1} \lesssim k_n.$$

Assuming that these inequalities hold, Lemma \ref{PML1} gives us the estimates 
\begin{align}\label{recallestimatelemmazeroone}
     \|A-Id\|\lesssim \frac{1}{w_n k_n}, \hspace{0.5cm} \|\nabla A\|\lesssim \frac{1}{w_n}, \hspace{0.5cm} \| \dot A \| \lesssim \frac{1}{w_n}
\end{align}
in $\T^2 \times [a_n,a_{n+1}]$.
Combining these three estimates with the Claim \eqref{claimholderfailuniquecont} ensures that  
\begin{align}\label{holdercoeffofa}
  \left|A-Id\right|_{C^{0, \alpha}} \lesssim  \frac{1}{w_n k_n^{1-\alpha}} .
\end{align}
Therefore, to ensure uniform $\alpha$-Hölder regularity of $A$, it is enough to have
\begin{align}\label{sufficientforunifomrholder}
   \frac{1}{w_n k_n^{1-\alpha} }\lesssim 1.
\end{align}
 To guarantee uniform ellipticity of $A,$ it is enough to have 
\begin{align}\label{sufficientforunifellipticity}
    \frac{1}{w_n k_n} \ll 1.
\end{align}
  as can be seen from the first inequality from \eqref{recallestimatelemmazeroone}.

The sequence $\{k_n\}$ is increasing and tends to infinity. So if $k_0$ is sufficiently large, then \eqref{sufficientforunifomrholder} implies \eqref{sufficientforunifellipticity} and we need to check one condition instead of two. Also if $\frac{1}{w_n k_n^{1-\alpha}} \lesssim 1$, if $k_0$ is sufficiently large and if $k_n \to \infty$, then $\|A-Id\|\lesssim \frac{1}{w_nk_n}=o(1)$ as $n\to \infty$ and the matrix $A(x,y,t)$ tends to the identity matrix as $t \to T$.

\textbf{Goal:} Find two sequences $\{w_n\} $ and $\{k_n\}$ such that 
\begin{align}\label{constraintsgoal}
    0 < k_{n+1}-k_n \lesssim w_n^{-1}, \hspace{0.5cm} \frac{1}{w_n k_n^{1-\alpha} }\lesssim 1, \hspace{0.5cm} \sum_{n \geq 1} w_n <\infty 
   \quad \textup{ and } \quad k_n \to \infty
\end{align}

In the next few lines, we explain why these two sequences don't exist for $\alpha\geq 1/2.$ Assuming these two sequences exist, we see that combining the first two constraints from \eqref{constraintsgoal} yields
\begin{align}\label{firsttwoconstraintsalltogether}
    k_{n+1}-k_n \lesssim k_n^{1-\alpha},
\end{align}
which yields $k_{n+1}/k_n \to 1$.
By the Mean Value Theorem and \eqref{firsttwoconstraintsalltogether}, $$k_{n+1}^{\alpha}-k_n^\alpha \leq \alpha(k_{n+1}-k_n)k_{n+1}^{\alpha-1} \sim \alpha(k_{n+1}-k_n)k_{n}^{\alpha-1} \lesssim 1.$$ 

Hence,
\begin{align}\label{estimateforlambdanalpha}
k_n^{\alpha} \lesssim n .   
\end{align}

Now, the second constraint from \eqref{constraintsgoal} together with \eqref{estimateforlambdanalpha} implies 
$$
w_n \gtrsim \frac{1}{k_n^{1-\alpha}}\gtrsim \frac{1}{n^{(1-\alpha)/\alpha}}.
$$

Finally, $\sum w_n < \infty$ implies that $\frac{1-\alpha}{\alpha}>1$, which is equivalent to $\alpha<\frac{1}{2}.$ This is precisely why this approach does not yield $\alpha$-Hölder regularity  for $\alpha \geq 1/2$.

Based on the above argument we can now explicitly find two sequences $\{k_n\}$ and $\{w_n\}$ satisfying the constraints \eqref{constraintsgoal} when $\alpha<\frac{1}{2}$. More precisely, the inequality \eqref{estimateforlambdanalpha} suggests to choose $k_n=n^{1/\alpha}$ and the second inequality from \eqref{constraintsgoal} suggests to choose $w_n=k_n^{\alpha-1}=n^{(\alpha-1)/\alpha}$. This is the content of the next Claim:

\begin{claim}\label{defoflambdanwn}
    By defining $k_n := (n+n_0)^{1/\alpha}, w_n:=(n+n_0)^{(\alpha -1)/\alpha}$ for some  $n_0\gg 1$, the constraints from \eqref{constraintsgoal} are satisfied.
\end{claim}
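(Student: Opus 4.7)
The plan is to verify the four constraints of \eqref{constraintsgoal} one by one with the explicit choice $k_n=(n+n_0)^{1/\alpha}$ and $w_n=(n+n_0)^{(\alpha-1)/\alpha}$. Observe first that since $1/\alpha>0$, the sequence $k_n$ is strictly increasing and tends to infinity, so the positivity of $k_{n+1}-k_n$ and the condition $k_n\to\infty$ are immediate.

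Next I would verify the first inequality $k_{n+1}-k_n\lesssim w_n^{-1}$. The function $s\mapsto s^{1/\alpha}$ has derivative $\frac{1}{\alpha}s^{1/\alpha-1}=\frac{1}{\alpha}s^{(1-\alpha)/\alpha}$, so by the Mean Value Theorem applied to $s\in[n+n_0,n+1+n_0]$ there exists $\xi$ in that interval with
\[
k_{n+1}-k_n=\frac{1}{\alpha}\,\xi^{(1-\alpha)/\alpha}\lesssim (n+n_0)^{(1-\alpha)/\alpha},
\]
where the last inequality uses $\xi\leq n+1+n_0\leq 2(n+n_0)$ together with the fact that $(1-\alpha)/\alpha$ is a fixed exponent (the implicit constant depends only on $\alpha$). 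Since $w_n^{-1}=(n+n_0)^{(1-\alpha)/\alpha}$, this gives exactly the required inequality.

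For the second constraint, I would simply multiply exponents:
\[
w_n\,k_n^{1-\alpha}=(n+n_0)^{(\alpha-1)/\alpha}\cdot(n+n_0)^{(1-\alpha)/\alpha}=(n+n_0)^0=1,
\]
so $1/(w_n k_n^{1-\alpha})=1\lesssim 1$ holds trivially (and in particular also gives the ellipticity bound $1/(w_nk_n)\to 0$ once $k_0$ is large, as noted in the text).

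Finally, for the summability $\sum w_n<\infty$, I would use the standing assumption $\alpha<1/2$. Indeed, $\alpha<1/2$ is equivalent to $1/\alpha>2$, i.e.\ $(1-\alpha)/\alpha>1$, so the exponent of $w_n$ satisfies $(\alpha-1)/\alpha<-1$, and the series $\sum_{n\geq 1}(n+n_0)^{(\alpha-1)/\alpha}$ converges as a $p$-series with $p>1$. This covers all four conditions in \eqref{constraintsgoal}. There is really no obstacle here; the only step that is not a one-line exponent calculation is the application of the MVT, and even that is routine. The role of $n_0\gg 1$ is only cosmetic: it ensures that the ellipticity bound $1/(w_nk_n)\ll 1$ demanded by \eqref{sufficientforunifellipticity} is satisfied from the very first index, which is needed so that Lemma \ref{PML1} may be applied on every block $\T^2\times[a_n,a_{n+1}]$.
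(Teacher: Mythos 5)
Your verification is correct and is exactly the "direct computation" the paper invokes without writing out: the MVT bound $k_{n+1}-k_n\lesssim(n+n_0)^{(1-\alpha)/\alpha}=w_n^{-1}$, the identity $w_nk_n^{1-\alpha}=1$, summability from $(\alpha-1)/\alpha<-1$ when $\alpha<1/2$, and the monotone divergence of $k_n$. Your closing remark on the role of $n_0$ (making $1/(w_nk_n)=(n+n_0)^{-1}\ll1$ so the ellipticity requirement holds from the first block) also matches the discussion surrounding \eqref{sufficientforunifellipticity} in the text.
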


\begin{proof}[Proof of the Claim]
    The claim follows by direct computation. 
\end{proof}

 These conditions guarantee uniform $\alpha$-Hölder continuity and ellipticity of the coefficients of $A$ in $\T^2 \times (-\infty, T)$. As discussed before $A(x,y,t) \to Id$ as $t\to T$, so we define $A = Id$ for $t\geq T$.
We also should check that the constructed $u$ actually solves the divergence form equation through $\{t=T\}$.
 We include an explanation in the next section  \ref{appendixforcounerexampleendpoint}, which shows that $u$ and the derivatives of $u$ rapidly decay to zero as $t\to T$. In particular, the divergence-free vector field $A\nabla u$ in 
$\T^2 \times (-\infty, T)$ has a $C^1$ extension by zero  onto $\T^2 \times [T,+\infty)$.

\subsection{The equation near $\{t=T\}$}\label{appendixforcounerexampleendpoint}

This section explains why the equation $\ddot u + \div(A\nabla u)=0 $ is satisfied near $\{t=T\}$.
\\

\pseudosection{The equation near $\{t=T\}$}

Recall that $u \equiv 0$ for $t\geq T.$  We note that $u$ solves $\div(A\nabla u) +\ddot u =0$ in $\mathbb{T}^2\times [0, T)$, but it is not clear why the equation holds through $\{t=T\}$. In fact, as we will show below, the derivatives of $A$ (polynomially) blow up at $\{t=T\}$. However, $u$ and its derivatives decay to zero at $\{t=T\}$ as $e^{-d_1(T-t)^{-d_2}}$ for some positive constants $d_1, d_2$. Hence, we will be able to extend $A\nabla u$ into a $C^1$ function across $\{t=T\}$ in such a way that $A\nabla u$ and its derivatives are equal to zero at $\{t=T\}$. Using this extension of $A\nabla u$, we will be able to make sense of the equation $\div(A\nabla u)+\ddot{u}=0$  at $\{t=T\}$ 

\pseudosection{Exponential decay of $u$ at $\{t=T\}$} 

By the definition of $u$ \eqref{ulemmatwopointonetildeunzeroforbigt}, 
$$
\sup_{\T^2\times \{t=a_n\}}|u|=c_n \e^{-k_na_n}, \hspace{0.5cm} \sup_{\T^2\times \{t=a_{n+1}\}}|u|=c_{n+1} \e^{-k_{n+1}a_{n+1}}.
$$
By definition \eqref{seqeuncecn}, the sequence $\{c_n\}$ is given  by $c_n \e^{-k_na_n}=c_{n+1}\e^{-k_{n+1}a_n}$. Since $2w_n=a_{n+1}-a_n$ by definition, we get
\begin{equation}
\label{decay}
    \sup_{\T^2\times \{t=a_{n+1}\}}|u|= e^{-2k_{n+1}w_n} \sup_{\T^2 \times \{t=a_{n}\}}|u| \hspace{0.5cm} \mbox{ for all } n.
\end{equation}

By definition, $k_n = (n+n_0)^{1/\alpha}, w_n=(n+n_0)^{(\alpha -1)/\alpha}$ for some  $n_0\gg 1$ (see Claim \ref{defoflambdanwn}). Therefore, 
$$k_{n+1}w_n \asymp n, \quad \mbox{ for } n \gg 1.$$ 

Since $u$ solves $\ddot u + \div(A\nabla u) = 0$ between $a_n$ and $a_{n+1}$, the maximum principle applies and 
\begin{equation}\label{max}
    \sup_{\T^2\times [a_n,a_{n+1}]}|u|\leq \max\left(\sup_{\T^2 \times \{t=a_n\}}|u|, \sup_{\T^2 \times \{t=a_{n+1}\}}|u| \right) = \sup_{\T^2\times \{t=a_n\}} |u|
    \end{equation}
    by \eqref{decay}.
\comment{
\begin{rem}
    \label{maxim}
    Between $a_n$ and $a_{n+1}$ we have $\div\mathcal{A}\nabla u = 0.$ Thus, the maximum principle applies and
    \begin{equation}
    \label{max}
    \sup_{a_n \leq t\leq a_{n+1}}|u|\leq \sup_{t\in\{a_n,a_{n+1}\}}|u| = \sup_{t=a_n} |u|.
    \end{equation}

\end{rem}}

Let $t$ be close to $T$, that is , $t \in [a_n, a_{n+1}]$ for some $n \gg 1.$ By \eqref{max} and  by iterating \eqref{decay}, we get 
\begin{align}\label{iterateexponentialdeecayclosetobigt}
\sup_{\T^2} |u| \leq \sup_{\T^2\times \{t=a_n\}} |u| \lesssim \e^{-d_1n^2}    
\end{align}
for some constant $d_1>0.$ However, recalling that $w_n \asymp n^{(\alpha-1)/\alpha}$ (see Claim \ref{defoflambdanwn}), with $\alpha <1/2$, we have that 
\begin{align}\label{ncomparabletoinversepowerinterval}
T-t \approx \sum_{i=n}^\infty 2w_i \asymp n^{2-\frac{1}{\alpha}}    
\end{align}
and therefore, 
\begin{align}
 |u(x,y,t)| \lesssim e^{-d_1(T-t)^{-d_2}}   
\end{align}
for some constant $d_2>0$. This proves the exponential decay of $u$ as $t \rightarrow T$.
\\

\pseudosection{Exponential decay of the derivatives of $u$ at $\{t=T\}$}
Let $t$ be close to $T$, i.e., $t \in [a_n, a_{n+1}]$ for some $n \gg 1$. Let $\beta \in \mathbb N^3$ be a multi-index of order $|\beta|\leq 2$. By Lemma \ref{PML1}
$$|\partial^{\beta} u| \lesssim k_{n+1}^{|\beta|} \sup_{\T^2 \times [a_n, a_{n+1}]}|u|\lesssim n^{2/\alpha} e^{-d_1 n^2}$$
for some $d_1>0$. In the second inequality, we used \eqref{max} and \eqref{iterateexponentialdeecayclosetobigt} and that $k_n:=n^{1/\alpha}$, $0<\alpha <1/2$, by definition (see Claim \ref{defoflambdanwn}). Hence, by \eqref{ncomparabletoinversepowerinterval}, we have $$|\partial^{\beta} u|\lesssim (T-t)^{\frac{-d_2}{\alpha}}e^{-d_1(T-t)^{-d_2}}.$$
   Therefore all the derivatives of $u$ of order less or equal to 2 decay exponentially fast.
 \comment{
 By Lemma \ref{PML1}, 
 $$|\pl_{i,j}^2 u| + |\ddot u| \lesssim k_{n+1}^2 \sup_{\T^2 \times [a_n;a_{n+1}]}|u|$$. Therefore, by arguing as above, it is clear that

 which  implies, as above, that $$|\pl_{i,j}^2 u| + |\ddot u|\lesssim (T-t)^{-C}e^{-c(T-t)^C}$$
 for $t\in[a_n,a_{n+1})$ and for some constants $c, C>0.$  Therefore $\ddot{u}$ and $\pl_{i.j}^2 u$ also decay exponentially fast.
 \\
 }

\pseudosection{Polynomial growth of the derivatives of $A$}
By the inequalities \eqref{props} in Lemma \ref{PML1}, the coefficient of $A$ are uniformly bounded and the derivatives of $A$ in the interval $t\in[a_n,a_{n+1})$ are bounded by \begin{align}\label{polynomialgrowth}
|\nabla A|,|\dot{A}|\lesssim\frac{1}{w_n}.    
\end{align}
 Recall that $w_n=n^{(\alpha-1)/\alpha}$ and $0<\alpha<1/2$ by definition (see Claim \ref{defoflambdanwn}). For $t$ close to $T$ (or equivalently $t \in [a_n, a_{n+1}]$ with $n \gg 1$), we saw in \eqref{ncomparabletoinversepowerinterval} that $n\asymp (T-t)^{-d_2}$ for some constant $d_2>0$. Therefore, by \eqref{polynomialgrowth}, the derivatives of $A$ grow at most polynomially.

\phantom{bla-bla}\\\phantom{bla-bla}

\pseudosection{The extension}
Since $u$ and its derivatives decay exponentially fast and since $A$ and its derivative grow only polynomially fast, it is clear that $A\nabla u$ can be extended as a $C^1$ function at $\{t=T\}$ such that $A\nabla u$ and its derivatives are equal to zero at $t=T$. Therefore, $\div(A\nabla u) + \ddot{u}$ make sense through $\{t=T\}$  and the solution is equal to zero in $\{t>T\}$. In conclusion, we get that the equation is satisfied in $\T^2 \times \R$.

\comment{
\textcalor{red}{move this remark somewhere; potentially prove in Appendix or even here We would like to make a note (which will not be used) that a careful reader can extract that both $u$ and $A\nabla u$ will be $C^\infty$-smooth everywhere despite the fact that the derivatives of $A$ blow up near $t=T$. 
\begin{proof}
    It turns out that $A = \beta(t)e^{\lambda t}A_s = \beta(t)e^{\lambda t}\left(-s\begin{pmatrix}-\frac{\cos\lambda'y\cos\lambda x}{\lambda^2}&0\\0&0\end{pmatrix}+\begin{pmatrix}-\frac{-2\sin^2\lambda'y}{\lambda^2}&\frac{-2\sin\lambda'y\sin\lambda x}{\lambda\lambda'}\\0&0\end{pmatrix}\right).$ It's easy to see that the $k$th gradient of $A$ is equal to $O(\lambda^{k-1}w^{-1}).$ Moreover, the $k$th gradient of $u$ is estimated as $O(\lambda^k\|u\|).$ Thus the $k$th gradient of the product $A\nabla u$ is estimated as $O(\lambda^{k}w^{-1}\|u\|)$ and tends to zero since $\lambda$ and $w^{-1}$ increase polynomially and $\|u\|$ decays exponentially.   
\end{proof}
}
}
 
\begin{rem}
    With a bit more effort, one can show that $A\nabla u$ and $ u$ are also $C^\infty$-smooth across $\{t=T\}$ by showing that all derivatives of $u$ have exponential decay while derivatives of $A$ blow up at most polynomially.
     Thus the derivatives of $\binom{A\nabla u}{\dot{u}}$ also have exponential decay, and  $\binom{A\nabla u}{\dot{u}}$ is  $C^\infty$-smooth and divergence-free everywhere. 
\end{rem}

\section{Constructions of rapidly decaying solutions}\label{toolboxnew}
\subsection{Definitions, notations}
 We will say that a solution $u_1$ to an equation $L_1 u_1=0$ is transformed into a solution $u_2$ to an equation $L_2 u_2=0$ within a set $\{ (x,y,t): T_1 \leq t \leq T_2 \}$ via a solution $u$ to an equation $Lu=0$ if
$$ u=u_1, \hspace{0.2cm} L=L_1 \hspace{0.1cm}\textup { in } \{ (x,y,t):  t \leq T_1 \}\quad \mbox{ and }\quad u=u_2, \hspace{0.2cm} L=L_2 \hspace{0.1cm}\textup{ in } \{ (x,y,t):  t \geq T_2 \}$$ and $u$ is $C^2$. In the following, we will mostly consider operators $L$ of the form $L=\div(A \nabla \cdot)$.  In this case, we also ask $A$ to be $C^1$.

The difference $T=T_2-T_1$ will be called the time (duration) of the transformation. In addition, we will often say "transforming $u_1$ to $u_2$ in time $T$" to indicate that $u_1$ is transformed to $u_2$ within the set $\{ (x,y,t): 0 \leq t \leq T \}.$ 

\comment{
\begin{figure}[H]
	\includegraphics[scale=0.48]{Transforming.jpg}
	\centering
	\caption{The general case of transforming} 
\label{fig:main_step_kk}
\end{figure}}
We will also say that a solution $u$ to $Lu=0$ on the set $\{(x,y,t) : T_1 \leq t \leq T_2\}$ is obtained by gluing at time $T$ a solution $u_1$ to $L_1 u_1=0$ on the set $\{(x,y,t): T_1 \leq t \leq T\}$ and a solution $u_2$ to $L_2 u_2=0$ on the set $\{(x,y,t) : T \leq t \leq T_2)\}$ if
$$
 u=u_1, \hspace{0.2cm} L=L_1 \hspace{0.1cm}\textup { in } \{ (x,y,t):  T_1 \leq t \leq T \}\quad \mbox{ and } \quad u=u_2, \hspace{0.2cm} L=L_2 \hspace{0.1cm}\textup{ in } \{ (x,y,t): T \leq t \leq T_2 \}
$$
and $u$ is $C^2$ at $t=T$. In the case where $L=\div(A\nabla \cdot),$ we also ask $A$ to be $C^1$ at $t=T$.

We will say that the transformation has a certain regularity property if the operator $L$ has this property.
For instance, the transformation is called $C^1$ smooth with constant $C$ if the derivatives of the coefficients of $L$ are smaller than $C$ in absolute value.  

 Unless specified otherwise, we consider in this article transformations with $L$ in divergence form 
 $$Lu= \div(\mathcal{A}\nabla u)  + \mu u,$$
 where $\mu$ is a constant and $\mathcal{A}$ is a real elliptic matrix valued function.

\textbf{Regularity classes.} We say that a transformation belongs to the class $ R (\Lambda, C )$ if the 
the ellipticity constant of $\mathcal{A}$ is bounded by $\Lambda$: for any vector $\xi$ $$\frac{1}{\Lambda}  |\xi|^2 \leq (\mathcal{A}\xi, \xi) \leq \Lambda |\xi|^2$$
and the coefficients of $\mathcal{A}$  are $C^1$ smooth with constant $C$.

\comment{\textbf{ Time versus regularity.} Given two solutions $u_1$ and $u_2$ (and two equations), we will be interested in how the minimal time of transformation from $u_1$ to $u_2$ depends on the regularity of the transformation. 
The less regularity, the smaller time we need.}

\comment{
\begin{rem}[Remark about the function $u$ used]
\label{whatisu}
    Here and further we will use only functions $u$ that can be represented as a sum of one or two cosines: $u = f(t)\cos(kx)+g(t)\cos(k'y),$ $u = f(t)\cos(kx)$ or $u = g(t)\cos(k'y)$ on certain intervals. Changing the values of $k$ or $k'$ is done by having $f$ or $g$ become zero with zero derivatives, then one can safely glue, for example, the functions. \textbf{Whenever we say that a solution exists, it will have the form described above.} 
\end{rem}}

\subsection{Informal explanation of the proof of Theorem \ref{EigenTheorem}}\label{informalexplanationfullproof}
To make it easier for the reader, we first present the main steps in the proof of Theorem \ref{EigenTheorem}, that is, of the construction of a double-exponentially decaying eigenfunction to $\ddot u + \div(A\nabla u)=-\mu u$ in $\T^2\times \R$. The proof can be decomposed into three steps:
\begin{itemize}
    \item In the first step, we construct a solution $u$ to the divergence form equation $\ddot u + \div(A\nabla u)=0$ in $\T^2 \times [0, C]$ ("a block") for some universal $C>0$. The solution $u$ will be equal to $\cos(kx)\e^{-kt}$ for $t\leq 0$ and to $c(k)\cos(2ky)\e^{-2kt}$ for $t \geq C$, where $k \gg 1$ and $c(k)$ is a positive constant depending on $k$. At first sight, this step might look very similar to Lemma \ref{PML1}. The main difference lies in the frequency of oscillations: we can switch to a harmonic function with a much faster frequency compared to Lemma \ref{PML1} (for a duration of order $\mathcal{O}(1)$, while Lemma \ref{PML1} only allows to switch from frequency $k$ to frequency $k+\mathcal{O}(1)$).  To achieve our construction in a block, we use a new idea: we do not immediately transform $\cos(kx)\e^{-kt}$ into $c(k)\cos(2ky)\e^{-2kt}$. Instead, we first transform $\cos(kx)\e^{-kt}$ into a slower-decaying but faster oscillating function $c_1(k)\cos(2ky)\e^{\frac{-2k}{3}t}.$ Doing that allows us to control the regularity of the matrix $A$ uniformly in $k$ (if we were to immediately transform $\cos(kx)\e^{-kt}$ into $c(k)\cos(2ky)\e^{-2kt}$ in time $\mathcal{O}(1)$, we would not be able to get a uniformly $C^1$ matrix $A$). Once this transformation is done, we can accelerate the decay to transform $c_1(k)\cos(2ky)\e^{\frac{-2k}{3}t}$ into $c(k)\cos(2ky) \e^{-2kt}.$  This first step is explained in the section \ref{thebuildingblocknewlabel}.

    \item With the construction from the first step in hand, we can now shift it to transform $c_n \cos(k_nx)\e^{-k_nt}$ into $c_{n+1}\cos(2k_ny)\e^{-2k_nt}$ via a solution $u_n(x,y,t)$ to $\ddot u + \div(A\nabla u)=0,$ in the block $\T^2 \times [C(n-1), Cn]$. This allows us to define a solution $u$ in $\T^2\times \R^+$ equal to $u_n(x,y,t)$ when $t \in [C(n-1), Cn]$ and $n$ is odd and equal to $u_n(y,x,t)$ when $t \in [C(n-1), Cn]$ and $n$ is even. This function $u$ will solve an equation $\ddot u+\div(A\nabla u)=0$ in $\T^2 \times \R^+$, it will be uniformly $C^2$ and the matrix $A$ will be uniformly elliptic and uniformly $C^1$. Since in each block $\T^2\times [C(n-1), Cn]$, the frequency increases by a factor 2,  and since the length of a block is of order $\mathcal{O}(1),$ we get that after $n$ iterations, $t \asymp n$ and the frequency $k_n \asymp 2^n$, showing where the super-exponential decay comes from. We then lift this construction of a $A$-harmonic solution with double-exponential decay in $\T^2\times \R^+$  into a double-exponential decaying solution to an equation with constant potential $\ddot u + \div(A\nabla u)=-\mu u$ in the same set $\T^2\times \R^+$. To achieve that, we use that at any moment $t \in [C(n-1), Cn]$ (assuming $n$ is odd), the $A$-harmonic solution $u$ will be of the form $f_n(t)\cos(k_nx)+g_{n+1}(t)\cos(2k_ny).$ We then observe that since $u$ solves $\ddot u +\div(A\nabla u)=0$, then $u$ also solves $\ddot u + \div(A'\nabla u)=-\mu u$ when $A'=A+B$, $B=\begin{pmatrix}
        \mu/k_n^2 & 0 \\0 & \mu/(2k_n)^2
    \end{pmatrix}$ and $t\in [C(n-1), Cn]$. However, the matrix $A'$ is not even continuous in $\T^2\times \R^+$ while we want a $C^1$ matrix $A'$. To solve this issue, we use that $u$ will in fact have only one cosine term near the boundaries of the block $\T^2\times [C(n-1), Cn].$. This allows us to smoothly change the entry in $B$ related to the other space variable, thus making the matrix $B \in C^1$ in $\T^2\times \R^+$. This is the second step and we prove it in section \ref{sect4}.

    \item Now, after we have an eigenfunction in $\T^2 \times \R^+$, we would like to reflect it around zero to get an eigenfunction in $\T^2 \times \R$. However, we face an issue: the eigenfunction $u$ that we are building is of the form $\cos(k_1x)\e^{-k_1t}$ for $t$ close to zero, and we first transform $u$ into another function, which solves an eigenfunction equation with different coefficients and, as a function of $t$,  is a periodic trigonometric function. Then we will shift the constructed solution to make the time derivative at $t=0$ zero and after that the solution can be reflected in an even way. We prove this third step in section \ref{mainproof}. 
\end{itemize}

\comment{
Informally speaking, we use three technical lemmas and propositions (\ref{lemmachangingacoeff}, \ref{slownew} and \ref{accelnew}) to conclude that one can transform  $\cos(kx)e^{-kt}$ into $c\cos(k'y)e^{-k'(t-C)}$ within the set $\mathbb{T}^2\times[0;C]$ where $c$ is a small constant proportional to $e^{-O(k)}$ (Lemma \ref{blockzerocnewnew}). One can use a shifted version to transform $\cos(k_n x)e^{-k_n(t-C(n-1))}$ into $c_n\cos(k_{n+1} y)e^{-k_{n+1}(t-Cn)}$ within the set $\{(x,y,t): C(n-1)\leq t\leq Cn\}$. A chain of those trasformations ensures that the module rapidly decays. This yields an example proving Theorem \ref{theoremforhalfcylinder}.\\
In Section 4 we describe a way to transform said example into a solution of the equation $\ddot{u}+\div A'\nabla u = -\mu u$ so that $u$ decays in \textit{both} directions. First of all, we transform the matrix $A$ of the example described above into another matrix $A'$ so that $\ddot{u}+\div A'\nabla u = -\mu u.$ It is possible since $u$ is always a sum of two cosines and since $u$ is actually one cosine on rather long intervals, during which one can prepare to introducing the new frequency.\\
This allows one to nullify $\dot{u}$ at some moment. Then we move this moment to $t=0,$ reflect the example about $t=0$, move it away and join the example with its reflection.
}

\subsubsection{Plan of the proof}
We prove the main Theorem \ref{EigenTheorem} by successively reducing it to simpler statements. In section \ref{mainproof}, we reduce the proof of Theorem \ref{EigenTheorem}, to the proof of the existence of a fast decaying eigenfunction on a half-cylinder and to the \textit{symmetrization Lemma} that will allow us to reflect around zero. We then prove the symmetrization Lemma.
In section \ref{sect4}, we reduce the construction of a fast decaying eigenfunction on a half-cylinder to the construction of a solution in the block $\T^2 \times [0, C]$. In section \ref{thebuildingblocknewlabel}, we reduce the construction of a solution in the block $\T^2 \times [0, C]$ to three technical Propositions. We finally prove these three technical Propositions in section \ref{proofofpropslabel}.

\section{Proof of Theorem \ref{EigenTheorem}}
\label{mainproof}
In this section, we reduce the construction of an eigenfunction on $\T^2 \times \R$ with double-exponential decay in both directions, to the construction of a solution to a divergence form PDE with constant potential on $\T^2 \times \R^+$ with double-exponential decay and to the symmetrization Lemma (which allows us to reflect around zero). We will then prove the symmetrization Lemma.  The construction of a solution to a divergence form PDE with constant potential on $\T^2 \times \R^+$ with double-exponential decay in a half-cylinder will be proved in section \ref{sect4}.

\subsection{A fast-decaying eigenfunction on $\T^2 \times \R$}
We recall the Theorem that we want to prove:

\begin{thm*}[\ref{EigenTheorem}]
    In the 3-dimensional cylinder $\mathbb{T}^2\times\mathbb{R}$, for every $\mu>0$, there exists a uniformly $C^1$ and uniformly elliptic matrix $A=A_{\mu}(x,y,t)$ and  a non-zero uniformly $C^2$ real function $u=u_\mu(x,y,t)$ such that \begin{equation} \label{eigen2} \ddot u+ \div(A\nabla u)=-\mu u\end{equation}and  $u$ has double exponential decay: for any $T\gg 1,$
$$\sup_{\T^2\times \{|t|\geq T\}}|u(x,y,t)|\leq Ce^{-ce^{cT}}$$
for some numerical $c(\mu), C(\mu)>0$.
\end{thm*}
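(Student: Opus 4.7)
The plan is to prove Theorem \ref{EigenTheorem} by reducing it to two separate ingredients: (i) the existence of a double-exponentially decaying real eigenfunction $\tilde u$ to an equation of the form $\ddot{\tilde u}+\div(\tilde A\nabla\tilde u)=-\mu\tilde u$ on the half-cylinder $\T^2\times\R^+$, with uniformly $C^1$ and uniformly elliptic $\tilde A$; and (ii) a symmetrization lemma that converts any such half-cylinder eigenfunction of the special form produced in (i) into one on $\T^2\times\R$ which is even in $t$ and still decays double-exponentially in both directions.

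For ingredient (i) I would upgrade the $A$-harmonic construction behind Theorem \ref{theoremforhalfcylinder}. That construction produces, on each block $\T^2\times[C(n-1),Cn]$, a solution of the special shape $u(x,y,t)=f_n(t)\cos(k_n x)+g_{n+1}(t)\cos(2k_n y)$ (with the roles of $x$ and $y$ swapped on even blocks), where $k_n\asymp 2^n$. Because the two cosines are eigenfunctions of $-\Delta_{x,y}$ with eigenvalues $k_n^2$ and $(2k_n)^2$, the pointwise modification $A'=A+B_n$ with diagonal $B_n=\mathrm{diag}(\mu/k_n^2,\mu/(2k_n)^2)$ automatically turns $\ddot u+\div(A\nabla u)=0$ into $\ddot u+\div(A'\nabla u)=-\mu u$ inside the block: indeed $\div(B_n\nabla u)=-\mu u$ once evaluated on the two-cosine structure. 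The matrix $B_n$ jumps between blocks, so $A'$ would a priori fail to be continuous. I would fix this by using a feature built into the block construction: near each block boundary the solution degenerates to a single cosine, say $f_n(t)\cos(k_n x)$. During such a sub-interval the $y$-entry of $B$ is irrelevant and may be smoothly interpolated between $\mu/(2k_n)^2$ and $\mu/(2k_{n+1})^2$ with a bump; a symmetric manoeuvre handles the $x$-entry at the next boundary. Since $\|B_n\|=O(1/k_n^2)\to 0$, uniform ellipticity of $A'$ is inherited from that of $A$, and the $C^1$ norms of the bump interpolations are absorbed into the $C^1$ bounds of $A$.

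For ingredient (ii), the symmetrization lemma, the main difficulty is that $\tilde u$ from (i) satisfies $\dot{\tilde u}(x,y,0)\neq 0$ in general, so its even extension would fail to be $C^1$ at $t=0$. My plan is to first transform $\tilde u$ near $t=0$, within a fixed finite collar $\T^2\times[0,T_0]$, into a function $v(x,y,t)=h(t)\cos(k_1 x)$ that solves an eigenfunction equation with constant coefficients $\ddot v+\div(A_0\nabla v)=-\mu v$, where $A_0=\mathrm{diag}(a,b)$ is a constant matrix and $a,b$ are chosen so that the reduced ODE $\ddot h=-(\mu-ak_1^2)h$ has trigonometric solutions. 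The transformation itself is a single-block gluing at bounded frequency $k_1$, carried out using the machinery of sections \ref{counterexamplesection}--\ref{proofofpropslabel}. Because $h(t)=\alpha\cos(\omega t)+\beta\sin(\omega t)$, I can pick a phase $t_0$ with $\dot h(t_0)=0$, translate the whole construction by $-t_0$, and discard the negative-time part. The resulting eigenfunction on $\T^2\times\R^+$ then has $\dot u\equiv 0$ on $\{t=0\}$, and at $t=0$ the matrix has the simple form $\mathrm{diag}(a,b,1)$ with no off-diagonal $t$-derivatives. The even reflection $u(x,y,t):=u(x,y,|t|)$ and $A(x,y,t):=A(x,y,|t|)$ is therefore $C^2$ in $u$ and $C^1$ in $A$ across $\{t=0\}$, solves the same eigenfunction equation, and inherits double-exponential decay in both directions.

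The step I expect to be the main obstacle is the smooth patching in both ingredients. In (i) the pointwise modification $A\mapsto A+B_n$ fits together continuously across block boundaries only because the block construction degenerates honestly to a single cosine near its ends; proving that degeneration and keeping the interpolation of $B_n$ inside the regularity class $R(\Lambda,C)$ requires tracking the block machinery carefully. In (ii) producing a constant-coefficient trigonometric collar that matches the left endpoint of $\tilde u$ while the gluing itself stays in $R(\Lambda,C)$ again relies on the block machinery, this time reused at a fixed bounded frequency $k_1$. Once these two gluings are engineered cleanly, the rest of the argument is essentially book-keeping.
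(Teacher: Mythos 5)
Your proposal is correct and follows essentially the same route as the paper: the half-cylinder eigenfunction is obtained exactly as in the paper by adding block-wise diagonal matrices $B_n=\mathrm{diag}(\mu/k_n^2,\mu/(2k_n)^2)$ to the $A$-harmonic construction and smoothly interpolating the ``irrelevant'' entry on the sub-intervals where the solution is a single cosine, and your symmetrization step is the paper's Lemma \ref{altabstractlemmaevaluenewnew}: lower the $xx$-coefficient near $t=0$ so that the reduced time ODE becomes oscillatory, shift to a critical point of the trigonometric profile, and reflect evenly (the constancy of $A$ in $t$ near $0$ giving $C^1$ regularity of the reflected matrix). The only cosmetic difference is that the paper implements the collar by directly solving the scalar ODE $\ddot g=(k^2 a(t)-\mu)g$ with a smoothly interpolated coefficient $a(t)$, rather than re-invoking the block/transformation machinery at the fixed frequency.
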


As mentioned above, we reduce Theorem \ref{EigenTheorem} to the construction of a solution to a divergence form PDE with constant potential on $\T^2\times \R^+$ (Theorem \ref{halfcylinderallmunew} below) and to the symmetrization Lemma (Lemma \ref{altabstractlemmaevaluenewnew} below). We first state these two results and then we proceed to the proof of this reduction.

\begin{thm}[A fast-decaying solution on a half-cylinder with constant potential]\label{halfcylinderallmunew}
    Let $\mu \in \R$. In the 3-dimensional cylinder $\mathbb{T}^2\times\mathbb{R}^+$, there exists a uniformly $C^1$ and uniformly elliptic matrix $A:=A_{\mu}(x,y,t)$ and a non-zero uniformly $C^2$ real function $u:=u_{\mu}(x,y,t)$ such that
    \begin{equation} \label{eigen} \ddot{u} + \div(A\nabla u)=-\mu u,\end{equation}
    and  $u$ has double exponential decay: for any $T\gg 1,$
$$ \sup_{\T^2 \times \{t\geq T\}}|u(x,y,t)|\leq Ce^{-ce^{c T}}$$ 
for some  $c(\mu), C(\mu)>0$. The matrix $A$ belongs to the regularity class $R(100, 61).$ Moreover, on $\T^2 \times \left[0, \frac{1}{100}\right],$ the function $u$ and the matrix $A$ are equal to
\begin{align}\label{solutionuclosetozeroinmainhalfcylinder}
    u(x,y,t)= \cos(kx)\e^{-kt}, \hspace{0.5cm} A= Id + \begin{pmatrix}
        \frac{\mu}{k^2} & 0\\
        0 & \frac{\mu}{4k^2}
    \end{pmatrix}.
\end{align}
for some fixed $k\gg \sqrt{|\mu|}$. \comment{ and the matrix $A$ is be equal to
\begin{align}\label{matrixaclosetozeroinainhalfcylinder}
    A= Id + \begin{pmatrix}
        \frac{\mu}{k^2} & 0\\
        0 & \frac{\mu}{4k^2}
    \end{pmatrix}.
\end{align}}
\end{thm}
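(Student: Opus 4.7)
The plan is to build the solution in two stages, following the outline of Section \ref{informalexplanationfullproof}. First, I would set $\mu=0$ and construct an $A$-harmonic solution with double-exponential decay on $\T^2\times \R^+$ by chaining shifted copies of the building block from Section \ref{thebuildingblocknewlabel}. Then I would perturb the matrix $A$ by a diagonal correction to absorb the term $-\mu u$, exploiting the two-frequency structure of the solution inside each block.

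For the first stage, fix $k\gg \sqrt{|\mu|}$, set $k_n:=2^n k$, and apply (shifted, renormalized versions of) the building block to transform $c_n\cos(k_n x)\e^{-k_n t}$ into $c_{n+1}\cos(k_{n+1} y)\e^{-k_{n+1} t}$ on $\T^2\times[C(n-1),Cn]$, alternating the roles of $x$ and $y$ between successive blocks. Choosing the constants $c_n$ to match values at each interface forces $c_{n+1}/c_n=\e^{(k_n-k_{n+1})Cn}$, and iterating yields $\sup_{t=Cn}|u|\lesssim \e^{-c\cdot 2^n n}$, which is double-exponential in $t$ since $t\asymp n$ on the $n$-th block. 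The $C^2$-smoothness of $u$ as well as the $C^1$-smoothness and uniform ellipticity of $A$ across the interfaces $\{t=Cn\}$ will follow from the $C^2$/$C^1$ matching built into the block lemma.

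For the second stage, the key observation is that inside each block the solution has the form $u=f(t)\cos(k_n x)+g(t)\cos(k_{n+1}y)$ (up to swapping the roles of $x,y$). Since $\partial_x^2\cos(k_n x)=-k_n^2\cos(k_n x)$ and similarly for $y$, adding to $A$ the diagonal matrix $B_n:=\mathrm{diag}(\mu/k_n^2,\mu/k_{n+1}^2)$ turns the $A$-harmonic equation into $\ddot u+\div\bigl((A+B_n)\nabla u\bigr)=-\mu u$ exactly on that block. The issue is that the entries of $B_n$ would jump between blocks. To make the correction $C^1$ across interfaces, I use the second structural property of the building block: near each endpoint of a block, one of the two cosines vanishes identically, so on that one-frequency region the entry of $B$ attached to the inactive variable can be smoothly interpolated from $\mu/k_n^2$ to $\mu/k_{n+2}^2$ without affecting the equation. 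Because $|B_n|\lesssim 1/k^2$ uniformly in $n$, this perturbation is $C^1$-small and preserves uniform ellipticity and the $C^1$ bound up to a constant factor, so that after tuning the constants in the block lemma, $A+B\in R(100,61)$. The prescribed form on $\T^2\times[0,1/100]$ is obtained by arranging that the first block starts with $u=\cos(k x)\e^{-kt}$ on an initial sub-interval, where the $\mu=0$ matrix is $Id$ and thus the corrected matrix is exactly $Id+\mathrm{diag}(\mu/k^2,\mu/(4k^2))$, matching \eqref{solutionuclosetozeroinmainhalfcylinder}.

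The main obstacle is the $C^1$ gluing of the correction matrix $B$ across block boundaries: it forces the building block to include single-frequency transition regions near each endpoint. This constraint is precisely what motivates the decomposition of the block into the two sub-steps described in Section \ref{informalexplanationfullproof} (first a frequency switch at a slower decay rate, then an acceleration of decay) rather than a single direct transformation; it will also dictate the bookkeeping of the ellipticity and $C^1$ constants when verifying $A_\mu\in R(100,61)$.
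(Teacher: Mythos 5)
Your proposal follows essentially the same two-stage route as the paper's own proof: first the $A$-harmonic double-exponentially decaying solution obtained by chaining shifted building blocks with doubling frequencies (Theorem \ref{theoremaharmonicagainnew}), then the diagonal correction $B_n=\begin{pmatrix}\mu/k_n^2&0\\0&\mu/k_{n+1}^2\end{pmatrix}$ made $C^1$ across block interfaces by smoothly interpolating the entry attached to the inactive variable inside the single-frequency endpoint regions, with $|B_n|\lesssim \mu/k_1^2$ giving the ellipticity and $R(100,61)$ bookkeeping and the prescribed form of $u$ and $A$ on $\T^2\times[0,\tfrac{1}{100}]$. The only slips are minor bookkeeping: the interface matching is $c_{n+1}=c_n\,d_n\,\e^{(k_{n+1}-k_n)t_n}$ (the building block contributes an internal constant $d_n=\e^{-k_n/2+5k_{n+1}/6}$, and the exponent's sign is opposite to the one you wrote), which does not affect the structure or the double-exponential conclusion.
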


\begin{rem}
    Theorem \ref{halfcylinderallmunew} implies our first main result Theorem \ref{theoremforhalfcylinder} by taking $\mu=0.$ 
\end{rem}

\begin{rem}
Theorem \ref{halfcylinderallmunew} holds for all $\mu \in \R$ but in Theorem \ref{EigenTheorem}, we will restrict to $\mu>0$. Indeed, for $\mu\leq 0$, there does not exist non-trivial, uniformly $C^2$, solutions $u$ to 
\begin{align}\label{generalpdenew}
\ddot u + \div(A\nabla u) = -\mu u    
\end{align}
with $u, \nabla u$ decaying to zero (fast enough) in both directions in $\T^2 \times \R$ and with $A$ uniformly $C^1$. To see that, one can multiply the equation \eqref{generalpdenew} by $u$ and integrate over $\T^2 \times \R$ to get $$\int_{\T^2\times \R} u \, \div(\mathcal{A}\nabla u)\, dx\,dy\,dt = -\mu \int_{\T^2 \times \R} |u|^2 \,dx\,dy\,dt$$ where the definition of $\mathcal{A}$ was given in \eqref{threedmatrixaugmented}. In particular, $\mathcal{A}$ is a $3\times 3$ matrix, which has $A$ as its $2\times 2$ principal minor and the last line and column are of the form $(0, 0, 1)$. We can then integrate by part the first term (the boundary terms vanish), to get $$\mu = \frac{\int_{\T^2\times \R} (\mathcal{A}\nabla u, \nabla u) }{\int_{\T^2\times \R} |u|^2} \geq \frac{\int_{\T^2\times \R} |\nabla u|^2 }{\Lambda \int_{\T^2\times \R} |u|^2} $$ by ellipticity. This last quantity is strictly positive since otherwise, $u$ would be a constant function decaying to zero at infinity i.e., the trivial function. 
\end{rem}

As we mentioned earlier, a natural idea to extend the construction from a half-cylinder to a full-cylinder is to glue at $t=0$, the solution $u$ to $\ddot{u}+ \div(A \nabla u) = -\mu u$, $\mu>0$, in $\T^2 \times \R^+$, with its reflection $\tilde u:= u(x,y,-t)$, solution to $\ddot{\tilde u} +\div(\tilde A \nabla \tilde u) = -\mu \tilde u$, $\mu>0$ in $\T^2 \times \R^-$. For this approach to yield a $C^2$ function around $t=0$, it is necessary that $\dot{u}(t=0)=0.$ However, the function that we construct in Theorem \ref{halfcylinderallmunew} will be of the form $u=\cos(k x) \e^{-kt }$, for some fixed $k\gg \sqrt{\mu}$ and  for $t$ close enough to 0. This function does not satisfy $\dot{u}(t=0)=0.$ The way out is provided by Lemma \ref{altabstractlemmaevaluenewnew} below.

\begin{lemma}[Symmetrization]\label{altabstractlemmaevaluenewnew}
Let $\mu>0$ and let $k \gg \sqrt{\mu}$. There exists a $C^2$ function $f(t):\R \rightarrow \R$ such that $\dot{f}(0)=0$ and there exist $C=C(\mu, k)>0$ and a time $t_0=t_0(\mu, k)>0$ satisfying $t_0-C>0,$ such that the function $u_1:=\cos(kx)f(t)$  solves  $$
\ddot u_1 +\div\left[ \begin{pmatrix}
    \frac{\mu}{2k^2} & 0\\
    0 & 1+\frac{\mu}{4k^2}
\end{pmatrix}\nabla u_1\right]=-\mu u_1 $$ for $t \leq t_0-C$
and also for the solution $u_2:=\cos(kx)e^{-k(t-t_0)}$ to the equation
$$
 \ddot u_2 +\div\left[ \begin{pmatrix}
    1+\frac{\mu}{k^2} & 0\\
    0 & 1+\frac{\mu}{4k^2}
\end{pmatrix}\nabla u_2\right]=-\mu u_2. 
$$
there is a transformation of $u_1$ into $u_2$ within the set $\T^2 \times [t_0-C, t_0]$, which solves  $\ddot u + \div(A\nabla u)=-\mu u$, where $A$ is in the regularity class $R(\frac{5k^2}{\mu}, 10).$ The duration $C$ of the transformation satisfies $C \leq \frac{2}{5}.$
\end{lemma}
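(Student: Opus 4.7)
Both boundary functions have the product form $\cos(kx)$ times a function of $t$, and both boundary matrices are diagonal. I would preserve this structure throughout the transition by taking $u(x,y,t) = \cos(kx)\,h(t)$ and $A(t) = \operatorname{diag}(a(t),\,1 + \mu/(4k^2))$. The PDE $\ddot u + \div(A\nabla u) = -\mu u$ then collapses to the scalar ODE $\ddot h = (a(t)k^2 - \mu)\,h$. A direct substitution shows that $u_1 = \cos(kx)\,f(t)$ solves the first equation if and only if $\ddot f = -\tfrac{\mu}{2}f$, so I take $f(t) = \cos(\sqrt{\mu/2}\,t)$, which automatically satisfies $\dot f(0) = 0$. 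Picking $t_0$ so that $\sqrt{\mu/2}\,(t_0-C) = 2\pi n$ for a large integer $n$ (with $t_0-C > 0$) places a maximum of $f$ at the left endpoint of the transition, yielding the matching data $(h,\dot h,\ddot h)(t_0-C) = (1,0,-\mu/2)$; at the right endpoint, $u_2$ requires $(h,\dot h,\ddot h)(t_0) = (1,-k,k^2)$.

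\textbf{Explicit construction on the transition interval.} The substitution $h(t) = e^{-k\sigma(t)}$ is convenient: the ODE yields the closed form
\[
a(t) \;=\; \dot\sigma(t)^2 \;-\; \tfrac{1}{k}\ddot\sigma(t) \;+\; \tfrac{\mu}{k^2},
\]
and the requirements that $u$ be $C^2$ and $A$ be $C^1$ across the endpoints translate into eight prescribed values, namely $(\sigma,\dot\sigma,\ddot\sigma,\dddot\sigma)(t_0-C) = (0,0,\mu/(2k),0)$ and $(\sigma,\dot\sigma,\ddot\sigma,\dddot\sigma)(t_0) = (0,1,0,0)$. I would realize $\sigma$ as a degree-$7$ Hermite polynomial in the rescaled variable $\eta = (t-(t_0-C))/C \in [0,1]$ and recover $a$ from the displayed formula. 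Positivity of $h = e^{-k\sigma}$ is automatic, so $a$ is well-defined and $C^1$ on the closed interval; the $(2,2)$-entry of the matrix is irrelevant (since $u$ is independent of $y$) and is kept constant throughout to match both ends.

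\textbf{Verifying the regularity class---the main obstacle.} The $C^1$-bound $|\dot a| \leq 10$ follows from the identity $\dot a = 2\dot\sigma\ddot\sigma - \dddot\sigma/k$ together with the polynomial estimates $\dot\sigma = O(1)$, $\ddot\sigma = O(1/C)$, $\dddot\sigma = O(1/C^2)$: these give $|\dot a| \lesssim 1/C + 1/(kC^2)$, which is at most $10$ when $C$ is chosen in $[1/5,\,2/5]$ and $k \gg 1$. The harder requirement is uniform ellipticity $a(t) \geq \mu/(5k^2)$; the dangerous regime is near interior zeros of $\dot\sigma$, where the dominant $\dot\sigma^2$ term vanishes and only $-\ddot\sigma/k + \mu/k^2$ remains. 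I expect this to be the main technical step: one must either arrange the Hermite interpolant so that $\ddot\sigma \leq 0$ at every interior critical point of $\sigma$, or introduce an extra free parameter (e.g.\ by raising the degree or by letting $a$ itself be the primary free function) to keep $a$ uniformly bounded below by $\mu/(5k^2)$, using in a crucial way the hypothesis $k \gg \sqrt{\mu}$. Once both bounds are secured, gluing $u_1$ on $\{t \leq t_0-C\}$, $\cos(kx)\,h(t)$ on $[t_0-C,\,t_0]$, and $u_2$ on $\{t \geq t_0\}$ gives the desired transformation.
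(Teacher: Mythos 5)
Your solution-first strategy (choose the transition profile $h=e^{-k\sigma}$, then read off $a(t)$ from the ODE) is genuinely different from the paper's, but the step you flag as ``the main technical obstacle'' is not a technicality to be patched: with the normalization you impose it is provably impossible for small $\mu$. By taking $f(t)=\cos(\sqrt{\mu/2}\,t)$ of unit amplitude and matching at one of its maxima, you force $h(t_0-C)=h(t_0)=1$, i.e.\ $\sigma(t_0-C)=\sigma(t_0)=0$, together with $\dot\sigma(t_0-C)=0$ and $\dot\sigma(t_0)=1$. Let $\xi^*$ be the last zero of $\dot\sigma$ in $[t_0-C,t_0)$ (it exists since $\dot\sigma(t_0-C)=0$) and put $v=\dot\sigma$ on $[\xi^*,t_0]$, so $v(\xi^*)=0$, $v(t_0)=1$, $v\ge 0$ there. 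Mere nonnegativity of $a$ (let alone $a\ge \mu/(5k^2)$) reads $\ddot\sigma\le k\dot\sigma^2+\mu/k$, i.e.\ $\dot v\le kv^2+\mu/k$, and comparison with the Riccati equation $\dot w=kw^2+\mu/k$, $w(\xi^*)=0$, gives $t_0-\xi^*\ge\int_0^1\frac{dv}{kv^2+\mu/k}=\frac{1}{\sqrt\mu}\arctan\frac{k}{\sqrt\mu}$, which is essentially $\frac{\pi}{2\sqrt\mu}$ since $k\gg\sqrt\mu$. Hence $C\gtrsim \mu^{-1/2}$, incompatible with $C\le\frac25$ for all but fairly large $\mu$, while the lemma must hold for every $\mu>0$. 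No Hermite interpolant (nor any other choice of $\sigma$, nor letting $a$ be the free function) can avoid this, since the argument uses only your boundary data and $a\ge 0$; also your suggested remedy ``arrange $\ddot\sigma\le 0$ at interior critical points'' is unavailable, because $\sigma$ vanishes at both ends and is negative just left of $t_0$, so it has an interior minimum where $\ddot\sigma\ge 0$.

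The missing idea is exactly the paper's: do not prescribe the amplitude and phase of the oscillatory piece on the left (equivalently, do not force zero net decay of $h$ across the window). The paper makes the coefficient the primary object: $a(t)$ is an explicit monotone interpolation from $\frac{\mu}{2k^2}$ to $1+\frac{\mu}{k^2}$ with $|\dot a|\le 10$, so ellipticity is automatic ($a\in[\frac{\mu}{2k^2},1+\frac{\mu}{k^2}]$, giving the class $R(\frac{5k^2}{\mu},10)$) and $C=t_2-t_1=\frac{\sqrt\pi}{10}(1+\frac{\mu}{2k^2})\le\frac25$; it then solves $\ddot g=(k^2a(t)-\mu)g$ backwards from the exponential data $g(t_2)=1$, $\dot g(t_2)=-k$, and accepts whatever $\alpha\cos(\sqrt{\mu/2}\,t)+\beta\sin(\sqrt{\mu/2}\,t)$ emerges for $t\le t_1$ (its amplitude is in general very large, reflecting the decay the solution must accumulate). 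The requirement $\dot f(0)=0$ costs nothing: one translates the whole construction in $t$ so that a critical point of the oscillatory part sits at $t=0$ — the lemma only asks for some $f$, not a unit-amplitude one. If you wish to keep your formulation, you must at least leave $\sigma(t_0-C)$ as a free ($k$-dependent, typically large) parameter instead of pinning it to $0$.
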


\textbf{Heuristic idea of the reduction of Theorem \ref{EigenTheorem} to Theorem \ref{halfcylinderallmunew} and to Lemma \ref{altabstractlemmaevaluenewnew}}

Lemma \ref{altabstractlemmaevaluenewnew} tells us that we can transform  a shifted version (on $\T^2 \times [t_0, \infty)$)  of the eigenfunction constructed in Theorem \ref{halfcylinderallmunew}  into a new eigenfunction (on $\T^2 \times [0, t_0-C])$,  such that the latter has a vanishing $t$ derivative at $t=0$. This yields a new eigenfunction $V$ on $\T^2 \times \R^+$ which has a vanishing $t$ derivative at $t=0$  and a double exponential decay as $t \rightarrow \infty$.

Theorem \ref{EigenTheorem} now follows by gluing at $t=0$ the eigenfunction $V$ obtained in this way, with its reflection $\tilde V:= V(x,y,-t)$ defined for $t\leq 0$. The function $U$ obtained with this gluing will be an eigenfunction on $\T^2\times \R$ with double exponential decay in both directions. 
\\

We postpone the proofs of Theorem \ref{halfcylinderallmunew} and of Lemma \ref{altabstractlemmaevaluenewnew} and we proceed to the reduction of  Theorem \ref{EigenTheorem} to Theorem \ref{halfcylinderallmunew} to Lemma \ref{altabstractlemmaevaluenewnew}.
\begin{proof}[Proof of the reduction of Theorem \ref{EigenTheorem} to Theorem \ref{halfcylinderallmunew} and to Lemma \ref{altabstractlemmaevaluenewnew}] 

Let $\mu>0.$
    By Theorem \ref{halfcylinderallmunew}, we can construct a solution $u_1$ and a matrix $A_1$ such that 
    \begin{align}\label{fullcylinderfirstintroductionvandcnew}
     \ddot{u}_1 + \div(A_1\nabla u_1) = -\mu u_1  \hspace{0.3cm} \mbox{ in } \hspace{0.3cm} \T^2\times \R^+.
    \end{align}

Define the function $\tilde u_1$ and the matrix $\tilde A_1$ which are shifted version of $u_1$ and $A_1$:
\begin{align}\label{defuforshiftedfulcylindernew}
    \tilde u_1(x,t):= u_1(x, t-t_0)  \hspace{0.3cm} \mbox{ and } \hspace{0.3cm} \tilde A_1(x,y,t):= A_1(x,y,t-t_0) \hspace{0.3cm} \mbox{ for } \hspace{0.3cm} t\geq t_0.
\end{align}
where $t_0$  is given by Lemma \ref{altabstractlemmaevaluenewnew}. 
\comment{
Define also the matrix $\tilde A_1$ which is a shifted version of the matrix $A_1$ given in \eqref{fullcylinderfirstintroductionvandcnew}:
\begin{align}\label{defBnew}
    \tilde A_1(x,y,t):= A_1(x,y,t-t_0), \hspace{0.5cm} t\geq t_0.
\end{align}
}
In particular, by Theorem \ref{halfcylinderallmunew}, in the interval $[t_0, t_0+\frac{1}{100}],$ the matrix $\tilde A_1$ and the function $\tilde u_1$ are given by 
\begin{align}\label{defuandaclosetozeroforfullcylindernew}
 \tilde u_1(x,t)= \cos(kx) \e^{-(t-t_0) k}, \hspace{0.5cm} \tilde A_1=Id + \begin{pmatrix}
    \frac{\mu}{k^2} & 0 \\
    0 & \frac{\mu}{4k^2}
\end{pmatrix} 
\end{align}
for some fixed $k \gg \sqrt{\mu}.$

Let $C=C(\mu, k)$ be given by Lemma \ref{altabstractlemmaevaluenewnew}.  Consider also the function $u_0 :=\cos(kx) f(t)$ where $f$ is given by Lemma \ref{altabstractlemmaevaluenewnew}. 
Define the matrix 
\begin{align}\label{azeromatrixbottom}
    A_0:= \begin{pmatrix}
    \frac{\mu}{2k^2} & 0 \\
    0 & 1+\frac{\mu}{4k^2}
\end{pmatrix}
\end{align}

By Lemma \ref{altabstractlemmaevaluenewnew}, $u_0$ solves
$$
\ddot u_0 + \div(A_0\nabla u_0)=-\mu u_0 \hspace{0.3cm} \mbox{ for } \hspace{0.3cm} t \leq t_0-C.
$$

By \eqref{defuandaclosetozeroforfullcylindernew} and by Lemma \ref{altabstractlemmaevaluenewnew}, we can transform the solution $u_0$ into the solution $\tilde u_1$, via a function $v$ and a matrix $B$ such that $\ddot v + \div(B \nabla v)=-\mu v$, within the set $\T^2 \times [t_0-C, t_0].$ In particular, the function $v$ is $C^2$, the matrix $B$ is $C^1$ and they satisfy
\begin{align}\label{defvnew}
    v=\begin{cases}
      \cos(kx) f(t) & \mbox{ for } 0 \leq t \leq t_0-C, \\
      \tilde u_1 & \mbox{ for } t \geq t_0,
    \end{cases} \hspace{0.5cm} \mbox{ and } \hspace{0.5cm} B= \begin{cases}
        A_0 & \mbox{ for } 0 \leq t \leq t_0-C, \\
        \tilde A_1 & \mbox{ for } t \geq t_0.
    \end{cases}
\end{align}
The function $\tilde u_1$ and the matrix $\tilde A_1$ were defined in \eqref{defuforshiftedfulcylindernew}, and the matrix  $A_0$ was defined in \eqref{azeromatrixbottom}. \comment{ and the matrix $B$ is $C^1$ and satisfies
\begin{align}\label{defbnewnewnew}
    B= \begin{cases}
        A_0 & \mbox{ for } 0 \leq t \leq t_0-C, \\
        \tilde A_1 & \mbox{ for } t \geq t_0.
    \end{cases}
\end{align}}
By Theorem \ref{halfcylinderallmunew}, the matrix $\tilde A_1$ belongs to the regularity class $R(100, 61)$, by Lemma \ref{altabstractlemmaevaluenewnew} the matrix $B$ belongs to the regularity class $R(\frac{5k^2}{\mu}, 10)$ when $t_0-C\leq t \leq t_0$ and by the definition of $A_0$ \eqref{azeromatrixbottom}, it is clear that $A_0 \in R(\frac{5k^2}{\mu}, 1)$. Hence, we have that $B \in R(\frac{5k^2}{\mu}, 61)$ (since $k \gg \sqrt{\mu}$), i.e., $B$ is uniformly elliptic and uniformly $C^1$ on $\T^2 \times \R^+ $ (recall that $k, \mu$ are fixed numbers).  Note also that $v$ is uniformly $C^2$ by Lemma \ref{altabstractlemmaevaluenewnew} and by Theorem \ref{halfcylinderallmunew}

We now define a function $U$ and a matrix $A$ by 
\begin{align}\label{defUandAfinalfinalfinal}
  U(x,y,t)=  \left\{
\begin{array}{ll}
v(x,y,t) & \mbox{if } t \geq 0, \\
v(x,y,-t) & \mbox{if } t \leq 0,
\end{array}
\right. \hspace{0.3cm} \mbox{ and } \hspace{0.3cm} A(x,y,t)=  \left\{
\begin{array}{ll}
 B(x,y,t) & \mbox{if } t \geq 0, \\
 B(x,y,-t) & \mbox{if } t \leq 0.
\end{array}
\right.
\end{align}

By the definition of $v$ \eqref{defvnew} and by Lemma \ref{altabstractlemmaevaluenewnew}, $\dot{v}(t=0)=0$ and therefore, the function $U$ is $C^2$ at $t=0$. Moreover, by the definition of $v$ \eqref{defvnew}, and by Theorem \ref{halfcylinderallmunew}, $U$ has double exponential decay in both directions (since far away from 0, $U=\tilde u_1$ and $\tilde u_1$, defined in \eqref{defuforshiftedfulcylindernew}, has double exponential decay by Theorem \ref{halfcylinderallmunew}).  By the definition of $ B$ \eqref{defvnew}, for $t \in [0, t_0-C],$ $B$ has constant coefficients, therefore, the matrix $A$ defined in \eqref{defUandAfinalfinalfinal} is $C^1$ at $t=0.$

\comment{
We define a new matrix $A$ by 
\begin{align}\label{defmatrixaaroundzerosymmetryenew}
  A(x,y,t)=  \left\{
\begin{array}{ll}
 B(x,y,t) & \mbox{if } t \geq 0, \\
 B(x,y,-t) & \mbox{if } t \leq 0,
\end{array}
\right.
\end{align}
}

To conclude, assuming Theorem \ref{halfcylinderallmunew} and Lemma \ref{altabstractlemmaevaluenewnew}, we constructed a uniformly elliptic and uniformly $C^1$-smooth matrix $A$ defined on $\T^2 \times \R$ and a uniformly $C^2$ solution $U$ to the eigenvalue equation $\ddot{U} + \div(A\nabla U)=-\mu U$, such that $U$ decays double exponentially in both directions. This finishes the proof of Theorem \ref{EigenTheorem} assuming Theorem \ref{halfcylinderallmunew} and Lemma \ref{altabstractlemmaevaluenewnew}.
\end{proof}

\subsection{The symmetrization Lemma}
In this section, we prove the symmetrization Lemma that allowed us to extend the construction of a super-exponentially decaying eigenfunction on a half-cylinder $\T^2 \times \R^+$ to a super-exponentially decaying eigenfunction on the full-cylinder $\T^2 \times \R$. We recall the statement:

\begin{lemma*}[\ref{altabstractlemmaevaluenewnew}, Symmetrization]
Let $\mu>0$ and let $k \gg \sqrt{\mu}$. There exists a $C^2$ function $f(t):\R \rightarrow \R$ such that $\dot{f}(0)=0$ and there exist $C=C(\mu, k)>0$ and a time $t_0=t_0(\mu, k)>0$ satisfying $t_0-C>0,$ such that the function $u_1:=\cos(kx)f(t)$  solves  \begin{align}\label{problemuonenew}
\ddot u_1 +\div\left[ \begin{pmatrix}
    \frac{\mu}{2k^2} & 0\\
    0 & 1+\frac{\mu}{4k^2}
\end{pmatrix}\nabla u_1\right]=-\mu u_1    
\end{align}
 for $t \leq t_0-C.$

Consider also the function $u_2:=\cos(kx)e^{-k(t-t_0)}$ which is  a solution to the equation
\begin{align}\label{problemutwonew}
 \ddot u_2 +\div\left[ \begin{pmatrix}
    1+\frac{\mu}{k^2} & 0\\
    0 & 1+\frac{\mu}{4k^2}
\end{pmatrix}\nabla u_2\right]=-\mu u_2.    
\end{align}
 
Then, we can transform $u_1$ into $u_2$ within the set $\T^2 \times [t_0-C, t_0]$ while solving $\ddot u + \div(A\nabla u)=-\mu u$ and where $A$ is in the regularity class $R(\frac{5k^2}{\mu}, 10).$ The duration $C$ of the transformation satisfies $C \leq \frac{2}{5}.$
\end{lemma*}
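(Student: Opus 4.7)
I would first determine the form of $f$ and set up the transformation ansatz. Substituting $u_1 = \cos(kx) f(t)$ into \eqref{problemuonenew} and noting that $u_1$ is independent of $y$, so that only the $xx$-entry $\mu/(2k^2)$ of the matrix contributes, the PDE reduces to the scalar ODE $\ddot f = -(\mu/2)\,f$. The boundary condition $\dot f(0) = 0$ then forces $f(t) = A\cos\bigl(\sqrt{\mu/2}\,t\bigr)$ for some positive amplitude $A$ to be fixed later. For the transformation on $\T^2 \times [t_0-C,\,t_0]$, I take the ansatz
\[
u(x,y,t) = \cos(kx)\,g(t), \qquad A(x,y,t) = \begin{pmatrix} a(t) & 0 \\ 0 & 1 + \mu/(4k^2) \end{pmatrix},
\]
where $a(t)$ is a smooth scalar profile interpolating monotonically from $\mu/(2k^2)$ at $t = t_0-C$ to $1 + \mu/k^2$ at $t = t_0$, with $\dot a$ vanishing at both endpoints. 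Since $u$ is independent of $y$, the $yy$-entry plays no role and the PDE $\ddot u + \div(A\nabla u) = -\mu u$ becomes the linear scalar ODE $\ddot g = (k^2 a(t) - \mu)\,g$.

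Next, I define $g$ as the backward solution of this ODE on $[t_0-C,\,t_0]$ with terminal data $g(t_0) = 1$, $\dot g(t_0) = -k$. Because $k^2 a(t_0) - \mu = k^2$, the ODE automatically gives $\ddot g(t_0) = k^2$, which matches $\ddot{u_2}(t_0)/\cos(kx)$; combined with $\dot a(t_0) = 0$, this guarantees $C^2$-gluing of $u$ to $u_2$ and $C^1$-gluing of $A$ at the right endpoint. To match at $t = t_0-C$, set $\alpha := g(t_0-C)$ and $\beta := \dot g(t_0-C)$; since $g$ is a non-trivial solution of a linear second-order ODE, $(\alpha,\beta) \neq (0,0)$. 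The matching conditions $f(t_0-C) = \alpha$ and $\dot f(t_0-C) = \beta$ read
\[
A\cos\bigl(\sqrt{\mu/2}(t_0-C)\bigr) = \alpha, \qquad -A\sqrt{\mu/2}\,\sin\bigl(\sqrt{\mu/2}(t_0-C)\bigr) = \beta,
\]
which I solve by first choosing $t_0-C > 0$ to be one of the infinitely many positive values satisfying $\tan\bigl(\sqrt{\mu/2}(t_0-C)\bigr) = -\sqrt{2/\mu}\,\beta/\alpha$ (with the obvious adjustment when $\alpha = 0$), and then determining $A$ from either equation. Since $k^2 a(t_0-C) - \mu = -\mu/2$, the ODE then also forces $\ddot g(t_0-C) = -(\mu/2)\alpha = \ddot f(t_0-C)$, ensuring $u$ is $C^2$ across the left endpoint too, and $\dot a(t_0-C) = 0$ makes $A$ a $C^1$ match there.

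Finally, I verify the regularity class and the bound $C \leq 2/5$. Because $k \gg \sqrt{\mu}$, the profile satisfies $a(t) \geq \mu/(2k^2)$ throughout, while the $yy$-entry $1 + \mu/(4k^2)$ is of order $1$; the ellipticity constant of $A$ is therefore at most $5k^2/\mu$. The only non-trivial derivative to control is $\dot a$, and since $a$ traverses a distance of order $1$ one has $|\dot a| \sim 1/C$, so taking the transformation duration $C$ of order $1/5$ (say $C = 1/5$) simultaneously achieves $|\dot a| \leq 10$ and $C \leq 2/5$. I expect the main technical care to be in reconciling all the constraints at once — ellipticity, the $C^1$-bound on $A$ uniform in $k$ and $\mu$, endpoint matching of both values \emph{and} derivatives on the two sides, and the bound $C \leq 2/5$ — but the large gap between $\mu/(2k^2) \approx 0$ and $1 + \mu/k^2 \approx 1$ leaves ample room to design the profile $a$, while the freedom to pick $t_0 - C$ arbitrarily large along an arithmetic progression guarantees positivity.
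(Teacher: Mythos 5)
Your proposal is correct and follows essentially the same route as the paper: reduce the transformation to the scalar ODE $\ddot g = (k^2 a(t)-\mu)g$ with a monotone diagonal profile $a$ interpolating $\mu/(2k^2)$ and $1+\mu/k^2$, solve backward from the terminal data $(g,\dot g)(t_0)=(1,-k)$, and exploit that in the left constant-coefficient region the solution is a $\sqrt{\mu/2}$-sinusoid whose critical points can be placed at $t=0$ with $t_0-C>0$, with the same ellipticity bound $5k^2/\mu$ and the same $|\dot a|\le 10$, $C\le 2/5$ bookkeeping. The only cosmetic difference is that the paper constructs $g$ on all of $\R$ and time-shifts it so a critical point lands at $0$, whereas you fix the interval location via the tangent matching condition — these are the same argument up to relabeling.
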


\begin{proof}[Proof of Lemma \ref{altabstractlemmaevaluenewnew}]
Let $0<t_1< t_2$ and define $a(t)$ to be a smooth increasing function such that 
\begin{align}\label{defaoftbeginningnew}
   a(t)=\frac{\mu}{2k^2} \hspace{0.3cm} \mbox{ for } t \leq t_1, \hspace{0.5cm} a(t) = 1+\frac{\mu}{k^2} \hspace{0.3cm} \mbox{ for } t \geq t_2.
\end{align}

An explicit choice of $a(t)$ is given in Appendix \ref{appendixdurationsymmetry}, where we also explain how to choose $t_2-t_1$  such that $\dot{a}(t) \leq 10.$ Consider the unique solution $g(t)$ defined on $\R$ of the equation 
\begin{align}\label{secondorderodegnew}
    \ddot{g}(t) = \left( k^2 a(t) -\mu\right) g(t)
\end{align}
with initial conditions $g(t_2)=1$ and $\dot{g}(t_2)=-k.$ In particular, this solution $g$ is such that $g(t)=\e^{-(t-t_2)k}$ for $t \geq t_2$ and is such that $g(t)=\alpha \cos\left( \sqrt{\frac{\mu}{2}}t\right) + \beta \sin\left( \sqrt{\frac{\mu}{2}}t\right)  $ for $t \leq t_1$ and for some $\alpha, \beta \in \R.$ We note that the function $\alpha \cos\left( \sqrt{\frac{\mu}{2}}t\right) + \beta \sin\left( \sqrt{\frac{\mu}{2}}t\right)$ has its derivative equal to zero at points $\sqrt{\frac{2}{\mu}} \left(n \pi + \arctan\left( \frac{\beta}{\alpha}\right)\right)$, $n \in \mathbb{Z}$. By choosing $n=-1$, we can ensure that 
\begin{align}\label{positivetime}
\sqrt{\frac{2}{\mu}} \left(-\pi + \arctan\left( \frac{\beta}{\alpha}\right)\right)\leq 0 < t_1.    
\end{align}

We now define $f:\R \rightarrow \R$ which is a shifted version of $g$:
\begin{align}\label{deffunctionfnew}
    f(t):=g\left(t+\sqrt{\frac{2}{\mu}} \left(-\pi + \arctan\left( \frac{\beta}{\alpha}\right)\right)\right).
\end{align}

We note that $f(t)$ will be a linear combination of $\cos$ and $\sin$ for $t \leq t_1 - \sqrt{\frac{2}{\mu}} \left(-\pi + \arctan\left( \frac{\beta}{\alpha}\right)\right)$ which is a strictly positive time by \eqref{positivetime} and that $\dot{f}(0)=0$. We also note that  
\begin{align}\label{deftonew}
    f(t) = \e^{-(t-t_0)k} \hspace{0.5cm} \mbox{ for } t \geq t_0:= t_2-\sqrt{\frac{2}{\mu}} \left(-\pi + \arctan\left( \frac{\beta}{\alpha}\right)\right)>0.
\end{align}

Denote by $\tilde a$ the shifted version of $a$:
\begin{align}\label{tildeatnew}
    \tilde a(t) := a\left(t+\sqrt{\frac{2}{\mu}} \left(-\pi + \arctan\left( \frac{\beta}{\alpha}\right)\right)\right).
\end{align}

Since $g$ solves the second order ode \eqref{secondorderodegnew}, it is clear that $f$ solves the same shifted ode: $\ddot f(t) = (k^2 \tilde a(t) -\mu) f(t)$. By using that $f$ solves this second order ode, it follows by a direct computation that $u(x,t):=f(t) \cos(k x)$ solves the divergence form equation $\ddot{u} + \div\left[\begin{pmatrix}\tilde a(t) &0\\0&1+\frac{\mu}{4k^2}\end{pmatrix}\nabla u\right] = -\mu u$ on $\T^2 \times \R$. 
Indeed,
\begin{align}\label{eqtforunew}
    \ddot u + \div\left[\begin{pmatrix}\tilde a(t)&0\\0&1+\frac{\mu}{4k^2}\end{pmatrix}\nabla u\right]& = \ddot f(t) \cos(kx) - k^2 \tilde a(t) f(t) \cos(kx) \\
    &= k^2 \tilde a(t) f(t) \cos(kx) - \mu f(t) \cos(kx) - k^2 \tilde a(t) f(t) \cos(kx) \nonumber \\
    &=-\mu u. \nonumber
\end{align}

\textbf{Conclusion:} In \eqref{deffunctionfnew}, we constructed a uniformly $C^2$ function $f:\R \rightarrow \R$ such that $\dot f(0)=0.$ We also showed that there exists a time $t_0>0$ (see \eqref{deftonew}) and a number $C:=t_2-t_1>0$ such that $t_0-C>0$ (by \eqref{positivetime}) and such that the function $u(x,t):=\cos(kx)f(t)$ solves 
$$
\ddot u + \div\left[ \begin{pmatrix}
    \frac{\mu}{2k^2} & 0\\
    0 & 1+\frac{\mu}{4k^2}
\end{pmatrix}\nabla u\right]=-\mu u $$ for $t \leq t_0-C.$ Indeed, $u$ solves
$$
\ddot u + \div\left[ \begin{pmatrix}
    \tilde a(t) & 0\\
    0 & 1+\frac{\mu}{4k^2}
\end{pmatrix}\nabla u\right]=-\mu u $$ for all $t \in \R$ by \eqref{eqtforunew} and $\tilde a(t)\equiv \frac{\mu}{2k^2}$ for $t \leq t_0-C$ by the definition of $\tilde a$ (see \eqref{tildeatnew}), by the definition of $a$ (see \eqref{defaoftbeginningnew}) and of $t_0$ and $C$. Moreover, by the definition of $\tilde a$ (see \eqref{tildeatnew}), $\tilde a \equiv 1+\frac{\mu}{k^2}$ for $t \geq t_0$ and $u(x,t)=\cos(kx) \e^{-k(t-t_0)}$ for $t \geq t_0$ (by \eqref{deftonew}). Hence, as claimed in the symmetrization Lemma, we have been able to transform the solution and the matrix given in \eqref{problemuonenew} into the solution and the matrix given in \eqref{problemutwonew} within the set $\T^2 \times [t_0-C, t_0]$. The transformation matrix is given by 
\begin{align}\label{transfmatrixwithatilde}
\begin{pmatrix}
    \tilde a(t) & 0 \\
    0 & 1 + \frac{\mu}{4k2}
\end{pmatrix}.    
\end{align}

Since $\frac{\mu}{2k^2} \leq \tilde a(t) \leq 1+\frac{\mu}{k^2},$ (since the same is true for $a(t)$ by its definition \eqref{defaoftbeginningnew}), it is clear that the transformation matrix \eqref{transfmatrixwithatilde} has a uniform ellipticity constant of $\frac{5k^2}{\mu}.$ Finally, as mentioned in the beginning, we show in Appendix \ref{appendixdurationsymmetry} that $t_2-t_1 \leq \frac{2}{5}$. This bound is obtained by asking that $\dot{a}(t) \leq 10$ which is also proved in the Appendix \ref{appendixdurationsymmetry}. This finishes the proof of the symmetrization Lemma.

\comment{
The construction above shows that we can transform $f(t)\cos(k x)$ into $ \e^{-(t-t_0)k} \cos(k x)$ in time $t_2-t_1.$  As mentioned in the beginning, we show in Appendix \ref{appendixdurationsymmetry} that $t_2-t_1 \leq \frac{2}{5}$. This bound is obtained by asking that $\dot{a}(t) \leq 10$ which is also proved in the Appendix \ref{appendixdurationsymmetry}. Finally, since $\frac{\mu}{2k^2} \leq a(t) \leq 1+\frac{\mu}{k^2},$ it is clear that the matrix $\begin{pmatrix}
    a(t) & 0 \\
    0 & 1+\frac{\mu}{4k^2}
\end{pmatrix}$
has a uniform ellipticity constant of $\frac{5k^2}{\mu}.$ This finishes the proof of Lemma \ref{altabstractlemmaevaluenewnewrecall}.}

\end{proof}

To finish the proof of Theorem \ref{EigenTheorem}, it remains to prove Theorem \ref{halfcylinderallmunew}. This is done in the next three sections (section \ref{sect4}, section \ref{thebuildingblocknewlabel} and section \ref{proofofpropslabel}).

\comment{
So by the definition of $\tilde u$ and $\tilde A$ in \eqref{deftildeutildeaforshitedproof}, for $t \in [t_1, t_1+\frac{1}{100}],$
$$
\tilde u(x,y,t) = \cos(kx) e^{-k(t-t_1)}, \hspace{0.5cm} \tilde A(x,y,t) = Id.
$$

So, by \eqref{defUshifted}, 
$$
U(x,y,t) = c_1 \cos(kx) \e^{-kt}, \hspace{0.5cm} \tilde A(x,y,t) = Id
$$
as was claimed in \eqref{formuclosebegtonerec} in Lemma \ref{lemmaniceconnectingtonerecall}. 
To prove \eqref{formucloseendttworec}, we argue similarly and we use the relation \eqref{defconstantzeroc} $c_1 c \e^{-kt_1} = c_2 \e^{-k't_1}$.

   This finishes the proof of Lemma \ref{lemmaniceconnectingtonerecall}.
}

\comment{
\begin{prop}\label{slownewglue}
    With the assumption of Proposition \ref{slownew}, consider the solution $u$ and the matrix $A$ that are constructed in Proposition \ref{slownew} of the equation $\ddot u + \div(A\nabla u)=0$ on $\T^2 \times [t_1, t_1+C]$. They have the following properties:
    \begin{itemize}
        \item on $[t_1, t_1+C],$ the function $u$ is of the form 
        $$
        c_1f(t) \cos(kx) \e^{-k\sqrt{a}t} + c_2 g(t) \cos(k'y) \e^{-k'\sqrt{b}t} 
        $$
        where $f, g$ are smooth functions.
        \item The functions $f, g$ satisfy:
        $$
\left\{
\begin{array}{rl}
\lim_{t \rightarrow t_1} f(t) = 1 & \mbox{ and  } \lim_{t \rightarrow t_1} g(t)=0 \\
\lim_{t \rightarrow t_1} f^{(k)}(t) = 0 & \mbox{ and  } \lim_{t \rightarrow t_1} g^{(k)}(t)=0 \end{array}
\right.
        $$
        for $k \in \{1, 2\}.$
They also satisfy:
                $$
\left\{
\begin{array}{rl}
\lim_{t \rightarrow t_1+C} f(t) = 0 & \mbox{ and  } \lim_{t \rightarrow t_1+C} g(t)=1 \\
\lim_{t \rightarrow t_1+C} f^{(k)}(t) = 0 & \mbox{ and  } \lim_{t \rightarrow t_1+C} g^{(k)}(t)=0 \end{array}
\right.
        $$
        for $k \in \{1,2\}$.
        
Finally, for all $t \in [0, \frac{1}{k^{4/3}}],$ the matrix $A$ is of the form $A = \begin{pmatrix}
    a& 0 \\
    0 & b
\end{pmatrix} + B$ where $B$ satisfies
$$
\lim_{t \rightarrow t_1} B=0, \hspace{0.5cm} \lim_{t \rightarrow t_1} \partial_i B=0
$$
where $i \in \{x,y,t\}$.
    \end{itemize}
\end{prop}
 }

\section{The half-cylinder}
\label{sect4}
The goal of this section is to reduce Theorem \ref{halfcylinderallmunew} that we recall below on the construction of a super-exponentially decaying solution to a divergence form equation with constant potential on the half-cylinder $\T^2 \times \R^+$, to the construction of a $A$-harmonic function on $\T^2 \times [0, C]$, which oscillates and decay faster at $t=C$ compared to $t=0$. This reduction will be done in several steps.

\subsection{A fast-decaying solution on a half-cylinder with constant potential}

\begin{thm*}[\ref{halfcylinderallmunew}, A fast-decaying solution on a half-cylinder with constant potential] 
    Let $\mu \in \R$. In the 3-dimensional cylinder $\mathbb{T}^2\times\mathbb{R}^+$, there exists a $C^1$-uniformly smooth elliptic matrix $A:=A_{\mu}(x,y,t)$ and a non-zero uniformly $C^2$ real function $u:=u_{\mu}(x,y,t)$ such that \begin{equation} \label{eigen} \ddot{u} + \div(A\nabla u)=-\mu u\end{equation} and  $u$ has double exponential decay: for any $T\gg 1,$
$$ \sup_{\T^2 \times \{t\geq T\}}|u(x,y,t)|\leq Ce^{-ce^{c T}}$$
for some  $c(\mu), C(\mu)>0$. The matrix $A$ belongs to the regularity class $R(100, 61).$ Moreover, on $\T^2 \times \left[0, \frac{1}{100}\right],$ the function $u$ and the matrix $A$ are equal to
\begin{align}\label{solutionuclosetozeroinmainhalfcylinderbis}
    u(x,y,t)= \cos(kx)\e^{-kt}, \hspace{0.5cm} A= Id + \begin{pmatrix}
        \frac{\mu}{k^2} & 0\\
        0 & \frac{\mu}{4k^2}
    \end{pmatrix}.
\end{align}
for some fixed $k\gg \sqrt{|\mu|}$. \comment{ and the matrix $A$ is be equal to
\begin{align}\label{matrixaclosetozeroinainhalfcylinder}
    A= Id + \begin{pmatrix}
        \frac{\mu}{k^2} & 0\\
        0 & \frac{\mu}{4k^2}
    \end{pmatrix}.
\end{align}}
\end{thm*}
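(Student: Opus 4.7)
The strategy is to build $u$ block-by-block on $\T^2\times\R^+$, where each block of unit length doubles both the spatial frequency and the exponential decay rate. After $n$ blocks, time $\asymp n$ has passed but the frequency and rate are $\asymp 2^n$, which produces the double-exponential decay. The entire theorem reduces to a \emph{block lemma}: for every sufficiently large integer $k$, on $\T^2\times[0,C_0]$ (with $C_0>0$ universal) there exist a matrix $A_k$ and a function $u_k$ with $\ddot{u}_k+\div(A_k\nabla u_k)=0$, such that $u_k=\cos(kx)\e^{-kt}$ near $t=0$ and $u_k=c(k)\cos(2ky)\e^{-2kt}$ near $t=C_0$, with $k$-uniform ellipticity and $C^1$-bounds on $A_k$. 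Lemma~\ref{PML1} alone is insufficient, since it only allows the spatial frequency to change by $\mathcal{O}(w^{-1})\lesssim 1$ in time $w$, whereas the block lemma demands doubling $k$ in constant time.

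\textbf{Building the block.} I would produce the block by composing two sub-transformations inside $[0,C_0]$. First, transform $\cos(kx)\e^{-kt}$ into $c_1(k)\cos(2ky)\e^{-(2k/3)t}$: this doubles the frequency but deliberately \emph{slows} the decay, and can be performed in constant time with $k$-uniform $C^1$ bounds because the smaller decay rate leaves enough amplitude in the $y$-cosine that the coefficients needed to make this an exact divergence-form solution are of size $\mathcal{O}(1/k)$ with only $\mathcal{O}(1)$ derivatives. Second, accelerate the decay from rate $2k/3$ to $2k$ while keeping the mode $\cos(2ky)$ fixed; this is essentially a one-variable problem, solved by choosing a time-dependent $A_{yy}(t)$, and is done in constant time with uniform $C^1$ bound. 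I would arrange that near both endpoints of $[0,C_0]$ the function $u_k$ is a pure single cosine, a property needed in the potential step.

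\textbf{Iteration and adding the potential.} Set $k_n=k_0\cdot 2^n$, apply a shifted copy of the block lemma on each interval $[C_0n, C_0(n+1)]$ (alternating the roles of $x$ and $y$), and choose constants $c_n$ satisfying $c_{n+1}=c_n\,c(k_n)\,\e^{-k_{n+1}C_0}$ to match values at the seams. Then $c_n\lesssim\exp(-c\cdot 2^n)$, and since $t\asymp C_0 n$ on the $n$-th block, $|u|\lesssim\exp(-c\,\e^{ct})$. On the $n$-th block, $u$ has the explicit form $f(t)\cos(k_n x_1)+g(t)\cos(k_{n+1}x_2)$, so adding to $A$ the diagonal matrix $B_n$ with entries $\mu/k_n^2$ and $\mu/k_{n+1}^2$ (along $x_1$ and $x_2$) converts $\ddot{u}+\div(A\nabla u)=0$ into $\ddot{u}+\div((A+B_n)\nabla u)=-\mu u$, by a direct calculation. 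The matrix $B_n$ jumps between blocks, but by construction $u$ is a single cosine near each seam, so the diagonal entry of $B$ corresponding to the \emph{absent} cosine can be freely interpolated across the seam without altering the equation; since the entries of $B_n$ are $\mathcal{O}(\mu/k_n^2)=o(1)$, the interpolation preserves uniform $C^1$- and ellipticity-bounds for $k_0$ large. On $\T^2\times[0,1/100]$ I would simply set $u=\cos(k_0 x)\e^{-k_0 t}$ and $A=\mathrm{Id}+\mathrm{diag}(\mu/k_0^2,\mu/(4k_0^2))$, which solves $\ddot u+\div(A\nabla u)=-\mu u$ exactly and glues smoothly into the first block. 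Choosing $k_0\gg\sqrt{|\mu|}$ large absorbs all $\mathcal{O}(1/k_n^2)$ corrections into the quantitative class $R(100,61)$.

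\textbf{Main obstacle.} The entire construction rests on the block lemma, and within it on the frequency-doubling sub-step. The explicit formulas must produce an $A_k$ whose deviation from $\mathrm{Id}$ is of order $1/k$ while the space- and time-derivatives are of order $\mathcal{O}(1)$, so that the $C^1$-norm of $A_k$ stays uniformly bounded as $k\to\infty$. The ``slow-decay, fast-oscillation'' detour through $\e^{-(2k/3)t}$, rather than going directly to $\e^{-2kt}$, is precisely the mechanism that creates enough slack in the amplitude of the higher-frequency mode to achieve these $C^1$ estimates; I expect the quantitative bookkeeping needed to land inside $R(100,61)$ to be the technical heart of the argument.
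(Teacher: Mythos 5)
Your proposal follows essentially the same route as the paper: the same block lemma in which the frequency is doubled in constant time by first passing to the slower-decaying, faster-oscillating mode $\cos(2ky)\e^{-(2k/3)t}$ and then accelerating the decay, the same iteration over unit-length blocks with $k_n\asymp 2^n$ giving the double-exponential rate, and the same device of adding the diagonal matrix $\mathrm{diag}(\mu/k_n^2,\mu/k_{n+1}^2)$ blockwise and smoothing it across the seams using that $u$ is a single cosine there. Apart from minor bookkeeping in the normalization of the constants $c_n$, this matches the paper's proof.
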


\comment{
\begin{thm}[A fast-decaying solution on a half-cylinder with constant potential]\label{halfcylinderallmunewrecall}
    Let $\mu \in \R$. In the 3-dimensional cylinder $\mathbb{T}^2\times\mathbb{R}^+$, there exists a $C^1$-uniformly smooth elliptic operator $A:=A_{\mu}(x,y,t)$ and a non-zero real function $u:=u_{\mu}(x,y,t)$ such that \begin{equation} \label{eigen} \ddot{u} + \div(A\nabla u)=-\mu u\end{equation} and  $u$ has double exponential decay:
$$ \sup_{t\geq T}|u(x,y,t)|\leq e^{-ce^{c T}}$$
for some  $c(\mu)>0$ and for $T\gg 1$. The matrix $A$ belongs to the regularity class $R(400, 11).$
\\

Moreover, on $\T^2 \times \left[0, \frac{1}{100}\right],$ the function $u$ is be equal to
\begin{align}\label{solutionuclosetozeroinmainhalfcylinder}
    u(x,y,t)= \cos(kx)\e^{-kt}.
\end{align}
for some fixed $k\gg \sqrt{|\mu|}$ and the matrix $A$ is be equal to
\begin{align}\label{matrixaclosetozeroinainhalfcylinder}
    A= Id + \begin{pmatrix}
        \frac{\mu}{k^2} & 0\\
        0 & \frac{\mu}{4k^2}
    \end{pmatrix}.
\end{align}
\end{thm}
}

We first reduce Theorem \ref{halfcylinderallmunew} to Theorem \ref{theoremaharmonicagainnew} below, which is a quantitative version of our first main Theorem \ref{theoremforhalfcylinder} and which states that we can construct a $A$-harmonic function on the half-cylinder $\T^2 \times \R^+$ with double-exponential decay.

\begin{thm}[A fast-decaying $A$-harmonic function on a half-cylinder]\label{theoremaharmonicagainnew}
    In the 3-dimensional cylinder $\mathbb{T}^2\times\mathbb{R^+}$, there exists an elliptic matrix $A=A(x,y,t)$ which is $C^1$-uniformly smooth and  a non-zero uniformly  $C^2$ real function $u=u(x,y,t)$ such that \begin{equation*} \ddot u + \div(A\nabla u)=0\end{equation*}and such that $u$ has double exponential decay: for any $T\gg 1,$
\begin{align}\label{decaystatementforevaluebutnotyet}
    \sup_{\T^2\times \{t \geq T\}}|u(x,y,t)|\leq Ce^{-ce^{cT}}
\end{align}
for some numerical $c, C>0$.

More precisely, there exists an increasing sequence of times $t_n \geq 0$ such that $t_1=0$ and such that $t_{n+1}-t_n > \frac{3}{100}$. We represent $\T^2 \times \R^+ = \bigsqcup_{n \geq 1} \T^2 \times [t_n, t_{n+1}]$. The following hold:
\begin{itemize}
    \item On $\T^2 \times [t_n, t_{n+1}],$ for $n$ odd, $u:=u_n(x,y,t)$ where
    \begin{align}\label{defunandknforhalfyclinderreductionharmonicfirst}
        u_n(x,y,t):= f_n(t) \cos(k_n x) + g_{n+1}(t)  \cos(k_{n+1} y)
    \end{align}
    where $k_n:=2^{n+n_0-1}$ where $n_0\gg 1$ of our choosing. Moreover, $f_n$ and $g_{n+1}$ are uniformly $C^2$ on $\R^+.$
    \item In particular, on $\T^2 \times [t_n, t_n+\frac{1}{100}],$ for $n$ odd,
    \begin{align}\label{unbeginnnngforhalfcylinderharmofirst}
    u_n(x,y,t)=c_n \cos(k_n x)\e^{-k_n t}, \hspace{0.5cm} A=Id     
    \end{align}
    
    and on $\T^2 \times [t_{n+1}-\frac{1}{100},t_{n+1}]$, for $n$ odd
    \begin{align}\label{unbeginnnngforhalfcylinderharmoendfirst}
        u_n(x,y,t)=c_{n+1} \cos(k_{n+1} y)\e^{-k_{n+1} t}, \hspace{0.5cm} A=Id 
    \end{align}
    where $\{c_n\}$ is a sequence of positive terms such that $c_1:=1$.
    
    \item On $\T^2 \times [t_n, t_{n+1}],$ for $n$ even, the role of $x$ and $y$ are exchanged in \eqref{defunandknforhalfyclinderreductionharmonicfirst}, \eqref{unbeginnnngforhalfcylinderharmofirst} and \eqref{unbeginnnngforhalfcylinderharmoendfirst}.
    
    \item $A$ belongs to the regularity class $R(80,60).$ 

    \item The constants $c, C$ from \eqref{decaystatementforevaluebutnotyet} depends on $n_0.$
\end{itemize}

\end{thm}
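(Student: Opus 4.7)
The plan is to reduce Theorem~\ref{theoremaharmonicagainnew} to a single-block construction—to be established in Section~\ref{thebuildingblocknewlabel}—and then iterate. Specifically, I would first isolate the following building block, which is the one sketched informally in Section~\ref{informalexplanationfullproof}: for each large integer $k$ there exist a universal constant $C>3/100$, a positive normalization $c(k)$, a $C^2$ function $v_k$ on $\T^2\times[0,C]$, and a $C^1$ elliptic matrix $A_k\in R(80,60)$ solving $\ddot v_k+\div(A_k\nabla v_k)=0$, such that $v_k(x,y,t)=\cos(kx)e^{-kt}$ with $A_k=\mathrm{Id}$ on $\T^2\times[0,1/100]$, and $v_k(x,y,t)=c(k)\cos(2ky)e^{-2k(t-C)}$ with $A_k=\mathrm{Id}$ on $\T^2\times[C-1/100,C]$.

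Granted this block, I would set $k_n:=2^{n+n_0-1}$ and $t_n:=(n-1)C$, so $t_{n+1}-t_n=C>3/100$. On $\T^2\times[t_n,t_{n+1}]$ I define $u:=c_n\,v_{k_n}(x,y,t-t_n)$ when $n$ is odd, and the same expression with the roles of $x$ and $y$ swapped (and $A_{k_n}$ permuted accordingly) when $n$ is even. The normalization constants are defined recursively by $c_1:=1$ and $c_{n+1}:=c_n\,c(k_n)$, which is exactly the compatibility condition ensuring that at each boundary $t=t_n$ the two candidate expressions for $u$ and $A$ agree. Because near every $t_n$ the matrix reduces to $\mathrm{Id}$ and $u$ is a pure exponential, the blocks glue together into a globally uniformly $C^2$ function $u$ and a globally uniformly $C^1$, uniformly elliptic matrix $A\in R(80,60)$, and the explicit representation \eqref{defunandknforhalfyclinderreductionharmonicfirst}--\eqref{unbeginnnngforhalfcylinderharmoendfirst} holds by construction, with $f_n,g_{n+1}$ reducing to pure exponentials near the block boundaries.

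For the double-exponential decay I would argue as follows. Matching the magnitudes of $v_{k_n}$ at the two endpoints $t=1/100$ (where it equals $\cos(k_n x)e^{-k_n/100}$) and $t=C-1/100$ (where it equals $c(k_n)\cos(2k_n y)e^{2k_n/100}$), combined with the maximum principle, forces $c(k_n)=e^{-\Theta(k_n)}$; this is the content of the internal normalization of the block construction in Section~\ref{thebuildingblocknewlabel}. Consequently $c_n=\prod_{j=1}^{n-1}c(k_j)\leq e^{-c\,2^n}$ for some $c>0$, and on the $n$th block $|u|\leq c_n\sup|v_{k_n}|\lesssim c_n$. Since $t_n\asymp n$, for any $T\in[t_n,t_{n+1}]$ we obtain $\sup_{\T^2\times\{t\geq T\}}|u|\leq Ce^{-c\,2^n}\leq Ce^{-ce^{c'T}}$, which is \eqref{decaystatementforevaluebutnotyet}; the dependence of the constants on $n_0$ is explicit.

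The heart of the argument, and the only real obstacle, is the block construction itself. A naive Plis--Miller style interpolation (Lemma~\ref{PML1}) with $w\asymp 1$ would require a frequency jump of order $1$ and cannot double the frequency in a single step of width $\mathcal{O}(1)$: the resulting matrix would have $\|\nabla A\|\asymp k$, which is far outside any uniform regularity class. The fix announced in Section~\ref{informalexplanationfullproof} is to split the block into two substages—first transform $\cos(kx)e^{-kt}$ into a higher-frequency but slower-decaying $c_1(k)\cos(2ky)e^{-(2k/3)t}$, and then accelerate the temporal decay from $e^{-(2k/3)t}$ to $e^{-2kt}$ without changing the spatial frequency. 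Proving that both substages respect the uniform regularity class $R(80,60)$ independently of $k$ is the technical work deferred to Sections~\ref{thebuildingblocknewlabel} and~\ref{proofofpropslabel}; once that is available, the iteration and decay estimate above finish the proof of Theorem~\ref{theoremaharmonicagainnew}.
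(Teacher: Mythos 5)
Your architecture is the same as the paper's: isolate a frequency-doubling block with $A=Id$ and pure exponentials near its two faces (this is Lemma \ref{blockzerocnewnew}, shifted as in Lemma \ref{lemmaniceconnectingtone}), set $t_n=(n-1)C$, $k_n=2^{n+n_0-1}$, glue with the matching recursion $c_{n+1}=c_n\,c(k_n)$ (equivalent to the paper's recursion after absorbing the shifts $\e^{-k_nt_n}$ into the constants), and alternate $x$ and $y$. However, there is a genuine gap in how you obtain the decay \eqref{decaystatementforevaluebutnotyet}. Everything hinges on an upper bound $c(k)\leq \e^{-ck}$, and your justification — that matching the magnitudes at $t=1/100$ and $t=C-1/100$ ``combined with the maximum principle forces $c(k)=\e^{-\Theta(k)}$'' — is not valid. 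The maximum principle bounds interior values of $|v_k|$ by the values on $\{t=0\}\cup\{t=C\}$; it gives no bound of one boundary slice in terms of the other, and a block satisfying all of your stated interface conditions could perfectly well have $c(k)\geq 1$ (variable-coefficient solutions can grow across the block; only a lower bound on $c(k)$ is ``forced'', by three-ball type arguments). The exponential smallness of $c(k)$ is a quantitative output of the specific construction — in the paper $c=\e^{-k/2+5k'/6}$ with $k'=2k$, i.e.\ $c(k)=\e^{-(2C-7/6)k}$ in your normalization, and it is written into the statement of Lemma \ref{blockzerocnewnew} — so it must be part of your block statement rather than deduced afterwards. Once it is, your argument does close: $c_n\leq \e^{-c2^{n}}$, and the maximum principle legitimately gives $\sup_{\mathrm{block}}|v_{k_n}|\leq\max(1,c(k_n))\leq 1$, hence \eqref{decaystatementforevaluebutnotyet}.

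A second, smaller gap: uniform $C^2$ boundedness of $u$ (and of $f_n,g_{n+1}$ in \eqref{defunandknforhalfyclinderreductionharmonicfirst}) does not follow from each block merely being $C^2$, because the $C^2$ norms of the blocks grow with $k_n$ (the paper's bounds are $|f^{(\alpha)}|\lesssim k^{7\alpha/3}\e^{-kt}$ and $|g^{(\alpha)}|\lesssim (k')^{\alpha}c\,\e^{-k't/3}$). Uniformity holds only because the double-exponentially small prefactors $c_n$ beat this polynomial growth — this is exactly Claims 1 and 2 in the paper's proof of the reduction to Lemma \ref{lemmaniceconnectingtone}. So your block statement must also carry $k$-explicit derivative estimates, and the uniform $C^2$ claim needs that short computation; the $C^2/C^1$ matching at the interfaces, where $A=Id$ and $u$ is a single exponential, is the easy part and you handled it correctly, as is your observation that a single Lemma \ref{PML1}-type step cannot double the frequency within the uniform regularity class.
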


To ease the reader, we first present the heuristic idea of the reduction.
\pseudosection{Heuristic idea of the reduction of Theorem \ref{halfcylinderallmunew} to Theorem \ref{theoremaharmonicagainnew}.}
 Consider $u$, the solution to $\ddot{u} + \div(A\nabla u)=0$ from Theorem \ref{theoremaharmonicagainnew}. At any moment $t\in[t_n, t_{n+1}]$ (assuming $n$ is odd) the solution $u$ can be represented as 
    \begin{align}\label{uintntnplusonebisbisbisfirstnew}
        f_n(t) \cos(k_nx)+g_{n+1}(t)\cos(k_{n+1}y).
    \end{align}

    Consider the matrix $A' := A+B$ where 
    $$
    B:=\begin{pmatrix}\mu/k_n^2&0\\0&\mu/(k_{n+1})^2\end{pmatrix}.$$ 

    Then,    
    $$\ddot{u}+\div (A'\nabla u) = \ddot{u}+\div (A\nabla u)+\div(B\nabla u)= \div(B\nabla u).$$
    
    Moreover,
    $$\div(B\nabla u) = \frac{\mu}{k_n^2} \frac{\pl^2 u}{\pl x^2}+\frac{\mu}{(k_{n+1})^2}\frac{\pl^2 u}{\pl y^2} = -\mu u$$
by \eqref{uintntnplusonebisbisbisfirstnew}.

However, the matrix $A'$ that we constructed here is not even continuous at times $t=t_n$ (since $B$ has constant coefficients). 
To solve this issue, we use that $u$ will in fact be only one cosine towards the endpoints of a block $\T^2\times [t_n, t_{n+1}]$ (see \eqref{unbeginnnngforhalfcylinderharmofirst} and \eqref{unbeginnnngforhalfcylinderharmoendfirst}). This allows us to smoothly change the entry in $B$ related to the other space variable thus making the matrix $B \in C^1$ in $\T^2\times \R^+$. We now present the proof of this reduction.

\begin{proof}[Proof of the reduction of Theorem \ref{halfcylinderallmunew} to Theorem \ref{theoremaharmonicagainnew}]
    Let $\mu \in \R$. Consider the matrix $A$ and the double-exponentially decaying solution $u$ to $\ddot{u}+\div( A\nabla u) = 0$ given by Theorem \ref{theoremaharmonicagainnew}.  Define a matrix $A'$ by 
    \begin{align}\label{thematrixaprime}
        A':=A+B
    \end{align}
where $B$ is to be constructed.

\pseudosection{Step 1: The construction of $B$}
Let $\{t_n\}$ be the increasing sequence of times given by Theorem \ref{theoremaharmonicagainnew}. In particular,
\begin{align}\label{propertytnnew}
    t_1=0 \hspace{0.5cm} \mbox{ and } \hspace{0.5cm} t_{n+1}-t_n >\frac{3}{100}.
\end{align}
  Let $\{k_n\}$ be given by  Theorem \ref{theoremaharmonicagainnew}, that is,
\begin{align}\label{defknxonstructionb}
k_n:=2^{n_0+n-1}, \hspace{0.5cm } n\geq 1,
\end{align}
where $n_0$ is fixed and to be chosen later (we will choose it in the third step of the proof, see 
 \eqref{choosingnzero}, such that $2^{n_0} \gg \sqrt{|\mu|})$. For all $n \geq 1$ and for all $t \in [t_n, t_{n+1}]$, we define 
    \begin{align}\label{defbharmonicreducing}
        B_n:=
    \begin{pmatrix}
       a_{n+1}(t) & 0 \\ 0 & a_{n+2}(t)
    \end{pmatrix} \mbox{for $n$ odd } \hspace{0.3cm} \mbox{ and } \hspace{0.3cm} B_n:=
    \begin{pmatrix}
       a_{n+2}(t) & 0 \\ 0 & a_{n+1}(t)
    \end{pmatrix} \mbox{for $n$ even.}
    \end{align}

 The sequence $\{a_{n+1}(t)\}$ is defined for $n \geq 1$ by 
    \begin{align}\label{defanforcoeffinb}
       a_{n+1}(t) := \left(\frac{\mu}{k_{n}^2} - \frac{\mu}{k_{n+2}^2} \right) \theta\bigg(100(t-t_{n+1})+1\bigg) + \frac{\mu}{k_{n+2}^2} ,
    \end{align}
    
 where $\theta(\tau)$ is presented in picture \ref{fig:theta} (see also footnote \footnote{We choose $\theta(t):= 1-G(\tan(\pi(t-1/2)))$ where $G(x)=\frac{1}{\sqrt{\pi}}\int_{-\infty}^x \e^{-\eta^2} \ud{\eta}$. \label{footnoteblablattt}}) and goes smoothly from 1 to 0 as $\tau$ goes from 0 to 1. We continue the function $\theta$ so that $\theta(\tau)\equiv 1$ for $\tau \leq 0$.  In particular,
 \begin{align}\label{propanplusonenew}
     a_{n+1}(t) \mbox{ is smooth }, \hspace{0.3cm} a_{n+1}(t) \equiv \frac{\mu}{k_n^2} \mbox{ for } t \leq t_{n+1}-\frac{1}{100}, \hspace{0.3cm} \lim_{t \rightarrow t_{n+1}^-} a_{n+1}(t) =  \frac{\mu}{k_{n+2}^2}.
 \end{align}

    We also define the matrix $B$ by 
    \begin{align}\label{defBharmonicreducingwitouhoutn}
        B:=B_n \mbox{ for } t \in [t_n, t_{n+1}] \mbox{ and } n \geq 1.
    \end{align}

\pseudosection{Step 2: The equation $\div(B \nabla u)=-\mu u$}

   Without loss of generality, \underline{we assume $n$ is odd in what follows.}
\comment{
By Theorem \ref{theoremaharmonicagainnew}, at any moment $t\in[t_n, t_{n+1}]$ (since $n$ is odd) the solution can be represented as 
    \begin{align}\label{uintntnplusonebisbisbis}
        f_n(t) \cos(k_nx)+g_{n+1}(t)\cos(k_{n+1}y).
    \end{align}
}

\begin{enumerate}
    \item 
\underline{For $t \in [t_n, t_{n+1}]$:} In this case,
\begin{align}\label{defpropbnintntnplusone}
    B=B_n= \begin{pmatrix}
    a_{n+1}(t) & 0 \\
    0 & a_{n+2}(t) \end{pmatrix}.
\end{align}
The first equality comes from the definition of $B$ \eqref{defBharmonicreducingwitouhoutn}. The second equality comes from the definition of $B_n$ \eqref{defbharmonicreducing} and by the assumption that $n$ is odd. We split the discussion into two cases:

\begin{itemize}
    \item For $t \in [t_{n+1}-\frac{1}{100}, t_{n+1}],$
\begin{align}\label{bclosetotnplusoneminus}
B= \begin{pmatrix}
    a_{n+1}(t) & 0 \\
    0 & \mu/k_{n+1}^2
\end{pmatrix}.    
\end{align}
Indeed, $t \leq t_{n+1} <t_{n+2}-\frac{1}{100}$ since $t_{n+2}-t_{n+1}>\frac{3}{100}$ by the property of $\{t_n\}$ \eqref{propertytnnew}. Therefore, by \eqref{propanplusonenew}, $a_{n+2}(t) \equiv \frac{\mu}{k_{n+1}^2}$ and this yields  the equality \eqref{bclosetotnplusoneminus}.

Also, by Theorem \ref{theoremaharmonicagainnew}, when $t \in [t_{n+1}-\frac{1}{100}, t_{n+1}],$ and since $n$ is odd by assumption, $u=g_{n+1}(t) \cos(k_{n+1}y)$. Therefore, $\div(B \nabla u)=-\mu u.$

\item For $t \in [t_n, t_{n+1}-\frac{1}{100}],$ 
\begin{align}\label{defbawayfromrightendpoint}    
   B=\begin{pmatrix}
        \mu/k_n^2 & 0 \\
        0 & \mu/k_{n+1}^2
    \end{pmatrix}    .
    \end{align}
Indeed, since $t \leq t_{n+1}-\frac{1}{100},$ $a_{n+1}(t) \equiv \frac{\mu}{k_n^2}$ and $a_{n+2}(t) \equiv \frac{\mu}{k_{n+1}^2}$  by \eqref{propanplusonenew}. So the equality \eqref{defbawayfromrightendpoint} follows from \eqref{defpropbnintntnplusone}. By Theorem \ref{theoremaharmonicagainnew}, when $t \in [t_n, t_{n+1}],$ $u=f_n(t)\cos(k_nx)+g_{n+1}(t)\cos(k_{n+1}y)$ (since $n$ is assumed to be odd) and therefore $\div(B\nabla u)=-\mu u$. 

\pseudosection{Verification of \eqref{solutionuclosetozeroinmainhalfcylinderbis}}
Moreover, \eqref{defbawayfromrightendpoint} shows that 
    \begin{align}
        B=\begin{pmatrix}
         \mu/k_1^2 & 0 \\
         0 & \mu/k_2^2
        \end{pmatrix}
    \end{align}
for $t \in [0, \frac{1}{100}]$ (because $[0, \frac{1}{100}] = [t_1, t_1+\frac{1}{100}] \subset [t_1, t_2-\frac{1}{100}]$ since $t_1=0$ and $t_{2}-t_1 > \frac{3}{100}$ by \eqref{propertytnnew}). 

Therefore, since $A'=A+B$  by definition \eqref{thematrixaprime} and since $A=Id$ on $[0, \frac{1}{100}]$ by Theorem \ref{theoremaharmonicagainnew}, we get that 
\begin{align}\label{aprimeclosetozeroproof}
    A'=Id+\begin{pmatrix}
         \mu/k_1^2 & 0 \\
         0 & \mu/4k_1^2
        \end{pmatrix}
\end{align}
on $[0, \frac{1}{100}]$ (we used that $k_{n+1}=2k_n$ by definition \eqref{defknxonstructionb}). Also by Theorem \ref{theoremaharmonicagainnew}, for $t \in [0, \frac{1}{100}],$
       \begin{align}\label{uoneclosetozeroproof}
           u_1(x,y,t)=\cos(k_1 x) \e^{-k_1t}
       \end{align} 
       since $c_1:=1.$
Hence, by \eqref{aprimeclosetozeroproof}, by \eqref{uoneclosetozeroproof} and by the fact that $k_1=2^{n_0}$  (see \eqref{defknxonstructionb} and note that $n_0$  will be chosen in \eqref{choosingnzero} such that $2^{n_0} \gg \sqrt{|\mu|}$), we obtain \eqref{solutionuclosetozeroinmainhalfcylinderbis} that was claimed in Theorem \ref{halfcylinderallmunew}. 

\end{itemize}

\item 
\underline{For $t \in [t_{n+1}, t_{n+2}]$:} In this case, 
\begin{align}\label{bdefpropintnrplusonetnplusrwo}
    B=B_{n+1} = \begin{pmatrix}
        a_{n+3}(t) & 0 \\
        0 & a_{n+2}(t)
    \end{pmatrix}.
\end{align}
The first equality comes from the definition of $B$ \eqref{defBharmonicreducingwitouhoutn}. The second equality comes from the definition of $B_n$ \eqref{defbharmonicreducing} and by the assumption that $n$ is odd (hence $n+1$ is even). We again split the discussion into two cases:
\begin{itemize}
    \item For $t \in [t_{n+1}, t_{n+2}-\frac{1}{100}],$
    \begin{align}\label{bontplusoneplus}
         B= \begin{pmatrix}
        \mu/k_{n+2}^2 & 0 \\
        0 & \mu/k_{n+1}^2
    \end{pmatrix} . 
    \end{align}
    Indeed, since $t \leq t_{n+2}-\frac{1}{100},$ $a_{n+3}(t) \equiv \frac{\mu}{k_{n+2}^2}$ and $a_{n+2}(t)\equiv \frac{\mu}{k_{n+1}^2} $  by \eqref{propanplusonenew}. So, the equality \eqref{bontplusoneplus} now follows from \eqref{bdefpropintnrplusonetnplusrwo}. By Theorem \ref{theoremaharmonicagainnew}, for $t \in [t_{n+1}, t_{n+2}-\frac{1}{100}], $ $u$ is of the form $f_{n+1}(t) \cos(k_{n+1}y) + g_{n+2}(t)\cos(k_{n+2}x)$ (since $n+1$  is even) and therefore $\div(B\nabla u) = -\mu u.$

    \item For $t \in [t_{n+2}-\frac{1}{100}, t_{n+2}]$,
    \begin{align}\label{eqtforbnlastoneblablanew}
    B=  \begin{pmatrix}
        \mu/k_{n+2}^2 & 0 \\
        0 & a_{n+2}(t)
    \end{pmatrix}.     
    \end{align}
\end{itemize}
Indeed, since $t \leq t_{n+2} < t_{n+3}-\frac{1}{100}$ (since by  \eqref{propertytnnew}, $t_{n+3}-t_{n+2}>\frac{3}{100}$), \eqref{propanplusonenew} gives $a_{n+3}(t) \equiv \mu/k_{n+2}^2.$ So the equality \eqref{eqtforbnlastoneblablanew} now follows from \eqref{bdefpropintnrplusonetnplusrwo}. Since for $t \in [t_{n+2}-\frac{1}{100}, t_{n+2}],$ $u$ is of the form $g_{n+2}(t) \cos(k_{n+2}x)$ by Theorem \ref{theoremaharmonicagainnew}, we again have $\div(B\nabla u)=-\mu u.$ 
\\
\end{enumerate}

\pseudosection{Step 3: The regularity}

\begin{enumerate}
    \item 
\underline{The uniform ellipticity of $A'$:}
Recall that we defined $A':=A+B$ (see \eqref{thematrixaprime}). The uniform ellipticity of $A'$ is due to the following: 
for any $n \geq 1$ and for any $t \in [t_n, t_{n+1}],$ 
\begin{align}\label{ineqforellipticboundofaprime}
|A'-A| \leq \frac{|\mu|}{k_1^2} .    
\end{align}
This comes from the definition $A'=A+B$ and from the definition of $B$ \eqref{defbharmonicreducing}. Indeed, for $t \in [t_n, t_{n+1}],$ $B$ is a diagonal matrix with entries $a_{n+1}(t)$ and $a_{n+2}(t)$. By definition,  $$ a_{n+1}(t) := \left(\frac{\mu}{k_{n}^2} - \frac{\mu}{k_{n+2}^2} \right) \theta\bigg(100(t-t_{n+1})+1\bigg) + \frac{\mu}{k_{n+2}^2}.$$ Since $0 \leq \theta \leq 1,$ and since $\{k_n\}$ is an increasing sequence (see \eqref{defknxonstructionb}), we have for all $n \geq 1$ and for any $t \in [t_n, t_{n+1}],$
$$
\frac{|\mu|}{k_{n+2}^2} \leq |a_{n+1}(t)| \leq \frac{|\mu|}{k_n^2} \leq \frac{|\mu|}{k_1^2}, \hspace{0.5cm} \frac{|\mu|}{k_{n+3}^2} \leq |a_{n+2}(t)| \leq \frac{|\mu|}{k_{n+1}^2} \leq \frac{|\mu|}{k_1^2}.
$$
Since  $k_1:=2^{n_0}$ and since $n_0$ is us to choose, we choose it such that 
\begin{align}\label{choosingnzero}
2^{n_0} \gg \sqrt{|\mu|},    
\end{align}

which  will ensure thanks to \eqref{ineqforellipticboundofaprime} above that 
$$
|A'-A| \leq\frac{1}{1000}
$$

Since $A \in R(80,60)$ by Theorem \ref{theoremaharmonicagainnew}, i.e., $\frac{1}{80}|\xi|^2 \leq (A\xi,\xi)\leq 80 |\xi|^2,$ we get that 
\begin{align}\label{ellipticityofaprime}
\frac{|\xi|^2}{100} \leq (A'\xi, \xi) \leq 100 |\xi|^2.    
\end{align}

The argument is similar in $[t_{n+1}, t_{n+2}]$ and we skip it. This proves the uniform ellipticity of $A'.$
\\

\item
 \underline{The $C^1$ boundedness of $A'$:}
 \begin{itemize}
     \item \underline{The endpoints:} By \eqref{bclosetotnplusoneminus},
 for $t \in [t_{n+1}-\frac{1}{100}, t_{n+1}]$,
 \begin{align*}
     B=\begin{pmatrix}
         a_{n+1}(t) & 0 \\
         0 & \mu/k_{n+1}^2
     \end{pmatrix} 
 \end{align*}
 and by \eqref{bontplusoneplus}, for $t \in [t_{n+1}, t_{n+2}-\frac{1}{100}],$
 \begin{align*}
     B=\begin{pmatrix}
         \mu/k_{n+2}^2 & 0 \\
         0 & \mu/k_{n+1}^2
     \end{pmatrix}.
 \end{align*}

 Since $\lim_{t \rightarrow t_{n+1}^-}a_{n+1}(t) =\mu/k_{n+2}^2 $ by \eqref{propanplusonenew}, $B$ is continuous at $t_{n+1}.$  Note that by definition (see footnote \footnote{We choose $\theta(t):= 1-G(\tan(\pi(t-1/2)))$ where $G(x)=\frac{1}{\sqrt{\pi}}\int_{-\infty}^x \e^{-\eta^2} \ud{\eta}$. \label{footnoteblablabla}}), $\theta$ satisfies 
\begin{align}\label{propthetaforbn}
\lim_{\tau \rightarrow 0} \dot{\theta}(\tau) = 0, \hspace{0.5cm} \lim_{\tau \rightarrow 1} \dot{\theta}(\tau)=0, \hspace{0.5cm} \sup_{[0, 1]}|\dot \theta(\tau)|\leq \sqrt{\pi}  .  
\end{align}

Since by definition (see \eqref{defanforcoeffinb})
$$
a_{n+1}(t) := \left(\frac{\mu}{k_{n}^2} - \frac{\mu}{k_{n+2}^2} \right) \theta\bigg(100(t-t_{n+1})+1\bigg) + \frac{\mu}{k_{n+2}^2},
$$
we have $\lim_{t \rightarrow t_{n+1}^-}\dot a_{n+1}(t) =0$. Hence, $B$ is $C^1$ across $t_{n+1}.$ A similar argument applies across $t_{n+2}.$ Hence, $B$ is $C^1$ across all $t_n$, $n \geq 2.$

\item \underline{In $[t_n, t_{n+1}]$} The $C^1$ regularity of $B$ on $t \in [t_n, t_{n+1}]$ is due to the following: $B$ is a diagonal matrix, with entries $a_{n+1}(t)$ and $a_{n+2}(t)$. Hence, it does not depend on $x,y$ so $\frac{\pl B}{\pl x}=\frac{\pl B}{\pl y} = 0.$

Moreover, for $t \in [t_n, t_{n+1}]$, 

\begin{align}\label{derivativebintntnplusonegood}
     \left|\frac{d a_{n+1}(t)}{dt}\right| = \left|\frac{\mu}{k_{n}^2} - \frac{\mu}{k_{n+2}^2}\right| 100 \big|\dot{\theta}\big(100(t-t_{n+1})+1)\big)\big|.
\end{align}
By \eqref{propthetaforbn}, $\left| \dot{\theta}\big(100(t-t_{n+1})+1)\big)\right| \leq \sqrt{\pi}$ on $[t_n, t_{n+1}]$ (recall that we extended $\theta(\tau)$ by 1 for $\tau \leq 0$).
\comment{
$
\dot{\theta}\big(100(t-t_{n+1})-1)\big)$ goes to zero as $t$ goes to $t_{n+1}-\frac{1}{100}$ and to $t_{n+1}$. Moreover, on $[t_n, t_{n+1}],$ $\left| \dot{\theta}\big(100(t-t_{n+1})-1)\big)\right| \leq \sqrt{\pi}$
}

Therefore, by \eqref{derivativebintntnplusonegood}, and by recalling that $k_n=2^{n+n_0-1},$ we get that
\begin{align*}
\left|\frac{d a_{n+1}(t)}{d t}\right| \leq \frac{1}{100}    
\end{align*}
by choosing $n_0$ such that $2^{n_0}\gg \sqrt{|\mu|}.$ A similar argument applies for $a_{n+2}(t)$ hence,
\begin{align}\label{coneboundofb}
   \left|\frac{d B}{d t}\right| \leq \frac{1}{100} 
\end{align}

in $[t_{n}, t_{n+1}].$ A similar argument applies for $t \in [t_{n+1}, t_{n+2}]$ and we skip it. We have therefore shown that $B$ is uniformly $C^1$ on all of $\R^+.$ 

\comment{
\item \textcolor{red}{At any interval the function $u$ is equal to $f(t)\cos(kx)+g(t)\cos(k'y)$. Here $f(t)$ and $g(t)$ decay exponentially everywnere save for the introduction or eradication. In said intervals the $\alpha$-th derivatives of $f$ and $g$ are estimated as $C_\alpha k^\alpha \max(f,g).$ During the introduction itself $g$ is introduced or $f$ is eradicated while being $\epsilon$ times smaller than the other function. The $\alpha$th time derivative is estimated as $\epsilon^{\frac{\alpha}{3}-1}\max(|f|,|g|).$ Then the derivatives up to order 3 satisfy the necessary bounds.}}
 \end{itemize}

\item \underline{The conclusion} 

Since $A$ is uniformly $C^1$ on $\T^2 \times \R^+$ by Theorem \ref{theoremaharmonicagainnew} we get that $A':=A+B$ is uniformly $C^1$ on $\T^2 \times \R^+$ since $B$ is uniformly $C^1$ by step 3, as claimed in Theorem \ref{halfcylinderallmunew}. 

In particular, since $A'$ has a uniform ellipticity constant of $100$ by \eqref{ellipticityofaprime}, since $B$ has $C^1$ bound of $\frac{1}{100}$ by \eqref{coneboundofb} and since $A \in R(80, 60)$ by Theorem \ref{theoremaharmonicagainnew}, we get that $A' \in R(100, 61)$ as claimed in Theorem \ref{halfcylinderallmunew}.

By Theorem \ref{theoremaharmonicagainnew}, $u$ is uniformly $C^2$ smooth on $\T^2\times \R^+$ and has double exponential decay at infinity. Hence, the claimed $C^2$ regularity and double exponential decay in Theorem \ref{halfcylinderallmunew} is true as well since we did not modify the solution $u$ from Theorem \ref{theoremaharmonicagainnew}. Note also that by Theorem \ref{theoremaharmonicagainnew}, the constants $c, C$ depend on $n_0.$ Since $n_0$ is now chosen so that $2^{n_0}\gg \sqrt{|\mu|},$ the constants $c, C$ now depends on $\mu$ as claimed.

Finally, by step 2,  $\div(B\nabla u)=-\mu u$ on $\T^2 \times \R^+$. Since $u$ satisfies $\ddot u+ \div(A\nabla u)=0$ in $\T^2\times \R^+$  we get that $\ddot u+ \div(A'\nabla u)=-\mu u$ in $\T^2\times \R^+.$ This finishes the proof of the reduction of Theorem \ref{halfcylinderallmunew} to Theorem \ref{theoremaharmonicagainnew}.

\end{enumerate}

\comment{

\pseudosection{Step 2: The regularity of $A'$}

\textbf{Uniform ellipticity:} The ellipticity and boundedness of $A'$ is due to the following: for $t \in [t_n, t_{n+1}],$ 
\begin{align}\label{ineqforellipticboundofaprime}
|A'-A| \leq \frac{|\mu|}{k_n^2} .    
\end{align}

Recall from Theorem \ref{theoremaharmonicagain} that $k_n=2^{n+n_0}$ for some $n_0 \gg 1$ of our choosing. We take $n_0$ such that $2^{n_0} \gg \sqrt{|\mu|}.$ In particular, this will guarantee from \eqref{ineqforellipticboundofaprime} above that 
$$
|A'-A| \leq\frac{1}{100}.
$$
Since $A \in R(80,10)$, we get that $\frac{|\xi|^2}{81} \leq (A'\xi, \xi) \leq 81 |\xi|^2.$ This proves the boundedness and ellipticity of $A'.$

\textbf{The $C^1$ boundedness of $A'$:}
The matrix $A'$ is the sum $A+B.$ The $C^1$ regularity of the matrix $A$ is guaranteed by Theorem \ref{theoremaharmonicagain}. The $C^1$ regularity of $B$ is due to the following: $\frac{\pl B}{\pl x}=\frac{\pl B}{\pl y} = 0,$ while for $t \in [t_n, t_{n+1}]$, 

\begin{align}\label{derivativebintntnplusone}
    \left\|\frac{\pl B}{\pl t}\right\| = \left|\frac{da_{n+1}(t)}{dt}\right| = \left|\frac{\mu}{k_{n}^2} - \frac{\mu}{k_{n+2}^2}\right| 100 \big|\dot{\theta}\big(100(t-t_{n+1})-1)\big)\big|.
\end{align}
By our choice of $\theta,$ (see below  \eqref{defanforcoeffinb}), 
$
\dot{\theta}\big(100(t-t_{n+1})-1)\big)$ goes to zero as $t$ goes to $t_{n+1}-\frac{1}{100}$ and to $t_{n+1}$. Moreover, on $[t_n, t_{n+1}],$ $\left| \dot{\theta}\big(100(t-t_{n+1})-1)\big)\right| \leq 10.$

Therefore, by \eqref{derivativebintntnplusone}, and by recalling that $k_n=2^{n+n_0},$ we get that
$$
\left\|\frac{\pl B}{\pl t}\right\| \leq \frac{1}{100}
$$
by choosing $n_0\gg 1.$

Moreover, across the points $t_{n+1},$  $B$ is continuous by our choice of $a_{n+1}$ \eqref{defanforcoeffinb} since $\theta$  and $\dot{a}_{n+1}  \rightarrow 0$ as $t \rightarrow t_{n+1}^-,$  and since $B$ has coefficients independent of $t$ for $t \in [t_{n+1}, t_{n+1}+\frac{1}{100}]$, $B$ is also $C^1$ across $t_{n+1}.$

Therefore, the matrix $A'$ that we constructed is $C^1$ on $\T^2 \times \R^+$ and belongs to the regularity class $R(81,11)$. This finishes the proof.
}
    \end{proof}

\subsection{A fast decaying $A$-harmonic function on a half-cylinder}
\label{harmony}
In this section, we reduce Theorem \ref{theoremaharmonicagainnew}, which we recall below, to the construction of a $A$-harmonic function on a block $\T^2 \times [t_1, t_1+C]$ for any $t_1\geq 0$ and where $C$ is a universal constant. 
\begin{thm*}[\ref{theoremaharmonicagainnew}, A fast-decaying $A$-harmonic function on a half-cylinder]
    In the 3-dimensional cylinder $\mathbb{T}^2\times\mathbb{R^+}$, there exists an elliptic operator $A=A(x,y,t)$ which is $C^1$-uniformly smooth in $x,y,t$ and  a non-zero uniformly  $C^2$ real function $u=u(x,y,t)$ such that \begin{equation*} \ddot u+ \div(A\nabla u)=0\end{equation*}and such that $u$ has double exponential decay: for any $T\gg 1,$
\begin{align}\label{decayestimatestatementhalfharmonic}
\sup_{\T^2\times \{t \geq T\}}|u(x,y,t)|\leq Ce^{-ce^{cT}}    
\end{align}
for some numerical $c, C>0$.

More precisely, there exists an increasing sequence of times $t_n \geq 0$ such that $t_1=0$ and such that $t_{n+1}-t_n > \frac{3}{100}$. We represent $\T^2 \times \R^+ = \bigsqcup_{n \geq 1} \T^2 \times [t_n, t_{n+1}]$. The following hold:
\begin{itemize}
    \item On $\T^2 \times [t_n, t_{n+1}],$ for $n$ odd, $u:=u_n(x,y,t)$ where
    \begin{align}\label{defunandknforhalfyclinderreductionharmonic}
        u_n(x,y,t):= f_n(t) \cos(k_n x) + g_{n+1}(t)  \cos(k_{n+1} y)
    \end{align}
    where $k_n:=2^{n+n_0-1}$ where $n_0\gg 1$ of our choosing. Moreover, $f_n$ and $g_{n+1}$ are uniformly $C^2$ on $\R^+.$
    \item In particular, on $\T^2 \times [t_n, t_n+\frac{1}{100}],$ for $n$ odd,
    \begin{align}\label{unbeginnnngforhalfcylinderharmo}
    u_n(x,y,t)=c_n \cos(k_n x)\e^{-k_n t}, \hspace{0.5cm} A=Id     
    \end{align}
    
    and on $\T^2 \times [t_{n+1}-\frac{1}{100},t_{n+1}]$, for $n$ odd
    \begin{align}\label{unbeginnnngforhalfcylinderharmoend}
        u_n(x,y,t)=c_{n+1} \cos(k_{n+1} y)\e^{-k_{n+1} t}, \hspace{0.5cm} A=Id 
    \end{align}
    where $\{c_n\}$ is a sequence of positive terms such that $c_1:=1$.
    
    \item On $\T^2 \times [t_n, t_{n+1}],$ for $n$ even, variables $x$ and $y$ are exchanged in \eqref{defunandknforhalfyclinderreductionharmonic}, \eqref{unbeginnnngforhalfcylinderharmo} and \eqref{unbeginnnngforhalfcylinderharmoend}.
    
    \item $A$ belongs to the regularity class $R(80,60).$ 

    \item The constants $c, C$ from \eqref{decayestimatestatementhalfharmonic} depends on $n_0$.
\end{itemize}
\end{thm*}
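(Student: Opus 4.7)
\textbf{Proof proposal for Theorem \ref{theoremaharmonicagainnew}.} The plan is to reduce the construction on $\T^2\times\R^+$ to a single ``building block'' construction on $\T^2 \times [0,C]$ for a universal constant $C>3/100$, and then to chain together infinitely many shifted copies of this block, alternating the roles of the periodic variables $x$ and $y$ from one block to the next. Concretely, I will assume (and defer the proof to Section \ref{thebuildingblocknewlabel}) the following block statement: for every sufficiently large integer $k$ there exist a constant $c(k)\in(0,1]$, a matrix $A^{(k)}\in R(80,60)$ and a function $v^{(k)}$ on $\T^2\times[0,C]$ solving $\ddot v^{(k)}+\div(A^{(k)}\nabla v^{(k)})=0$, with $A^{(k)}=Id$ on $\T^2\times([0,1/100]\cup[C-1/100,C])$, such that $v^{(k)}=\cos(kx)e^{-kt}$ near $t=0$ and $v^{(k)}=c(k)\cos(2ky)e^{-2kt}$ near $t=C$.

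Granted such a block, I would set $k_n:=2^{n+n_0-1}$, $t_n:=(n-1)C$, and on each slab $[t_n,t_{n+1}]$ place a shifted, rescaled copy of the corresponding block. More precisely, for odd $n$ I would define
\begin{equation*}
u_n(x,y,t)\;=\;c_n\,e^{-k_n t_n}\,v^{(k_n)}(x,y,\,t-t_n), \qquad A(x,y,t)=A^{(k_n)}(x,y,\,t-t_n),
\end{equation*}
and for even $n$ the same formula with the roles of $x$ and $y$ interchanged. Near $t_n$ the function $u_n$ then equals $c_n\cos(k_n x)e^{-k_n t}$ (for $n$ odd) with $A=Id$, and near $t_{n+1}$ it equals $c_n c(k_n)e^{k_n t_n-2k_n C}\cos(2k_n y)e^{-2k_n t}$ with $A=Id$. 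Defining $c_{n+1}$ by the matching relation $c_{n+1}=c_n c(k_n)e^{k_n t_n}$ (starting from $c_1:=1$) ensures that $u_n$ and $u_{n+1}$ agree at $t=t_{n+1}$ along with all their derivatives in $t$ up to order 2 (the derivatives match trivially, since in a neighborhood of $t_{n+1}$ both functions are the same pure exponential in $t$ times a cosine in $y$). Because $A=Id$ on both sides of $t_{n+1}$, the glued matrix is also $C^1$ across the junction. The resulting global $u$ and $A$ are $C^2$ and $C^1$ respectively, and lie uniformly (in $n$) in the prescribed regularity class, since within each slab the properties come directly from the block.

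The decay estimate now follows from an explicit accounting. The supremum of $|u|$ on $\T^2\times\{t=t_{n+1}\}$ equals $c_{n+1}e^{-k_{n+1}t_{n+1}}=c_n e^{-k_n t_n}\cdot c(k_n)e^{-2k_n C}$, so by induction it is bounded by $\prod_{i=1}^{n}c(k_i)e^{-2k_i C}\leq \exp\!\big(-2C\sum_{i=1}^n k_i\big)$. Since $k_i=2^{i+n_0-1}$ the sum is comparable to $k_n\asymp 2^n$, and $t_n\asymp n$, so $\sup_{\T^2\times\{t\geq t_n\}}|u|\lesssim \exp(-c\,2^n)\lesssim \exp(-c\,e^{c t_n})$ by the maximum principle applied block by block (recall $\ddot u+\div(A\nabla u)=0$ in each slab, so the supremum on $[t_n,t_{n+1}]$ is attained on the boundary). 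This yields the claimed double-exponential decay, with constants $c,C>0$ depending only on $n_0$.

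The main obstacle in the above scheme is of course the block construction itself: transforming $\cos(kx)e^{-kt}$ into $c(k)\cos(2ky)e^{-2kt}$ in a time $C=O(1)$ independent of $k$, while keeping $A$ uniformly in $R(80,60)$. Lemma \ref{PML1} is insufficient here because it only allows to increase the frequency by $O(1)$ in time $O(1/k)$, which would lead to a logarithmically small transformation time and uncontrolled $C^1$ norm. The remedy, sketched in Section \ref{informalexplanationfullproof}, is to route the transformation through an intermediate, slower-decaying but faster-oscillating state $c_1(k)\cos(2ky)e^{-2kt/3}$; the intermediate step uses a small ellipticity deviation of order $1/k$ in one coordinate and then accelerates decay in that same coordinate. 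The technical heart of the argument is therefore the three propositions in Section \ref{proofofpropslabel}, which I shall take as black boxes here.
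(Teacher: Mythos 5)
Your proposal follows essentially the same route as the paper: reduce everything to a building-block transformation of $\cos(kx)\e^{-kt}$ into $c(k)\cos(2ky)\e^{-2kt}$ on a slab of universal length $C$ (the paper's Lemma \ref{blockzerocnewnew}, shifted as in Lemma \ref{lemmaniceconnectingtone}), chain shifted copies with the roles of $x$ and $y$ alternating, match the constants $c_n$ so that near each junction both neighbouring solutions are the same pure exponential times a cosine (which gives the $C^2$/$C^1$ gluing for free, exactly as in the paper), and obtain the double-exponential decay by iterating the per-block decay factor via the maximum principle. Two caveats, neither fatal. First, the block the paper actually constructs has $c(k)=\e^{-k/2+5k'/6}=\e^{7k/6}>1$, not $c(k)\in(0,1]$ as you posit; consequently your bound $\prod_{i\le n}c(k_i)\e^{-2k_iC}\le\exp(-2C\sum_{i\le n}k_i)$ is not literally valid for the block you are deferring to. The repair is immediate: $c(k_i)\e^{-2k_iC}=\e^{-k_i(2C-7/6)}$ and the block length satisfies $C\ge 2$, so the product is still $\exp\big(-c'\sum k_i\big)$ with $c'>0$ and the double-exponential decay survives (this is precisely the paper's Claim 2 computation of $C_n=c_n\e^{-k_n(t_n-7/6)}$). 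Second, the uniform $C^2$ bound on $u$ (and on $f_n,g_{n+1}$, which the theorem asserts) does not come ``directly from the block'': the block's derivative estimates grow polynomially in $k_n$ (like $k_n^{7\alpha/3}$), and one must combine this growth with the double-exponential smallness of $c_n\e^{-k_nt_n}$ to conclude; your per-block accounting handles $\sup|u|$ itself but should be run once more with the extra polynomial factors, which is exactly the content of the paper's Claims 1--2 in the proof of Theorem \ref{theoremaharmonicagainnew}. With these two adjustments your argument coincides with the paper's.
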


We reduce Theorem \ref{theoremaharmonicagainnew} to Lemma \ref{lemmaniceconnectingtone} below.

\begin{lemma}\label{lemmaniceconnectingtone}
    Let $1\ll k<k'\leq 2k$. There exists $C\geq 2$ independent of $k, k'$ such that for any $t_1 \geq 0$ and for any $c_1>0$,  there exists $ c_2>0$ such that one can transform $c_1 \cos(kx) \e^{-kt}$ into $c_2 \cos(k'y) \e^{-k't}$ within the set $\mathbb{T}^2\times[t_1, t_1+C]$, via a solution $u$ to $\ddot u + \div(A\nabla u)=0$ and where $A$ is in the  regularity class $R(80, 60)$ and $A$ is identity near the boundary of $\mathbb{T}^2\times[t_1, t_1+C]$. The constants $c_1$ and $c_2$ are related by $$c_1\, c\,  \e^{-kt_1} = c_2 \e^{-k't_1}, \hspace{0.5cm} c=\e^{\frac{-k}{2} + \frac{5k'}{6}}.$$

In particular, on $[t_1, t_1+C]$, the function $u$ is of the form 
\begin{align}\label{generalformtonetoneplusc}
    f(t)  \cos(kx) + g(t)  \cos(k'y)
\end{align}
      where $f,g \in C^2$ satisfy for all $0\leq \alpha \leq 2,$
     \begin{align}\label{generalformtonetonepluscforfg}
     |f^{(\alpha)}(t)| \lesssim c_1 k^{\frac{7\alpha}{3}} \e^{-kt}, \hspace{0.5cm} |g^{(\alpha)}(t)| \lesssim c_2\, (k')^{\alpha}  \e^{\frac{-k'}{3}t} \e^{\frac{-2k'}{3}t_1}.    
     \end{align}

      Moreover, on $[t_1, t_1+\frac{1}{100}]$,
     \begin{align}\label{formuclosebegtone}
         u(x,y,t)=c_1\cos(kx) \e^{-kt}, \hspace{0.5cm} A=Id
     \end{align}
      and on $[t_1+C-\frac{1}{100}, t_1+C]$,
      \begin{align}\label{formucloseendttwo}
          u(x,y,t)=c_2 \cos(k'y) \e^{-k't}, \hspace{0.5cm} A=Id.
      \end{align}
\end{lemma}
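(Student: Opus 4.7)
The plan is to decompose the transformation into three consecutive sub-transformations, bracketed by two short coefficient-adjustment buffers ensuring that $A$ coincides with the identity near the boundary of $\T^2 \times [t_1, t_1 + C]$. Each sub-transformation will be one of the three technical propositions announced in Section \ref{thebuildingblocknewlabel} (a changing-coefficient proposition, a ``slow'' proposition, and an ``acceleration'' proposition), and the overall duration will be kept universal in $k, k'$ by passing through a carefully chosen intermediate ``slow'' state $\cos(k'y) e^{-(k'/3)t}$ rather than transforming to $\cos(k'y) e^{-k't}$ directly.

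Starting from $u = c_1 \cos(kx) e^{-kt}$ with $A = Id$, the first step is to deform on a short time-window the $(y,y)$-entry of $A$ from $1$ to $1/9$ while leaving $u$ unchanged. This is possible precisely because $u$ is independent of $y$, hence $A_{yy}$ does not appear in $\ddot u + \div(A\nabla u)=0$; this is the content of the changing-coefficient proposition. Next, on an $O(1)$-length interval, I would use the ``slow'' proposition to transform $c_1 \cos(kx) e^{-kt}$ into $c'\cos(k'y) e^{-(k'/3)t}$ for an explicit constant $c'$. The key point is that $\cos(k'y) e^{-(k'/3)t}$ solves $\ddot u + \div(\mathrm{diag}(1, 1/9)\nabla u) = 0$, so the target is compatible with the matrix prepared in the first step. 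Then, on another $O(1)$-length interval, the acceleration proposition speeds up the decay, taking $c' \cos(k'y) e^{-(k'/3)t}$ to $c_2 \cos(k'y) e^{-k't}$: since the function depends only on $y$ and $t$, the equation reduces to a second-order ODE $\ddot f = b(t)(k')^2 f$, and varying $b(t)$ monotonically from $1/9$ back up to $1$ interpolates between the two decay rates. A final short buffer restores $A$ to $Id$ while $u = c_2 \cos(k'y)e^{-k't}$.

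The multiplicative factor $c = e^{-k/2 + 5k'/6}$ in the relation $c_1 c \,e^{-kt_1} = c_2 e^{-k't_1}$ is tracked by matching $u$ at the two internal junctions: the $e^{-k/2}$ contribution records the natural decay of $\cos(kx)e^{-kt}$ over the duration of the slow stage, while the $e^{5k'/6}$ contribution records the amplification needed to match the slow intermediate $\cos(k'y)e^{-(k'/3)t}$ against $c_2\cos(k'y)e^{-k't}$ at the junction between the slow and acceleration stages. The derivative bounds \eqref{generalformtonetonepluscforfg} follow from the explicit construction in each proposition: the factor $k^{7\alpha/3}$ for $f$ arises from the smoothing scale built into the cutoff functions of the slow proposition, while the factor $(k')^\alpha$ for $g$ arises from the second-order ODE for $g(t)$ with coefficients bounded by $(k')^2$. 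Uniform ellipticity and the $C^1$ bound defining $R(80,60)$ are verified by combining the corresponding outputs of the three propositions.

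The main obstacle, and the reason for passing through the slow intermediate rather than attempting a direct transformation $\cos(kx)e^{-kt}\rightsquigarrow\cos(k'y)e^{-k't}$, is uniformity of the regularity of $A$ in $k$. A naive application of Lemma \ref{PML1} would require a transformation time $w$ satisfying $k'-k \lesssim w^{-1} \lesssim k$; since $k' - k$ can be as large as $k$, this forces $w \lesssim 1/k$, so that $\|\nabla A\|, \|\dot A\| \lesssim k$ would blow up and the matrix could not belong to any fixed class $R(\Lambda, C)$. Replacing the direct transformation by the two-stage passage through $\cos(k'y) e^{-(k'/3)t}$ keeps the required perturbation of $A$ of size $O(1/k)$ in $C^1$ uniformly in $k$, at the cost of an extra acceleration stage whose duration is nevertheless $O(1)$ because it is effectively a one-dimensional-in-space problem with bounded coefficients. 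Verifying that all five pieces glue $C^2$-smoothly and that the combined matrix indeed lies in $R(80,60)$ will be the most delicate part, and is precisely what the three propositions in Section \ref{thebuildingblocknewlabel} are designed to guarantee.
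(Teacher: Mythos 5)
Your proposal is correct and follows essentially the same route as the paper: the paper proves this lemma by shifting in time and rescaling the amplitude of the $t_1=0$, $c_1=1$ "building block" (Lemma \ref{blockzerocnewnew}), which is itself established exactly by your three-stage scheme — deform $A_{yy}$ from $1$ to $1/9$, transform to the slower-decaying intermediate $\cos(k'y)\e^{-(k'/3)t}$, then accelerate back to $\cos(k'y)\e^{-k't}$ — using Propositions \ref{lemmachangingacoeff}, \ref{slownew} and \ref{accelnew}. The only detail left implicit in your sketch is that obtaining the exact constant $c=\e^{-k/2+5k'/6}$ forces the two internal junctions to sit at times $t_1+\tfrac12$ and $t_1+1$, which is precisely the choice made in the paper.
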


\begin{proof}[Proof of the reduction of Theorem \ref{theoremaharmonicagainnew} to Lemma \ref{lemmaniceconnectingtone}]
    The reduction will be done in several steps. Using Lemma \ref{lemmaniceconnectingtone}, we construct $u$ and $A$. Then we show that $A$ is uniformly $C^1$. Finally, we show that $u$ is uniformly $C^2$ and that it has a double exponential decay at infinity.

    \pseudosection{Construction of the solution $u$ and the matrix $A$.}
    
    Consider the sequence of times
    \begin{align}\label{deftimestnnew}
        t_n := C(n-1), \hspace{0.5cm} n \geq 1,
    \end{align}
    where $C\geq 2$ is given by Lemma \ref{lemmaniceconnectingtone}. We decompose $\R^+$ as the almost disjoint union of intervals $[t_n, t_{n+1}].$ Choose 
    \begin{align}\label{defkn}
        k_n:=2^{n_0+n-1}, \hspace{0.5cm} n \geq 1,
    \end{align}
    where $n_0\gg 1$ is fixed of our choosing. Let $c_1:=1$ and inductively define the sequence $\{c_n\}$ by 
    \begin{align}\label{relationcncnplusone}
        c_n \,d_n\, \e^{-k_nt_n} = c_{n+1} \e^{-k_{n+1}t_n}, \hspace{0.5cm} d_n=\e^{\frac{-k_n}{2} + \frac{5k_{n+1}}{6}}.
    \end{align}

    By Lemma \ref{lemmaniceconnectingtone}, we can transform $c_n \cos(k_n x) \e^{-k_nt}$ into $c_{n+1} \cos(k_{n+1} y) \e^{-k_{n+1}t}$ within $\T^2 \times [t_n, t_{n+1}]$ via a solution $\tilde u_n(x,y,t)$ of $\ddot{\tilde u}_n + \div( \tilde A_n\nabla \tilde u_n)=0$ and with regularity $R(80, 60).$  In particular, by Lemma \ref{lemmaniceconnectingtone}, for $t \in [t_n, t_n+\frac{1}{100}]$, 
\begin{align}\label{tildeunatthebeginning}
\tilde u_n(x,y,t)=c_n \cos(k_nx) \e^{-k_n t}, \hspace{0.5cm} \tilde A_n = Id
\end{align}
 and  for $t \in [t_{n+1}-\frac{1}{100}, t_{n+1}],$
\begin{align}\label{tildeunatthebeginningend}
   \tilde u_n(x,y,t)=c_{n+1} \cos(k_{n+1}y) \e^{-k_{n+1} t} , \hspace{0.5cm} \tilde A_n = Id.
\end{align}

Moreover, by Lemma \ref{lemmaniceconnectingtone}, for $t \in [t_n, t_{n+1}],$ the function $\tilde u_n$ is of the form 
\begin{align}\label{shapetildeunhalfyclinderharmonic}
\tilde u_n(x,y,t) = f_n(t) \cos(k_nx) + g_{n+1}(t) \cos(k_{n+1}y)    
\end{align}
where $f_n, g_{n+1} \in C^2$ satisfy for all $0\leq \alpha\leq 2$ and for all $t \in [t_n, t_{n+1}],$
\begin{align}\label{regularityfngnplusonehalfcylinderhamronic}
|f_n^{(\alpha)}(t)| \lesssim c_n (k_n)^{\frac{7\alpha}{3}} \e^{-k_n t}, \hspace{0.5cm} |g^{(\alpha)}(t)| \lesssim c_{n+1} (k_{n+1})^{\alpha} \e^{\frac{-k_{n+1}t}{3}} \e^{\frac{-2k_{n+1} t_n}{3}}.    
\end{align}

    For $t \in [t_n, t_{n+1}],$ define the functions 
    \begin{align}\label{defunoddeven}
       u_n(x,y,t):= \begin{cases}\wt{u}_n(x,y,t),&n \mbox{ odd,}\\\wt{u}_n(y,x,t),&n \mbox{ even},\end{cases} \hspace{0.5cm}  A_n(x,y,t):= \begin{cases}\wt{A}_n(x,y,t),&n \mbox{ odd,}\\\wt{A}^t_n(y,x,t),&n \mbox{ even}\end{cases}
    \end{align}
   \comment{ and 
    \begin{align}\label{defAnoddeven}
       A_n(x,y,t):= \begin{cases}\wt{A}_n(x,y,t),&n \mbox{ odd,}\\\wt{A}_n(y,x,t),&n \mbox{ even}\end{cases}
    \end{align}}
    where $\tilde{A}^t_n(y,x,t)$, denotes the matrix obtained from $\tilde{A}_n(y,x,t)$ by exchanging the position of the two rows and by exchanging the position of the two columns. We will now glue all these functions $u_n$ and all these matrices $A_n$ to get a function $u$ and a matrix $A$ defined on $\T^2\times \R^+$: for all $n\geq 1$ and for all $t \in [t_n, t_{n+1}],$ we define 
\begin{align}\label{defu}
u(x,y,t):= u_n(x,y,t), \hspace{0.5cm} A(x,y,t):= A_n(x,y,t).
\end{align}

\pseudosection{The regularity of $A$}
We verify that the matrix $A$ defined in \eqref{defu} is $C^1$ on $\T^2 \times \R^+$. Let $t \in [t_n, t_{n+1}]$. To fix the idea, assume $n$ is odd. By the definition of $A$ \eqref{defu} and by the definition of $A_n$ \eqref{defunoddeven}, $A(x,y,t) = \tilde A_n (x,y,t).$ Since $\tilde A_n$ is the transformation matrix from Lemma \ref{lemmaniceconnectingtone}, we saw above that $\tilde A_n$ is in the regularity class $R(80, 60).$ Hence, $ A\in R(80, 60)$ for $t \in [t_n, t_{n+1}]$.

It remains to verify that $A$ is $C^1$ across the endpoints $t_n$, $n \geq 2$. By the definition of $A$ \eqref{defu}, by the definition of $A_n$ \eqref{defunoddeven} and \eqref{tildeunatthebeginning} and \eqref{tildeunatthebeginningend}, 
    \begin{align}\label{defaclosetotnplusone}
        \begin{cases}
            A(x,y,t) = Id  & t \in [t_{n+1}-\frac{1}{100}, t_{n+1}], \\
            A(x,y,t) = Id , & t \in [t_{n+1}, t_{n+1}+\frac{1}{100}].
        \end{cases}
    \end{align}

   \comment{ By Lemma \ref{lemmaniceconnectingtoneglue}, 
    $$
    \lim_{t \rightarrow t_{n+1}^-} C_n = \lim_{t \rightarrow t_{n+1}^+} B_{n+1}=0 
    $$
    and therefore by \eqref{defaclosetotnplusone} $A$ is continuous at $t_{n+1}.$

    Also, by Lemma \ref{lemmaniceconnectingtoneglue},
    $$
    \lim_{t \rightarrow t_{n+1}^-} \partial_i C_n = \lim_{t \rightarrow t_{n+1}^+} \partial_i B_{n+1}=0 
    $$
    for $i\in \{x, y,t\}$. }
    Therefore, $A$ is $C^1$ at $t_{n+1}$. In conclusion, the matrix $A$ defined by \eqref{defu} is  $C^1$ on $\T^2 \times \R^+$ and more precisely, $A \in R(80, 60).$

\pseudosection{The regularity of $u$}
We will show that $u$ defined in \eqref{defu} is uniformly $C^2$ on $\T^2 \times \R^+.$ The proof will be split in several steps:
\\
\pseudosection{Step 1: The regularity of $u$ across $t_n,$ $n \geq 2.$} Without loss of generality, we assume $n$ is odd. By the definition of $u$ \eqref{defu}, by the definition of $u_n$ \eqref{defunoddeven}, and by \eqref{tildeunatthebeginning} and \eqref{tildeunatthebeginningend},  
    \begin{align}\label{unintheinterval}
    \begin{cases}
       u(x,y,t)=  c_{n+1}\cos(k_{n+1}y) \e^{-k_{n+1}t} & t \in [t_{n+1}-\frac{1}{100}, t_{n+1}] ,\\
        u(x,y,t)=c_{n+1} \cos(k_{n+1} y) \e^{-k_{n+1}t}   & t \in [t_{n+1}, t_{n+1}+\frac{1}{100}] 
    \end{cases}    
    \end{align}
    \comment{
and   $$
   \lim_{t \rightarrow t_{n+1}^-} g_{n+1}(t) =1, \hspace{0.5cm}  \lim_{t \rightarrow t_{n+1}^+} f_{n+1}(t) =1.
   $$
   
    Therefore, by  \eqref{unintheinterval}, $u$ is continuous at $t_{n+1}.$

    By Lemma \ref{lemmaniceconnectingtoneglue}, 
    $$
    \lim_{t \rightarrow t_{n+1}^+}  f^{(k)}_{n+1}(t) = \lim_{t \rightarrow t_{n+1}^-} g^{(k)}_{n+1}(t) =0.
    $$
    for $k \in \{1, 2\}$.}
    Therefore,  $u$ is $C^2$ at $t_{n+1}.$ 

    \pseudosection{Step 2: The regularity of $u$ on $\T^2\times [t_n, t_{n+1}]$.} We again assume $n$ is odd. By the definition of $u$ \eqref{defu}, by the definition of $u_n$ \eqref{defunoddeven} and by \eqref{shapetildeunhalfyclinderharmonic}, for $t \in [t_n, t_{n+1}],$
$$
u(x,y,t) = f_n(t) \cos(k_nx) + g_{n+1}(t) \cos(k_{n+1}y)    
$$
where $f_n, g_{n+1} \in C^2$ . Therefore, $u$ is $C^2$ on $\T^2 \times [t_n, t_{n+1}].$

\textbf{Partial conclusion:} By Step 1 and Step 2 above, we can conclude that $u$ is $C^2$ on $\T^2 \times \R^+$.  Moreover, by construction, $u$ solves 
\begin{align}\label{partialconclusionaftersometime}
    \ddot u + \div(A\nabla u)=0
\end{align}
 on $\T^2 \times \R^+$. Note that we do not claim yet that $u$ is \textit{uniformly} $C^2$.

\pseudosection{Step 3: The super-exponential decay of $u$ and the uniform $C^2$ boundedness.}
In this step, we show the following  result: for any multi-index $\alpha \in \mathbb{N}^3$ of order $|\alpha|\leq 2$, the following holds: for any $T \gg 1$, 
\begin{align}\label{maindecayresult}
\sup_{\T^2 \times \{t\geq T\}} |\partial^{\alpha} u(x,y,t)|\leq C \e^{-c\e^{cT}} 
\end{align}
for some numerical constants $C, c>0.$ This result shows that $u$ and all its derivatives of order 2 or less, decay super-exponentially at infinity. In particular, \eqref{maindecayresult} shows that $u$ has super-exponential decay at infinity and that  $u$ is uniformly $C^2$ on $\T^2 \times \R^+.$

\comment{

    \item \textbf{Claim 1:} Let $0\leq T_1<T_2 <\infty$.  For any $t \in [T_1, T_2],$
\begin{align}\label{claimmonotonicsup}
     t \rightarrow \sup_{\T^2}|u(t)|
\end{align}
is monotonic.

\begin{proof}[Proof of Claim 1 \eqref{claimmonotonicsup}]
    Let $T_1\leq t_1 \leq t_2 \leq t_3 \leq T_2$. We argue by contradiction.  Without loss of generality, we  assume that 
    \begin{align}\label{supcontraditionassumption}
    \sup_{\T^2}|u(t_1)| \leq \sup_{\T^2}|u(t_2)|, \hspace{0.5cm}    \sup_{\T^2}|u(t_2)| >  \sup_{\T^2}|u(t_3)|.     
    \end{align}

    Since $u$ solves $\ddot u + \div(A\nabla u)$ in $\T^2 \times [T_1, T_2]$ and $A$ is uniformly elliptic, the maximum principle applies. In particular, it applies in $\T^2 \times [t_1, t_3].$ Since $u$ is not constant,
    $$
    \sup_{\T^2}|u(t_2)| < \max \left( \sup_{\T^2}|u(t_1)|, \sup_{\T^2}|u(t_3)| \right).
    $$
If $\max \bigg( \sup_{\T^2}|u(t_1)|, \sup_{\T^2}|u(t_3)| \bigg) = \sup_{\T^2}|u(t_1)|$, we get a contradiction with \eqref{supcontraditionassumption}. The other case is similar.

If the inequalities in \eqref{supcontraditionassumption} are reversed, we use the version of the maximum principle with the infimum being achieved on the boundary to reach a contradiction. This finishes the proof of Claim 1 \eqref{claimmonotonicsup}. 
\end{proof}

\item \textbf{Claim 2:} For any $n \geq 1$,
\begin{align}\label{ratiomaxnew}
    \frac{\sup_{\T^2}|u(t_{n+1})|}{\sup_{\T^2}|u(t_{n})|} \leq \e^{\frac{-k_n}{100}}
\end{align}
where we recall that $k_n=2^{n_0+n-1}$ by definition (see \eqref{defkn}), where $n_0\gg 1$ is fixed and of our choosing and where we recall that $t_n=(n-1)C$ by definition \eqref{deftimestnnew}  and where $C\geq 2$ is a universal constant given by Lemma \ref{lemmaniceconnectingtone}.

\begin{proof}[Proof of Claim 2 \eqref{ratiomaxnew}]  
Without loss of generality, we assume $n$ is odd. By \eqref{claimmonotonicsup}, $t \rightarrow \sup_{\T^2}|u(t)|$ is monotonic.  By the definition of $u$ \eqref{defu}, by the definition of $u_n$ \eqref{defunoddeven} and by \eqref{tildeunatthebeginning}, 
\begin{align}\label{suptnsuptnplusoneoeronheunderer}
\sup_{\T^2}|u(t_n)| = c_n \e^{-k_n t_n}, \hspace{0.5cm} \sup_{\T^2}\left|u\left(t_n+\frac{1}{100}\right)\right| = c_n \e^{-k_n (t_n+\frac{1}{100})}.    
\end{align}

Hence, the supremum is decreasing somewhere so by monotonicity, it is decreasing everywhere. So, 
\begin{align*}
    \frac{\sup_{\T^2}|u(t_{n+1})|}{\sup_{\T^2}|u(t_{n})|} \leq \frac{\sup_{\T^2}|u(t_n + 1/100)|}{\sup_{\T^2}|u(t_{n})|}  = \frac{c_n \e^{-k_n (t_n+\frac{1}{100})}}{c_n \e^{-k_n t_n}} = \e^{\frac{-k_n}{100}}.
\end{align*}
where we used \eqref{suptnsuptnplusoneoeronheunderer} in the before last equality. This finishes the proof of Claim 2 \eqref{ratiomaxnew}.

\comment{
Since by definition \eqref{defkn}, $k_{n}=2^{n_0+n-1},$ we get
\begin{align}
    \frac{\sup_{\T^2}|u(t_{n+1})|}{\sup_{\T^2}|u(t_{n})|} \leq \e^{\frac{-2^{n+n_0-1}}{100}}
\end{align}
as claimed in \eqref{ratiomaxnew}.} 
\end{proof}

\item \textbf{Claim 3:} For any $n \gg 1,$ 
\begin{align}\label{stetaementclaimthreenew}
    \sup_{\T^2}|u(t_n)| \leq \e^{\frac{k_1}{100}}\e^{-\frac{k_n}{100}}.
\end{align}

\begin{proof}[Proof of Claim 3 \eqref{stetaementclaimthreenew}]
Let $n \gg 1.$
By iterating \eqref{ratiomaxnew}, 
\begin{align}\label{iteratesupnonetoone}
\sup_{\T^2}|u(t_{n})| &= \frac{\sup_{\T^2}|u(t_{n})|}{\sup_{\T^2}|u(t_{n-1})|} \frac{\sup_{\T^2}|u(t_{n-1})|}{\sup_{\T^2}|u(t_{n-2})|} \dots \frac{\sup_{\T^2}|u(t_{2})|}{\sup_{\T^2}|u(t_{1})|} \sup_{\T^2}|u(t_{1})| \nonumber\\
& \leq \e^{\frac{-1}{100} \left(\sum_{k=1}^{n-1} k_l \right)} \sup_{\T^2}|u(t_{1})| \nonumber\\
&= \e^{\frac{-1}{100} \left(\sum_{k=1}^{n-1} k_l \right)}  
\end{align}
since $\sup_{\T^2}|u(t_{1})|=1$. Indeed, $\sup_{\T^2}|u(t_{1})|=c_1\e^{-k_1t_1}$ by \eqref{suptnsuptnplusoneoeronheunderer} and  $c_1=1$ by definition (see \eqref{}) and $t_1=0$ by definition (see \eqref{deftimestnnew}) . Recall that $k_l=2^{n_0+l-1}$ by definition \eqref{defkn}. Therefore,
$$
\sum_{k=1}^{n-1} k_l = \sum_{k=1}^{n-1} 2^{n_0+l-1} = 2^{n_0+n-1}-2^{n_0}=k_n-k_1.
$$
Hence, using \eqref{iteratesupnonetoone}, we get 
$$
\sup_{\T^2}|u(t_{n})| \leq \e^{\frac{k_1}{100}}\e^{\frac{-k_n}{100}}.
$$
This finishes the proof of Claim 3.
\end{proof}
}

\begin{proof}[Proof of \eqref{maindecayresult}]

Let $t \in [t_n, t_{n+1}]$ for some $n \geq 1$. Suppose without loss of generality that $n$  is odd. By the definition of $u$ \eqref{defu} (see also \eqref{defunoddeven} and  \eqref{shapetildeunhalfyclinderharmonic}), for $t \in [t_n, t_{n+1}],$ the function $u$ is of the form 
\begin{align}\label{shapetildeunhalfyclinderharmonicforproof}
u(x,y,t) = f_n(t) \cos(k_nx) + g_{n+1}(t) \cos(k_{n+1}y)    
\end{align}
with $f_n, g_{n+1} \in C^2.$
\comment{
satisfying for all $0\leq \alpha\leq 2$ and for all $t \in [t_n, t_{n+1}],$
\begin{align}\label{regularityfngnplusonehalfcylinderhamronicforproof}
|f_n^{(\alpha)}(t)| \lesssim c_n (k_n)^{\frac{7\alpha}{3}} \e^{-k_n t}, \hspace{0.5cm} |g_{n+1}^{(\alpha)}(t)| \lesssim c_{n+1} (k_{n+1})^{\alpha} \e^{\frac{-k_{n+1}t}{3}} \e^{\frac{-2k_{n+1} t_n}{3}}.    
\end{align}
}

\textbf{Claim 1:} Let $t \in [t_n, t_{n+1}]$ for some $n \gg 1.$ Then, 
\begin{align}\label{claimfourstatetemtn}
   |f_n^{(\beta)}(t)| \lesssim \e^{k_1(2C-\frac{7}{6})} \e^{-(C-\frac{7}{6})k_n},  \hspace{0.5cm}  |g_{n+1}^{(\beta)}(t)| \lesssim  \e^{k_1(2C-\frac{7}{6})} \e^{-(C-\frac{7}{6})k_n}.
\end{align}
for all $0\leq \beta\leq 2$ and where $C=t_{n+1}-t_n\geq 2$ is a universal constant. We postpone the proof of Claim 1 and we continue with the proof of \eqref{maindecayresult}.

Let $T\gg 1$, that is, $T\in [t_{n_1}, t_{n_1+1} ]$ for some $n_1\gg 1.$ For any $t \geq T$, $t$ will be in some interval $t \in [t_{n_2}, t_{n_2+1}]$, $n_2\geq n_1$. By \eqref{shapetildeunhalfyclinderharmonicforproof}, for any $0\leq \beta \leq 2$ and for $t \in [t_{n_2}, t_{n_2+1}]$
\begin{align}\label{sueperdecaytderivatives}
|\partial^{\beta}_t u(x,y,t)| \leq |f_{n_2}^{(\beta)}(t)| + |g_{n_2+1}^{(\beta)}(t)| &\lesssim \e^{k_1(2C-\frac{7}{6})} \e^{-(C-\frac{7}{6})k_{n_2}} \nonumber \\
&\leq  \e^{k_1(2C-\frac{7}{6})} \e^{-(C-\frac{7}{6})k_{n_1}} \nonumber \\
&=\e^{2^{n_0}(2C-\frac{7}{6})} \e^{-(C-\frac{7}{6})2^{n_0+n_1-1}}.    
\end{align}
The second inequality comes from Claim 1 \eqref{claimfourstatetemtn} and the third and fourth inequalities hold since $n_2\geq n_1$ and $k_n=2^{n_0+n-1}$ by definition (see \eqref{defkn}) where $n_0\gg 1$ is fixed and of our choosing. Since $T \in [t_{n_1}, t_{n_1+1}]$ and since $t_n=C(n-1)$ with $C\geq 2$ (see \eqref{deftimestnnew}), we have $\frac{T}{C}\leq n_1.$ Therefore, by \eqref{sueperdecaytderivatives},
\begin{align}\label{finalfort}
|\partial^{\beta}_t u(x,y,t)| \lesssim \e^{2^{n_0}(2C-\frac{7}{6})} \e^{-(C-\frac{7}{6})2^{n_0-1} 2^{\frac{T}{C}}}      
\end{align}
for any $0\leq \beta \leq 2$ and for any $t\geq T.$ Since $n_0$ is fixed, and since \eqref{finalfort} holds for any $t \geq T,$ we get the claimed inequality \eqref{maindecayresult} when $\alpha=(0, 0, \beta)$. By combining \eqref{shapetildeunhalfyclinderharmonicforproof} with Claim 1 \eqref{claimfourstatetemtn}, it is straightforward to see that a similar argument ensures that \eqref{maindecayresult} holds for any multi-index $\alpha$ of order $|\alpha|\leq 2.$ This finishes the proof of \eqref{maindecayresult} given Claim 1.
\end{proof}

We now prove Claim 1.
\begin{proof}[Proof of Claim 1] 
Let $t \in [t_n, t_{n+1}]$ for some $n \geq 1$. Suppose without loss of generality that $n$  is odd. By the definition of $u$ \eqref{defu} (see also \eqref{defunoddeven}, \eqref{shapetildeunhalfyclinderharmonic} and \eqref{regularityfngnplusonehalfcylinderhamronic}), for $t \in [t_n, t_{n+1}],$ the function $u$ is of the form 
\begin{align}\label{shapetildeunhalfyclinderharmonicforproofbis}
u(x,y,t) = f_n(t) \cos(k_nx) + g_{n+1}(t) \cos(k_{n+1}y)    
\end{align}
with $f_n, g_{n+1} \in C^2$ satisfying for all $0\leq \beta\leq 2$ and for all $t \in [t_n, t_{n+1}],$
\begin{align}\label{regularityfngnplusonehalfcylinderhamronicforproof}
|f_n^{(\beta)}(t)| \lesssim c_n (k_n)^{\frac{7\beta}{3}} \e^{-k_n t}, \hspace{0.5cm} |g_{n+1}^{(\beta)}(t)| \lesssim c_{n+1} (k_{n+1})^{\beta} \e^{\frac{-k_{n+1}t}{3}} \e^{\frac{-2k_{n+1} t_n}{3}}.    
\end{align}
We start with $f_n^{(\beta)}$. By \eqref{regularityfngnplusonehalfcylinderhamronicforproof}, we get for $t \in [t_n, t_{n+1}]$, $n \geq 1,$ 
\begin{align}\label{claimfourfirstresult}
    |f_n^{(\beta)}(t)| \lesssim (k_n)^{\frac{7\beta}{3}} c_n\e^{-k_n t} \leq (k_n)^{\frac{7\beta}{3}} c_n\e^{-k_n t_n}.
\end{align}
    for any $0\leq \beta \leq 2$. 
    
   Now, for $g_{n+1}^{(\beta)},$  recall that by \eqref{regularityfngnplusonehalfcylinderhamronicforproof}, for all $t \in [t_n, t_{n+1}]$, $n \geq 1,$
    $$
    |g_{n+1}^{(\beta)}(t)| \lesssim c_{n+1} (k_{n+1})^{\beta} \e^{\frac{-k_{n+1}t}{3}} \e^{\frac{-2k_{n+1} t_n}{3}}
    $$
    for all $0\leq \beta \leq 2.$ Recall that $ 
        c_{n+1}=c_n \,d_n\, \e^{(k_{n+1}-k_n)t_n} = c_n \,d_n\, \e^{k_nt_n}$. The first equality comes from the definition of $c_{n+1}$ \eqref{relationcncnplusone} and the second comes from $k_{n+1}-k_n=k_n$ by definition \eqref{defkn}. Therefore,  we have that
\begin{align*}
    |g_{n+1}^{(\beta)}(t)| &\lesssim (k_{n+1})^{\beta} c_{n} \,d_n\, \e^{k_n t_n}  \e^{\frac{-2k_{n+1} t_n}{3}} \e^{\frac{-k_{n+1}t}{3}} 
\end{align*}

Using again  $k_{n+1}=2k_n$, we have that $\e^{k_n t_n}\e^{\frac{-2k_{n+1} t_n}{3}} = \e^{\frac{-k_{n} t_n}{3}} $ and the estimate above  becomes
\begin{align*}
   |g_{n+1}^{(\beta)}(t)| \lesssim (k_{n+1})^{\beta} c_{n} \,d_n\, \e^{\frac{-k_{n} t_n}{3}}  \e^{\frac{-k_{n+1}t}{3}} . 
\end{align*}

Recall that $d_n=\e^{\frac{-k_n}{2}+\frac{5k_{n+1}}{6}}$  by definition \eqref{relationcncnplusone}, which becomes $d_n=\e^{\frac{7k_n}{6}}$ by using $k_{n+1}=2k_n$. Hence, 
\begin{align*}
    |g_{n+1}^{(\beta)}(t)| \lesssim  (k_{n+1})^{\beta} c_{n} \e^{\frac{7}{6}k_n} \e^{\frac{-k_nt_n}{3}} \e^{\frac{-2k_nt}{3}}
\end{align*}
where we used again that $k_{n+1}=2k_n.$ Finally, since $t \in [t_n, t_{n+1}],$ we get
\begin{align}\label{estimateforgnplusonebeforeintermediateclaim}
|g_{n+1}^{(\beta)}(t)| \lesssim  (k_{n+1})^{\beta} c_{n} \e^{-k_n(t_n-\frac{7}{6})}.    
\end{align}

\textbf{Partial conclusion:} Combining \eqref{claimfourfirstresult} and \eqref{estimateforgnplusonebeforeintermediateclaim}, we showed for $t \in [t_n, t_{n+1}],$ $n \geq 1$ and for all $0\leq \beta \leq 2,$
\begin{align}\label{intermediateconclusionnicefngnplusone}
|f_n^{(\beta)}(t)| \lesssim (k_n)^{\frac{7\beta}{3}} c_n\e^{-k_n t_n},  \hspace{0.5cm}  |g_{n+1}^{(\beta)}(t)| \lesssim  (k_{n+1})^{\beta} c_{n} \e^{-k_n(t_n-\frac{7}{6})}. 
\end{align}

\textbf{Claim 2:} Define $C_n:=c_n \e^{-k_n(t_n-\frac{7}{6})}$. Then $C_n= \e^{k_1(2C-\frac{7}{6})} \e^{-(2C-\frac{14}{6})k_n}, $ where we recall that $C=t_{n+1}-t_n \geq 2$ is a universal constant.
\\

We postpone the proof of Claim 2 and we finish the proof of Claim 1 \eqref{claimfourstatetemtn}. Since $\e^{-k_n t_n} \leq \e^{-k_n(t_n-\frac{7}{6})},$ it comes by \eqref{intermediateconclusionnicefngnplusone} and by Claim 2 that 
\begin{align}\label{claimforclaimoneblabla}
|f_n^{(\beta)}(t)| \lesssim (k_n)^{\frac{7\beta}{3}} \e^{k_1(2C-\frac{7}{6})} \e^{-(2C-\frac{14}{6})k_n},  \hspace{0.5cm}  |g_{n+1}^{(\beta)}(t)| \lesssim  (k_{n+1})^{\beta} \e^{k_1(2C-\frac{7}{6})} \e^{-(2C-\frac{14}{6})k_n}    
\end{align}
for all $t \in [t_n, t_{n+1}],$ $n \geq 1$ and for all $0\leq \beta \leq 2.$  Now, let $t\gg 1$, that is, $t \in [t_n, t_{n+1}]$ for $n\gg 1.$ Since $k_n=2^{n_0+n-1}$ by definition (see \eqref{defkn}) and since $C=t_{n+1}-t_n\geq 2$ (see \eqref{deftimestnnew}) we have by \eqref{claimforclaimoneblabla} that,
$$
|f_n^{(\beta)}(t)| \lesssim  \e^{k_1(2C-\frac{7}{6})} \e^{-(C-\frac{7}{6})k_n},  \hspace{0.5cm}  |g_{n+1}^{(\beta)}(t)| \lesssim  \e^{k_1(2C-\frac{7}{6})} \e^{-(C-\frac{7}{6})k_n}    
$$
for all $0\leq \beta \leq 2.$ This proves Claim 1.
\end{proof}

We now prove Claim 2.
\begin{proof}[Proof of Claim 2]
Recall from definition \eqref{relationcncnplusone} that $d_n=\e^{\frac{-k_n}{2}+\frac{5k_{n+1}}{6}} = \e^{\frac{7}{6}k_n}$ since $k_{n+1}=2k_n.$ Therefore, since $c_{n+1}=c_n d_n \e^{k_nt_n},$ we have
\begin{align*}
    c_{n+1} = c_n \e^{k_n(\frac{7}{6}+t_n)}
\end{align*}
for all $ n \geq 1.$ Therefore, 
$$
C_{n+1} = c_{n+1}\e^{-k_{n+1}(t_{n+1}-\frac{7}{6})} = c_n \e^{k_n(\frac{7}{6}+t_n)}\e^{-k_{n+1}(t_{n+1}-\frac{7}{6})}.
$$

Since $k_{n+1}=2k_n$ (see \eqref{defkn}) and since $t_{n+1}=t_{n}+C$ (see \eqref{deftimestnnew}), this estimates becomes
$$
C_{n+1} = c_n \e^{k_n(\frac{7}{6}+t_n)} \e^{-2k_nt_n-2k_nC +k_n\frac{14}{6}} = c_n \e^{-k_n(t_n-\frac{7}{6})} \e^{-k_n(2C-\frac{14}{6})}=C_n \e^{-k_n(2C-\frac{14}{6})}.
$$
So, for all $n \geq 1,$
$$
C_n=C_1\e^{-(2C-\frac{14}{6})\sum_{l=1}^{n-1}k_l}
$$
Since $k_l=2^{n_0+l-1}$ by definition \eqref{defkn}, we see that $\sum_{l=1}^{n-1}k_l=2^{n_0+n-1}-2^{n_0}=k_n-k_1.$ Therefore, for all $n \geq 1,$
\begin{align}\label{cnaftersometime}
C_n = C_1 \e^{-(2C-\frac{14}{6})(k_n-k_1)}.    
\end{align}

By definition (see Claim 2), $C_1=c_1\e^{-k_1(t_1-\frac{7}{6})}$. Since  $c_1=1$ by definition \eqref{relationcncnplusone} and since $t_1=0$ also by definition \eqref{deftimestnnew}, we obtain $C_1=\e^{\frac{7}{6}k_1}$. And therefore, by \eqref{cnaftersometime}, 
$$
C_n = \e^{k_1(2C-\frac{7}{6})} \e^{-(2C-\frac{14}{6})k_n}
$$
as claimed.   This finishes the proof of Claim 2.

\comment{
Note that $k_1=2^{n_0}$ for some fixed $n_0\gg 1$ of our choosing (see \eqref{defkn}), that $c_1=1$ by definition \eqref{relationcncnplusone} and that $t_1=0$ also by definition \eqref{deftimestnnew} . Therefore, $C_1=c_1\e^{-k_1(t_1-\frac{7}{6})}=\e^{\frac{7}{6}k_1}$ and
$C_1 \e^{(2C-\frac{14}{6})k_1}=\e^{k_1(2C-\frac{7}{6})}$. In conclusion, we showed 
$$
C_n = \e^{k_1(2C-\frac{7}{6})} \e^{-(2C-\frac{14}{6})k_n}
$$
as claimed.   This finishes the proof of Claim 2.}
\end{proof}

\comment{
\pseudosection{Step 4: The uniform $C^2$ regularity of $u$.}

\begin{itemize}
    \item \textbf{The $t$-derivatives:} Let $t\gg 1$. In particular, $t \in [t_n, t_{n+1}]$ for some $n \gg 1$. Suppose without loss of generality that $n$  is odd. By the definition of $u$ \eqref{defu}, for $t \in [t_n, t_{n+1}],$ the function $u$ is of the form 
\begin{align}\label{shapetildeunhalfyclinderharmonicforproof}
u(x,y,t) = f_n(t) \cos(k_nx) + g_{n+1}(t) \cos(k_{n+1}y)    
\end{align}
with $f_n, g_{n+1} \in C^2$ satisfying for all $0\leq \alpha\leq 2$ and for all $t \in [t_n, t_{n+1}],$
\begin{align}\label{regularityfngnplusonehalfcylinderhamronicforproof}
|f_n^{(\alpha)}(t)| \lesssim c_n (k_n)^{\frac{7\alpha}{3}} \e^{-k_n t}, \hspace{0.5cm} |g_{n+1}^{(\alpha)}(t)| \lesssim c_{n+1} (k_{n+1})^{\alpha} \e^{\frac{-k_{n+1}t}{3}} \e^{\frac{-2k_{n+1} t_n}{3}}.    
\end{align}

Note that for $t \in [t_n, t_{n+1}],$
\begin{align}\label{estimateforfnfinaldsfdsfsdf}
    |f_n^{(\alpha)}(t)| \lesssim  (k_n)^{\frac{7\alpha}{3}} c_n\e^{-k_n t_n} = (k_n)^{\frac{7\alpha}{3}} \sup_{\T^2} |u(t_n)| \leq (k_n)^{\frac{7\alpha}{3}}  \e^{\frac{-k_n}{200}}
\end{align}
where the equality comes from \eqref{suptnsuptnplusoneoeronheunderer} and where the last inequality comes from the estimate $\sup_{\T^2} |u(t_n)| \leq \e^{\frac{-2^{n_0-2} \,2^{n}}{100}}$ that was proved in \eqref{finaldecayestimevaluenicenicenice} and the definition $k_n=2^{n_0+n-1}$ (see\eqref{defkn}). 

To estimate $g_{n+1}^{(\alpha)}$, we recall that $ 
        c_n \,d_n\, \e^{(k_{n+1}-k_n)t_n} = c_{n+1}$  by definition \eqref{relationcncnplusone} and $k_{n+1}-k_n=k_n$ by definition \eqref{defkn}. Therefore,  we have that
\begin{align*}
    |g_{n+1}^{(\alpha)}(t)| &\lesssim (k_{n+1})^{\alpha} c_{n} \,d_n\, \e^{k_n t_n}  \e^{\frac{-2k_{n+1} t_n}{3}} \e^{\frac{-k_{n+1}t}{3}} 
\end{align*}

Using again  $k_{n+1}=2k_n$, we have that $\e^{k_n t_n}\e^{\frac{-2k_{n+1} t_n}{3}} = \e^{\frac{-k_{n} t_n}{3}} $ and the above estimates becomes
\begin{align}
   |g_{n+1}^{(\alpha)}(t)| \lesssim (k_{n+1})^{\alpha} c_{n} \,d_n\, \e^{\frac{-k_{n} t_n}{3}}  \e^{\frac{-k_{n+1}t}{3}} . 
\end{align}

Recall that $d_n=\e^{\frac{-k_n}{2}+\frac{5k_{n+1}}{6}}$  by definition \eqref{relationcncnplusone}, which becomes $d_n=\e^{\frac{7k_n}{6}}$ by using $k_{n+1}=2k_n$. Hence, 

\comment{
Since $k_{n+1}=2k_n$, we get $d_n=\e^{\frac{7}{6}k_n}.$ Therefore 

\begin{align}\label{estimatesforgnnew}
    |g_{n+1}^{(\alpha)}(t)|& \lesssim (k_{n+1})^{\alpha} c_n \e^{\frac{7}{6}k_n}   \e^{\frac{-k_{n+1}t}{3}} \e^{\frac{-k_{n} t_n}{3}} \nonumber\\
    &= (k_{n+1})^{\alpha} c_n \e^{-k_n t_n} \e^{\frac{2k_n t_n}{3}} \e^{\frac{-2k_nt}{3}} \e^{\frac{7}{6}k_n} \nonumber \\
    &= (k_{n+1})^{\alpha} \sup_{\T^2} |u(t_n)| \e^{-k_n\left(\frac{2}{3}(t-t_n)-\frac{7}{6}\right)}
\end{align}
where in the last equality we used \eqref{suptnsuptnplusoneoeronheunderer} $ \sup_{\T^2} |u(t_n)|=  c_n \e^{-k_n t_n} .$}

\begin{align}\label{estimategnplusonealphafinalgodododod}
    |g_{n+1}^{(\alpha)}(t)| \lesssim  (k_{n+1})^{\alpha} c_{n} \e^{\frac{7}{6}k_n} \e^{\frac{-k_nt_n}{3}} \e^{\frac{-2k_nt}{3}}
\end{align}
where we used again that $k_{n+1}=2k_n.$ Finally, since $t \in [t_n, t_{n+1}],$ $t=t_n+\tau$ for some $0 \leq\tau \leq C$ (recall $t_{n+1}=t_n+C$), we get
\begin{align}\label{estimateforgnplusonebeforeintermediateclaim}
|g_{n+1}^{(\alpha)}(t)| \lesssim  (k_{n+1})^{\alpha} c_{n} \e^{-k_n(t_n-\frac{7}{6})} \e^{\frac{-2k_n\tau}{3}}.    
\end{align}

\textbf{Intermediate claim:} $C_n:=c_n \e^{-k_n(t_n-\frac{7}{6})}= \e^{2^{n_0}(2C-\frac{7}{6})} \e^{-(2C-\frac{14}{6})k_n} $
where $n_0\gg 1$ is fixed of our choosing.
\begin{proof}[Proof of the intermediate claim]
Recall that $d_n=\e^{\frac{-k_n}{2}+\frac{5k_{n+1}}{6}} = \e^{\frac{7}{6}k_n}$ since $k_{n+1}=2k_n.$ Therefore, since $c_{n+1}=c_n d_n \e^{k_nt_n},$ we have
\begin{align*}
    c_{n+1} = c_n \e^{k_n(\frac{7}{6}+t_n)}
\end{align*}
for all $ n \geq 1.$ Therefore, 
$$
C_{n+1} = c_{n+1}\e^{-k_{n+1}(t_{n+1}-\frac{7}{6})} = c_n \e^{k_n(\frac{7}{6}+t_n)}\e^{-k_{n+1}(t_{n+1}-\frac{7}{6})}.
$$

Since $k_{n+1}=2k_n$ (see \eqref{defkn}) and since $t_{n+1}=t_{n}+C$ (see \eqref{deftimestnnew}), this estimates becomes
$$
C_{n+1} = c_n \e^{k_n(\frac{7}{6}+t_n)} \e^{-2k_nt_n-2k_nC +k_n\frac{14}{6}} = c_n \e^{-k_n(t_n-\frac{7}{6})} \e^{-k_n(2C-\frac{14}{6})}=C_n \e^{-k_n(2C-\frac{14}{6})}.
$$
So, for all $n \geq 1,$
$$
C_n=C_1\e^{-(2C-\frac{14}{6})\sum_{l=1}^{n-1}k_l}
$$
Since $k_l=2^{n_0+l-1},$ we see that $\sum_{l=1}^{n-1}k_l=2^{n_0+n-1}-2^{n_0}=k_n-k_1.$ Therefore, for all $n \geq 1,$
$$
C_n = C_1 \e^{-(2C-\frac{14}{6})(k_n-k_1)}.
$$
Note that $k_1=2^{n_0}$ for some fixed $n_0\gg 1$ of our choosing (see \eqref{defkn}), that $c_1=1$ by definition \eqref{relationcncnplusone} and that $t_1=0$ also by definition \eqref{deftimestnnew} . Therefore,
$C_1 \e^{(2C-\frac{14}{6})k_1}=\e^{k_1(2C-\frac{7}{6})}=\e^{2^{n_0}(2C-\frac{7}{6})}$. In conclusion, we showed 
$$
C_n = \e^{2^{n_0}(2C-\frac{7}{6})} \e^{-(2C-\frac{14}{6})k_n}
$$
as claimed.   
\end{proof}

Recalling that $|g_{n+1}^{(\alpha)}(t)| \lesssim  (k_{n+1})^{\alpha} c_{n} \e^{-k_n(t_n-\frac{7}{6})} \e^{\frac{-2k_n\tau}{3}}    
$ by \eqref{estimateforgnplusonebeforeintermediateclaim}, we get using the Intermediate Claim,
\begin{align}\label{finalestimateforgnplusonegoodblabla}
|g_{n+1}^{(\alpha)}(t)| \lesssim  (k_{n+1})^{\alpha}  \e^{2^{n_0}(2C-\frac{7}{6})} \e^{-(2C-\frac{14}{6})k_n}.    
\end{align}

\comment{

Now, recall that $d_n=\e^{\frac{-k_n}{2}+\frac{5k_{n+1}}{6}} = \e^{\frac{7}{6}k_n}$ since $k_{n+1}=2k_n.$ Therefore, since $c_{n+1}=c_n d_n \e^{k_nt_n},$
\begin{align*}
    c_{n+1} = c_n \e^{k_n(\frac{7}{6}+t_n)}
\end{align*}
for all $ n \geq 1.$ Hence, 
\begin{align}
    c_n=\e^{\sum_{l=2}^{n-1} k_l(\frac{7}{6}+t_l)}.
\end{align}

\textcolor{blue}{
Now, recall that $k_l=2^{n_0+l-1}$ (see \eqref{defkn}) and $t_l=C(l-1)$ (see \eqref{deftimestnnew}). This implies that 
\begin{align}\label{formulaforcnestimate}
    c_n=\e^{\frac{7}{6}2^{n_0-1}(2^n-4) + C\left(\frac{n(n-1)}{2}-(n-1)\right)}=\e^{\frac{7}{6}k_n-\frac{14}{3}2^{n_0-1}+(\frac{n}{2}-1)t_n} \leq \e^{10k_n}.
\end{align}
}

Therefore, by combining \eqref{estimategnplusonealphafinalgodododod} and \eqref{formulaforcnestimate}, we get 
\begin{align}\label{finalesiteforgnplusgodoodfdfdffdf}
    |g_{n+1}^{(\alpha)}(t)| \lesssim  (k_{n+1})^{\alpha}  \e^{-k_n (\frac{1}{2}-10 + \frac{t_n}{3})}  \e^{-k_{n+1}(\frac{t}{3}-\frac{5}{6})} .
\end{align}

}

\textbf{Partial conclusion:} We showed in \eqref{estimateforfnfinaldsfdsfsdf} and \eqref{finalestimateforgnplusonegoodblabla} that for $t \in [t_n, t_{n+1}]$ with $n \gg 1,$
\begin{align}\label{partialconclusionfngnplusonedecay}
    |f_n^{(\alpha)}(t)| \lesssim  (k_n)^{\frac{7\alpha}{3}}  \e^{\frac{-k_n}{200}}, \hspace{0.5cm} |g_{n+1}^{(\alpha)}(t)| \lesssim (k_{n+1})^{\alpha}  \e^{2^{n_0}(2C-\frac{7}{6})} \e^{-(2C-\frac{14}{6})k_n}.
\end{align}
for all $0\leq \alpha\leq 2.$

\textbf{Finishing the estimation}
Now, recall from \eqref{shapetildeunhalfyclinderharmonicforproof} that $
u(x,y,t) = f_n(t) \cos(k_nx) + g_{n+1}(t) \cos(k_{n+1}y)$ for $t \in [t_n, t_{n+1}]$.  Hence, 
\begin{align}\label{timederivativeuclosetofinal}
    |D^{\alpha}_t u| \leq |f_n^{(\alpha)}(t)| + |g_{n+1}^{(\alpha)}(t)|
\end{align}
for any $0\leq \alpha \leq 2.$

Let $t\gg 1$. In particular, $t \in [t_n, t_{n+1}]$ for some $n \gg 1.$ Since $k_n=2^{n_0+n-1}$ where $n_0\gg 1$ fixed of our choosing and since $t_n=C(n-1)$ for a universal $C\geq 2$, we get by combining \eqref{partialconclusionfngnplusonedecay} and \eqref{timederivativeuclosetofinal} that
$$
|D_t^{\alpha} u(x,y,t)| \leq 1
$$

In conclusion, we showed that for any $0\leq \alpha \leq 2$, $|D^{\alpha}_t u|$ is uniformly bounded on $\T^2 \times \R^+.$

\end{itemize}
}

\comment{
\newpage

\pseudosection{Step 3: The super-exponential decay of $u$.} 
\begin{itemize}
    \item \textbf{Claim 1:} Let $0\leq T_1<T_2 <\infty$.  For any $t \in [T_1, T_2],$
\begin{align}\label{claimmonotonicsup}
     t \rightarrow \sup_{\T^2}|u(t)|
\end{align}
is monotonic.

\begin{proof}[Proof of Claim 1 \eqref{claimmonotonicsup}]
    Let $T_1\leq t_1 \leq t_2 \leq t_3 \leq T_2$. We argue by contradiction.  Without loss of generality, we  assume that 
    \begin{align}\label{supcontraditionassumption}
    \sup_{\T^2}|u(t_1)| \leq \sup_{\T^2}|u(t_2)|, \hspace{0.5cm}    \sup_{\T^2}|u(t_2)| >  \sup_{\T^2}|u(t_3)|.     
    \end{align}

    Since $u$ solves $\ddot u + \div(A\nabla u)$ in $\T^2 \times [T_1, T_2]$ and $A$ is uniformly elliptic, the maximum principle applies. In particular, it applies in $\T^2 \times [t_1, t_3].$ Since $u$ is not constant,
    $$
    \sup_{\T^2}|u(t_2)| < \max \left( \sup_{\T^2}|u(t_1)|, \sup_{\T^2}|u(t_3)| \right).
    $$
If $\max \bigg( \sup_{\T^2}|u(t_1)|, \sup_{\T^2}|u(t_3)| \bigg) = \sup_{\T^2}|u(t_1)|$, we get a contradiction with \eqref{supcontraditionassumption}. The other case is similar.

If the inequalities in \eqref{supcontraditionassumption} are reversed, we use the version of the maximum principle with the infimum being achieved on the boundary to reach a contradiction. This finishes the proof of Claim 1 \eqref{claimmonotonicsup}. 
\end{proof}

\item \textbf{Claim 2:} For any $n \geq 1$,
\begin{align}\label{ratiomaxnew}
    \frac{\sup_{\T^2}|u(t_{n+1})|}{\sup_{\T^2}|u(t_{n})|} \leq \e^{\frac{-2^{n+n_0-1}}{100}}
\end{align}
where we recall that $n_0\gg 1$ is fixed and of our choosing and where we recall that $t_n=(n-1)C$ by definition \eqref{deftimestnnew}  and where $C\geq 2$ is a universal constant given by Lemma \ref{lemmaniceconnectingtone}.

\begin{proof}[Proof of Claim 2 \eqref{ratiomaxnew}]  
Without loss of generality, we assume $n$ is odd. By \eqref{claimmonotonicsup}, $t \rightarrow \sup_{\T^2}|u(t)|$ is monotonic.  By the definition of $u$ \eqref{defu}, by the definition of $u_n$ \eqref{defunoddeven} and by \eqref{tildeunatthebeginning}, 
\begin{align}\label{suptnsuptnplusoneoeronheunderer}
\sup_{\T^2}|u(t_n)| = c_n \e^{-k_n t_n}, \hspace{0.5cm} \sup_{\T^2}\left|u\left(t_n+\frac{1}{100}\right)\right| = c_n \e^{-k_n (t_n+\frac{1}{100})}.    
\end{align}

Hence, the supremum is decreasing somewhere so by monotonicity, it is decreasing everywhere. So, 
\begin{align*}
    \frac{\sup_{\T^2}|u(t_{n+1})|}{\sup_{\T^2}|u(t_{n})|} \leq \frac{\sup_{\T^2}|u(t_n + 1/100)|}{\sup_{\T^2}|u(t_{n})|}  = \frac{c_n \e^{-k_n (t_n+\frac{1}{100})}}{c_n \e^{-k_n t_n}} = \e^{\frac{-k_n}{100}}.
\end{align*}
where we used \eqref{suptnsuptnplusoneoeronheunderer} in the before last equality. Since by definition \eqref{defkn}, $k_{n}=2^{n_0+n-1},$ we get
\begin{align}
    \frac{\sup_{\T^2}|u(t_{n+1})|}{\sup_{\T^2}|u(t_{n})|} \leq \e^{\frac{-2^{n+n_0-1}}{100}}
\end{align}
as claimed in \eqref{ratiomaxnew}. This finishes the proof of Claim 2 \eqref{ratiomaxnew}.
\end{proof}

\item \textbf{Claim 3:} We claim 
\begin{align}\label{maxprincipleforevalueproof}
    \sup_{\T^2 \times [t_n, t_{n+1}]} |u| =\sup_{\T^2}|u(t_n)|.
\end{align}

\begin{proof}[Proof of Claim 3 \eqref{maxprincipleforevalueproof}]
    
Since $u$ solves $\ddot{u}+\div(A\nabla u)=0$ on $\T^2 \times [t_n, t_{n+1}]$, the maximum principle applies and 
\begin{align}\label{maxprincipleboundary}
 \sup_{\T^2 \times [t_n, t_{n+1}]} |u| =\max \left( \sup_{\T^2}|u(t_n)|, \sup_{\T^2}|u(t_{n+1})| \right).    
\end{align}

In the previous claim \eqref{ratiomaxnew}, we showed 
\begin{align*}
    \frac{\sup_{\T^2}|u(t_{n+1})|}{\sup_{\T^2}|u(t_{n})|} \leq \e^{\frac{-2^{n+n_0-1}}{100}}.
\end{align*}
Therefore, $\max \bigg( \sup_{\T^2}|u(t_n)|, \sup_{\T^2}|u(t_{n+1})|\bigg) =\sup_{\T^2}|u(t_n)| $ and by \eqref{maxprincipleboundary}, the claim \eqref{maxprincipleforevalueproof} is proved.
\end{proof}

\item \textbf{Claim 4:} Let $T\gg 1.$ Then, 
\begin{align}\label{doublexponentialdecaytonettwogood}
    \sup_{\T^2 \times \{t \geq T\}} |u| \leq \e^{-c\e^{cT}}
\end{align}
for some numerical $c>0.$

\begin{proof}[Proof of Claim 4 \eqref{doublexponentialdecaytonettwogood}.] 
First of all, note that $u$ is uniformly bounded on $\T^2 \times \R^+.$ Indeed, by Claim 1 \eqref{claimmonotonicsup}, $\sup_{\T^2}|u(t)|$ is a monotonic function. Moreover, by \eqref{suptnsuptnplusoneoeronheunderer}, $\sup_{\T^2}|u(t)|$ is a decreasing somewhere, hence it is decreasing everywhere. So, for any $t \geq 0,$ $\sup_{\T^2}|u(t)| \leq \sup_{\T^2}|u(0)|=1$ by construction (by \eqref{tildeunatthebeginning}, $u(t_1) = c_1 \cos(k_1 x) \e^{-k_1 t_1}$, $t_1=0$ by definition and $c_1=1$ by \eqref{relationcncnplusone}).

Now, let $T \gg 1$. Note that $T \in [t_{n_1}, t_{n_1+1}]$ for some $n_1 \gg 1$. Therefore, 
\begin{align}\label{sequenceinequalityforsupereponentialdecay}
    \sup_{\T^2 \times \{t \geq T\}} |u| \leq \sup_{n \geq n_1} \sup_{\T^2 \times [t_n, t_{n+1}]} |u| = \sup_{n \geq n_1} \sup_{\T^2}|u(t_n)| \leq \sup_{\T^2} |u(t_{n_1})|.
\end{align}
Indeed, the equality comes from Claim 3 \eqref{maxprincipleforevalueproof} and the last inequality comes from Claim 2 \eqref{ratiomaxnew}.

By iterating \eqref{ratiomaxnew}, 
\begin{align}\label{iteratesupnonetoone}
\sup_{\T^2}|u(t_{n_1})| &= \frac{\sup_{\T^2}|u(t_{n_1})|}{\sup_{\T^2}|u(t_{n_1-1})|} \frac{\sup_{\T^2}|u(t_{n_1-1})|}{\sup_{\T^2}|u(t_{n_1-2})|} \dots \frac{\sup_{\T^2}|u(t_{2})|}{\sup_{\T^2}|u(t_{1})|} \sup_{\T^2}|u(t_{1})| \nonumber\\
& \leq \e^{\frac{-2^{n_0-1}}{100}  \left(\sum_{k=1}^{n_1-1} 2^k\right)} \sup_{\T^2}|u(t_{1})|\nonumber \\
&= \e^{\frac{-2^{n_0-1}}{100}  \left(\sum_{k=1}^{n_1-1} 2^k\right)} 
\end{align}
since $\sup_{\T^2}|u(t_{1})|=1$ as we saw just above. We note that $$\sum_{k=1}^{n_1-1} 2^k  \geq 2^{n_1-1}$$ since $n_1 > 1.$ Recalling that $t_n=C(n-1)$  (see \eqref{deftimestnnew}) and that $T\in [t_{n_1}, t_{n_1+1}]$  by assumption, we get $n_1\geq \frac{T}{C}$ and therefore, \eqref{iteratesupnonetoone} becomes
\begin{align}\label{finaldecayestimevaluenicenicenice}
\sup_{\T^2}|u(t_{n_1})| \leq \e^{\frac{-2^{n_0-2} \,2^{n_1}}{100}} \leq\e^{\frac{-2^{n_0-2} \,2^{\frac{T}{C}}}{100}} .    
\end{align}

Therefore, by \eqref{sequenceinequalityforsupereponentialdecay}, for any $T \gg 1,$
$$
\sup_{\T^2\times \{t \geq T\}}|u| \leq \e^{\frac{-2^{n_0-2} \,2^{\frac{T}{C}}}{100}}
$$
where we recall that $n_0\gg 1$ is a fixed number of our choosing. Since $n_0$ is fixed of our choosing, we get Claim 4: the function $u$ has a super-exponential decay.
\end{proof}
\end{itemize}

\textbf{Partial conclusion:} In this third step, we saw that the function $u$ that we constructed in \eqref{defu} has a super-exponential decay: for any $T\gg 1,$
$$
\sup_{\T^2\times \{t \geq T\}}|u| \leq \e^{\frac{-2^{n_0-2} \,2^{\frac{T}{C}}}{100}}.
$$

\pseudosection{Step 4: The uniform $C^2$ regularity of $u$.}

\begin{itemize}
    \item \textbf{The $t$-derivatives:} Let $t\gg 1$. In particular, $t \in [t_n, t_{n+1}]$ for some $n \gg 1$. Suppose without loss of generality that $n$  is odd. By the definition of $u$ \eqref{defu}, for $t \in [t_n, t_{n+1}],$ the function $u$ is of the form 
\begin{align}\label{shapetildeunhalfyclinderharmonicforproof}
u(x,y,t) = f_n(t) \cos(k_nx) + g_{n+1}(t) \cos(k_{n+1}y)    
\end{align}
with $f_n, g_{n+1} \in C^2$ satisfying for all $0\leq \alpha\leq 2$ and for all $t \in [t_n, t_{n+1}],$
\begin{align}\label{regularityfngnplusonehalfcylinderhamronicforproof}
|f_n^{(\alpha)}(t)| \lesssim c_n (k_n)^{\frac{7\alpha}{3}} \e^{-k_n t}, \hspace{0.5cm} |g_{n+1}^{(\alpha)}(t)| \lesssim c_{n+1} (k_{n+1})^{\alpha} \e^{\frac{-k_{n+1}t}{3}} \e^{\frac{-2k_{n+1} t_n}{3}}.    
\end{align}

Note that for $t \in [t_n, t_{n+1}],$
\begin{align}\label{estimateforfnfinaldsfdsfsdf}
    |f_n^{(\alpha)}(t)| \lesssim  (k_n)^{\frac{7\alpha}{3}} c_n\e^{-k_n t_n} = (k_n)^{\frac{7\alpha}{3}} \sup_{\T^2} |u(t_n)| \leq (k_n)^{\frac{7\alpha}{3}}  \e^{\frac{-k_n}{200}}
\end{align}
where the equality comes from \eqref{suptnsuptnplusoneoeronheunderer} and where the last inequality comes from the estimate $\sup_{\T^2} |u(t_n)| \leq \e^{\frac{-2^{n_0-2} \,2^{n}}{100}}$ that was proved in \eqref{finaldecayestimevaluenicenicenice} and the definition $k_n=2^{n_0+n-1}$ (see\eqref{defkn}). 

To estimate $g_{n+1}^{(\alpha)}$, we recall that $ 
        c_n \,d_n\, \e^{(k_{n+1}-k_n)t_n} = c_{n+1}$  by definition \eqref{relationcncnplusone} and $k_{n+1}-k_n=k_n$ by definition \eqref{defkn}. Therefore,  we have that
\begin{align*}
    |g_{n+1}^{(\alpha)}(t)| &\lesssim (k_{n+1})^{\alpha} c_{n} \,d_n\, \e^{k_n t_n}  \e^{\frac{-2k_{n+1} t_n}{3}} \e^{\frac{-k_{n+1}t}{3}} 
\end{align*}

Using again  $k_{n+1}=2k_n$, we have that $\e^{k_n t_n}\e^{\frac{-2k_{n+1} t_n}{3}} = \e^{\frac{-k_{n} t_n}{3}} $ and the above estimates becomes
\begin{align}
   |g_{n+1}^{(\alpha)}(t)| \lesssim (k_{n+1})^{\alpha} c_{n} \,d_n\, \e^{\frac{-k_{n} t_n}{3}}  \e^{\frac{-k_{n+1}t}{3}} . 
\end{align}

Recall that $d_n=\e^{\frac{-k_n}{2}+\frac{5k_{n+1}}{6}}$  by definition \eqref{relationcncnplusone}, which becomes $d_n=\e^{\frac{7k_n}{6}}$ by using $k_{n+1}=2k_n$. Hence, 

\comment{
Since $k_{n+1}=2k_n$, we get $d_n=\e^{\frac{7}{6}k_n}.$ Therefore 

\begin{align}\label{estimatesforgnnew}
    |g_{n+1}^{(\alpha)}(t)|& \lesssim (k_{n+1})^{\alpha} c_n \e^{\frac{7}{6}k_n}   \e^{\frac{-k_{n+1}t}{3}} \e^{\frac{-k_{n} t_n}{3}} \nonumber\\
    &= (k_{n+1})^{\alpha} c_n \e^{-k_n t_n} \e^{\frac{2k_n t_n}{3}} \e^{\frac{-2k_nt}{3}} \e^{\frac{7}{6}k_n} \nonumber \\
    &= (k_{n+1})^{\alpha} \sup_{\T^2} |u(t_n)| \e^{-k_n\left(\frac{2}{3}(t-t_n)-\frac{7}{6}\right)}
\end{align}
where in the last equality we used \eqref{suptnsuptnplusoneoeronheunderer} $ \sup_{\T^2} |u(t_n)|=  c_n \e^{-k_n t_n} .$}

\begin{align}\label{estimategnplusonealphafinalgodododod}
    |g_{n+1}^{(\alpha)}(t)| \lesssim  (k_{n+1})^{\alpha} c_{n} \e^{\frac{7}{6}k_n} \e^{\frac{-k_nt_n}{3}} \e^{\frac{-2k_nt}{3}}
\end{align}
where we used again that $k_{n+1}=2k_n.$ Finally, since $t \in [t_n, t_{n+1}],$ $t=t_n+\tau$ for some $0 \leq\tau \leq C$ (recall $t_{n+1}=t_n+C$), we get
\begin{align}\label{estimateforgnplusonebeforeintermediateclaim}
|g_{n+1}^{(\alpha)}(t)| \lesssim  (k_{n+1})^{\alpha} c_{n} \e^{-k_n(t_n-\frac{7}{6})} \e^{\frac{-2k_n\tau}{3}}.    
\end{align}

\textbf{Intermediate claim:} $C_n:=c_n \e^{-k_n(t_n-\frac{7}{6})}= \e^{2^{n_0}(2C-\frac{7}{6})} \e^{-(2C-\frac{14}{6})k_n} $
where $n_0\gg 1$ is fixed of our choosing.
\begin{proof}[Proof of the intermediate claim]
Recall that $d_n=\e^{\frac{-k_n}{2}+\frac{5k_{n+1}}{6}} = \e^{\frac{7}{6}k_n}$ since $k_{n+1}=2k_n.$ Therefore, since $c_{n+1}=c_n d_n \e^{k_nt_n},$ we have
\begin{align*}
    c_{n+1} = c_n \e^{k_n(\frac{7}{6}+t_n)}
\end{align*}
for all $ n \geq 1.$ Therefore, 
$$
C_{n+1} = c_{n+1}\e^{-k_{n+1}(t_{n+1}-\frac{7}{6})} = c_n \e^{k_n(\frac{7}{6}+t_n)}\e^{-k_{n+1}(t_{n+1}-\frac{7}{6})}.
$$

Since $k_{n+1}=2k_n$ (see \eqref{defkn}) and since $t_{n+1}=t_{n}+C$ (see \eqref{deftimestnnew}), this estimates becomes
$$
C_{n+1} = c_n \e^{k_n(\frac{7}{6}+t_n)} \e^{-2k_nt_n-2k_nC +k_n\frac{14}{6}} = c_n \e^{-k_n(t_n-\frac{7}{6})} \e^{-k_n(2C-\frac{14}{6})}=C_n \e^{-k_n(2C-\frac{14}{6})}.
$$
So, for all $n \geq 1,$
$$
C_n=C_1\e^{-(2C-\frac{14}{6})\sum_{l=1}^{n-1}k_l}
$$
Since $k_l=2^{n_0+l-1},$ we see that $\sum_{l=1}^{n-1}k_l=2^{n_0+n-1}-2^{n_0}=k_n-k_1.$ Therefore, for all $n \geq 1,$
$$
C_n = C_1 \e^{-(2C-\frac{14}{6})(k_n-k_1)}.
$$
Note that $k_1=2^{n_0}$ for some fixed $n_0\gg 1$ of our choosing (see \eqref{defkn}), that $c_1=1$ by definition \eqref{relationcncnplusone} and that $t_1=0$ also by definition \eqref{deftimestnnew} . Therefore,
$C_1 \e^{(2C-\frac{14}{6})k_1}=\e^{k_1(2C-\frac{7}{6})}=\e^{2^{n_0}(2C-\frac{7}{6})}$. In conclusion, we showed 
$$
C_n = \e^{2^{n_0}(2C-\frac{7}{6})} \e^{-(2C-\frac{14}{6})k_n}
$$
as claimed.   
\end{proof}

Recalling that $|g_{n+1}^{(\alpha)}(t)| \lesssim  (k_{n+1})^{\alpha} c_{n} \e^{-k_n(t_n-\frac{7}{6})} \e^{\frac{-2k_n\tau}{3}}    
$ by \eqref{estimateforgnplusonebeforeintermediateclaim}, we get using the Intermediate Claim,
\begin{align}\label{finalestimateforgnplusonegoodblabla}
|g_{n+1}^{(\alpha)}(t)| \lesssim  (k_{n+1})^{\alpha}  \e^{2^{n_0}(2C-\frac{7}{6})} \e^{-(2C-\frac{14}{6})k_n}.    
\end{align}

\comment{

Now, recall that $d_n=\e^{\frac{-k_n}{2}+\frac{5k_{n+1}}{6}} = \e^{\frac{7}{6}k_n}$ since $k_{n+1}=2k_n.$ Therefore, since $c_{n+1}=c_n d_n \e^{k_nt_n},$
\begin{align*}
    c_{n+1} = c_n \e^{k_n(\frac{7}{6}+t_n)}
\end{align*}
for all $ n \geq 1.$ Hence, 
\begin{align}
    c_n=\e^{\sum_{l=2}^{n-1} k_l(\frac{7}{6}+t_l)}.
\end{align}

\textcolor{blue}{
Now, recall that $k_l=2^{n_0+l-1}$ (see \eqref{defkn}) and $t_l=C(l-1)$ (see \eqref{deftimestnnew}). This implies that 
\begin{align}\label{formulaforcnestimate}
    c_n=\e^{\frac{7}{6}2^{n_0-1}(2^n-4) + C\left(\frac{n(n-1)}{2}-(n-1)\right)}=\e^{\frac{7}{6}k_n-\frac{14}{3}2^{n_0-1}+(\frac{n}{2}-1)t_n} \leq \e^{10k_n}.
\end{align}
}

Therefore, by combining \eqref{estimategnplusonealphafinalgodododod} and \eqref{formulaforcnestimate}, we get 
\begin{align}\label{finalesiteforgnplusgodoodfdfdffdf}
    |g_{n+1}^{(\alpha)}(t)| \lesssim  (k_{n+1})^{\alpha}  \e^{-k_n (\frac{1}{2}-10 + \frac{t_n}{3})}  \e^{-k_{n+1}(\frac{t}{3}-\frac{5}{6})} .
\end{align}

}

\textbf{Partial conclusion:} We showed in \eqref{estimateforfnfinaldsfdsfsdf} and \eqref{finalestimateforgnplusonegoodblabla} that for $t \in [t_n, t_{n+1}]$ with $n \gg 1,$
\begin{align}\label{partialconclusionfngnplusonedecay}
    |f_n^{(\alpha)}(t)| \lesssim  (k_n)^{\frac{7\alpha}{3}}  \e^{\frac{-k_n}{200}}, \hspace{0.5cm} |g_{n+1}^{(\alpha)}(t)| \lesssim (k_{n+1})^{\alpha}  \e^{2^{n_0}(2C-\frac{7}{6})} \e^{-(2C-\frac{14}{6})k_n}.
\end{align}
for all $0\leq \alpha\leq 2.$

\textbf{Finishing the estimation}
Now, recall from \eqref{shapetildeunhalfyclinderharmonicforproof} that $
u(x,y,t) = f_n(t) \cos(k_nx) + g_{n+1}(t) \cos(k_{n+1}y)$ for $t \in [t_n, t_{n+1}]$.  Hence, 
\begin{align}\label{timederivativeuclosetofinal}
    |D^{\alpha}_t u| \leq |f_n^{(\alpha)}(t)| + |g_{n+1}^{(\alpha)}(t)|
\end{align}
for any $0\leq \alpha \leq 2.$

Let $t\gg 1$. In particular, $t \in [t_n, t_{n+1}]$ for some $n \gg 1.$ Since $k_n=2^{n_0+n-1}$ where $n_0\gg 1$ fixed of our choosing and since $t_n=C(n-1)$ for a universal $C\geq 2$, we get by combining \eqref{partialconclusionfngnplusonedecay} and \eqref{timederivativeuclosetofinal} that
$$
|D_t^{\alpha} u(x,y,t)| \leq 1
$$

In conclusion, we showed that for any $0\leq \alpha \leq 2$, $|D^{\alpha}_t u|$ is uniformly bounded on $\T^2 \times \R^+.$

\textcolor{blue}{should we be more explicit and write it as a double exponential decay in $t$? And maybe state it explicitly in the main result that all the derivatives of to order 2 also have super exponential decay? }

\textcolor{red}{We don't \textit{require} it in Theorems \ref{theoremforhalfcylinder} and \ref{EigenTheorem}. However, the proof of the decay is just as simple as the proof of boundedness...}
\textcolor{blue}{yes, I think I will make it explicit. I will also rewrite this proof to prove the super exponential decay of u and its derivative all in one. Because currently, we prove super exponential decay of u then we reprove it again when we look at C2 boundedness. }

\item \textbf{The spatial and the cross derivatives:} Using \eqref{partialconclusionfngnplusonedecay} with the representation $u=f_n(t)\cos(k_nx) + g_{n+1}(t)\cos(k_{n+1}y)$ for $t \in [t_n, t_{n+1}],$ it is straightforward to see that the spatial derivatives in $x, y$ of order less than 2 as well as all the cross derivatives of order less than 2 are also uniformly bounded in $\T^2\times \R^+.$

\textbf{Conclusion of Step 4:} In conclusion, in this Step 4, we proved that $u$ is uniformly $C^2$ in $\T^2 \times \R^+.$ 
\end{itemize}
}

\pseudosection{What we proved:}
\begin{itemize}
    \item We constructed a function $u$ and a matrix $A$ (see \eqref{defu}).
    \item In \eqref{defaclosetotnplusone} , we showed that $A$ is uniformly elliptic and uniformly $C^1$ in $\T^2 \times \R^+.$ More precisely, $A\in R(80, 60).$
    \item By the partial conclusion \eqref{partialconclusionaftersometime}, and by \eqref{maindecayresult} in Step 3, we get that $u$ is uniformly $C^2$ in $\T^2\times \R^+$, that it has super-exponential decay and that $\ddot u + \div(A\nabla u)=0$ in $\T^2\times \R^+.$
\end{itemize}

\underline{It only remains to verify some claims from Theorem \ref{theoremaharmonicagainnew}.} \begin{itemize}
\item  By definition, $t_n:=C(n-1)$ (see \eqref{deftimestnnew}) so $t_1=0$. By Lemma \ref{lemmaniceconnectingtone}, $t_{n+1}-t_n=C\geq 2$. 

\comment{
    \item By the definition of $u$ and $A$ \eqref{defu}, \eqref{defA}, by the definition of $u_n$ and $A_n$ \eqref{defunoddeven}, \eqref{defAnoddeven} and by \eqref{tildeunatthebeginning} and \eqref{tildeantosatisfyatthebegiining}, and using that $t_1=0$ ($t_n$ was defined at the very beginning of the proof), we have that on $[0, \frac{1}{100}]$,
\begin{align}
    u=u_1=\cos(k_1 x)\e^{-k_1t}, \hspace{0.5cm} A=Id
\end{align}
which proves a claim made in Theorem \ref{theoremaharmonicagain}.}

\item For $n$ odd, we saw (see \eqref{defu}, \eqref{defunoddeven} and \eqref{shapetildeunhalfyclinderharmonic}) that for $t \in \T^2 
\times [t_n, t_{n+1}]$,
\begin{align}\label{generalformuinsideablockforlemmablabla}
u(x,y,t) = f_n(t)  \cos(k_n x) + g_{n+1}(t)  \cos(k_{n+1}y)   
\end{align}
with $k_n=2^{n_0+n-1}$, $n_0\gg 1$ of our choosing (see \eqref{defkn}). We also showed in \eqref{claimfourstatetemtn} that $f_n, g_{n+1}$ are uniformly $C^2$ on $\R^+.$

\item  By the definition of $u$ and $A$ \eqref{defu}, by the definition of $u_n$ and $A_n$ \eqref{defunoddeven} and by \eqref{tildeunatthebeginning} (respectively \eqref{tildeunatthebeginningend}), we get \eqref{unbeginnnngforhalfcylinderharmo} (respectively \eqref{unbeginnnngforhalfcylinderharmoend}).

\item For $n$ even, by the definition of $u$ \eqref{defu}, by the definition of $u_n$ \eqref{defunoddeven} and by \eqref{shapetildeunhalfyclinderharmonic}, we have in $\T^2 
\times [t_n, t_{n+1}]$,
\begin{align}\label{generalformuinsideablockforlemmablablaeven}
u(x,y,t) = f_n(t)  \cos(k_n y)  + g_{n+1}(t)  \cos(k_{n+1}x)    
\end{align}
with $k_n=2^{n_0+n-1}$, $n_0\gg 1$ (see \eqref{defkn}). 

\item By \eqref{finalfort}, the constants $c, C$ in the decay estimate \eqref{decayestimatestatementhalfharmonic} depends on $n_0$ as claimed.
\end{itemize}

This finishes the proof of the reduction of Theorem \ref{theoremaharmonicagainnew} to Lemma \ref{lemmaniceconnectingtone}.

\comment{
\newpage
Therefore, for all $0 \leq \alpha \leq 2$,
$$
|D^{\alpha}_t u| \leq |f_n^{(\alpha)}(t)| + |g_{n+1}^{(\alpha)}(t)| \lesssim  c_n (k_n)^{\frac{7\alpha}{3}} \e^{-k_n t} + c_{n+1} (k_{n+1})^{\alpha} \e^{\frac{-k_{n+1}t}{3}} \e^{\frac{-2k_{n+1} t_n}{3}}.
$$

By \eqref{suptnsuptnplusoneoeronheunderer}, $
\sup_{\T^2}|u(t_n)| = c_n \e^{-k_n t_n}$. Therefore, since $t \in [t_n, t_{n+1}]$ and since $k_{n+1}=2k_n$ by definition \eqref{defkn},
\begin{align}\label{intermediatestepnicegoodblabla}
|D^{\alpha}_t u| \lesssim (k_n)^{\frac{7\alpha}{3}} \sup_{\T^2}|u(t_n)| + c_{n+1} (k_{n+1})^{\alpha}  \e^{-2 k_n t_n}.    
\end{align}

By definition \eqref{relationcncnplusone}, $$ 
        c_n \,d_n\, \e^{(k_{n+1}-k_n)t_n} = c_{n+1}, \hspace{0.5cm} d_n=\e^{\frac{-k_n}{2} + \frac{5k_{n+1}}{6}}.$$ Since $k_{n+1}-k_n=k_n$,  \eqref{intermediatestepnicegoodblabla} becomes
        \begin{align}\label{goodinequalityfordtalphau}
           |D^{\alpha}_t u| \lesssim (k_n)^{\frac{7\alpha}{3}} \sup_{\T^2}|u(t_n)| + c_n \,d_n\,(k_{n+1})^{\alpha} \e^{-k_n t_n} = \left((k_n)^{\frac{7\alpha}{3}}  + d_n\,(k_{n+1})^{\alpha} \right)\sup_{\T^2}|u(t_n)|
        \end{align}
        for $t \in [t_n, t_{n+1}].$ By \eqref{finaldecayestimevaluenicenicenice}, 
        $$
\sup_{\T^2}|u(t_{n})| \leq \e^{\frac{-2^{n_0-2} \,2^{n}}{100}} $$ for some $n_0\gg 1$ of our choosing. Therefore, \eqref{goodinequalityfordtalphau} becomes
$$
|D^{\alpha}_t u| \lesssim \left((k_n)^{\frac{7\alpha}{3}}  + d_n\,(k_{n+1})^{\alpha} \right) \e^{\frac{-2^{n_0-2} \,2^{n}}{100}}
$$
for $t \in [t_n, t_{n+1}].$ }

\comment{
\newpage

We will now verify that the function $u$ defined in \eqref{defu} is $C^2$ across $t=t_n$, $n \geq 2$ and that the matrix $A$ defined in \eqref{defA} is $C^1$ across $t=t_n$, $n \geq 2$. To fix the idea, we assume $n$ is odd.


    By the definition of $u$ \eqref{defu}, by the definition of $u_n$ \eqref{defunoddeven}, and by \eqref{tildeunatthebeginning} and \eqref{tildeunatthebeginningend},  
    \begin{align}\label{unintheinterval}
    \begin{cases}
       u(x,y,t)=  c_{n+1}\cos(k_{n+1}y) \e^{-k_{n+1}t} & t \in [t_{n+1}-\frac{1}{100}, t_{n+1}] ,\\
        u(x,y,t)=c_{n+1} \cos(k_{n+1} y) \e^{-k_{n+1}t}   & t \in [t_{n+1}, t_{n+1}+\frac{1}{100}] 
    \end{cases}    
    \end{align}
    \comment{
and   $$
   \lim_{t \rightarrow t_{n+1}^-} g_{n+1}(t) =1, \hspace{0.5cm}  \lim_{t \rightarrow t_{n+1}^+} f_{n+1}(t) =1.
   $$
   
    Therefore, by  \eqref{unintheinterval}, $u$ is continuous at $t_{n+1}.$

    By Lemma \ref{lemmaniceconnectingtoneglue}, 
    $$
    \lim_{t \rightarrow t_{n+1}^+}  f^{(k)}_{n+1}(t) = \lim_{t \rightarrow t_{n+1}^-} g^{(k)}_{n+1}(t) =0.
    $$
    for $k \in \{1, 2\}$.}
    Therefore,  $u$ is $C^2$ at $t_{n+1}.$

    Moreover, by the definition of $A$ \eqref{defA}, by the definition of $A_n$ \eqref{defAnoddeven} and \eqref{tildeunatthebeginning} and \eqref{tildeunatthebeginningend}, 
    \begin{align}\label{defaclosetotnplusone}
        \begin{cases}
            A(x,y,t) = Id  & t \in [t_{n+1}-\frac{1}{100}, t_{n+1}] \\
            A(x,y,t) = Id , & t \in [t_{n+1}, t_{n+1}+\frac{1}{100}].
        \end{cases}
    \end{align}

   \comment{ By Lemma \ref{lemmaniceconnectingtoneglue}, 
    $$
    \lim_{t \rightarrow t_{n+1}^-} C_n = \lim_{t \rightarrow t_{n+1}^+} B_{n+1}=0 
    $$
    and therefore by \eqref{defaclosetotnplusone} $A$ is continuous at $t_{n+1}.$

    Also, by Lemma \ref{lemmaniceconnectingtoneglue},
    $$
    \lim_{t \rightarrow t_{n+1}^-} \partial_i C_n = \lim_{t \rightarrow t_{n+1}^+} \partial_i B_{n+1}=0 
    $$
    for $i\in \{x, y,t\}$. }
    Therefore, $A$ is $C^1$ at $t_{n+1}$. 
    
    In conclusion, the matrix $A$ and the function $u$ respectively defined by \eqref{defA} and \eqref{defu} are respectively $C^1$ and $C^2$ on $\T^2 \times \R^+$ since the regularity on $[t_n, t_{n+1}]$ is provided by Lemma \ref{lemmaniceconnectingtone}. Moreover, $u$ and $A$ satisfy 
    \begin{align}\label{finalsolutionandreggblabla}
      \ddot{u} + \div(A\nabla u)=0 \hspace{0.5cm} \mbox{and} \hspace{0.5cm} A \in R(80, 10)
    \end{align}

\textbf{Verification of the claims from Theorem \ref{theoremaharmonicagainnew}}

\begin{itemize}
\item  By definition, $t_n:=C(n-1)$ (see \eqref{deftimestnnew}) so $t_1=0$. By Lemma \ref{lemmaniceconnectingtone}, $t_{n+1}-t_n=C\geq 1$. This proves a claim from Theorem \ref{theoremaharmonicagainnew}.

\comment{
    \item By the definition of $u$ and $A$ \eqref{defu}, \eqref{defA}, by the definition of $u_n$ and $A_n$ \eqref{defunoddeven}, \eqref{defAnoddeven} and by \eqref{tildeunatthebeginning} and \eqref{tildeantosatisfyatthebegiining}, and using that $t_1=0$ ($t_n$ was defined at the very beginning of the proof), we have that on $[0, \frac{1}{100}]$,
\begin{align}
    u=u_1=\cos(k_1 x)\e^{-k_1t}, \hspace{0.5cm} A=Id
\end{align}
which proves a claim made in Theorem \ref{theoremaharmonicagain}.}

\item For $n$ odd, by the definition of $u$ \eqref{defu}, by the definition of $u_n$ \eqref{defunoddeven} and since $\tilde u_n$ is the connecting solution from Lemma \ref{lemmaniceconnectingtone}, we have by Lemma \ref{lemmaniceconnectingtone} that in $\T^2 
\times [t_n, t_{n+1}]$,
\begin{align}\label{generalformuinsideablockforlemmablabla}
u(x,y,t) = f_n(t)  \cos(k_n x) + g_{n+1}(t)  \cos(k_{n+1}y)   
\end{align}
with $k_n=2^{n_0+n-1}$, $n_0\gg 1$ of our choosing (see \eqref{defkn}). This proves a claim made in Theorem \ref{theoremaharmonicagainnew}.

\item  By the definition of $u$ \eqref{defu}, by the definition of $u_n$ \eqref{defunoddeven} and by \eqref{tildeunatthebeginning}, we get \eqref{unbeginnnngforhalfcylinderharmo}. Similarly, by the definition of $A$ \eqref{defA}, by the definition of $A_n$ \eqref{defAnoddeven} and by \eqref{tildeunatthebeginningend}, we get \eqref{unbeginnnngforhalfcylinderharmoend}. This proves a claim made in Theorem \ref{theoremaharmonicagainnew}.

\item For $n$ even, by the definition of $u$ \eqref{defu}, by the definition of $u_n$ \eqref{defunoddeven} and since $\tilde u_n$ is the connecting solution from Lemma \ref{lemmaniceconnectingtone}, we have by Lemma \ref{lemmaniceconnectingtone} that in $\T^2 
\times [t_n, t_{n+1}]$,
\begin{align}\label{generalformuinsideablockforlemmablablaeven}
u(x,y,t) = f_n(t)  \cos(k_n y)  + g_{n+1}(t)  \cos(k_{n+1}x)    
\end{align}
with $k_n=2^{n_0+n-1}$, $n_0\gg 1$ (see \eqref{defkn}). This proves a claim made in Theorem \ref{theoremaharmonicagainnew}.
\item By \eqref{finalsolutionandreggblabla}, $A \in R(80,10)$ wich proves a claim made in Theorem \ref{theoremaharmonicagainnew}.
\end{itemize}

To finish the proof of Theorem \ref{theoremaharmonicagainnew}, it only remains to prove the double exponential decay of $u.$ We present the proof in Appendix \ref{}.
    
    \end{proof}

\pseudosection{Estimation of the decay of $u$.}
The proof relies on several claims.

\begin{itemize}
    \item We claim that  for any $t \in \R^+,$
\begin{align}\label{claimmonotonicsup}
     t \rightarrow \sup_{\T^2}|u(t)|
\end{align}
is monotonic.

\begin{proof}[Proof of \eqref{claimmonotonicsup}]
    Let $0\leq t_1 \leq t_2 \leq t_3$. We argue by contradiction.  Without loss of generality, we  assume that 
    \begin{align}\label{supcontraditionassumption}
    \sup_{\T^2}|u(t_1)| \leq \sup_{\T^2}|u(t_2)|, \hspace{0.5cm}    \sup_{\T^2}|u(t_2)| >  \sup_{\T^2}|u(t_3)|.     
    \end{align}

    Since $u$ solves $\ddot u + \div(A\nabla u)$ in $\T^2 \times \R^+$, the maximum principle applies. In particular, it applies in $\T^2 \times [t_1, t_3].$ Since $u$ is not constant,
    $$
    \sup_{\T^2}|u(t_2)| < \max \left( \sup_{\T^2}|u(t_1)|, \sup_{\T^2}|u(t_3)| \right).
    $$
If $\max \bigg( \sup_{\T^2}|u(t_1)|, \sup_{\T^2}|u(t_3)| \bigg) = \sup_{\T^2}|u(t_1)|$, we get a contradiction with \eqref{supcontraditionassumption}. The other case is similar.

If the inequalities in \eqref{supcontraditionassumption} are reversed, we use the version of the maximum principle with the infimum being achieved on the boundary to reach a contradiction. This finishes the proof of \eqref{claimmonotonicsup}. 
\end{proof}

\item 

Recall the definition of the times
   \begin{align}\label{deftn}
       t_n:=(n-1)C
   \end{align}
   where $C$ is  given by Lemma \ref{lemmaniceconnectingtone}. 

We claim that for $n \geq 1$,
\begin{align}\label{ratiomaxnew}
    \frac{\sup_{\T^2}|u(t_{n+1})|}{\sup_{\T^2}|u(t_{n})|} \leq \e^{\frac{-2^{n+n_0-1}}{100}}
\end{align}
where we recall that $n_0\gg 1$ is fixed and of our choosing.

\begin{proof}[Proof of \eqref{ratiomaxnew}]  
Without loss of generality, we assume $n$ is odd. By \eqref{claimmonotonicsup}, $t \rightarrow \sup_{\T^2}|u(t)|$ is monotonic.  By the definition of $u$ \eqref{defu}, by the definition of $u_n$ \eqref{defunoddeven} and by \eqref{tildeunatthebeginning}, 
\begin{align}\label{suptnsuptnplusoneoeronheunderer}
\sup_{\T^2}|u(t_n)| = c_n \e^{-k_n t_n}, \hspace{0.5cm} \sup_{\T^2}\left|u\left(t_n+\frac{1}{100}\right)\right| = c_n \e^{-k_n (t_n+\frac{1}{100})}.    
\end{align}

Hence, the supremum is decreasing somewhere so by monotonicity, it is decreasing everywhere. So, 
\begin{align*}
    \frac{\sup_{\T^2}|u(t_{n+1})|}{\sup_{\T^2}|u(t_{n})|} \leq \frac{\sup_{\T^2}|u(t_n + 1/100)|}{\sup_{\T^2}|u(t_{n})|}  = \frac{c_n \e^{-k_n (t_n+\frac{1}{100})}}{c_n \e^{-k_n t_n}} = \e^{\frac{-k_n}{100}}.
\end{align*}
where we used \eqref{suptnsuptnplusoneoeronheunderer} in the before last equality. Since by definition \eqref{defkn}, $k_{n}=2^{n_0+n-1},$ we get
\begin{align}
    \frac{\sup_{\T^2}|u(t_{n+1})|}{\sup_{\T^2}|u(t_{n})|} \leq \e^{\frac{-2^{n+n_0-1}}{100}}
\end{align}
as claimed in \eqref{ratiomaxnew}. 
\end{proof}

\item 
We claim 
\begin{align}\label{maxprincipleforevalueproof}
    \sup_{\T^2 \times [t_n, t_{n+1}]} |u| =\sup_{\T^2}|u(t_n)|.
\end{align}

\begin{proof}[Proof of \eqref{maxprincipleforevalueproof}]
    
Since $u$ solves $\ddot{u}+\div(A\nabla u)=0$, the maximum principle applies and 
\begin{align}\label{maxprincipleboundary}
 \sup_{\T^2 \times [t_n, t_{n+1}]} |u| =\max \left( \sup_{\T^2}|u(t_n)|, \sup_{\T^2}|u(t_{n+1})| \right).    
\end{align}

In the previous claim \eqref{ratiomaxnew}, we showed 
\begin{align*}
    \frac{\sup_{\T^2}|u(t_{n+1})|}{\sup_{\T^2}|u(t_{n})|} \leq \e^{\frac{-2^{n+n_0-1}}{100}}.
\end{align*}
Therefore, $\max \bigg( \sup_{\T^2}|u(t_n)|, \sup_{\T^2}|u(t_{n+1})|\bigg) =\sup_{\T^2}|u(t_n)| $ and by \eqref{maxprincipleboundary}, the claim \eqref{maxprincipleforevalueproof} is proved.
\end{proof}

\end{itemize}

\comment{
   By \eqref{tildeunatthebeginning}, $u(t=t_{n})= c_n \cos(k_n x) e^{-k_n t_n}$ and $u(t=t_{n+1})= c_{n+1} \cos(k_{n+1} y) e^{-k_{n+1}t_{n+1}}$ (here we assume that $n$ is odd to fix the idea, but the argument below works in the exact same way when $n$ is even).

Then,
\begin{align}\label{rationsupnewproof}
    \frac{\sup_{\T^2}|u(t=t_{n+1})|}{\sup_{\T^2}|u(t=t_{n})|} =\frac{c_{n+1} e^{-k_{n+1}t_{n+1}}}{c_n e^{-k_n t_n}}.
\end{align}

Recall from \eqref{relationcncnplusone}  $c_n \e^{-k_nt_n} = c_{n+1} \e^{-k_{n+1}t_n}$. By the definition of $t_n$, we have $t_{n+1}=t_n+C$. Hence, \eqref{rationsupnewproof} becomes
\begin{align}\label{almostdoneratiosupproof}
    \frac{\sup_{\T^2}|u(t=t_{n+1})|}{\sup_{\T^2}|u(t=t_{n})|} = \e^{-k_{n+1}C}.
\end{align}

Since by definition \eqref{defkn}, $k_{n+1}=2^{n_0+n},$ \eqref{almostdoneratiosupproof} becomes
$$
\frac{\sup_{\T^2}|u(t=t_{n+1})|}{\sup_{\T^2}|u(t=t_{n})|} = \e^{-C\, 2^{n_0+n}}
$$
which was claimed in \eqref{ratiomaxnew}.}

\comment{
Note that when $\mu=0$, this is just the maximum principle. When $\mu \neq 0$, the maximum principle does not apply and we need to argue differently. We present the detailed proof in  Appendix \ref{maxprincipleevalueproof} and here we only give a sketch of the proof of \eqref{maxprincipleforevalueproof}. In Appendix \ref{maxprincipleevalueproof} we prove that 
$$
\|u\|_{L^{\infty}(\T^2)} \leq E \|u\|_{L^2(\T^2)}
$$
for some $E>0$ and for any $t\in [t_n, t_{n+1}]$. This is due to the special form of $u$ with cosines and exponentials. Using that $u$ solves a pde, we then show that 
$$
t \rightarrow  \|u\|^2_{L^2(\T^2)}
$$
is a convex function and therefore has its maximum at the endpoint of the interval. Finally using \eqref{ratiomaxnew} we will get \eqref{maxprincipleforevalueproof}.
}

\pseudosection{Finishing the estimation of decay of $u$.}
Let $T \gg 1$. Recall that we want to show 
\begin{align}\label{expdecayevalueprooffinal}
    \sup_{\T^2 \times \{ t \geq T\}} |u| \leq e^{-ce^{cT}}
\end{align}
for some numerical $c>0.$

\begin{proof}[Proof of \eqref{expdecayevalueprooffinal}]
    
Note that $T \in [t_{n_1}, t_{n_1+1}]$ for some $n_1 \gg 1.$ Therefore
\begin{align}\label{torpoveevalueniceniceagain}
    \sup_{\T^2 \times \{ t \geq T\}}|u| \leq \sup_{n \geq n_1} \sup_{\T^2 \times  [t_n, t_{n+1}]}|u|.   
\end{align}

By \eqref{maxprincipleforevalueproof}, 
\begin{align}\label{supintervalanfinal}
\sup_{\T^2 \times [t_n, t_{n+1}]}|u| = \sup_{\T^2}|u(t_n)|.    
\end{align}

By \eqref{ratiomaxnew}, for all $n \geq n_1$,
\begin{align}\label{supendpointbysupnone}
\sup_{\T^2}|u(t_n)| \leq \sup_{\T^2}|u(t_{n_1})|.    
\end{align}

By iterating \eqref{ratiomaxnew}, 
\begin{align}\label{iteratesupnonetoone}
\sup_{\T^2}|u(t_{n_1})| &= \frac{\sup_{\T^2}|u(t_{n_1})|}{\sup_{\T^2}|u(t_{n_1-1})|} \frac{\sup_{\T^2}|u(t_{n_1-1})|}{\sup_{\T^2}|u(t_{n_1-2})|} \dots \frac{\sup_{\T^2}|u(t_{2})|}{\sup_{\T^2}|u(t_{1})|} \sup_{\T^2}|u(t_{1})| \nonumber\\
& \leq \e^{\frac{-2^{n_0-1}}{100}  \left(\sum_{k=1}^{n_1-1} 2^k\right)} \sup_{\T^2}|u(t_{1})|\nonumber \\
&= \e^{\frac{-2^{n_0-1}}{100}  \left(\sum_{k=1}^{n_1-1} 2^k\right)} 
\end{align}
since $\sup_{\T^2}|u(t_{1})|=1$. Indeed, by \eqref{unbeginnnngforhalfcylinderharmo}, $u(t_1) = c_1 \cos(k_1 x) \e^{-k_1 t_1}$, $t_1=0$ by \eqref{deftn} and $c_1=1$ by \eqref{relationcncnplusone}. 

We note that $$\sum_{k=1}^{n_1-1} 2^k  \geq 2^{n_1-1}.$$ Recalling that $t_n=C(n-1)$  (see \eqref{deftn}) and that $T\in [t_{n_1}, t_{n_1+1}]$ by assumption, we get $n_1\geq \frac{T}{C}$.

Therefore, \eqref{iteratesupnonetoone} becomes
\begin{align}\label{finaldecayestimevaluenicenicenice}
\sup_{\T^2}|u(t_{n_1})| \leq \e^{\frac{-2^{n_0-2} \,2^{n_1}}{100}} \leq\e^{\frac{-2^{n_0-2} \,2^{\frac{T}{C}}}{100}} .    
\end{align}

We can now combine \eqref{torpoveevalueniceniceagain}, \eqref{supintervalanfinal}, \eqref{supendpointbysupnone} and \eqref{finaldecayestimevaluenicenicenice} together with the assumption $T \gg 1,$ to obtain the claim \eqref{expdecayevalueprooffinal}. This finishes the proof of the super-exponential decay of $u$.}
\end{proof}

\subsection{The construction in a block }\label{reductioninablock}
In this section, we reduce Lemma \ref{lemmaniceconnectingtone}, which we recall below, to the construction of a $A$-harmonic function in a block $\T^2 \times [0, C]$ for a universal constant $C>0$.
\begin{lemma*}[\ref{lemmaniceconnectingtone}]
     Let $k$ and $k'$ be any integers such that $1\ll k<k'\leq 2k$. There exists $C\geq 2$ independent of $k, k'$ such that for any $t_1 \geq 0$ and for any $c_1>0$,  there exists $ c_2>0$ such that one can transform $c_1 \cos(kx) \e^{-kt}$ into $c_2 \cos(k'y) \e^{-k't}$ within the set $\mathbb{T}^2\times[t_1, t_1+C]$, via a solution $u$ to $\ddot u + \div(A\nabla u)=0$ and where $A$ is in the  regularity class $R(80, 60).$ The constants $c_1$ and $c_2$ are related by $$c_1\, c\,  \e^{-kt_1} = c_2 \e^{-k't_1}, \hspace{0.5cm} c=\e^{\frac{-k}{2} + \frac{5k'}{6}}.$$

In particular, on $[t_1, t_1+C]$, the function $u$ is of the form 
\begin{align}\label{generalformtonetoneplusc}
    f(t)  \cos(kx) + g(t)  \cos(k'y)
\end{align}
      where $f,g \in C^2$ satisfy for all $0\leq \alpha \leq 2,$
     \begin{align}\label{generalformtonetonepluscforfg}
     |f^{(\alpha)}(t)| \lesssim c_1 k^{\frac{7\alpha}{3}} \e^{-kt}, \hspace{0.5cm} |g^{(\alpha)}(t)| \lesssim c_2\, (k')^{\alpha}  \e^{\frac{-k'}{3}t} \e^{\frac{-2k'}{3}t_1}.    
     \end{align}

      Moreover, on $[t_1, t_1+\frac{1}{100}]$,
     \begin{align}\label{formuclosebegtone}
         u(x,y,t)=c_1\cos(kx) \e^{-kt}, \hspace{0.5cm} A=Id
     \end{align}
      and on $[t_1+C-\frac{1}{100}, t_1+C]$,
      \begin{align}\label{formucloseendttwo}
          u(x,y,t)=c_2 \cos(k'y) \e^{-k't}, \hspace{0.5cm} A=Id.
      \end{align}
\end{lemma*}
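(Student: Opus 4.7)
The strategy mirrors the heuristic from Section~\ref{informalexplanationfullproof}: rather than transforming $c_1\cos(kx)e^{-kt}$ directly into $c_2\cos(k'y)e^{-k't}$ (which would force an uncontrolled $k'$ in $\|\dot A\|$), I will split the transformation into four successive pieces fitting inside $[t_1, t_1+C]$. After translating we may assume $t_1=0$. The pieces are: (i) a short initial segment on which $A=Id$ and $u=c_1\cos(kx)e^{-kt}$, ensuring \eqref{formuclosebegtone}; (ii) a \emph{slow} piece that introduces $\cos(k'y)$ but with the artificially slow decay rate $k'/3$ in place of $k'$; (iii) an \emph{acceleration} piece that sharpens the decay from $k'/3$ up to $k'$, without touching the spatial profile; (iv) a short final segment on which $A=Id$ and $u=c_2\cos(k'y)e^{-k't}$, ensuring \eqref{formucloseendttwo}. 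Each of the two non-trivial pieces will be furnished by one of the technical propositions to be stated and proved in Section~\ref{thebuildingblocknewlabel}.

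On piece (ii), I would use the ``slow'' proposition, which takes $c_1\cos(kx)e^{-kt}$ as left boundary data and produces, after a bounded duration, an output of the form $c_1\,\tilde c\,\cos(k'y)e^{-k't/3}$ with $\tilde c$ an explicit positive constant. Throughout (ii) the solution is of the form $u=f(t)\cos(kx)+g(t)\cos(k'y)$ with $f$ smoothly decreasing from $c_1e^{-kt}$ to $0$ and $g$ growing from $0$ to roughly $c_1\tilde c\,e^{-k't/3}$, while $A\in R(80,60)$ uniformly in $k,k'$. The key mechanism is that because the new cosine is introduced with very small amplitude and with only the slow decay rate $k'/3$, the required correction to $\div(A\nabla u)=-\ddot u$ can be absorbed into an $O(1)$ perturbation of $A$. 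On piece (iii), the ``acceleration'' proposition takes $c_1\tilde c\,\cos(k'y)e^{-k't/3}$ and returns $c_2\cos(k'y)e^{-k't}$; here only the $yy$-block of $A$ is perturbed. Both propositions will be designed so that near each of their own boundaries $u$ is a pure product of one cosine and one exponential in $t$, and $A=Id$; this makes the gluings across (i)--(ii)--(iii)--(iv) trivially $C^1$ for $A$ and $C^2$ for $u$, yielding the global regularity $A\in R(80,60)$ and $u\in C^2$.

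The constant $c=e^{-k/2+5k'/6}$ in the matching rule $c_1 c\,e^{-kt_1}=c_2e^{-k't_1}$ will be obtained simply by multiplying the explicit amplitude factors produced by (ii) and (iii): one exponent records the ``cost'' of running the slow introduction for its prescribed duration at rate comparable to $k$, and the other records the ``gain'' of running the acceleration at rate $k'/3$ rather than $k'$ for its prescribed duration. The derivative bounds \eqref{generalformtonetonepluscforfg} will be inherited from the corresponding bounds stated in the Section~\ref{thebuildingblocknewlabel} propositions for $f$ and $g$; the unusual power $k^{7\alpha/3}$ in the bound on $f^{(\alpha)}$ reflects the fact that the transition window inside the slow piece has width proportional to $k^{-1/3}$ (a scaling forced on us by the requirement $\|\dot A\|\le 60$), so that each time derivative costs at most $k\cdot k^{1/3}=k^{4/3}$ beyond the baseline factor of $k^{\alpha}$ coming from differentiating the exponential. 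The main obstacle is entirely contained in Section~\ref{thebuildingblocknewlabel}: to carry out the slow piece so that $A$ remains uniformly elliptic and uniformly $C^1$ with constants independent of $k,k'$, despite introducing a cosine of frequency $k'\gg 1$. Once those propositions are in hand, the present Lemma is a concatenation plus a bookkeeping of boundary data, transition windows, and amplitude factors.
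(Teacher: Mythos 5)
Your overall architecture coincides with the paper's: reduce to $t_1=0$ by shifting and rescaling by $c_1e^{-kt_1}$, then build the block by a slow-introduction step followed by an acceleration step (Propositions \ref{slownew} and \ref{accelnew}), with the amplitude bookkeeping producing $c=\e^{-k/2+5k'/6}$ exactly as in Lemma \ref{blockzerocnewnew}. However, there is a genuine gap in the way you set up the junctions. You require that near the boundaries of each internal piece the solution be a single mode times an exponential \emph{and} that $A=Id$ there, claiming this makes the gluings trivial. At the junction between your slow piece (ii) and your acceleration piece (iii) the solution is supposed to be $c_1\tilde c\cos(k'y)\e^{-k't/3}$, and this function does not solve $\ddot u+\div(Id\,\nabla u)=0$: with $A=Id$ a pure $\cos(k'y)$ mode decaying in $t$ must decay at rate exactly $k'$, since $\ddot g=k'^2g$ forces $g$ to be a combination of $\e^{\pm k't}$. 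If you insist on rate $k'$ at that junction you are back to the direct transformation you correctly ruled out; if you keep rate $k'/3$ you cannot have $A=Id$ there, so the equation fails and the gluing argument collapses. This is precisely why the paper has a third technical ingredient that your plan omits: Proposition \ref{lemmachangingacoeff} first changes $A$ from $Id$ to $\begin{pmatrix}1&0\\0&1/9\end{pmatrix}$ (keeping $\cos(kx)\e^{-kt}$ a solution, since only the $yy$-entry moves), the slow step is run with this anisotropic matrix so that $\cos(k'y)\e^{-k't/3}=\cos(k'y)\e^{-k'\sqrt{1/9}\,t}$ is a legitimate solution at the (ii)--(iii) junction, and only the acceleration step returns the $yy$-entry from $1/9$ to $1$, so that $A=Id$ reappears on the final segment where $u=c_2\cos(k'y)\e^{-k't}$. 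Your plan is repairable by inserting this coefficient change and matching the junction matrices accordingly, but as written the equation is violated at the internal junction and the claimed $C^1$ gluing of $A$ is moot.

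A secondary, smaller inaccuracy: the transition window responsible for the $k^{7\alpha/3}$ loss is not of width $\sim k^{-1/3}$; in the paper's slow step the perturbative windows have width $\epsilon^{1/3}=k^{-4/3}$ (with $\epsilon=k^{-4}$), so each $t$-derivative of the cutoff costs $k^{4/3}$ on top of the $k$ from the exponential, giving $k^{7\alpha/3}$; the width $\asymp k^{-1/3}$ (up to $\sqrt{\ln k}$) only appears in the final amplitude-restoring window, where it is harmless. This does not affect the structure of the argument, but the exponent bookkeeping in \eqref{generalformtonetonepluscforfg} should be tied to the $k^{-4/3}$ windows, not to a $k^{-1/3}$ one.
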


We will reduce this Lemma to the \textit{building block}, Lemma \ref{blockzerocnewnew}, that we present below.
\begin{lemma}[The building block]\label{blockzerocnewnew}
     Let $1\ll k<k'\leq 2k$. There exists $C\geq 2$ independent of $k, k'$ such that  one can transform $ \cos(kx) \e^{-kt}$ into $c\cos(k'y) \e^{-k't}$ within the set $\{(x,y,t): 0\leq t \leq C\}$, via a solution $u$ to $\ddot u + \div(A\nabla u)=0$ and where $A$ is in the  regularity class $R(80, 60).$  The constant $c=c(k,k')$ is given by 
   $$
   c=\e^{\frac{-k}{2} + \frac{5k'}{6}}.
   $$
  On $[0, C]$, the function $u$ is of the form \begin{align}\label{generalshapeonzeroc}
       u(x,y,t)=f(t)  \cos(kx) + g(t) \cos(k'y)
  \end{align}
     where $f,g \in C^2$ satisfy for all $0\leq \alpha \leq 2,$
     $$
     |f^{(\alpha)}(t)| \lesssim k^{\frac{7\alpha}{3}} \e^{-kt}, \hspace{0.5cm} |g^{(\alpha)}(t)| \lesssim (k')^{\alpha} c\, \e^{\frac{-k'}{3}t}.
     $$
     Moreover, on $[0, \frac{1}{100}]$,
     \begin{align}\label{formuclosebegtonezeroc}
         u(x,y,t)=\cos(kx) \e^{-kt}, \hspace{0.5cm} A=Id 
     \end{align}
      and on $[C-\frac{1}{100}, C]$,
      \begin{align}\label{formucloseendttwozeroc}
          u(x,y,t)= c \cos(k'y) \e^{-k't}, \hspace{0.5cm} A=Id.
      \end{align}
\end{lemma}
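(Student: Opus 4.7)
The plan is to decompose the construction into two successive transformations on $[0,C]$, split at an intermediate time $T_1$, following the scheme outlined in Section \ref{informalexplanationfullproof}. On $[0, T_1]$ I would invoke the slow-introduction step (Proposition \ref{slownew} in the excerpt's list), which transforms $\cos(kx)e^{-kt}$ into $c_1 \cos(k'y)e^{-k't/3}$ by simultaneously damping the $\cos(kx)$ mode and introducing the $\cos(k'y)$ mode, but with the \emph{slow} decay rate $k'/3$ rather than $k'$. On $[T_1, C]$ I would invoke the acceleration step (Proposition \ref{accelnew}), which keeps the spatial mode $\cos(k'y)$ fixed and only speeds up the decay rate from $k'/3$ to $k'$, producing the desired output $c\,\cos(k'y)e^{-k't}$. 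Both subintervals can be taken of universal length independent of $k,k'$, and the product of the intermediate amplitudes, matched across $t=T_1$, yields the advertised constant $c = \e^{-k/2 + 5k'/6}$.

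The reason for introducing the slow intermediate rate $k'/3$ is regularity: a single-step transformation $\cos(kx)e^{-kt}\leadsto\cos(k'y)e^{-k't}$ in $O(1)$ time along the lines of Lemma \ref{PML1} would force the coefficients of $A$ to have $C^1$-norms proportional to $k$, since the frequency jump $k'-k$ can be as large as $k$ and so cannot be absorbed into an identity perturbation at unit time scale. The slow rate $\e^{-k't/3}$ makes the amplitude of the emerging $\cos(k'y)$ mode very small compared with that of the dying $\cos(kx)$ mode at the moment of its introduction, and this smallness can be traded against the ellipticity budget to keep $A$ in the class $R(80, 60)$ uniformly in $k$.

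The ansatz $u = f(t)\cos(kx) + g(t)\cos(k'y)$ is preserved throughout $[0,C]$ because both propositions respect it; on $[T_1, C]$ we simply have $f \equiv 0$. Near both endpoints the propositions return the matrix to the identity and the coefficient functions to pure exponentials, which produces \eqref{formuclosebegtonezeroc} and \eqref{formucloseendttwozeroc} as well as the $C^1$ gluing of $A$ and the $C^2$ gluing of $u$ across $t = T_1$. The pointwise bounds on $f^{(\alpha)}$ and $g^{(\alpha)}$ then follow from the corresponding bounds in the two propositions: the factor $k^{7\alpha/3}$ on $f$ originates from the fact that in the slow-introduction step the natural regularization scale is $w \sim k^{-1/3}$ (the largest scale on which the $C^1$-budget of $A$ can accommodate an $O(k)$ change of spatial frequency), so each time derivative of $f$ costs an extra $w^{-1} = k^{1/3}$ on top of the base factor $k$ coming from the exponential; whereas $g$ only collects the natural $(k')^\alpha$ factor per derivative since it arises by a simple rescaling of universal cutoffs.

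The principal technical obstacle is packaged in the slow frequency-switch step: the spatial frequency must jump by an $O(k)$ amount in unit time while the matrix $A$ stays uniformly $C^1$ with ellipticity bounded by a universal constant, and simultaneously the decay of the $\cos(kx)$ mode must remain at the natural rate $\e^{-kt}$ with $C^2$-bounds only of order $k^{7/3}$ per derivative. The acceleration step is comparatively mild, since it does not involve any change of spatial mode and reduces to adjusting a diagonal entry of $A$ on a single-mode solution (for which Lemma \ref{lemmachangingacoeff} suffices). Once these two propositions are in hand, Lemma \ref{blockzerocnewnew} follows by the concatenation described above together with a routine check that the amplitude bounds produced by the two propositions match the asserted form.
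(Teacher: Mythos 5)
Your overall route (first pass to the slower-decaying, faster-oscillating mode $\cos(k'y)e^{-k't/3}$, then accelerate the decay to $e^{-k't}$ via Proposition \ref{accelnew}) is essentially the paper's, but your two-stage decomposition omits the preliminary step that makes the first stage legal. Proposition \ref{slownew} requires that $\cos(kx)e^{-k\sqrt{a}t}$ and $\cos(k'y)e^{-k'\sqrt{b}t}$ solve the \emph{same} constant-coefficient equation $\ddot u+\div(\mathrm{diag}(a,b)\nabla u)=0$ with $k'\sqrt{b}<k\sqrt{a}$; to obtain the intermediate rate $k'/3$ you are forced to take $a=1$, $b=1/9$, so the ambient matrix before (and after) the slowdown is $\mathrm{diag}(1,1/9)$, not the identity — with $A=Id$ the hypothesis $k'\sqrt{b}<k\sqrt{a}$ would require $k'<k$, contradicting $k<k'$, and $\cos(k'y)e^{-k't/3}$ is simply not a solution. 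Consequently your claim that ``near both endpoints the propositions return the matrix to the identity'' fails at the left endpoint, and \eqref{formuclosebegtonezeroc} is not achieved. The missing ingredient is exactly the paper's Step 1: apply Proposition \ref{lemmachangingacoeff} on a fixed initial interval to deform the $(y,y)$-entry of $A$ from $1$ to $1/9$ while $\cos(kx)e^{-kt}$ remains a solution (possible because it is independent of $y$). The correct decomposition is therefore three steps, not two, and the stated constant $c=e^{-k/2+5k'/6}$ is tied to the specific schedule (slowdown matched at $t=1/2$, acceleration matched at $t=1$), so it must be computed from that timing rather than asserted.

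Two secondary inaccuracies: the acceleration does \emph{not} ``reduce to adjusting a diagonal entry of $A$ on a single-mode solution (for which Lemma \ref{lemmachangingacoeff} suffices)''. Proposition \ref{lemmachangingacoeff} keeps the solution fixed and only changes the coefficient in the direction the solution does not see; in the acceleration the solution itself changes decay rate from $e^{-k't/3}$ to $e^{-k't}$, so one must deform the exponent, $u=\cos(k'y)e^{-g(t)}$, and read off the time-dependent coefficient $\tilde b=-\ddot g/(k')^2+(\dot g/k')^2$ — this is precisely Proposition \ref{accelnew}, whose universal duration $400$ and ellipticity bound $80$ are where $C\geq 2$ and the class $R(80,60)$ come from, and whose terminal matrix $\mathrm{diag}(1,1)=Id$ yields \eqref{formucloseendttwozeroc}. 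Finally, your heuristic for the exponent $7\alpha/3$ is off: in the perturbative introduction the cutoff scale is $w=\epsilon^{1/3}=k^{-4/3}$ (with $\epsilon=k^{-4}$), not $k^{-1/3}$, so each time derivative of $f$ costs $k^{4/3}$ on top of the base factor $k$, giving $k^{7/3}$ per derivative as imported from Proposition \ref{slownew}.
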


\begin{proof}[Proof of the reduction of Lemma \ref{lemmaniceconnectingtone}  to Lemma \ref{blockzerocnewnew}]

    Let $C\geq 2$ be given by Lemma \ref{blockzerocnewnew} and let  $u$ and $A$ be the solution and the matrix from Lemma \ref{blockzerocnewnew}:  $\ddot{u}+\div(A\nabla u)=0$ and $u$ transforms  $ \cos(kx) \e^{-kt}$ into $ c \cos(k'y) \e^{-k't}$ within the set $\{(x,y,t): 0\leq t \leq C\}$ where $A$ is in the regularity class $R(80, 60)$ and where $c=c(k, k')=\e^{\frac{-k}{2} + \frac{5k'}{6}}$.

    Let $t_1\geq 0$ and let $t \in [t_1, t_1+C]$. Denote 
    \begin{align}\label{deftildeutildeaforshitedproof}
    \tilde u(t):= u(t-t_1), \hspace{0.5cm} \tilde A(t):=A(t-t_1).    
    \end{align}
    
    Then, $ \tilde u$ transforms $ \cos(kx) \e^{-k(t-t_1)}$ into $ c \cos(k'y) \e^{-k'(t-t_1)}$ within the set $\T^2\times [t_1, t_1+C]$. It is a solution of $\ddot{\tilde u} + \div(\tilde A \nabla \tilde u)=0$ and $\tilde A$ is in the regularity class $R(80, 60).$  Let $c_1>0$ and define $c_2$ by 
    \begin{align}\label{defconstantzeroc}
    c_1\, c\, \e^{-kt_1} = c_2 \e^{-k't_1}.    
    \end{align}
    
    Denote 
    \begin{align}\label{defUshifted}
    U(t):=c_1 \e^{-kt_1} \tilde u(t).        
    \end{align}
    
    Then, using \eqref{defconstantzeroc}, we see that $U$ transforms $c_1 \cos(kx) \e^{-kt}$ into $c_2\cos(k'y)\e^{-k't}$  within the set $\T^2\times [t_1, t_1+C]$. It is a solution of 
    \begin{align}\label{eqtintonetwpprofoo}
    \ddot{U} + \div(\tilde A \nabla U)=0    
    \end{align}
    for $t \in [t_1, t_1+C]$, where $\tilde A$ is in the regularity class $R(80, 60).$ The solution $U$ and the matrix $\tilde A$ are the solution and matrix in Lemma \ref{lemmaniceconnectingtone}.

\textbf{Proof of \eqref{generalformtonetoneplusc} and \eqref{generalformtonetonepluscforfg}}
 Consider the solution $U$  on $[t_1, t_1+C]$ that we just constructed. By \eqref{deftildeutildeaforshitedproof} and by \eqref{defUshifted}, $U$ is given, for $t \in [t_1, t_1+C]$, by 
\begin{align}\label{biguconstructedproof}
    U(x,y,t) = c_1 \e^{-kt_1}u(x,y,t-t_1)
\end{align}
where $u(x,y,t)$ is the solution on $[0, C]$ from Lemma \ref{blockzerocnewnew}.

By Lemma \ref{blockzerocnewnew}, for $t \in [0, C]$, 
\begin{align}\label{recallfromlittleuzeroc}
    u(x,y,t) = f(t) \cos(kx)  + g(t) \cos(k'y) 
\end{align}
where $f, g \in C^2.$ Hence, by \eqref{biguconstructedproof} and by \eqref{recallfromlittleuzeroc},  for $t \in [t_1, t_1+C]$,
\begin{align}
    U(x,y,t) &= c_1 \e^{-kt_1} f(t-t_1) \cos(kx) + c_1 \e^{-kt_1} g(t-t_1) \cos(k'y)  \nonumber \\
    &=: F(t) \cos(kx) + G(t)\cos(k'y).
\end{align}

By  Lemma \ref{blockzerocnewnew}, $f, g$ satisfy for all $0\leq \alpha \leq 2$ and for $t \in [0, C]$
     $$
     |f^{(\alpha)}(t)| \lesssim k^{\frac{7\alpha}{3}} \e^{-kt}, \hspace{0.5cm} |g^{(\alpha)}(t)| \lesssim (k')^{\alpha} c\, \e^{\frac{-k'}{3}t} .
     $$
Therefore, for $t \in [t_1, t_1+C,]$ $F, G$ are $C^2$ and satisfy for all $0\leq \alpha \leq 2,$
$$
|F^{(\alpha)}(t)| \lesssim c_1 k^{\frac{7\alpha}{3}} \e^{-kt}, \hspace{0.5cm} |G^{(\alpha)}(t)|\lesssim (k')^{\alpha} c_2 \e^{\frac{-k'}{3}t} \e^{\frac{-2k'}{3}t_1}.
$$
     Indeed, the inequality for $F$ is straightforward. For $G$, we use that $c_1\, c\, \e^{-kt_1} = c_2 \e^{-k't_1}$ by \eqref{defconstantzeroc}. This proves \eqref{generalformtonetoneplusc} and \eqref{generalformtonetonepluscforfg} from Lemma \ref{lemmaniceconnectingtone}.   
\\

\textbf{Proof of \eqref{formuclosebegtone}}

By Lemma \ref{blockzerocnewnew}, for $t \in [0, \frac{1}{100}]$, 
$$
u(x,y,t) = \cos(kx)e^{-kt}, \hspace{0.5cm} A(x,y,t)=Id.
$$
Since $U(x,y,t) = c_1 \e^{-kt_1}u(x,y,t-t_1)$ by \eqref{biguconstructedproof} and since $\tilde A= A(t-t_1)$ by \eqref{deftildeutildeaforshitedproof}, it is clear that 
$$
U(x,y,t) = c_1 \cos(kx) \e^{-kt}, \hspace{0.5cm} \tilde A(x,y,t) = Id
$$
for $t \in [t_1, t_1+\frac{1}{100}]$. This was claimed in \eqref{formuclosebegtone} in Lemma \ref{lemmaniceconnectingtone}. To prove \eqref{formucloseendttwo}, we argue similarly and we use the relation \eqref{defconstantzeroc} $c_1 c \e^{-kt_1} = c_2 \e^{-k't_1}$. This finishes the proof of the reduction of Lemma \ref{lemmaniceconnectingtone} to Lemma \ref{blockzerocnewnew}.

\end{proof}

To finish the proof of Theorem \ref{EigenTheorem}, it only remains to prove Lemma \ref{blockzerocnewnew}. We present the proof in the next section.

\section{The building block}\label{thebuildingblocknewlabel}
So far, we have reduced Theorem \ref{EigenTheorem} to Lemma \ref{blockzerocnewnew}, which we recall below, on the construction of a solution to a divergence form equation in the elementary block $\T^2 \times [0, C]$ for some universal $C>0$. In this section, we reduce Lemma \ref{blockzerocnewnew} to three technical Propositions. 

\begin{lemma*}[\ref{blockzerocnewnew}, The building block]
   Let $1\ll k<k'\leq 2k$. There exists $C\geq 2$ independent of $k, k'$ such that  one can transform $ \cos(kx) \e^{-kt}$ into $c\cos(k'y) \e^{-k't}$ within the set $\{(x,y,t): 0\leq t \leq C\}$, via a solution $u$ to $\ddot u + \div(A\nabla u)=0$ and where $A$ is in the  regularity class $R(80, 60).$  The constant $c=c(k,k')$ is given by 
   $$
   c=\e^{\frac{-k}{2} + \frac{5k'}{6}}.
   $$
  For $t \in [0, C]$, the function $u$ is of the form \begin{align}\label{generalshapeonzeroc}
       u(x,y,t)=f(t)  \cos(kx) + g(t) \cos(k'y)
  \end{align}
     where $f,g \in C^2$ satisfy for all $0\leq \alpha \leq 2,$
     $$
     |f^{(\alpha)}(t)| \lesssim k^{\frac{7\alpha}{3}} \e^{-kt}, \hspace{0.5cm} |g^{(\alpha)}(t)| \lesssim (k')^{\alpha} c\, \e^{\frac{-k'}{3}t}.
     $$
     Moreover, on $[0, \frac{1}{100}]$,
     \begin{align}\label{formuclosebegtonezeroc}
         u(x,y,t)=\cos(kx) \e^{-kt}, \hspace{0.5cm} A=Id 
     \end{align}
      and on $[C-\frac{1}{100}, C]$,
      \begin{align}\label{formucloseendttwozeroc}
          u(x,y,t)= c \cos(k'y) \e^{-k't}, \hspace{0.5cm} A=Id.
      \end{align}
\end{lemma*}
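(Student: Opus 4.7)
The plan is to build $u$ on $\T^2\times[0,C]$ by concatenating three $\mathcal O(1)$-long transformations, following the blueprint sketched in Section~\ref{informalexplanationfullproof}. I begin in a thin buffer $[0,1/100]$ where $A=Id$ and $u=\cos(kx)e^{-kt}$ trivially. At the end of this buffer I perform a ``coefficient-change'' step: while the solution remains of the form $f(t)\cos(kx)$ (independent of $y$), I am free to smoothly alter the $yy$ entry of $A$, and I can also tilt the $xx$ entry so long as I rescale the exponent of $f$ accordingly; this prepares the diagonal matrix $A_{*}$ that the slow block expects. In a mirror-image buffer $[C-1/100,C]$ I unwind the analogous coefficient change and arrive at $A=Id$ with $u=c\cos(k'y)e^{-k't}$.

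\textbf{The two nontrivial segments.} Between the buffers I apply two building blocks. The \emph{slow block} takes $\cos(kx)e^{-kt}$ and, over an $\mathcal O(1)$ window, installs the new frequency $\cos(k'y)$ while simultaneously decreasing the temporal decay rate from $-k$ to approximately $-k'/3$, producing an intermediate mode of the form $c_1\cos(k'y)e^{-k't/3}$ with $c_1$ of order $e^{\Theta(k)}$. The point of decaying only slowly is that the two cosine modes must remain comparable in size throughout the window; if the decay were faster, the ratio of the modes would blow up with $k$ and the transformation matrix would require $\|\nabla A\|\to\infty$. The \emph{acceleration block} then acts on the single mode $c_1\cos(k'y)e^{-k't/3}$: since only one cosine is present, $A$ can be kept diagonal and $t$-dependent only, the equation reduces to an ODE $\ddot g = (k')^2 a(t)\,g$, and by prescribing $a(t)$ to ramp smoothly from $1/9$ up to $1$ over time $\mathcal O(1)$ I can steepen the decay back to $-k'$. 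The composition yields the final scalar $c=e^{-k/2+5k'/6}$: this specific exponent is dictated by continuity of $u$ at the two interfaces, once the durations of the slow and acceleration windows (fixed numerical constants in the range required for $A\in R(80,60)$) are pinned down.

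\textbf{Regularity and gluing.} To obtain $A\in R(80,60)$ and $u\in C^2$ globally, I will require each of the three propositions to be stated in a version where the cutoff functions controlling the transitions in $t$ have derivatives of all orders $\le 2$ vanishing at the endpoints of their respective windows; this ensures $C^2$ gluing of $u$ and $C^1$ gluing of $A$ across the three interfaces, including the two buffer junctions where $A=Id$ on one side. The quantitative $|f^{(\alpha)}|\lesssim k^{7\alpha/3}e^{-kt}$ and $|g^{(\alpha)}|\lesssim (k')^{\alpha}c\,e^{-k't/3}$ estimates fall out of the explicit ODEs in the slow and acceleration blocks: the $k^{7\alpha/3}$ factor will come from the slow block, where the $t$-derivatives of $f$ pick up powers of $k$ scaled by the duration of the transition, while the cleaner $(k')^{\alpha}$ factor for $g$ comes from the acceleration block, whose ODE has coefficients of size $(k')^2$ acting on $g$.

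\textbf{Main obstacle.} The hard part will be designing the slow block so that, in a single $\mathcal O(1)$-length window, one mode can be turned off and another, at an entirely different frequency $k'\neq k$, can be turned on, all while keeping $A$ uniformly in $R(80,60)$ with constants independent of $k$. This is precisely where the innovation over Lemma~\ref{PML1} (which only allows $k'-k=O(w^{-1})$) is needed; the extra flexibility will be bought by rescaling the diagonal of $A$ in the coefficient-change step and by permitting the decay rate to drop to roughly $k'/3$ during the mixing. Once this block is in place, the acceleration block and the two buffer/coefficient-change steps are essentially one-dimensional ODE constructions with uniform bounds.
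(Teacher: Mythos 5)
Your overall architecture coincides with the paper's reduction: a buffer with $A=Id$, a coefficient change of the $yy$ entry (to $1/9$) while $u$ depends only on $x$, a ``slow block'' producing $c_1\cos(k'y)e^{-k't/3}$, an acceleration step governed by the one-mode ODE that ramps the $yy$ entry back from $1/9$ to $1$ (so your separate ``mirror-image'' unwinding at the end is redundant), and the constant $c$ fixed by continuity at the interfaces. This is exactly how the paper proves the lemma, via Propositions \ref{lemmachangingacoeff}, \ref{slownew} and \ref{accelnew}.

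The genuine gap is at the one point you yourself flag as the main obstacle: the slow block is never constructed, and the heuristic you offer for it would not lead to a construction. You require that ``the two cosine modes remain comparable in size throughout the window,'' but over any window of length $\mathcal{O}(1)$ their ratio necessarily changes by a factor $e^{\Theta(k)}$, since the decay rates differ by $k-k'/3\gtrsim k$; so that design constraint cannot be met, and ``rescaling the diagonal of $A$'' plus ``letting the decay drop to $k'/3$'' does not by itself explain how to switch the frequency from $k$ to $k'\approx 2k$ with $A$ uniformly $C^1$. The paper's Proposition \ref{slownew} does essentially the opposite of keeping the modes comparable: it introduces the $\cos(k'y)$ mode at tiny relative amplitude $\epsilon=k^{-4}$ over a very short window of length $\epsilon^{1/3}=k^{-4/3}$ (so the large cutoff derivatives, of size $\epsilon^{-1/3}$ and $\epsilon^{-2/3}$, are paid for by the factor $\epsilon$, and the divergence-correction of Lemma \ref{claim2Dtwopointoneproof} stays small); it then waits passively with $A$ constant for a time $\approx 8\ln k/(k-k'/3)$ until the dominance flips; it removes the old mode by the symmetric perturbative step; and it finally restores the lost factor $k^{-4}$ through a $t$-dependent ODE modification of the $yy$ coefficient over a window of length $\sqrt{4\ln k}/(k')^{1/3}$. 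These sub-steps are also where the $k^{7\alpha/3}$ loss in the $f$-estimates and part of the exponent in $c$ come from; without them (or a substitute mechanism) your outline does not establish the lemma. Two smaller slips: the intermediate amplitude is exponentially small, $c_1=e^{-(k-k'/3)/2}$, not of size $e^{+\Theta(k)}$; and ``tilting the $xx$ entry while rescaling the exponent of $f$'' is not a free operation (changing a decay exponent needs its own acceleration-type transition), though it is also unnecessary since the construction never touches the $xx$ entry.
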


This construction relies on three Propositions that we present below.

\begin{prop}[Changing a coefficient]\label{lemmachangingacoeff}
 Let $k \geq 1$, let $a,b \in (\frac{1}{10}, 10)$, let $t_1\geq 0$ and let also $C>0$ be a universal constant. Consider  the equations
    $$
    \ddot u + \div\left[\begin{pmatrix}
        a & 0 \\
        0 & a
    \end{pmatrix} \nabla u\right]=0, \hspace{0.5cm} \mbox{ and } \hspace{0.5cm} \ddot u + \div\left[\begin{pmatrix}
        a & 0 \\
        0 & b
    \end{pmatrix} \nabla u\right]=0.
    $$
    There exists a uniformly $C^1$ and uniformly elliptic matrix-valued function $A$ on $\T^2 \times \R,$ such that  $$A=\begin{pmatrix}
        a & 0 \\
        0 & a
    \end{pmatrix} \hspace{0.3cm} \mbox{ for } t\leq t_1 \hspace{0.5cm} \mbox{ and } \hspace{0.3cm}  A= \begin{pmatrix}
        a & 0 \\
        0 & b
    \end{pmatrix} \hspace{0.3cm} \mbox{ for } t\geq t_1+C$$
     and such that $A$ belongs to the regularity class $R(10, \frac{10\sqrt{\pi}}{C})$ and the function $u:=\cos(kx) \e^{-k\sqrt{a}t}$ is a solution to $\ddot u + \div(A\nabla u)=0$ in $\T^2 \times \R$. 
\end{prop}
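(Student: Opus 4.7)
\textbf{Proof plan for Proposition \ref{lemmachangingacoeff}.}

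The key observation is that the candidate solution $u(x,y,t) = \cos(kx)e^{-k\sqrt{a}t}$ does not depend on $y$, so $\partial_y u \equiv 0$. I will exploit this to make the problem essentially one-dimensional: the only entry of $A$ that interacts with $u$ is $A_{xx}$, while the entries $A_{yy}$, $A_{xy}$, $A_{yx}$ can be chosen freely (subject to ellipticity and smoothness) without disturbing the equation. Concretely, I will take $A$ to be the diagonal matrix
\[
A(x,y,t) = \begin{pmatrix} a & 0 \\ 0 & \alpha(t) \end{pmatrix},
\]
where $\alpha(t)$ is a smooth scalar function to be specified. Since $\nabla u = (\partial_x u,\, 0)^T$, we have $A\nabla u = (a\,\partial_x u,\, 0)^T$, and therefore $\div(A\nabla u) = a\,\partial_{xx}u = -ak^2 u$, while $\ddot u = ak^2 u$. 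Hence $\ddot u + \div(A\nabla u) = 0$ automatically, regardless of how $\alpha(t)$ is chosen.

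The remaining task is to pick $\alpha$ so that $\alpha(t) = a$ for $t \leq t_1$ and $\alpha(t) = b$ for $t \geq t_1 + C$, and to verify the regularity class. Following the construction of $\theta$ given in footnote \ref{footnoteblablablaintro} (and used earlier in \eqref{defanforcoeffinb}), which satisfies $\theta \equiv 1$ for $\tau \leq 0$, $\theta \equiv 0$ for $\tau \geq 1$, $\sup_{[0,1]}|\dot\theta| \leq \sqrt{\pi}$, and $\dot\theta$ vanishing smoothly at both endpoints (as recorded in \eqref{propthetaforbn}), I set
\[
\alpha(t) := a\,\theta\!\left(\frac{t-t_1}{C}\right) + b\left(1 - \theta\!\left(\frac{t-t_1}{C}\right)\right).
\]
Then $\alpha$ agrees with $a$ for $t \leq t_1$ and with $b$ for $t \geq t_1+C$, and it is $C^1$ (in fact smooth) across both endpoints because all derivatives of $\theta$ vanish at $0$ and $1$.

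For the ellipticity constant, since $a,b \in (1/10, 10)$, the entries of $A$ stay in $[1/10, 10]$, so for any vector $\xi$ we have $\frac{1}{10}|\xi|^2 \leq (A\xi,\xi) \leq 10|\xi|^2$. For the $C^1$ bound, $\partial_x A = \partial_y A = 0$ since $A$ depends only on $t$, and
\[
|\dot\alpha(t)| = \frac{|a-b|}{C}\left|\dot\theta\!\left(\frac{t-t_1}{C}\right)\right| \leq \frac{10\sqrt{\pi}}{C},
\]
using $|a-b| < 10$ and $|\dot\theta| \leq \sqrt{\pi}$. Thus $A \in R(10,\, 10\sqrt{\pi}/C)$, and the matrix together with $u$ satisfies all the stated conclusions. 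There is no substantive obstacle here: the single idea is that $u$ depending only on $x$ and $t$ decouples the $y$-direction of $A$, leaving us with complete freedom to smoothly interpolate $A_{yy}$.
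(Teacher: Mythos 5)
Your proof is correct and takes essentially the same route as the paper: the same observation that $u=\cos(kx)e^{-k\sqrt{a}t}$ is independent of $y$ so only the $(x,x)$ entry matters, and the same interpolation of the $(y,y)$ entry via the cutoff $\theta$ (your $\alpha(t)=a\theta+b(1-\theta)$ is exactly the paper's $c(t)=(a-b)\theta+b$), with the identical ellipticity and $C^1$ bounds $R(10,10\sqrt{\pi}/C)$.
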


\begin{prop}[slowing down]\label{slownew}
Let $1 \ll k < k' \leq 2k$ and let $a, b \in (\frac{1}{10}, 10).$ 
Consider two solutions $\cos(kx) \e^{-k\sqrt{a}t}$ and $\cos(k'y) \e^{-k'\sqrt{b}t}$ to the same equation 
$$
\ddot u + \div \left [\begin{pmatrix}a&0\\0&b\end{pmatrix} \nabla u \right ] = 0.
$$
If 
$$
k' \sqrt{b} < k \sqrt{a},
$$
then for any $t_1\geq 0$ and for any $c_1>0$, there exists $c_2>0$ such that one can transform $c_1 \cos(kx) \e^{-k\sqrt{a}t}$ into $c_2\cos(k'y) \e^{-k'\sqrt{b}t}$ within the set $\T^2 \times [t_1, t_1+C],$ via a solution $u$ to $\ddot u + \div(A\nabla u)=0,$  where $A$ is in the regularity class $R(20,10)$. The constants $c_1$ and $c_2$ are related by 
\begin{align}\label{defconstantslowdownconectwonew}
    c_1 e^{-k\sqrt{a}t_1} = c_2 e^{-k'\sqrt{b}t_1}.
\end{align}

The duration $C=C(k, k')$ of the transformation is 
\begin{align}\label{durationslowdowntoneprop}
C=\frac{1}{k^{4/3}} + \frac{8 \ln k}{k\sqrt{a}-k'\sqrt{b}} + \frac{\sqrt{4\ln k}}{(k')^{1/3}}+\frac{1}{100}.    
\end{align}

Moreover, for $t \in [t_1, t_1+C],$ the function $u$ is of the form 
\begin{align}\label{shapesolslowdownproptone}
u(x,y,t)= f(t) \cos(kx)  +  g(t) \cos(k'y)    
\end{align}
 where $f, g \in C^2$ satisfy $|f^{(\alpha)}(t)| \lesssim c_1 k^{\frac{7\alpha}{3}} \e^{-k\sqrt{a}t}$ and $|g^{(\alpha)}(t)| \lesssim c_2 (k')^{\alpha} \e^{-k'\sqrt{b}t}$ for all $0 \leq \alpha \leq 2.$
\end{prop}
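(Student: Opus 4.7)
Write $\lambda=k\sqrt a$ and $\lambda'=k'\sqrt b$, so the hypothesis reads $\lambda>\lambda'$. I will construct $u$ and $A$ on $[t_1,t_1+C]$ as a concatenation of four consecutive phases whose lengths coincide with the four summands of \eqref{durationslowdowntoneprop}: a short \emph{introduction} of length $\tau_1=k^{-4/3}$, a long \emph{relaxation} of length $\tau_2=8\ln k/(\lambda-\lambda')$, an \emph{eradication} of length $\tau_3=\sqrt{4\ln k}/(k')^{1/3}$, and a $1/100$ \emph{buffer}. On the relaxation and buffer intervals I take $A\equiv\mathrm{diag}(a,b)$ and $u=\tilde c_1(t)\cos(kx)+\tilde c_2(t)\cos(k'y)$ with each coefficient a pure exponential $e^{-\lambda t}$ or $e^{-\lambda't}$; by the definition of $\lambda,\lambda'$ both summands solve the constant-coefficient equation, so nothing is to be constructed. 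The quantitative role of the relaxation is that the coefficient of $\cos(kx)$ shrinks by $e^{-(\lambda-\lambda')\tau_2}=k^{-8}$ relative to the coefficient of $\cos(k'y)$; this smallness is the lever that makes the eradication feasible in class $R(20,10)$.

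The non-trivial phase is the introduction. A cutoff-based ansatz $g(t)=c_2 e^{-\lambda't}\alpha(t)$ with $\alpha$ ramping from $0$ to $1$ produces an obstruction $(\ddot\alpha-2\lambda'\dot\alpha)c_2 e^{-\lambda't}\cos(k'y)$ in $\ddot u+\div(\mathrm{diag}(a,b)\nabla u)$; the obvious diagonal remedy $p(t)=(\ddot\alpha-2\lambda'\dot\alpha)/((k')^2\alpha)$ added to $A_{yy}$ is singular as $\alpha\to 0$. The remedy will be a Plis--Miller style perturbation in which $A$ acquires off-diagonal entries of the trigonometric form $\sigma(t)\sin(kx)\sin(k'y)/(kk')$ together with a compensating diagonal correction built from $\cos(kx)\cos(k'y)$. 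Computing the divergence on the ansatz $u=f(t)\cos(kx)+g(t)\cos(k'y)$ produces cross-terms that exactly cancel the obstruction with no division by $\alpha$; simultaneously the equation couples $f$ and $g$, with $f$ acting as a source that drives $g$ from $g(t_1)=\dot g(t_1)=\ddot g(t_1)=0$ up to its relaxed value $c_2 e^{-\lambda'(t_1+\tau_1)}$ at the end of the window. The window $\tau_1=k^{-4/3}$ is the smallest scale for which the resulting perturbation of $A$ fits in $R(20,10)$.

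The eradication is the dual construction, with $f$ and $g$ (and the two Plis correction patterns) swapped. The essential new input is the $k^{-8}$ smallness provided by the relaxation: the forcing needed to bring $f$ to zero is a factor $k^{-8}$ smaller than the forcing in the introduction, so the Plis perturbation of $A$ can tolerate cutoff derivatives larger by a factor $k^{8/3}$ while remaining in $R(20,10)$. Balancing the $C^0$- and $C^1$-bounds of the perturbation and optimizing over the window length yields $\tau_3=\sqrt{4\ln k}/(k')^{1/3}$, where the $\sqrt{\ln k}$ comes from the stationary point of the expression balancing a $k^{-8}\tau_3^{-2}$ $C^0$-norm against a $k^{-8}\tau_3^{-3}$ $C^1$-norm.

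\textbf{Main obstacle.} The technical heart of the argument is the introduction phase: the pointwise bound $|g^{(\alpha)}(t)|\lesssim c_2(k')^\alpha e^{-\lambda't}$ must hold even while $g$ grows from $0$ on a window $\tau_1=k^{-4/3}$ shorter than $1/k'$, which formally rules out a scalar cutoff by the mean-value theorem. The Plis--Miller construction circumvents this precisely because $g$ is not a cutoff multiple of $c_2 e^{-\lambda't}$ but rather the second component of a coupled ODE system sourced by $f$ through the off-diagonal entries of $A$; with the coupling chosen so that $g$ inherits the natural time scale $\lambda'\sim k'$ rather than $\tau_1^{-1}=k^{4/3}$, the announced derivative bounds hold. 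The conservative bound $|f^{(\alpha)}|\lesssim c_1 k^{7\alpha/3}e^{-\lambda t}$ then has room to absorb the higher-order interactions between the decay factor $e^{-\lambda t}$, the cutoff derivatives $\tau_1^{-\alpha}\lesssim k^{4\alpha/3}$ and $\tau_3^{-\alpha}\lesssim k^{\alpha/3}(\ln k)^{\alpha/2}$, and the cross products with the trigonometric off-diagonal entries of $A$. Once the introduction and eradication are designed and glued to the trivial phases via the smooth cutoffs vanishing to second order at their endpoints, the remaining claims---the ellipticity and $C^1$ bounds on $A$, the form \eqref{shapesolslowdownproptone}, the derivative estimates, and the amplitude relation \eqref{defconstantslowdownconectwonew}---are verified by direct computation.
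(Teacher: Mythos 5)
Your skeleton (introduce the $\cos(k'y)$ mode via a Plis--Miller off-diagonal correction, let the two exponentials relax, then remove the $\cos(kx)$ mode) is the same as the paper's, but the introduction phase as you describe it cannot work, and the objection you raise yourself is not circumventable. Since $u$ must equal $c_1\cos(kx)e^{-k\sqrt a t}$ for $t\le t_1$, you have $g(t_1)=\dot g(t_1)=0$; if $g$ is to reach its \emph{full} relaxed value $c_2e^{-k'\sqrt b(t_1+\tau_1)}\approx c_1e^{-k\sqrt a t_1}$ at $t_1+\tau_1$ with $\tau_1=k^{-4/3}$, the mean value theorem produces a point where $|\dot g|\gtrsim c_1e^{-k\sqrt a t_1}\,k^{4/3}$, contradicting the bound $|\dot g|\lesssim c_2k'e^{-k'\sqrt b t}$ that the Proposition requires (here $k'\le 2k\ll k^{4/3}$); the MVT applies to $g$ however it is produced, coupled ODE system or not. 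The obstruction is in fact independent of the stated derivative bounds: testing $\ddot u+\div(A\nabla u)=0$ against $\cos(k'y)$ on $\T^2$ and integrating by parts gives $|\ddot g|\lesssim k'\|A\|_\infty\bigl(k|f|+k'|g|\bigr)$, so for any $A$ with ellipticity constant $\le 20$ and with $|f|,|g|\lesssim c_1e^{-k\sqrt a t_1}$ one gets $|g(t_1+\tau_1)|\lesssim k^2\tau_1^2\,c_1e^{-k\sqrt a t_1}=k^{-2/3}c_1e^{-k\sqrt a t_1}$: no choice of coupling through $A$ can bring $g$ anywhere near full amplitude in a window of length $k^{-4/3}$.

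The paper's proof never performs a full-amplitude transition on a short window: the new mode is introduced at relative amplitude $\epsilon=k^{-4}$ (Proposition \ref{perturbationnew}, in time $\epsilon^{1/3}=k^{-4/3}$), the waiting phase makes the old mode an $\epsilon$-perturbation of the new one, the old mode is then removed again in time $k^{-4/3}$, and only afterwards is the remaining \emph{single} cosine $k^{-4}\cos(k'y)e^{-k'\sqrt b t}$ rescaled to $\cos(k'y)e^{-k'\sqrt b t}$, by replacing the exponent with $h(t)=-k'\sqrt b\,t-4\ln k\,\alpha(t)$ over a window $w=\sqrt{4\ln k}/(k')^{1/3}$ and using the purely diagonal matrix $\mathrm{diag}(a,\tilde b)$ with $\tilde b=\ddot h/(k')^2+(\dot h/k')^2$. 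That final rescaling is where the $\sqrt{\ln k}$ window is actually spent: the factor $k^4=e^{4\ln k}$ enters the exponent linearly, and $w$ is chosen so that $\tilde b$ remains elliptic and uniformly $C^1$. Your plan assigns that window to the eradication instead (and your balancing of a $k^{-8}\tau_3^{-2}$ bound against a $k^{-8}\tau_3^{-3}$ bound does not produce a $\sqrt{\ln k}$), and it contains no amplitude-rescaling phase at all; without the small-amplitude introduction plus final exponent rescaling, the announced duration, the class $R(20,10)$, and the bounds on $f,g$ cannot all be met.
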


\begin{prop}[acceleration]
\label{accelnew}
Let $k \gg 1 $. Let also $a, b, b' \in (\frac{1}{10}, 10)$ with $b \leq b'$. Consider two functions $u_1: = \cos(ky) \e^{-k\sqrt{b}t}$ and $u_2:=\cos(ky) \e^{-k\sqrt{b'}t}$ which are solutions to the equations
$$\ddot{u}_1+\div \left[ \begin{pmatrix}a&0\\0&b\end{pmatrix}\nabla u_1 \right] = 0 \text{ \quad and \quad }\ddot{u}_2+\div \left [\begin{pmatrix}a&0\\0&b'\end{pmatrix}\nabla u_2 \right]= 0.$$
For any $t_1\geq 0$ and for any $c_1>0$, there exists $c_2>0$ such that we can transform $c_1 u_1$ into $c_2 u_2$ within the set $\T^2 \times [t_1, t_1+C]$ via a solution $u$ to $\ddot u + \div(A\nabla u)=0$ and where $A$ is in the regularity class $R(80, 10).$ The constants $c_1$ and $c_2$ are related by $$c_1 \e^{-k\sqrt{b}t_1}= c_2 \e^{-k\sqrt{b'}t_1}.$$

The time of the transformation is $C:=400.$ Moreover, for $t \in [t_1, t_1+C],$ the solution $u$ is of the form $$
        u(x,y,t)=g(t) \cos(ky) 
$$    where $g \in C^2$ satisfies  $|g^{(\alpha)}(t)| \lesssim c_1 k^{\alpha} \e^{-k\sqrt{b}t}$ for all $0 \leq \alpha \leq 2$.
\end{prop}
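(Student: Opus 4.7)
The plan is to look for the transforming solution in the separated form $u(x,y,t) = g(t)\cos(ky)$ with a diagonal matrix $A(x,y,t) = \mathrm{diag}(a,\tilde a(t))$ that depends only on $t$ in the transformation region $\T^2\times[t_1,t_1+C]$. Since $u$ does not depend on $x$, the $A_{xx}$ entry does not enter the equation, and $\ddot u + \div(A\nabla u) = 0$ collapses to the one-dimensional linear ODE $\ddot g(t) = k^2\tilde a(t)\,g(t)$. I will use a WKB-type ansatz $g(t) = c_1 e^{-k\phi(t)}$ with $\phi\in C^3$, which gives $\tilde a(t) = \dot\phi(t)^2 - \ddot\phi(t)/k$. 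Matching with $c_1 e^{-k\sqrt{b}t}$ near $t_1$ and with $c_2 e^{-k\sqrt{b'}t}$ near $t_1+C$ (together with the constraint that $A$ be $C^1$ across those slices) reduces to requiring $\dot\phi=\sqrt{b}$ in a neighborhood of $t_1$, $\dot\phi=\sqrt{b'}$ in a neighborhood of $t_1+C$, and the integral identity $\phi(t_1+C)-\phi(t_1)=\sqrt{b'}\,C$, which in turn is equivalent to the prescribed relation $c_1 e^{-k\sqrt{b}t_1}=c_2 e^{-k\sqrt{b'}t_1}$.

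The key structural point is this integral constraint: the average of $\dot\phi$ on $[t_1,t_1+C]$ must equal $\sqrt{b'}$, yet $\dot\phi$ starts at $\sqrt{b}\le\sqrt{b'}$ and ends at $\sqrt{b'}$. Hence $\dot\phi$ must \emph{overshoot} $\sqrt{b'}$ in the interior and then relax back, and this overshoot is exactly what produces the faster decay in $g$; it is the mechanism encoded in the name of the proposition. I will construct $\dot\phi$ as an explicit smooth bump-plateau profile: constant $\sqrt{b}$ on a short collar at the left endpoint, smoothly rising to a plateau at height $M>\sqrt{b'}$ on the middle portion of the interval, smoothly descending to $\sqrt{b'}$, and constant $\sqrt{b'}$ on a short collar at the right endpoint. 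A one-line area computation fixes $M$; since $\sqrt{b},\sqrt{b'}\in[1/\sqrt{10},\sqrt{10}]$, the value of $M$ stays bounded by an absolute constant, and $\dot\phi^2$ remains inside $[1/10,M^2]\subset[1/80,80]$.

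Verifying $A\in R(80,10)$ then amounts to bounding $\tilde a=\dot\phi^2-\ddot\phi/k$ in $[1/80,80]$ and $|\dot{\tilde a}|=|2\dot\phi\ddot\phi-\dddot\phi/k|\le 10$. Because $C=400$ is fixed and large, a standard smooth bump profile gives $|\ddot\phi|=O(1/C)$ and $|\dddot\phi|=O(1/C^2)$, so both estimates hold easily once $k\gg 1$. Continuity of $A$ across $\{t=t_1\}$ and $\{t=t_1+C\}$, together with vanishing derivative there, is automatic from the flat collars where $\dot\phi\equiv\sqrt{b}$ or $\sqrt{b'}$. Similarly $u=g(t)\cos(ky)$ is $C^2$ across both slices since on the collars $g$ coincides exactly with $c_1 e^{-k\sqrt{b}t}$ and with $c_2 e^{-k\sqrt{b'}t}$, matching the outside solutions to infinite order. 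The pointwise bound $|g^{(\alpha)}|\lesssim c_1 k^\alpha e^{-k\sqrt{b}t}$ for $\alpha\le 2$ follows from $\dot g=-k\dot\phi g$, $\ddot g=(k^2\dot\phi^2-k\ddot\phi)g$, together with the monotonicity estimate $\phi(t)\ge\sqrt{b}t$, which holds because $\dot\phi\ge\sqrt{b}$ everywhere and $\phi(t_1)=\sqrt{b}t_1$.

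The main technical point is to produce the overshoot profile of $\dot\phi$ simultaneously satisfying the integral identity, the prescribed constant values on both collars, and a second-derivative bound compatible with the $R(80,10)$ regularity class. But since $C=400$ is fixed and much larger than the natural scale $\sqrt{b'}-\sqrt{b}=O(1)$ and since $b,b'\in(1/10,10)$, a single explicit smooth profile does the job and all verifications reduce to finite bookkeeping.
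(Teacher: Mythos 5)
Your proposal is correct and follows essentially the same route as the paper's proof: a separated solution $u=g(t)\cos(ky)$ with a diagonal matrix whose $(y,y)$-entry depends only on $t$, so the PDE collapses to the scalar identity $\tilde a=\dot\phi^2-\ddot\phi/k$ (the paper's $\tilde b=-\ddot g/k^2+(\dot g/k)^2$), followed by the same ellipticity and $C^1$ bookkeeping over the fixed interval of length $C=400$ with $k\gg1$ absorbing the $\ddot\phi/k$, $\dddot\phi/k$ corrections. The only difference is in how the exponent profile is parametrized: the paper writes the exponent as $\big(\sqrt{b'}+(\sqrt{b}-\sqrt{b'})\theta(t/400)\big)kt$, interpolating the time-averaged rate (the overshoot of the instantaneous rate above $\sqrt{b'}$ and the smooth matching via the vanishing derivatives of $\theta$ are then implicit), whereas you design the instantaneous rate $\dot\phi$ directly as an overshooting bump–plateau subject to the area constraint, with flat collars giving the matching of $u$ and $A$ at the two interface slices.
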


We prove these three Propositions respectively in sections \ref{proof33}, \ref{proof34} and \ref{proof35}. In the following, we show that Lemma \ref{blockzerocnewnew} can be reduced to these three Propositions.

\pseudosection{Heuristic idea of the reduction of Lemma \ref{blockzerocnewnew}  to Proposition \ref{lemmachangingacoeff}, Proposition \ref{slownew} and Proposition \ref{accelnew}:}

We want to transform the harmonic function $\cos(kx) \e^{-kt}$ into the faster oscillating and faster decaying harmonic function $\cos(k'y) \e^{-k't}$ for $1\ll k <k'\leq 2k$.

To do that, we start with the harmonic function $u_1:=\cos(k x) \e^{-k t}$ solution of $\ddot u + \div(A\nabla u)=0$ with $A=Id$. In the first step, we use Proposition \ref{lemmachangingacoeff} to change a coefficient of $A$ to go from $A=Id$ to 
\begin{align}\label{Aheuristicblock}
    A=\begin{pmatrix}
    1 & 0 \\
    0 & b
\end{pmatrix}
\end{align} 
for some $b\leq 1$ to be chosen. At the end of this step, we have ensured that the functions $u_1=\cos(kx)\e^{-kt}$ and $u_2:= \cos(k'y) \e^{-k'\sqrt{b}t}$ are both solutions to $\ddot u + \div(A\nabla u)=0$ where $A$ is given by \eqref{Aheuristicblock}. By choosing $b$ such that  $k'\sqrt{b}<k$, Proposition \ref{slownew} ensures that we can transform $u_1$ into $u_2$ via a solution $u$ to a divergence form equation within some time $C>0.$ At the end of this second step, we only have the function $u_2$ and the matrix $A$ given in \eqref{Aheuristicblock}. Then, by using Proposition \ref{accelnew}, we can transform $u_2$ into the harmonic function $\cos(k'y) \e^{-k't}$ via a solution $u$ to a divergence form equation within some time $C>0.$ This  will yield Lemma \ref{blockzerocnewnew}.

\begin{rem} \label{Remark:transition to slower decaying solution}
    We see that the idea is to first go to a slower decaying but faster oscillating function, and then to accelerate the decay without changing the oscillation. Hence, we do not increase the rate of decay monotonically. This approach severely uses the flexibility of changing the coefficients of the equation.
\end{rem}

We now present the reduction of Lemma \ref{blockzerocnewnew}  to Proposition \ref{lemmachangingacoeff}, Proposition \ref{slownew} and Proposition \ref{accelnew}.

\begin{proof}[Proof of the reduction of Lemma \ref{blockzerocnewnew}  to Proposition \ref{lemmachangingacoeff}, Proposition \ref{slownew} and Proposition \ref{accelnew}]

 We will construct a function $u$ and a matrix $A$ that transforms $u_1:=\cos(kx) \e^{-kt}$ into $u_2:=c\cos(k'y) \e^{-k't}$, within the set $\{(x,y,t): 0\leq t \leq C\}$ for some $C\geq 2$ universal and for some $c=c(k, k')$. Moreover, $A$ will be in the regularity class $R(80, 60).$ 

\begin{figure}[H]
	\includegraphics[scale=0.27]{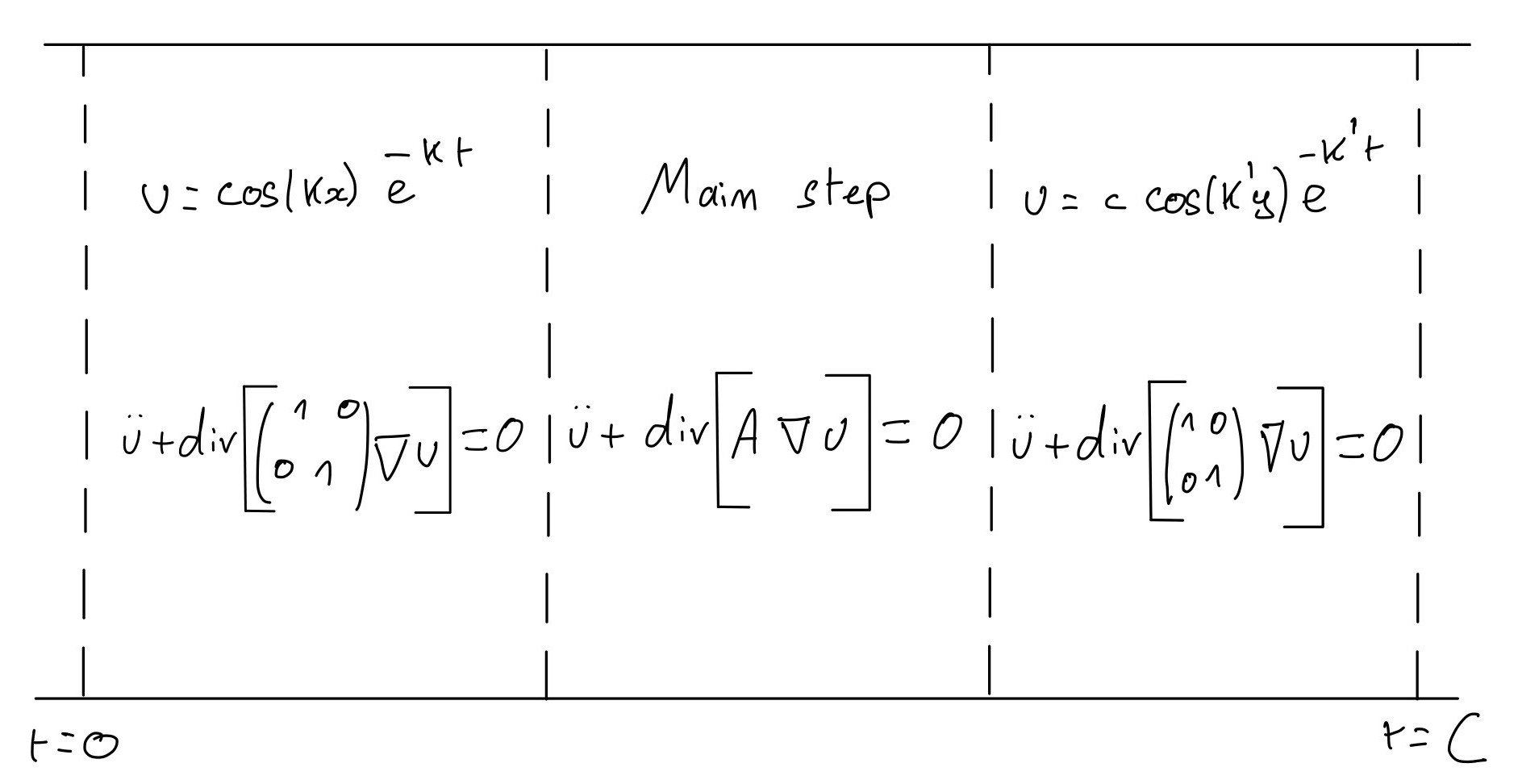}
	\centering
	\caption{Main step} 
\label{fig:main_step_kk}
\end{figure}

\pseudosection{Step 1: changing a coefficient}

We start with $A_0=Id$ in $\T^2 \times [0, \frac{1}{100}].$  By Proposition \ref{lemmachangingacoeff}, there exists a uniformly elliptic and uniformly $C^1$ matrix $\tilde A_0$ such that 
$$
\tilde A_0 = Id \hspace{0.3cm} \mbox{ for } 0\leq t \leq \frac{1}{100}, \hspace{0.5cm} \tilde A_0 = \begin{pmatrix}
    1 & 0 \\
    0 & \frac{1}{9}
\end{pmatrix} \hspace{0.3cm} \mbox{ for } t \geq \frac{1}{3}.
$$
By Proposition \ref{lemmachangingacoeff}, $u_1:=\cos(kx) \e^{-kt}$ is a solution to $\ddot u_1+ \div(\tilde A_0 \nabla u_1)=0 $ on $\T^2 \times \R^+ $ and the matrix $\tilde A_0$ belongs to the regularity class $R(10, 60).$

We define a matrix $A_1$ by 
\begin{align}\label{defofaonesteponebuildingblockc}
    A_1:= \begin{cases}
        \tilde A_0 &\mbox{ for } t \in [0, \frac{1}{3}], \\
\begin{pmatrix}
    1 & 0 \\
    0 & \frac{1}{9} 
\end{pmatrix} & \mbox{ for } t \in [\frac{1}{3}, \frac{1}{2}].
    \end{cases}
\end{align}

We note that $A_1$ has the same regularity as $\tilde A_0$: it is uniformly elliptic and uniformly $C^1$ on $[0, \frac{1}{2}]$ and it belongs to the regularity class $R(10, 60)$. Also,
\begin{align}\label{defuonefirststepnice}
u_1:=\cos(kx) \e^{-kt} \hspace{0.3cm} \mbox{ solves }  \hspace{0.3cm}  \ddot u_1+ \div( A_1 \nabla u_1)=0 \hspace{0.3cm} \mbox{ on }  \hspace{0.3cm} \T^2 \times \left[0, \frac{1}{2}\right].
\end{align}
It is also clear that for $t \in [0, \frac{1}{2}],$ $u_1$ is of the form $u_1=\cos(kx)f(t)$ for a function $f \in C^2$ satisfying
\begin{align}\label{uonebockzerocstepone}
    |f^{(\alpha)}(t)|\leq k^{\alpha} \e^{-kt}
\end{align}
for all $0\leq \alpha \leq 2.$

\begin{figure}[H]
	\includegraphics[scale=0.36]{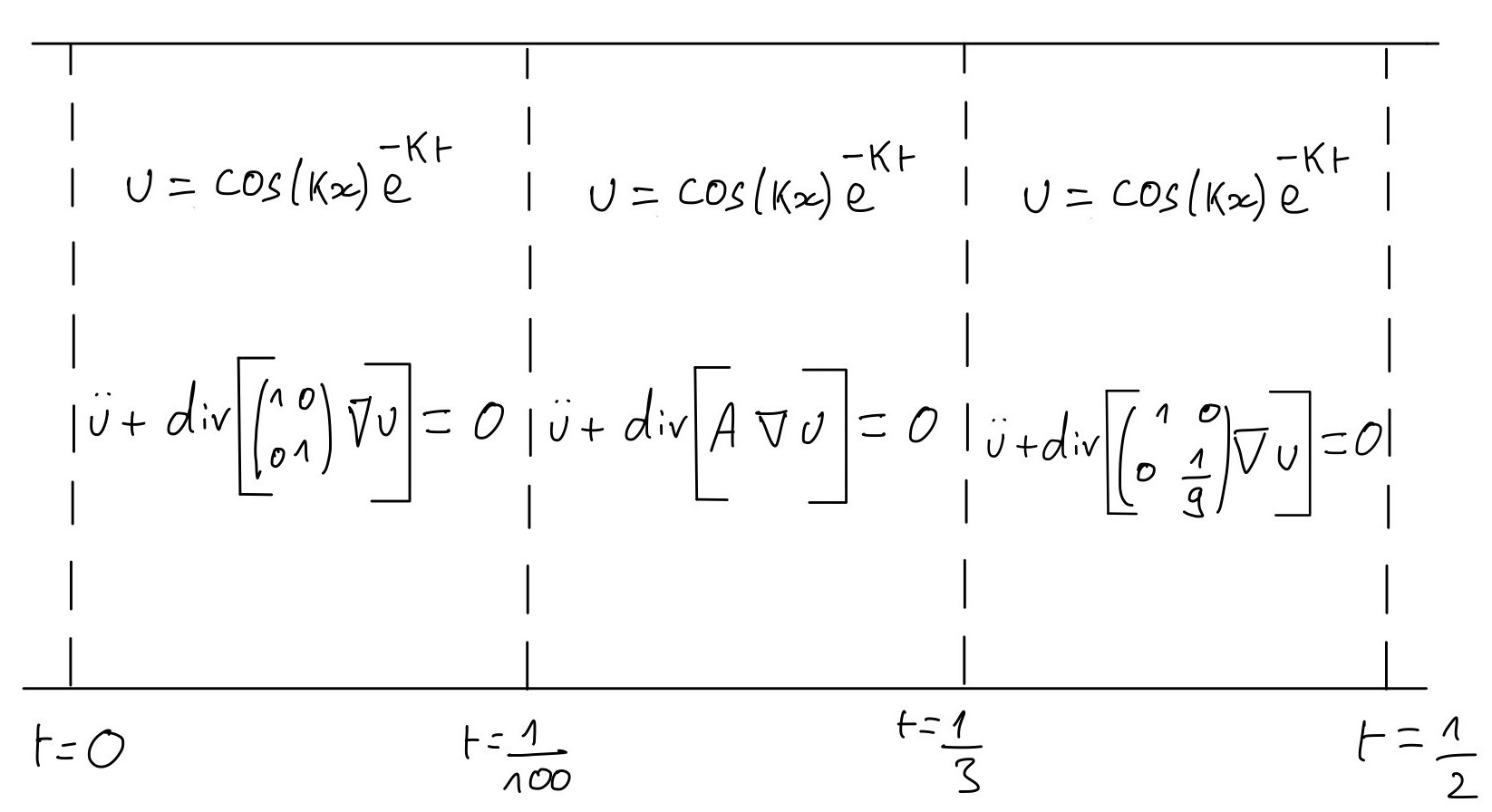}
	\centering
	\caption{Changing a coefficient} 
\label{fig:changing}
\end{figure}

\pseudosection{Step 2: transformation into a slower-decaying solution}

Define 
\begin{align}\label{tildeutwodefsteptwo}
\tilde u_2:= \cos(k'y) \e^{\frac{-k'}{3}t}    
\end{align}
 and note that it is a solution to $\ddot u+\div\left[\begin{pmatrix}
    1 & 0 \\
    0 & \frac{1}{9}
\end{pmatrix}\nabla u\right]= 0$ and that $\tilde u_2$ oscillates faster but decays slower that $u_1=\cos(kx) \e^{-kt}$ defined in the previous step. Indeed, by assumption, $1\ll k < k' \leq 2k$. Hence, $\frac{k'}{3}<k$. By the first step, for $t \in [\frac{1}{3}, \frac{1}{2}]$, the matrix $A_1$  defined in \eqref{defofaonesteponebuildingblockc} and the solution $u_1$ \eqref{defuonefirststepnice},  are given by 
\begin{align}\label{aoneuoneendlastblabla}
A_1 = \begin{pmatrix}
    1 & 0 \\
    0 & \frac{1}{9} 
\end{pmatrix}, \hspace{0.5cm} u_1=\cos(kx) \e^{-kt} .
\end{align}
Hence, $u_1$ \eqref{aoneuoneendlastblabla} and $\tilde u_2$ \eqref{tildeutwodefsteptwo} are solutions to the same equation and as we saw, $\frac{k'}{3}<k$. So, Proposition \ref{slownew} applies:  we can transform $u_1$ into the slower-decaying function $c_2\tilde u_2 = c_2 \cos(k'y) \e^{\frac{-k'}{3}t}$ within the set $\T^2 \times [\frac{1}{2}, \frac{1}{2}+C]$ via a solution $ u_2$ to $\ddot{ u}_2 + \div(A_2\nabla  u_2)=0$ where $A_2 \in R(20, 10)$. We recall that by transforming  we mean that $A_2$ is a uniformly elliptic and uniformly $C^1$ matrix on $\T^2 \times [\frac{1}{3}, \infty)$, such that $A_2=\begin{pmatrix}
    1 & 0 \\
    0 & \frac{1}{9}
\end{pmatrix}$ for $t \in [\frac{1}{3}, \frac{1}{2}] \cup [\frac{1}{2}+C, \infty)$ and such that $u_2$ is a $C^2$ functions satisfying $u_2=u_1$ for $t \in [\frac{1}{3}, \frac{1}{2}]$ and satisfying $u_2=c_2\tilde u_2$ for $t \in [\frac{1}{2}+C, \infty).$ By Proposition \ref{slownew}, the constant $c_2$ is defined by 
\begin{align}\label{defctwosteptworef}
 c_2:=\e^{-\frac{1}{2}(k-\frac{k'}{3})}.   
\end{align}
Also, by Proposition \ref{slownew}, the function $u_2$ is of the form $u_2=f(t)\cos(kx)+g(t)\cos(k'y)$ where $f, g \in C^2$ satisfy for all $0\leq \alpha \leq 2$
\begin{align}\label{utwoblockzerocsteptwo}
    |f^{(\alpha)}(t)| \lesssim  k^{\frac{7\alpha}{3}} \e^{-kt}, \hspace{0.5cm} |g^{(\alpha)}(t)|\lesssim c_2\, (k')^{\alpha} \e^{\frac{-k'}{3}t} \lesssim (k')^{\alpha} \e^{\frac{-k'}{3}t}
\end{align}
since $c_2=\e^{-\frac{1}{2}(k-\frac{k'}{3})}$ and $\frac{k'}{3}<k$ (since by assumption $k'\leq 2k$).  Moreover, the duration $C$ is given by
$$
C=\frac{1}{k^{4/3}} + \frac{8 \ln k}{k-k'/3} + \frac{\sqrt{4\ln k}}{k'^{1/3}}+\frac{1}{100} < \frac{1}{2}
$$
 since by assumption $1\ll k<k' \leq 2k.$ For $t \in [0, 1],$ we define a new matrix $\tilde A$ and a new function $\tilde u$ by
\begin{align}\label{newmatrixatildeforgluingatonethird}
    \tilde A :=
    \begin{cases}
        A_1 & \mbox{ for } t \in [0, \frac{1}{2}], \\
        A_2 & \mbox{ for } t \in [\frac{1}{2}, \frac{1}{2}+C], \\
        \begin{pmatrix}
        1 & 0 \\
        0 & \frac{1}{9}
    \end{pmatrix} & \mbox{ for } t \in [\frac{1}{2}+C, 1],
    \end{cases} \hspace{0.5cm} \tilde u:= \begin{cases}
        u_1 & \mbox{ for } t \in [0, \frac{1}{2}],\\
        u_2& \mbox{ for } t \in [\frac{1}{2}, \frac{1}{2}+C], \\
        c_2 \tilde u_2 & \mbox{ for } t \in [\frac{1}{2}+C, 1].
    \end{cases}
\end{align}

The matrix $A_1$ was defined in the first step \eqref{defofaonesteponebuildingblockc} and the matrix  $A_2$ is the transformation matrix that we used in this second step. We note that $\tilde A$ is a uniformly elliptic and uniformly $C^1$ (since $A_1$ is by the first step and since $A_2$ is by Proposition \ref{slownew}). In particular, $\tilde A$ belongs to the regularity class $R(20, 60)$ (since $A_1 \in R(10, 60)$ by the first step and $A_2 \in R(20, 10)$ by Proposition \ref{slownew}).

\comment{
We also define a new function $\tilde u$ for $t \in [0, \frac{1}{2}+C]$, by
\begin{align}\label{newfunctiondefforgluingatonethird}
    \tilde u:= \begin{cases}
        u_1 & \mbox{ for } t \in [0, \frac{1}{2}],\\
        u_2& \mbox{ for } t \in [\frac{1}{2}, \frac{1}{2}+C], \\
        c_2 \tilde u_2 & \mbox{ for } t \in [\frac{1}{2}+C, 1].
    \end{cases}
\end{align}}

We also note that $\tilde u$ is a $C^2$ function, since $u_1$ and $\tilde u_2$ are and since $u_2$ is the transformation function that we used in this second step. Moreover, the equation $\ddot{\tilde u} + \div(\tilde A \nabla \tilde u)=0$ is satisfied on $\T^2 \times [0, 1].$ Finally, for $t\in [0, 1],$ $\tilde u$ is of the form $\tilde u = f(t) \cos(kx) + g(t) \cos(k'y)$ where $f, g \in C^2$ satisfy for all $0\leq \alpha \leq 2,$
\begin{align}\label{utilderegsteptwoconclusion}
    |f^{(\alpha)}(t)| \lesssim k^{\frac{7\alpha}{3}} \e^{-kt}, \hspace{0.5cm} |g^{(\alpha)}(t)| \lesssim (k')^{\alpha} \e^{\frac{-k'}{3}t}.
\end{align}
Indeed: 
\begin{itemize}
    \item By \eqref{uonebockzerocstepone} in the first step, $u_1=\cos(kx)f(t)$ with $f\in C^2$ satisfying  $|f^{(\alpha)}(t)| \lesssim k^{\alpha} \e^{-kt}$.
    \item  By \eqref{utwoblockzerocsteptwo} in this second step, $u_2=f(t)\cos(kx)+g(t)\cos(k'y)$ where $f, g \in C^2$ satisfy  $|f^{(\alpha)}(t)| \lesssim k^{\frac{7\alpha}{3}} \e^{-kt}$ and $ |g^{(\alpha)}(t)| \lesssim (k')^{\alpha} \e^{\frac{-k'}{3}t}.$
    \item  And finally, by the definition \eqref{tildeutwodefsteptwo}, $c_2\tilde u_2 = c_2 \e^{\frac{-k'}{3}t}\cos(k'y)$ and $c_2\leq 1$ (see \eqref{defctwosteptworef}).
\end{itemize}

\begin{figure}[H]
	\includegraphics[scale=0.4]{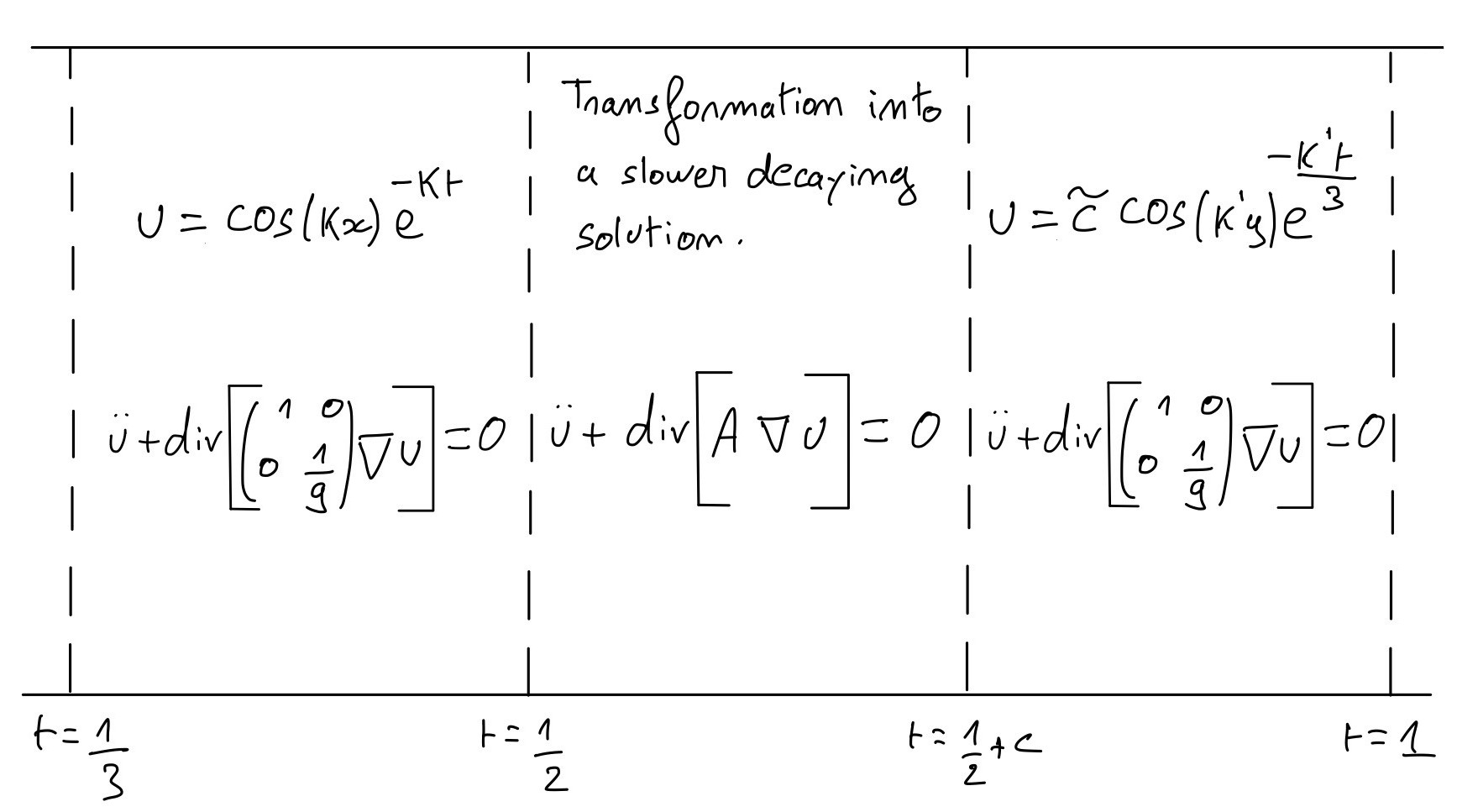}
	\centering
	\caption{Transformation into a slower decaying but faster oscillating solution} 
\label{fig:slower}
\end{figure}

\pseudosection{Step 3: the acceleration}

By the definition of $\tilde A$ and $\tilde u$ \eqref{newmatrixatildeforgluingatonethird}, for $t \in [\frac{1}{2}+C, 1]$,
\begin{align}\label{gluingatabovesteptwoblabladfblabla}
    \tilde u = c_2 \cos(k'y) \e^{\frac{-k'}{3}t}, \hspace{0.5cm} \tilde A = \begin{pmatrix}
    1 & 0 \\
    0 & \frac{1}{9}
\end{pmatrix}
\end{align}

By Lemma \ref{accelnew}, we can accelerate the decay of $\tilde u$ and transform $\tilde u$ into the faster decaying harmonic function $\tilde u_3:= c_3\cos(k'y) \e^{-k't}$ within the set $\T^2 \times [1, 1+C_1],$ via a solution $u_3$ of $\ddot u_3 + \div(A_3 \nabla u_3)=0$ where $A_3 \in R(80, 10).$ In particular, it means that $A_3$ is a uniformly $C^1$ and uniformly elliptic matrix satisfying $A_3=\tilde A$ for $t \in [\frac{1}{2}+C, 1]$ and $A_3= Id$ for $t \geq 1+C_1$. It also means that $u_3$ is a $C^2$ function such that $u_3=\tilde u$ for $t \in [\frac{1}{2}+C,1]$ and $u_3=c_3\cos(k'y) \e^{-k't}$ for $t \geq 1+C_1.$ By Proposition \ref{accelnew}, the constant $c_3$ is related to the constant $c_2$ by 
\begin{align}\label{defconstantcthreeusedinbuildingblock}
c_2 \e^{-\frac{k'}{3}} = c_3 \e^{-k'} .   
\end{align}
Moreover, by Proposition \ref{accelnew}, the duration $C_1$ is given by $C_1=400$, which is independent of $k, k'.$ Moreover, still by Proposition \ref{accelnew}, the solution $u_3$ is of the form $u_3=g(t)\cos(k'y)$ with $g\in C^2$ satisfying 
\begin{align}\label{uthreeinstepthreestimateg}
  |g^{(\alpha)}(t)|\lesssim c_2 (k')^{\alpha} \e^{\frac{-k'}{3}t} \lesssim (k')^{\alpha} \e^{\frac{-k'}{3}t}   
\end{align}
since we saw in step 2 that   $c_2\leq 1$ (see \eqref{defctwosteptworef}).

 \comment{
\begin{align}\label{defconstantconedurationstepthree}
C_1=400    
\end{align}
which is independent of $k, k'.$}

We finally define a new matrix $A$ and a new function $u$ for $t \in [0, 2+C_1]$ by 
\begin{align}\label{newmatrixatildeforgluingatonethirdaccel}
     A :=
    \begin{cases}
        \tilde A & \mbox{ for } t \in [0, 1], \\
        A_3 & \mbox{ for } t \in [1, 1+C_1]\\
        Id & \mbox{ for } t \in [1+C_1,2+C_1],
    \end{cases} \hspace{0.5cm} u:= \begin{cases}
        \tilde u & \mbox{ for } t \in [0, 1],\\
        u_3& \mbox{ for } t \in [1, 1+C_1],\\
        \tilde u_3 & \mbox{ for } t \in [1+C_1, 2+C_1].
    \end{cases}
\end{align}
The matrix  $\tilde A$ was defined in the second step \eqref{newmatrixatildeforgluingatonethird} while $A_3$ is the transformation matrix used in this third step. Since $\tilde A \in R(20, 60)$ and since $A_3 \in R(80, 10)$, it is clear that $A \in R(80, 60).$ The function $\tilde u$ was defined in the second step \eqref{newmatrixatildeforgluingatonethird}, the function  $u_3$ is the transformation function used in this third step and the function $\tilde u_3$ was defined above by $\tilde u_3=c_3\cos(k'y) \e^{-k't}$.

\comment{
We also define a new function $ u$ for $t \in [0, 2+C_1]$, by
\begin{align}\label{newfunctiondefforgluingatonethirdaccel}
     u:= \begin{cases}
        \tilde u & \mbox{ for } t \in [0, 1],\\
        u_3& \mbox{ for } t \in [1, 1+C_1]\\
        \tilde u_3 & \mbox{ for } t \in [1+C_1, 2+C_1]
    \end{cases}
\end{align}
}

We note that the solution $u$ defined in \eqref{newmatrixatildeforgluingatonethirdaccel} is of the form $u=f(t)\cos(kx) + g(t) \cos(k'y)$ where $f, g \in C^2$ satisfy for $t \in [0, 2+C_1]$ and for all $0\leq \alpha \leq 2,$
\begin{align}\label{finalestimatesforufandgblockzeroc}
|f^{(\alpha)}(t)| \lesssim k^{\frac{7\alpha}{3}} \e^{-kt}, \hspace{0.5cm} |g^{(\alpha)}(t)| \lesssim (k')^{\alpha} c_3 \e^{\frac{-k'}{3}t}, \hspace{0.5cm} c_3=\e^{\frac{-k}{2} + \frac{5k'}{6}}
\end{align}
as claimed in Lemma \ref{blockzerocnewnew}. Indeed, for $t\in [0, 1],$ $u=\tilde u=f(t)\cos(kx) + g(t) \cos(k'y)$ with $f, g \in C^2$ satisfying  
$$
|f^{(\alpha)}(t)| \lesssim k^{\frac{7\alpha}{3}} \e^{-kt}, \hspace{0.5cm} |g^{(\alpha)}(t)| \lesssim (k')^{\alpha} \e^{\frac{-k'}{3}t}
$$
(see \eqref{utilderegsteptwoconclusion}). We also saw in step 3 that for $t \in [1, 1+C_1],$ $u=u_3=g(t)\cos(k'y)$ with $g\in C^2$ satisfying
$$  |g^{(\alpha)}(t)| \lesssim (k')^{\alpha} \e^{\frac{-k'}{3}t}   $$ (see \eqref{uthreeinstepthreestimateg}). And finally, for $t \in [1+C_1, 2+C_1],$ $u=\tilde u_3 = c_3\cos(k'y)\e^{-k't}$ where $c_3=c_2 \e^{\frac{2}{3}k'} = \e^{\frac{-k}{2} + \frac{5k'}{6}}\geq 1$. The first equality comes from \eqref{defconstantcthreeusedinbuildingblock} and the second equality comes from $c_2= \e^{-\frac{1}{2}(k-\frac{k'}{3})}$ by  \eqref{defctwosteptworef}. \comment{ In particular,
\begin{align}\label{valueconstantcthreeend}
    \e^{\frac{k}{3}}\leq c_3\leq \e^{\frac{7}{12}k'}.
\end{align}
 Indeed,  $1\ll k \leq k' \leq 2k$ by assumption, which implies $\frac{k}{3}\leq \frac{-k}{2} + \frac{5k'}{6} \leq \frac{7}{12}k',$ which yields \eqref{valueconstantcthreeend} and \eqref{finalestimatesforufandgblockzeroc} (since $c_3\geq 1)$ .}

\begin{figure}[H]
	\includegraphics[scale=0.38]{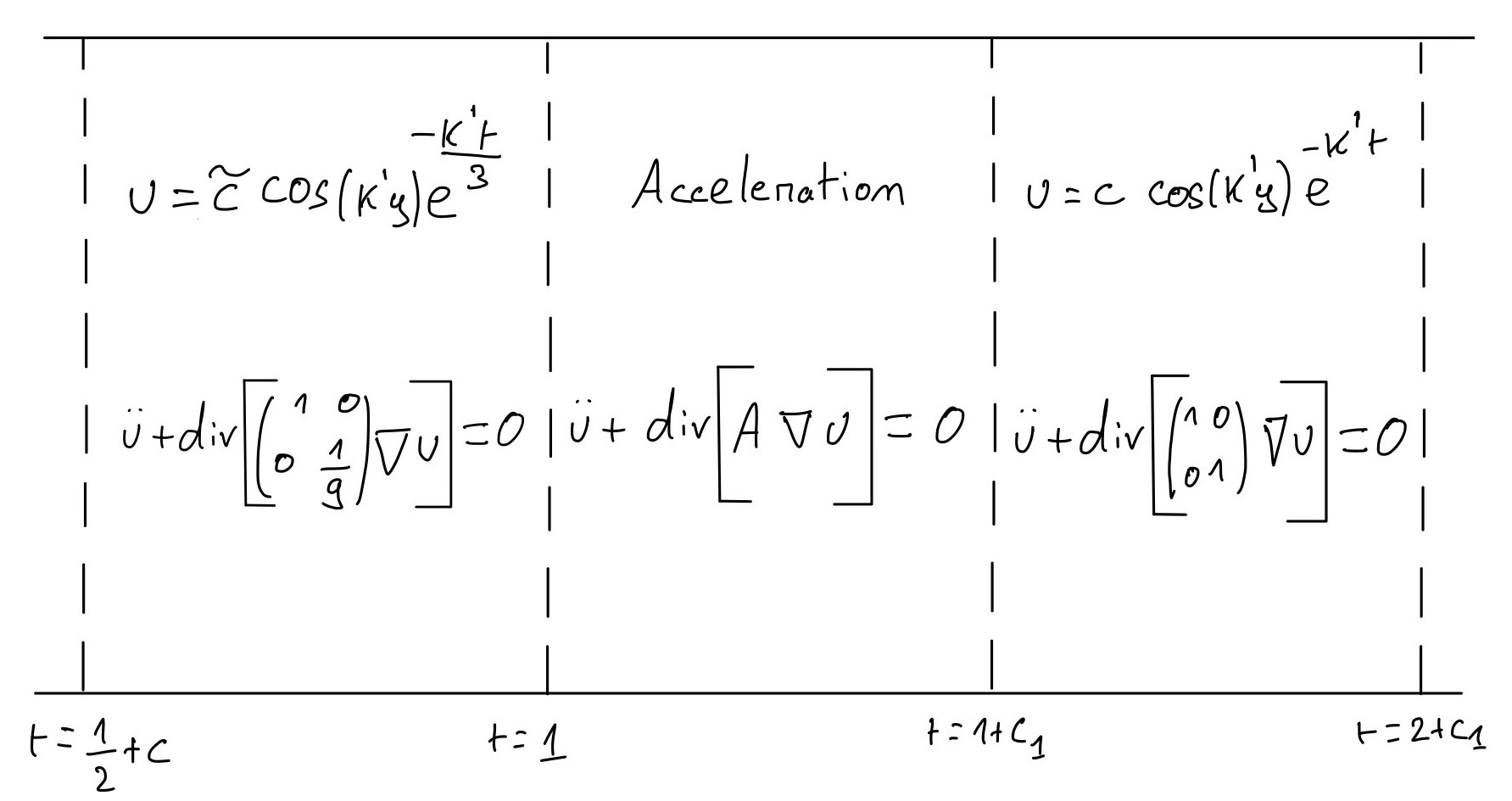}
	\centering
	\caption{Acceleration} 
\label{fig:accel}
\end{figure}

\textbf{Conclusion}
\begin{itemize}
    \item  We define the total duration $C_2:=2+C_1 \geq 2$ which is independent of $k, k'$ (as claimed in Lemma \ref{blockzerocnewnew}). We have been able to transform $\cos(kx)\e^{-kt}$ into $c_3\cos(k'y)\e^{-k't}$ via a solution $u$ to $\ddot u + \div(A\nabla u)=0$. We saw that $A$ is uniformly elliptic and uniformly $C^1$: $A$ belongs to the regularity class $R(80, 60)$.
    \item  By \eqref{finalestimatesforufandgblockzeroc}, for $t \in [0, C_2]$, the solution $u$ is of the form $f(t)\cos(kx) + g(t)\cos(k'y)$ with $f, g \in C^2$ satisfying  for all $0\leq \alpha \leq 2,$
$$
|f^{(\alpha)}(t)| \lesssim k^{\frac{7\alpha}{3}} \e^{-kt}, \hspace{0.5cm} |g^{(\alpha)}(t)| \lesssim (k')^{\alpha}c_3 \e^{\frac{-k'}{3}t}, \hspace{0.5cm} c_3=\e^{\frac{-k}{2} + \frac{5k'}{6}}.
$$
    \item Moreover, by the first step (see \eqref{defofaonesteponebuildingblockc} and \eqref{defuonefirststepnice}), the solution $u$ and the matrix $A$ satisfy for $t\in [0, \frac{1}{100}],$
$$
         u(x,y,t)=\cos(kx) \e^{-kt}, \hspace{0.5cm} A=Id.
$$

\item And by the third step (see \eqref{newmatrixatildeforgluingatonethirdaccel}),
       for $t \in [C_2-\frac{1}{100}, C_2]$, the solution $u$ and the matrix $A$ satisfy 
      
$$
u(x,y,t)= c_3 \cos(k'y) \e^{-k't}, \hspace{0.5cm} A=Id
$$
where $c_3=\e^{\frac{-k}{2} + \frac{5k'}{6}}$ (see \eqref{finalestimatesforufandgblockzeroc}).

\end{itemize}

This finishes the proof of Lemma \ref{blockzerocnewnew}.
\end{proof}

To finish the proof of Theorem \ref{EigenTheorem}, it only remains to prove the three technical Propositions. This is done in the next sections \ref{proof33}, \ref{proof34} and \ref{proof35}.

\section{The three technical Propositions }\label{proofofpropslabel}
\subsection{Proof of Proposition \ref{lemmachangingacoeff}}
\label{proof33}
For the reader's convenience, we recall the Proposition we want to prove:

\begin{prop*}[\ref{lemmachangingacoeff}, Changing a coefficient]
    Let $k \geq 1$, let $a,b \in (\frac{1}{10}, 10)$, let $t_1\geq 0$ and let also $C>0$ be a universal constant. Consider  the equations
    $$
    \ddot u + \div\left[\begin{pmatrix}
        a & 0 \\
        0 & a
    \end{pmatrix} \nabla u\right]=0, \hspace{0.5cm} \mbox{ and } \hspace{0.5cm} \ddot u + \div\left[\begin{pmatrix}
        a & 0 \\
        0 & b
    \end{pmatrix} \nabla u\right]=0.
    $$
    There exists a uniformly $C^1$ and uniformly elliptic matrix $A$ on $\T^2 \times \R,$ such that  $$A=\begin{pmatrix}
        a & 0 \\
        0 & a
    \end{pmatrix} \hspace{0.3cm} \mbox{ for } t\leq t_1 \hspace{0.5cm} \mbox{ and } \hspace{0.3cm}  A= \begin{pmatrix}
        a & 0 \\
        0 & b
    \end{pmatrix} \hspace{0.3cm} \mbox{ for } t\geq t_1+C$$
     and such that $A$ belongs to the regularity class $R(10, \frac{10\sqrt{\pi}}{C}).$ The function $u:=\cos(kx) \e^{-k\sqrt{a}t}$ is a solution to $\ddot u + \div(A\nabla u)=0$ in $\T^2 \times \R$. 
\end{prop*}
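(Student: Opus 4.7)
\medskip

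\noindent\textbf{Proof proposal.} The key observation is that the function $u=\cos(kx)e^{-k\sqrt{a}t}$ does not depend on $y$, so $\partial_y u\equiv 0$. Consequently, the $(2,2)$-entry of a diagonal matrix $A(x,y,t)$ never appears in $\div(A\nabla u)$, and we have complete freedom to prescribe this entry while still solving $\ddot u+\div(A\nabla u)=0$. The plan is therefore to take $A$ diagonal of the form
\[
A(x,y,t)=\begin{pmatrix} a & 0 \\ 0 & \varphi(t) \end{pmatrix},
\]
where $\varphi$ is a smooth scalar function that equals $a$ for $t\le t_1$ and equals $b$ for $t\ge t_1+C$, transitioning monotonically in between.

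For the interpolation I would reuse the step function $\theta$ already introduced in the paper (see the footnote in Section~\ref{counterexamplesection}), which is $C^\infty$, equal to $1$ on $\{t\le 0\}$, equal to $0$ on $\{t\ge 1\}$, and satisfies $\sup|\dot\theta|\le\sqrt{\pi}$. Concretely, set
\[
\varphi(t):=a+(b-a)\bigl(1-\theta\!\left(\tfrac{t-t_1}{C}\right)\bigr),
\]
so that $\varphi(t_1)=a$, $\varphi(t_1+C)=b$, and $|\dot\varphi(t)|\le |b-a|\sqrt{\pi}/C\le 10\sqrt{\pi}/C$ because $a,b\in(1/10,10)$. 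Since $\varphi$ depends only on $t$, the $x$- and $y$-derivatives of $A$ vanish identically; hence $A$ is $C^1$ on $\T^2\times\R$ with the claimed bound $10\sqrt{\pi}/C$ on $\|\nabla A\|+\|\dot A\|$.

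Uniform ellipticity is equally immediate: both diagonal entries $a$ and $\varphi(t)$ take values in $[\min(a,b),\max(a,b)]\subset(1/10,10)$, so $A\in R(10,10\sqrt{\pi}/C)$. Finally, the equation $\ddot u+\div(A\nabla u)=0$ reduces to a direct computation: $\partial_y u=0$ gives
\[
\div(A\nabla u)=\partial_x\bigl(a\,\partial_x u\bigr)=-ak^2\cos(kx)e^{-k\sqrt{a}t},
\qquad \ddot u=ak^2\cos(kx)e^{-k\sqrt{a}t},
\]
and the two terms cancel. There is essentially no obstacle here: the whole proposition is a device to change the $yy$-coefficient invisibly, exploiting the fact that the solution carries no $y$-oscillation. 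The only thing worth being careful about is that one really does pick $\varphi$ depending only on $t$ (so that the $x$- and $y$-derivatives of $A$ vanish and the $C^1$ bound is automatic), and that the transition is placed exactly on $[t_1,t_1+C]$ so that the boundary values match as stated.
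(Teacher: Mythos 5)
Your proposal is correct and follows essentially the same route as the paper: a diagonal matrix whose $(2,2)$-entry interpolates from $a$ to $b$ via the smooth step function $\theta$ (your $\varphi(t)=a+(b-a)(1-\theta(\frac{t-t_1}{C}))$ is algebraically identical to the paper's $c(t)=(a-b)\theta(\frac{t-t_1}{C})+b$), with the same observation that $u$ is $y$-independent so the $(y,y)$-coefficient never enters the equation, and the same $C^1$, ellipticity, and $|\dot\varphi|\le 10\sqrt{\pi}/C$ estimates.
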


\begin{proof}[Proof of Proposition \ref{lemmachangingacoeff}]
    Denote  $A := \begin{pmatrix}
        a & 0 \\
        0 & a
    \end{pmatrix}$ and  $B := \begin{pmatrix}
        a & 0 \\
        0 & b
    \end{pmatrix}$. Consider the matrix $A_0$ of the form 
    \begin{align}\label{defazerocoeffc}
    A_0 := \begin{pmatrix}
        a & 0 \\
        0 & c(t)
    \end{pmatrix} , \hspace{0.5cm}   c(t):=(a-b) \theta\left( \frac{t}{C} - \frac{t_1}{C}\right) + b
    \end{align}
    
     where 
     \comment{\begin{align}\label{defcoftforlemmachangingcoeff}
         c(t):=(a-b) \theta\left( \frac{t}{C} - \frac{t_1}{C}\right) + b
     \end{align}
    and }  $\theta(\tau)$ is presented in Picture \ref{fig:theta} (see also footnote \footnote{We choose $\theta(t):= 1-G(\tan(\pi(t-1/2)))$ where $G(x)=\frac{1}{\sqrt{\pi}}\int_{-\infty}^x \e^{-\eta^2} \ud{\eta}$. \label{footnotebuildingblockcoeffc}}). In particular, the function $\theta$ is smooth and has the following properties:
     
\begin{align}\label{prothetabuildingblocklemmac}
  \lim_{\tau \rightarrow 0}  \theta(\tau) = 1, \hspace{0.5cm} \lim_{\tau \rightarrow 1}  \theta(\tau) = 0
\end{align}
and 
\begin{align}\label{prothetabuildingblockendlemac}
 \lim_{\tau \rightarrow 0}  \dot \theta(\tau) = 0 , \hspace{0.5cm} \lim_{\tau \rightarrow 1}  \dot \theta(\tau) = 0, \hspace{0.5cm} |\dot \theta(\tau)| \leq \sqrt{\pi} \mbox{ for } \tau \in [0, 1].
\end{align}      
     
     Define the matrix $A_1$:
\begin{align}\label{defofaonesteponebuildingblock}
    A_1:= \begin{cases}
        A & \mbox{ for } t \leq t_1, \\ 
        A_0 &\mbox{ for } t \in [t_1, t_1+C], \\
        B  & \mbox{ for } t \geq t_1+C.
    \end{cases}
\end{align}

\textbf{The regularity of $A_1$}
\begin{itemize}
    \item \underline{The continuity of $A_1$.} By \eqref{prothetabuildingblocklemmac}, $c(t) =(a-b) \theta\left( \frac{t}{C} - \frac{t_1}{C}\right) + b $ is going from $a$ to $b$ as $t$ moves in $\left[t_1, t_1+C\right].$ Hence, the matrix $A_0 = \begin{pmatrix}
    a & 0 \\
    0 & c(t)
\end{pmatrix} $ defined in \eqref{defazerocoeffc} converges to $A$  (respectively $B$) as $t$ goes to $t_1$ (respectively $t_1+C$). Moreover, since $\theta$ is continuous, the matrix $A_1$ is continuous on $\T^2 \times \R$.

\item \underline{The $C^1$ regularity of $A_1$.} By \eqref{prothetabuildingblockendlemac}, $\dot c(t)$ is going to 0 as $t$ goes to $t_1$ and $t_1+C.$ Since $A$ and $B$ have constant coefficients, the matrix $A_1$ is $C^1$ at $t=t_1$ and $t=t_1+C.$ Moreover, since $\theta$ is $C^1$, the matrix $A_1$ is $C^1$ on $\T^2\times \R.$ More precisely, the following estimate holds: for $t \in [t_1, t_1+C],$
$$
|\dot c(t)| \leq \frac{1}{C}|a-b| \left|\dot \theta\left(\frac{t}{C}-\frac{t_1}{C}\right)\right| \leq \frac{\sqrt{\pi}}{C} |a-b| \leq \frac{10\sqrt{\pi}}{C}
$$ since $|\dot \theta(\tau)|\leq \sqrt{\pi}$ for $\tau \in [0, 1]$ by \eqref{prothetabuildingblockendlemac} and since by assumption $a,b \in (\frac{1}{10}, 10)$.

\item \underline{The ellipticity of $A_1$.} Since $c(t)=(a-b)\theta\left( \frac{t}{C} - \frac{t_1}{C}\right) +b$, and since $0\leq \theta \leq 1,$ we have for $t \in [t_1, t_1+C]$, $ 10 \geq \max(a,b)\geq c(t) \geq \min(a,b)\geq \frac{1}{10}$ since we assumed $a, b \in (\frac{1}{10}, 10)$.  Since $A=\begin{pmatrix}
    a & 0\\
    0 & a
\end{pmatrix}$ and $B=\begin{pmatrix}
    a & 0 \\
    0 & b
\end{pmatrix}$, it is clear that $ A_1$ defined in \eqref{defofaonesteponebuildingblock} satisfies 
$$\frac{1}{10} |\xi|^2\leq (A_1\xi, \xi) \leq 10 |\xi|^2$$ for all $\xi \in \R^2.$

    \item In conclusion, we proved that the matrix $A_1$ is $C^1$ on $\T^2 \times \R$ and belongs to the regularity class $R(10, \frac{10\sqrt{\pi}}{C}).$
\end{itemize}

\textbf{The solution of the equation.}
Finally, consider $u:=\cos(kx)\e^{-k\sqrt{a}t}$. It depends on $x,t$ only. Since $A_1$ is a diagonal matrix with a constant coefficient equal to $a$ in position $(x,x)$ \big(we only changed the coefficient in position $(y,y)$\big), it is clear that $\ddot u+ \div(A\nabla u)=0$ on $\T^2 \times \R.$ This finishes the proof of Proposition \ref{lemmachangingacoeff}.
\end{proof}

\subsection{Proof of Proposition \ref{slownew}}
\label{proof34}
We recall the Proposition that we want to prove for the reader's convenience:
\begin{prop*}[\ref{slownew}, slowing down]
Let $1 \ll k < k' \leq 2k$ and let $a, b \in (\frac{1}{10}, 10).$ 
Consider two solutions $\cos(kx) \e^{-k\sqrt{a}t}$ and $\cos(k'y) \e^{-k'\sqrt{b}t}$ to the same equation 
$$
\ddot u + \div \left [\begin{pmatrix}a&0\\0&b\end{pmatrix} \nabla u \right ] = 0.
$$
If 
$$
k' \sqrt{b} < k \sqrt{a},
$$
then for any $t_1\geq 0$ and for any $c_1>0$, there exists $c_2>0$ such that one can transform $c_1 \cos(kx) \e^{-k\sqrt{a}t}$ into $c_2\cos(k'y) \e^{-k'\sqrt{b}t}$ within the set $\T^2 \times [t_1, t_1+C],$ via a solution $u$ to $\ddot u + \div(A\nabla u)=0,$  where $A$ is in the regularity class $R(20,10)$. The constants $c_1$ and $c_2$ are related by 
\begin{align}\label{defconstantslowdownconectwonew}
    c_1 e^{-k\sqrt{a}t_1} = c_2 e^{-k'\sqrt{b}t_1}.
\end{align}

The duration $C=C(k, k')$ of the transformation is 
\begin{align}\label{durationslowdowntoneprop}
C=\frac{1}{k^{4/3}} + \frac{8 \ln k}{k\sqrt{a}-k'\sqrt{b}} + \frac{\sqrt{4\ln k}}{(k')^{1/3}}+\frac{1}{100}.    
\end{align}

Moreover, for $t \in [t_1, t_1+C],$ the function $u$ is of the form 
\begin{align}\label{shapesolslowdownproptone}
u(x,y,t)= f(t) \cos(kx)  +  g(t) \cos(k'y)    
\end{align}
 where $f, g \in C^2$ satisfy $|f^{(\alpha)}(t)| \lesssim c_1 k^{\frac{7\alpha}{3}} \e^{-k\sqrt{a}t}$ and $|g^{(\alpha)}(t)| \lesssim c_2 (k')^{\alpha} \e^{-k'\sqrt{b}t}$ for all $0 \leq \alpha \leq 2.$
\end{prop*}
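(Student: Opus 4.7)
The plan is to exploit the strict decay gap $k\sqrt{a}-k'\sqrt{b}>0$ between source and target. The normalization $c_1 e^{-k\sqrt a\, t_1}=c_2 e^{-k'\sqrt b\, t_1}$ says that the two functions have equal amplitude at $t=t_1$, so if one simply lets time evolve with $A=\begin{pmatrix}a&0\\0&b\end{pmatrix}$ unchanged, the source shrinks by $e^{-(k\sqrt a-k'\sqrt b)t}$ relative to the target. The middle term $\frac{8\ln k}{k\sqrt a-k'\sqrt b}$ in the duration $C$ is precisely the time needed to buy an amplitude gap of $k^{-8}$. Once this gap has been bought, one can swap source for target using an ansatz in the spirit of the proof of Lemma \ref{PML1}: the $C^1$-corrections to $A$ that would otherwise be large during the swap get multiplied by the tiny residual amplitude of the source and hence stay bounded.

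Concretely, I would split $[t_1,t_1+C]$ into five consecutive pieces. First, an opening buffer of length $\tfrac{1}{200}$ on which $A=\begin{pmatrix}a&0\\0&b\end{pmatrix}$ and $u=c_1\cos(kx)e^{-k\sqrt a t}$, to secure the boundary shape required at $t=t_1$. Second, a waiting phase of length $\tfrac{8\ln k}{k\sqrt a-k'\sqrt b}$, still with $A$ and $u$ unchanged; at its end the amplitude of the source is at most $k^{-8}$ times that of the target. Third, an introduction phase of length $w_1:=\tfrac{\sqrt{4\ln k}}{(k')^{1/3}}$, using the Miller--Plis ansatz $u=f(t)\cos(kx)+g(t)\cos(k'y)$ with $f$ kept equal to its pure exponential and $g$ smoothly turning on from $0$ to $c_2 e^{-k'\sqrt b t}$ via the cut-off $\theta$; substituting into $\ddot u+\div(A\nabla u)=0$ and solving for $A$ as a perturbation of $\begin{pmatrix}a&0\\0&b\end{pmatrix}$ with off-diagonal contributions proportional to $\sin(kx)\sin(k'y)$, exactly as in the appendix computation behind Lemma \ref{PML1}. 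Fourth, an elimination phase of length $w_2:=k^{-4/3}$ in which $g$ stays in pure exponential form and $f$ is cut down to $0$ on the scale $w_2$, again solving for the corresponding $A$. Finally, a closing buffer of length $\tfrac{1}{200}$ on which $A=\begin{pmatrix}a&0\\0&b\end{pmatrix}$ and $u=c_2\cos(k'y)e^{-k'\sqrt b t}$. The $C^1$-gluing at the four interfaces comes from the vanishing of $\dot\theta$ at the endpoints of the bump, as in footnote~\ref{footnoteblablattt}.

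The main obstacle is keeping $A\in R(20,10)$ through the elimination phase: compressing the cut-off of $f$ into the short window $w_2=k^{-4/3}$ inflates $|f^{(\alpha)}|$ to size $\asymp c_1 k^{7\alpha/3} e^{-k\sqrt a t}$ --- exactly the bound claimed in the proposition --- which would naively blow up the $C^1$-norm of $A$. The saving grace is the $k^{-8}$ amplitude suppression of $f$ bought in the waiting phase: the perturbation of $A$ that encodes the modification of $f$ is morally $(f/g)\cdot k^{-2}$ times trigonometric factors, and after accounting for the suppression one checks that both the perturbation and its first derivatives stay $O(1)$, with a margin that fits inside the $R(20,10)$ class. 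A subsidiary subtlety is the introduction phase: because $k'$ can be as large as $2k$, the estimate $\|A-\operatorname{Id}\|\lesssim 1/(wk)$ from Lemma \ref{PML1} is not small by itself. The logarithmic factor $\sqrt{4\ln k}$ in $w_1$ arises from an analogous ``wait so that the new mode dominates its cut-on error'' argument, this time applied to the milder gap available before the main waiting phase has been exhausted.
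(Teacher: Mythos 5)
The step that fails is your introduction phase. After your waiting phase the source $f$ has been suppressed by a factor $k^{-8}$ relative to the (virtual) target, and you then propose to switch on $g$ at \emph{full} amplitude ``exactly as in the appendix computation behind Lemma \ref{PML1}''. But that computation (Lemma \ref{claim2Dtwopointoneproof}) builds the correction $A'$ by dividing by the gradient of the mode that is already present: writing $u=e^{-k\sqrt a t}c_1\big(\cos(kx)+s\cos(k'y)\big)$, the correction has size $\|A'\|\lesssim |\beta(t)|\, s\,(1+|s|)/k^2$ with $s$ the amplitude ratio of the introduced mode to the carrier mode and $\beta\sim \dot\alpha k'$. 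In your ordering $s\approx k^{8}$ at the moment of the switch-on, so $A'$ is enormous and the transformation leaves the class $R(20,10)$ by many powers of $k$; the smallness you invoke (``multiplied by the tiny residual amplitude of the source'') works for \emph{removing} the small mode, not for \emph{creating} the large one. Nor can you create $g$ from zero by modulating its own $(y,y)$ coefficient: that route needs a coefficient containing $\dot\alpha/(1-\alpha)=-\tfrac{d}{dt}\ln\big(1-\alpha(t)\big)$, which is unbounded for any smooth cut-off starting at exactly $0$ (this is precisely why the paper's exponent-modulation trick only rescales a \emph{nonzero} amplitude). So as written the construction does not stay uniformly elliptic and uniformly $C^1$, and no alternative mechanism is supplied.

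The missing idea is the paper's two-stage treatment of the target mode. The paper first introduces $u_2$ at the tiny amplitude $\epsilon=k^{-4}$ \emph{while $u_1$ still dominates} (Proposition \ref{perturbationnew}, window $\epsilon^{1/3}=k^{-4/3}$, correction of size $\epsilon^{1/3}/k^2$), then waits $\tfrac{8\ln k}{k\sqrt a-k'\sqrt b}$ until $u_1$ is $k^{-4}$ times smaller than $\epsilon u_2$, then removes $u_1$ by the same small-perturbation device (your elimination phase is essentially this step and is fine), and only at the very end boosts the surviving mode from amplitude $k^{-4}$ to $1$ by replacing its exponent by $h(t)=-k'\sqrt b\,t+\ln(k^{-4})\alpha(t)$ over a window $w=\sqrt{4\ln k}/(k')^{1/3}$, with $\tilde b=\ddot h/(k')^2+(\dot h/k')^2$. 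That last step — not a ``milder-gap wait'' before the main waiting phase, as you suggest — is where the $\sqrt{4\ln k}/(k')^{1/3}$ term of the duration and the $R(20,10)$ bound actually come from. Without the $\epsilon$-introduction followed by the late amplitude boost (or some substitute mechanism for creating a mode under an amplitude mismatch of size $k^{8}$), the proposal has a genuine gap.
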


\comment{
First of all, we notice that one can easily reduce the Proposition \ref{slownew} to a special case: \begin{prop}
   \label{slownewzeroc}
   Proposition \ref{slownew} is true when $t_1=0, c_1=c_2=1.$
\end{prop}
The proof of this reduction is similar to the proof of the reduction in section \ref{reductioninablock} and we do not include it.
}

\subsubsection{A first reduction}
\label{reduce34}
We note that Proposition \ref{slownew} can be reduced to a special case: \begin{prop}
   \label{slownewzeroc}
   Proposition \ref{slownew} is true when $t_1=0, c_1=c_2=1.$
\end{prop}

\begin{proof}[Reduction of Proposition \ref{slownew} to Proposition \ref{slownewzeroc}]
Consider the function $u$, the matrix $A$ and the duration $C$ from Proposition \ref{slownewzeroc}. Then, $u$ solves
     $\ddot u + \div(A\nabla u)=0$, $u$ transforms $\cos(kx) \e^{-k\sqrt{a}t}$ into $\cos(k'y)\e^{-k'\sqrt{b}t}$ within the set $\T^2\times [0,C]$ and $A$ is in the regularity class $R(20, 10).$

    Let $t_1 \geq 0$. Denote \begin{align}\label{shiftforslowingdown}
        \tilde u(t):= u(t-t_1), \hspace{0.5cm} \tilde A(t) := A(t-t_1).
    \end{align}

Then, $\tilde u$ transforms $\cos(kx) \e^{-k\sqrt{a}(t-t_1)}$ into $\cos(k'y) \e^{-k'\sqrt{b}(t-t_1)}$ within the set $\mathbb{T}^2\times[t_1, t_1+C].$ It is a solution of $\ddot{\tilde u} + \div(\tilde A \nabla \tilde u) =0$ and $\tilde A$ is in the regularity class $R(20,10).$ Define $c_2$ by 
\begin{align}\label{defconectwoslowingdownproof}
    c_1 \e^{-k\sqrt{a}t_1} = c_2 \e^{-k'\sqrt{b}t_1}.
\end{align}
Denote 
\begin{align}\label{defbiguslowingdownproof}
    U(x,y,t):= c_1\e^{-k\sqrt{a}t_1} \tilde u(x,y,t).
\end{align}

Then, using \eqref{defconectwoslowingdownproof}, we see that $U$ transforms $c_1 \cos(kx) \e^{-k\sqrt{a}t}$ into $c_2\cos(k'y) \e^{-k'\sqrt{b}t}$ within the set $\mathbb{T}^2\times[t_1, t_1+C]$ and $\tilde A \in R(20, 10).$ Moreover, $U$ is a solution to 
\begin{align*}
    \ddot U + \div(\tilde A \nabla U)=0
\end{align*}
for $t \in [t_1, t_1+C]$. 

\textbf{Conclusion:} The matrix $\tilde A$ and the solution $U$ are the matrix and solution in Proposition \ref{slownew}. The duration $C(k, k')$ in Proposition \ref{slownew} is the same as the duration in Proposition \ref{slownewzeroc} since the shift does not change the duration of the transformation. By Proposition \ref{slownewzeroc}, for $t \in [0, C]$,
$$
u(x,y,t) = f(t) \cos(kx) + g(t) \cos(k'y) 
$$
where $f, g \in C^2$ satisfy $|f^{(\alpha)}(t)|\lesssim k^{\frac{7\alpha}{3}} \e^{-k\sqrt{a}t}$ and $|g^{(\alpha)}(t)| \lesssim (k')^{\alpha} \e^{-k'\sqrt{b}t}$ for all $0 \leq \alpha \leq 2.$ Hence, since $U(x,y,t) = c_1\e^{-k\sqrt{a}t_1}  u(x,y,t-t_1)$ by \eqref{defbiguslowingdownproof} and \eqref{shiftforslowingdown}, we get that $U$ is of the form 
$$
U(x,y,t) = c_1\e^{-k\sqrt{a}t_1} f(t-t_1)\cos(kx) + c_2 \e^{-k'\sqrt{b}t_1} g(t-t_1)\cos(k'y)
$$
where we used $c_1 \e^{-k\sqrt{a}t_1} = c_2 \e^{-k'\sqrt{b}t_1}$ by \eqref{defconectwoslowingdownproof}. Hence, $U$ is of the form $$U(x,y,t) = F(t) \cos(kx) + G(t) \cos(k'y)$$ where $F, G \in C^2$ satisfy $|F^{(\alpha)}(t)| \lesssim c_1 k^{\frac{7\alpha}{3}} \e^{-k\sqrt{a}t}$ and $|G^{(\alpha)}(t)| \lesssim c_2 (k')^{\alpha} \e^{-k'\sqrt{b}t}$ for all $0 \leq \alpha \leq 2.$ This finishes the proof of Proposition \ref{slownew} assuming Proposition \ref{slownewzeroc}.    
\end{proof}

\subsubsection{A second reduction}
In this section, we will reduce Proposition \ref{slownewzeroc} to the following Proposition:
\begin{prop}[adding a small perturbation]\label{perturbationnew}
\comment{Let $\theta(t)$ be the smooth function equal to 1 for $t \leq 0$, to 0 for $t \geq 1$ and that monotonically decreases for $t \in (0, 1)$ (see \footnote{We choose $\theta(t):= 1-G(\tan(\pi(t-1/2)))$ where $G(x)=\frac{1}{\sqrt{\pi}}\int_{-\infty}^x \e^{-\eta^2} \ud{\eta}$. \label{footnoteblablanewblablasecondred}}).}
     Consider  the equation $\ddot{u}+\div( A\nabla u) = 0,$ where $A := \begin{pmatrix}a&0\\0&b\end{pmatrix}$ is a matrix with constant coefficients $a,b$ in $(\frac{1}{10}, 10)$. Define two solutions $$u_1(x,t) := \cos(kx)e^{-k\sqrt{a} t} \textup{ and } u_2(y,t) := \cos(k'y)e^{-k'\sqrt{b}t}$$
    where  $ 1 \leq k \leq k' \leq 2k$   .    

\begin{itemize}
\item There exists a universal constant $D>0$ such that for any $0<\epsilon<D$, if $\epsilon^{-\frac{1}{4}} \leq k, k'\leq \epsilon^{-\frac{1}{3}}$, then we can transform $u_1$ into $u_1+\epsilon u_2$ via a solution $u$ to $\ddot u + \div(\tilde A\nabla u)=0$ in $\epsilon^{\frac{1}{3}}$ time
(within the set $\T^2 \times [0, \epsilon^{1/3}]$) with $\tilde A$ in the regularity class $R(20,10)$. 

\item For $t \in [0, \epsilon^{1/3}],$ the function $u$ is of the form 
\begin{align}\label{perturbationgneralfomrstatementone}
    u(x,y,t)= f(t) \cos(kx)  +  g(t) \cos(k'y)
\end{align}
where $f, g \in C^2$ satisfy $|f^{(\alpha)}(t)| \lesssim k^{\alpha} \e^{-k\sqrt{a}t}$ and $|g^{(\alpha)}(t)|  \lesssim (k')^{\alpha}\epsilon^{\frac{3-\alpha}{3}} \e^{-k'\sqrt{b}t}$ for any $0 \leq \alpha\leq 2.$
\item Similarly, we can go from $\epsilon u_1+u_2$ to $u_2$ in the same amount of time (within the same set), under the same conditions on $\epsilon, k, k'$ and with the same regularity of the transformation $u$. The solution $u$ will be of the same form $u(x,y,t)= f(t) \cos(kx)  +  g(t) \cos(k'y)$ (but $f, g$ will be different from $f, g$ in \eqref{perturbationgneralfomrstatementone}) and the following will hold: $f, g \in C^2$ and  $|f^{(\alpha)}(t)| \lesssim k^{\alpha} \epsilon^{\frac{3-\alpha}{3}} \e^{-k\sqrt{a}t}$ and $|g^{(\alpha)}(t)|  \lesssim (k')^{\alpha} \e^{-k'\sqrt{b}t}$ for any $0 \leq \alpha\leq 2.$
\end{itemize}
\end{prop}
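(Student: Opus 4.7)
The plan is to mimic the Miller--Plis construction of Lemma \ref{PML1}, rescaled by the overall factor $\epsilon$ and with base matrix $A_0:=\mathrm{diag}(a,b)$ instead of the identity. Set $w:=\epsilon^{1/3}$ and let $\beta(t):=1-\theta(t/w)$, where $\theta$ is the smooth step of the paper's footnote; thus $\beta\equiv0$ for $t\le0$, $\beta\equiv 1$ for $t\ge w$, and every derivative of $\beta$ vanishes at $t\in\{0,w\}$. I would take the ansatz $u(x,y,t):=u_1(x,t)+\epsilon\beta(t)u_2(y,t)$, which has the correct boundary values at $t\le0$ and $t\ge w$ and is already in the form $f(t)\cos(kx)+g(t)\cos(k'y)$ with $f(t)=e^{-k\sqrt{a}t}$ and $g(t)=\epsilon\beta(t)e^{-k'\sqrt{b}t}$. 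The bound $|f^{(\alpha)}|\lesssim k^{\alpha}e^{-k\sqrt{a}t}$ is immediate; the bound $|g^{(\alpha)}|\lesssim (k')^{\alpha}\epsilon^{(3-\alpha)/3}e^{-k'\sqrt{b}t}$ follows by Leibniz from $|\beta^{(j)}|\lesssim w^{-j}=\epsilon^{-j/3}$ together with $k'\ge\epsilon^{-1/4}$, after checking that for each $0\le j\le\alpha\le2$ the ratio $\epsilon^{(\alpha-j)/3}(k')^{-j}$ is $\le1$.

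Because $u_1$ and $u_2$ both solve $\ddot v+\div(A_0\nabla v)=0$, only the time derivatives hitting $\beta$ survive in the residual:
\[
\ddot u+\div(A_0\nabla u)\ =\ \epsilon\bigl(\ddot\beta-2k'\sqrt{b}\dot\beta\bigr)u_2\ =:\ S(t)\cos(k'y)e^{-k'\sqrt{b}t}.
\]
The task is to find a correction $B$, vanishing outside $[0,w]$, so that $\tilde A:=A_0+B$ satisfies $\div(B\nabla u)=-S(t)\cos(k'y)e^{-k'\sqrt{b}t}$. This is exactly the algebraic problem solved by the Miller--Plis identity in Lemma \ref{PML1}: one takes $B$ whose entries are fixed trigonometric polynomials in $(x,y)$ built from $\sin(kx),\cos(kx),\sin(k'y),\cos(k'y)$, with scalar $t$-prefactors linear in $\dot\beta$ and $\ddot\beta$, chosen so that after computing $\div(B\nabla u)$ and collecting modes the $\cos(k'y)e^{-k'\sqrt{b}t}$ mode is exactly $-S(t)\cos(k'y)e^{-k'\sqrt{b}t}$ and all junk modes cancel. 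Crucially the scalar prefactors do not involve any division by $\beta$, so $B$ and all its derivatives vanish smoothly at $t\in\{0,w\}$, making $\tilde A\in C^1$ across the endpoints.

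The required regularity of $\tilde A$ follows by the analogue of the estimates \eqref{props} rescaled by $\epsilon$: $\|B\|_{\infty}\lesssim \epsilon/(wk')$ and $\|\nabla B\|_{\infty}+\|\dot B\|_{\infty}\lesssim \epsilon/w$. Substituting $w=\epsilon^{1/3}$ and $k'\in[\epsilon^{-1/4},\epsilon^{-1/3}]$ gives $\|B\|_{\infty}\lesssim \epsilon^{11/12}$ and $\|\nabla B\|_{\infty}+\|\dot B\|_{\infty}\lesssim \epsilon^{2/3}$, both small. Combined with $A_0\in R(10,0)$, this yields $\tilde A\in R(20,10)$ for $\epsilon$ below a universal threshold $D$. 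The reverse transformation $\epsilon u_1+u_2\to u_2$ is handled symmetrically by the ansatz $u:=\epsilon\gamma(t)u_1+u_2$ with $\gamma(t):=\theta(t/w)$, and the analogous estimates hold with the roles of $(k,a)$ and $(k',b)$ exchanged. The main obstacle is the explicit choice of $B$: it must cancel the residual mode exactly, produce no uncanceled junk modes, and avoid any $1/\beta$ factors that would blow up near $t\in\{0,w\}$; this is the content of the Miller--Plis/Mandache identity (see \cite{M98}), and once it is in hand all the estimates above follow by routine Leibniz-rule bookkeeping.
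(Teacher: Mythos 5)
Your outline follows the paper's own route: the same ansatz $u=u_1+\epsilon\bigl(1-\theta(t/w)\bigr)u_2$ with $w=\epsilon^{1/3}$, the same reduction of $\ddot u+\div(\tilde A\nabla u)=0$ to finding a corrector $B$ supported in $[0,w]$ with $\div(B\nabla u)=-\epsilon(\ddot\beta-2k'\sqrt{b}\,\dot\beta)u_2$, the same appeal to the Miller--Plis type algebraic identity (in the paper this is the two-dimensional Lemma \ref{claim2Dtwopointoneproof}, proved in the appendix), and the same smallness mechanism; your Leibniz check of the bounds on $f$ and $g$ is correct and matches the paper.

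Two points need repair. First, you cannot get the bounds on $B$ ``by the analogue of \eqref{props} rescaled by $\epsilon$'': Lemma \ref{PML1} is proved under the hypothesis $w^{-1}\lesssim k$, i.e.\ $wk\gtrsim 1$, whereas here $w=\epsilon^{1/3}$ while $k$ may be as small as $\epsilon^{-1/4}$, so $wk\asymp\epsilon^{1/12}\to 0$ and the transition window is much shorter than $1/k'$; the PML1 regime simply does not apply. The estimates must instead be read off the explicit corrector: with $B=\epsilon(\ddot\alpha-2k'\sqrt{b}\,\dot\alpha)\,\e^{(k\sqrt a-k'\sqrt b)t}A_{s(t)}$ and $\|A_s\|_\infty\lesssim(1+|s|)k^{-2}$, one gets $\|B\|_\infty\lesssim\epsilon\bigl(w^{-2}+k'w^{-1}\bigr)k^{-2}\lesssim\epsilon^{1/3}k^{-2}$ and first-derivative bounds of order a small power of $\epsilon$ --- different in form from (and in places weaker than) your claimed $\epsilon/(wk')$ and $\epsilon/w$, but still amply small, so the conclusion $\tilde A\in R(20,10)$ for $\epsilon<D$ survives; here it is the $\epsilon$-prefactor, not $wk\gg1$, that rescues ellipticity. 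Second, the delicate point in constructing $B$ is not avoiding ``$1/\beta$ factors'' near $t\in\{0,w\}$ (the temporal prefactors are automatically multiples of $\dot\alpha,\ddot\alpha$ and vanish there); it is that solving $B\nabla u=V$ forces division by $\partial_x u=-k\sin(kx)$, which vanishes on $\T^2$. The whole content of the Miller--Plis/Mandache identity is the choice of a vector field $V$ with $\div V=\cos(k'y)$ whose components carry the factors $\sin(kx)$ and $\sin^2(kx)$ so that the quotients are smooth. Note also that the entries of the corrector are not fixed trigonometric polynomials times prefactors linear in $\dot\beta,\ddot\beta$: they carry an extra $t$-dependence through $s(t)=\epsilon(1-\alpha)\e^{(k\sqrt a-k'\sqrt b)t}$, and the term $\partial_s A_s\cdot\dot s$ must be included in the bound for $\dot{\tilde A}$ (it is harmless, as the paper checks). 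With these two adjustments your argument coincides with the paper's proof.
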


\pseudosection{Heuristic proof of the reduction of Proposition \ref{slownewzeroc}  to Proposition \ref{perturbationnew}}
The reduction is done in four steps. In the first step, we start with the harmonic function $u_1:=\cos(kx) \e^{-k\sqrt{a}t}$ and using Proposition \ref{perturbationnew}, we will transform $u_1$ into $u_1 + \epsilon u_2$ where $\epsilon \ll 1$ and where $u_2:=\cos(k'y) \e^{-k'\sqrt{b}t}$ is a faster oscillating but slower decaying harmonic function (recall that by assumption in Proposition \ref{slownewzeroc}, $k'\sqrt{b}<k\sqrt{a}$). In the second step, we wait. We wait until $u_1$ becomes smaller than $\epsilon u_2$. This is possible since $u_1$ has a faster decay than $\epsilon u_2$. Once $u_1$ becomes smaller than $\epsilon u_2$, we will again be in the small perturbative regime and we can again apply Proposition \ref{perturbationnew} to remove $u_1$: we are now left with $\epsilon u_2.$ The last step now consists in transforming $\epsilon u_2$ into $u_2$, which will then finish the proof of the reduction.

\begin{proof}[Proof of the reduction of Proposition \ref{slownewzeroc} to Proposition \ref{perturbationnew}]

By assumption, $1 \ll k <k' \leq 2k,$ $a,b \in (\frac{1}{10}, 10),$ $k'\sqrt{b}<k\sqrt{a}$. Denote $u_1:=\cos(kx)\e^{-k\sqrt{a}t}$ and $u_2:=\cos(k'y)\e^{-k'\sqrt{b}t}$, two solutions to the same equation
$$
\ddot u+ \div\left[\begin{pmatrix}
    a & 0 \\
    0 & b
\end{pmatrix} \nabla u\right]=0.
$$
Note that $u_2$ oscillates faster but decays slower than $u_1$ (since $k'>k$ and $k'\sqrt{b}<k\sqrt{a}$). As heuristically explained above, the reduction will be done in four steps.
\\

\textbf{Step 1: Introduction of the slower decaying function.} We will apply Proposition \ref{perturbationnew} with $\epsilon=\frac{1}{k^4}$; since $k\gg 1$, we can ensure that $\frac{1}{k^4} <D$ where $D$ is the universal constant from Proposition \ref{perturbationnew}. Moreover, since $k\gg 1$, we also have $k \leq k^{4/3}.$ Also, since $k'\leq 2k$, we have $k'\leq k^{4/3}.$ Hence, 
 Proposition \ref{perturbationnew} applies with $\epsilon =\frac{1}{k^4}$ and  we can transform  
\begin{align}
    u_1 \hspace{0.5cm} \mbox{ into } \hspace{0.5cm} u_1+ \frac{1}{k^4}u_2
\end{align}
in $\frac{1}{k^{4/3}}$ time (within the set $\T^2 \times [0, k^{-4/3}]$), via a solution $u$ to $\ddot u+\div(A\nabla u)=0$ and where $A$ is in the regularity class $R(20, 10)$.

Throughout this step, the solution $u$ is of the form $u=f(t)\cos(kx) + g(t)\cos(k'y)$ where $f, g \in C^2$ satisfy for all $0\leq \alpha \leq 2,$ $|f^{(\alpha)}(t)|\lesssim k^{\alpha} \e^{-k\sqrt{a}t}$ and $|g^{(\alpha)}(t)|\lesssim (k')^{\alpha} \left(\frac{1}{k^4}\right)^{\frac{3-\alpha}{3}} \e^{-k'\sqrt{b}t} \lesssim (k')^{\alpha} \e^{-k'\sqrt{b}t}$ (since $k \geq 1$).  At the end of this step, the solution $u$ is $u=u_1+\frac{1}{k^4}u_2$ and the matrix $A$ is $A=\begin{pmatrix}
    a & 0 \\0 & b
\end{pmatrix}.$
\\

\textbf{Step 2: We wait.} This phase, which might seem surprising at first, is key. It explains why we considered a slower decaying function. Indeed, since $u_2$ decays slower than $u_1$, then, if we wait long enough, our initial function $u_1$ (which has a faster decay) will become much smaller than $u_2$. 

More precisely, we wait until the moment when $u_1$ becomes $\frac{1}{k^4}$ times smaller than $\frac{1}{k^4} u_2$:  $\frac{\sup_{\T^2}|u_1|(t)}{\sup_{\T^2}k^{-4}|u_2|(t)}=\frac{1}{k^4}$, i.e.,
\begin{align}\label{timetzerotowaitnew}
    \frac{\e^{-k \sqrt{a}t}}{k^{-4} \e^{-k' \sqrt{b}t}} = \frac{1}{k^4}.
\end{align}
This happens at time $t_0:=\frac{8 \ln k}{k \sqrt{a} - k' \sqrt{b}}>0.$ Since at the end of step 1 we were at time $t=\frac{1}{k^{4/3}}$, we then wait in this step 2 for a total time of $\frac{8 \ln k}{k \sqrt{a} - k' \sqrt{b}}-\frac{1}{k^{4/3}}>0$ since $k \gg 1.$

Hence, at the end of step 2, we are at time $t_0=\frac{8 \ln k}{k \sqrt{a} - k' \sqrt{b}}$ and it holds that 
\begin{align}\label{newtimephd}
     \frac{\e^{-k \sqrt{a}t_0}}{k^{-4} \e^{-k' \sqrt{b}t_0}} = \frac{1}{k^4}.
\end{align}
We do not change the matrix $A$ or the solution $u$ in this phase.
\\

\textbf{Step 3: We remove the initial function.} At the beginning of this step, the situation is the following: we have our function 
\begin{align}
u= \cos(k x) \e^{-k \sqrt{a} t}+ \frac{1}{k^4} \cos(k' y) \e^{-k' \sqrt{b} t},    
\end{align}
we have the matrix $A=\begin{pmatrix}
    a & 0 \\
    0 & b
\end{pmatrix}$, and we are at time $t=t_0=\frac{8 \ln k}{k \sqrt{a} - k' \sqrt{b}}$. We will use Proposition \ref{perturbationnew} to remove the initial function $u_1$. To this end, we note that $u$ can be written as
\begin{align*}
    u= \e^{-k \sqrt{a} t_0} \left( \cos(k x) \e^{-k \sqrt{a} (t-t_0)} + \cos(k'y) \e^{-k'\sqrt{b}(t-t_0)} \frac{\e^{-k'\sqrt{b}t_0}}{k^4 \e^{-k \sqrt{a}t_0}}\right).
\end{align*}

By \eqref{newtimephd}, we can rewrite $u$ as 
\begin{align}\label{rewriteutoremoveslowdown}
    u&= \e^{-k \sqrt{a} t_0} \left( \cos(k x) \e^{-k\sqrt{a} (t-t_0)} + \cos(k'y) \e^{-k'\sqrt{b}(t-t_0)} k^4\right) \nonumber\\
    &=k^4\e^{-k \sqrt{a} t_0} \left( \frac{1}{k^4}\cos(k x) \e^{-k \sqrt{a} (t-t_0)} + \cos(k'y) \e^{-k'\sqrt{b}(t-t_0)} \right).
\end{align}
We  introduce the new variable $t':=t-t_0\geq 0$ and apply Proposition \ref{perturbationnew} to transform the function
\begin{align}\label{removefromslowdownnew}
    \frac{1}{k^4}\cos(k x) \e^{-k \sqrt{a} t'} + \cos(k'y) \e^{-k'\sqrt{b}t'}
\end{align}
into the function 
\begin{align}\label{removetoslowdownnew}
    \cos(k'y) \e^{-k'\sqrt{b}t'}
\end{align}
via a function $u$ solution to $\ddot u + \div(A\nabla u)=0$, in time $\frac{1}{k^{4/3}}$ (within the set $\{(x,y,t') : 0 \leq t' \leq k^{-4/3}\}$, where $A$ is in the regularity class $R(20,10)$. The solution $u$ is of the form $f(t')\cos(kx) + g(t') \cos(k'y)$ where $f,g \in C^2$ satisfy $|f^{(\alpha)}(t')| \lesssim k^{\alpha}\left(\frac{1}{k^4}\right)^{\frac{3-\alpha}{3}} \e^{-k\sqrt{a}t'}$ and $|g^{(\alpha)}(t')| \lesssim (k')^{\alpha} \e^{-k'\sqrt{b}t'}$ for all $0\leq \alpha \leq 2.$

We can certainly go back to our old variable $t$ (since the pde $\ddot u + \div(A\nabla u)=0$ is unaffected by this change of variable): hence, we have been able to transform 
\begin{align}\label{removuonenew}
\frac{1}{k^4}\cos(k x) \e^{-k \sqrt{a} (t-t_0)} + \cos(k'y) \e^{-k'\sqrt{b} (t-t_0)}    
\end{align}
into 
\begin{align}\label{removeuonenewbis}
    \cos(k'y) \e^{-k'\sqrt{b} (t-t_0)} 
\end{align}
via a function $u$ solution to $\ddot u + \div(A\nabla u)=0$, within the set $\{(x,y,t): t_0\leq t \leq t_0+k^{-4/3}\}$ where $A$ is in the regularity class $R(20, 10).$ The solution $u$ is of the form $f(t)\cos(kx) + g(t) \cos(k'y)$ where $f,g \in C^2$  satisfy $|f^{(\alpha)}(t)| \lesssim k^{\alpha}\left(\frac{1}{k^4}\right)^{\frac{3-\alpha}{3}} \e^{-k\sqrt{a}(t-t_0)}$ and $|g^{(\alpha)}(t)| \lesssim (k')^{\alpha} \e^{-k'\sqrt{b}(t-t_0)}$ for all $0\leq \alpha \leq 2.$

We also notice that by the linearity of the pde $\ddot u + \div(A\nabla u)=0$,  we can multiply both \eqref{removuonenew} and \eqref{removeuonenewbis} by $ k^4 \e^{-k \sqrt{a}t_0}$ to transform
$$
\cos(k x) \e^{-k \sqrt{a} t} + k^4 \e^{-k \sqrt{a}t_0} \cos(k'y) \e^{-k'\sqrt{b} (t-t_0)}    
$$
into 
$$
k^4 \e^{-k \sqrt{a}t_0} \cos(k'y) \e^{-k'\sqrt{b} (t-t_0)} 
$$
via a function $u$ solution to $\ddot u + \div(A\nabla u)=0$, within the set $\{(x,y,t): t_0\leq t \leq t_0+k^{-4/3}\}$, where $A$ is in the regularity class $R(20, 10).$  The solution $u$ is of the form $f(t)\cos(kx) + g(t) \cos(k'y)$ where $f,g \in C^2$ satisfy $|f^{(\alpha)}(t)| \lesssim k^{\alpha}k^{\frac{4\alpha}{3}} \e^{-k\sqrt{a}t}$ and $|g^{(\alpha)}(t)| \lesssim (k')^{\alpha} k^4 \e^{-k\sqrt{a}t_0}\e^{-k'\sqrt{b}(t-t_0)}$ for all $0\leq \alpha \leq 2.$

Remembering the relation \eqref{newtimephd}
    $k^4\e^{-k \sqrt{a}t_0}=k^{-4} \e^{-k' \sqrt{b}t_0}$, we just showed that we can transform 
$$
\cos(k x) \e^{-k \sqrt{a} t} + \frac{1}{k^4}\cos(k'y) \e^{-k'\sqrt{b} t}    
$$
into 
$$
\frac{1}{k^4}  \cos(k'y) \e^{-k'\sqrt{b} t} 
$$
via a function $u$ solution to $\ddot u + \div(A\nabla u)=0$, within the set $\{(x,y,t): t_0\leq t \leq t_0+k^{-4/3}\}$, where $A$ is in the regularity class $R(20, 10).$ The solution $u$ is of the form $f(t)\cos(kx) + g(t) \cos(k'y)$ where $f,g \in C^2$ satisfy $|f^{(\alpha)}(t)| \lesssim k^{\frac{7\alpha}{3}} \e^{-k\sqrt{a}t}$ and $|g^{(\alpha)}(t)| \lesssim (k')^{\alpha} \frac{1}{k^4} \e^{-k'\sqrt{b}t} \lesssim (k')^{\alpha} \e^{-k'\sqrt{b}t}$ 
(since $k \geq 1$), for all $0\leq \alpha \leq 2$.

At the end of this step, the solution is $u=\frac{1}{k^4}\cos(k'y)\e^{-k'\sqrt{b}t}$ and the matrix is $A=\begin{pmatrix}
    a & 0\\
    0 & b
\end{pmatrix}.$
\\

\textbf{Partial conclusion 1:} We have been able to transform 
\begin{align}\label{patialconclusionslowblablanew}
    \cos(k x) \e^{-k \sqrt{a} t} \hspace{0.5cm} \mbox{ into } \hspace{0.5cm} \frac{1}{k^{4}} \cos(k'y) \e^{-k'\sqrt{b}t}
\end{align}
via a function $u$ solution to $\ddot u + \div(A\nabla u)=0$ in $\frac{1}{k^{4/3}} + \frac{8 \ln k}{k \sqrt{a} - k' \sqrt{b}} $ amount of time with $A$ in the regularity class $R(20,10).$ Moreover, the solution $u$ is of the form $f(t)\cos(kx) + g(t) \cos(k'y)$ where $f, g \in C^2$ satisfy $|f^{(\alpha)}(t)| \lesssim k^{\frac{7\alpha}{3}} \e^{-k\sqrt{a}t}$ and $|g^{(\alpha)}(t)|  \lesssim (k')^{\alpha} \e^{-k'\sqrt{b}t}$  for all $0\leq \alpha \leq 2$.
\\

\textbf{The last step:} We will now explain how to transform 
\begin{align}\label{laststepslowingdown}
    \frac{1}{k^{4}} \cos(k'y) \e^{-k'\sqrt{b}t} \hspace{0.5cm} \mbox{ into } \hspace{0.5cm}  \cos(k'y) \e^{-k'\sqrt{b}t}
\end{align}
via a solution $u$ to $\ddot u + \div(A\nabla u)=0$ in $\frac{\sqrt{4\ln(k)}}{k'^{1/3}} $ amount of time, where $A$ is in the regularity class $R(20,10).$ Moreover, we will show that throughout this phase, $u$ is of the form $u=g(t) \cos(k'y)$ with $g \in C^2$ satisfying $|g^{(\alpha)}(t)| \lesssim (k')^{\alpha} \e^{-k'\sqrt{b}t}$ for all $0\leq \alpha \leq 2.$ The matrix $A$ at the end of this step will be $A=\begin{pmatrix}
    a & 0 \\
    0 & b
\end{pmatrix}.$
\\

So far, the situation is the following: we have 
\begin{align}\label{uaftersometimebeginningphaselaststep}
   u= \frac{1}{k^{4}} \cos(k'y) \e^{-k'\sqrt{b}t}, \quad A=\begin{pmatrix}
    a &0 \\
    0 & b
\end{pmatrix}
\end{align}
and we are at the time 
\begin{align}\label{timetzerotildeaftertime}
\tilde t_0= \frac{1}{k^{4/3}} + \frac{8 \ln(k)}{k \sqrt{a} -k' \sqrt{b}}.    
\end{align}
We keep the function $u=\frac{1}{k^{4}} \cos(k'y) \e^{-k'\sqrt{b}t} $ and the matrix $A=\begin{pmatrix}
    a &0 \\
    0 & b
\end{pmatrix}$ until time 
\begin{align}\label{newtonephd}
t_1:=\tilde t_0+\frac{1}{100}.  
\end{align}
For $t \geq t_1$, denote by $\alpha$ the function 
 \begin{align}\label{defalphagslowingdownremovec}
     \alpha(t)=\theta\left(\frac{t-t_1}{w} \right) \hspace{0.5cm} \mbox{ where } \hspace{0.5cm} w:=\frac{\sqrt{4\ln(k)}}{k'^{1/3}}
 \end{align}
and where $\theta(\tau)$ is the smooth function going from 1 to 0 as $\tau$ goes from 0 to 1, which is presented in Picture \ref{fig:theta} (see also footnote \footnote{We choose $\theta(t):= 1-G(\tan(\pi(t-1/2)))$ where $G(x)=\frac{1}{\sqrt{\pi}}\int_{-\infty}^x \e^{-\eta^2} \ud{\eta}$. \label{footnotebuildingblockcoeffcblanewrenew}}). Consider also the function 
\begin{align}\label{defgslowingdownremovec}
    h(t):=-k'\sqrt{b}t +\ln\left(\frac{1}{k^4}\right) \alpha(t).
\end{align}
Then, the function 
\begin{align}\label{functionuremoveconstantforestimation}
u_3:=\cos(k'y) \e^{h(t)}    
\end{align}
 goes from 
\begin{align*}
    \frac{1}{k^{4}} \cos(k'y) \e^{-k'\sqrt{b}t} \hspace{0.5cm} \mbox{ to } \hspace{0.5cm} \cos(k'y) \e^{-k'\sqrt{b}t} 
\end{align*}
as $t$ goes from $t_1$ to $t_1+w$. We want $u_3$ to solve a divergence form equation. To this end, we look for a function $\tilde b$ depending on $t$ only and such that
\begin{align}\label{connectingequationremoveconstantslowingdownnew}
    \ddot{u}_3 + \div \left [\begin{pmatrix}a&0\\0&\tilde b\end{pmatrix} \nabla u_3 \right ] = 0.
\end{align}

Plugging $u_3=\cos(k'y) \e^{h(t)} $ into \eqref{connectingequationremoveconstantslowingdownnew}, we see that the connecting equation \eqref{connectingequationremoveconstantslowingdownnew} is equivalent to
\begin{align}\label{btildeforconneceqtslowdiwnnew}
    \tilde b := \frac{\ddot{h}}{k'^2} + \left(\frac{\dot{h}}{k'}\right)^2.
\end{align}

We then define a function $U$ and a matrix $B$ by 
\begin{align}\label{defbiguaftersometime}
U=
\left\{
\begin{array}{rl}
u & \mbox{if } t \leq t_1, \\
u_3 & \mbox{if } t \geq t_1
\end{array}
\right. ,
\quad 
B=
\left\{
\begin{array}{cl}
A & \mbox{if } t \leq t_1, \\
\begin{pmatrix}
    a & 0 \\
    0 & \tilde b
\end{pmatrix} & \mbox{if } t \geq t_1
\end{array}
\right. 
\end{align}
where $u$, $A$ and $u_3$ are defined in \eqref{uaftersometimebeginningphaselaststep} and \eqref{functionuremoveconstantforestimation} and where $t_1$ was defined in \eqref{newtonephd}.

Before discussing the regularity of $U$ and $B$, we state some useful facts about the function $\alpha(t)$ defined in \eqref{defalphagslowingdownremovec} and which appears in the definition of $u_3$ and $\tilde b$.

\textbf{Facts:} For $t \in [t_1, t_1+w],$
\begin{align}\label{propalphagreatgreat}
0 \leq \alpha(t) \leq 1, \hspace{0.3cm} |\dot{\alpha}(t)| \lesssim w^{-1}, \hspace{0.3cm} |\ddot{\alpha}(t)| \lesssim w^{-2}.
\end{align}
where $w=\frac{\sqrt{4\ln(k)}}{k'^{1/3}}$ (see \eqref{defalphagslowingdownremovec}). The first fact follows directly from the definition of $\theta$ (footnote \ref{footnotebuildingblockcoeffcblanewrenew}). To see the second and third fact, we use that $\theta$ satisfies $\sup_{\tau \in [0, 1]} |\theta^{(n)}(\tau)| \leq C_n$ for some universal constants $C_n$ and for all $n \geq 0.$ Indeed, note that for any $\tau \in [0, 1]$ and for any $n \geq 1,$
\begin{align}\label{propthetaaftertime}
\theta^{(n)} (\tau) = \e^{-\tan^2(\pi(\tau-1/2))} P_n\big(\tan(\pi(\tau-1/2))\big)    
\end{align}
where $P_n$ is a polynomial. Moreover, it holds that 
\begin{align}\label{propalphaaftersometime}
    \left\{
\begin{array}{rl}
\alpha(t) \to 1 & \mbox{when } t \to t_1, \\
\alpha(t) \to 0 & \mbox{when } t \to t_1+w
\end{array}
\right. ,
\quad 
   \left\{
\begin{array}{rl}
\alpha^{(n)}(t) \to 0 & \mbox{when } t \to t_1, \\
\alpha^{(n)}(t) \to 0 & \mbox{when } t \to t_1+w
\end{array}
\right.
\quad 
\end{align}
for $n \in \{1,2\}$. Formulas \eqref{propalphaaftersometime} immediately follow from the definition of $\alpha$ \eqref{defalphagslowingdownremovec} and from \eqref{propthetaaftertime}. 

\comment{
\pseudosection{The regularity estimates for $U$}
We first compute the derivatives of $u_3$ that we will need later. By \eqref{functionuremoveconstantforestimation},  the function $u_3$ is of the form $u_3=\e^{h(t)} \cos(k'y)$ where $h$ is given in \eqref{defgslowingdownremovec} by $h(t):=-k'\sqrt{b}t +\ln\left(\frac{1}{k^4}\right) \alpha(t).$  The derivatives of $g(t):=\e^{h(t)}$ are given by 
\begin{align}\label{derivatovesgremoveconstantgreat}
\dot{g}(t) = \left(-k'\sqrt{b}-4\ln(k) \dot{\alpha}(t)\right) g(t), \hspace{0.5cm} \ddot{g}(t) = \left(-k'\sqrt{b}-4\ln(k) \dot{\alpha}(t)\right)^2 g(t) -4\ln(k) \ddot{\alpha}(t) g(t).    
\end{align}
Also, since $g(t) = \e^{-k'\sqrt{b}t - 4\ln(k) \alpha(t)}$, we have that $|g(t)| \leq \e^{-k'\sqrt{b}t}$ because $k \geq 1$ (by assumption) and $\alpha(t) \geq 0$ (see \eqref{propalphagreatgreat}). 
}
\pseudosection{1) The regularity of $U$ at $t_1$}
We observe that by the definition of $U$ \eqref{defbiguaftersometime}, of $u$ \eqref{uaftersometimebeginningphaselaststep} and of $u_3$ \eqref{functionuremoveconstantforestimation}, we have  
$$
U= \left\{
\begin{array}{rl}
\frac{1}{k^4}\cos(k'y)e^{-k'\sqrt{b}t} & \mbox{when } t \in [t_1-1/100, t_1], \\
\cos(k'y)e^{h(t)} & \mbox{when } t \in [t_1, t_1+w]
\end{array}
\right. 
$$

Since $h(t)=-k'\sqrt{b}t + \ln(\frac{1}{k^4})\alpha(t)$ by definition \eqref{defgslowingdownremovec}, it is straightforward to check that $U$ is $C^2$ at $t=t_1$ by using \eqref{propalphaaftersometime}.  

\comment{
Recall that $t_1$ (defined in \eqref{newtonephd}) is the beginning of the phase. Since the function $\alpha$ defined in footnote \ref{footnotebuildingblockcoeffcblanewrenew} satisfies 
\begin{align}\label{deralphaphd}
\lim_{t \to t_1^+} \alpha^{(k)}(t)=0, \quad   k \in \{1, 2\}
\end{align}
We already saw in \eqref{functionuremoveconstantforestimation} that $u=\cos(k'y)\e^{h(t)}$ goes to $\frac{1}{k^4}\cos(k'y)\e^{-k'\sqrt{b}t}$ when $t \to t_1^+$. Hence, $u$ is continuous at $t_1.$ 

By writing $u=g(t)\cos(k'y),$ we have $\dot u = \dot g \cos(k'y)$ and $\ddot u = \ddot g \cos(k'y).$ By \eqref{derivatovesgremoveconstantgreat} and by \eqref{deralphaphd}, we have that $\dot u \to \frac{-k'\sqrt{b}}{k^4}\e^{-k'\sqrt{b}t_1} \cos(k'y)$ and $\ddot u \to \frac{(k')^2b}{k^4} \e^{-k'\sqrt{b}t_1} \cos(k'y)$ as $t\to t_1^+.$ Since on $[t_1-\frac{1}{100}, t_1],$ $u$ is equal to $u=\frac{1}{k^4}\e^{-k'\sqrt{b}t}\cos(k'y)$, the function $u$ is $C^2$ at the endpoint $t=t_1.$

Using that $\lim_{t \to (t_1+w)^{-}} \alpha^{(k)}(t)=0$ for $k \in \{1, 2\},$ a similar argument shows that $u$ converges in a $C^2$ way to $\cos(k'y)\e^{-k'\sqrt{b}t}$ as $t \to (t_1+w)^{-}$ (we already saw the continuity in \eqref{functionuremoveconstantforestimation}).
}

\pseudosection{2) The regularity of $U$ on $[t_1, t_1+w]$}
On $[t_1, t_1+w]$, we saw above that $U=\cos(k'y) e^{h(t)}. $ Observe that 
\begin{align}\label{derivatovesgremoveconstantgreat}
\frac{d}{dt}\left(\e^{h(t)}\right)(t) = \left(-k'\sqrt{b}-4\ln(k) \dot{\alpha}(t)\right) \e^{h(t)}, \hspace{0.5cm} \frac{d^2}{dt^2}\left(e^{h(t)}\right)(t) = \left(-k'\sqrt{b}-4\ln(k) \dot{\alpha}(t)\right)^2 \e^{h(t)} -4\ln(k) \ddot{\alpha}(t) \e^{h(t)}.    
\end{align}
Also, since $\e^{h(t)} = \e^{-k'\sqrt{b}t - 4\ln(k) \alpha(t)}$, we have that $|\e^{h(t)}| \leq \e^{-k'\sqrt{b}t}$ because $k \geq 1$ (by assumption) and $\alpha(t) \geq 0$ (see \eqref{propalphagreatgreat}). By \eqref{propalphagreatgreat} and since $1\leq k \leq k'$, it then comes, 
\begin{align}\label{estiatesfordotgremoveconstant}
 |\dot{U}(t)| \lesssim \left(k'+\sqrt{\ln(k)} (k')^{1/3}\right)\e^{-k'\sqrt{b}t} \lesssim k'\e^{-k'\sqrt{b}t}    
\end{align}
and 
\begin{align}\label{estimatedddotgforremoveconstant}
|\ddot U(t)| \lesssim \left((k')^2+(k')^{2/3}\right)\e^{-k'\sqrt{b}t} \lesssim (k')^2 \e^{-k'\sqrt{b}t}
\end{align}

\comment{

Finally, since $\left|-k'\sqrt{b}-4\ln(k) \dot{\alpha}(t)\right| \lesssim k'$ by \eqref{estiatesfordotgremoveconstant}, we get by \eqref{derivatovesgremoveconstantgreat} and \eqref{propalphagreatgreat}
\begin{align}\label{estimatedddotgforremoveconstant}
|\ddot g(t)| \lesssim \left((k')^2+(k')^{2/3}\right)\e^{-k'\sqrt{b}t} \lesssim (k')^2 \e^{-k'\sqrt{b}t}
\end{align}
since $k'\geq 1$.}

Hence, we showed that
\begin{align}\label{estimatesforuremoveconstants}
    U=g(t) \cos(k'y) \hspace{0.5cm} \mbox{ where $g \in C^2$ satisfies} \hspace{0.5cm} |g^{(\alpha)}(t)| \lesssim (k')^{\alpha} \e^{-k'\sqrt{b}t}
\end{align}
for all $0 \leq \alpha \leq 2.$

It remains to verify that the matrix $B$ defined in \eqref{defbiguaftersometime} is in the regularity class $R(20,10).$
\\

\pseudosection{The regularity of $B$}
By the definition of $B$ \eqref{defbiguaftersometime} and $A$ \eqref{uaftersometimebeginningphaselaststep}, we have 
\begin{align}\label{Bclosetotoneaftertime}
    B=
\left\{
\begin{array}{cl}
\begin{pmatrix}
    a & 0 \\
    0 &  b
\end{pmatrix} & \mbox{if } t \in [t_1-1/100,t_1], \\
\begin{pmatrix}
    a & 0 \\
    0 & \tilde b
\end{pmatrix} & \mbox{if } t \in [t_1, t_1+w]
\end{array}
\right. 
\end{align}
By definition \eqref{btildeforconneceqtslowdiwnnew}, $\tilde b= \frac{\ddot{h}}{k'^2} + \left(\frac{\dot{h}}{k'}\right)^2.$ Recall also that  $h(t)=-k'\sqrt{b}t + \ln(\frac{1}{k^4})\alpha(t)$ by definition \eqref{defgslowingdownremovec} and that $\alpha(t)=\theta\left(\frac{t-t_1}{w} \right)$ (see \eqref{defalphagslowingdownremovec}).

\comment{
Recall from \eqref{btildeforconneceqtslowdiwnnew} that 
\begin{align}\label{timdebrecalldefnew}
    \tilde b := \frac{\ddot{h}}{(k')^2} + \left(\frac{\dot{h}}{k'}\right)^2
\end{align}
where  $h(t)=-k'\sqrt{b}t +\ln\left(\frac{1}{k^4}\right) \alpha(t)$ (see \eqref{defgslowingdownremovec}) and that $\alpha(t)=\theta\left(\frac{t-t_1}{w} \right)$ (see \eqref{defalphagslowingdownremovec}).}

Clearly
\begin{align}\label{derivativegslowdown}
   \dot{h}(t) = -k'\sqrt{b} - \frac{4\ln(k)}{w} \dot{\theta}\left(\frac{t-t_1}{w}\right), \hspace{0.5cm} \ddot h(t) = \frac{-4\ln(k)}{w^2} \ddot\theta \left(\frac{t-t_1}{w}\right), \quad \dddot h(t)=\frac{-4\ln(k)}{w^3} \dddot\theta \left(\frac{t-t_1}{w}\right).
\end{align}
We recall that
\begin{align}\label{usefulrelationslownew}
    w=\frac{\sqrt{4\ln(k)}}{(k')^{1/3}}, \hspace{0.5cm} \sup_{\tau \in [0,1]}| \theta^{(n)} (\tau)| \leq C_n
\end{align}
 where $C_n$ is some universal constant and for any $n \geq 0$.

\pseudosection{1) The regularity of B at $t_1$}
By \eqref{Bclosetotoneaftertime}, \eqref{derivativegslowdown} and by \eqref{propalphaaftersometime}, it is straightforward to see that $B$ is $C^1$ at $t=t_1.$

\comment{
Recall that $\theta^{(k)}\left(\frac{t-t_1}{w}\right)\to 0$ as $t\to t_1^+$ for $k \in \{1,2\},$ by the definition of $\theta$.
Hence, by \eqref{timdebrecalldefnew} and \eqref{derivativegslowdown}, $\tilde b \to b$ as $t \to t_1^+$, and $B=\begin{pmatrix}
    a & 0 \\
    0 & \tilde b
\end{pmatrix} \to \begin{pmatrix}
    a & 0 \\
    0 & b
\end{pmatrix}$ as $t \to t_1^+$, which is exactly the matrix $A$ on $[t_1-\frac{1}{100}, t_1]$ (see above \eqref{newtonephd}). Hence, $B$ is continuous at $t_1.$ 

Since  $\theta^{(k)}\left(\frac{t-t_1}{w}\right)\to 0$ as $t\to (t_1+w)^-$ for $k \in \{1,2\}$, a similar argument shows that $B$ goes in a continuous way to $\begin{pmatrix}
    a & 0 \\
    0 & b
\end{pmatrix}$ at $t \to (t_1+w)^-.$ Hence  $B$ is continuous at the endpoints of $[t_1, t_1+w]$ and is clearly continuous on the interior as well.
}
\pseudosection{2) The uniform ellipticity of $B$ in $[t_1, t_1+w]$}
In $[t_1, t_1+w]$, $B=\begin{pmatrix}
    a & 0 \\
    0 & \tilde b
\end{pmatrix}$ by \eqref{Bclosetotoneaftertime} . By definition \eqref{btildeforconneceqtslowdiwnnew}, $\tilde b= \frac{\ddot{h}}{k'^2} + \left(\frac{\dot{h}}{k'}\right)^2.$ We estimate the two terms separately:
\begin{itemize}
    \item \underline{The first term.}
    Using \eqref{derivativegslowdown} and \eqref{usefulrelationslownew}, we have for $t \in [t_1, t_1+w]$,
    \begin{align}\label{estimatefirsttermuniformelliptcslownew}
    \left|\frac{\ddot h}{(k')^2}\right| \lesssim \frac{(k')^{2/3}}{(k')^2}  \ll 1        
    \end{align}
    
    since $k'\gg 1$ by assumption.

\item \underline{The second term.} Using \eqref{derivativegslowdown}, we have for $t \in [t_1, t_1+w],$
$$
\frac{\dot h}{k'} = -\sqrt{b} - \frac{4\ln(k)}{wk'} \dot{\theta}\left(\frac{t-t_1}{w}\right).
$$
Therefore,
\begin{align}\label{secondtermsquaredfed}
\left(\frac{\dot h}{k'} \right)^2 = b + \frac{8 \sqrt{b} \ln(k)}{wk'} \dot{\theta}\left(\frac{t-t_1}{w}\right) + \left[ \frac{4\ln(k)}{wk'} \dot{\theta}\left(\frac{t-t_1}{w}\right)\right]^2.    
\end{align}

By \eqref{usefulrelationslownew}, 
\begin{align}\label{firstsecondtermsow}
\left|  \frac{8 \sqrt{b} \ln(k)}{wk'} \dot{\theta}\left(\frac{t-t_1}{w}\right)\right| \lesssim \frac{\sqrt{\ln(k)}}{(k')^{2/3}} \ll 1    
\end{align}

since $k' > k \gg 1$ by assumption.
Similarly,
\begin{align}\label{secondsecondtermslow}
\left[ \frac{4\ln(k)}{wk'} \dot{\theta}\left(\frac{t-t_1}{w}\right)\right]^2 \lesssim \frac{\ln(k)}{(k')^{4/3}} \ll 1    .
\end{align}

Hence,  by \eqref{secondtermsquaredfed}, \eqref{firstsecondtermsow} and \eqref{secondsecondtermslow}, the second term $\left(\frac{\dot h}{k'} \right)^2$ is estimated by 
\begin{align}\label{estimesecondtermslowunifellip}
    \left(\frac{\dot h}{k'} \right)^2 = b + \mathcal{O}\left(\frac{1}{1000}\right),
\end{align}
where $\mathcal{O}\left(\frac{1}{1000}\right)$ is  an error term smaller than $1/1000$ in absolute value.
Recalling that $\tilde b =  \frac{\ddot h}{(k')^2}+\left(\frac{\dot h}{k'} \right)^2  $ by definition \eqref{btildeforconneceqtslowdiwnnew}, we get by combining \eqref{estimatefirsttermuniformelliptcslownew} and \eqref{estimesecondtermslowunifellip} that 
$$
\tilde b = b +\mathcal{O}\left(\frac{1}{1000}\right).
$$
Since by assumption $\frac{1}{10} \leq a, b \leq 10$, we get that the matrix $\begin{pmatrix}
    a & 0 \\
    0 & \tilde b
\end{pmatrix}$
is uniformly elliptic, i.e., in $[t_1, t_1+w],$
\begin{align}\label{bisunifomrelliptcslowdown}
\frac{1}{20} |\xi|^2 \leq (B \xi, \xi) \leq 20 |\xi|^2.
\end{align}

\end{itemize}

\pseudosection{3) The $C^1$ smoothness of $B$ in $[t_1, t_1+w]$} 

\comment{
Recall that $\tilde b = \frac{\ddot h}{(k')^2} + \left(\frac{\dot h}{k'}\right)^2$ by \eqref{timdebrecalldefnew}. Hence, 
\begin{align}\label{derivateivegdotnewblabla}
\dot{\tilde b} = \frac{\dddot h}{(k')^2} + 2\frac{\dot h \ddot h}{(k')^2}.    
\end{align}

We recall also from \eqref{derivativegslowdown} that  the first and second derivatives of $h$ are given by,
\begin{align}\label{dervgrecallblabla}
   \dot{h}(t) = -k'\sqrt{b} - \frac{4\ln(k)}{w} \dot{\theta}\left(\frac{t-t_1}{w}\right), \hspace{0.5cm} \ddot h(t) = \frac{-4\ln(k)}{w^2} \ddot\theta \left(\frac{t-t_1}{w}\right).
\end{align}
Hence, 
\begin{align}\label{thirdderivativegblabla}
    \dddot h(t) = \frac{-4\ln(k)}{w^3} \dddot\theta \left(\frac{t-t_1}{w}\right).
\end{align}

We also recall from \eqref{usefulrelationslownew} that
\begin{align}\label{recallwthetaboundedblablanew}
    w=\frac{\sqrt{4\ln(k)}}{(k')^{1/3}}, \hspace{0.5cm} \sup_{\tau \in [0,1]}| \theta^{(n)} (\tau)| \leq C_n
\end{align}
for some universal constants $C_n>0$ and for all $n \geq 0.$

\pseudosection{1) The $C^1$ regularity of $B$ at $t_1$}
We can argue in the same way as we did for the continuity of $B$ at $t_1^+$ and $ (t_1+w)^-$ to see that $\dot B$ converges continuously to the zero matrix at the endpoints $t_1^+$ and $ (t_1+w)^-$ and therefore $B$ is $C^1$ at the endpoints (since on $[t_1-\frac{1}{100}, t_1]$ and for $t \geq t_1+w$, we have the constant matrix $\begin{pmatrix}
    a & 0 \\ 0 & b
\end{pmatrix}$).

\pseudosection{2) The $C^1$ regularity of $B$ on $[t_1, t_1+w]$}
}
In $[t_1, t_1+w]$, $B=\begin{pmatrix}
    a & 0 \\
    0 & \tilde b
\end{pmatrix}$ by \eqref{Bclosetotoneaftertime} . By definition \eqref{btildeforconneceqtslowdiwnnew}, $\tilde b= \frac{\ddot{h}}{k'^2} + \left(\frac{\dot{h}}{k'}\right)^2.$ Clearly $\dot{\tilde b} = \frac{\dddot h}{k'^2} + \frac{2\dot{h}\ddot h}{k'^2}$. Before estimating these two terms separately, we recall that  $h(t)=-k'\sqrt{b}t + \ln(\frac{1}{k^4})\alpha(t)$ by definition \eqref{defgslowingdownremovec} and that $\alpha(t)=\theta\left(\frac{t-t_1}{w} \right)$ (see \eqref{defalphagslowingdownremovec}). Moreover, \begin{align}\label{derivativegslowdownagain}
   \dot{h}(t) = -k'\sqrt{b} - \frac{4\ln(k)}{w} \dot{\theta}\left(\frac{t-t_1}{w}\right), \hspace{0.5cm} \ddot h(t) = \frac{-4\ln(k)}{w^2} \ddot\theta \left(\frac{t-t_1}{w}\right), \quad \dddot h(t)=\frac{-4\ln(k)}{w^3} \dddot\theta \left(\frac{t-t_1}{w}\right).
\end{align}
We recall that
\begin{align}\label{usefulrelationslownewagain}
    w=\frac{\sqrt{4\ln(k)}}{(k')^{1/3}}, \hspace{0.5cm} \sup_{\tau \in [0,1]}| \theta^{(n)} (\tau)| \leq C_n
\end{align}
 where $C_n$ is some universal constant and for any $n \geq 0$.    

\begin{itemize}
    \item \underline{The first term:} By \eqref{derivativegslowdownagain} and \eqref{usefulrelationslownewagain},
\begin{align}\label{firsttermbtildedotblablaslow}
    \left|\frac{\dddot h}{(k')^2} \right| = \left| \frac{4\ln(k)}{w^3 (k')^2} \dddot\theta \left(\frac{t-t_1}{w}\right)\right| \lesssim \frac{1}{k' \sqrt{\ln(k)}} \ll 1
\end{align}
 $k'>k \gg 1$ by assumption.

\item \underline{The second term:} By \eqref{derivativegslowdownagain} and \eqref{usefulrelationslownewagain},
\begin{align}\label{estimatrsecondbhjbfbfaerfdf}
    \left| \frac{\dot h \ddot h}{(k')^2} \right| &\leq \left| \frac{4 k'\sqrt{b}\ln(k)}{(k')^2w^2} \ddot\theta \left(\frac{t-t_1}{w}\right)\right| +\left| \frac{\big(4\ln(k)\big)^2}{w^3(k')^{2}}  \dot{\theta}\left(\frac{t-t_1}{w}\right) \ddot\theta \left(\frac{t-t_1}{w}\right)\right| \nonumber\\
    & \lesssim \frac{1}{(k')^{1/3}} +\frac{\sqrt{\ln(k)}}{k'} \ll 1
\end{align}
since $k'>k \gg 1$ by assumption. Hence, by combining \eqref{firsttermbtildedotblablaslow} and \eqref{estimatrsecondbhjbfbfaerfdf}, we get that 
$$
|\dot{\tilde b}| \leq 1.
$$
\end{itemize}

Since in $[t_1, t_1+w]$, the matrix $B$ is of the form $B=\begin{pmatrix}
    a & 0 \\
    0 & \tilde b
\end{pmatrix}$, since $\tilde b$ depends on $t$ only and since $a$ is a constant, we get that $B$ is uniformly $C^1$ in $[t_1, t_1+w]$. In particular, by the previous ellipticity bound \eqref{bisunifomrelliptcslowdown}, we get that 
$$
B \in R(20,1) \quad \text{when } t \in [t_1, t_1+w].
$$

\pseudosection{When $t \geq t_1+w$}
Using the definition of $U$ and $B$ \eqref{defbiguaftersometime}, of $u_3$ \eqref{functionuremoveconstantforestimation} and of $\tilde b$ \eqref{btildeforconneceqtslowdiwnnew} together with the properties of $\alpha$ \eqref{propalphaaftersometime} and by arguing in the same fashion as the regularity questions at $t=t_1$, we leave it to the reader to verify that $U$ converge in the $C^2$ norm to $\cos(k'y)e^{-k'\sqrt{b}t}$ when $t \to t_1+w$ and that $B$ converges in the $C^1$ norm to $\begin{pmatrix}
    a & 0 \\ 0 & b
\end{pmatrix}$
when $t \to t_1+w.$

\pseudosection{Partial conclusion 2:}

In this last step, we have been able to  transform 
\begin{align}\label{laststepslowingdownsecondpartialccl}
    \frac{1}{k^{4}} \cos(k'y) \e^{-k'\sqrt{b}t} \hspace{0.5cm} \mbox{ into } \hspace{0.5cm}  \cos(k'y) \e^{-k'\sqrt{b}t}
\end{align}
in $\frac{\sqrt{4\ln(k)}}{(k')^{1/3}} $ amount of time  within the regularity class $R(20,1).$ 
\\

\textbf{Conclusion:} By the partial conclusion 1 \eqref{patialconclusionslowblablanew} and by this last step \eqref{laststepslowingdownsecondpartialccl}, we have been able to transform 
\begin{align*}
    \cos(k x) \e^{-k \sqrt{a} t} \hspace{0.5cm} \mbox{ into } \hspace{0.5cm}  \cos(k'y) \e^{-k'\sqrt{b}t}
\end{align*}
in $\frac{1}{k^{4/3}} + \frac{8 \ln(k)}{k \sqrt{a} - k' \sqrt{b}} + \frac{\sqrt{4\ln(k)}}{(k')^{1/3}}+\frac{1}{100}$ amount of time and we did it within the regularity class $R(20,10).$ We also showed that throughout this transformation, the solution $u$ is of the form $u(x,y,t)=f(t)\cos(kx) +g(t) \cos(k'y)$ where $f, g \in C^2$ satisfy $|f^{(\alpha)}(t)| \lesssim k^{\frac{7\alpha}{3}} \e^{-k\sqrt{a}t}$ and $|g^{(\alpha)}(t)| \lesssim (k')^{\alpha} \e^{-k'\sqrt{b}t}$ for all $0 \leq \alpha \leq 2$. This finishes the reduction of Proposition \ref{slownewzeroc} to Proposition \ref{perturbationnew}.

    
\end{proof}

We are now left with the proof of Proposition \ref{perturbationnew}. We recall it for the reader's convenience:
\begin{prop*}[\ref{perturbationnew}, adding a small perturbation]
 Consider  the equation $\ddot{u}+\div( A\nabla u) = 0,$ where $A := \begin{pmatrix}a&0\\0&b\end{pmatrix}$ is a matrix with constant coefficients $a,b$ in $(\frac{1}{10}, 10)$. Define two solutions $$u_1(x,t) := \cos(kx)e^{-k\sqrt{a} t} \textup{ and } u_2(y,t) := \cos(k'y)e^{-k'\sqrt{b}t}$$
    where  $ 1 \leq k \leq k' \leq 2k$   .    

\begin{itemize}
\item There exists a universal constant $D>0$ such that for any $0<\epsilon<D$, if $\epsilon^{-\frac{1}{4}} \leq k, k'\leq \epsilon^{-\frac{1}{3}}$, then we can transform $u_1$ into $u_1+\epsilon u_2$ via a solution $u$ to $\ddot u + \div(\tilde A\nabla u)=0$ in $\epsilon^{\frac{1}{3}}$ time
(within the set $\T^2 \times [0, \epsilon^{1/3}]$) with $\tilde A$ in the regularity class $R(20,10).$ 

\item For $t \in [0, \epsilon^{1/3}],$ the function $u$ is of the form 
\begin{align}\label{perturbationgneralfomrstatement}
    u(x,y,t)= f(t) \cos(kx)  +  g(t) \cos(k'y)
\end{align}
where $f, g \in C^2$ satisfy $|f^{(\alpha)}(t)| \lesssim k^{\alpha} \e^{-k\sqrt{a}t}$ and $|g^{(\alpha)}(t)|  \lesssim (k')^{\alpha}\epsilon^{\frac{3-\alpha}{3}} \e^{-k'\sqrt{b}t}$ for any $0 \leq \alpha\leq 2.$
\comment{
    \item  Then, there exists a universal constant $D>0$ such that for any $0<\epsilon < D $, we can transform $u_1$ into $u_1+\epsilon u_2$ in $\epsilon^{\frac{1}{3}}$ time
(within the set $\T^2 \times [0, \epsilon^{1/3}]$) if 
$$\epsilon^{-\frac{1}{4}} \leq k, k'\ll\epsilon^{-\frac{1}{3}}.$$

\underline{Regularity of the transformation:} There exists a matrix $A'$ and a function $u$ such that $u_1$ can be transformed into $u_1+\epsilon u_2$ via the function $u$ such that $\ddot{u}+\div\left[(A+A')\nabla u\right] = 0,$ where $A+A'$ is in the regularity class $R(20,10)$. Moreover, for $t \in [0, \epsilon^{1/3}],$ the function $u$ is of the form 
\begin{align}\label{perturbationgneralfomrstatement}
    u(x,y,t)= f(t) \cos(kx)  +  g(t) \cos(k'y)
\end{align}
where $f, g \in C^2$ satisfy $|f^{(\alpha)}(t)| \lesssim k^{\alpha} \e^{-k\sqrt{a}t}$ and $|g^{(\alpha)}(t)|  \lesssim (k')^{\alpha}\epsilon^{\frac{3-\alpha}{3}} \e^{-k'\sqrt{b}t}$ for any $0 \leq \alpha\leq 2.$
\\ 
}
\item Similarly, we can go from $\epsilon u_1+u_2$ to $u_2$ in the same amount of time (within the same set), under the same conditions on $\epsilon, k, k'$ and with the same regularity of the transformation $u$. The solution $u$ will be of the same form $u(x,y,t)= f(t) \cos(kx)  +  g(t) \cos(k'y)$ (but $f, g$ will be different from $f, g$ in \eqref{perturbationgneralfomrstatement}) and the following will hold: $f, g \in C^2$ and  $|f^{(\alpha)}(t)| \lesssim k^{\alpha} \epsilon^{\frac{3-\alpha}{3}} \e^{-k\sqrt{a}t}$ and $|g^{(\alpha)}(t)|  \lesssim (k')^{\alpha} \e^{-k'\sqrt{b}t}$ for any $0 \leq \alpha\leq 2.$
\end{itemize}
\end{prop*}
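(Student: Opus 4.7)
The plan is to build the transforming solution explicitly and then correct the matrix to absorb the residual. I would take
\[
u(x,y,t):=\cos(kx)e^{-k\sqrt a\,t}+\epsilon\,\alpha(t)\cos(k'y)e^{-k'\sqrt b\,t},
\]
where $\alpha(t):=1-\theta(t/\epsilon^{1/3})$ is a smooth cutoff that increases from $0$ at $t=0$ to $1$ at $t=\epsilon^{1/3}$ and is flat outside this interval, so that $u$ matches $u_1$ for $t\leq 0$ and $u_1+\epsilon u_2$ for $t\geq \epsilon^{1/3}$ with $C^2$-smoothness. This $u$ already has the required form $f(t)\cos(kx)+g(t)\cos(k'y)$ with $f(t)=e^{-k\sqrt a\,t}$ and $g(t)=\epsilon\alpha(t)e^{-k'\sqrt b\,t}$; using $|\alpha^{(j)}(t)|\lesssim \epsilon^{-j/3}$ for $j=0,1,2$, together with the assumption $k,k'\leq \epsilon^{-1/3}$, the desired bounds on $f^{(\alpha)}$ and $g^{(\alpha)}$ follow by a routine Leibniz calculation.

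Since $u_1,u_2$ both solve $\ddot v+\div(A\nabla v)=0$, a direct computation (using $\partial_t u_2=-k'\sqrt b\,u_2$) produces the residual
\[
\ddot u+\div(A\nabla u)=\epsilon\bigl(\ddot\alpha(t)-2k'\sqrt b\,\dot\alpha(t)\bigr)\cos(k'y)\,e^{-k'\sqrt b\,t}=:R(y,t),
\]
of size $|R|\lesssim \epsilon\cdot\epsilon^{-2/3}=\epsilon^{1/3}$. Next I would seek a small perturbation matrix $B(x,y,t)$, supported in $\T^2\times[0,\epsilon^{1/3}]$, so that $\div(B\nabla u)=-R$; then $\tilde A:=A+B$ makes $u$ a solution of $\ddot u+\div(\tilde A\nabla u)=0$. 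Following the Plis--Miller idea behind Lemma \ref{PML1}, I would take $B$ of the shape
\[
B(x,y,t)=\begin{pmatrix}0&\beta(x,y,t)\\ \beta(x,y,t)&0\end{pmatrix}+\begin{pmatrix}0&0\\0&\gamma(x,y,t)\end{pmatrix},
\]
where $\beta$ is proportional to $\sin(kx)\sin(k'y)/(kk')$ times a $t$-dependent coefficient, and $\gamma$ is a small correction with $y$-dependence designed to cancel the stray $\sin$-terms produced by $\beta$. Expanding $\div(B\nabla u)$ (noting $\partial_{xy}u=0$) and matching the coefficients of $\cos(k'y)$, $\sin^2(kx)\sin(k'y)$, etc., reduces to a small uniquely-solvable linear system for these coefficients.

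Given $B$, the crucial estimates follow from a clean size count: $|R|\lesssim\epsilon^{1/3}$ and $|\partial_x u|\sim k$ force the time-dependent coefficients of $\beta,\gamma$ to be of order $\epsilon^{1/3}/(kk')\lesssim \epsilon^{5/6}$, using $k,k'\geq\epsilon^{-1/4}$. Hence $\|B\|\leq \epsilon^{5/6}\ll 1$, which immediately yields uniform ellipticity with constant $20$, and each $\nabla$ or $\partial_t$ of $B$ costs at most a factor $k'$ or $\epsilon^{-1/3}$, so $\|\nabla B\|,\|\dot B\|\lesssim \epsilon^{5/6}\cdot\epsilon^{-1/3}=\epsilon^{1/2}\leq 10$ once $\epsilon<D$ for some universal $D>0$. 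The second half of the statement, transforming $\epsilon u_1+u_2$ into $u_2$, is then obtained by the mirror construction: replace $\alpha(t)$ by $1-\alpha(t)$ and swap the roles of the two summands in the ansatz.

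The main obstacle is the algebra of step three: choosing $\beta$ and $\gamma$ so that $\div(B\nabla u)=-R$ holds identically, not just up to lower-order errors. The naive diagonal ansatz $B=\mathrm{diag}(0,\gamma(t))$ forces $\gamma=(\ddot\alpha-2k'\sqrt b\,\dot\alpha)/((k')^2\alpha)$, which blows up at $t=0$ where $\alpha$ vanishes; this is the structural reason the off-diagonal piece $\beta$ (with explicit $x,y$-dependence) is unavoidable. Verifying that all coefficients stay bounded and that $\tilde A$ does not exceed the $R(20,10)$ threshold is the delicate point, and it is precisely this verification that pins down the scale window $\epsilon^{-1/4}\leq k,k'\leq\epsilon^{-1/3}$ and the universal constant $D$.
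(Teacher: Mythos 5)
Your ansatz for the solution and the computation of the residual coincide with the paper's: you take $u=u_1+\epsilon\alpha(t)u_2$ (the paper writes $1-\alpha$, same thing), reduce to solving $\div(B\nabla u)=-R$ with $R=\epsilon(\ddot\alpha-2k'\sqrt b\,\dot\alpha)\cos(k'y)e^{-k'\sqrt b t}$ exactly, and you correctly identify that the naive diagonal ansatz fails because it divides by $\alpha(t)$. The gap is that the matrix shape you then propose, $B=\begin{pmatrix}0&\beta\\ \beta&\gamma\end{pmatrix}$ with $\beta\propto\sin(kx)\sin(k'y)/(kk')$ and a $(2,2)$-entry $\gamma$ meant to ``cancel the stray terms,'' cannot do the job with bounded coefficients, so the ``small uniquely-solvable linear system'' you invoke does not exist in the class you need. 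Writing $u=F(t)\cos(kx)+G(t)\cos(k'y)$ with $F=e^{-k\sqrt a t}$ and $G=\epsilon\alpha(t)e^{-k'\sqrt b t}$, the off-diagonal piece produces
\begin{align*}
\partial_y\bigl(\beta\,\partial_x u\bigr)=-\tfrac{c(t)F}{2}\cos(k'y)+\tfrac{c(t)F}{2}\cos(2kx)\cos(k'y),
\end{align*}
and the first term is the one you match against $-R$, which forces $c(t)F\sim R$. The stray term $\tfrac{c(t)F}{2}\cos(2kx)\cos(k'y)$ therefore carries the factor $F$, not the small factor $G$; but every contribution of the $(2,2)$-entry is of the form $\partial_y(\gamma\,\partial_y u)$ and hence proportional to $G=\epsilon\alpha(t)e^{-k'\sqrt b t}$. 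Cancelling the stray term thus forces $\gamma$ to be of size comparable to $c(t)F/\bigl((k')^2 G\bigr)\sim(\ddot\alpha-2k'\sqrt b\,\dot\alpha)/\bigl((k')^2\alpha\bigr)$ (up to bounded exponential factors), which blows up as $t\to0^+$ where $\alpha$ vanishes to infinite order --- exactly the defect you diagnosed in the diagonal ansatz --- and is already of order $\geq1$ even where $\alpha\simeq1$, destroying the $R(20,10)$ bound. No choice of $x,y$-dependence in $\gamma$ helps, since the $G$-factor is unavoidable.

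The structural fix, and the paper's route, is to put the second unknown in the $(1,1)$ slot rather than the $(2,2)$ slot, so that both unknown entries act on the non-degenerate derivative $\partial_x u\sim kF\sin(kx)$: this is Lemma \ref{claim2Dtwopointoneproof}, where $A_s=\begin{pmatrix}a_s&b_s\\ b_s&0\end{pmatrix}$ with $b_s=-2\sin(k'y)\sin(kx)/(kk')$ and $a_s=-\cos(kx)\cos(k'y)/k^2+2s\sin^2(k'y)/k^2$; the term $\partial_x(a_s\partial_x u)$ is precisely what cancels the $\cos(2kx)\cos(k'y)$ stray term (equivalently, one prescribes $A_s\nabla u=V$ with the explicit divergence-$\cos(k'y)$ field $V$), and all entries obey $O((1+|s|)/k^2)$ with the $C^1$ and $\partial_s$ bounds that make the ellipticity and derivative estimates go through as you outlined. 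Your proposed shape, zero $(1,1)$-entry and nonzero $(2,2)$-entry, is in fact the correct one for the \emph{second} half of the statement (removing $\epsilon u_1$ from $\epsilon u_1+u_2$, where the residual is $\propto\cos(kx)$ and the large derivative is $\partial_y u$), which the paper handles with the companion Lemma \ref{lemmabistwopointnineinproofoftwopointne}; as written, your single ansatz is attached to the wrong half, and the exact-cancellation step it rests on fails.
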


\begin{proof}[Proof of Proposition \ref{perturbationnew}]
   Let $\epsilon >0$ and denote $w:= \epsilon^{1/3}.$ Let also $\epsilon^{-\frac{1}{4}} \leq k, k'\leq \epsilon^{-\frac{1}{3}}.$ Define $\alpha(t):= \theta(t/w)$ where $\theta(\tau)$ is the smooth function going from 1 to 0 as $\tau$ goes from 0 to 1, which is presented in Picture \ref{fig:theta} (see also footnote \footnote{We choose $\theta(t):= 1-G(\tan(\pi(t-1/2)))$ where $G(x)=\frac{1}{\sqrt{\pi}}\int_{-\infty}^x \e^{-\eta^2} \ud{\eta}$. \label{footnotebuildingblockcoeffcblanewnew}}).   \comment{ Denote 
    $$
    u_1(x,t) = \cos(kx)e^{-k\sqrt{a} t} \textup{ and } u_2(y,t) = \cos(k'y)e^{-k'\sqrt{b}t}
    $$
    where  
    \begin{align*}
        1 \leq k \leq k' \leq 2k, \hspace{0.5cm} \epsilon^{-1/4} \leq k, k' \ll \epsilon^{-1/3}.
    \end{align*}
    Note that $u_1$ and $u_2$ are solutions to $\ddot u_i + \div\left[\begin{pmatrix}
        a & 0 \\
        0 & b
    \end{pmatrix} \nabla u_i\right] = 0$, $i\in \{1, 2\}$. }For $t \in [0, w],$ consider the function 
\begin{align}\label{defufastdecaylemma}
u(x,y,t):= u_1 +  (1-\alpha) \epsilon u_2. 
\end{align}

We look for a matrix $\tilde A:= A + A'$ so that $\ddot u + \div(\tilde A \nabla u) = 0.$ We have 
$$
\ddot u = \ddot u_1 + (1-\alpha)\epsilon \ddot u_2  -\epsilon\left( \ddot \alpha u_2 +2 \dot \alpha \dot u_2 \right).
$$

Since $\alpha$ depends only on the $t$ variable, we have
$$
\div(\tilde A \nabla u) = \div(A\nabla u_1)+(1-\alpha)\epsilon \div(A\nabla u_2) + \div(A'\nabla u).
$$

Since $u_1, u_2$ are $A$-harmonic, the equation $\ddot u + \div(\tilde A\nabla u)=0$ becomes
\begin{align}\label{almostpdeforaprimenew}
\div(A'\nabla u) =\epsilon\left( \ddot \alpha u_2 +2 \dot \alpha \dot u_2 \right).    
\end{align}

Since $u_2=\cos(k'y)\e^{-k'\sqrt{b}t},$ we see that $\dot u_2 = -k'\sqrt{b}u_2$. Hence, we can rewrite \eqref{almostpdeforaprimenew} as 
\begin{align}\label{eqtforaprimefastdecay}
    \div(A' \nabla u) = \epsilon \left(\ddot \alpha - 2 \dot \alpha k' \sqrt{b} \right) u_2=: \beta(t) u_2.
\end{align}
We stress that in \eqref{eqtforaprimefastdecay}, the unknown is the matrix $A'$. We also note that 
\begin{align*}
    u&= \cos(kx) \e^{-k\sqrt{a}t} + (1-\alpha) \epsilon \cos(k'y) \e^{-k'\sqrt{b}t} \\
    &=  \e^{-k \sqrt{a}t} \left(\cos(k x) + \epsilon (1-\alpha) \e^{(k \sqrt{a}-k'\sqrt{b})t} \cos(k'y) \right) \\
    & = \e^{-k \sqrt{a}t} \big(\cos(k x) + s \cos(k'y) \big)
\end{align*}
where we defined $s:=s(t):=\epsilon (1-\alpha) \e^{(k \sqrt{a}-k'\sqrt{b})t} .$ Since \eqref{eqtforaprimefastdecay} does not involve $t$ derivatives, we can consider the $t$ variable fixed. We will use the following Lemma: 
\begin{lemma}\label{claim2Dtwopointoneproof}
Let $1\leq k \leq k' \leq 2k.$ Given a positive number $s$ and a function $u = \cos(k x)+s\cos(k'y)$ on $\T^2$, there exists a smooth vector field $V$ with divergence $\div(V)=\cos(k'y)$ such that the matrix $A_s:= \begin{pmatrix}a_s(x,y)&b_s(x,y)\\b_s(x,y)&0\end{pmatrix}$ uniquely defined by $A_s\nabla u=V$ is smooth and satisfies 
\begin{align}\label{estimatestosatisfyforas}
        \|A_s\|_{\infty} \lesssim \frac{(1+|s|)}{k^2}, \hspace{0.5cm} \|\nabla A_s\|_{\infty} \lesssim \frac{(1+|s|)}{k}
    \end{align}
    where we recall that the gradient only involves spatial derivatives.
 Moreover, the function $s\mapsto A_s$ satisfies
    $$
    \bigg\|\frac{\partial A_s}{ \partial s}\bigg\|_{\infty} \lesssim \frac{1}{k^2}.
    $$
\end{lemma}
The proof of Lemma \ref{claim2Dtwopointoneproof} is presented in Appendix \ref{prooftwodlemmaappendix}.
\\

Now, Lemma \ref{claim2Dtwopointoneproof}  ensures that there exists a matrix $A'_s$ such that
\begin{align*}
    \div\left(A'_s \nabla (u \e^{k \sqrt{a}t})\right) = \cos(k'y).
\end{align*}
Since $u_2=\cos(k'y)\e^{-k'\sqrt{b}t},$ we see that by letting $A':= \beta(t) \e^{(k \sqrt{a} - k' \sqrt{b})t} A'_s$, we get \eqref{eqtforaprimefastdecay}:
\begin{align*}
    \div(A' \nabla u) = \beta(t) u_2
\end{align*}
and therefore $u$ solves 
\begin{align}
    \ddot u + \div\bigg((A+A') \nabla u\bigg) =0.
\end{align}

Before verifying the regularity of $u$ and $\tilde A=A+A'$ we state some useful facts about the function $\alpha(t)=\theta(t/w)$ that was introduced at the beginning of the proof (we recall the definition of  $\theta$ in footnote \footnote{We choose $\theta(t):= 1-G(\tan(\pi(t-1/2)))$ where $G(x)=\frac{1}{\sqrt{\pi}}\int_{-\infty}^x \e^{-\eta^2} \ud{\eta}$. \label{footnoteblablanewblanewbla}}).
\\

\textbf{Facts:} For $t \in [0, w],$
\begin{align}\label{propalphagreatgreatnewgreat}
0 \leq \alpha(t) \leq 1, \hspace{0.3cm} |\dot{\alpha}(t)| \lesssim w^{-1}, \hspace{0.3cm} |\ddot{\alpha}(t)| \lesssim w^{-2}, \hspace{0.3cm} w:=\epsilon^{1/3}.
\end{align}

The first fact follows directly from the definition of $\theta$. To see the second and third fact, we use that $\theta$ satisfies $\sup_{\tau \in [0, 1]} |\theta^{(n)}(\tau)| \leq C_n$ for some universal constants $C_n$ and for all $n \geq 0.$ Indeed, note that for any $\tau \in [0, 1]$ and for any $n \geq 1,$
$$
\theta^{(n)} (\tau) = \e^{-\tan^2(\pi(\tau-1/2))} P_n\big(\tan(\pi(\tau-1/2))\big)
$$
where $P_n$ is a polynomial. 

\pseudosection{The regularity estimates for $u$}
By definition \eqref{defufastdecaylemma}, $u$ is given by $u=u_1+(1-\alpha) \epsilon u_2$ where $u_1=\cos(kx)\e^{-k\sqrt{a}t}$ and $u_2=\cos(k'y)\e^{-k'\sqrt{b}t}.$ In particular, we can write 
\begin{align}\label{shapeofupertubartionverify}
u(x,y,t)=f(t) \cos(kx) + g(t) \cos(k'y)    
\end{align}
where $f(t):= \e^{-k\sqrt{a}t}$ and $g(t):=(1-\alpha(t)) \epsilon \e^{-k'\sqrt{b}t}.$ We verify the estimates for $f, g$ and their derivatives. By assumption, $a, b \in (\frac{1}{10}, 10)$, hence,
\begin{align}\label{estimatesforfertubartion}
  |f(t)|= \e^{-k\sqrt{a}t}, \hspace{0.5cm}  |\dot{f}(t)| \lesssim k \e^{-k\sqrt{a}t}, \hspace{0.5cm} |\ddot f(t)| \leq k^2 \e^{-k\sqrt{a}t}
\end{align}
as claimed in Proposition \ref{perturbationnew}. Moreover,
    $$
    \dot{g}(t) = -\left(\dot{\alpha}(t) +k'\sqrt{b} (1- \alpha(t)) \right) \epsilon \e^{-k'\sqrt{b}t}, \hspace{0.5cm} \ddot{g} = \left( -\ddot{\alpha}(t) + 2k'\sqrt{b}\dot{\alpha}(t) + (k'\sqrt{b})^2 (1- \alpha(t)) \right)\epsilon \e^{-k'\sqrt{b}t}.
    $$

Hence, by \eqref{propalphagreatgreatnewgreat} and by choosing $\epsilon \leq 1$, we get
    \begin{align}\label{estimatesforgperturbation}
        |g(t)| \leq \epsilon \e^{-k'\sqrt{b}t}, \hspace{0.5cm} |\dot{g}(t)| \lesssim k' \epsilon^{2/3}\e^{-k'\sqrt{b}t}, \hspace{0.5cm} |\ddot{g}(t)| \lesssim (k')^2 \epsilon^{1/3} \e^{-k'\sqrt{b}t}
    \end{align}
 since $a,b \in (\frac{1}{10}, 10)$ and since $k' \geq 1$ by assumption. Equations \eqref{shapeofupertubartionverify}, \eqref{estimatesforfertubartion} and \eqref{estimatesforgperturbation} proves \eqref{perturbationgneralfomrstatement} from Proposition \ref{perturbationnew}

Finally, remember that by definition, the function $\alpha$ satisfies 
$$
\lim_{t \to 0}\alpha(t)=1, \quad \lim_{t \to w}\alpha(t)=0, \quad \lim_{t \to 0} \alpha^{(k)}(t)=0, \quad \lim_{t \to w} \alpha^{(k)}(t)=0.
$$
for $k \in \{1, 2\}$.

It is then straightforward to see that 
$$
\lim_{t \to 0}u^{(k)}(t) = u_1^{(k)}(0), \quad \lim_{t \to w}u^{(k)}(t) = u_1^{(k)}(w)+\epsilon u_2^{(k)}(w)
$$
for $k \in \{0, 1, 2\}$.  This shows that the function given by $u_1$ for $t \leq 0,$ $u=u_1+(1-\alpha)\epsilon u_2$ for $t \in [0, w]$ and $u_1+\epsilon u_2$ for $t \geq w$ is $C^2$.

\pseudosection{The regularity of $\tilde A$}

We verify the regularity of $\tilde A:= A+A'.$  We first recall the objects involved in the estimation of $\tilde A$:
\begin{itemize}
    \item The matrix $\tilde A$ is given by 
    \begin{align}\label{fastdecaylemmarecallaprime}
        \tilde A = A+A', \hspace{0.5cm} A'= \beta(t) \e^{-(k'\sqrt{b}-k \sqrt{a})t} A'_s, \hspace{0.5cm} \beta(t)=\epsilon\left(\ddot \alpha - 2 \dot \alpha k' \sqrt{b} \right).
    \end{align}
    \item The matrix $A'_s$ was given by Lemma \ref{claim2Dtwopointoneproof} and satisfies the bounds
    \begin{align}\label{fastdecaylemmaaprimesrecall}
        \|A'_s\|_{\infty} \lesssim \frac{1+\|s(t)\|_{\infty}}{k^2}, \hspace{0.5cm} \|\nabla A'_s\|_{\infty} \lesssim \frac{1+\|s(t)\|_{\infty}}{k}, \hspace{0.5cm} \left|\partial_s(A'_s)\right| \lesssim \frac{1}{k^2},
    \end{align}
    where  $s=s(t)=\epsilon(1-\alpha) \e^{(k \sqrt{a}- k'\sqrt{b})t}.$
    \item For $t \in [0, w]$, the smooth function $\alpha$ was given by $\alpha(t)=\theta(t/w)$ and satisfies
    \begin{align}\label{fastdecaylemmarecallalpha}
        0 \leq \alpha(t)\leq 1, \hspace{0.5cm} \|\dot \alpha\|_{\infty} \lesssim w^{-1}, \hspace{0.5cm} \|\ddot \alpha\|_{\infty} \lesssim w^{-2}, \hspace{0.5cm} w=\epsilon^{1/3}.
    \end{align}
    \comment{since $\theta$ satisfies $\sup_{\tau \in [0,1]}|\theta^{(n)}(\tau)|\leq C_n$ for some universal constant $C_n$ and for all $n \geq 0$  (see footnote \footnote{We choose $\theta(t):= 1-G(\tan(\pi(t-1/2)))$ where $G(x)=\frac{1}{\sqrt{\pi}}\int_{-\infty}^x \e^{-\eta^2} \ud{\eta}$. \label{footnoteblablanewblabla}} for the definition of $\theta$).}
    \item Recall also that by hypothesis $k, k' \leq w^{-1}.$ Therefore, 
    \begin{align}\label{fastdecaylemmarecallexponent}
        \e^{-(k'\sqrt{b}-k \sqrt{a})t} \lesssim 1
    \end{align}
    for any $t \in [0, w].$ We finally recall that \begin{align}\label{fastdecaylemmarecallassumplambda}
         1 \leq k \leq k' \leq 2k, \hspace{0.5cm} \epsilon^{-1/4} \leq k, k' \leq \epsilon^{-1/3}.
    \end{align}
\end{itemize}

\pseudosection{The ellipticity bound for $\tilde A$}
Note that $A$ satisfies $\frac{1}{10} |\xi|^2 \leq (A\xi, \xi) \leq 10 |\xi|^2$ since $A$ is a diagonal matrix with entries $a, b$ where $\frac{1}{10} \leq a, b \leq 10$ by assumption.
Hence, it remains to estimate $\tilde A - A =A'.$ By \eqref{fastdecaylemmarecallaprime}, $$A'= \beta(t) \e^{-(k'\sqrt{b}-k \sqrt{a})t} A'_s$$ and $ \beta(t)=\epsilon\left(\ddot \alpha - 2 \dot \alpha k' \sqrt{b} \right).$ Hence, by \eqref{fastdecaylemmarecallalpha}, $|\beta| \lesssim \frac{\epsilon}{w^2} + \frac{\epsilon k'}{w}.$ Since $w=\epsilon^{1/3}$ by definition and since $k' \leq \epsilon^{-1/3}$ by assumption, this estimate reduces to 
\begin{align}\label{estimatesbetanewnewnbla}
|\beta| \lesssim \epsilon^{1/3}.    
\end{align}

By \eqref{fastdecaylemmaaprimesrecall}, \eqref{fastdecaylemmarecallalpha} and \eqref{fastdecaylemmarecallexponent}, $|A'_s| \lesssim \frac{1}{k^2}$. Hence,  \begin{align}\label{exponenttimesmatrixas}
\left|\e^{-(k'\sqrt{b}-k \sqrt{a})t} A'_s\right| \lesssim \frac{1}{k^2}.    
\end{align}
  Therefore, 
$$
|A'| \lesssim \frac{\epsilon^{1/3}}{k^2} \lesssim\epsilon^{1/3}
$$
since $k \geq 1$ by assumption. Therefore, by choosing $\epsilon$ small enough,  $\tilde A$ satisfies the ellipticity bound 
 \begin{align}\label{ellipticetycboudndfortildeanewgood}
 \frac{1}{20}|\xi|^2 \leq (\tilde A \xi, \xi) \leq 20 |\xi|^2     
 \end{align}
as claimed.

\pseudosection{The $C^1$ bound for $\tilde A $ in the $x, y$ variables} Recall that $\tilde A = A+A'$ and that $A$ has constant coefficients. Therefore, 
\begin{align*}
    |\nabla \tilde A|= |\nabla A'| = |\beta(t) \e^{-(k'\sqrt{b}-k \sqrt{a})t} \nabla A'_s|.
\end{align*}
By \eqref{estimatesbetanewnewnbla}, $|\beta| \lesssim \epsilon^{1/3}$ and by \eqref{fastdecaylemmarecallexponent}, $\e^{-(k'\sqrt{b}-k \sqrt{a})t} \lesssim 1$. By \eqref{fastdecaylemmaaprimesrecall}, $\|\nabla A'_s\|_{\infty} \lesssim \frac{1}{k} \lesssim 1$ since $k \geq 1$ by assumption. Hence, 
\begin{align}\label{estimategrdadientatildeblablanewgood}
\|\nabla \tilde A\|_{\infty} \lesssim \epsilon^{1/3}.    
\end{align}

\pseudosection{The $C^1$ bound  for $\tilde A$ in the $t$ variable} Since $\tilde A=A+A' $ and $A$ has constant coefficients, $\dot{\tilde A} = \dot{A'}$ where $A'= \beta(t) \e^{-(k'\sqrt{b}-k \sqrt{a})t} A'_s$ and $s=s(t)=\epsilon(1-\alpha) \e^{(k \sqrt{a}- k'\sqrt{b})t}.$ So, to estimate $\dot{A'}$, there are three terms to estimates:
\begin{itemize}
    \item \textbf{The first term in $\dot{A}'$:} $\dot{\beta}(t) \e^{-(k'\sqrt{b}-k \sqrt{a})t} A'_s.$  
    
    Since $\beta(t) = \epsilon\left(\ddot \alpha - 2 \dot \alpha k' \sqrt{b} \right) $ by \eqref{fastdecaylemmarecallaprime}, since $\alpha(t)=\theta(t/w)$ and since $\sup_{\tau \in [0,1]}|\theta^{(n)}(\tau)|\leq C_n$ for some universal constant $C_n$ and for all $n \geq 0$, we get 
    $$
    |\dot{\beta}(t)| \lesssim \frac{\epsilon}{w^3} + \frac{k' \epsilon}{w^2}
    $$
which reduces to 
$$
|\dot{\beta}(t)| \lesssim 1
$$
since $w=\epsilon^{1/3}$  by definition and $k'\leq \epsilon^{-1/3}$ by assumption. Therefore, since $\left|\e^{-(k'\sqrt{b}-k \sqrt{a})t} A'_s\right| \lesssim \frac{1}{k^2}$ by \eqref{exponenttimesmatrixas}, we get
\begin{align}\label{estimatefirsttermconebound}
    \left|\dot{\beta}(t) \e^{-(k'\sqrt{b}-k \sqrt{a})t} A'_s \right| \lesssim \frac{1}{k^2} \leq \epsilon^{1/2}
\end{align}
since $k\geq \epsilon^{-1/4}$ by assumption.

\item \textbf{The second term in $\dot{A}'$:} $-(k'\sqrt{b}-k\sqrt{a})\beta(t) \e^{-(k'\sqrt{b}-k\sqrt{a})t} A'_s.$

By \eqref{estimatesbetanewnewnbla}, $|\beta| \lesssim \epsilon^{1/3}$. By \eqref{exponenttimesmatrixas}, $\left|\e^{-(k'\sqrt{b}-k \sqrt{a})t} A'_s\right| \leq \frac{1}{k^2}.$ Therefore, since $k'\leq 2k$, 
\begin{align}\label{thesecondtermtderivativeblabla}
\left| (k'\sqrt{b}-k\sqrt{a})\beta(t) \e^{-(k'\sqrt{b}-k\sqrt{a})} A'_s\right| \lesssim \frac{\epsilon^{1/3}}{k} \lesssim \epsilon^{1/3}   
\end{align}
since $k \geq 1$ by assumption.

\item \textbf{The third term in $\dot{A}'$:} $\beta(t) \e^{-(k'\sqrt{b}-k\sqrt{a})t} \partial_t(A'_s)$.

Recall that $s=s(t)=\epsilon(1-\alpha) \e^{(k \sqrt{a}- k'\sqrt{b})t}$ is a function of $t$. Hence, 
$$
\partial_t(A'_s) = \partial_s(A'_s) \dot{s}(t).
$$
By \eqref{fastdecaylemmaaprimesrecall}, $\left|\partial_s(A'_s)\right| \lesssim \frac{1}{k^2}$. Observe that
$$
\dot{s}(t) = \epsilon \left( - \dot \alpha  +  (1-\alpha) (k \sqrt{a}- k'\sqrt{b}) \right)\e^{(k \sqrt{a}- k'\sqrt{b})t}.
$$
Since $\e^{-(k'\sqrt{b}-k\sqrt{a})t} \lesssim 1$ by \eqref{fastdecaylemmarecallexponent}, since $k' \leq 2k$, since $0\leq \alpha \leq 1$ and since $\alpha(t) = \theta(t/w)$ where $\dot{\theta}$ is universally bounded, we have $\left|\dot{s}(t)\right| \lesssim\frac{\epsilon}{w} +\epsilon k.$ Therefore, 
$$
\left|\partial_t(A'_s)\right| \lesssim \frac{\epsilon}{k^2 w} + \frac{\epsilon}{k}.
$$
By \eqref{estimatesbetanewnewnbla}, $|\beta| \lesssim \epsilon^{1/3}$. Therefore, 
\begin{align}\label{thirdtermbtildedotblablanew}
\left| \beta(t) \e^{-(k'\sqrt{b}-k\sqrt{a})t} \partial_t(A'_s)\right| \lesssim \frac{\epsilon^{4/3}}{k^2 w} + \frac{\epsilon^{4/3}}{k} \lesssim \epsilon    
\end{align}
since $w=\epsilon^{1/3}$ by definition and since $k \geq 1$ by assumption.
\end{itemize}
We combine \eqref{estimatefirsttermconebound}, \eqref{thesecondtermtderivativeblabla} and \eqref{thirdtermbtildedotblablanew} to conclude that
$$
|\dot{\tilde A}| =|\dot{A'}| \lesssim \epsilon^{1/3} \ll 1
$$
by choosing $\epsilon$ small enough. In particular, by combining $\|\nabla \tilde A\|_{\infty} \lesssim \epsilon^{1/3}$ (see \eqref{estimategrdadientatildeblablanewgood}) with the estimate for $\dot{\tilde A}$ we just obtained, we conclude that the derivatives of $\tilde A$ can be bounded by 10 by choosing $\epsilon$ small enough. Therefore, since we showed in \eqref{ellipticetycboudndfortildeanewgood}  that $ \frac{1}{20}|\xi|^2 \leq (\tilde A \xi, \xi) \leq 20 |\xi|^2 $, we conclude that $\tilde A \in R(20, 10).$ Moreover, the solution $u$ is given by $u(x,y,t)=f(t) \cos(kx) + g(t) \cos(k'y)  $ (see  \eqref{shapeofupertubartionverify}) where the functions $f$ and $g$ satisfy the estimates $|f^{(\alpha)}(t)| \lesssim k^{\alpha} \e^{-k\sqrt{a}t}$ (see \eqref{estimatesforfertubartion})and $|g^{(\alpha)}(t)| \lesssim (k')^{\alpha} \epsilon^{\frac{3-\alpha}{3}} \e^{-k\sqrt{b}t}$  (see \eqref{estimatesforgperturbation}) for all $0 \leq \alpha \leq 2$, as claimed in Proposition \ref{perturbationnew}.

Moreover, by using that 
$$
\lim_{t \to 0}\alpha(t)=1, \quad \lim_{t \to w}\alpha(t)=0, \quad \lim_{t \to 0} \alpha^{(n)}(t)=0, \quad \lim_{t \to w} \alpha^{(n)}(t)=0.
$$
for $n \in \{1, 2\}$, we leave it to the reader to verify that the matrix $A'$ and all its first derivatives converge to zero as $t \to 0$ and $t \to w.$ Hence, the matrix defined by $A$ for $t \leq 0$, $\tilde A=A+A'$ for $t \in [0, w]$ and $A$ for $t \geq w$ is uniformly $C^1.$

Finally, to transform $\epsilon u_1 + u_2$ into $u_2$, we now consider the function $u:=\epsilon \alpha u_1+u_2$ . We then proceed as above: we look for a matrix $A'$ such that $\ddot u + \div[(A+A')\nabla u]=0$. As above, plugging $u$ into this equation will give us a new equation where the unknown is $A'$. We then use Lemma \ref{lemmabistwopointnineinproofoftwopointne} (presented below) instead of Lemma \ref{claim2Dtwopointoneproof}. The rest of the proof is similar and we do not include it. This finishes the proof of the Proposition \ref{perturbationnew}.

\begin{lemma}\label{lemmabistwopointnineinproofoftwopointne}
Let $1\leq k \leq k' \leq 2k.$  Given a positive number $s$ and a function $v = s\cos(k x)+\cos(k'y)$ on $\T^2$, there exists a smooth vector field $V$ with divergence $\div(V)=\cos(kx)$ such that the matrix  $A_s = \begin{pmatrix}0&b_s(x,y)\\b_s(x,y)&c_s(x,y)\end{pmatrix}$ uniquely defined by $A_s \nabla v=V$ is smooth and satisfies \begin{align*}
        \|A_s\|_{\infty} \lesssim \frac{(1+|s|)}{k^2}, \hspace{0.5cm} \|\nabla A_s\|_{\infty} \lesssim \frac{(1+|s|)}{k}
    \end{align*}
    where we recall that the gradient only involves spatial derivatives.
     Moreover, the function $s\mapsto A_s$ satisfies
    $$
    \bigg\|\frac{\partial A_s}{ \partial s}\bigg\|_{\infty} \lesssim \frac{1}{k^2}.
    $$
\end{lemma}

\end{proof}

\subsection{Proof of Proposition \ref{accelnew}}
\label{proof35}
We recall it for the reader's convenience:
\begin{prop*}[\ref{accelnew}, acceleration]

Let $k \gg 1 $. Let also $a, b, b' \in (\frac{1}{10}, 10)$ with $b \leq b'$. Consider two functions $u_1: = \cos(ky) \e^{-k\sqrt{b}t}$ and $u_2:=\cos(ky) \e^{-k\sqrt{b'}t}$ which are solutions to the equations
$$\ddot{u}_1+\div \left[ \begin{pmatrix}a&0\\0&b\end{pmatrix}\nabla u_1 \right] = 0 \text{ \quad and \quad }\ddot{u}_2+\div \left [\begin{pmatrix}a&0\\0&b'\end{pmatrix}\nabla u_2 \right]= 0.$$
For any $t_1\geq 0$ and for any $c_1>0$, there exists $c_2>0$ such that we can transform $c_1 u_1$ into $c_2 u_2$ within the set $\T^2 \times [t_1, t_1+C]$ via a solution $u$ to $\ddot u + \div(A\nabla u)=0$ and where $A$ is in the regularity class $R(80, 10).$ The constants $c_1$ and $c_2$ are related by $$c_1 \e^{-k\sqrt{b}t_1}= c_2 \e^{-k\sqrt{b'}t_1}.$$

The time of the transformation is $C:=400.$ Moreover, for $t \in [t_1, t_1+C],$ the solution $u$ is of the form 
\begin{align}\label{littlegaccelpropstate}
    u(x,y,t)=g(t) \cos(ky)  
\end{align}
  where $g \in C^2$ satisfies  $|g^{(\alpha)}(t)| \lesssim c_1 k^{\alpha} \e^{-k\sqrt{b}t}$ for all $0 \leq \alpha \leq 2$.
\end{prop*}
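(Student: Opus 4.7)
The plan is to seek the transformation as $u(x,y,t) = g(t)\cos(ky)$ and $A(t) = \mathrm{diag}(a, \tilde b(t))$; since $u$ is independent of $x$, the $(x,x)$ entry plays no role in the equation. The PDE $\ddot u + \div(A\nabla u) = 0$ then collapses to the scalar ODE $\ddot g(t) = k^2 \tilde b(t)\,g(t)$. Writing $g = e^{h}$ turns this into an algebraic identity, $\tilde b(t) = (\dot h(t)/k)^2 + \ddot h(t)/k^2$, which becomes a \emph{definition} of $\tilde b$ once $h$ is chosen. The prescribed boundary data are $h(t) = h_L(t) := \ln c_1 - k\sqrt{b}\,t$ for $t \leq t_1$ and $h(t) = h_R(t) := \ln c_2 - k\sqrt{b'}\,t$ for $t \geq t_1 + C$; the hypothesis $c_1 e^{-k\sqrt{b}\,t_1} = c_2 e^{-k\sqrt{b'}\,t_1}$ is precisely $h_L(t_1) = h_R(t_1)$, equivalently the structural identity $h_R(t) - h_L(t) = k(\sqrt b - \sqrt{b'})(t-t_1)$.

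On $[t_1, t_1+C]$ define $h := h_L + \rho\cdot(h_R - h_L)$, where $\rho$ is a smooth monotone transition from $0$ to $1$ that equals $0$ on $[t_1, t_1+C/3]$, equals $1$ on $[t_1+2C/3, t_1+C]$, and satisfies $|\dot\rho|, |\ddot\rho|, |\dddot\rho| \lesssim 1/C$. A direct computation using $h_R - h_L = k(\sqrt b - \sqrt{b'})(t-t_1)$ gives
\begin{align*}
\dot h(t) &= -k\sqrt b - k(\sqrt{b'}-\sqrt b)\,\Phi(t), \qquad \Phi(t) := \rho(t) + \dot\rho(t)(t-t_1),\\
\ddot h(t) &= -k(\sqrt{b'}-\sqrt b)\bigl(2\dot\rho(t) + \ddot\rho(t)(t-t_1)\bigr).
\end{align*}
Since $\Phi(t_1)=0$, $\Phi(t_1+C)=1$, and $\ddot h$ together with all higher derivatives vanishes at both endpoints (all $\rho^{(n)}$ vanish there), the function $h$ extended by $h_L$ on the left and by $h_R$ on the right is $C^\infty$ across both seams; value-matching is automatic thanks to the structural identity, and that is exactly the place where the compatibility assumption on $c_1,c_2$ is used.

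Two observations then control the regularity of $\tilde b$. First, $\Phi \geq 0$ because $\rho, \dot\rho \geq 0$, and $\Phi \leq 1 + C\|\dot\rho\|_\infty = O(1)$. Consequently $(\dot h/k)^2 = \bigl(\sqrt b + (\sqrt{b'}-\sqrt b)\Phi\bigr)^2 \in [b, 40]$, using $b, b' \in (1/10, 10)$. Second, $|\ddot h|/k^2 \lesssim (\sqrt{b'}-\sqrt b)/(kC) = O(1/k)$, so for $k \gg 1$ we get $\tilde b \in (1/80, 80)$; differentiating once more gives $|\dot{\tilde b}| \lesssim 1/C \leq 10$ when $C = 400$, so $A \in R(80,10)$. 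The decay estimates follow from $\dot h(t) \leq -k\sqrt b$ (using $\Phi \geq 0$ and $\sqrt{b'} \geq \sqrt b$), which upon integrating from $t_1$ forces $h(t) \leq h_L(t)$, i.e.\ $|g(t)| \leq c_1 e^{-k\sqrt b\,t}$; then $|\dot g| = |\dot h||g|$ and $|\ddot g| = k^2 |\tilde b||g|$, combined with the uniform bounds on $\dot h/k$ and $\tilde b$, yield $|g^{(\alpha)}| \lesssim c_1 k^\alpha e^{-k\sqrt b\,t}$ for $\alpha \in \{0,1,2\}$. The only place where anything nontrivial could fail is the uniform control on $\Phi$: if $C$ were of order $1$, then $|\dot\rho|(t-t_1)$ could blow up and $\tilde b$ could leave the regularity class. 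This is the one nontrivial step, and it is precisely why $C$ must be a sufficiently large universal constant; the choice $C = 400$ leaves a very safe margin.
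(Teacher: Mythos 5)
Your construction is essentially the paper's own proof: the ansatz $u=e^{h(t)}\cos(ky)$ with a diagonal matrix whose $(y,y)$ entry is \emph{defined} by $\tilde b=\ddot h/k^2+(\dot h/k)^2$, a smooth interpolation of the exponent between $-k\sqrt b\,t$ and $-k\sqrt{b'}\,t$ (the relation between $c_1,c_2$ being exactly the matching of the two exponents at $t_1$), ellipticity from $\Phi\ge 0$, the term $\ddot h/k^2$ killed by $k\gg 1$, and the $C^1$ bound from the universal duration $C=400$. The paper runs the same computation with $g=-h$, after first reducing to $t_1=0$, $c_1=c_2=1$ by a shift and a scaling; your direct treatment of general $t_1,c_1,c_2$ is only a cosmetic difference.

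Two quantitative points do not close as written, and they matter because the statement demands the explicit class $R(80,10)$. First, with your $\rho$ (flat on the first and last thirds, transition on the middle third) the product $\dot\rho(t)(t-t_1)$ is of size roughly $\tfrac{3\sqrt\pi}{C}\cdot\tfrac{C}{2}\approx 2.7$ near the middle of the interval, so $\Phi$ can reach about $3.2$; for $b=1/10$, $b'=10$ this gives $(\dot h/k)^2\approx 87$, so your claimed window $[b,40]$ is false and even $\tilde b\le 80$ fails. The repair is the paper's choice: let the transition span the whole interval, $\rho(t)=1-\theta((t-t_1)/C)$, so that $\dot\rho(t)(t-t_1)\le \sup_\tau|\dot\theta(\tau)|\le\sqrt\pi$, which gives $(\dot h/k)^2\le\bigl(\sqrt{b'}+(\sqrt{b'}-\sqrt b)\sqrt\pi\bigr)^2\le 70$ and hence $\tilde b\in[1/80,80]$ once $k\gg 1$. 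Second, your closing explanation of the role of $C$ is backwards: $\dot\rho(t)(t-t_1)$ is scale-invariant in $C$, so taking $C$ large does nothing for ellipticity; what $C=400$ actually buys is the $C^1$ bound, since $\dot{\tilde b}$ contains the term $2\dot h\ddot h/k^2$, of size a constant (independent of $k$) times $1/C$ — the paper tracks this constant and gets $\le 5$ at $C=400$ — while the remaining term $\dddot h/k^2\sim 1/(kC^2)$ is handled by $k\gg 1$. With these two corrections your argument coincides with the paper's proof.
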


We first reduce Proposition \ref{accelnew} to the case $t_1=0$ and $c_1=c_2=1.$

\begin{prop}\label{accelnewzeroc}
    Proposition \ref{accelnew} is true when $t_1=0$, $c_1=c_2=1.$
\end{prop}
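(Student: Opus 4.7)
The plan is to reduce the problem to an ODE by imposing a separated form. Seek a solution of the shape $u(x,y,t)=g(t)\cos(ky)$ together with a diagonal, $t$-dependent transformation matrix $A(t)=\begin{pmatrix}a & 0\\ 0 & \tilde b(t)\end{pmatrix}$ (so the $x$-entry stays frozen at $a$, which already matches both endpoint matrices). Plugging this ansatz into $\ddot u+\div(A\nabla u)=0$ collapses the PDE into the scalar ODE $\ddot g=k^2\tilde b(t)\,g$. Writing $g(t)=e^{h(t)}$ turns this into an algebraic identity
\begin{equation*}
\tilde b(t)=\left(\frac{\dot h(t)}{k}\right)^{\!2}+\frac{\ddot h(t)}{k^2},
\end{equation*}
so the construction reduces to choosing a single scalar function $h$ on $[0,C]$ with the right boundary data.

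The boundary data are dictated by the requirement that $u$ glues in a $C^2$ manner and $A$ in a $C^1$ manner to $c_1 u_1$ on $\{t\le 0\}$ and $c_2u_2$ on $\{t\ge C\}$. With $c_1=c_2=1$ this forces $h(0)=0$, $\dot h(0)=-k\sqrt{b}$, $h(C)=-k\sqrt{b'}C$, $\dot h(C)=-k\sqrt{b'}$, plus the higher-order matching conditions $\ddot h(0)=\ddot h(C)=0$ and $\dddot h(0)=\dddot h(C)=0$ (the first pair makes $\tilde b$ continuous at the endpoints, the second pair makes $\dot{\tilde b}$ vanish there, so that $\tilde b$ extends $C^1$-smoothly by the constants $b$ and $b'$). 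Parametrize $\dot h(t)=-k\,s(t)$; the conditions become $s(0)=\sqrt b$, $s(C)=\sqrt{b'}$, $\dot s(0)=\dot s(C)=0$, $\ddot s(0)=\ddot s(C)=0$, together with the integral constraint $\int_0^C s(t)\,dt=\sqrt{b'}C$ equivalent to $h(C)=-k\sqrt{b'}C$.

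Construct $s$ explicitly as follows: since $s(0)=\sqrt b\le\sqrt{b'}=s(C)$, a monotone interpolation undershoots the integral constraint, so $s$ must overshoot above $\sqrt{b'}$ on some intermediate subinterval to compensate. Given $C=400$ and $b,b'\in(1/10,10)$, build $s$ out of two smooth cut-off transitions (e.g.\ shifted and scaled copies of the $\theta$ used elsewhere in the paper) on short intervals near the endpoints, connected by a long constant plateau at a value $s_{\max}\in(\sqrt{b'},\sqrt{10}]$. The required value of $s_{\max}$ is determined by the integral constraint; the length $C=400$ gives more than enough room to keep $s_{\max}\le\sqrt{10}$ and, simultaneously, $|\dot s|,|\ddot s|=O(1)$ uniformly in $k,b,b'$.

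The remaining verifications are routine size estimates. Ellipticity: $\tilde b=s^2-\dot s/k$ lies in $(1/80,80)$ since $s\in[\sqrt{1/10},\sqrt{10}]$ and $k\gg 1$ makes the $\dot s/k$ correction negligible; combined with $a\in(1/10,10)$ this gives $A\in R(80,\,\cdot)$. $C^1$ bound: $\dot{\tilde b}=2s\dot s-\ddot s/k$, which is $O(1)\le 10$ since $\dot s$ and $\ddot s$ are of order $1/C$ and $1/C^2$. Size of $g$: because $s\ge\sqrt b$ throughout the plateau construction, $h(t)=-k\!\int_0^t s\le -k\sqrt b\,t$, hence $|g(t)|\le e^{-k\sqrt b\,t}$, and $|\dot g|=|\dot h|\,|g|\lesssim k\,e^{-k\sqrt b\,t}$, $|\ddot g|\le(\dot h^2+|\ddot h|)|g|\lesssim k^2 e^{-k\sqrt b\,t}$. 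The main (and only) obstacle is to meet the integral constraint on $s$ simultaneously with the eight pointwise constraints at $t=0$ and $t=C$ while keeping $s$ bounded — but the universal length $C=400$ makes this a harmless interpolation problem.
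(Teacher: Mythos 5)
Your proposal is correct and is essentially the paper's own argument: the paper makes the same separated ansatz $u=g(t)\cos(ky)$ with $A=\begin{pmatrix}a&0\\0&\tilde b(t)\end{pmatrix}$ and the same reduction $\tilde b=(\dot h/k)^2+\ddot h/k^2$, the only difference being that it writes the exponent in closed form as $h(t)=-\bigl(\sqrt{b'}+(\sqrt b-\sqrt{b'})\theta(t/w)\bigr)kt$ with $w=400$, which automatically encodes your endpoint matching and integral constraint (the necessary overshoot of the instantaneous rate above $\sqrt{b'}$ appears there as the term $-(\sqrt{b'}-\sqrt b)\dot\alpha\,t\ge 0$), whereas you build the rate $s=-\dot h/k$ by hand via a plateau. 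One caveat on constants: your two statements "transitions on short intervals near the endpoints" and "$|\dot s|,|\ddot s|=O(1/C),O(1/C^2)$" are inconsistent — with $O(1)$-length transitions one gets $|\dot{\tilde b}|\approx 2s|\dot s|$ of size $\sim 30$, exceeding the advertised bound $10$ in $R(80,10)$ — so to land the stated regularity class you must spread the transitions over intervals of length comparable to $C=400$ (which the available room easily permits, exactly as the paper's choice $w=400$ does).
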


The proof of this reduction is similar to the reduction of Proposition \ref{slownew} to Proposition \ref{slownewzeroc} and we skip it. We now prove the Proposition \ref{accelnewzeroc}.

\begin{proof}[Proof of Proposition \ref{accelnewzeroc}]
Let $\theta(t)$ be the smooth function, presented in Picture \ref{fig:theta} (see also footnote \footnote{We choose $\theta(t):= 1-G(\tan(\pi(t-1/2)))$ where $G(x)=\frac{1}{\sqrt{\pi}}\int_{-\infty}^x \e^{-\eta^2} \ud{\eta}$. \label{footnotebuildingblockcoeffcbla}}). It is equal to 1 for $t \leq 0$ and to zero for $t \geq 1$ and it monotonically decreases for $t \in [0,1]$. Denote by $\alpha$ the function 
 \begin{align}\label{defalphaaccel}
     \alpha(t):=\theta\left(\frac{t}{w} \right)
 \end{align}
where $w$ is a fixed universal number to be chosen later. Denote also by 
\begin{align}\label{defgaccellemma}
    g(t):=\left( \sqrt{b'} + (\sqrt{b}-\sqrt{b'})\alpha(t)\right) k t
\end{align}
and consider the function 
\begin{align}\label{uacceldef}
    u:= \cos(k y) \e^{-g(t)}
\end{align}
for $t \in [0, w]$. This function goes from 
\begin{align*}
    \cos(k y) \e^{-k \sqrt{b}t} \hspace{0.5cm} \mbox{ to } \hspace{0.5cm} \cos(k y) \e^{-k \sqrt{b'}t}
\end{align*}
as $t$ goes from $0$ to $w$. We will now look for a matrix $A:=\begin{pmatrix}
    a & 0 \\ 0 & \tilde b
\end{pmatrix}$ for some function $\tilde b$  depending on $t$ only and such that $\ddot{u} + \div(A \nabla u)=0.$ Plugging $u=\cos(ky)\e^{-g(t)}$ into the equation yields
\begin{align}\label{defbtildeaccellemma}
    \tilde b := \frac{- \ddot{g}}{k^2} + \left(\frac{\dot{g}}{k}\right)^2.
\end{align}

Since $g(t)=\left( \sqrt{b'} + (\sqrt{b}-\sqrt{b'})\alpha(t)\right) k t$ by definition \eqref{defgaccellemma}, 
\begin{align}\label{derivgaccellemma}
    &\dot{g} = (\sqrt{b} - \sqrt{b'})\dot{\alpha} k t + \left(\sqrt{b'} + (\sqrt{b}- \sqrt{b'} )\alpha \right) k, \\
   & \ddot{g} = (\sqrt{b} - \sqrt{b'})\ddot{\alpha} k t +2 (\sqrt{b}- \sqrt{b'} )\dot{\alpha} k \nonumber
\end{align}

Before verifying the regularity of $u$ and $A$ we state some useful facts about the function $\alpha(t)=\theta(t/w)$ that was introduced at the beginning of the proof.
\\

\textbf{Facts:} For $t \in [0, w],$
\begin{align}\label{propalphagreatgreatnewgreatnewfacts}
0 \leq \alpha(t) \leq 1, \hspace{0.3cm} |\dot{\alpha}(t)| \lesssim w^{-1}, \hspace{0.3cm} |\ddot{\alpha}(t)| \lesssim w^{-2}
\end{align}
and $w$ is a universal number to be chosen later.
The first fact follows directly from the definition of $\theta$. To see the second and third fact, we use that $\theta$ satisfies $\sup_{\tau \in [0, 1]} |\theta^{(n)}(\tau)| \leq C_n$ for some universal constants $C_n$ and for all $n \geq 0.$ Indeed, note that for any $\tau \in [0, 1]$ and for any $n \geq 1,$
$$
\theta^{(n)} (\tau) = \e^{-\tan^2(\pi(\tau-1/2))} P_n\big(\tan(\pi(\tau-1/2))\big)
$$
where $P_n$ is a polynomial.

\pseudosection{The regularity estimates for $u$}
By \eqref{defgaccellemma} and \eqref{uacceldef}, $u$ is of the form $u=\cos(ky) G(t)$ where 
\begin{align}\label{biggdefaccel}
G(t):= e^{-g(t)}=\e^{-\big( \sqrt{b'} + (\sqrt{b}-\sqrt{b'})\alpha(t)\big) k t} .   
\end{align}

By \eqref{biggdefaccel}, 
\begin{align}\label{derivativesbigggg}
\dot G(t) = -\dot g(t)G(t), \hspace{0.5cm} \ddot G(t) = \left(-\ddot g(t) +\dot g(t)^2\right) G(t).    
\end{align}

We first bound $G(t)$. Recall that $0 \leq \alpha \leq 1$  by \eqref{propalphagreatgreatnewgreatnewfacts} and  $b \leq b'$ by assumption. Therefore,  $0 \leq (\sqrt{b'}-\sqrt{b})\alpha \leq (\sqrt{b'}-\sqrt{b})$, which implies
\begin{align}\label{ineqforgdotfirstaccelestimatebigg}
    \sqrt{b} \leq \left(\sqrt{b'} - (\sqrt{b'}- \sqrt{b} )\alpha \right) \leq \sqrt{b'}.
\end{align}
Hence, by \eqref{biggdefaccel}, 
\begin{align}\label{firstestimatebigg}
    |G(t)| \leq \e^{-k\sqrt{b}t}.
\end{align}

We now bound $\dot G(t)$. By \eqref{derivgaccellemma}, for $t \in [0, w],$ $\dot{g} = (\sqrt{b} - \sqrt{b'})\dot{\alpha} k t + \left(\sqrt{b'} + (\sqrt{b}- \sqrt{b'} )\alpha \right) k$. So, by \eqref{propalphagreatgreatnewgreatnewfacts} and by \eqref{ineqforgdotfirstaccelestimatebigg}, we get for $t \in [0, w],$ $|\dot{g}(t)| \lesssim k $ since $b,b' \in (\frac{1}{10}, 10)$. Therefore, by \eqref{derivativesbigggg} and by \eqref{firstestimatebigg}, we get for $t \in [0, w],$
\begin{align}\label{secondestimatesbigggg}
    |\dot{G}(t)| \lesssim k \e^{-k\sqrt{b}t}.
\end{align}

Finally, we bound $\ddot G$. By \eqref{derivgaccellemma}, for $t \in [0, w],$ $\ddot{g} = (\sqrt{b} - \sqrt{b'})\ddot{\alpha} k t +2 (\sqrt{b}- \sqrt{b'} )\dot{\alpha} k.$ So by \eqref{propalphagreatgreatnewgreatnewfacts}, we get for $t \in [0, w]$, $|\ddot g (t)| \lesssim \frac{k}{w}$ since $b, b'\in (\frac{1}{10}, 10).$ Therefore, by \eqref{derivativesbigggg}, we get for $t\in [0, w],$
\begin{align}\label{thirdestimatesbigggg}
    |\ddot G(t)| \lesssim \left(\frac{k}{w} + k^2\right)\e^{-k\sqrt{b}t} \lesssim k^2 \e^{-k\sqrt{b}t}
\end{align}
since $k \geq 1$ by assumption and since $w$ is a universal constant to be chosen later. Hence, the estimates from Proposition \ref{accelnewzeroc} are satisfied (i.e., the estimates \eqref{littlegaccelpropstate} with $c_1=1$).  

Moreover, by using that 
$$
\lim_{t \to 0}\alpha(t)=1, \quad \lim_{t \to w}\alpha(t)=0, \quad \lim_{t \to 0} \alpha^{(n)}(t)=0, \quad \lim_{t \to w} \alpha^{(n)}(t)=0.
$$
for $n \in \{1, 2\}$, we leave it to the reader to verify that the function $u=\cos(ky)\e^{-g(t)}$ (respectively all its first and second derivatives) converge to the function $\cos(ky)\e^{-k\sqrt{b}t}$ (respectively all its first and second derivatives) as $t \to 0$ and to the function $\cos(ky)\e^{-k\sqrt{b'}t}$ (respectively all its first and second derivatives) as $t \to w.$ 

\pseudosection{Uniform ellipticity and boundedness of $A$}
We will now show that the  matrix $A=\begin{pmatrix}
    a & 0 \\ 0& \tilde b
\end{pmatrix}$ belongs to the regularity class $R(80, 10)$. Before starting the estimation, we recall that $\tilde b=\frac{-\ddot g}{k^2} + \left( \frac{\dot g}{k}\right)^2$ (see \eqref{defbtildeaccellemma}) where $g$ is given by $g(t)=\left( \sqrt{b'} + (\sqrt{b}-\sqrt{b'})\alpha(t)\right) k t$ (see \eqref{defgaccellemma}) and $\alpha(t)=\theta\left(\frac{t}{w}\right)$ where $w$ is a universal constant to be chosen later. We also recall that  $\theta$ is defined in footnote \footnote{We choose $\theta(t):= 1-G(\tan(\pi(t-1/2)))$ where $G(x)=\frac{1}{\sqrt{\pi}}\int_{-\infty}^x \e^{-\eta^2} \ud{\eta}$. \label{footnotebuildingblockcoeffcbla}}. By \eqref{propalphagreatgreatnewgreatnewfacts}, the following holds: for $t \in [0, w],$
\begin{align}\label{propalphaacelforbigg}
    0 \leq \alpha(t) \leq 1, \hspace{0.3cm} |\dot{\alpha}(t)|\lesssim w^{-1}, \hspace{0.3cm}, |\ddot{\alpha}(t)|\lesssim w^{-2}.
\end{align}

\comment{
\begin{align}\label{estimationthetarecallnew}
    \sup_{\tau \in [0, 1]} |\theta^{(n)} (\tau)| \leq C_n
\end{align}
for some universal constants $C_n$ and for all $n \geq 0$. This fact will be used many times in the estimation below. To see it, note that for any $\tau \in [0, 1]$ and for any $n \geq 1,$
$$
\theta^{(n)} (\tau) = \e^{-\tan^2(\pi(\tau-1/2))} P_n\big(\tan(\pi(\tau-1/2))\big)
$$
where $P_n$ is a polynomial. 
\\
In the following, we prove the uniform ellipticity of $\begin{pmatrix}
    a & 0 \\
    0 & \tilde b
\end{pmatrix}$
}

\underline{We start with the ellipticity.} We first estimate $\tilde b := \frac{- \ddot{g}}{k^2} + \left(\frac{\dot{g}}{k}\right)^2.$ We will treat the two terms of separately.

\begin{itemize}
    \item \textbf{The first term:} $\frac{ \ddot{g}}{k^2}.$ By \eqref{derivgaccellemma}, $\ddot{g} = (\sqrt{b} - \sqrt{b'})\ddot{\alpha} k t +2 (\sqrt{b}- \sqrt{b'} )\dot{\alpha} k .$
    
    By definition, $\alpha(t)=\theta\left(\frac{t}{w}\right)$ and $b, b' \in (\frac{1}{10}, 10)$ by assumption. So, for $t \in [0, w],$ we have,
\begin{align}\label{firsttermbtildeaccellemma}
\left|\frac{\ddot g}{k^2} \right| \lesssim \frac{k w}{w^2 k^2} + \frac{k}{wk^2} \lesssim \frac{1}{kw} \ll     1
\end{align}
    since $w$ is a universal constant to be chosen later and since $k \gg 1$ by assumption.    

\item \textbf{The second term:} $\left(\frac{\dot{g}}{k}\right)^2$. Using the derivatives of $g$ \eqref{derivgaccellemma}, we get that
\begin{align}\label{gdotdefoverlambdanaccellema}
    \frac{\dot{g}}{k} = -(\sqrt{b'}-\sqrt{b})\dot{\alpha}t + \left(\sqrt{b'} - (\sqrt{b'}-\sqrt{b})\alpha \right).
\end{align}
By \eqref{ineqforgdotfirstaccelestimatebigg}, \comment{Since by definition \eqref{defalphaaccel},  $0 \leq \alpha \leq 1$ and since by assumption $b \leq b'$, we have that $0 \leq (\sqrt{b'}-\sqrt{b})\alpha \leq (\sqrt{b'}-\sqrt{b})$ and therefore, }
\begin{align}\label{ineqforgdotfirstaccel}
    \sqrt{b} \leq \left(\sqrt{b'} - (\sqrt{b'}- \sqrt{b} )\alpha \right) \leq \sqrt{b'}.
\end{align}
Also, since $\alpha$ is monotonically decreasing (because $\theta$ is monotonically decreasing by footnote \ref{footnotebuildingblockcoeffcbla}), and since we assumed that $b \leq b'$, we have that
\begin{align}\label{ineqforgdotsecondaccellemma}
    -(\sqrt{b'}-\sqrt{b})\dot{\alpha}t \geq 0.
\end{align}
Combining \eqref{gdotdefoverlambdanaccellema},  \eqref{ineqforgdotfirstaccel} and \eqref{ineqforgdotsecondaccellemma} implies that $\frac{\dot{g}}{k} \geq \sqrt{b}$, i.e.,  
\begin{align}\label{finalestimatesecondtermbtildeaccellemma}
    \left(\frac{\dot{g}}{k}\right)^2 \geq b.
\end{align}

In conclusion, since $\tilde b=\frac{-\ddot g}{k^2} + \left(\frac{\dot g}{k} \right)^2$ and since we showed that $\left|\frac{\ddot g}{k^2} \right| \ll 1$ in \eqref{firsttermbtildeaccellemma} and $\left(\frac{\dot{g}}{k}\right)^2 \geq b$ by \eqref{finalestimatesecondtermbtildeaccellemma}, we get that 
\begin{align}\label{ellipticityboundaccellemmabtilde}
    \tilde b \geq \frac{b}{2} \geq \frac{1}{80}.
\end{align}
since we assumed $b \geq \frac{1}{10}.$

\underline{We now look at boundedness.} First recall that by \eqref{defbtildeaccellemma}, $ \tilde b := \frac{- \ddot{g}}{k^2} + \left(\frac{\dot{g}}{k}\right)^2$. Moreover, by \eqref{gdotdefoverlambdanaccellema}, $\frac{\dot{g}}{k} = -(\sqrt{b'}-\sqrt{b})\dot{\alpha}t + \left(\sqrt{b'} - (\sqrt{b'}-\sqrt{b})\alpha \right).$ By \eqref{ineqforgdotfirstaccel},  $\sqrt{b}\leq \left(\sqrt{b'} - (\sqrt{b'}-\sqrt{b})\alpha \right) \leq \sqrt{b'}$ and by \eqref{ineqforgdotsecondaccellemma}, $-(\sqrt{b'}-\sqrt{b})\dot{\alpha}t \geq 0.$ Hence,
  
\begin{align}\label{boundedoneaccelnew}
    0 \leq \frac{\dot{g}}{k} \leq -(\sqrt{b'}-\sqrt{b})\dot{\alpha}t + \sqrt{b'}.
\end{align}

Since $\alpha(t)=\theta\left(\frac{t}{w}\right)$ and since $\theta$ is monotonically decreasing, we get for $t \in [0, w],$ 

\begin{align}\label{boundedtwoaccelnew}
-\dot \alpha(t) t =|\dot \alpha(t) t | \leq \sup_{\tau \in[0,1]} |\dot{\theta}| \leq \sqrt{\pi}    
\end{align}

by the definition of $\theta$ (see footnote \footnote{We choose $\theta(t):= 1-G(\tan(\pi(t-1/2)))$ where $G(x)=\frac{1}{\sqrt{\pi}}\int_{-\infty}^x \e^{-\eta^2} \ud{\eta}$. \label{footnotebuildingblockcoeffcblabla}}). So by \eqref{boundedoneaccelnew} and \eqref{boundedtwoaccelnew}, 
\begin{align}\label{ineqforboundednessaccelgdotlambdan}
    0 \leq \frac{\dot{g}}{k} \leq  (\sqrt{b'}-\sqrt{b})\sqrt{\pi} + \sqrt{b'}.
\end{align}
Therefore, since we assumed that $b, b' \in (\frac{1}{10}, 10),$ \eqref{ineqforboundednessaccelgdotlambdan} implies that
\begin{align}\label{boundednaccellemmaexplicitseventy}
    \left(\frac{\dot{g}}{k} \right)^2 \leq 70.
\end{align}
Since $\tilde b = \frac{- \ddot{g}}{k^2} + \left(\frac{\dot{g}}{k}\right)^2$ and since  we saw in \eqref{firsttermbtildeaccellemma} that $\left|\frac{ \ddot{g}}{k^2} \right| \ll 1$, we can therefore conclude that 
$    \tilde b \leq 80.
$
Combining \eqref{ellipticityboundaccellemmabtilde} with this last estimate, we showed 
$
\frac{1}{80} \leq \tilde b \leq 80.
$
Since  $A=\begin{pmatrix}
    a & 0 \\
    0 & \tilde b
\end{pmatrix}$ and since $a\in (\frac{1}{10}, 10)$, we get that the matrix $A$ satisfies the bounds
 \begin{align}\label{finalellipticityboundaccelllemmaunifaswell}
     \frac{1}{80} |\xi|^2 \leq (A\xi, \xi) \leq 80 |\xi|^2.
 \end{align}
\end{itemize}

\phantom{Test}\\\phantom{Test}

\textbf{The $C^1$ bound:} We are left with proving the $C^1$ bound for the matrix $A=\begin{pmatrix}
    a & 0 \\
    0 & \tilde b
\end{pmatrix}$. Since $a$ is a constant, we only have to estimate the derivatives of $\tilde b.$ Recall that $\tilde b$ was given in \eqref{defbtildeaccellemma} by 
$$
\tilde b := \frac{- \ddot{g}}{k^2} + \left(\frac{\dot{g}}{k}\right)^2
$$
and note that $\tilde b$ only depends on the $t$ variable. We have,
\begin{align}\label{derivativebtildeaccellemma}
    \dot{\tilde b} = \frac{-\dddot{g}}{k^2} + \frac{2 \dot{g} \ddot{g}}{k^2}.
\end{align}
Since $g$ was defined in \eqref{defgaccellemma} by
\begin{align*}
    g(t):=\left( \sqrt{b'} + (\sqrt{b}-\sqrt{b'})\alpha(t)\right) k t, 
\end{align*}
we immediately see that
\begin{align}\label{threederivativesgaccellemma}
     &\dot{g} = (\sqrt{b} - \sqrt{b'})\dot{\alpha} k t + \left(\sqrt{b'} + (\sqrt{b}- \sqrt{b'} )\alpha \right) k, \nonumber \\
   & \ddot{g} = (\sqrt{b} - \sqrt{b'})\ddot{\alpha} k t +2 (\sqrt{b}- \sqrt{b'} )\dot{\alpha} k  \\
   & \dddot{g}= (\sqrt{b} - \sqrt{b'})\dddot{\alpha} k t +3 (\sqrt{b}- \sqrt{b'} )\ddot{\alpha} k \nonumber.
\end{align}
We recall that $\alpha(t)$ was defined by
$     \alpha(t)=\theta\left(\frac{t}{w} \right)
$ for $t \in [0,w]$, where $w$ is a universal constant to be chosen later. We also recall that $\sup_{\tau \in [0, 1]} |\theta^{(n)}(\tau)| \leq C_n$ for some universal constants $C_n$ and for all $n \geq 0$ (see \eqref{propalphagreatgreatnewgreatnewfacts}). Therefore, we immediately see that the first term in \eqref{derivativebtildeaccellemma} is estimated, for $t \in [0, w],$ by 
\begin{align}\label{estiamtefirsttermbtildederaccelemmaddot}
    \left|\frac{\dddot{g}}{k^2}\right| \lesssim \frac{kw}{k^2 w^3} + \frac{k}{w^2k^2}\lesssim\frac{1}{w^2 k}  \ll 1
\end{align}
since $w$ is a universal constant and since $k \gg 1$ by assumption.

To estimate the second term $\frac{2\dot g \ddot g}{k^2}$ from \eqref{derivativebtildeaccellemma}, we use the bound \eqref{ineqforboundednessaccelgdotlambdan}
\begin{align}\label{oneothertermforconeaccellemma}
     \left|\frac{\dot{g}}{k}\right| \leq (\sqrt{b'}-\sqrt{b})\sqrt{\pi} + \sqrt{b'}
\end{align}
and the estimate
\begin{align}\label{oneoftheestimateforconeaccel}
   \left| \frac{\ddot{g}}{k}\right| \leq \frac{\sqrt{b'}-\sqrt{b}}{w}\left(\|\ddot{\theta}\|_{\infty} + 2\|\dot{\theta}\|_{\infty} \right).
\end{align}
which follows from \eqref{threederivativesgaccellemma}. Therefore, since by assumption $\frac{1}{10} \leq b, b' \leq 10$ and since by our choice of $\theta$ (see footnote \footnote{We choose $\theta(t):= 1-G(\tan(\pi(t-1/2)))$ where $G(x)=\frac{1}{\sqrt{\pi}}\int_{-\infty}^x \e^{-\eta^2} \ud{\eta}$. \label{footnotebuildingblockcoeffcblabla}}), we have $|\dot{\theta}| \leq \sqrt{\pi} $ and $|\ddot{\theta}| \leq 35$, we see that
\begin{align}\label{secondtermconebound}
 \left|\frac{2 \dot g \ddot g}{k^2} \right| \leq \frac{2}{w}\left[  (\sqrt{b'}-\sqrt{b})\sqrt{\pi} + \sqrt{b'}\right] \left[(\sqrt{b'}-\sqrt{b})\left(\|\ddot{\theta}\|_{\infty} + 2\|\dot{\theta}\|_{\infty} \right) \right]\leq 5  
\end{align}
by choosing $w:=400$. Since $\dot{\tilde b} = \frac{-\dddot{g}}{k^2} + \frac{2 \dot{g} \ddot{g}}{k^2}$ by \eqref{derivativebtildeaccellemma} and since $\left|\frac{\dddot{g}}{k^2}\right| \ll 1$ by \eqref{estiamtefirsttermbtildederaccelemmaddot}, we get using \eqref{secondtermconebound} that 
$$
|\dot{\tilde b}| \leq 10.
$$

Finally, by using that 
$$
\lim_{t \to 0}\alpha(t)=1, \quad \lim_{t \to w}\alpha(t)=0, \quad \lim_{t \to 0} \alpha^{(k)}(t)=0, \quad \lim_{t \to w} \alpha^{(k)}(t)=0.
$$
for $k \in \{1, 2\}$, we leave it to the reader to verify that the matrix $A=\begin{pmatrix}
    a & 0 \\
    0& \tilde b
\end{pmatrix}$ (respectively all its first derivatives) converge to the matrix $\begin{pmatrix}
    a & 0 \\
    0&  b \end{pmatrix}$ (respectively all its first derivatives) as $t \to 0$ and to the matrix $\begin{pmatrix}
    a & 0 \\
    0&  b' \end{pmatrix}$ (respectively all its first derivatives) as $t \to w.$ 
\\

\textbf{Conclusion:} We have been able to transform  
\begin{align*}
    \cos(k y) \e^{-k \sqrt{b}t} \hspace{0.5cm} \mbox{ into } \hspace{0.5cm} \cos(k y) \e^{-k \sqrt{b'}t}
\end{align*}
as $t$ goes from $0$ to $w:=400$, via a function $u$ solution to $\ddot u + \div(A\nabla u)=0$ where $A$ is in the regularity class $R(80, 10).$ We saw in \eqref{biggdefaccel} that $u=\cos(ky) G(t)$ for a smooth function $G$ which satisfies for $t \in [0, w]$ (see \eqref{firstestimatebigg}, \eqref{secondestimatesbigggg}, \eqref{thirdestimatesbigggg}) 

$$
|G(t)|\leq \e^{-k\sqrt{b}t}, \hspace{0.3cm} |\dot G(t)|\lesssim k\e^{-k\sqrt{b}t}, \hspace{0.3cm} |\ddot G(t)| \lesssim k^2\e^{-k\sqrt{b}t}.
$$
 Hence, the estimates from Proposition \ref{accelnewzeroc} are satisfied. This finishes the proof of the Proposition.
\end{proof}

\section{The super-exponential decay of a solution to a parabolic equation}
\label{parab}

In this section, we prove our last main result Theorem \ref{maintheoremparabolicbeg}, that we recall below, about the construction of a complex-valued, super-exponentially decaying solution to a parabolic equation $\dot{u} = \Delta u +B \nabla u$ in the half-cylinder $\T^2 \times \R^+$ with $B \in \mathbb C^2$ continuous and bounded.

\begin{thm*}[\ref{maintheoremparabolicbeg}, The parabolic setting]
    In the 3-dimensional cylinder $\T^2 \times \R^+$,  there exists a complex vector field $B \in \mathbb C^2$ which is continuous and uniformly bounded and there exists a non-zero, uniformly $C^2$ complex-valued function $u$ such that 
    \begin{align}
        \dot{u} = \Delta u + B\nabla u
    \end{align}
    and $u$ has double exponential decay: for any $T\gg 1,$
    \begin{align}
       \sup_{\T^2 \times \{t \geq T\} }|u(x,y,t)| \leq e^{-ce^{cT}}
    \end{align}
    for some numerical $c>0.$
\end{thm*}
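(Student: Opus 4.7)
The plan is to mirror the block-iteration strategy used for Theorem~\ref{EigenTheorem}, replacing the elliptic building blocks $\cos(kx)e^{-kt}$ by their parabolic analogues: the complex-valued heat modes $e^{ikx}e^{-k^2 t}$, which solve $\dot u = \Delta u$ on $\T^2 \times \R^+$. I would partition $\R^+$ into blocks $[t_n, t_{n+1}]$ of uniformly bounded length $C$, and transition inside each block from a state dominated by $c_n e^{ik_n \xi_n}e^{-k_n^2 t}$ to one dominated by $c_{n+1} e^{ik_{n+1} \xi_{n+1}}e^{-k_{n+1}^2 t}$, with $\xi_n \in \{x,y\}$ alternating and integer wavenumbers $k_n$ growing with $n$. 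On the building block I would work with the two-mode ansatz $u(x,y,t) = f(t) e^{ik_n \xi_n} + g(t) e^{ik_{n+1} \xi_{n+1}}$, plug it into the PDE, and solve for $B = (B_1, B_2)$ algebraically; the decisive simplification over the real-valued elliptic setting is that $e^{ikx}$ and $e^{iky}$ are nowhere vanishing, so algebraic inversion is legitimate (this is exactly where the restriction to complex-valued $u$ and $B$ is used). Taking $B_{\xi_{n+1}} \equiv 0$ and routing both inhomogeneities into $B_{\xi_n}$ yields the explicit formula
\[
B_{\xi_n}(x,y,t) = \frac{\dot f + k_n^2 f}{ik_n f} + \frac{(\dot g + k_{n+1}^2 g)\,e^{i(k_{n+1}\xi_{n+1} - k_n\xi_n)}}{ik_n f},
\]
continuous wherever $f$ is nonvanishing. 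Natural choices $f(t) = c_n e^{-k_n^2 t}$ and $g(t) = c_{n+1}\chi(t) e^{-k_{n+1}^2 t}$, with $\chi$ a $C^2$ profile rising from $0$ at $t_n$ to $1$ at $t_{n+1}$ with vanishing first and second derivatives at both endpoints, keep $f$ everywhere nonzero and reduce the control of $|B|$ to the single explicit ratio above.

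The main obstacle, and the place where the parabolic setting genuinely differs from the elliptic one, is that in the heat equation lower wavenumbers decay more slowly (rate $k^2$), so the old mode $e^{ik_n \xi_n}$ is not automatically dominated by the new mode $e^{ik_{n+1}\xi_{n+1}}$ at the end of a block: keeping $|B|$ uniformly bounded in the above formula forces $c_{n+1}/c_n$ to be only of order $k_n$, whereas a naive dominance of the new mode at $t_{n+1}$ would demand the exponentially larger ratio $c_{n+1}/c_n \gg e^{(k_{n+1}^2 - k_n^2)t_{n+1}}$. The way out I would pursue is to let $B$ carry genuine spatial Fourier content: a piece of the form $\beta(t)\,e^{i(k_{n+1}\xi_{n+1}-k_n\xi_n)}$ in $B_{\xi_n}$ already appears in the formula above, and its action $B\nabla u$ \emph{couples} old and new modes directly, pumping amplitude from $e^{ik_n\xi_n}$ into $e^{ik_{n+1}\xi_{n+1}}$. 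This mode-coupling role for $B$ has no elliptic analogue; it uses in an essential way that $B$ is a first-order coefficient that can mix Fourier modes, rather than a principal-part coefficient that cannot. On subsequent blocks one continues to suppress the residual tail of each earlier mode by including analogous Fourier pieces in $B$, and the main analytic task is to check that, with $k_n$ growing fast enough, the cumulative spatial complexity of $B$ remains uniformly bounded thanks to the large spectral gaps $k_{n+1}^2 - k_n^2$.

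Once the block is in place, the gluing is routine: with $k_n = 2^{\lceil n/2\rceil} k_0$ for a large initial wavenumber $k_0$ and blocks of uniform length $C$, the vanishing endpoint derivatives of $\chi$ ensure that $u$ is $C^2$ and $B$ is continuous across block boundaries, and shifts of the building block are glued together just as in Section~\ref{harmony}. Iterating, the per-block decay factor on the dominant mode is of order $e^{-k_n^2 C} = e^{-c\,2^{n}}$, so at time $T \asymp nC$ the amplitude of $u$ is bounded by $\prod_{\ell\leq n} e^{-c\,2^{\ell}} \lesssim e^{-c\,2^{n}} \lesssim e^{-c\,e^{cT}}$, the announced double-exponential rate.
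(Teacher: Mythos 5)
Your local mechanism is the same as the paper's (Lemma \ref{buildingblockparabolic}): complex heat modes $e^{ikx}e^{-k^2t}$, a $C^2$ temporal cutoff that introduces or removes one mode at a time, and algebraic inversion for $B$ exploiting that complex exponentials never vanish. The gap is in the global bookkeeping. With frequency doubling $k_{n+1}=2k_n$ and blocks of uniform length, the obstruction you yourself identify is fatal, and the proposed repair does not close it. With $\|B\|_\infty\le M$, bounding the coefficient you display at the introduction of the new mode forces (new)/(old) $\lesssim k_n$ there, while exact removal of the old mode by a cutoff forces (old)/(new) $\lesssim k_{n+1}$ at the removal time; since the ratio (old)/(new) grows like $e^{(k_{n+1}^2-k_n^2)t}=e^{3k_n^2t}$, the window between introduction and removal can last at most $O\bigl((\log k_n)/k_n^2\bigr)$, not a unit time. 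The mode-coupling idea cannot substitute for exact removal: writing $u=f(t)e^{ik_nx}+\sum_m g_m(t)e^{ik_m\xi_m}$, the $e^{ik_nx}$-Fourier coefficient of $B\nabla u$ is bounded by $\|B\|_\infty\bigl(k_n|f|+\sum_m k_m|g_m|\bigr)$, and since the higher modes decay at the faster rates $k_m^2$ they quickly drop far below $|f|$; from then on the best decay you can enforce on $f$ is at rate $k_n^2+O(k_n)$. Hence any nonzero residual of an early mode caps the decay of $u$ at a single fixed exponential, contradicting the double-exponential claim, and your ansatz (with $f=c_ne^{-k_n^2t}$ never cut off, and $B_{\xi_{n+1}}\equiv0$ making the formula divide by $f$ if you did cut it off) never removes the old mode exactly. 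So "suppressing residual tails on later blocks" with bounded $B$ is not available.

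The paper resolves this by changing the parameters, not the mechanism: the frequencies grow arithmetically ($k_n=n$; the building block only needs $k\le k'\le k+10$) and the blocks shrink, of length $\tfrac{7}{2k_n}$. Then $\bigl((k')^2-k^2\bigr)\times(\text{block length})\lesssim 1$, the two modes stay comparable up to a factor $e^{O(1)}$ throughout each block, and both cutoff steps give $|B|\lesssim 1$ directly, with the old mode removed identically. The double-exponential rate then comes from the time scale rather than from frequency doubling: after $n$ blocks the elapsed time is $t_n\asymp\log n$ while the amplitude has dropped like $e^{-cn^2}$, i.e. $e^{-ce^{cT}}$ at time $T$. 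If you insist on geometric frequencies, shrinking your blocks to the admissible length $O\bigl((\log k_n)/k_n^2\bigr)$ makes the total time converge, so you cannot cover $\R^+$ (that regime is the unique-continuation counterexample, not a decay statement). In short: right building block, but the frequency-increment/block-length tradeoff is the actual content of the parabolic theorem, and your version of it fails.
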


\begin{rem}
    The vector field $B$ in the Theorem above is not uniformly $C^1$-smooth.
\end{rem}

As in the elliptic case above, we start with the building block. 
\begin{lemma}\label{buildingblockparabolic}
    Let $1\leq k \leq k' \leq k+10.$ Denote $u_1:=\e^{ikx} \e^{-k^2t}$ and $u_2:=\e^{ik'y} \e^{-(k')^2t}$ and note that they are both solutions to the same equation $\dot{u}=\Delta u$. 

    We can transform $u_1$ into $u_2$ via a solution $u$ to $\dot{u}=\Delta u + B\nabla u$, within the set $\T^2 \times [0, \frac{7}{2k}]$ where the vector field $B$ is continuous and uniformly bounded. For $t \in [0, \frac{7}{2k}]$, the function $u$ is of the form 
    \begin{align}\label{shapeubuildingblockparab}
        u(x,y,t) = f(t) \e^{ikx} + g(t) \e^{ik'y}
    \end{align}
    where $f, g \in C^2$ satisfy for all $0 \leq \alpha \leq 2,$
    $$
    |f^{(\alpha)}(t)|\lesssim k^{2\alpha} \e^{-k^2t}, \hspace{0.5cm}  |g^{(\alpha)}(t)|\lesssim (k')^{2\alpha} \e^{-(k')^2t}.
    $$
    Moreover, for $t \in [0, \frac{1}{2k}],$
    $$
    u = u_1, \hspace{0.5cm} B=0,
    $$
    and for $t \in [\frac{3}{k}, \frac{7}{2k}],$
    $$
    u = u_2, \hspace{0.5cm} B=0. 
    $$
\end{lemma}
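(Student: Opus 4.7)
The plan is to write the solution throughout the block in the prescribed two‑mode form $u(x,y,t)=f(t)e^{ikx}+g(t)e^{ik'y}$ and to allow $B=(B_1,B_2)$ to carry only the two ``cross‑mode'' Fourier frequencies that couple $e^{ikx}$ with $e^{ik'y}$:
\[
B_1(x,y,t)=\gamma(t)\,e^{-ikx+ik'y},\qquad B_2(x,y,t)=\delta(t)\,e^{\,ikx-ik'y}.
\]
Since $k,k'$ are integers, $B_1,B_2$ are genuinely smooth functions on $\T^2$, of modulus $|\gamma(t)|$ and $|\delta(t)|$ respectively. A direct computation gives
\[
B\cdot\nabla u=ikf(t)\,\gamma(t)\,e^{ik'y}+ik'g(t)\,\delta(t)\,e^{ikx},
\]
and matching this against $\dot u-\Delta u=(\dot f+k^2f)e^{ikx}+(\dot g+(k')^2g)e^{ik'y}$ yields the two decoupled identities
\[
ik'g\,\delta=\dot f+k^2 f,\qquad ikf\,\gamma=\dot g+(k')^2 g.
\]

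I would split the transition interval $[1/(2k),3/k]$ into two equal halves. On Phase~I $=[1/(2k),2/k]$ I would keep $f(t)=e^{-k^2t}$ (so automatically $\dot f+k^2f=0$ and hence $\delta\equiv0$) and turn on $g(t)=\beta(t)\,e^{-(k')^2t}$, where $\beta$ is a rescaled version of the smooth cutoff $\theta$ used in the elliptic sections, going from $0$ at $t=1/(2k)$ to $1$ at $t=2/k$ with all derivatives vanishing at both endpoints. On Phase~II $=[2/k,3/k]$ I would keep $g(t)=e^{-(k')^2t}$ (so $\gamma\equiv0$) and turn off $f(t)=\mu(t)\,e^{-k^2t}$, where $\mu$ is a cutoff going from $1$ to $0$ with vanishing derivatives at both endpoints. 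Vanishing derivatives at $t=2/k$ ensure $C^2$ matching of $f,g$ across the sub-phase join, and—via the formulas below—also force $\gamma,\delta$ to vanish at every endpoint of the transition, so that $B$ agrees continuously with the choice $B\equiv0$ on the two outer intervals $[0,1/(2k)]$ and $[3/k,7/(2k)]$.

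The key estimates are on $\gamma$ and $\delta$. In Phase~I one obtains
\[
\gamma(t)=\frac{\dot\beta(t)\,e^{-((k')^2-k^2)t}}{ik},
\]
and since $\beta$ transitions over a window of length $\asymp 1/k$, $|\dot\beta|\lesssim k$; because $(k')^2\ge k^2$ the exponential factor is at most $1$, giving $|\gamma|\lesssim 1$ uniformly in $k$. In Phase~II one finds
\[
\delta(t)=\frac{\dot\mu(t)\,e^{((k')^2-k^2)t}}{ik'},
\]
where the exponential factor is now potentially large; however, the hypothesis $k'\le k+10$ forces $(k')^2-k^2=(k'-k)(k'+k)\le 10(2k+10)$, so that $((k')^2-k^2)\cdot\tfrac{3}{k}\le 60+300/k$ is bounded by an absolute constant. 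Combined with $|\dot\mu|\lesssim k$, this again yields $|\delta|\lesssim 1$ uniformly in $k$. The same elementary bookkeeping, using the standard derivative bounds on $\theta$, produces the required derivative estimates $|f^{(\alpha)}|\lesssim k^{2\alpha}e^{-k^2t}$ and $|g^{(\alpha)}|\lesssim(k')^{2\alpha}e^{-(k')^2t}$ for $\alpha\le 2$.

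The hard part—and indeed the very reason for the narrow‑gap hypothesis $k'\le k+10$—is precisely the uniform bound for $\delta$ in Phase~II: if $k'-k$ were allowed to grow, the exponential $e^{((k')^2-k^2)t}$ over a transition of duration $\asymp 1/k$ would blow up with $k$ and $B$ would fail to be uniformly bounded. The remaining delicate point is simply to verify that the cutoff $\theta$ can be chosen so that $\beta,\mu$ and their first two derivatives all vanish at the internal join $t=2/k$; this is both what guarantees the claimed $C^2$ regularity of $u$ and what ensures that $\gamma,\delta$ and hence $B$ are continuous at $t=2/k$ as well as at the outer endpoints $t=1/(2k)$ and $t=3/k$.
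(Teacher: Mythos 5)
Your proposal is correct and follows essentially the same route as the paper: you introduce $u_2$ with a cutoff over a window of length $\asymp 1/k$ using only the $x$-component of $B$, then remove $u_1$ using only the $y$-component, and your explicit single-Fourier-mode fields $\gamma(t)e^{-ikx+ik'y}$, $\delta(t)e^{ikx-ik'y}$ are exactly the paper's $-\dot\alpha u_2/\partial_x u_1$ and $\dot\alpha u_1/\partial_y u_2$ written out, with the same key use of $k'-k\le 10$ to bound $e^{((k')^2-k^2)t}$ on $t\lesssim 1/k$. Only a cosmetic slip in the last sentence: at the internal join it is $\dot\beta,\ddot\beta,\dot\mu,\ddot\mu$ that vanish (with $\beta=\mu=1$ there), as you correctly stated earlier.
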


\begin{proof}[Proof of Lemma \ref{buildingblockparabolic}]
    
\pseudosection{Step 1: adding a function} In $\T^2 \times [0, \frac{2}{k}],$ we will construct a $C^2$ function $\tilde u$ and a uniformly bounded continuous vector field $\tilde B \in \mathbb{C}^2$ such that $\dot{\tilde u}=\Delta \tilde u+\tilde B\nabla \tilde u$. 

We start with $\tilde u=u_1$ and $B=0$ on $\T^2 \times [0, \frac{1}{2k}].$  Denote by $\theta(t)$ the smooth function  going from 1 to 0 as $t$ goes from zero to one (see footnote \footnote{We choose $\theta(t):= 1-G(\tan(\pi(t-1/2)))$ where $G(x)=\frac{1}{\sqrt{\pi}}\int_{-\infty}^x \e^{-\eta^2} \ud{\eta}$ \label{footonethetaparabolicstepone}.}).

For $t \in [\frac{1}{2k}, \frac{3}{2k}]$, define $\alpha(t):=\theta(kt-\frac{1}{2}).$ The function $\alpha$ goes from 1 to 0 as $t$ goes from $\frac{1}{2k}$ to $\frac{3}{2k}.$ Define also the function 
    \begin{align}\label{defusteponeparabzeroc}
        u(x,y,t) = u_1+(1-\alpha)u_2
    \end{align}
    which goes from $u_1$ to $u_1+u_2$ as $t$ moves in $[\frac{1}{2k}, \frac{3}{2k}].$ We will now find a uniformly bounded vector field $\tilde B$ such that $\dot u = \Delta u+\tilde B\nabla u$ in $\T^2 \times [\frac{1}{2k}, \frac{3}{2k}].$ Since $\dot u_1=\Delta u_1$ and $\dot u_2=\Delta u_2$, and since $\alpha$ depends on $t$ only, it comes in $\T^2 \times [\frac{1}{2k}, \frac{3}{2k}],$
    \begin{align}\label{eqtparabchoosebnewstepone}
       \dot{u} = \dot u_1+(1-\alpha) \dot u_2-\dot{\alpha} u_2 = \Delta u_1+(1-\alpha)\Delta u_2 -\dot{\alpha} u_2 =  \Delta u -  \dot{\alpha} u_2.  
    \end{align}
We want to choose $\tilde B \in \mathbb{C}^2$ uniformly bounded so that $\dot{u} = \Delta u + \tilde B \nabla u$ is satisfied. We take $\tilde B:= \binom{\tilde B_1}{0}$. Hence, $\tilde B\nabla u = \tilde B_1 \partial_x u_1$ since $u=u_1(x,t) + \big(1-\alpha(t)\big)u_2(y,t)$. By \eqref{eqtparabchoosebnewstepone}, we then see that $\dot u = \Delta u + \tilde B\nabla u$ if 
\begin{align}\label{defbonestepineparaboliczeroc}
\tilde B_1:= \frac{-\dot{\alpha}u_2}{\partial_x u_1}.  
\end{align}

Finally, in $\T^2 \times [\frac{3}{2k}, \frac{2}{k}],$ we take $\tilde u=u_1+u_2$ and $\tilde B=0.$ In conclusion, in $\T^2 \times [0, \frac{2}{k}],$ we choose $\tilde u$ and $\tilde B$ to be
\begin{align}\label{completedefubsteponeparazeroc}
    \tilde u:= \begin{cases}
        u_1 & \mbox{ in } \T^2 \times [0, \frac{1}{2k}] \\
        u_1+(1-\alpha)u_2 & \mbox{ in } \T^2 \times [\frac{1}{2k}, \frac{3}{2k}] \\
        u_1+u_2 & \mbox{ in } \T^2 \times [\frac{3}{2k}, \frac{2}{k}]
    \end{cases}, \hspace{0.5cm} \tilde B:=  \begin{cases}
        0 & \mbox{ in } \T^2 \times [0, \frac{1}{2k}] \\
        \binom{\tilde B_1}{0} & \mbox{ in } \T^2 \times [\frac{1}{2k}, \frac{3}{2k}] \\
        0 & \mbox{ in } \T^2 \times [\frac{3}{2k}, \frac{2}{k}]
    \end{cases}.
\end{align}

\textbf{The regularity of $\tilde u$ and $\tilde B$}

Before discussing the regularity of $\tilde u$ and $\tilde B$, we state some useful facts about the function $\alpha(t)=\theta(kt-\frac{1}{2})$ that was introduced above:

\textbf{Facts:} For $t \in [\frac{1}{2k}, \frac{3}{2k}],$
\begin{align}\label{factsalphasteponeparabolic}
\begin{cases}
    0 \leq \alpha(t) \leq 1, \\
    |\dot{\alpha}(t)| \lesssim k,\\
    |\ddot{\alpha}(t)| \lesssim k^2,
\end{cases} \hspace{0.3cm}
\begin{cases}
    \lim_{t \rightarrow \frac{1}{2k}} \alpha(t)=1, \\
    \lim_{t \rightarrow \frac{1}{2k}} \dot\alpha(t)=0, \\
     \lim_{t \rightarrow \frac{1}{2k}} \ddot\alpha(t)=0 ,
\end{cases} \hspace{0.3cm}
\begin{cases}
    \lim_{t \rightarrow \frac{3}{2k}} \alpha(t)=0, \\
    \lim_{t \rightarrow \frac{3}{2k}} \dot\alpha(t)=0, \\
     \lim_{t \rightarrow \frac{3}{2k}} \ddot\alpha(t)=0.
\end{cases}
\end{align}
The first line of facts follows directly from the definition of $\theta$. To see the second and third lines of facts, we recall that $\alpha(t)=\theta(kt-\frac{1}{2})$ and that
 for any $\tau \in [0, 1]$ and for any $n \geq 1,$
$$
\theta^{(n)} (\tau) = \e^{-\tan^2(\pi(\tau-1/2))} P_n\big(\tan(\pi(\tau-1/2))\big)
$$
where $P_n$ is a polynomial. This also shows that $\theta$ satisfies $\sup_{\tau \in [0, 1]} |\theta^{(n)}(\tau)| \leq C_n$ for some universal constants $C_n$ and for all $n \geq 1.$

\begin{itemize}
    \item \textbf{The regularity of $\tilde u:$} First note that by the property of $\alpha$ given in the second and third column of \eqref{factsalphasteponeparabolic}, it is clear that $\tilde u$ (defined in \eqref{completedefubsteponeparazeroc}) is $C^2$ on $\T^2 \times [0, \frac{2}{k}].$ Also, by the definition of $\tilde u, u_1, u_2$ and $\alpha$ it is clear that 
    \begin{align}\label{shapetildeustepone}
        \tilde u(x,y,t) = f(t) \e^{ikx} +g(t) \e^{ik'y}
    \end{align}
on  $\T^2 \times [0, \frac{2}{k}]$ where 
 $$
 f(t):=\e^{-k^2t}, \hspace{0.5cm} g(t):=\begin{cases}
     0 & \mbox{ for } t \in [0, \frac{1}{2k}] \\
     (1-\alpha(t)) \e^{-(k')^2t} & \mbox{ for } t \in [\frac{1}{2k}, \frac{3}{2k}] \\
     \e^{-(k')^2t} & \mbox{ for } t \in [\frac{3}{2k}, \frac{2}{k}].
 \end{cases}
 $$
 In particular, $f, g$ are $C^2$ on $[0, \frac{2}{k}]$ by the properties for $\alpha$ \eqref{factsalphasteponeparabolic}. Moreover, since $1\leq k \leq k'$ and since  $|\dot{\alpha}|\lesssim k$ and $|\ddot \alpha| \lesssim k^2$, it is clear that $f, g$ satisfy
for all $0 \leq n \leq 2$ and for all $t \in [0, \frac{2}{k}],$
    \begin{align}\label{propfgsteponeparabzeroc}
    |f^{(n)}(t)|\lesssim k^{2n} \e^{-k^2t}, \hspace{0.5cm}  |g^{(n)}(t)|\lesssim (k')^{2n} \e^{-(k')^2t}.    
    \end{align}

    \item \textbf{The regularity of $\tilde B$:} Recall that $\tilde B$ was defined in \eqref{completedefubsteponeparazeroc} by 
    \begin{align*}
    \tilde B:=  \begin{cases}
        0 & \mbox{ in } \T^2 \times [0, \frac{1}{2k}] \\
        \binom{\tilde B_1}{0} & \mbox{ in } \T^2 \times [\frac{1}{2k}, \frac{3}{2k}] \\
        0 & \mbox{ in } \T^2 \times [\frac{3}{2k}, \frac{2}{k}]
    \end{cases}    
    \end{align*}
    where $\tilde B_1=\frac{-\dot{\alpha}u_2}{\partial_x u_1}$ (see \eqref{defbonestepineparaboliczeroc}) where $u_1=\e^{ikx} \e^{-k^2t}$ and where $u_2=\e^{ik'y}\e^{-(k')^2t}.$ By the properties of $\alpha$ \eqref{factsalphasteponeparabolic} and since $\partial_x u_1 =iku_1$, it is clear that $\tilde B$ is continuous on $[0, \frac{2}{k}]$. To see that it is also uniformly bounded, we recall that $|\dot \alpha| \lesssim k$ (see \eqref{factsalphasteponeparabolic}) and we note that
    $$
    |\tilde B_1| = \left|\frac{\dot{\alpha}u_2}{\partial_x u_1}\right| \lesssim k \left|\frac{u_2}{k u_1}\right|=\e^{-\left((k')^2-k^2\right)t} \lesssim 1
    $$
    since $t\geq 0$ and $k \leq k'$ by assumption. 
\end{itemize}
\textbf{Conclusion of Step 1:}
    In conclusion, in $\T^2 \times [0, \frac{2}{k}],$ we defined a $C^2$ functions $\tilde u$ and a uniformly bounded continuous vector field $\tilde B \in \mathbb C^2$ such that $\dot{\tilde u} = \Delta \tilde u + \tilde B\nabla \tilde u.$

    \pseudosection{Step 2: Removing a function}
In this step, we will construct a $C^2$ function $\tilde v$ and a uniformly bounded continuous vector field $\tilde C \in \mathbb C^2$ such that $\dot{\tilde v} = \Delta \tilde v + \tilde C \nabla \tilde v$ in $\T^2 \times [\frac{2}{k}, \frac{7}{2k}].$

 Denote by $\theta(t)$ the smooth function  going from 1 to 0 as $t$ goes from zero to one (see footnote \footnote{We choose $\theta(t):= 1-G(\tan(\pi(t-1/2)))$ where $G(x)=\frac{1}{\sqrt{\pi}}\int_{-\infty}^x \e^{-\eta^2} \ud{\eta}$ \label{footonethetaparabolicstepone}.}). 
    For $t \in [\frac{2}{k}, \frac{3}{k}]$, define $\alpha(t):=\theta(kt-2).$ The function $\alpha$ goes from 1 to 0 as $t$ goes from $\frac{2}{k}$ to $\frac{3}{k}.$ Define also the function 
    \begin{align}\label{defusteponeparabzeroc}
        v(x,y,t) = \alpha u_1+u_2
    \end{align}
    which goes from $u_1+u_2$ to $u_2$ as $t$ moves in $[\frac{2}{k}, \frac{3}{k}].$ We will now find a uniformly bounded vector field $\tilde C$ such that $\dot v = \Delta v+\tilde C\nabla v$ in $\T^2 \times [\frac{2}{k}, \frac{3}{k}].$ Since $\dot u_1=\Delta u_1$ and $\dot u_2=\Delta u_2$, and since $\alpha$ depends on $t$ only, it comes in $\T^2 \times [\frac{2}{k}, \frac{3}{k}],$
    \begin{align}\label{eqtparabchoosebnewsteponesteptwo}
       \dot{v} = \alpha \dot u_1+ \dot u_2+\dot{\alpha} u_1 = \alpha \Delta u_1+\Delta u_2 +\dot{\alpha} u_1 =  \Delta v +  \dot{\alpha} u_1.  
    \end{align}
We want to choose $\tilde C \in \mathbb{C}^2$ uniformly bounded so that $\dot{v} = \Delta v + \tilde C \nabla v$ is satisfied. We take $\tilde C:= \binom{0}{\tilde C_2}$. Hence, $\tilde C\nabla v = \tilde C_2 \partial_y u_2$ since $v=\alpha(t) u_1(x,t) + u_2(y,t)$. By \eqref{eqtparabchoosebnewsteponesteptwo}, we then see that $\dot v = \Delta v + \tilde C\nabla v$ if 
\begin{align}\label{defbonestepineparaboliczerocsteptwo}
\tilde C_2:= \frac{\dot{\alpha}u_1}{\partial_y u_2}.  
\end{align}

Finally, in $\T^2 \times [\frac{3}{k}, \frac{7}{2k}],$ we take $\tilde v=u_2$ and $\tilde C=0.$ In conclusion, in $\T^2 \times [\frac{2}{k}, \frac{7}{2k}],$ we choose $\tilde v$ and $\tilde C$ to be
\begin{align}\label{completedefubsteponeparazerocsteptwo}
    \tilde v:= \begin{cases}
        \alpha u_1+u_2 & \mbox{ in } \T^2 \times [\frac{2}{k}, \frac{3}{k}] \\
        u_2 & \mbox{ in } \T^2 \times [\frac{3}{k}, \frac{7}{2k}] 
    \end{cases}, \hspace{0.5cm} \tilde C:=  \begin{cases}
        \binom{0}{\tilde C_2} & \mbox{ in } \T^2 \times [\frac{2}{k}, \frac{3}{k}] \\
        0 & \mbox{ in } \T^2 \times [\frac{3}{k}, \frac{7}{2k}]
    \end{cases}.
\end{align}

\textbf{The regularity of $\tilde v$ and $\tilde C$}

Before discussing the regularity of $\tilde v$ and $\tilde C$, we recall some useful facts about the function $\alpha(t)=\theta(kt-2)$ that was introduced earlier (see footnote \ref{footonethetaparabolicstepone} for the definition of $\theta$):

\textbf{Facts:} For $t \in [\frac{2}{k}, \frac{3}{k}],$
\begin{align}\label{factsalphasteponeparabolicsteptwo}
\begin{cases}
    0 \leq \alpha(t) \leq 1, \\
    |\dot{\alpha}(t)| \lesssim k,\\
    |\ddot{\alpha}(t)| \lesssim k^2,
\end{cases} \hspace{0.3cm}
\begin{cases}
    \lim_{t \rightarrow \frac{2}{k}} \alpha(t)=1, \\
    \lim_{t \rightarrow \frac{2}{k}} \dot\alpha(t)=0, \\
     \lim_{t \rightarrow \frac{2}{k}} \ddot\alpha(t)=0 ,
\end{cases} \hspace{0.3cm}
\begin{cases}
    \lim_{t \rightarrow \frac{3}{k}} \alpha(t)=0, \\
    \lim_{t \rightarrow \frac{3}{k}} \dot\alpha(t)=0, \\
     \lim_{t \rightarrow \frac{3}{k}} \ddot\alpha(t)=0.
\end{cases}
\end{align}
The first line of facts follows directly from the definition of $\theta$ (see footnote \ref{footonethetaparabolicstepone}). To see the second and third lines of facts, we recall that $\alpha(t)=\theta(kt-2)$ and that
 for any $\tau \in [0, 1]$ and for any $n \geq 1,$
$$
\theta^{(n)} (\tau) = \e^{-\tan^2(\pi(\tau-1/2))} P_n\big(\tan(\pi(\tau-1/2))\big)
$$
where $P_n$ is a polynomial. This also shows that $\theta$ satisfies $\sup_{\tau \in [0, 1]} |\theta^{(n)}(\tau)| \leq C_n$ for some universal constants $C_n$ and for all $n \geq 1.$

\begin{itemize}
    \item \textbf{The regularity of $\tilde v:$} First note that by the property of $\alpha$ given in the second and third column of \eqref{factsalphasteponeparabolicsteptwo}, it is clear that $\tilde v$ (defined in \eqref{completedefubsteponeparazerocsteptwo}) is $C^2$ on $\T^2 \times [\frac{2}{k}, \frac{7}{2k}].$ Also, by the definition of $\tilde v, u_1, u_2$ and $\alpha$ it is clear that 
    \begin{align}\label{shapetildevparab}
    \tilde v(x,y,t) = f(t) \e^{ikx} +g(t) \e^{ik'y}    
    \end{align}
on  $\T^2 \times [\frac{2}{k}, \frac{7}{2k}]$ where 
 $$
 f(t):=\begin{cases}
     \alpha(t) \e^{-k^2t} & \mbox{ for } t \in [\frac{2}{k}, \frac{3}{k}] \\
     0 & \mbox{ for } t \in [\frac{3}{k}, \frac{7}{2k}] 
 \end{cases}, \hspace{0.5cm} g(t):= \e^{-(k')^2t}
 $$
 In particular, $f, g$ are $C^2$ on $[\frac{2}{k}, \frac{7}{2k}].$ Moreover, since $1\leq k$ and since  $|\dot{\alpha}|\lesssim k$ and $|\ddot \alpha| \lesssim k^2$, it is clear that $f, g$ satisfy
for all $0 \leq n \leq 2$ and for all $t \in [\frac{2}{k}, \frac{7}{2k}],$
    \begin{align}\label{propfgsteponeparabzerocsteptwo}
    |f^{(n)}(t)|\lesssim k^{2n} \e^{-k^2t}, \hspace{0.5cm}  |g^{(n)}(t)|\lesssim (k')^{2n} \e^{-(k')^2t}.    
    \end{align}

    \item \textbf{The regularity of $\tilde C$:} Recall that $\tilde C$ was defined in \eqref{completedefubsteponeparazerocsteptwo} by 
    \begin{align*}
     \tilde C:=  \begin{cases}
        \binom{0}{\tilde C_2} & \mbox{ in } \T^2 \times [\frac{2}{k}, \frac{3}{k}] \\
        0 & \mbox{ in } \T^2 \times [\frac{3}{k}, \frac{7}{2k}]
    \end{cases}.  
    \end{align*}
    where $\tilde C_2=\frac{\dot{\alpha}u_1}{\partial_y u_2}$ (see \eqref{defbonestepineparaboliczerocsteptwo}) where $u_1=\e^{ikx} \e^{-k^2t}$ and where $u_2=\e^{ik'y}\e^{-(k')^2t}.$ By the properties of $\alpha$ \eqref{factsalphasteponeparabolicsteptwo} and since $\partial_y u_2 =ik'u_2$, it is clear that $\tilde C$ is continuous on $[\frac{2}{k}, \frac{7}{2k}]$. To see that it is also uniformly bounded, we recall that $|\dot \alpha| \lesssim k$ (see \eqref{factsalphasteponeparabolicsteptwo}) and we note that for $t \in [\frac{2}{k}, \frac{3}{k}],$
    \begin{align}\label{uniformboundedctwosteptwoparab}
       |\tilde C_2| = \left|\frac{\dot{\alpha}u_1}{\partial_y u_2}\right| \lesssim k \left|\frac{u_1}{k' u_2}\right|\leq\e^{\left((k')^2-k^2\right)t} \leq \e^{3\left((k')^2-k^2\right)/k}
    \end{align}
    
    since $1\leq k \leq k'$ by assumption. By the mean value Theorem, $(k')^2-k^2 \leq 2 k'(k'-k)$. Therefore, 
    $$
    \frac{\left((k')^2-k^2\right)}{k} \leq 2\frac{k'}{k}(k'-k).
    $$
By assumption, $k'-k\leq 10.$ In particular, this implies $\frac{k'}{k} \leq 11$ since $k \geq 1$ by assumption. Hence, 
\begin{align}\label{mvtparabsteptwo}
  \frac{\left((k')^2-k^2\right)}{k} \lesssim 1
\end{align}
 and therefore by \eqref{uniformboundedctwosteptwoparab}, $\tilde C_2$ is uniformly bounded. Note that by \eqref{uniformboundedctwosteptwoparab} and \eqref{mvtparabsteptwo}, the function $\tilde C_2$ is not uniformly $C^1$ in $k, k'$ (for instance differentiating in $x$ yields a factor $k$ that cannot be absorbed).

\end{itemize}
\textbf{Conclusion of Step 2:}
    In conclusion, in $\T^2 \times [\frac{2}{k}, \frac{7}{2k}],$ we defined a $C^2$ functions $\tilde v$ and a uniformly bounded continuous vector field $\tilde C \in \mathbb C^2$ such that $\dot{\tilde v} = \Delta \tilde v + \tilde C\nabla \tilde v.$
  
\pseudosection{Step 3: The gluing}
We define a function $u$ and a vector field $B \in \mathbb C^2$ on $\T^2 \times [0, \frac{7}{2k}]$ by 
\begin{align}\label{defudefbfinalparabzroc}
   U= \begin{cases}
        \tilde u & \mbox{ in } \T^2 \times [0, \frac{2}{k}] \\
        \tilde v & \mbox{ in } \T^2 \times [\frac{2}{k}, \frac{7}{2k}]
    \end{cases}, \hspace{0.5cm} B= \begin{cases}
        \tilde B & \mbox{ in } \T^2 \times [0, \frac{2}{k}] \\
        \tilde C & \mbox{ in } \T^2 \times [\frac{2}{k}, \frac{7}{2k}]
    \end{cases}
\end{align}
where $\tilde u, \tilde B$ were introduced in step 1 \eqref{completedefubsteponeparazeroc} and $\tilde v, \tilde C$ were introduced in step 2 \eqref{completedefubsteponeparazerocsteptwo}. 

\begin{itemize}
    \item By step 1 and step 2, to see that $U \in C^2$, we only need to verify the regularity of $U$ at $t =\frac{2}{k}$. By \eqref{completedefubsteponeparazeroc}, $U=\tilde u = u_1+u_2$ in $\T^2 \times [\frac{3}{2k}, \frac{2}{k}]$. By \eqref{completedefubsteponeparazerocsteptwo}, $U=\tilde v= \alpha u_1+u_2$ in $\T^2 \times [\frac{2}{k}, \frac{3}{k}].$ By \eqref{factsalphasteponeparabolicsteptwo},
\begin{align}\label{recallpropalphastepthreezerocparab}
      \lim_{t \rightarrow \frac{2}{k}} \alpha(t)=1, \hspace{0.3cm}
    \lim_{t \rightarrow \frac{2}{k}} \dot\alpha(t)=0,\hspace{0.3cm}
     \lim_{t \rightarrow \frac{2}{k}} \ddot\alpha(t)=0.  
\end{align}

Therefore, $U$ is $C^2$ at $t=\frac{2}{k}$ and therefore is $C^2$ in $\T^2 \times [0, \frac{7}{2k}].$

By \eqref{shapetildeustepone} and \eqref{propfgsteponeparabzeroc} in step 1, $U=\tilde u = f(t)\e^{ikx} + g(t) \e^{ik'y}$ with $f, g \in C^2$ satisfying $|f^{(n)}(t)|\lesssim k^{2n} \e^{-kt}$ and $|g^{(n)}(t)|\lesssim (k')^{2n} \e^{-k't}$ for every $ t \in [0, \frac{2}{k}].$ By \eqref{shapetildevparab} and \eqref{propfgsteponeparabzerocsteptwo}, the same is true for $U=\tilde v$ in $\T^2 \times [\frac{2}{k}, \frac{7}{2k}].$ Hence, we can conclude that the function $U$ defined in \eqref{defudefbfinalparabzroc} is $C^2$ and satisfies $U=f(t)\e^{ikx} + g(t) \e^{ik'y}$ with $f, g \in C^2$ such that $|f^{(n)}(t)|\lesssim k^{2n} \e^{-kt}$ and $|g^{(n)}(t)|\lesssim (k')^{2n} \e^{-k't}$ for every $ t \in [0, \frac{7}{2k}].$

\item By step 1 and step 2, to see that $B$ is continuous, we only need to verify that $B$ is continuous at $t=\frac{2}{k}.$ By \eqref{completedefubsteponeparazeroc} in step 1, $B=\tilde B= 0$ in $\T^2 \times [\frac{3}{2k}, \frac{2}{k}]$. By \eqref{completedefubsteponeparazerocsteptwo} in step 2, $B=\tilde C=\binom{0}{\tilde C_2}$ in $\T^2 \times [\frac{2}{k}, \frac{3}{k}]$ where $\tilde C_2$ was defined in \eqref{defbonestepineparaboliczerocsteptwo} by $\tilde C_2 = \frac{\dot{\alpha}u_1}{\partial_y u_2}.$ By \eqref{recallpropalphastepthreezerocparab}, it is clear that $B$ is continuous at $t=\frac{2}{k}.$ Hence, we get that $B$ defined in \eqref{defudefbfinalparabzroc} is a continuous vector field in $\T^2 \times [0, \frac{7}{2k}].$ It is also uniformly bounded by the conclusion of steps 1 and 2. We also note that the equation $\dot u = \Delta u + B \nabla u$ is satisfied in $\T^2 \times [0, \frac{7}{2k}].$ And finally, for $t \in [0, \frac{1}{2k}],$ $U=u_1, B=0$ by \eqref{completedefubsteponeparazeroc} and for $t \in [\frac{3}{k}, \frac{7}{2k}],$ $U=u_2, B=0$ by \eqref{completedefubsteponeparazerocsteptwo}. This finishes the proof of Lemma \ref{buildingblockparabolic}.

\end{itemize}
 \end{proof}

As in the elliptic case, the next step is to prove a shifted version of Lemma \ref{buildingblockparabolic}.

\begin{lemma}\label{shiftedblockparab}
    Let $1\leq k \leq k' \leq k+10.$ Denote $u_1:=\e^{ikx} \e^{-k^2t}$ and $u_2:=\e^{ik'y} \e^{-(k')^2t}$ and note that they are both solutions to the same equation $\dot{u}=\Delta u$. 

    For any $c_1>0$ and for any $t_1\geq 0$, there exists $c_2>0$ such that we can transform $c_1u_1$ into $c_2u_2$ via a solution $u$ to $\dot{u}=\Delta u + B\nabla u$, within the set $\T^2 \times [t_1, t_1+\frac{7}{2k}]$ where the vector field $B$ is continuous and uniformly bounded. The constants $c_1, c_2$ are related by 
    $$
    c_1\e^{-k^2t_1} = c_2\e^{-(k')^2t_1}.
    $$    
    
    For $t \in [t_1, t_1+\frac{7}{2k}]$, the function $u$ is of the form 
    \begin{align}\label{shapeubuildingblockparab}
        u(x,y,t) = f(t) \e^{ikx} + g(t) \e^{ik'y}
    \end{align}
    where $f, g \in C^2$ satisfy for all $0 \leq \alpha \leq 2,$
    $$
    |f^{(\alpha)}(t)|\lesssim c_1k^{2\alpha} \e^{-k^2t}, \hspace{0.5cm}  |g^{(\alpha)}(t)|\lesssim c_2(k')^{2\alpha} \e^{-(k')^2t}.
    $$
    Moreover, for $t \in [t_1, t_1+\frac{1}{2k}],$
    $$
    u = c_1u_1, \hspace{0.5cm} B=0,
    $$
    and for $t \in [t_1+\frac{3}{k}, t_1+\frac{7}{2k}],$
    $$
    u = c_2u_2, \hspace{0.5cm} B=0. 
    $$
\end{lemma}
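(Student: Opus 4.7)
The plan is to reduce Lemma~\ref{shiftedblockparab} to the (unshifted) building block Lemma~\ref{buildingblockparabolic} by a time-shift and rescaling, exactly analogous to the reduction used for the elliptic slowing-down Proposition~\ref{slownew} to Proposition~\ref{slownewzeroc} in Section~\ref{reduce34}. Since the equation $\dot u = \Delta u + B\nabla u$ is both linear in $u$ (so multiplying the solution by a scalar preserves the equation with the same $B$) and autonomous in $t$ (so a time-shift preserves the equation with $B$ shifted accordingly), no genuine new analysis is needed.

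First, I would invoke Lemma~\ref{buildingblockparabolic} to obtain a $C^2$ function $u(x,y,t)$ and a continuous uniformly bounded vector field $B(x,y,t)$ on $\T^2\times[0,\tfrac{7}{2k}]$ transforming $u_1$ into $u_2$. Next, define the shifted objects
\begin{equation*}
   \tilde u(x,y,t) := u(x,y,t-t_1), \qquad \tilde B(x,y,t) := B(x,y,t-t_1), \qquad t\in[t_1,t_1+\tfrac{7}{2k}],
\end{equation*}
so that $\dot{\tilde u} = \Delta \tilde u + \tilde B \nabla \tilde u$. Finally, rescale by setting
\begin{equation*}
   U(x,y,t) := c_1\, e^{-k^2 t_1}\, \tilde u(x,y,t), \qquad c_2 := c_1\, e^{((k')^2 - k^2)\, t_1},
\end{equation*}
and keep the same vector field $\tilde B$. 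By linearity, $U$ solves $\dot U = \Delta U + \tilde B\nabla U$ on $\T^2\times[t_1, t_1+\tfrac{7}{2k}]$, and the defining relation for $c_2$ is exactly $c_1 e^{-k^2 t_1} = c_2 e^{-(k')^2 t_1}$ as required.

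I would then check the three remaining claims directly. On $[t_1, t_1+\tfrac{1}{2k}]$, Lemma~\ref{buildingblockparabolic} gives $u(x,y,t-t_1)=u_1(x,t-t_1)=e^{ikx}e^{-k^2(t-t_1)}$ and $B(x,y,t-t_1)=0$, so $U = c_1 u_1$ and $\tilde B = 0$. On $[t_1+\tfrac{3}{k}, t_1+\tfrac{7}{2k}]$, the same lemma gives $u(x,y,t-t_1) = e^{ik'y}e^{-(k')^2(t-t_1)}$ and $B=0$, so
\begin{equation*}
   U = c_1 e^{-k^2 t_1} e^{ik'y} e^{-(k')^2(t-t_1)} = c_1 e^{((k')^2 - k^2) t_1} e^{ik'y} e^{-(k')^2 t} = c_2 u_2,
\end{equation*}
and $\tilde B = 0$. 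Continuity and uniform boundedness of $\tilde B$ are preserved under the shift, and $U$ inherits its $C^2$ regularity from $u$.

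For the explicit form \eqref{shapeubuildingblockparab} and the derivative bounds, I would use the representation $u(x,y,t) = f(t)e^{ikx} + g(t)e^{ik'y}$ furnished by Lemma~\ref{buildingblockparabolic}, which yields
\begin{equation*}
   U(x,y,t) = F(t)e^{ikx} + G(t)e^{ik'y}, \qquad F(t):=c_1 e^{-k^2 t_1} f(t-t_1),\quad G(t):=c_1 e^{-k^2 t_1} g(t-t_1).
\end{equation*}
The bound $|f^{(\alpha)}(s)|\lesssim k^{2\alpha}e^{-k^2 s}$ gives $|F^{(\alpha)}(t)|\lesssim c_1 k^{2\alpha}e^{-k^2 t}$ immediately, and the bound $|g^{(\alpha)}(s)|\lesssim (k')^{2\alpha}e^{-(k')^2 s}$ combined with the relation $c_1 e^{-k^2 t_1}=c_2 e^{-(k')^2 t_1}$ gives $|G^{(\alpha)}(t)|\lesssim c_2(k')^{2\alpha}e^{-(k')^2 t}$, matching the required estimates. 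There is no genuine obstacle in this argument; the only thing to be careful about is the bookkeeping that shows the rescaling converts the ``intrinsic'' endpoint $u_2(x,y,t-t_1)$ into the desired $c_2 u_2(x,y,t)$ with the correct $c_2$, which is precisely the purpose of choosing the multiplicative factor $c_1 e^{-k^2 t_1}$.
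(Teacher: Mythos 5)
Your proposal is correct and follows essentially the same route as the paper's own proof: shift the building-block solution of Lemma \ref{buildingblockparabolic} in time, multiply by $c_1 e^{-k^2 t_1}$, define $c_2$ via $c_1 e^{-k^2 t_1}=c_2 e^{-(k')^2 t_1}$, and transfer the endpoint identities and derivative bounds using that relation. No gaps.
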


\begin{proof}[Proof of Lemma \ref{shiftedblockparab}]
    Consider the function $u$ and the vector field $B$ from Lemma \ref{buildingblockparabolic}. They satisfy $\dot u = \Delta u+B \nabla u$ where $B$ is continuous and uniformly bounded and $u$ transforms $\e^{ikx}\e^{-k^2t}$ into $\e^{ik'y} \e^{-(k')^2t}$ within the set $\T^2 \times [0, \frac{7}{2k}]$. 

    Let $t_1\geq 0.$ Denote 
    \begin{align}
        \tilde u(t):= u(t-t_1), \hspace{0.5cm} \tilde B(t):=B(t-t_1).
    \end{align}
Then $\tilde u$ transforms $\e^{ikx} \e^{-k^2(t-t_1)}$ into $\e^{ik'y} \e^{-(k')^2(t-t_1)}$ within the set $\T^2 \times [t_1, t_1+\frac{7}{2k}]$ and $\tilde B$ is continuous and uniformly bounded. Let $c_1>0$ and define $c_2$ by
\begin{align}\label{defconectwoparab}
    c_1\e^{-k^2t_1} = c_2\e^{-(k')^2t_1}.
\end{align}

Denote $U(x,y,t):=c_1 \e^{-k^2t_1} \tilde u(x,y,t).$ Then, using \eqref{defconectwoparab}, we see that $U$ transforms $c_1\e^{ikx} \e^{-k^2t}$ into $c_2 \e^{ik'y} \e^{-(k')^2t}$ within the set $\T^2 \times [t_1, t_1+\frac{7}{2k}]$ and $\tilde B$ is continuous and uniformly bounded. Moreover, $U$ solves $\dot U = \Delta U + \tilde B \nabla U$ in $\T^2 \times [t_1, t_1+\frac{7}{2k}]$ .

By Lemma \ref{buildingblockparabolic}, for $t \in [0, \frac{7}{2k}]$, the function $u$ is of the form 
    \begin{align}\label{shapeubuildingblockparab}
        u(x,y,t) = f(t) \e^{ikx} + g(t) \e^{ik'y}
    \end{align}
    where $f, g \in C^2$ satisfy for all $0 \leq \alpha \leq 2,$
    $$
    |f^{(\alpha)}(t)|\lesssim k^{2\alpha} \e^{-k^2t}, \hspace{0.5cm}  |g^{(\alpha)}(t)|\lesssim (k')^{2\alpha} \e^{-(k')^2t}.
    $$
Hence, the function $U=c_1 \e^{-k^2t_1} u(x,y,t-t_1)$, $t \in [t_1, t_1+\frac{7}{2k}]$ is of the form 
$$
U(x,y,t)=F(t) \e^{ikx} + G(t) \e^{ik'y}
$$
where $F(t)=c_1 \e^{-k^2t_1} f(t-t_1)$ and $G(t)=c_2 \e^{-(k')^2t_1} g(t-t_1)$ (using \eqref{defconectwoparab}). Clearly, $F, G \in C^2$ and they satisfy for all $0 \leq \alpha \leq 2,$
$$
|F^{(\alpha)}(t)|\lesssim c_1k^{2\alpha} \e^{-k^2t}, \hspace{0.5cm}  |G^{(\alpha)}(t)|\lesssim c_2(k')^{2\alpha} \e^{-(k')^2t}.
$$

Finally, since for $t \in [0, \frac{1}{2k}],$ $u(x,y,t)=\e^{ikx}\e^{-k^2t}, B=0$ and since for $t \in [\frac{3}{k}, \frac{7}{2k}]$ $u=\e^{ik'y} \e^{-(k')^2t}, B=0$, it is clear that $U$ and $\tilde B$ satisfy for $t \in [t_1, t_1+\frac{1}{2k}],$ $U(x,y,t)=c_1\e^{ikx}\e^{-k^2t}, \tilde B=0$ and  for $t \in [t_1+\frac{3}{k}, t_1+\frac{7}{2k}]$ $U(x,y,t)=c_2\e^{ik'y} \e^{-(k')^2t}, \tilde B=0$. This finishes the proof of Lemma \ref{shiftedblockparab}.
\end{proof}

We can now prove the main Theorem \ref{maintheoremparabolicbeg}. We recall it:
\begin{thm*}[\ref{maintheoremparabolicbeg}]
    In the 3-dimensional cylinder $\T^2 \times \R^+$, there exists a complex vector field $B \in \mathbb C^2$ which is continuous and uniformly bounded and there exists  a non-zero, uniformly $C^2$ complex-valued function $u$ such that 
    \begin{align}
        \dot{u} = \Delta u + B\nabla u
    \end{align}
    and $u$ has double exponential decay: for any $T\gg 1,$
    \begin{align}
        \sup_{\T^2 \times \{t \geq T\}}|u(x,y,t)| \leq e^{-ce^{cT}}
    \end{align}
    for some numerical $c>0.$
\end{thm*}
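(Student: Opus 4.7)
The plan is to iterate the shifted building block, Lemma \ref{shiftedblockparab}, infinitely many times on a sequence of shrinking sub-intervals whose union exhausts $\T^2 \times \R^+$. Fix a large integer $k_0$ and set $k_n := k_0 + 10(n-1)$ so that $k_n \leq k_{n+1} \leq k_n + 10$ satisfies the hypothesis of Lemma \ref{shiftedblockparab}. Define the times $t_1 := 0$ and $t_{n+1} := t_n + \tfrac{7}{2k_n}$, and the constants $c_1 := 1$, $c_{n+1} := c_n e^{(k_{n+1}^2 - k_n^2) t_n}$, so that the compatibility $c_n e^{-k_n^2 t_n} = c_{n+1} e^{-k_{n+1}^2 t_n}$ holds. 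On each slab $\T^2 \times [t_n, t_{n+1}]$ apply Lemma \ref{shiftedblockparab} (with variables $(x,y)$ swapped when $n$ is even, as in the elliptic construction) to obtain a $C^2$ function $u_n$ and a continuous uniformly bounded vector field $B_n$ with $\dot u_n = \Delta u_n + B_n \nabla u_n$, transforming $c_n e^{ik_n x} e^{-k_n^2 t}$ into $c_{n+1} e^{ik_{n+1} y} e^{-k_{n+1}^2 t}$. Define $u$ and $B$ on $\T^2 \times \R^+$ piecewise by these blocks.

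Because $\sum_{n\geq 1} \tfrac{7}{2k_n}$ diverges (it is comparable to $\tfrac{7}{20}\log n$), the times $t_n$ tend to $+\infty$, so $u$ and $B$ are defined on the whole half-cylinder. The $C^2$-regularity of $u$ and the continuity of $B$ at the matching times $t_n$ are automatic: the final clauses of Lemma \ref{shiftedblockparab} guarantee that near each endpoint of a block one has $u_n = c_n e^{i k_n x} e^{-k_n^2 t}$ (or the analogous expression with $k_{n+1}, y$) and $B_n \equiv 0$, so the pieces agree with their derivatives at the seams. Uniform boundedness of $B$ follows block by block: the only nontrivial components $\tilde B_1, \tilde C_2$ built in the proof of Lemma \ref{buildingblockparabolic} are bounded by quantities controlled by $(k'/k)(k'-k)$, which stays bounded under our choice $k_{n+1} - k_n \leq 10$.

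For the decay, let $S_n := \sup_{\T^2}|u(t_n)| = c_n e^{-k_n^2 t_n}$. The compatibility relation and the maximum estimate $|u| \lesssim S_n$ inside the $n$-th block (which follows from the representation $u = f(t) e^{ik_n x} + g(t) e^{ik_{n+1}y}$ and the bounds in Lemma \ref{shiftedblockparab}) yield
\begin{equation*}
\frac{S_{n+1}}{S_n} \;=\; e^{-k_{n+1}^2 (t_{n+1}-t_n)} \;=\; e^{-\frac{7 k_{n+1}^2}{2 k_n}} \;\leq\; e^{-c_1 n}
\end{equation*}
for a numerical $c_1 > 0$ and $n$ large. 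Summing the exponents gives $\log S_n \leq -c_2 n^2$, while $t_n = \sum_{j<n} \tfrac{7}{2 k_j} = c_3 \log n + O(1)$, so $n \gtrsim e^{c_4 t_n}$ and therefore $S_n \leq \exp(-c\, e^{c\, t_n})$ for some $c>0$. The uniform bound $|u(x,y,t)| \lesssim S_n$ on $[t_n,t_{n+1}]$ then yields the double-exponential decay stated in Theorem \ref{maintheoremparabolicbeg}, and the uniform $C^2$-bound on $u$ is a byproduct of the estimates $|f^{(\alpha)}| \lesssim c_n k_n^{2\alpha} e^{-k_n^2 t}$, $|g^{(\alpha)}| \lesssim c_{n+1} k_{n+1}^{2\alpha} e^{-k_{n+1}^2 t}$ from Lemma \ref{shiftedblockparab}, since $k_n^{2\alpha} S_n \to 0$ as $n\to\infty$.

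The only delicate point is to verify that the uniform bound on $B$ survives \emph{across all blocks}, not just within one; the constants in Lemma \ref{buildingblockparabolic} depend on $k'-k$ and on $k'/k$, both of which remain bounded as $n \to \infty$ under our choice of $k_n$, so this ultimately reduces to a uniform estimate on $\tilde C_2 \lesssim e^{((k_{n+1})^2 - k_n^2)/k_n} \lesssim e^{20(1+O(1/k_n))}$, which is uniformly finite. This is in sharp contrast with the elliptic situation where the building block doubled the frequency; here we pay for the weaker block by shrinking the block length, and the geometric bookkeeping above shows that it is still enough to reach double-exponential decay.
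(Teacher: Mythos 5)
Your proposal is correct and follows essentially the same route as the paper: iterate the shifted building block (Lemma \ref{shiftedblockparab}) on slabs of length $\tfrac{7}{2k_n}$ with the compatibility constants $c_n e^{-k_n^2 t_n}=c_{n+1}e^{-k_{n+1}^2 t_n}$, glue using the flat ends ($B=0$, pure exponential) of each block, and convert the bound $S_n\lesssim e^{-cn^2}$ together with $t_n\asymp \log n$ into double-exponential decay. The only difference is your choice $k_n=k_0+10(n-1)$ instead of the paper's $k_n=n$, which is immaterial since both satisfy $k_{n+1}-k_n\le 10$ and give the same asymptotics.
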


\begin{proof}[Proof of Theorem \ref{maintheoremparabolicbeg}]
    Let 
    \begin{align}\label{defknparabolicfull}
       k_n := n, \hspace{0.5cm} n\geq 1 .
    \end{align}
    Let $t_1:=0$ and for $n \geq 2$, consider the sequence of times 
    \begin{align}\label{seqeuncetimeparab}
        t_n:=\sum_{l=1}^{n-1} \frac{7}{2k_l}.
    \end{align}
    We decompose $\R^+$ as the almost disjoint union of intervals $[t_n, t_{n+1}].$ Let $c_1:=1$ and define the sequence $\{c_n\}$ by 
    \begin{align}\label{defcnfullparab}
        c_n \e^{-k_n^2t_n} = c_{n+1} \e^{-k_{n+1}^2 t_n}.
    \end{align}
By Lemma \ref{shiftedblockparab}, we can transform $c_n \e^{ik_nx} \e^{-k_n^2t}$ into $c_{n+1}\e^{ik_{n+1}y} \e^{-k_{n+1}^2t}$ within the set $\T^2 \times [t_n, t_{n+1}]$, via a solution $\tilde u_n(x,y,t)$ to $\dot{\tilde{u}}_n = \Delta \tilde u_n + \tilde B_n \nabla \tilde u_n$ where $\tilde B_n$ is continuous and uniformly bounded. In particular, by Lemma \ref{shiftedblockparab}, for  $t \in [t_n, t_n+\frac{1}{2k_n}],$ 
\begin{align}\label{tildeunbegfullparab}
    \tilde u_n(x,y,t) = c_n \e^{ik_nx} \e^{-k_n^2t}, \hspace{0.5cm} \tilde B_n=0
\end{align}
and for $t \in [t_{n+1}-\frac{1}{2k_n}, t_{n+1}],$
\begin{align}\label{tildeunendfullparab}
    \tilde u_n(x,y,t) = c_{n+1}\e^{ik_{n+1}y} \e^{-k_{n+1}^2t}, \hspace{0.5cm} \tilde B_n=0.
\end{align}
Moreover, by Lemma \ref{shiftedblockparab}, for $t \in [t_n, t_{n+1}],$ the function $\tilde u_n$ is of the form 
\begin{align}\label{shapetideuneparabfullproof}
    \tilde u_n(x,y,t) = f_n(t) \e^{ik_n x} + g_{n+1}(t) \e^{ik_{n+1}y}
\end{align}
where $f_n, g_{n+1}\in C^2$ satisfy for all $0\leq \alpha \leq 2$ and for all $t \in [t_n, t_{n+1}],$
\begin{align}\label{estimatefngnplusonefullparab}
    |f_n^{(\alpha)}(t)| \lesssim c_n (k_n)^{2\alpha} \e^{-k_n^2t}, \hspace{0.5cm} |g_{n+1}^{(\alpha)}(t)| \lesssim c_{n+1} (k_{n+1})^{2\alpha} \e^{-k_{n+1}^2t}.
\end{align}
For $t \in [t_n, t_{n+1}],$ define the functions 
\begin{align}\label{defunbnfullpproofprab}
    u_n(x,y,t):=\begin{cases}
        \tilde u_n(x,y,t), & n \mbox{ odd },\\
        \tilde u_n(y,x,t), & n \mbox{ even },
    \end{cases}, \hspace{0.5cm} B_n(x,y,t):= \begin{cases}
        \tilde B_n(x,y,t), & n \mbox{ odd }, \\
        \tilde B_n^t(y,x,t), & n \mbox{ even },
    \end{cases}
\end{align}
where if a vector $C=\binom{C_1}{C_2}$, we define  the vector $C^t:=\binom{C_2}{C_1}.$ We will now glue all these functions $u_n, B_n$ to get a function $u$ and a vector field $B$ on $\T^2 \times \R^+.$ For all $n \geq 1$ and for all $t \in [t_n, t_{n+1}],$ we define 
\begin{align}\label{defuandbparab}
    u(x,y,t):=u_n(x,y,t), \hspace{0.5cm} B(x,y,t):=B_n(x,y,t).
\end{align}

\pseudosection{The regularity of $B$}
We verify that the vector field $B$ defined in \eqref{defuandbparab} is continuous and uniformly bounded in $\T^2 \times \R^+$. By the definition of $B$ \eqref{defuandbparab}, by the definition of $B_n$ \eqref{defunbnfullpproofprab} and since $\tilde B_n$ is the vector field from Lemma \ref{shiftedblockparab}, it is clear that $B$ is continuous in $\T^2 \times (t_n, t_{n+1})$ and is uniformly bounded in $\T^2 \times \R^+.$ We only need to show that $B$ is continuous across the endpoints $t_n$, $n \geq 2.$ By the definition of $B$ \eqref{defuandbparab} (see also \eqref{defunbnfullpproofprab}) and by \eqref{tildeunbegfullparab}, \eqref{tildeunendfullparab}, for $n \geq 1,$
\begin{align}\label{biscontinuousunifbdedparab}
  \begin{cases}
   B= 0, & t \in [t_{n+1}-\frac{1}{2k_n}, t_{n+1}], \\
   B=0, & t \in [t_{n+1}, t_{n+1}+\frac{1}{2k_{n+1}}].
\end{cases}  
\end{align}

Therefore, $B$ is continuous across the endpoints and we conclude that $B$ is continuous and uniformly bounded in $\T^2 \times \R^+.$

\pseudosection{The regularity of $u$}
We will show that $u$ is uniformly $C^2$ in $\T^2 \times \R^+.$ The proof will be split into several steps.
\\

\textbf{Step 1: The regularity across $t_n$, $n \geq 2.$}
Without loss of generality, we assume $n$ is odd. By the definition of $u$ \eqref{defuandbparab} (see also \eqref{defunbnfullpproofprab}) and by \eqref{tildeunbegfullparab}, \eqref{tildeunendfullparab}, for $n \geq 1,$
$$
\begin{cases}
   u= c_{n+1} \e^{ik_{n+1}y} \e^{-k_{n+1}^2t}, & t \in [t_{n+1}-\frac{1}{2k_n}, t_{n+1}], \\
   u=c_{n+1} \e^{ik_{n+1}y} \e^{-k_{n+1}^2t}, & t \in [t_{n+1}, t_{n+1}+\frac{1}{2k_{n+1}}].
\end{cases}
$$
Therefore, $u$ is $C^2$ across the endpoints.
\\

\textbf{Step 2: The regularity of $u$ on $\T^2 \times [t_n, t_{n+1}]$.}
We assume again that $n$ is odd. By the definition of $u$ \eqref{defuandbparab} (see also \eqref{defunbnfullpproofprab}) and by \eqref{shapetideuneparabfullproof}, for $t \in [t_n, t_{n+1}],$
   $ u(x,y,t) = f_n(t) \e^{ik_n x} + g_{n+1}(t) \e^{ik_{n+1}y}$ with $f_n, g_{n+1} \in C^2.$ Therefore, $u$ is $C^2$ in $\T^2 \times [t_n, t_{n+1}].$

\textbf{Partial conclusion:} By step 1 and step 2 above, we can conclude that $u$ is $C^2$ in $\T^2 \times \R^+$. Moreover, by construction, $u$ solves $\dot u = \Delta u + B \nabla u$ on $\T^2 \times \R^+.$ Note that we do not claim yet that $u$ is \textit{uniformly} $C^2$.

\pseudosection{Step 3: The super-exponential decay of $u$ and the uniform $C^2$ boundedness.}
In this step, we show the following result: for any multi-index $\alpha \in \mathbb N^3$ of order $|\alpha| \leq 2,$ the following holds: for any $T \gg 1,$
\begin{align}\label{superexpforallderparab}
    \sup_{\T^2 \times \{t \geq T\}} |\partial^{\alpha}u(x,y,t)| \leq \e^{-c\e^{cT}}
\end{align}
for some numerical constant $c>0.$ This result shows that $u$ as well as all its derivatives of order 2 or less, decay super-exponentially at infinity. In particular, \eqref{superexpforallderparab} shows that $u$ has super-exponential decay at infinity and that $u$ is uniformly $C^2$ in $\T^2 \times \R^+.$

\begin{proof}[Proof of \eqref{superexpforallderparab}]
    Let $t \in [t_n, t_{n+1}]$ for some $n \geq 1.$ Suppose without loss of generality that $n$ is odd. By the definition of $u$ \eqref{defuandbparab} (see also \eqref{defunbnfullpproofprab}) and by \eqref{shapetideuneparabfullproof}, for $t \in [t_n, t_{n+1}],$
   \begin{align}\label{shapeuparabfinalgoodblabla}
   u(x,y,t) = f_n(t) \e^{ik_n x} + g_{n+1}(t) \e^{ik_{n+1}y}    
   \end{align}
    with $f_n, g_{n+1} \in C^2$ satisfying  for all $0\leq \beta \leq 2$ and for all $t \in [t_n, t_{n+1}],$
\begin{align*}
    |f_n^{(\beta)}(t)| \lesssim c_n (k_n)^{2\beta} \e^{-k_n^2t}, \hspace{0.5cm} |g_{n+1}^{(\beta)}(t)| \lesssim c_{n+1} (k_{n+1})^{2\beta} \e^{-k_{n+1}^2t}.
\end{align*}
    \\
    
    \textbf{Claim 1:} For all $t \gg 1$, i.e., for $t \in [t_n, t_{n+1}]$ with $n \gg 1,$ and for all $0\leq \beta \leq 2,$ we have 
\begin{align}\label{estimatesfordoubledecayparabolicfull}
    |f_n^{(\beta)}(t)| \lesssim   \e^{-\frac{7}{8}n(n-1)}, \hspace{0.5cm} |g_{n+1}^{(\beta)}(t)| \lesssim   \e^{-\frac{7}{8}n(n-1)}.
\end{align}
We postpone the proof of Claim 1 and we continue with the proof of \eqref{superexpforallderparab}. Let $T \gg 1,$ that is, $T \in [t_{n_1}, t_{n_1+1}]$ for some $n_1\gg 1.$ For any $t \geq T$, $t$ will be in some interval $[t_{n_2}, t_{n_2+1}]$, $n_2\geq n_1.$ By \eqref{shapeuparabfinalgoodblabla}, $
   u(x,y,t) = f_{n_2}(t) \e^{ik_{n_2} x} + g_{n_2+1}(t) \e^{ik_{n_2+1}y} $ (assuming $n_2$ is odd). Therefore, for any $0\leq \beta \leq 2,$ and for $t \in [t_{n_2}, t_{n_2+1}]$,
   \begin{align}\label{almostdoneprbfull}
       |\partial_t^{\beta}u(x,y,t)|\leq |f_{n_2}^{(\beta)}(t)| + |g_{n_2+1}^{(\beta)}(t)| \lesssim \e^{-\frac{7}{8}n_2(n_2-1)} \leq \e^{-\frac{7}{8}n_1(n_1-1)}
   \end{align}
since $n_2\geq n_1.$

Now, recall the definition $t_{n_1}=\sum_{l=1}^{n_1-1}\frac{7}{2k_l}=\sum_{l=1}^{n_1-1}\frac{7}{2l}$ since $k_l=l$ by definition (see \eqref{defknparabolicfull}). Therefore, since $T \leq t_{n_1+1},$ we get 
$$
T \leq \sum_{l=1}^{n_1}\frac{7}{2l} =\frac{7}{2}\left( 1+  \sum_{l=2}^{n_1}\frac{1}{l}\right)
\leq\frac{7}{2}\left(1+\int_{1}^{n_1}\frac{1}{x} \,dx\right) 
\leq 7 \ln(n_1).
$$
Therefore, $\e^{\frac{T}{7}}\leq n_1$, hence, by \eqref{almostdoneprbfull}, we get for any $0\leq \beta \leq 2$ and for any $t \geq T,$
$$
|\partial_t^{\beta}u(x,y,t)|\leq \e^{-\frac{7}{8}e^{\frac{T}{7}}}.$$
Since this holds for any $t\geq T$, we proved \eqref{superexpforallderparab} in the case where the multi-index $\alpha=(0, 0, \beta).$ By combining \eqref{shapeuparabfinalgoodblabla} with Claim 1 \eqref{estimatesfordoubledecayparabolicfull}, a similar argument ensures that \eqref{superexpforallderparab} holds for any multi-index $\alpha$ of order $|\alpha|\leq 2.$ This finishes the proof of \eqref{superexpforallderparab} assuming Claim 1.
\end{proof}

    We now prove Claim 1.
    \begin{proof}[Proof of Claim 1] By \eqref{shapeuparabfinalgoodblabla}, for $t \in [t_n, t_{n+1}]$ (assuming $n$ is odd), we have
    $$
    u(x,y,t) = f_n(t) \e^{ik_n x} + g_{n+1}(t) \e^{ik_{n+1}y}    $$
    
    with $f_n, g_{n+1} \in C^2$  satisfying  for all $0\leq \beta \leq 2$ and for all $t \in [t_n, t_{n+1}],$
\begin{align*}
    |f_n^{(\beta)}(t)| \lesssim c_n (k_n)^{2\beta} \e^{-k_n^2t}, \hspace{0.5cm} |g_{n+1}^{(\beta)}(t)| \lesssim c_{n+1} (k_{n+1})^{2\beta} \e^{-k_{n+1}^2t}.
\end{align*}
In particular, since $t \geq t_n$ we get 
\begin{align*}
    |f_n^{(\beta)}(t)| \lesssim c_n (k_n)^{2\beta} \e^{-k_n^2t_n}, \hspace{0.5cm} |g_{n+1}^{(\beta)}(t)| \lesssim c_{n+1} (k_{n+1})^{2\beta} \e^{-k_{n+1}^2t_n}.
\end{align*}
Recalling that $c_{n+1} = c_n \e^{(k_{n+1}^2-k_n^2)t_n}$ by definition \eqref{defcnfullparab}, we get
\begin{align}\label{estimatefngnplusonefullparabfullblabl}
    |f_n^{(\beta)}(t)| \lesssim c_n (k_n)^{2\beta} \e^{-k_n^2t_n}, \hspace{0.5cm} |g_{n+1}^{(\beta)}(t)| \lesssim c_n (k_{n+1})^{2\beta} \e^{-k_n^2t_n}.
\end{align}

\textbf{Claim 2:} For all $n \geq 1$, define $C_n:=c_n \e^{-k_n^2t_n}.$ Then, $C_n\leq \e^{\frac{-7}{4}(n-1)n}.$
\\

We postpone the proof of Claim 2 and continue with the proof of Claim 1 \eqref{estimatesfordoubledecayparabolicfull}. By \eqref{estimatefngnplusonefullparabfullblabl} and by Claim 2, we get
$$
|f_n^{(\beta)}(t)| \lesssim  (n)^{2\beta} \e^{-\frac{7}{4}n(n-1)}, \hspace{0.5cm} |g_{n+1}^{(\beta)}(t)| \lesssim  (n+1)^{2\beta} \e^{-\frac{7}{4}n(n-1)}
$$
where we used $k_n=n$ by definition (see \eqref{defknparabolicfull}). Therefore, for all $t \gg 1$, i.e., for $t \in [t_n, t_{n+1}]$ with $n \gg 1,$ and for all $0\leq \beta \leq 2,$ we get 
\begin{align}\label{estimatesfordoubledecayparabolicfullb}
    |f_n^{(\beta)}(t)| \lesssim   \e^{-\frac{7}{8}n(n-1)}, \hspace{0.5cm} |g_{n+1}^{(\beta)}(t)| \lesssim   \e^{-\frac{7}{8}n(n-1)}.
\end{align}
This proves Claim 1 assuming Claim 2.
\end{proof}

We now prove Claim 2.

\begin{proof}[Proof of Claim 2]
    We have $C_{n+1} = c_{n+1}\e^{-k_{n+1}^2t_{n+1}} =c_n \e^{(k_{n+1}^2-k_n^2)t_n} e^{-k_{n+1}^2t_{n+1}}$
where we used $c_{n+1} = c_n \e^{(k_{n+1}^2-k_n^2)t_n}$ by definition \eqref{defcnfullparab}. By definition \eqref{seqeuncetimeparab}, $t_{n+1}=t_n+\frac{7}{2k_n}.$ Therefore, $$C_{n+1} = c_n\e^{(k_{n+1}^2-k_n^2)t_n} e^{-k_{n+1}^2(t_n+\frac{7}{2k_n})} =c_n \e^{-k_n^2t_n} \e^{\frac{-7k_{n+1}^2}{2k_n}}=C_n\e^{\frac{-7k_{n+1}^2}{2k_n}}.
$$
By definition \eqref{defknparabolicfull}, $k_{n+1}=k_n+1\geq k_n$. Therefore, 
$$
C_{n+1}\leq C_n \e^{\frac{-7k_n}{2}}.
$$
Since $k_n=n$, we get for all $n \geq 1,$
$$
C_n \leq C_1\e^{\frac{-7}{2} \sum_{l=1}^{n-1}l} = \e^{\frac{-7n(n-1)}{4}}
$$
since $C_1=c_1 \e^{-k_1^2t_1}$ and since $c_1=1$ and $t_1=0$ by definition (see \eqref{defcnfullparab} and \eqref{seqeuncetimeparab}). This finishes the proof of Claim 2. 
\end{proof}

\pseudosection{What we proved:}
\begin{itemize}
    \item We constructed a function $u$ and a vector field $B$ (see \eqref{defuandbparab}).
    \item In \eqref{biscontinuousunifbdedparab}, we showed that $B$ is continuous and uniformly bounded in $\T^2 \times \R^+.$
    \item In Step 3 (see \eqref{superexpforallderparab}) we showed that $u$ is uniformly $C^2$ in $\T^2 \times \R^+$ and that it has super-exponential decay.
    \item In the partial conclusion of Step 2, we showed that $u$ solves $\dot u=\Delta u+B \nabla u$ in $\T^2 \times \R^+$.
\end{itemize}
This finishes the proof of Theorem \ref{maintheoremparabolicbeg}.
\end{proof}

\comment{

In this step,  $t \in [1/\lambda_n, 2/\lambda_n]$. Let $\theta(t)$ as in Step 1. Define $\alpha(t):=\theta(t \lambda_n-1).$ Define also the function $$ u:=\alpha f_n + g_{n+1}.$$

    As in Step 1, since $\dot{f}_n=\Delta f_n$ and $\dot{g}_{n+1}=\Delta g_{n+1}$, it comes that
     
    \begin{align}\label{eqtionparabolicbremovesteptwo}
    \dot{u} = \Delta u +  \dot{\alpha} f_n.    
    \end{align}

    As in Step 1, we choose $B \in \mathbb{C}^2$ such that $\dot u = \Delta u + B \nabla u$ is satisfied. We take $B:= \begin{pmatrix}
        0 \\ B_2
    \end{pmatrix}$. Therefore $B\nabla u = B_2 \partial_y g_{n+1}$ since $u=\alpha(t) f_n(x,t)+g_{n+1}(y,t)$.  By \eqref{eqtionparabolicbremovesteptwo}, we see that $\dot u = \Delta u + B\nabla u$ if $$ B_2:= \frac{\dot{\alpha}f_n}{\partial_y g_{n+1}}.$$
\\

\textbf{Partial conclusion:} We have been able to go from $\e^{i \lambda_n x} \e^{-\lambda_n^2 t}$ to $\e^{i \lambda_{n+1}y} \e^{-\lambda_{n+1}^2 t}$ within the set $\{(x,y,t) : 0 \leq t \leq \frac{2}{\lambda_n}\}$. It remains to check the boundedness of $B$. 
\\

\textbf{Step 3: The boundedness of $B$.}
\begin{itemize}
    \item Recall that in Step 1, we were in the situation where $0 \leq t \leq 1/\lambda_n$ and where \\$B=\binom{B_1}{0}$ with $$ B_1 =  \frac{-\dot{\alpha}g_{n+1}}{\partial_x f_n}$$
and where $\alpha(t) = \theta(\lambda_n t).$

Recall also that $f_n=\e^{i \lambda_n x} \e^{-\lambda_n^2t}$, $g_{n+1}=\e^{i \lambda_{n+1} y} \e^{-\lambda_{n+1}^2t} $. 
We therefore have $|\dot{\alpha}| \leq \lambda_n \|\dot \theta\|_{\infty}$ and $\partial_x f_n = i \lambda_n f_n$. Hence,
\begin{align*}
    |B_1| = \left|\frac{g_{n+1}}{\partial_x f_n}\right||\dot{\alpha}| \leq \lambda_n \|\dot \theta\|_{\infty}\left|\frac{g_{n+1}}{\lambda_n f_n}\right| \leq \|\dot{\theta}\|_{\infty} \e^{-(\lambda_{n+1}^2 - \lambda_n^2)t} \leq  \|\dot{\theta}\|_{\infty}
\end{align*}
since $\lambda_n \leq \lambda_{n+1}.$ Hence, in this step, $B$ is uniformly bounded.

\item In Step 2, we were in the situation where $1/\lambda_n \leq t \leq 2/\lambda_n$ and where $B=\begin{pmatrix}
    0 \\ B_2
\end{pmatrix}$ with $$ B_2 =  \frac{\dot{\alpha}f_{n}}{\partial_y g_{n+1}}$$
and where $\alpha(t) = \theta(\lambda_n t-1).$
We also have that $f_n=\e^{i \lambda_n x} \e^{-\lambda_n^2t}$, $g_{n+1}=\e^{i \lambda_{n+1} y} \e^{-\lambda_{n+1}^2t} $.

Hence, $|\dot{\alpha}| \leq \lambda_n \|\dot{\theta}\|_{\infty}$ and $\partial_y g_{n+1} = i \lambda_{n+1} g_{n+1}.$ Therefore,

\begin{align}\label{btwoinque}
    |B_2| = \left|\frac{\dot{\alpha}f_{n}}{\partial_y g_{n+1}} \right| \leq \lambda_n \|\dot{\theta}\|_{\infty}\left| \frac{f_n}{\lambda_{n+1} g_{n+1}} \right| \leq \|\dot{\theta}\|_{\infty} \e^{(\lambda_{n+1}^2 - \lambda_n^2)t} \leq \|\dot \theta\|_{\infty} \e^{2(\lambda_{n+1}^2 - \lambda_n^2)/\lambda_n}
\end{align}
where we used that $\{\lambda_n\}$ is increasing in the second inequality and that $t \leq 2/\lambda_n$ in the last inequality.

By the mean value Theorem, $\lambda_{n+1}^2 - \lambda_n^2 \leq 2 \lambda_{n+1} (\lambda_{n+1}-\lambda_n).$
Therefore, 
$$
\frac{\lambda_{n+1}^2 - \lambda_n^2}{\lambda_n} \leq 2 \frac{\lambda_{n+1}}{\lambda_n}(\lambda_{n+1}-\lambda_n).
$$
By assumption, the sequence $\{\lambda_n\}$ satisfies $\lambda_{n+1}-\lambda_n \leq C$ which implies that\\ $\frac{\lambda_{n+1}}{\lambda_n} \leq 1 +C$ since $\lambda_n\geq 1.$ We finally get that 
$$
\frac{\lambda_{n+1}^2 - \lambda_n^2}{\lambda_n} \leq 2 C(1+C)
$$
and therefore by \eqref{btwoinque}, $B_2$ is uniformly bounded. This finishes the proof.
\end{itemize}

}

\comment{

The proof of Theorem \ref{maintheoremparabolic} relies on the following Lemma.

\begin{lemma}\label{transitionlemmaparabolic}
    Let $\{\lambda_n\}$ be an increasing sequence satisfying $\lambda_n \geq 1$ and $\lambda_{n+1}-\lambda_n \leq C$ for some fixed constant $C>0.$ Consider also $f_n:= \e^{i \lambda_n x} \e^{-\lambda_n^2 t}$ and $g_{n+1}:= \e^{i \lambda_{n+1}y} \e^{-\lambda_{n+1}^2t}.$ They are solutions of $\dot{u} = \Delta u.$ 
    
    Then, we can transition from $f_n$ to $g_{n+1}$ via a solution $u$ of the equation $\dot{u} = \Delta u + B \nabla u$ within the set $\{(x,y,t): 0 \leq t \leq \frac{2}{\lambda_n}\}$. The vector field $B \in \mathbb{C}^2$ is uniformly bounded but not uniformly $C^1$-smooth.
\end{lemma}

Before proving Lemma \ref{transitionlemmaparabolic}, we first prove our main Theorem \ref{maintheoremparabolic}.

\begin{proof}[Proof of Theorem \ref{maintheoremparabolic}]

\textbf{The construction}

    We choose the sequence $\lambda_n=n.$ In the interval $[0,1]$, we take $u=u_1=e^{ix} e^{-t}.$ 
    Define for $n$ odd, $u_n:=e^{inx} e^{-n^2t}$ and for $n$ even, $u_n:=e^{iny} e^{-n^2t}$.
    
    Then, we would like to apply Lemma \ref{transitionlemmaparabolic} to go from $u_1$ to $c_2 u_2$ where $t \in [1, 3].$ And we would like to repeat this process,  go from $c_2u_2$ to $c_3 u_3$ with time $1$. More generally, we can define $a_n:=1+\sum_{k=1}^{n-1}\frac{2}{k}$ and $$ S_n:= \T^2 \times \left[a_n, a_{n+1}\right]$$ and we want to transition from $c_n u_n$ to $c_{n+1} u_{n+1}$ within the set $S_n$ (in time $2/n)$ for some sequence $\{c_n\}$ to be chosen. Of course, Lemma \ref{transitionlemmaparabolic} does not apply direct directly. However, we can argue as in Remark \ref{howtoapplythelemmasonintervalwithconstants} (which explains how to apply a shifted version of Lemma \ref{transitionlemmaparabolic}), and by defining $\{c_n\}$ such that 
    \begin{align}\label{choicecnparabolic}
    c_n\e^{-n^2a_n} = c_{n+1} \e^{-(n+1)^2a_n},  
    \end{align}
    we can indeed go from $c_n u_n$ to $c_{n+1} u_{n+1}$ within $S_n$.
\\

   \textbf{The decay}
   \\

   It remains to verify that the solution $u$ that we constructed has double exponential decay. Since $u$ solves $\dot{u} = \Delta u +B\nabla u$, it satisfies the maximum principle. Therefore
   $$
   \max_{S_n}|u| = \max_{\T^2 \times \{a_n, a_{n+1}\}}|u| = \max_{\T^2 \times \{a_n\}} = c_n e^{-n^2 a_n}.
   $$
   Hence, 
   $$
   \frac{\max_{S_{n+1}}|u|}{\max_{S_n}|u|} = \frac{c_{n+1} e^{-(n+1)^2 a_{n+1}}}{c_n e^{-n^2 a_n}}.
   $$
   Since $a_{n+1} = a_n + \frac{2}{n}$, we get
   $$
    \frac{\max_{S_{n+1}}|u|}{\max_{S_n}|u|} = \frac{c_{n+1} e^{-(n+1)^2 a_{n}}}{c_n e^{-n^2 a_n}} \e^{\frac{-2(n+1)^2}{n}} = \e^{\frac{-2(n+1)^2}{n}}
   $$
   by our choice of $\{c_n\}$ (see \eqref{choicecnparabolic}).Therefore,
   \begin{align}\label{decayparabolicdbleexp}
     \frac{\max_{S_{n+1}}|u|}{\max_{S_n}|u|} = \e^{\frac{-2(n+1)^2}{n}} \leq e^{-2n}.  
   \end{align}

   Now, let $T$ be large. In particular, $(x,y,T)\in S_{n+1}$ for some large $n$. Therefore, 
   \begin{align*}
       \sup_{t \geq T} |u|= \sup_{S_{n+1}}|u| \leq \e^{-n^2} \sup_{S_1} |u| \leq \e^{-n^2}
   \end{align*}
   by iterating the estimate \eqref{decayparabolicdbleexp}. But  since $(x,y,T)\in S_{n+1}$ , we have $$1+\sum_{k=1}^{n} \frac{2}{k} \leq T \leq 1+\sum_{k=1}^{n+1} \frac{2}{k},$$ so $T \asymp \ln(n)$ and therefore, 
   $$
   \sup_{t \geq T} |u| \leq \e^{-c\e^{cT}}
   $$
   for some numerical constant $c>0.$ This finishes the proof of the Theorem.
\end{proof}
}
\comment{
We can now prove the key Lemma \ref{transitionlemmaparabolic}.

\begin{proof}[Proof of Lemma \ref{transitionlemmaparabolic}]
\textbf{Step 1: Adding a function}
\\

    Let $\theta(t)$ be a smooth non-negative function \footnote{We choose $\theta(t):= 1-G(\tan(\pi(t-1/2)))$ where $G(x)=\frac{1}{\sqrt{\pi}}\int_{-\infty}^x \e^{-\eta^2} \ud{\eta}$.} going from 1 to 0 as $t$ goes from zero to one. 
    In this step, we consider $t \in [0, 1/\lambda_n]$.  Define $\alpha(t):=\theta(t \lambda_n).$ Define the function $$ u:=f_n + (1-\alpha) g_{n+1}.$$

    Since $\dot{f}_n=\Delta f_n$ and $\dot{g}_{n+1}=\Delta g_{n+1}$, it comes that
    \begin{align}\label{eqtparabchooseb}
       \dot{u} = \dot{f}_n+(1-\alpha)\dot{g}_{n+1}-\dot{\alpha}g_{n+1} = \Delta f_n+(1-\alpha)\Delta {g}_{n+1} -\dot{\alpha}g_{n+1} =  \Delta u -  \dot{\alpha} g_{n+1}.  
    \end{align}

Now, we want to choose $B \in \mathbb{C}^2$ such that $\dot{u} = \Delta u + B \nabla u$ is satisfied. We take \\$B:= \binom{B_1}{0}$. Therefore, $B\nabla u = B_1 \partial_x f_n$ since $u=f_n(x,t) + (1-\alpha(t))g_{n+1}(y,t)$. By \eqref{eqtparabchooseb}, we then see that $\dot u = \Delta u + B\nabla u$ if $$ B_1:= \frac{-\dot{\alpha}g_{n+1}}{\partial_x f_n}.$$ 
    
    \comment{Hence, choosing $B:= \begin{pmatrix}
        B_1 \\ 0
    \end{pmatrix}$ with $$ B_1:= \frac{-\dot{\alpha}g_{n+1}}{\partial_x f_n}$$ ensures that $\dot{u} = \Delta u + B \nabla u$ is satisfied.}

    \textbf{Step 2: Removing a function}
    \\
In this step,  $t \in [1/\lambda_n, 2/\lambda_n]$. Let $\theta(t)$ as in Step 1. Define $\alpha(t):=\theta(t \lambda_n-1).$ Define also the function $$ u:=\alpha f_n + g_{n+1}.$$

    As in Step 1, since $\dot{f}_n=\Delta f_n$ and $\dot{g}_{n+1}=\Delta g_{n+1}$, it comes that
     
    \begin{align}\label{eqtionparabolicbremovesteptwo}
    \dot{u} = \Delta u +  \dot{\alpha} f_n.    
    \end{align}

    As in Step 1, we choose $B \in \mathbb{C}^2$ such that $\dot u = \Delta u + B \nabla u$ is satisfied. We take $B:= \begin{pmatrix}
        0 \\ B_2
    \end{pmatrix}$. Therefore $B\nabla u = B_2 \partial_y g_{n+1}$ since $u=\alpha(t) f_n(x,t)+g_{n+1}(y,t)$.  By \eqref{eqtionparabolicbremovesteptwo}, we see that $\dot u = \Delta u + B\nabla u$ if $$ B_2:= \frac{\dot{\alpha}f_n}{\partial_y g_{n+1}}.$$
\\

\textbf{Partial conclusion:} We have been able to go from $\e^{i \lambda_n x} \e^{-\lambda_n^2 t}$ to $\e^{i \lambda_{n+1}y} \e^{-\lambda_{n+1}^2 t}$ within the set $\{(x,y,t) : 0 \leq t \leq \frac{2}{\lambda_n}\}$. It remains to check the boundedness of $B$. 
\\

\textbf{Step 3: The boundedness of $B$.}
\begin{itemize}
    \item Recall that in Step 1, we were in the situation where $0 \leq t \leq 1/\lambda_n$ and where \\$B=\binom{B_1}{0}$ with $$ B_1 =  \frac{-\dot{\alpha}g_{n+1}}{\partial_x f_n}$$
and where $\alpha(t) = \theta(\lambda_n t).$

Recall also that $f_n=\e^{i \lambda_n x} \e^{-\lambda_n^2t}$, $g_{n+1}=\e^{i \lambda_{n+1} y} \e^{-\lambda_{n+1}^2t} $. 
We therefore have $|\dot{\alpha}| \leq \lambda_n \|\dot \theta\|_{\infty}$ and $\partial_x f_n = i \lambda_n f_n$. Hence,
\begin{align*}
    |B_1| = \left|\frac{g_{n+1}}{\partial_x f_n}\right||\dot{\alpha}| \leq \lambda_n \|\dot \theta\|_{\infty}\left|\frac{g_{n+1}}{\lambda_n f_n}\right| \leq \|\dot{\theta}\|_{\infty} \e^{-(\lambda_{n+1}^2 - \lambda_n^2)t} \leq  \|\dot{\theta}\|_{\infty}
\end{align*}
since $\lambda_n \leq \lambda_{n+1}.$ Hence, in this step, $B$ is uniformly bounded.

\item In Step 2, we were in the situation where $1/\lambda_n \leq t \leq 2/\lambda_n$ and where $B=\begin{pmatrix}
    0 \\ B_2
\end{pmatrix}$ with $$ B_2 =  \frac{\dot{\alpha}f_{n}}{\partial_y g_{n+1}}$$
and where $\alpha(t) = \theta(\lambda_n t-1).$
We also have that $f_n=\e^{i \lambda_n x} \e^{-\lambda_n^2t}$, $g_{n+1}=\e^{i \lambda_{n+1} y} \e^{-\lambda_{n+1}^2t} $.

Hence, $|\dot{\alpha}| \leq \lambda_n \|\dot{\theta}\|_{\infty}$ and $\partial_y g_{n+1} = i \lambda_{n+1} g_{n+1}.$ Therefore,

\begin{align}\label{btwoinque}
    |B_2| = \left|\frac{\dot{\alpha}f_{n}}{\partial_y g_{n+1}} \right| \leq \lambda_n \|\dot{\theta}\|_{\infty}\left| \frac{f_n}{\lambda_{n+1} g_{n+1}} \right| \leq \|\dot{\theta}\|_{\infty} \e^{(\lambda_{n+1}^2 - \lambda_n^2)t} \leq \|\dot \theta\|_{\infty} \e^{2(\lambda_{n+1}^2 - \lambda_n^2)/\lambda_n}
\end{align}
where we used that $\{\lambda_n\}$ is increasing in the second inequality and that $t \leq 2/\lambda_n$ in the last inequality.

By the mean value Theorem, $\lambda_{n+1}^2 - \lambda_n^2 \leq 2 \lambda_{n+1} (\lambda_{n+1}-\lambda_n).$
Therefore, 
$$
\frac{\lambda_{n+1}^2 - \lambda_n^2}{\lambda_n} \leq 2 \frac{\lambda_{n+1}}{\lambda_n}(\lambda_{n+1}-\lambda_n).
$$
By assumption, the sequence $\{\lambda_n\}$ satisfies $\lambda_{n+1}-\lambda_n \leq C$ which implies that\\ $\frac{\lambda_{n+1}}{\lambda_n} \leq 1 +C$ since $\lambda_n\geq 1.$ We finally get that 
$$
\frac{\lambda_{n+1}^2 - \lambda_n^2}{\lambda_n} \leq 2 C(1+C)
$$
and therefore by \eqref{btwoinque}, $B_2$ is uniformly bounded. This finishes the proof.
\end{itemize}

\end{proof}
}
\comment{

Before going into the detailed construction, let us introduce two functions. We denote by $T$ a smooth function on $[0, 1]$, such that $T(0)=1$, $T(1)=0$ and such that all its derivatives are supported in $(0,1)$. We also denote by $\tilde T(t):=T(1-t)$. 
To fix the idea, we choose $T(t):= 1-G(\tan(\pi(t-1/2)))$ where $G(x)=\frac{1}{\sqrt{\pi}}\int_{-\infty}^x \e^{-\eta^2} \ud{\eta}$. In particular, we note that $T(t) + \tilde T(t) =1.$

We explain now how to go from a function proportional to $\e^{inx_1} \e^{-n^2 t}$ to a function proportional to $\e^{i(n+1)x_2} \e^{-(n+1)^2t}$ in a time of $\mathcal{O}(\frac{1}{n}).$ We note that this produces a super-exponentially decaying solution.

Here we use the approach of Plis, Miller and Filonov and we go directly to the faster decaying function. Let us denote 
\begin{align}
\left\{
\begin{array}{l}
u_1(t,x_1,x_2):= f_n \e^{inx_1}  \e^{-n^2t}, \\
u_2(t,x_1,x_2):= f_{n+1} \e^{i(n+1)x_2}  \e^{-(n+1)^2t}
\end{array}
\right.
\end{align}
where $\{f_n\}$ is a decaying sequence of positive numbers to be chosen later.

\begin{itemize}
    \item \textit{Phase 1: $t\in[\ln 2n, \ln2(n+1)]$.} We define $u_n:= u_1+ \tilde T_n u_2$ where $\tilde T_n(t):= \tilde T(e^t-n).$ A simple computation shows that 
    \begin{align*}
        \partial_t u_n = \Delta u_n + n \partial_t \tilde T_n u_2.
    \end{align*}
    
    Therefore, choosing of the form $B:=\begin{pmatrix}
        \alpha\\ 0
    \end{pmatrix}$, with 
    \begin{align*}
    \alpha:=\frac{e^t \partial_t \tilde T_n u_2 }{\partial_{x_1} u_1}
    \end{align*}
    ensures that $\partial_t u_n = \Delta u_n + B \nabla u_n$ is satisfied.

    \comment{
    we can see that $\partial_t u_n = \Delta u_n + B \nabla u_n$ reduces to the equation
    \begin{align*}
        \begin{pmatrix}\alpha\\ \beta\end{pmatrix} \begin{pmatrix}
            in u_1 \\ i(n+1) \tilde T_n u_2
        \end{pmatrix} =  n \partial_t \tilde T_n u_2 + 4 n\tilde T_n u_2.
    \end{align*}
    Hence, we can choose 
    \begin{align*}
\left\{
\begin{array}{l}
\alpha:=-i \partial_t \tilde T_n \frac{f_{n+1}}{f_n} \e^{-inx_1+i(n+1)x_2} \e^{n^2(t-n)-(n-1)^2(t-n-3/n)}\\
\beta := \frac{-4i n}{n+1}
\end{array}
\right.
    \end{align*}
    and the equation is solved throughout this phase. 
    }
    \item \textit{Phase 2: $t\in[\ln(2n+1),\ln(2n+2)]$.} Here, we define $u_n:=T_n u_1+u_2$ where $T_n(t):=T\left(e^t-n-1\right).$ A easy computation yields 
    \begin{align*}
        \partial_t u_n = \Delta u_n + n \partial_t T_n u_1.
    \end{align*}
    
    Therefore, choosing $B$ of the form $B:=\begin{pmatrix}
        0\\ \beta
    \end{pmatrix}$, with 
    \begin{align*}
    \beta:=\frac{e^t \partial_t T_n u_1 }{\partial_{x_2} u_2}
    \end{align*}
    ensures that $\partial_t u_n = \Delta u_n + B \nabla u_n$ is satisfied.

    \comment{
    An easy computation yields
    \begin{align*}
        \partial_t u_n = \Delta u_n + n\partial_t  T_n u_1 + 4nu_2.
    \end{align*}
    By defining $B:= \begin{pmatrix}
        \alpha \\ \beta_1+\beta_2
    \end{pmatrix}$,  we can see that $\partial_t u_n = \Delta u_n + B \nabla u_n$ reduces to the equation
    \begin{align*}
        \begin{pmatrix}
            \alpha \\ \beta_1 + \beta_2
        \end{pmatrix} \begin{pmatrix}
            inu_1 T\\ i(n+1)u_2
        \end{pmatrix} = n \partial_t T_n u_1 +4n u_2.
    \end{align*}
    Hence, we can choose 
    \begin{align*}
        \left\{
\begin{array}{l}
\alpha:=0,\\
\beta_1:= \frac{-in}{n+1}  \frac{f_n}{f_{n+1}} \partial_t T_n \e^{inx_1-i(n+1)x_2} \e^{(n-1)^2(t-n-3/n) - n^2(t-n)},\\
\beta_2 := \frac{-4i n}{n+1}
\end{array}
\right.
    \end{align*}
    and the equation is solved throughout this phase. We also note that the coefficient $B$ is clearly continuous on $[n, n+2/n].$

    \item \textit{Phase 3: $t\in[n+2/n, n+3/n]$.} In this phase, we are left with $u_2$ and we show how to accelerate the decay to the reach the claimed speed. Consider the smooth increasing function $g$ satisfying $g(0)=1$, $g(1)=\frac{(n+1)^2}{(n-1)^2}$ and with derivatives supported in $(0,1)$. Consider also its rescaling $g_n(t):=g\left(n\left(t-\frac{2}{n}\right)\right)$ note that $u_2$ can be seen as $u_2= f_{n+1} \e^{i(n+1)x_2} \e^{-(n-1)^2(t-n-3/n)g_n(t)}.$ 

    Choosing $B:=\begin{pmatrix}
        0 \\ \beta
    \end{pmatrix}$, we see that $\partial_t u_2 = \Delta u_2 + B \nabla u_2$ is reduced to 
\begin{align*}
    \partial_t\bigg(-(n-1)^2(t-n-3/n) g_n(t) \bigg) = -(n+1)^2 + \beta i (n+1).
\end{align*}

Choosing 
\begin{align*}
    \beta := \frac{-i}{n+1}\bigg((n+1)^2-(n-1)^2 g_n(t) - (n-1)^2 n (t-n-3/n) g_n'(t) \bigg)
\end{align*}
allows $u_2$ to solve the equation. We note that choosing $g$  of slow growth (smaller than exponential) yields a bounded coefficient $\beta$.
}
It finally remains to chooseimo2023 the sequence $\{f_n\}$ so that the coefficient $B$ is bounded in phase 1 and phase 2. It is easily seen that choosing $\{f_n\}$ such that
\begin{align*}
    f_{n+1}= f_n \e^{(n+1)^2\ln(2n)-n^2\ln(2n)}= f_n (2n)^{2n+1}.
\end{align*} 
is enough to guarantee boundedness of $B$ in these two phases. Finally, we note that we this construction does not yield a uniformly $C^1$-bounded coefficient $B$. This finishes the construction.
\end{itemize}

}

\comment{
\begin{rem}
    A similar idea with a transition in a time of $\mathcal{O}(1)$ allows to construct a complex-valued solution to $\Delta u + B\nabla u=0$ with bounded $B$ and decaying like a Gaussian.
\end{rem}
}

\comment{
 Indeed, consider a half-cylinder $\mathbb{T}^2\times\mathbb{R}_+$ and $u = e^{-e^{2t}}\sum_{n=1}^\infty f(e^t-n)e^{inx_{n\bmod 2}},$ where $f$ is a nonnegative smooth function with support on $[-1;1]$ which is equal to 1 in $[-1/2;1/2].$ Then $\dot{u} = -2e^{2t}u+ e^{-e^{2t}}\sum f'(e^t-n)e^te^{inx_{n\bmod 2}}+e^t\sum f'(e^t-n)e^{inx_{n\bmod 2}}.$ Meanwhile $$\Delta u = -\sum f(e^t-n)n^2e^{inx_{n\bmod 2}},$$ and $\ddot{u}-2\Delta u = e^t\sum f'(e^t-n)e^{inx_{n\bmod 2}}-2\sum f(e^t-n)[e^{2t}-n^2]e^{inx_{n\bmod 2}} = O(e^te^{-e^{2t}}).$ However, $\nabla u = e^{-e^{2t}}\sum_{n=1}^\infty f(e^t-n)ine_{x\bmod 2}e^{inx_{n\bmod 2}},$ and $\|\nabla u\|\geq Ce^te^{-e^{2t}}.$ Therefore, if we consider $$B = \frac{\nabla u}{\|\nabla u\|^2}(\dot{u}-2\Delta u),$$ we obtain a bounded vector. Thus, $u$ solves the equation $\Delta u+B\nabla u+Cu = \dot{u}.$
 }

\comment{
\section{Sharpness}
\label{sharpness_gen}

In this section, we prove that the constructions we did earlier are optimal in terms of decay: non-trivial solutions cannot decay faster than double exponentially. This result is known (see \cite{L63} in the elliptic case and \cite{CM2022} in the parabolic case). We present the proof in the elliptic case for completeness.

\comment{

\subsection{Sharpness in general}\label{toolbox_sharpness}
There are three main approaches to show that a solution cannot decay too fast at infinity: Carleman inequalities, the frequency function and the 3-ball inequality. Our presentation of the Carleman inequalities will follow very closely the lecture notes of Malinnikova \cite{M17}.
\begin{enumerate}
    \item Carleman inequalities are weighted norm estimates for differential operators. Denoting by $\mathcal{P}$ a \textit{nice} differential operator, a Carleman inequality usually take the form 
    \begin{align}\label{carlemanexample}
        \| \e^{\lambda w} v \|_{L^2} \lesssim \| \e^{\lambda w} \mathcal{P} v \|_{L^2} 
    \end{align}
    where $v$ is smooth and compactly supported, $w$ is a suitable weight, usually satisfying appropriate convexity conditions and $\lambda$ is a large parameter. Carleman inequalities were first introduced to solve unique continuation problem of the type: if $u$ vanishes on a non-empty open set and satisfies $|\mathcal{P} u| \leq |u|$ then $u \equiv 0.$ In the case where the differential operator $\mathcal{P}$ has  analytic coefficients, such problem are usually tackled by Holmgren's uniqueness theorem and Carleman inequalities can be seen as a generalization to more general operator $\mathcal{P}$. 

    Very often, proving inequality \eqref{carlemanexample} relies on a commutator trick and integration by parts. We illustrate the idea of the proof of a Carleman inequality with a simple example with quadratic weight: we will show
    \begin{align}\label{examplecarlemantoolbox}
        C k^2 \int_{\T^2\times \R^+} |u|^2 \e^{kt^2} \leq \int_{\T^2\times \R^+} |\Delta u|^2 \e^{kt^2}  
    \end{align}
    for all $u \in C^{\infty}_c(\T^2 \times \R^+)$ for some $C>0$.
    \begin{proof}
    It turns out to be easier to work with $g:=\e^{kt^2/2}u$ and $\Delta_{\psi} g:=\e^{\psi}\Delta \e^{-\psi}g$ where $\psi:= kt^2/2.$ Now, the idea is to decompose $\Delta_{\psi}$ into the sum of its symmetric and anti-symmetric part. A quick computation yields $\Delta^s_{\psi}g:=\Delta g + k^2 t^2 g$ and $\Delta^a_{\psi}g:=-2kt\partial_t g -kg.$ It then follows that
    \begin{align*}
        \int_{\T^2\times \R^+} |\Delta_{\psi}g|^2 &\geq \langle [\Delta^s_{\psi}, \Delta^a_{\psi}]g,g\rangle =4k^3 \int_{\T^2 \times \R^+} t^2 |g|^2 + 4k\int_{\T^2\times \R^+} |\partial_t g|^2.
    \end{align*}
    Finally, a cylindrical version of Heisenberg's uncertainty principle (which follows from the one-dimensional Hardy inequality) gives \eqref{examplecarlemantoolbox}.
    \end{proof}
     The main difficulty with Carleman inequalities is the choice of the weight $w$. By choosing an appropriate weight, Carleman inequalities can  be used to prove quantitative version of unique continuation like three balls inequalities or also quantitative version of Landis conjecture. Bourgain and Kenig in their work on Anderson localisation \cite{BK05},  but also Meshkov \cite{M92} on his work on the complex Landis conjecture used the following type of Carleman inequality:
     \begin{thm}[Hörmander]
     \label{Hormander}
         Let $\phi=\phi(|x|)$ be a radial function with $\phi'<0$ and $|\phi'|>c$. The following inequality holds for all $u \in C^2_c(B_R)$ if and only if $\phi''+r^{-1}\phi'>\tilde c>0$ on $[0, R]$:
         \begin{align*}
             \int_{|x|<R} |\Delta u|^2 \e^{k \phi} \geq C_R \left(k^3 \int_{|x|<R} |u|^2 \e^{k \phi} + k \int_{|x|<R} |\nabla u|^2 \e^{k \phi} \right).
         \end{align*}
     \end{thm}

     Note the presence of the $k^3$ (compare to \eqref{examplecarlemantoolbox} with only $k^2$). This plays a crucial role in their argument. The idea is that a good weight should be convex enough with non vanishing gradient (note that our quadratic weight $t^2$ does not satisfy that).

     A vast literature exists on which choice of weight to choose and we refer to the book of Lerner \cite{L19} for a complete treatment of the subject. 
    
    \item The second approach to prove a maximal rate of decay is the frequency function. It has first been studied by Almgren \cite{A79} and Agmon \cite{A66}. It was later developed further by Garofalo and Lin \cite{GL86}, \cite{GL87} (see also \cite{K98}, \cite{LM19}, \cite{M13}) to study unique continuation properties of solutions to 
    \begin{align}\label{generalformdivellpde}
        \div(A \nabla u) + B \cdot \nabla u+ Cu =0.
    \end{align}
    In particular, Garofalo and Lin proved that if $A$ is uniformly elliptic and Lipschitz regular, then a doubling inequality holds and therefore, non-trivial solution to \eqref{generalformdivellpde} cannot vanish on a non-empty open set. We briefly recall the argument presented in \cite{GL86} for the Laplace operator: Let $\Delta u =0$ in some open set $\Omega \subset \R^d$ containing say $\overline{B_2}$. Define the surface integral
    \begin{align*}
        H(r):= \int_{\partial B_r} u^2 \, \ud{\sigma}.
    \end{align*}
Define also the Dirichlet integral 
\begin{align*}
    I(r):=\int_{B_r} |\nabla u|^2 \, \ud{x}. 
\end{align*}
    Using harmonicity and the divergence theorem, we can see that
    \begin{align*}
        \partial_r H(r) = \frac{d-1}{r}H(r) + 2 I(r)
    \end{align*}
    which implies
    \begin{align}\label{keyineqpres}
        \partial_r \left( \log \left(\frac{H(r)}{r^{d-1}}\right) \right) \leq 2 \frac{I(r)}{H(r)}.
    \end{align}

Let us define the \textit{frequency function}
\begin{align*}
    N(r):=\frac{r I(r)}{H(r)}.
\end{align*}
\begin{exmp}
If we consider in $\R^2$ the function $u_k(r,\theta):=a_k r^k \sin(k \theta)$, then $N(r) \equiv k.$
\end{exmp}

The key observation is that the frequency function $N$ is a non-decreasing function of $r$. Using this monotonicity property, one can integrate \eqref{keyineqpres} between $R$ and $2R<1$, take exponential and integrate again in $R$ to finally obtain the doubling inequality
\begin{align*}
    \int_{B_{2R}} u^2 \, \ud{x} \leq C \int_{B_{R}} u^2 \, \ud{x}
\end{align*}
for any $B_R$ such that $B_{2R} \subset B_1$ and where $C$ depends on $u$, the Lipschitz constant of $A$, the ellipticity constant of $A$ and the dimension $d$. 

One can also see that the monotonicity of the  frequency function will play a key role to prove convexity of $r \rightarrow \log(\|u\|^2_{L^2(\partial B_r)})$ and therefore obtaining three spheres inequalities. Combining such an inequality with a Harnack chain argument will lead to sharpness of decay (see e.g. \cite{L63} who did just that to show that a solution to an elliptic equation cannot decay faster than super-exponentially in a cylinder). Note also that the frequency function might differ from one equation to the other, leading to many open questions. One such problem is to find an appropriate monotonic frequency function for $p$-harmonic functions.

In section \ref{parab_sharpness}, we will present an application of the frequency function for parabolic equation in $\T^2 \times \R^+$ to show that a non-trivial solution cannot decay faster than super-exponentially. 

    
\item The third approach is the 3-ball inequality. It allows one to prove rather simple estimates. An example is the proof that solutions to elliptic equations of the type $\div A\nabla u+B\nabla u+Cu = 0$ do not decay faster than double exponentially.

\end{enumerate}
}

\subsection{Sharpness for the elliptic equations}\label{sharpnessellipticequat}
Here, we prove that solutions to a divergence equation cannot decay faster than super-exponentially. This result also holds for solutions to the eigenvalue equation (see Remark \ref{remforevaluesharpness} below).  

\begin{lemma}\label{sharpnessellipticitythreeballs}
    Let $u$ be a non-trivial solution of $\div(A\nabla u) = 0$ in $\T^2 \times \R^+$ where $A$ is Lipschitz and uniformly elliptic. Then, there exists $C_1, C_2>0$ and there exists a sequence $\{t_n\}: t_n \rightarrow +\infty$ and such that
    \begin{align*}
    \sup_{\tilde t = t_n} |u(x,y,\tilde t)| \geq \e^{-C_1 \e^{C_2 t_n}} \hspace{0.5cm} \mbox{ for all } n.
    \end{align*}

    \comment{
    \begin{align*}
        \limsup_{t \rightarrow \infty} \frac{\sup_{\tilde t = t} |u(x,y,\tilde t)|}{\e^{-C_1 \e^{C_2 t}}} >0 .
    \end{align*}
    }
\end{lemma}

The proof of Lemma \ref{sharpnessellipticitythreeballs}, relies on the following three balls inequality.

\begin{lemma}[Three balls inequality for $L^{\infty}$ norm]\label{threeballsineq}
 Let  $L:=\div(A\nabla \cdot) $ in $\T^2 \times \R^+$ where $A$ is Lipschitz and uniformly elliptic. Let $u$ be a solution to $Lu=0.$ Let $r_0>0$ so small that the ball centered at (x,y,t)=(0,0,1) and of radius $4r_0$ is included in $\T^2 \times \R^+$.
 
 Denote  $M_1(t)$ (respectively $M_2(t), M_4(t)$) the supremum of $|u|$ over the ball of radius $r_0$ (respectively $2r_0$ and $4r_0$) and centered at $(x,y,t)=(0,0,t)$. Then, there exists $C>1$ and $\alpha \in (0,1)$, such that for every $t>4r_0$ and for every $u$ solution of $Lu=0$,
 \begin{align*}
     M_2(t) \leq C \left(M_1(t) \right)^{\alpha} \left(M_4(t) \right)^{1-\alpha}.
 \end{align*}
 
 In particular, $C$ is independent of the center of the ball.
 \end{lemma}

 \begin{rem}\label{remforevaluesharpness}
     The three balls inequality for $L^{\infty}$ norm requires the maximum principle to hold. Therefore, Lemma \ref{sharpnessellipticitythreeballs}  does not hold directly for eigenfunctions. However, we can employ the trick of harmonic lift: if $\div(A \nabla u) = - \mu u$ then in the extended domain $\T^2 \times \R^+ \times [-1, 1]$, $v(x,y,t,s):= u(x,y,t) \e^{- s\sqrt{\mu}}$ is $\tilde A$-harmonic where $\tilde A$ has $A$ has its $3 \times 3$ minor and where the last line and column only contain a 1 in position $(4,4)$. One can then apply  Lemma \ref{sharpnessellipticitythreeballs} to $v$ and immediately conclude that the result of Lemma \ref{sharpnessellipticitythreeballs} still holds for an eigenfunction.  
 \end{rem}

We refer to \cite{LM19} for a proof of the three-balls inequality. We can now prove Lemma \ref{sharpnessellipticitythreeballs}. We follow \cite{L63}.

\comment{
We will only present a brief sketch of the proof of the three balls inequality. We follow \cite{LM19}.
\begin{proof}[Sketch of a proof of Lemma \ref{threeballsineq}]
    By the boundedness of the frequency function for solutions of uniformly elliptic divergence form equation with Lipschitz coefficient, there holds the following doubling inequality
    \begin{align*}
        \int_{B_{2r_0}} |u|^2 \leq C \int_{B_{r_0}} |u|^2
    \end{align*}
    We refer to \cite{GL86}, \cite{GL87} for the proof of this fact.

    By elliptic regularity, the following equivalence of norms holds:
    \begin{align*}
        D\|u\|_{L^2(B_{r_0})} \leq \|u\|_{L^{\infty}(B(r_0))} \leq E \|u\|_{L^2(B_{2r_0})} .
    \end{align*}
    Note that this equivalence of norm requires the maximum principle.
    
    We also refer to \cite{LM19} for a proof of this fact. Combining the two inequalities yields the three balls inequality.
\end{proof}
}

\begin{proof}[Proof of Lemma \ref{sharpnessellipticitythreeballs}]
    Without loss of generality, we assume that $|u| < 1$. We will only consider balls centered at points of the form $(x,y,t)=(0,0,t)$. Therefore, by the notation $B(t,r)$ we mean the ball centered at $(x,y,t)=(0,0,t)$ with radius $r$. Let $r_0>0$ so small that the ball centered at $(x,y,t)=(0,0,1)$ and of radius $4r_0$ is included in $\T^2 \times \R^+$. Let us also use the notation $$M_l(t):= \sup_{B(t, lr_0)} |u|$$ where $l \in \{1, 2, 4\}$.

Note that since $u$ solves $\div(A\nabla u)=0$ with uniformly elliptic and Lipschitz coefficients and since $u$ is not identically zero, for every $t>r_0$, $M_1(t)>0$ by the unique continuation property.  The proof of Lemma \ref{sharpnessellipticitythreeballs} relies on the following two claims:

\textbf{Claim 1:} Let $t_1>4r_0$. Then, $M_1(t_1-kr_0)  \leq C^k M_1(t_1)^{\alpha^k}$ for any $k \geq 1$ as long as $t_1-(k-1)r_0 > 4r_0$ and where $C>1$ is the constant from the three balls inequality.
\\

Fix a point $t_0>4r_0.$ Then, $M_1(t_0)>0.$ 

\textbf{Claim 2:} Let $t_1 > t_0.$ Let also $k:=\lfloor \frac{t_1-t_0}{r_0} \rfloor + 1$. Then, $M_1(t_1) \geq D_k M_1(t_0)^{1/\alpha^k} $ where $D_k:= C^{\frac{-1-\alpha(k-1)}{\alpha^{k}}}$ and $C>1$ is the constant from the three balls inequality. 
\\

Before proving these two claims, we finish the proof of Lemma \ref{sharpnessellipticitythreeballs}.
We estimate $D_k M_1(t_0)^{1/\alpha^k}$ where $k= \lfloor\frac{t_1-t_0}{r_0} \rfloor+1$ and $D_k:=C^{\frac{-1-\alpha(k-1)}{\alpha^{k}}}$. Since $\alpha \in (0,1)$, since $C>1$, since $M_1(t_0)<1$ (we assumed $|u|<1$) and since $k=\lfloor\frac{t_1-t_0}{r_0} \rfloor+1$, a computation shows that 
$$
D_k M_1(t_0)^{1/\alpha^k} \geq \e^{-C_1 \e^{C_2 \frac{t_1}{r_0}}}
$$
for some $C_1, C_2>0$ depending only on $t_0, M_1(t_0), \alpha, r_0$ and $C$. The important thing being that $C_1, C_2$ are independent of $t_1.$

    Combining this last estimate with Claim 2 shows that for any $t_1>t_0$,
    \begin{align}\label{suponaball}
    \sup_{B(t_1,r_0)} |u| \geq \e^{-C_1 \e^{C_2 \frac{t_1}{r_0}}}.     
    \end{align}
   
Since this supremum is achieved at some point $(x,y, \tilde{t_1})$ where $t_1-r_0 \leq \tilde{t_1}\leq t_1+r_0$ we get that on the $\tilde{t_1}$ slice of the cylinder
\begin{align}\label{suponaslice}
\sup_{\tilde{t} = \tilde{t_1}} |u(x,y, \tilde t)| \geq \e^{-D_1 \e^{D_2 \frac{\tilde{t_1}}{r_0}}}.    
\end{align}
for some new constants $D_1, D_2>0$ independent of $\tilde t_1.$

Since \eqref{suponaball} holds for any $t_1>t_0,$ we can choose a sequence of points $\{t_n\}$ going to infinity for which \eqref{suponaball} is true. Hence, we get a sequence $\{\tilde t_n\}$ of points going to infinity for which \eqref{suponaslice} is true, as claimed in Lemma \ref{sharpnessellipticitythreeballs}. This proves Lemma \ref{sharpnessellipticitythreeballs} assuming the two Claims.
\end{proof}

We prove Claim 1 now.

\begin{proof}[Proof of Claim 1]

    Let $t_1 >4r_0$. Recall that we assumed, without loss of generality,  $|u|<1$. By the three balls inequality, 
    \begin{align*}
        M_2(t_1) \leq C M_1(t_1)^{\alpha}.
    \end{align*}
    Since $B(t_1-r_0,r_0) \subset B(t_1, 2r_0)$, the previous inequality implies
    \begin{align}\label{initialisationpropagation}
        M_1(t_1-r_0) \leq C M_1(t_1)^{\alpha}.
    \end{align}

If $t_1-r_0>4r_0$, we can repeat this argument. By the three balls inequality, we have
\begin{align*}
    M_2(t_1-r_0) \leq C M_1(t_1-r_0)^{\alpha}.
\end{align*}
    Since $B(t_1-2r_0,r_0) \subset B(t_1-r_0, 2r_0)$, the previous inequality implies
    \begin{align*}
        M_1(t_1-2r_0) \leq C M_1(t_1-r_0)^{\alpha} \leq C^2 M_1(t_1)^{\alpha^2}
    \end{align*}
    where we used \eqref{initialisationpropagation} in the last inequality. 

    It is then clear using induction that the following holds:
    \begin{align}
        M_1(t_1-kr_0)  \leq C^k M_1(t_1)^{\alpha^k}
    \end{align}
    as long as $t_1-(k-1)r_0 > 4r_0.$ This finishes the proof of Claim 1.
\end{proof}

We now prove Claim 2. 
Fix a point $t_0>4r_0.$ Then, by unique continuation, $M_1(t_0)>0.$ Let $t_1 > t_0.$ Let also $k:=\lfloor \frac{t_1-t_0}{r_0} \rfloor + 1$.

\textbf{Claim 2:} $M_1(t_1) \geq D_k M_1(t_0)^{1/\alpha^k} $ where $D_k:=C^{\frac{-1-\alpha(k-1)}{\alpha^{k}}}$ and where $C>1$ is the constant from the three balls inequality.

\begin{proof}[Proof of Claim 2]

We split the proof in two cases: $k=1$ and $k>1.$
\begin{itemize}
    \item When $k=1:$ The inequality we want to prove becomes 
    $$
    M_1(t_0)\leq C M_1(t_1)^{\alpha}
    $$
    with $C$ the constant from the three balls inequality.
    
    Since $k=1$, we have $t_1-t_0<r_0.$ Hence, $B(t_0, r_0)\subset B(t_1, 2r_0).$ So, by the three balls inequality, 
    $$
    M_1(t_0)\leq M_2(t_1)\leq C M_1(t_1)^{\alpha}
    $$ as claimed.

    \item When $k>1:$ We assume the contrary: $M_1(t_1) < D_k M_1(t_0)^{1/\alpha^k} $.
    
    By the definition of $k$ and since $t_0>r_0$, we can see that $t_1-(k-2)r_0>4r_0$. Hence by Claim 1, 
    \begin{align*}
        M_1(t_1-(k-1)r_0) \leq C^{k-1} M_1(t_1)^{\alpha^{k-1}} < C^{k-1} D_k^{\alpha^{k-1}}M_1(t_0)^{1/\alpha}.
    \end{align*}

    Since $B(t_0, r_0) \subset B(t_1-(k-1)r_0, 2r_0)$ we get that
    \begin{align*}
        M_1(t_0) \leq M_2(t_1-(k-1)r_0) \leq C M_1(t_1-(k-1)r_0)^{\alpha}
    \end{align*}
    where we used the three balls inequality for the last inequality. Combining these two estimates yields
    \begin{align*}
        M_1(t_0) < C^{1+\alpha(k-1)} D_k^{\alpha^{k}} M_1(t_0)=M_1(t_0)
    \end{align*}
    since  $D_k:=C^{\frac{-1-\alpha(k-1)}{\alpha^{k}}}$. Hence we reach a contradiction.
\end{itemize}

This finishes the proof of Claim 2.
\end{proof}

\comment{

\subsection{Sharpness for the  parabolic case}\label{parab_sharpness}

\textcolor{blue}{to include or to refer Colding/Minicozzi, parabolic frequency on manifolds?}

This section shows that the complex solution to a parabolic constructed in section \ref{parab} has the fastest possible decay. More precisely, the following Theorem holds:

\begin{thm}\label{sharpnessdecaypara}
    Let $u$ be a non-trivial solution to $\dot{u} = \Delta u + B \nabla u$ on $\T^2 \times \R^+ $ with $B \in \mathbb{C}^2$ uniformly bounded. Then, $u$ cannot decay faster than super-exponentially. 
\end{thm}

\textbf{Notation:} Since we are dealing with complex quantities, we recall the definition of the scalar products we are using. For $x, y \in \mathbb{C}^n$, we denote 
$$
(x,y)_{\mathbb{C}^n}:=\sum_i x_i \bar{y_i}.
$$
Similarly, for $f,g \in L^2(\T^2)$, we denote
$$
(f,g):= \int_{\T^2} f \bar{g}dxdy.
$$

\begin{rem}\label{sharpnessparabremone}
    A more general version of Theorem \ref{sharpnessdecaypara} holds. Let $A$ be a real and symmetric matrix satisfying the uniform ellipticity condition 
    $$
    \frac{1}{\Lambda} |\xi|^2 \leq (A \xi, \xi)_{\mathbb{C}^2} \leq \Lambda |\xi|^2.
    $$
    for every $\xi \in \mathbb{C}^n$ and for some $\Lambda \geq 1.$
    Assume also that $A \in C^1(\R^+, C^1(\T^2))$ uniformly. Let also $B \in \mathbb{C}^2$ and $c \in \mathbb{C}$ be uniformly bounded. Then, a non-trivial solution of \\$\dot{u} = \div(A\nabla u) + B\nabla u + cu$ cannot decay faster than super-exponentially. 

    The proof of this fact is almost identical to the proof of Theorem \ref{sharpnessdecaypara} above. More precisely, in the proof, we replace $|\nabla u|^2$ by $(A\nabla u, \nabla u)$, $\Delta u$ by $\div(A\nabla u)$ and $B\nabla u$ by $B \nabla u + cu.$ See also Remark \ref{secondpartremarkgeneralsecondorderparab} below.
\end{rem}

\begin{rem}\label{sharpnessparabremtwo}
    In the previous Remark, the assumption that $A$ is $C^1$ in time is almost sharp. If $A$ is only Hölder regular in time, then there exists a non-trivial compactly supported in time, solution to $\dot{u} = \div(A\nabla u) + B\nabla u + cu$. We refer to Miller \cite{M74} for an example a function within the $\frac{1}{6}$-Hölder space.
\end{rem}

We will now prove Theorem \ref{sharpnessdecaypara}. Our argument relies on a frequency function approach. We refer to \cite{A66}, \cite{A79}, \cite{GL86}, \cite{GL87} and \cite{LM19} for more details on the frequency function and its use in unique continuation. 

\begin{proof}
 We define the relevant quantities:
\begin{align}\label{relevantqtittiyheat}
\left\{
\begin{array}{rl}
H(t) :=& \int_{\T^2} |u|^2 \,\ud{x}dy, \\
I(t) :=& \int_{\T^2} |\nabla u|^2 \,\ud{x}dy, \\
N(t):=& \frac{I(t)}{H(t)}.
\end{array}
\right.
\end{align}

We argue by contradiction and assume that $u$ decay faster than super-exponentially and is not identically zero. In particular, there exists $t_0 \in [0, \infty)$ such that $H(t_0) >0.$ By continuity, there exists $\tau>0$ such that $1/H$ is well defined on $[t_0, t_0+\tau]$ and therefore $N$ makes sense on this interval.

The proof relies on several claims. 
\\

\textbf{Claim 1:} The following holds:
\begin{align}\label{diffparabnicelemmasharpn}
    \dot{H}(t) &= -2 \int_{\T^2} |\nabla u|^2dxdy + 2 \Re\left( \int_{\T^2} B \nabla u \,\bar{u}dxdy\right),\nonumber\\
    \dot{I}(t) & =-2 \int_{\T^2}|\Delta u|^2dxdy - 2 \Re\left( \int_{\T^2} B \nabla u \, \overline{\Delta u}dxdy \right)    
\end{align}
for all $t \in [t_0, t_0+\tau].$ We postpone its proof.

\comment{
Using Claim 1, we then get the following Claim:
\\
\textbf{Claim 2:} For all $t \in [t_0, t_0+\tau]$, 
\begin{align}
    \frac{-\dot{N}(t)}{2} =  \frac{(\Delta u, \Delta u) (u,u) - (\nabla u, \nabla u)^2}{(u,u)^2} + \frac{\Re\big[(B\nabla u, \Delta u)\big] (u,u) + \Re \big[ (B\nabla u, u)\big] (\nabla u, \nabla u)}{(u,u)^2}.
\end{align}
}

To simplify the expressions, we introduce a new notation. We denote $\tilde u:= -\Delta u - N(t) u$ and skip the expression $dxdy$ in the integrals. With this notation, using Claim 1, we get the following Claim:
\\

\textbf{Claim 2:} For all $t \in [t_0, t_0+\tau],$
\begin{align}
    \dot{N}(t) =  2\, \frac{ \Re\left( \int_{\T^2} B\nabla u \, \overline{\tilde u} \right)-\int_{\T^2} |\tilde u|^2}{\int_{\T^2} |u|^2} 
\end{align}
We will prove this Claim later and continue with the main argument. We estimate the numerator in the next Claim:
\\

\textbf{Claim 3:} The following bound holds:
\begin{align}
     \Re\left( \int_{\T^2} B\nabla u \, \overline{\tilde u} \right)-\int_{\T^2} |\tilde u|^2 \leq \frac{\|B\|^2_{\infty}\int_{\T^2} |\nabla u|^2}{2}.
\end{align}
This Claim will be proved later. We combine Claim 2 and Claim 3 to deduce that 
\begin{align}\label{ineqfornparab}
    \dot{N}(t) \leq C N(t)
\end{align}
for some $C>0$ and for all $t \in [t_0, t_0+\tau]$. Therefore, 
\begin{align}\label{gronwall}
    N(t) \leq N(t_0)e^{C(t-t_0)}
\end{align}
for all $t \in [t_0, t_0+\tau]$.

Using Claim 1, we have the following inequality:
\\

\textbf{Claim 4:} For all $t \in [t_0, t_0+\tau]$, it holds that 
\begin{align}
    \frac{\dot{H}(t)}{H(t)} \geq -C_1 N(t) - C_2 
\end{align}
for some $C_1, C_2 >0.$
We also postpone the proof of this Claim. Combining \eqref{gronwall} and Claim 4, we get that there exist two positive constants $A, B>0$ depending on $t_0$ and such that 
\begin{align}\label{finalineqsharpnesspara}
    H(t) \geq H(t_0)\e^{-A\e^{Bt}}
\end{align}
for all $t \in [t_0, t_0+\tau].$

Let $t_1$ such that $t_0+\tau<t_1<+\infty$ be the first time after $t_0+\tau$ that $H$ reaches zero: $H(t_1)=0.$ Clearly, the exact same computation that we just did shows that \eqref{finalineqsharpnesspara} holds for all $t \in [t_0, t_1)$. But then, $H(t_1)$ cannot be equal to zero since it will have to contradict \eqref{finalineqsharpnesspara} by continuity. Hence, $H$ is never zero on $[t_0, \infty)$ and \eqref{finalineqsharpnesspara} holds for all $t \geq t_0.$ 

But then, recall that we assumed, by contradiction, that $u$ decays faster than super-exponentially, i.e., for all $\delta >0$, there exists $\gamma >0$ such that $u=o(\e^{-\gamma \e^{\delta t}})$  for all $t$ big enough. This contradicts \eqref{finalineqsharpnesspara}. 

This finishes the proof of the theorem. We present the proofs of Claims 1, 2, 3 and 4 in Appendix \ref{proofclaimsharpnessparabolic}.

\end{proof}

\begin{rem}\label{secondpartremarkgeneralsecondorderparab}
    If $u$ solves $\dot u =\div(A\nabla u) + B\nabla u + cu$, then \eqref{diffparabnicelemmasharpn} has an extra term $\mathcal{O}(I)$: $$\dot{I}(t) =-2 \int_{\T^2}|\Delta u|^2 - 2 \Re\left( \int_{\T^2} B \nabla u \, \overline{\Delta u} \right)+\mathcal{O}(I).$$ This term comes from estimating $$(\dot{A} \nabla u,  \nabla u) \lesssim ( \nabla u,  \nabla u) \lesssim (A \nabla u,  \nabla u) =I$$ using that $\dot A$ is uniformly bounded and then that $A$ is elliptic. In particular, this adds an extra term $\mathcal{O}(N)$ in Claim 2, since $$\dot N= \dot I/H - I \dot H/H^2 =\dot N_{previous} + \mathcal{O}(N).$$

    Another modification in this more general case consists in replacing $B\nabla u$ by $B\nabla u + Cu$ in Claims 1, 2, 3. In particular, in Claim 3, there is an extra term $\|c\|^2_{\infty} \int_{\T^2}|u|^2$ on the right-hand side.

    These two modifications yield a new equation \eqref{ineqfornparab}: $\dot N(t) \leq C N(t) + D$ for some $C, D>0.$ Using Grönwall's inequality, we recover an estimate similar to \eqref{gronwall}: $$N(t) \leq N(t_0)e^{C(t-t_0)}+\frac{D}{C}(e^{C(t-t_0)}-1).$$
\end{rem}

}
}

\section{Appendix}

\subsection{Proof of Lemma \ref{PML1}}\label{theboundsforaanducounterexampleappendix}
We recall Lemma \ref{PML1} for the reader's convenience.
Let $\theta(t)$ be a smooth function which is equal to 1 for $t\leq 0$ and to zero for $t\geq 1$ and which monotonically decreases for $t\in(0, 1)$ (see footnote \footnote{We choose $\theta(t):= 1-G(\tan(\pi(t-1/2)))$ where $G(x)=\frac{1}{\sqrt{\pi}}\int_{-\infty}^x \e^{-\eta^2} \ud{\eta}$. \label{footnotebuildingblockcoeffcblanew}} and   Figure \ref{fig:theta}).

\begin{lemma*}[\ref{PML1}]

Consider two harmonic functions in $\T^2\times \mathbb{R}$ $$f:=\cos(k x)e^{-k t} \textup{\quad and \quad} g:=\cos(k'y)e^{-k' t}$$
and assume that the positive numbers $k, k',w >0$ satisfy 
\begin{equation} \label{assumptionsproof}
0<k'-k\lesssim w^{-1}\lesssim k.
\end{equation}
Put $\alpha(t) := \theta(t/w)$ and
\begin{equation}
    \label{defn_uproof}
    u := \begin{cases} f+\big(1-\alpha(t)\big)g, & t\leq w\\
 \alpha(t-w)f+g,&t \geq w.
 \end{cases}
\end{equation}

Then the following holds.
\begin{enumerate}
    \item[a)] There exists a $C^1$ smooth matrix-valued function $A$ such that $u$ solves  $$\ddot u+ \div(A\nabla u)=0 \hspace{0.5cm} \textup{ on } \quad  \T^2 \times \R$$
    \item[b)] and
\begin{equation}
\label{propsproof}
    \|A-Id\|\lesssim \frac{1}{wk}, \hspace{0.5cm} \|\nabla A\|\lesssim \frac{1}{w}, \hspace{0.5cm} \| \dot A \| \lesssim \frac{1}{w} \quad \textup{ on } \quad \T^2 \times \R. 
\end{equation}

\item[c)] The solution $u$ satisfies the following estimates on $\T^2\times [0, 2w]$: For any multi-index $\alpha \in \mathbb N^3$ of order $|\alpha|\leq 2$,
$$
|\partial^{\alpha} u(x,y,t)| \lesssim (k')^{|\alpha|} \sup_{\T^2 \times [0, 2w]} |u|.
$$
\end{enumerate}
\end{lemma*}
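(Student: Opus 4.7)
The idea is that $u$ is identically $f$ for $t\leq 0$ and identically $g$ for $t\geq 2w$, so outside $[0,2w]$ the choice $A=\mathrm{Id}$ already solves $\ddot u + \Delta u = 0$. The work is therefore concentrated on the two transition strips $[0,w]$ (where $g$ is introduced) and $[w,2w]$ (where $f$ is removed), and in both cases I would construct $A$ as a perturbation $\mathrm{Id}+B$ of the identity via the two-dimensional helper Lemmas \ref{claim2Dtwopointoneproof} and \ref{lemmabistwopointnineinproofoftwopointne}.

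First I would analyze the slab $[0,w]$. Writing $u = F_1(t)\cos(kx) + F_2(t)\cos(k'y)$ with $F_1(t) = e^{-kt}$ and $F_2(t) = (1-\alpha(t))e^{-k't}$, a direct computation shows
\begin{equation*}
(\ddot u + \Delta u)(x,y,t) = \beta(t)\,e^{-k't}\cos(k'y), \qquad \beta(t):= -\ddot\alpha(t) + 2k'\dot\alpha(t),
\end{equation*}
since the $\cos(kx)$ coefficient solves the harmonic ODE on the nose. Factoring $u = e^{-kt}\tilde u(x,y,t)$ with $\tilde u = \cos(kx) + s(t)\cos(k'y)$ and $s(t):=(1-\alpha(t))e^{(k-k')t}$, Lemma \ref{claim2Dtwopointoneproof} produces a smooth symmetric matrix $A_s$ with $\div(A_s\nabla\tilde u) = \cos(k'y)$ and with $\|A_s\|_\infty \lesssim (1+|s|)/k^2$, $\|\nabla A_s\|_\infty \lesssim (1+|s|)/k$, $\|\partial_s A_s\|_\infty \lesssim 1/k^2$. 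Setting $B(x,y,t):=-\beta(t)\,e^{(k-k')t}A_{s(t)}(x,y)$ then gives $\div(B\nabla u) = -\beta(t)e^{-k't}\cos(k'y)$, so $u$ solves $\ddot u + \div((\mathrm{Id}+B)\nabla u) = 0$.

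Once this is set up, the proof reduces to bookkeeping. Using $\alpha(t) = \theta(t/w)$ I would note $|\dot\alpha|\lesssim w^{-1}$, $|\ddot\alpha|\lesssim w^{-2}$, $|\dddot\alpha|\lesssim w^{-3}$, and the hypothesis $w^{-1}\lesssim k\lesssim k'$ gives $|\beta|\lesssim k'/w$ and $|\dot\beta|\lesssim (k')^2/w$. Also $|s|\lesssim 1$ (since $(k'-k)t\lesssim 1$ on $[0,w]$) and $|\dot s|\lesssim w^{-1}$. Combining these with the Lemma \ref{claim2Dtwopointoneproof} estimates and $k'\sim k$ yields the three bounds $\|B\|\lesssim 1/(wk)$, $\|\nabla B\|\lesssim 1/w$, $\|\dot B\|\lesssim 1/w$ required by \eqref{propsproof}. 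The same construction, with Lemma \ref{lemmabistwopointnineinproofoftwopointne} in place of Lemma \ref{claim2Dtwopointoneproof}, handles $[w,2w]$, where now the obstruction to being harmonic is a multiple of $e^{-kt}\cos(kx)$. The $C^1$ matching across $t=0,w,2w$ is automatic because $\alpha$ and all its derivatives vanish at the endpoints of $[0,w]$, so $\beta$, $\dot\beta$ and $\dot s$ all vanish there, and hence so do $B$ and $\dot B$.

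Part (c) is essentially an observation: because $u = F_1(t)\cos(kx) + F_2(t)\cos(k'y)$ on $[0,2w]$ with $|F_j^{(n)}|\lesssim (k')^n$ for $0\leq n \leq 2$ (the time-derivative bounds follow from the same $\alpha$-derivative estimates used above), every partial derivative of $u$ of order $\leq 2$ is pointwise bounded by $(k')^{|\alpha|}$ times a universal constant, while $u(x,y,0)=\cos(kx)$ forces $\sup_{\T^2\times[0,2w]}|u|\geq 1$. The main obstacle I expect is not conceptual but in ensuring that the $\dot A$ bound comes out as $1/w$ and not something larger: the term $\dot\beta\,e^{(k-k')t}A_s$ alone is of size $(k')^2/(wk^2)$, which requires the full strength of the assumption $k'-k\lesssim w^{-1}\lesssim k$ to guarantee $k'\sim k$; without it the estimate would degrade. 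All other checks are routine once Lemma \ref{claim2Dtwopointoneproof} is taken on faith.
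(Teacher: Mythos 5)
Your proposal follows essentially the same route as the paper's proof: on each transition slab you reduce $\ddot u+\Delta u$ to a multiple of $g$ (resp.\ $f$), solve $\div(B\nabla u)=$ that multiple by applying Lemma \ref{claim2Dtwopointoneproof} (resp.\ Lemma \ref{lemmabistwopointnineinproofoftwopointne}) with the time-dependent prefactor $\beta(t)e^{(k-k')t}$, and close all estimates using $k'\sim k$ and $kw\gtrsim 1$, exactly as in the appendix. One cosmetic slip: $\dot s$ does not in fact vanish at $t=w$ (only the derivatives of $\alpha$, hence $\beta$ and $\dot\beta$, vanish at the junctions, which is what forces $B=\dot B=0$ there), but this does not affect the argument.
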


\begin{proof}[Proof of Lemma \ref{PML1}]

    \pseudosection{Step 1: The construction of $A$}
 \begin{itemize}
     \item \textbf{When $t \in [0, w]$:}  
We want a matrix $A_1$ such that $\ddot u+\div(A_1\nabla u)=0$. We look for $A_1$  in the form $A_1=Id+A'_1$ for some $A'_1$ to be chosen. With this choice, the equation $\ddot u+\div(A_1\nabla u)=0$ becomes  
\begin{align}\label{eqtlemmatwoonestepone}
\ddot u + \Delta u + \div(A'_1\nabla u)=0.    
\end{align}

Since $t\in [0, w],$ $u=f+\big(1-\alpha(t)\big)g$ and since $\alpha$ depends only on $t$, we get
$$
\Delta u= \Delta f + (1-\alpha)\Delta g, \hspace{0.5cm} \ddot u = \ddot f +(1-\alpha)\ddot g -2\dot \alpha \dot g - \ddot \alpha g.
$$
Since $f, g$ are harmonic, $\ddot f+\Delta f=\ddot g+\Delta g=0,$ \eqref{eqtlemmatwoonestepone} becomes
\begin{align*}
    \div(A'_1\nabla u)=2\dot \alpha \dot g + \ddot \alpha g.
\end{align*}
Since $g=\cos(k'y)\e^{-k't},$ we get $2\dot \alpha \dot g + \ddot \alpha g =(-2k'\dot \alpha  + \ddot \alpha)g :=\beta_1(t)g $ and
\begin{align}\label{eqtforaprimesteponetwoone}
    \div(A'_1\nabla u)= \beta_1(t)g , \hspace{0.5cm} \beta_1(t):=-2k'\dot \alpha  + \ddot \alpha. 
\end{align}
    Note that in \eqref{eqtforaprimesteponetwoone}, the $2\times 2$ matrix $A'_1$ is the unknown and notice that since $\div, \nabla$ only involve space derivatives, we can consider the variable $t$ as fixed. The next Lemma ensures the existence of the matrix $A'_1$ satisfying \eqref{eqtforaprimesteponetwoone} and such that the bounds \eqref{propsproof} for the matrix $A_1=Id+A'_1$ will be satisfied.

    \begin{lemma*}[\ref{claim2Dtwopointoneproof}]
Let $1\leq k \leq k' \leq 2k.$ Given a positive number $s$ and a function $u = \cos(k x)+s\cos(k'y)$ on $\T^2$, there exists a smooth vector field $V$ with divergence $\div(V)=\cos(k'y)$ such that the matrix $A_s:= \begin{pmatrix}a_s(x,y)&b_s(x,y)\\b_s(x,y)&0\end{pmatrix}$ uniquely defined by $A_s\nabla u=V$ is smooth and satisfies 
\begin{align}\label{estimatestosatisfyforas}
        \|A_s\|_{\infty} \lesssim \frac{(1+|s|)}{k^2}, \hspace{0.5cm} \|\nabla A_s\|_{\infty} \lesssim \frac{(1+|s|)}{k}
    \end{align}
    where the gradient only involves spatial derivatives.
 Moreover, the function $s\mapsto A_s$ satisfies
    $$
    \bigg\|\frac{\partial A_s}{ \partial s}\bigg\|_{\infty} \lesssim \frac{1}{k^2}.
    $$
\end{lemma*}

We postpone the proof of Lemma \ref{claim2Dtwopointoneproof} to Appendix \ref{prooftwodlemmaappendix} and we continue with the proof of Lemma \ref{PML1}. Recall that $u=\cos(kx)\e^{-kt}+\big(1-\alpha\big) \cos(k'y)\e^{-k't},$ which we can rewrite as
$$
u=\e^{-kt}\left(\cos(kx)+(1-\alpha) \e^{(k-k')t}\cos(k'y)\right).
$$
Denoting $s_1(t):=(1-\alpha) \e^{(k-k')t}$, we get
$$
u=\e^{-kt}\left(\cos(kx)+s_1(t)\cos(k'y)\right).
$$
Considering the variable $t$ fixed (and denoting $s_1:=s_1(t)$), we are in the setting of Lemma \ref{claim2Dtwopointoneproof}: there exists a matrix-valued function $A_{s_1}$ such that 
$$
\div(A_{s_1}\nabla(u\e^{kt})) = \cos(k'y).
$$
Since we want a matrix $A'_1$ such that $\div(A'_1\nabla u)=\beta_1(t)\cos(k'y)\e^{-k't},$ (by \eqref{eqtforaprimesteponetwoone}),
we choose 
\begin{align}
    A'_1:=\beta_1(t)\e^{(k-k')t}A_{s_1}.
\end{align}

In conclusion, for $t \in [0, w]$ the matrix $A_1$ such that $\ddot u+\div(A_1\nabla u)=0$ in $\T^2\times [0, w]$ is given by 
\begin{align}\label{defastepone}
A_1:=Id+\beta_1(t)\e^{(k-k')t}A_{s_1}.
\end{align}

\item \textbf{When $t \in [w, 2w]$:} We will be brief since the construction is very close to the case $t\in [0, w].$ We want a matrix $A_2$ such that $\ddot u+\div(A_2\nabla u)=0.$ We again look for $A_2$ in the form $A_2=Id+A'_2.$ Since $t \in [w, 2w],$ $u=\alpha(t-w)f+g.$ Using again that $f, g$ are harmonic, the equation that $A'_2$ should satisfy is
\begin{align}\label{eqtforaprimesteponetwoonetwo}
    \div(A'_2\nabla u)=\beta_2(t)f, \hspace{0.5cm} \beta_2(t):=2k \dot \alpha - \ddot \alpha.
\end{align}
This should be compared to \eqref{eqtforaprimesteponetwoone} when $t \in [0, w].$ We again use a two-dimensional Lemma to ensure the existence of the matrix $A'_2$ satisfying \eqref{eqtforaprimesteponetwoonetwo} and such that the bounds \eqref{propsproof} for $A_2=Id+A'_2$ will be satisfied.

\begin{lemma*}[\ref{lemmabistwopointnineinproofoftwopointne}]
Let $1\leq k \leq k' \leq 2k.$  Given a positive number $s$ and a function $v = s\cos(k x)+\cos(k'y)$ on $\T^2$, there exists a smooth vector field $V$ with divergence $\div(V)=\cos(kx)$ such that the matrix  $A_s = \begin{pmatrix}0&b_s(x,y)\\b_s(x,y)&c_s(x,y)\end{pmatrix}$ uniquely defined by $A_s \nabla v=V$ is smooth and satisfies \begin{align*}
        \|A_s\|_{\infty} \lesssim \frac{(1+|s|)}{k^2}, \hspace{0.5cm} \|\nabla A_s\|_{\infty} \lesssim \frac{(1+|s|)}{k}
    \end{align*}
    where we recall that the gradient only involves spatial derivatives.
     Moreover, the function $s\mapsto A_s$ satisfies
    $$
    \bigg\|\frac{\partial A_s}{ \partial s}\bigg\|_{\infty} \lesssim \frac{1}{k^2}.
    $$
\end{lemma*}
We postpone the proof of Lemma \ref{lemmabistwopointnineinproofoftwopointne} to Appendix \ref{prooftwodlemmaappendix} and we continue with the proof of Lemma \ref{PML1}. Since $u=\alpha \cos(kx)\e^{-kt}+\cos(k'y)\e^{-k't},$ we can rewrite it as
$$
u=\e^{-k't}\left(\alpha \e^{(k'-k)t}\cos(kx)+\cos(k'y) \right).
$$
Denoting $s_2(t):=\alpha \e^{(k'-k)t}$, we get
$$
u=\e^{-k't}\left(s_2(t)\cos(kx)+\cos(k'y) \right).
$$
Considering the $t$ variable fixed (and denoting $s_2:=s_2(t)$), we are in the setting of Lemma \ref{lemmabistwopointnineinproofoftwopointne}: there exists a matrix-valued function $A_{s_2}$ such that 
$$
\div(A_{s_2}\nabla (u\e^{k't}))=\cos(kx).
$$
Since we want a matrix $A'_2$ such that $\div(A'_2\nabla u)=\beta_2(t)\cos(kx)\e^{-kt}$ (by \eqref{eqtforaprimesteponetwoonetwo}), we choose
\begin{align}\label{defaprimetwopointone}
    A'_2:=\beta_2(t)\e^{(k'-k)t}A_{s_2}.
\end{align}

In conclusion, for $t \in [w, 2w],$ the matrix $A_2$ such that $\ddot u+\div(A_2\nabla u)=0$ in $\T^2\times [w, 2w]$ is given by 
\begin{align}\label{defatwotwopointone}
    A_2:=Id+\beta_2(t)\e^{(k'-k)t}A_{s_2}.
\end{align}

\textbf{The matrix $A$ in $\T^2 \times \R$:}
Recall the definition of $u$ \eqref{defn_uproof}:
\begin{align}\label{defufiulltwoone}
    u := \begin{cases} f+\big(1-\alpha(t)\big)g, & t\leq w\\
 \alpha(t-w)f+g,&t \geq w.
 \end{cases}  
\end{align}
In particular, for $t \leq 0$, $u=f$ and for $t\geq 2w,$ $u=g$, both being harmonic. Hence we define the matrix $A$ in $\T^2\times \R$ by 
\begin{align}\label{defafulllcylindertwoone}
    A:=\begin{cases}
        Id & \mbox{ in } \T^2\times (-\infty, 0], \\
        A_1 & \mbox{ in } \T^2\times [0, w], \\
        A_2 & \mbox{ in } \T^2\times [w, 2w], \\
        Id & \mbox{ in } \T^2\times [2w, +\infty).
    \end{cases}
\end{align}
By the construction above, we know that $\ddot u+\div(A\nabla u)=0$ is satisfied in each of these four cases. We need to make sure that $A$ is $C^1$ in $\T^2 \times \R$ (and in particular at $t=0, t=w, t=2w$). We also need to make sure that the bounds from \eqref{propsproof} and that we recall now are satisfied:
\begin{equation}\label{estimateforatwooneproof}
    \|A-Id\|\lesssim \frac{1}{wk}, \hspace{0.5cm} \|\nabla A\|\lesssim \frac{1}{w}, \hspace{0.5cm} \| \dot A \| \lesssim \frac{1}{w} \quad \textup{ on } \quad \T^2 \times \R. 
\end{equation}
Finally, we also need to make sure that $u$ is $C^2$ and that the claimed bound for $u$ that we recall below is satisfied:
$$
|\partial^{\alpha} u(x,y,t)| \lesssim (k')^{|\alpha|} \sup_{\T^2 \times [0, 2w]} |u|.
$$ 
This is the content of the next steps.
 \end{itemize}

Before discussing the regularity of $A$ and $u$, we state some useful facts about the function $\alpha(t):=\theta(t/w)$ for $t \in [0, w]$ (see footnote \footnote{We choose $\theta(t):= 1-G(\tan(\pi(t-1/2)))$ where $G(x)=\frac{1}{\sqrt{\pi}}\int_{-\infty}^x \e^{-\eta^2} \ud{\eta}$. \label{footnotebuildingblockcoeffcblanew}} and   Figure \ref{fig:theta} for the definition of $\theta$). The same facts will be true for the function $ \alpha(t-w)$ and $t \in [w, 2w]$.
\\

\textbf{Facts:} For $t \in [0, w],$
\begin{align}\label{factsalphasteponeparabolictwopointone}
\begin{cases}
    0 \leq \alpha(t) \leq 1, \\
    |\dot{\alpha}(t)| \lesssim w^{-1},\\
    |\ddot{\alpha}(t)| \lesssim w^{-2},
\end{cases} \hspace{0.3cm}
\begin{cases}
    \lim_{t \rightarrow 0} \alpha(t)=1, \\
    \lim_{t \rightarrow 0} \dot\alpha(t)=0, \\
     \lim_{t \rightarrow 0} \ddot\alpha(t)=0 ,
\end{cases} \hspace{0.3cm}
\begin{cases}
    \lim_{t \rightarrow w} \alpha(t)=0, \\
    \lim_{t \rightarrow w} \dot\alpha(t)=0, \\
     \lim_{t \rightarrow w} \ddot\alpha(t)=0.
\end{cases}
\end{align}

The first line of facts follows directly from the definition of $\theta$. To see the second and third lines of facts, we recall that $\alpha(t)=\theta(t/w)$ and that
 for any $\tau \in [0, 1]$ and for any $n \geq 1,$
\begin{align}\label{thetanallderivativezerowblablablabla}
\theta^{(n)} (\tau) = \e^{-\tan^2(\pi(\tau-1/2))} P_n\big(\tan(\pi(\tau-1/2))\big)    
\end{align}

where $P_n$ is a polynomial. This also shows that $\theta$ satisfies $\sup_{\tau \in [0, 1]} |\theta^{(n)}(\tau)| \leq C_n$ for some universal constants $C_n$ and for all $n \geq 1.$

\pseudosection{Step 2: The estimates \eqref{estimateforatwooneproof} and the regularity for $A$}
 From \eqref{defafulllcylindertwoone}, when $t \leq w,$
\begin{align}\label{recalaozerowregtwoone}
    A:=\begin{cases}
        Id & \mbox{ in } \T^2\times (-\infty, 0], \\
        A_1 & \mbox{ in } \T^2\times [0, w]
        \end{cases}, \hspace{0.5cm} A_1=Id+\beta_1(t)\e^{(k-k')t}A_{s_1}, \hspace{0.5cm} \beta_1=-2k'\dot\alpha + \ddot\alpha 
\end{align}
where, by Lemma \ref{claim2Dtwopointoneproof}, $A_{s_1}$ satisfies for $t\in [0, w],$
\begin{align}\label{boundsasonezerowtwoone}
 \|A_{s_1}\|_{\infty} \lesssim \frac{(1+|s_1|)}{k^2}, \hspace{0.5cm} \|\nabla A_{s_1}\|_{\infty} \lesssim \frac{(1+|s_1|)}{k}, \hspace{0.5cm} s_1(t)=(1-\alpha) \e^{(k-k')t}.    
\end{align}

\begin{itemize}
    \item \textbf{ We show that $A$ is continuous :} We first show that $A$ given in \eqref{recalaozerowregtwoone} is continuous at $t=0$. For $t \in [0,w]$, $s_1(t)=(1-\alpha) \e^{(k-k')t}$. By assumption, $k'-k>0$. Hence, for $t\in [0, w],$ $\e^{(k-k')t}\leq 1$. Therefore, using $0\leq \alpha \leq 1$ by \eqref{factsalphasteponeparabolictwopointone}, we get for $t \in [0,w],$
\begin{align}\label{zerowsonebounded}
|s_1(t)|\lesssim 1    
\end{align}
Hence $\|A_{s_1}\|_{\infty}\lesssim \frac{1}{k^2}$ by \eqref{boundsasonezerowtwoone}. Using again $k<k'$, we get for $t\in [0,w],$
\begin{align}\label{aoneisidplusbonetimebounded}
    \|\e^{(k-k')t}A_{s_1}\|_{\infty}\lesssim \frac{1}{k^2}.
\end{align}

Since $\beta_1=-2k'\dot \alpha+\ddot \alpha$ (see \eqref{recalaozerowregtwoone}) and since the derivatives of $\alpha$ converge to zero when $t \to 0$ by \eqref{factsalphasteponeparabolictwopointone}, we get  $\lim_{t \to 0}\beta_1(t) =0.$ Hence, since $A=Id+\beta_1(t) \e^{(k-k')t}A_{s_1}$, we get 
$\lim_{t \to 0^+}A=Id$
and $A$ is continuous at $t=0.$ The same argument shows that 
$\lim_{t \rightarrow w^-} A=Id.$

When $t \in [w, 2w],$ $A:=Id+\beta_2(t)\e^{(k'-k)t}A_{s_2}$ (see \eqref{defafulllcylindertwoone} and \eqref{defatwotwopointone}) where $A_{s_2}$ satisfies estimates similar to \eqref{boundsasonezerowtwoone} with $s_2$ in place of $s_1$ (by Lemma \ref{lemmabistwopointnineinproofoftwopointne}) and where $s_2(t)=\alpha(t-w) \e^{(k'-k)t}$ . We recall also $\beta_2(t):=2k \dot \alpha(t-w) - \ddot \alpha(t-w) $ by \eqref{eqtforaprimesteponetwoonetwo}. To see that 
$$
\lim_{t \rightarrow w+} A=Id, \hspace{0.5cm} \lim_{t \rightarrow 2w^-} A=Id,
$$
we can proceed as above. The only difference is in the estimation of $\e^{(k'-k)t}.$ Since $k'-k>0,$ the argument used above does not work anymore but we recall that by assumption $k'-k\lesssim w^{-1}.$ Hence, for $t\in [w, 2w],$ we again have $\e^{(k'-k)t}\lesssim 1.$

Finally, Lemma \ref{claim2Dtwopointoneproof} ensures that $A$ is continuous in $(0, w)$ and Lemma \ref{lemmabistwopointnineinproofoftwopointne} ensures that $A$ is continuous in $(w, 2w).$ \underline{In conclusion, $A$ is continuous in $\T^2\times \R$.}

\item \textbf{We show that  $\|A-Id\|_{\infty} \lesssim \frac{1}{wk}$}: \quad For $t \in [0, w]$,  
\begin{align}\label{amoinsidzerowtwoone}
  \|A-Id\|_{\infty}=\|A_1-Id\|_{\infty} =\|\beta_1(t)e^{-(k'-k) t}A_{s_1} \|_{\infty} 
\end{align}
  by \eqref{recalaozerowregtwoone}. Since $\beta_1=-2k'\dot \alpha + \ddot \alpha$ by \eqref{recalaozerowregtwoone}, and since  $ |\dot{\alpha}(t)| \lesssim w^{-1}, 
    |\ddot{\alpha}(t)| \lesssim w^{-2}$ by \eqref{factsalphasteponeparabolictwopointone}, 
\begin{align}\label{estimatesbetaonezerowtwoone}
    |\beta_1| \lesssim\frac{k'}{w} + \frac{1}{w^2}.
\end{align}

By \eqref{zerowsonebounded}, $|s_1|\lesssim 1,$ so by \eqref{boundsasonezerowtwoone} $\|A_{s_1}\|_{\infty} \lesssim \frac{1}{k^2}$. Since $\e^{-(k'-k)t}\leq 1$ (as $k'>k$ by assumption), we get using \eqref{amoinsidzerowtwoone}, that,
$$
\|A-Id\|_{\infty} \lesssim\frac{k'}{k^2w} + \frac{1}{k^2w^2}.
$$
By assumption $k'-k\lesssim w^{-1},$ hence, $\frac{k'}{k} \lesssim 1+\frac{1}{kw}.$ Therefore, for $t \in [0, w],$
$$
\|A-Id\|_{\infty} \lesssim \frac{1}{kw} + \frac{1}{k^2w^2} \lesssim \frac{1}{kw}
$$
since  $kw\gtrsim 1$ by assumption. 

The case $t \in [w, 2w]$ is similar. Indeed, $\|A-Id\|_{\infty}=\|A_2-Id\|_{\infty} =\|\beta_2(t)e^{(k'-k) t}A_{s_2} \|_{\infty}$ and $\beta_2(t):=2k \dot \alpha(t-w) - \ddot \alpha(t-w) $. So $|\beta_2|\lesssim\frac{k}{w}+\frac{1}{w^2}.$  The estimates for $A_{s_2}$ are similar to the one for $A_{s_1}$ with $s_2$ in place of $s_1$ (by Lemma \ref{lemmabistwopointnineinproofoftwopointne}) where $s_2(t)=\alpha(t-w) \e^{(k'-k)t}$. Again $\e^{(k'-k)t} \lesssim 1$ since $t\leq 2w$ and $k'-k\lesssim w^{-1}$ by assumption. So, the estimate $\|A-Id\|\lesssim \frac{1}{kw}$ holds when $t \in [w, 2w].$ Since $A=Id$ when $t\leq 0$ and $t\geq 2w$, we conclude that 
$$
\|A-Id\|_{\infty} \lesssim\frac{1}{kw} \quad \mbox{ in } \T^2\times \R.
$$

\item \textbf{We show that $\|\nabla A\|_{\infty}\lesssim \frac{1}{w}$}: For $t \in [0, w],$ by \eqref{recalaozerowregtwoone}, $$A=A_1=Id+\beta_1(t)\e^{(k-k')t}A_{s_1}.$$  Since $\nabla$ only involve $x, y$ derivatives, 
\begin{align}\label{nablaatwooneow}
  \nabla A = \beta_1(t)\e^{(k-k')t} \nabla A_{s_1}.  
\end{align}

By \eqref{estimatesbetaonezerowtwoone}, $|\beta_1| \lesssim\frac{k'}{w} + \frac{1}{w^2}.$ By assumption $k<k'$ so $\e^{(k-k')t}\leq 1$ since $t\geq 0.$ Therefore, 
\begin{align}\label{betaoneetwooneow}
|\beta_1(t)\e^{(k-k')t}| \lesssim\frac{k'}{w} + \frac{1}{w^2}.    
\end{align}

By \eqref{boundsasonezerowtwoone}, 
$
\|\nabla A_{s_1}\|_{\infty} \lesssim \frac{(1+|s_1|)}{k}    
$
and by \eqref{zerowsonebounded}, $|s_1(t)|\lesssim 1$. Therefore, 
\begin{align}\label{nablasonezerowtwoone}
  \|\nabla A_{s_1}\|_{\infty} \lesssim \frac{1}{k}.  
\end{align}

Combining \eqref{nablaatwooneow}, \eqref{betaoneetwooneow} and \eqref{nablasonezerowtwoone}, we get
$$
\|\nabla A\|_{\infty} \lesssim \frac{k'}{kw} + \frac{1}{kw^2}.
$$

Since $k'-k\lesssim w^{-1}$ by assumption, we get $\frac{k'}{k}\lesssim 1+\frac{1}{kw}\lesssim 1$ since $kw\geq 1$ by assumption. We conclude that for $t\in [0, w],$
$$
\|\nabla A\|_{\infty} \lesssim  \frac{1}{w}.
$$

It is again straightforward to see that the same argument will work for $t \in [w, 2w]$. Indeed, when $t \in [w, 2w],$ we saw when proving $\|A-Id\| \lesssim \frac{1}{kw}$ above that $|\beta_2(t)\e^{-(k-k')t}| \lesssim\frac{k}{w} + \frac{1}{w^2}.$  By Lemma \ref{lemmabistwopointnineinproofoftwopointne}, the estimates for $\nabla A_{s_2}$ are similar to the one for $ \nabla A_{s_1}$ with $s_2$ in place of $s_1$  where $s_2(t)=\alpha(t-w) \e^{(k'-k)t}$. Again $\e^{(k'-k)t} \lesssim 1$ since $t\leq 2w$ and $k'-k\lesssim w^{-1}$ by assumption. So $|s_2(t)|\lesssim 1$ and $\|\nabla A_{s_2}\|_{\infty} \lesssim \frac{1}{k}.$ Since $A=Id+\beta_2(t)\e^{(k'-k)t}A_{s_2}$,  the estimate $\|\nabla A\|_{\infty} \lesssim \frac{1}{w}$ holds for $t \in [w, 2w]$ (as $kw\gtrsim 1$ by assumption). Finally, since $A=Id$ when $t\leq 0$ and $t\geq 2w$, we conclude that 
$$
\|\nabla A\|_{\infty} \lesssim \frac{1}{w} \quad \mbox{ in } \T^2 \times \R.
$$

\item \textbf{We show $\nabla A$ is continuous at $t=0, t=w$ and $t=2w$:} For $t \in [0, w]$, by \eqref{nablaatwooneow}
  $$\nabla A = \beta_1(t)\e^{(k-k')t} \nabla A_{s_1}. $$ By \eqref{nablasonezerowtwoone}, $\|\nabla A_{s_1}\|_{\infty} \lesssim \frac{1}{k}$. Since $k<k',$ $\e^{(k-k')t}\leq 1.$ By \eqref{recalaozerowregtwoone},  $\beta_1=-2k'\dot\alpha + \ddot\alpha $ and goes to zero as $t$ goes to 0 and $w$ since the derivatives of $\alpha$ do (recall \eqref{factsalphasteponeparabolictwopointone}). Hence, 
  $$
  \lim_{t \to 0^+} \nabla A=  \lim_{t \to w^-} \nabla A=0.
  $$
Since $A=Id$ when $t\leq 0$, we showed that $\nabla A$ is continuous at $t=0.$ We skip the justification at $t=w^+$ and $t=2w^-$ since it is very similar.

\item \textbf{We show $\|\dot A\|\lesssim \frac{1}{w}$:} For $t \in [0, w]$, by \eqref{recalaozerowregtwoone} and by \eqref{boundsasonezerowtwoone},
\begin{align}\label{recalabetaonezeroonetwoone}
 A=A_1=Id+\beta_1(t)\e^{(k-k')t}A_{s_1}, \hspace{0.5cm} \beta_1=-2k'\dot\alpha + \ddot\alpha , \hspace{0.5cm}  s_1(t)=(1-\alpha) \e^{(k-k')t}.      
\end{align}

In particular, $s_1$ depends on $t.$ Therefore,
\begin{align}\label{computederivativeofAwrttimenew}
\dot{A} = \partial_t\left[\beta_1(t)\e^{-(k'-k)t}\right]A_{s_1}+ \beta_1(t)\e^{-(k'-k)t}\dot{s_1}\frac{\pl A_{s_1}}{\pl s_1}.    
\end{align}

\pseudosection{Estimation of the first term of in \eqref{computederivativeofAwrttimenew}}
 We have
\begin{align}\label{firsttermwhhendifferetiatingbetanew}
   \left\|\partial_t\left[\beta_1(t)\e^{-(k'-k)t}\right]A_{s_1}\right\|_{\infty} \leq \|\dot\beta_1(t) \e^{-(k'-k)t}A_{s_1}\|_{\infty} + \|\beta_1(t)(k'-k)\e^{-(k'-k)t}A_{s_1}\|_{\infty} .
\end{align}

\textbf{The first term in \eqref{firsttermwhhendifferetiatingbetanew}:}
By \eqref{boundsasonezerowtwoone}, $\|A_{s_1}\|_{\infty} \lesssim \frac{1+|s_1|}{k^2}.$ Since $0\leq \alpha \leq 1$ by \eqref{factsalphasteponeparabolictwopointone}, since $\e^{(k-k')t}\leq 1$ (as $k'>k$ by assumption and $t\geq 0$), and since $s_1=(1-\alpha)\e^{(k-k')t}$ by \eqref{recalabetaonezeroonetwoone}, we have $|s_1|\leq 1$ and 
\begin{align}\label{asonerecllzerowtwoone}
   \|A_{s_1}\|_{\infty} \lesssim \frac{1}{k^2}. 
\end{align}

Since $\beta_1=-2k'\dot\alpha + \ddot \alpha$ (see\eqref{recalabetaonezeroonetwoone}), and since $\partial_t^{(n)}\alpha(t)\lesssim w^{-n}$ for all $n \geq 1$ (see \eqref{factsalphasteponeparabolictwopointone}), we get 
\begin{align}\label{dotbetaonefirstfirst}
|\dot \beta_1| \lesssim \frac{k'}{w^2}+\frac{1}{w^3}.    
\end{align}

Therefore, the first term in \eqref{firsttermwhhendifferetiatingbetanew} is estimated by 
\begin{align}\label{firsttermfirsttermdota}
    \|\dot\beta_1(t) \e^{-(k'-k)t}A_{s_1}\|_{\infty} \lesssim \frac{k'}{k^2w^2}+\frac{1}{k^2w^3} \lesssim \left(1+\frac{1}{kw}\right) \left( \frac{1}{(kw)w}\right)+\frac{1}{(kw)^2w} \lesssim \frac{1}{w}
\end{align}
 where we used $\frac{k'}{k}\lesssim 1+\frac{1}{kw}$ (since $k'-k\lesssim w^{-1}$ by assumption) in the second inequality and $kw \geq 1$ (by assumption) in the last inequality.
\\

\textbf{The second term in \eqref{firsttermwhhendifferetiatingbetanew}:} By \eqref{recalabetaonezeroonetwoone} and since $\partial_t^{(n)}\alpha(t)\lesssim w^{-n}$ for all $n \geq 1$ (see \eqref{factsalphasteponeparabolictwopointone}), we get
\begin{align}\label{estimatebetaonerecallzerowtwoone}
|\beta_1| \lesssim\frac{k'}{w}+\frac{1}{w^2}.   
\end{align}

We saw above that $\e^{(k-k')t}\leq 1$ (as $k'>k$ by assumption and $t\geq 0$) and $$
   \|A_{s_1}\|_{\infty} \lesssim \frac{1}{k^2}.
$$
by \eqref{asonerecllzerowtwoone}. Therefore, the second term in \eqref{firsttermwhhendifferetiatingbetanew} is estimated by 
\begin{align}\label{firsttermsecondtermzerow}
  \|\beta_1(t)(k'-k)\e^{-(k'-k)t}A_{s_1}\|_{\infty} \lesssim (k'-k) \left(\frac{k'}{w}+\frac{1}{w^2}\right)\frac{1}{k^2}   \lesssim \frac{k'}{k^2w^2}+\frac{1}{k^2w^3} \lesssim \frac{1}{w}
\end{align}
where the second inequality follows from $k'-k\lesssim w^{-1}$ by assumption and where the last inequality follows from the same argument as in \eqref{firsttermfirsttermdota}.

Combining \eqref{firsttermfirsttermdota} and \eqref{firsttermsecondtermzerow} with \eqref{firsttermwhhendifferetiatingbetanew}, we get
\begin{align}\label{finalestimatefirstterm}
     \left\|\partial_t\left[\beta_1(t)\e^{-(k'-k)t}\right]A_{s_1}\right\|_{\infty}\lesssim\frac{1}{w}.
\end{align}

\pseudosection{Estimation of the second term in \eqref{computederivativeofAwrttimenew}:} We recall the term we want to estimate
$$
\|\beta_1(t)\e^{-(k'-k)t}\dot{s_1}\frac{\pl A_{s_1}}{\pl s_1}\|_{\infty}, \hspace{0.5cm} \mbox{ where }  s_1(t)=(1-\alpha) \e^{(k-k')t}.
$$

By \eqref{estimatebetaonerecallzerowtwoone}
$|\beta_1| \lesssim\frac{k'}{w}+\frac{1}{w^2},
$
hence, since $\e^{-(k'-k)t} \leq 1$ (as $k<k'$ by assumption and $t\geq 0$), we get
 
\begin{align}\label{boundexpagainagainagin}
|\beta_1 \e^{-(k'-k)t} | \lesssim\frac{k'}{w}+\frac{1}{w^2}.   
\end{align}

Since $s_1=(1-\alpha) \e^{(k-k')t}$, we get
$$
\dot s_1 = -\dot \alpha \e^{(k-k')t} -(1-\alpha)(k'-k)  \e^{(k-k')t}.
$$
We remember that $|\dot \alpha|\lesssim w^{-1} $  and $1-\alpha \leq 1$ since $0\leq \alpha \leq 1$ (see \eqref{factsalphasteponeparabolictwopointone}). Since $k'-k \lesssim w^{-1}$ by assumption, we get 
\begin{align}\label{boundsonezerowagain}
|\dot s_1| \lesssim \frac{1}{w}.    
\end{align}

By Lemma \ref{claim2Dtwopointoneproof},
\begin{align}\label{boundderivativeasonezerowtwoone}
\left\|\frac{\pl A_{s_1}}{\pl s_1}\right\|_{\infty} \lesssim \frac{1}{k^2}.    
\end{align}

Therefore, combining  \eqref{boundexpagainagainagin}, \eqref{boundsonezerowagain} and \eqref{boundderivativeasonezerowtwoone}, we get 
\begin{align}\label{secondtermzerowonewbla}
    \|\beta_1(t)\e^{-(k'-k)t}\dot{s_1}\frac{\pl A_{s_1}}{\pl s_1}\|_{\infty}\lesssim \frac{k'}{k^2w^2} + \frac{1}{k^2w^3} \lesssim \frac{1}{w}
\end{align}
by the last inequality in \eqref{firsttermsecondtermzerow}.

\textbf{Summing up:} We wanted to estimate $\dot A$. By \eqref{computederivativeofAwrttimenew}, we saw
$$
\dot{A} = \partial_t\left[\beta_1(t)\e^{-(k'-k)t}\right]A_{s_1}+ \beta_1(t)\e^{-(k'-k)t}\dot{s_1}\frac{\pl A_{s_1}}{\pl s_1}. 
$$
By \eqref{finalestimatefirstterm}
$$
\left\|\partial_t\left[\beta_1(t)\e^{-(k'-k)t}\right]A_{s_1}\right\|_{\infty}\lesssim\frac{1}{w}.
$$
     By \eqref{secondtermzerowonewbla} $$
    \|\beta_1(t)\e^{-(k'-k)t}\dot{s_1}\frac{\pl A_{s_1}}{\pl s_1}\|_{\infty}\lesssim \frac{k'}{k^2w^2} + \frac{1}{k^2w^3} \lesssim \frac{1}{w}.   
     $$
  
Therefore, when $t\in [0,w]$
$$
\|\dot A\|_{\infty} \lesssim \frac{1}{w}
$$
as claimed. We leave the case $t\in [w, 2w]$ to the reader. A similar argument will yield the bound $\frac{1}{w}$ in that case as well. Finally, since $A=Id$ for $t\leq 0$ and $t\geq 2w$, we conclude that 
$$
\|\dot A\|_{\infty} \lesssim \frac{1}{w} \quad \mbox{ in } \T^2 \times \R.
$$

\item \textbf{We show $\dot A$ is continuous at $t=0, t=w$ and $t=2w$ :}
In \eqref{computederivativeofAwrttimenew}, we saw for $t\in [0, w]$
$$
\dot{A} = \partial_t\left[\beta_1(t)\e^{-(k'-k)t}\right]A_{s_1}+ \beta_1(t)\e^{-(k'-k)t}\dot{s_1}\frac{\pl A_{s_1}}{\pl s_1}. 
$$
We recall that $\beta_1(t)=-2k'\dot \alpha(t) + \ddot\alpha(t)$ (see \eqref{recalabetaonezeroonetwoone}). All the terms in $\dot A$ above will involve  $\beta_1$ or its derivative, i.e., they involve the derivative of $\theta(t/w)$ (since $\alpha(t)=\theta(t/w)$ by definition). Recall that all the derivatives of $\theta(t/w)$ go to zero as $t$ go to $0^+$ or to $w^-$ (see \eqref{thetanallderivativezerowblablablabla}). An inspection of the bound  $|\dot A| \lesssim w^{-1}$ that we just proved will show that every term in $\dot A$ other than $\beta_1$ and its derivative can be bounded independently of $t$. Hence, $\dot A$ is estimated by a combination of constants (with respect to $t$, that is, terms involving $k$ and $w$ which are fixed), each being multiplied by a derivative of $\theta (t/w).$ Since all the derivatives of $\theta(t/w)$ go to zero as $t$ go to $0^+$ or to $w^-$, we get that $\dot A$ goes to zero as $t$ goes to $0^+$ and $w^-$. The same argument will show that $\dot A$ goes to zero as $t$
 goes to $w^+$ and $2w^-$. Since $A=Id$ for $t\leq 0$ and $t\geq 2w$, we get that $\dot A$ is continuous in $\T^2 \times \R.$ We leave the details to the interested reader.
\end{itemize}

\pseudosection{The estimate for $u$}

We recall what we want to prove: For any multi-index $\alpha \in \mathbb N^3$ of order $|\alpha|\leq 2$,
\begin{align}\label{boundforutoshow}
|\partial^{\alpha} u(x,y,t)| \lesssim (k')^{|\alpha|} \sup_{\T^2 \times [0, 2w]} |u|  
\end{align}

where $u$ was defined by
\begin{equation}
    u = \begin{cases} f+(1-\alpha(t))g, & t\leq w,\\
 \alpha(t-w)f+g,&t \geq w,
 \end{cases}
\end{equation}

where $$f=\cos(k x)e^{-k t} \textup{\quad and \quad} g=\cos(k'y)e^{-k' t}.$$

\begin{itemize}
    \item \textbf{The function $u$ is $C^2$:} Recall that the derivatives of $\alpha(t)$ go to zero as $t$ goes to 0 and $w^-$ and the derivative of $\alpha(t-w)$ go to zero as $t$ goes to $w^+$ and $2w^-.$ Therefore, $u$ is clearly $C^2$ at $t=w$ and therefore in $\T^2\times \R.$
    \item \textbf{The bounds \eqref{boundforutoshow} for $u$:} We will only treat the case $t \in [0, w]$ since the other case is analogous. First note that $u$ solves $ \ddot u+ \div(A \nabla u)=0$ and therefore the maximum principle applies: 
\begin{align}\label{maxprincipleforuforlemmazerooneproof}
    \sup_{t\in[0, 2w]} |u| = \sup_{t \in \{0, 2w\}} |u| = \sup_{t=0} |u|=1
\end{align}
by the definition of $u$. By a direct computation,  
$$|\dot{u}| \leq |\dot{f}|+(1-\alpha)|\dot{g}|+|\dot{\alpha}g| \lesssim k+k'+w^{-1}.$$ 
By using the hypothesis $w^{-1} \lesssim k$ and $k\leq k'$, we get the bound
\begin{align}\label{boundforudotproof}
|\dot{u}| \lesssim k'.  
\end{align}

Analogously,
\begin{align}\label{boundnablauproof}
|\nabla u|\lesssim |\nabla f|+(1-\alpha)|\nabla g| \lesssim k+k'\lesssim k'.    
\end{align}

We leave the estimation of the other derivatives to the reader (keeping in mind the assumption $0 \leq k'-k\leq w^{-1} \leq k$) .
This finishes the proof of Lemma \ref{PML1}. It only remains to prove Lemma \ref{claim2Dtwopointoneproof} and Lemma \ref{lemmabistwopointnineinproofoftwopointne}. This is included in Appendix \ref{prooftwodlemmaappendix}.
\end{itemize}
\end{proof}

\subsection{Proof of Lemma \ref{claim2Dtwopointoneproof}\label{prooftwodlemmaappendix}.}

In this section, we present the proof of the two-dimensional Lemma \ref{claim2Dtwopointoneproof}. We recall it for the reader's convenience.

\begin{lemma*}[\ref{claim2Dtwopointoneproof}]
Let $1\leq k \leq k' \leq 2k.$ Given a positive number $s$ and a function $u = \cos(k x)+s\cos(k'y)$ on $\T^2$, there exists a smooth vector field $V$ with divergence $\div(V)=\cos(k'y)$ such that the matrix $A_s:= \begin{pmatrix}a_s(x,y)&b_s(x,y)\\b_s(x,y)&0\end{pmatrix}$ uniquely defined by $A_s\nabla u=V$ is smooth and satisfies 
\begin{align}\label{estimatestosatisfyforas}
        \|A_s\|_{\infty} \lesssim \frac{(1+|s|)}{k^2}, \hspace{0.5cm} \|\nabla A_s\|_{\infty} \lesssim \frac{(1+|s|)}{k}
    \end{align}
    where we recall that the gradient only involves spatial derivatives.
 Moreover, the function $s\mapsto A_s$ satisfies
    $$
    \bigg\|\frac{\partial A_s}{ \partial s}\bigg\|_{\infty} \lesssim \frac{1}{k^2}.
    $$
\end{lemma*}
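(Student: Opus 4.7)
The plan is to construct $V$ explicitly and then extract $a_s,b_s$ by inverting the linear system $A_s\nabla u=V$. Using $\nabla u=(-k\sin(kx),\,-sk'\sin(k'y))$, the prescribed shape of $A_s$ yields the two scalar equations
$$V_1=-ka_s\sin(kx)-sk'b_s\sin(k'y),\qquad V_2=-kb_s\sin(kx).$$
For $b_s$ to extend smoothly across $\{\sin(kx)=0\}$, $V_2$ must carry $\sin(kx)$ as a smooth factor; similarly $V_1+sk'b_s\sin(k'y)$ must be divisible by $\sin(kx)$ so that $a_s$ is smooth. Hence the main task is to produce a smooth periodic $V$ with $\div V=\cos(k'y)$ whose two components both vanish on $\{\sin(kx)=0\}$ in a compatible way.

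My ansatz is $V_2=\sin^2(kx)\cdot 2\sin(k'y)/k'$, which vanishes to order two on $\{\sin(kx)=0\}$ and therefore automatically produces a smooth quotient. With this choice $\partial_y V_2=(1-\cos(2kx))\cos(k'y)$, and the divergence constraint forces $\partial_x V_1=\cos(k'y)-\partial_y V_2=\cos(2kx)\cos(k'y)$. Since the right-hand side has zero $x$-average, it admits a smooth $2\pi$-periodic primitive, namely $V_1=\sin(2kx)\cos(k'y)/(2k)=\sin(kx)\cos(kx)\cos(k'y)/k$, which is itself a clean multiple of $\sin(kx)$.

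With $V$ in hand, solving the linear system gives
$$b_s=-\frac{2\sin(kx)\sin(k'y)}{kk'},\qquad a_s=\frac{-\cos(kx)\cos(k'y)+2s\sin^2(k'y)}{k^2},$$
where the extra $\sin(k'y)$ contribution to $a_s$ disappears cleanly against the $sk'b_s\sin(k'y)$ term. Both coefficients are smooth on $\T^2$. All the requested bounds follow by inspection using $k\le k'\le 2k$: $|a_s|\le (1+2|s|)/k^2$ and $|b_s|\le 2/(kk')\le 2/k^2$, so $\|A_s\|_\infty\lesssim(1+|s|)/k^2$; a single spatial derivative of $a_s$ or $b_s$ produces one extra factor of $k$ or $k'$, giving $\|\nabla A_s\|_\infty\lesssim(1+|s|)/k$; and only $a_s$ carries $s$-dependence, linearly, with $\partial_s a_s=2\sin^2(k'y)/k^2$, so $\|\partial_s A_s\|_\infty\le 2/k^2$.

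The only real technical obstacle is ensuring that both $V_2/\sin(kx)$ and $(V_1+sk'b_s\sin(k'y))/\sin(kx)$ extend smoothly to all of $\T^2$. This is handled for free by engineering $V_2$ to contain $\sin^2(kx)$ as a factor and then observing, via the double-angle identity $2\sin^2(kx)=1-\cos(2kx)$, that the primitive $V_1$ inherits a factor of $\sin(kx)$ automatically; the $\sin(k'y)\cdot b_s$ subtraction then cancels the $2s\sin^2(k'y)/k$ contribution in $V_1$, leaving $a_s$ smooth. Everything else is a routine verification.
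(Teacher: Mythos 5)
Your construction is exactly the paper's: the same vector field $V=\bigl(\sin(2kx)\cos(k'y)/(2k),\;2\sin(k'y)\sin^2(kx)/k'\bigr)$, the same resulting coefficients $b_s=-2\sin(kx)\sin(k'y)/(kk')$ and $a_s=\bigl(-\cos(kx)\cos(k'y)+2s\sin^2(k'y)\bigr)/k^2$, and the same inspection of the bounds using $k\le k'\le 2k$. The proof is correct and follows essentially the same route as the paper (only the closing remark about what cancels against what is phrased loosely; the explicit formulas make the smoothness immediate).
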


We also present a version of Lemma \ref{claim2Dtwopointoneproof} for a slightly different function $v.$
\begin{lemma*}[\ref{lemmabistwopointnineinproofoftwopointne}]
Let $1\leq k \leq k' \leq 2k.$  Given a positive number $s$ and a function $v = s\cos(k x)+\cos(k'y)$ on $\T^2$, there exists a smooth vector field $V$ with divergence $\div(V)=\cos(kx)$ such that the matrix  $A_s = \begin{pmatrix}0&b_s(x,y)\\b_s(x,y)&c_s(x,y)\end{pmatrix}$ uniquely defined by $A_s \nabla v=V$ is smooth and sataisfies \begin{align*}
        \|A_s\|_{\infty} \lesssim \frac{(1+|s|)}{k^2}, \hspace{0.5cm} \|\nabla A_s\|_{\infty} \lesssim \frac{(1+|s|)}{k}
    \end{align*}
    where we recall that the gradient only involves spatial derivatives.
     Moreover, the function $s\mapsto A_s$ satisfies
    $$
    \bigg\|\frac{\partial A_s}{ \partial s}\bigg\|_{\infty} \lesssim \frac{1}{k^2}.
    $$
\end{lemma*}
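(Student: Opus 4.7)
The plan is to reduce Lemma \ref{lemmabistwopointnineinproofoftwopointne} to Lemma \ref{claim2Dtwopointoneproof} by the coordinate swap $(x,y)\leftrightarrow(y,x)$. The function $v(x,y)=s\cos(kx)+\cos(k'y)$ becomes, after setting $(X,Y):=(y,x)$, exactly $u(X,Y):=v(Y,X)=\cos(k'X)+s\cos(kY)$, which fits the form treated in Lemma \ref{claim2Dtwopointoneproof} with relabeled data $K=k'$, $K'=k$, $S=s$. The matrix shape $\begin{pmatrix}0&b_s\\b_s&c_s\end{pmatrix}$ of Lemma \ref{lemmabistwopointnineinproofoftwopointne} is related to the shape $\begin{pmatrix}a_S&b_S\\b_S&0\end{pmatrix}$ of Lemma \ref{claim2Dtwopointoneproof} by conjugation with $J:=\begin{pmatrix}0&1\\1&0\end{pmatrix}$, and the target divergence $\cos(kx)$ corresponds to $\cos(K'Y)$, which is exactly what Lemma \ref{claim2Dtwopointoneproof} delivers.

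Step one: invoke Lemma \ref{claim2Dtwopointoneproof} applied to $u(X,Y)=\cos(KX)+S\cos(K'Y)$ with $K=k'$, $K'=k$, $S=s$. The hypothesis $k\le k'\le 2k$ of our lemma translates into comparability $K\asymp K'$, which is all the construction of Lemma \ref{claim2Dtwopointoneproof} genuinely uses; the stated ordering $K\le K'$ there is a convention, since interchanging the two frequencies only permutes factors that are equivalent up to a fixed constant. This yields a smooth matrix $A_S(X,Y)=\begin{pmatrix}a_S&b_S\\b_S&0\end{pmatrix}$ satisfying $\div_{(X,Y)}(A_S\nabla_{(X,Y)}u)=\cos(K'Y)$ together with $\|A_S\|_\infty\lesssim(1+|S|)/K^2$, $\|\nabla_{(X,Y)}A_S\|_\infty\lesssim(1+|S|)/K$, and $\|\partial_S A_S\|_\infty\lesssim 1/K^2$.

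Step two: set $A_s(x,y):=JA_S(y,x)J$. A direct matrix computation gives $A_s(x,y)=\begin{pmatrix}0&b_S(y,x)\\b_S(y,x)&a_S(y,x)\end{pmatrix}$, so $A_s$ has the required shape with $b_s(x,y):=b_S(y,x)$ and $c_s(x,y):=a_S(y,x)$. The identity $u(X,Y)=v(Y,X)$ yields $\nabla_{(X,Y)}u\big|_{(X,Y)=(y,x)}=J\nabla_{(x,y)}v$, so writing $W:=A_S\nabla_{(X,Y)}u$ one has $A_s\nabla_{(x,y)}v=JW(y,x)$. The divergence transforms as
\begin{equation*}
\div_{(x,y)}\bigl(A_s\nabla v\bigr)=\partial_x W_2(y,x)+\partial_y W_1(y,x)=(\div_{(X,Y)}W)(y,x)=\cos(K'Y)\big|_{(X,Y)=(y,x)}=\cos(kx),
\end{equation*}
so $V:=A_s\nabla v$ has the required divergence. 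Because the swap of coordinates and the conjugation by $J$ merely permute entries and interchange the two partial derivatives, the sup norms of $A_s$, $\nabla_{(x,y)}A_s$ and $\partial_s A_s$ equal those of $A_S$, $\nabla_{(X,Y)}A_S$ and $\partial_S A_S$; inserting $K=k'$ and using $k\asymp k'$, the bounds from Lemma \ref{claim2Dtwopointoneproof} become $\|A_s\|_\infty\lesssim(1+|s|)/k^2$, $\|\nabla A_s\|_\infty\lesssim(1+|s|)/k$ and $\|\partial_s A_s\|_\infty\lesssim 1/k^2$, as required.

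The main obstacle is thus contained in Lemma \ref{claim2Dtwopointoneproof} itself rather than in its mirror image: one must check that the explicit formulas produced there for $a_S$ and $b_S$, and the corresponding sup-norm bounds, do not actually require the ordering $K\le K'$ but only $K\asymp K'$. This is an inspection of the explicit construction in the preceding appendix and, once confirmed, the passage to Lemma \ref{lemmabistwopointnineinproofoftwopointne} is essentially algebraic; no additional analysis beyond the change of variables above is needed.
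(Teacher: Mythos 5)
Your proof is correct and is essentially the paper's own argument: the paper handles this lemma by remarking that its proof is identical (the $x\leftrightarrow y$ mirror image) to that of Lemma \ref{claim2Dtwopointoneproof}, which is precisely the symmetry you formalize through the coordinate swap and conjugation by $J$. The one inspection you defer does go through — the explicit construction in Appendix \ref{prooftwodlemmaappendix} uses only $k\asymp k'$, not the ordering — and one can even bypass the relabeling by writing the mirrored formulas $V=\bigl(\tfrac{2\sin(kx)\sin^2(k'y)}{k},\ \tfrac{\sin(2k'y)\cos(kx)}{2k'}\bigr)$, $b_s=\tfrac{-2\sin(kx)\sin(k'y)}{kk'}$, $c_s=\tfrac{2s\sin^2(kx)-\cos(kx)\cos(k'y)}{(k')^2}$, which satisfy $\div V=\cos(kx)$ and the stated bounds immediately since $k\le k'\le 2k$.
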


\begin{rem}
    The proof of Lemma \ref{lemmabistwopointnineinproofoftwopointne} is identical to the proof of Lemma \ref{claim2Dtwopointoneproof}  and we do not include it.
\end{rem}

\pseudosection{Heuristic idea for the proof of Lemma \ref{claim2Dtwopointoneproof}}
\textbf{Observation(first and failed attempt)}. The vector field $W:=\begin{pmatrix}
    0 \\ \frac{\sin(k'y)}{k'}
\end{pmatrix}$
is such that $\div(W)=\cos(k'y).$ By defining the matrix $A_s:= \begin{pmatrix}a_s(x,y)&b_s(x,y)\\b_s(x,y)&0\end{pmatrix}$ with $A_s \nabla u= V$, we get 
\begin{align}\label{asexpressed}
  \begin{cases}
    b_s=\frac{v_2}{\pl u/\pl x},\\a_s = \frac{v_1 - v_2\frac{\pl u/\pl y}{\pl u/\pl x}}{\pl u/\pl x},
\end{cases} \iff  \begin{cases}
    b_s=\frac{-v_2}{k\sin(kx)},\\a_s = \frac{-v_1}{k\sin(kx)}  + s \frac{v_2 k'\sin(k'y)}{k^2\sin(kx)^2}
\end{cases} 
\end{align}
since $u=\cos(kx)+s\cos(k'y)$ by definition. Hence, if we choose $V\equiv W,$ we can see that the coefficient $b_s$ becomes
$$
b_s=\frac{-\sin(k'y)}{k k' \sin(kx)}
$$
which blows up on $\T^2$ and therefore does not satisfy the boundedness estimates \eqref{estimatestosatisfyforas} from Lemma \ref{claim2Dtwopointoneproof}. We therefore have to choose $V$ more carefully. 
\\

We want to choose $V$ such that $A_s$ given in \eqref{asexpressed} satisfies the estimates \eqref{estimatestosatisfyforas} and such that $\div(V)=\cos(k'y)$. To achieve this goal, we can try to look for $v_2$ in the form $\frac{v_2 k'}{\sin(kx)^2}=g(x,y)$ where $|g|\lesssim 1$ (this comes from the relation for $a_s$ in \eqref{asexpressed}). So, we are led to consider
$$
v_2(x,y) = \frac{\sin(kx)^2 }{k'} g(x,y), \hspace{0.5cm} |g(x,y)|\lesssim 1.
$$
Recalling $\sin(x)^2=\frac{1}{2}(1-\cos(2x))$, we can rewrite $v_2$ as 
\begin{align}\label{guesstheformulavtwo}
v_2(x,y) = \frac{g(x,y)}{2k'} - \frac{g(x,y)\cos(2kx)}{2k'}, \hspace{0.5cm} |g(x,y)|\lesssim 1.    
\end{align}

We also want $\div(V)=\cos(k'y)$ and as we saw in the observation above, the vector field $\begin{pmatrix}
    0 \\ \frac{\sin(k'y)}{k'}
\end{pmatrix}$ satisfies this requirement. Hence, \eqref{guesstheformulavtwo} suggests the representation $V=W+X$ where $W:=\begin{pmatrix}
    0 \\ \frac{\sin(k'y)}{k'}
\end{pmatrix}$ and $X$ is divergence free. 
This is achieved by taking the function $g$ to be $g(x,y)=2\sin(k'y)$ (which satisfies $|g|\lesssim 1$). Such a $g$ yields  
$$
v_2(x,y) = \frac{\sin(k'y)}{k'} - \frac{\sin(k'y)\cos(2kx)}{k'}
$$
and  $X:=\begin{pmatrix}
    X_1 \\ - \frac{\sin(k'y)\cos(2kx)}{k'}
\end{pmatrix}$ which should be divergence-free (since $\div(W)=\cos(k'y)$ and since we want $\div(V)=\cos(k'y)$). The divergence-free condition on $X$ gives
$$
\frac{\pl X_1}{\pl x} = -\frac{\pl X_2}{\pl y} = \cos(k' y) \cos(2 k x)
$$
which in turn leads to $X_1=\frac{\cos(k'y)\sin(2kx)}{2k}$. Hence, so far we have 
$$
V=\begin{pmatrix}
    \frac{\sin(2kx)\cos(k'y)}{2k} \\
    \frac{2\sin(k'y) \sin(kx)^2}{k'}
\end{pmatrix}, \hspace{0.5cm}
 \div(V)=\cos(k'y).
 $$
It remains to make sure that such a $V$ yields a matrix $A_s$ defined by \eqref{asexpressed}, that satisfies the bounds \eqref{estimatestosatisfyforas} from Lemma \ref{claim2Dtwopointoneproof}: $ \|A_s\|_{\infty} \lesssim \frac{(1+|s|)}{k^2}, \hspace{0.5cm} \|\nabla A_s\|_{\infty} \lesssim \frac{(1+|s|)}{k}.$ Recalling that $1\leq k \leq k' \leq 2k$, this  is now straightforward to see. This finishes the heuristic argument for choosing $V.$ We present the proof of Lemma \ref{claim2Dtwopointoneproof} below.

\comment{

\newpage

\pseudosection{Heuristic proof Lemma \ref{claim 2D}}

First of all, we express the matrix $A_s$ in terms of $V = \binom{v_1}{v_2}$ and $\nabla u.$ :
$$A_s \nabla u = V \iff \begin{cases}a_s\frac{\pl u}{\pl x}+ b_s\frac{\pl u}{\pl y} = v_1, \\
b_s\frac{\pl u}{\pl x}=v_2,\end{cases}$$ which implies \begin{align}\label{solutionasbsforsublemma}
\begin{cases}
    b_s=\frac{v_2}{\pl u/\pl x},\\a_s = \frac{v_1 - v_2\frac{\pl u/\pl y}{\pl u/\pl x}}{\pl u/\pl x},
\end{cases} \iff  \begin{cases}
    b_s=\frac{-v_2}{k\sin(kx)},\\a_s = \frac{-v_1}{k\sin(kx)}  + s \frac{v_2 k'\sin(k'y)}{k^2\sin(kx)^2}
\end{cases} 
\end{align}
where we used the definition of $u=\cos(kx)+s\cos(k'y).$

We will first choose $V=\binom{v_1}{v_2}$ such that the right-hand side of \eqref{solutionasbsforsublemma} is well-defined and such that $\div(V) = \cos(k'y)$. Then, we will define $a_s, b_s$ by \eqref{solutionasbsforsublemma}. 

\pseudosection{Heuristic argument for choosing $V$}
We want $\div(V) = \cos(k'y)$. So, the simplest choice for $V:=\binom{v_1}{v_2}$ would be $v_1=0$ and $v_2 = \frac{\sin(k'y)}{k'}.$ Since $u=\cos(kx)+s\cos(k'y),$ we have  $\frac{\pl u}{\pl x} = -k \sin(k x)$. Hence, this choice of $V$ implies that $b_s$ given in \eqref{solutionasbsforsublemma} would be $$b_s=\frac{v_2}{\pl u/\pl x} = \frac{-\sin(k'y)}{k' k \sin(kx)}.$$ So, this choice of $V$ yields a coefficient $b_s$ that can blow up. Since we want a bounded matrix $A_s$ on $\T^2$, this suggests that we have to carefully choose the vector $V$.

From the definition \eqref{solutionasbsforsublemma} of $a_s, b_s$, we see that a sufficient condition for the coefficients $a_s, b_s$ to be bounded on $\T^2$ is $v_1= f \,\frac{\partial u}{\partial x},  v_2= g \, \left(\frac{\partial u}{\partial x}\right) ^2  $ for some functions $f, g $ bounded on $\T^2$. Since $u=\cos(kx)+s\cos(k'y)$, we can rewrite $v_1, v_2$ as
\begin{align}\label{sufficientconditionboundedcoeff}
v_1= -k f \sin(kx), \hspace{0.5cm} v_2= k^2 g \sin(kx)^2   
\end{align}
for some functions $f, g $ bounded on $\T^2$.
\comment{
To tackle this problem we introduce a notion of \textit{divisibility}. More precisely, we shall call a function $f:\Omega \rightarrow \mathbb{C}$ \textit{divisible} by a function $g:\Omega \rightarrow \mathbb{C}$ if  $$\sup_{\Omega}\left|\frac{f}{g}\right| <\infty.$$  For $a_s$ and $b_s$ to be well-defined in $x$ and $y$, \eqref{solutionasbsforsublemma} shows that it is sufficient that $v_2$ is \textit{divisible} by $(\pl u/\pl x)^2 = \sin^2(k x)k^2,$ and it is sufficient that $v_1$ is \textit{divisible} by $\pl u/\pl x = -k\sin(k x).$ We will choose $V$ such that the above \textit{divisibility} conditions are satisfied.} \comment{Since $\div(V)=\cos(\lambda' y)$ by the Lemma's conditions, the simplest choice for $V$ is $V:=w(y) e_y$ where $w(y):= \frac{\sin(\lambda' y)}{\lambda'}.$ However, the ratio $\frac{w(y)}{\lambda^2 \sin^2(\lambda x)}$ is unbounded (i.e., the \textit{divisibility} condition fails).}

To achieve that, we represent $V = \binom{v_1}{v_2}$ as $W+X,$ where $W$ has the form $W := \binom{0}{w(y)}$ with $w(y):=\frac{\sin(k'y)}{k'}$  and where $X := \binom{X_1}{X_2}$ is divergence-free (and therefore $V$ will have the required divergence).

Note that by the definition of $u=\cos(kx)+s \cos(k'y)$, we get $ \left(\frac{\partial u}{\partial x}\right) ^2  =k^2 \sin(kx)^2.$ Since $2\sin^2(k x) =1 -\cos(2k x)$ , one can consider $X_2 = -w(y)\cos(2k x).$ It ensures that $v_2 = w(y)+X_2 = 2 w(y)\sin^2(k x) $. Hence, we managed to express $v_2$ as $v_2 = g\, \left(\frac{\partial u}{\partial x}\right) ^2$ with $g:=\frac{2w(y)}{k^2}$ bounded. Recall that we also want $X$ to be divergence-free and therefore we require
$$
\frac{\pl X_1}{\pl x} = -\frac{\pl X_2}{\pl y} = \cos(k' y) \cos(2 k x).
$$

 We choose $X_1 := \frac{\sin (2k x) \cos(k'y)}{2k}.$ Since $V = \binom{v_1}{v_2} =W+X$ where $W$ has the form $W := \binom{0}{w(y)}$, we get $v_1=X_1$. Since $\frac{\partial u}{\partial x} = -k \sin(kx)$, we clearly managed to express $v_1=f \, \frac{\partial u}{\partial x}$ with $f$ bounded.
 
We now present the rigorous proof of Lemma \ref{claim 2D}.
}

\begin{proof}[Proof of Lemma \ref{claim2Dtwopointoneproof}]

\pseudosection{Definition of $V$}
We choose 
\begin{align}\label{defvoffcial}
V:= \begin{pmatrix}
    \frac{\sin (2k x) \cos(k'y)}{2k} \\  \frac{2\sin(k' y) \sin(k x)^2}{k'}
\end{pmatrix}.    
\end{align}

A direct computation shows that $\div(V) = \cos(k'y)$.

\pseudosection{Definition of $A_s$}
Recall that $u=\cos(kx)+s \cos(k'y)$ and that that the matrix $A_s$ is defined by  $$A_s:= \begin{pmatrix}a_s(x,y)&b_s(x,y)\\b_s(x,y)&0\end{pmatrix}, \quad A_s \nabla u= V.$$ Therefore, we get 
\begin{align}\label{asexpressedbis}
   \begin{cases}
    b_s=\frac{v_2}{\pl u/\pl x},\\a_s = \frac{v_1 - v_2\frac{\pl u/\pl y}{\pl u/\pl x}}{\pl u/\pl x},
\end{cases} \iff  \begin{cases}
    b_s=\frac{-v_2}{k\sin(kx)},\\a_s = \frac{-v_1}{k\sin(kx)}  + s \frac{v_2 k'\sin(k'y)}{k^2\sin(kx)^2},
\end{cases} 
\end{align}

The choice of $V$ \eqref{defvoffcial} leads to

\begin{align}\label{defofbs}
  b_s= \frac{-2\sin(k'y)\sin(k x)}{kk'},  
\end{align}

and
\begin{align}\label{defofas}
    a_s = -\frac{\cos(k'y)\cos(k x)}{k^2}+\frac{2 s\sin(k'y)^2}{k^2}.
\end{align}

The bounds on $A_s$ and $\nabla A_s$ from Lemma \ref{claim2Dtwopointoneproof} are immediate. The above considerations are true for all $s \in \R^+$. Hence, we can consider $s\rightarrow A_s$. It comes
    $$
    \bigg\|\frac{\partial A_s}{ \partial s}\bigg\|_{\infty} \lesssim \frac{1}{k^2}.
    $$
    This finishes the proof of Lemma \ref{claim2Dtwopointoneproof}.
\end{proof}

\subsection{Estimation of the duration in Lemma \ref{altabstractlemmaevaluenewnew}}\label{appendixdurationsymmetry}
As mentioned in the beginning of the proof of Lemma \ref{altabstractlemmaevaluenewnew}, to be more quantitative, we make an explicit choice for $t_2-t_1$ such that the smooth function $a(t)$ has its first derivative bounded by 10.

We first choose explicitly the function $a(t).$ Recall that we want $a(t)=\frac{\mu}{2k^2}$ for $t\leq t_1$ and $a(t)=1+\frac{\mu}{k^2}$ for $t\geq t_2$. We choose 
$$
a(t):=1+\frac{\mu}{k^2}-\left(\frac{\mu}{2k^2}+1 \right)\theta\left(\frac{t-t_1}{t_2-t_1}\right)$$

where $\theta(\tau)$ is a smooth function going from 1 to 0 as $\tau$ goes from 0 to 1 (see \footnote{We choose $\theta(t):= 1-G(\tan(\pi(t-1/2)))$ and where $G(x)=\frac{1}{\sqrt{\pi}}\int_{-\infty}^x \e^{-\eta^2} \ud{\eta}$.}). We have 
\begin{align}\label{dotaduration}
    \dot{a}(t) = -\left(\frac{\mu}{2k^2}+1 \right)(t_2-t_1)^{-1}\dot{\theta}\left(\frac{t-t_1}{t_2-t_1}\right).
\end{align}

By our choice of $\theta$, we get that 
$$
\left|\dot{\theta}\left( \frac{t-t_1}{t_2-t_1}\right)\right| \leq \sqrt{\pi}.
$$
Therefore, 
\begin{align}\label{adotbisduration}
|\dot{a}(t)|\leq \sqrt{\pi}\left(\frac{\mu}{2k^2}+1 \right)(t_2-t_1)^{-1}.    
\end{align}

We choose $t_2-t_1$ such that the right-hand side in \eqref{adotbisduration} equals 10. This gives us
$$
t_2-t_1 = \frac{1}{10}\sqrt{\pi} \left(1+\frac{\mu}{2k^2}\right) \leq \frac{2}{5}
$$
since $k \gg \sqrt{\mu}$ by assumption. This finishes the proof.

\comment{
    \section{Proof of equation \eqref{maxprincipleforevalueproof}}\label{maxprincipleevalueproof}

We prove that the function $u$ constructed in the proof of Lemma \ref{halfcylinderallmu}, which solves $\ddot{u} + \div(A\nabla u)=-\mu u$, satisfies
\begin{align}\label{inequalitytoproveblablablahjhjkhvs}
    \sup_{x, y, t\in [t_n, t_{n+1}]} |u| \leq D \sup_{x,y} |u(t=t_n)|
\end{align}
for some $D>0.$ The proof will be done in two steps: first, we show that for every $t \in [t_n, t_{n+1}],$
\begin{align}\label{steponemaxprinciple}
 \|u\|_{L^{\infty}(\T^2)} \leq E \|u\|_{L^2(\T^2)} 
\end{align}
for some $E>0.$
Then, in the second step, we show that 
\begin{align}\label{steptwomaxprinciple}
t\rightarrow \|u\|^2_{L^2(\T^2)}    
\end{align}
is a convex function. 

\begin{itemize}
    \item The first step.

\begin{proof}[Proof of \eqref{steponemaxprinciple}]
 While $t\in [t_n,t_{n+1}],$  $u$ is of the form
\begin{align}\label{defuinablockconvexarg}
u=f_n(t) \cos(k_n x)+ g_{n+1}(t)\cos(k_{n+1} y)    
\end{align}

for some smooth functions $f_n$ and $g_{n+1}$ that we do not make explicit. Moreover, $u(t=t_n)=f_n(t=t_n) \cos(k_nx)$ and $u(t=t_{n+1})=g_{n+1}(t=t_{n+1}) \cos(k_{n+1}y).$

We denote 
\begin{align}\label{defUnVnplusone}
U_n:= f_n(t) \cos(k_n x)\hspace{0.5cm} \mbox{ and } \hspace{0.5cm} V_{n+1}:= g_{n+1}(t)\cos(k_{n+1} y).    
\end{align}
In particular, $u=U_n+V_{n+1}$ for $t \in [t_n, t_{n+1}]$ and 
\begin{align}\label{ubegendpointtermofubig}
u(t=t_n)=U_n(t=t_n), \hspace{0.5cm} u(t=t_{n+1}) = V_{n+1}(t=t_n)    
\end{align}

First note that
\begin{align}\label{decomposeusomessquare}
\|u\|^2_{L^2(\T^2)} &= \|U_n\|^2_{L^2(\T^2)} +\|V_{n+1}\|^2_{L^2(\T^2)}
\end{align}
since the integral on $\T^2$ of the cross term is zero. 

Hence, 
\begin{align}\label{firstineqblablabla}
    \|u\|_{L^2(\T^2)} &=\sqrt{ \|U_n\|^2_{L^2(\T^2)} +\|V_{n+1}\|^2_{L^2(\T^2)}} \nonumber \\
    &\geq C \left(  \|U_n\|_{L^2(\T^2)}+\|V_{n+1}\|_{L^2(\T^2)} \right)
\end{align}
for a universal constant $C>0$. Here we are just using $\sqrt{x+y}\geq C (\sqrt{x}+\sqrt{y})$ for $x,y>0.$

Moreover,
\begin{align}\label{estimateinfnityltwoeivaluedecay}
  \|U_n\|_{L^2(\T^2)} =f_n(t) \|\cos(k_n \cdot)\|_{L^2(\T^2)}   
\end{align}
    and similarly for $V_{n+1}.$

But
$$\|\cos(k\cdot)\|^2_{L^2(\T^2)} = \int_{\T^2} |\cos^2(k_n x)|dxdy = \int_{\T^2} dxdy/2 +\int_{\T^2} \cos(2k_nx)dxdy = \int_{\T^2} dxdy/2 = 2\pi^2.$$ 
Clearly, $\|\cos(k_{n+1}\cdot)\|^2_{L^2(T^2)} = 2\pi^2$ as well.  Therefore, by \eqref{estimateinfnityltwoeivaluedecay}, 
\begin{align}\label{ltwonormunplusonev}
\|U_n\|_{L^2(\T^2)} =f_n(t) \pi \sqrt{2}, \hspace{0.5cm} \|V_{n+1}\|_{L^2(\T^2)} =g_{n+1}(t) \pi \sqrt{2}.    
\end{align}

Since by definition \eqref{defUnVnplusone}, $U_n=f_n(t) \cos(k_n x)$, we have 
\begin{align}\label{linftyofUn}
    \|U_n\|_{L^{\infty}(\T^2)} =  f_n(t)
\end{align}
and similarly for $V_{n+1}.$

Hence, by \eqref{ltwonormunplusonev} and \eqref{linftyofUn}, we get
\begin{align}\label{ltwolinftyunnnn}
    \|U_n\|_{L^{\infty}(\T^2)} = \frac{\|U_n\|_{L^{2}(\T^2)}}{\pi\sqrt{2}}, \hspace{0.5cm} 
    \|V_{n+1}\|_{L^{\infty}(\T^2)} = \frac{\|V_{n+1}\|_{L^{2}(\T^2 \times [t_n, t_{n+1}])}}{\pi\sqrt{2}}.
\end{align}

Finally, since $u=U_n+V_{n+1},$ the triangle inequality implies
\begin{align*}
    \|u\|_{L^{\infty}(\T^2)} &\leq  \|U_n\|_{L^{\infty}(\T^2)} +  \|V_{n+1}\|_{L^{\infty}(\T^2)}\\
    & = \frac{1}{\pi \sqrt{2}}\left( \|U_n\|_{L^{2}(\T^2)} + \|V_{n+1}\|_{L^{2}(\T^2)}\right) \\
    & \leq C \|u\|_{L^{2}(\T^2)}
\end{align*}
for some universal $C>0$ and where we used \eqref{ltwolinftyunnnn} in the second equation and \eqref{firstineqblablabla} in the last. This finishes the proof of \eqref{steponemaxprinciple}.
\end{proof}

\item We prove now the second step \eqref{steptwomaxprinciple} :
\begin{align*}
    t \rightarrow \|u\|_{L^2(\T^2)}^2
\end{align*}
is convex on $[t_n, t_{n+1}].$

\begin{proof}[Proof of \eqref{steptwomaxprinciple}]

Denote $Y:=\|u\|_L^2(\T^2).$ Since $\ddot{u} = -\div (A\nabla u)-\mu u,$ 
\begin{align}\label{yddot}
\ddot{Y} = 2(\dot{u},\dot{u})+2(u,\ddot{u}) \geq -2\mu (u,u) - 2\big(u,\div( A\nabla u)\big).    
\end{align}
 But $A$ is an elliptic operator in the regularity class $R(80,10)$.  Therefore, 

\begin{align}\label{secondstepyddot}
-2\big(u,\div (A\nabla u)\big) = 2(\nabla u,A\nabla u)\geq  (\nabla u,\nabla u)/40.    
\end{align}

Indeed, the boundary term  
\begin{align}\label{boundaryintegral}
\int_{\partial\T^2} u \, A\nabla u \overrightarrow{n} dS=0.    
\end{align}

This is clear since $u$ and $A$ depend on $x,y$ through trigonometric functions which are $2\pi$-periodic. Therefore, 
\begin{align}
    \int_{\partial \T^2} u A\nabla u\overrightarrow n dS &= \int_{\{x=0\}\times [0,2\pi]}  u A\nabla u (-e_x) dS + \int_{\{x=2\pi\}\times [0,2\pi]}  u A\nabla u (e_x) dS \\
    &+ \int_{\{y=0\}\times [0,2\pi]}  u A\nabla u (-e_y) dS +  \int_{\{y=2\pi\}\times [0,2\pi]}  u A\nabla u (e_y) dS\\
    &=0.
\end{align}

Since $u$ is the sum of two cosines and since $k_{n+1} \geq k_n$, we have 
\begin{align}\label{nabaubiggeru}
\|\nabla u\|^2_{L^2(\T^2)}\geq k_n^2\|u\|^2_{L^2(\T^2)}. 
\end{align}

Indeed, by \eqref{defuinablockconvexarg}, $u=f_n(t) \cos(k_nx) + g_{n+1}(t)\sin(k_{n+1}y)$, therefore
\begin{align*}
    \int_{\T^2} |\nabla u|^2 &= \int_{\T^2} |k_n f_n(t) \sin(k_nx) + k_{n+1} g_{n+1}(t) \sin(k_{n+1}y)|^2 \\
    &= \int_{\T^2} |k_n f_n(t) \sin(k_nx)|^2 + |k_{n+1} g_{n+1}(t) \sin(k_{n+1}y)|^2 
\end{align*}
since the integral of the cross term is zero. But
\begin{align*}
    \int_{\T^2} |k_n f_n(t) \sin(k_nx)|^2 = (k_n f_n(t))^2\int_{\T^2} | \sin(k_nx)|^2 = (k_n f_n(t))^2 2 \pi^2 = (k_n f_n(t))^2 \int_{\T^2} | \cos(k_nx)|^2.
\end{align*}
Therefore, 
\begin{align*}
      \int_{\T^2} |\nabla u|^2 &= k_n^2 \int_{\T^2}|f_n(t) \cos(k_n x)|^2 + k_{n+1}^2 \int_{\T^2} |g_{n+1}(t) \cos(k_{n+1}y)|^2 \\
      & \geq k_n^2 \left( \int_{\T^2}|f_n(t) \cos(k_n x)|^2 +  |g_{n+1}(t) \cos(k_{n+1}y)|^2 \right) \\
      & = k_n^2 \int_{\T^2} |u|^2
\end{align*}
where in the second inequality we used that $k_n \geq k_{n+1}$ and where in the last we used that the cross term is again zero.

 Combining \eqref{yddot}, \eqref{secondstepyddot} and \eqref{nabaubiggeru}, we finally get
 
  $$\ddot{Y} \geq -2\mu(u,u) + k_n^2(u,u)/40.$$ 
  
  Since by assumption $k_n\gg \sqrt{\mu},$ we have  $k_n^2>80\mu$ and we obtain the convexity of $Y$: $$\ddot{Y}>0.$$ 
   
\end{proof}

\item It remains to obtain the final inequality \eqref{inequalitytoproveblablablahjhjkhvs}:
\begin{align}\label{thelastoneconvexity}
 \sup_{x, y, t\in [t_n, t_{n+1}]} |u| \leq D \sup_{x,y} |u(t=t_n)|.   
\end{align}

\begin{proof}[Proof of \eqref{thelastoneconvexity}]

By the first step \eqref{steponemaxprinciple},  for all $t \in [t_n, t_{n+1}],$
\begin{align}\label{usefirststep}
    \|u\|^2_{L^{\infty}(\T^2)} \leq E \|u\|^2_{L^2(\T^2)}. 
\end{align}
By the second step \eqref{steptwomaxprinciple},  $t \rightarrow Y(t)$ is convex for $t \in [t_n, t_{n+1}]$. Hence, by \eqref{usefirststep}, we get
\begin{align*}
    \|u\|^2_{L^{\infty}(\T^2)} \leq E \max \left( \|U_n(t=t_n)\|^2_{L^2(\T^2)}, \|V_{n+1}(t=t_{n+1})\|^2_{L^2(\T^2)}\right)
\end{align*}
since $u(t=t_n)=U_n(t=t_n)$ and $u(t=t_{n+1})=V_{n+1}(t=t_{n+1})$ (see \eqref{ubegendpointtermofubig} and right below \eqref{defuinablockconvexarg}).

By \eqref{ltwolinftyunnnn}, this becomes for all $t \in [t_n, t_{n+1}],$
\begin{align}\label{googogogoogogog}
    \|u\|^2_{L^{\infty}(\T^2)} \leq2\pi^2 E \max \left( \|U_n(t=t_n)\|^2_{L^{\infty}(\T^2)}, \|V_{n+1}(t=t_{n+1})\|^2_{L^{\infty}(\T^2)}\right).
\end{align}

    By \eqref{ratiomaxnew} and by \eqref{ubegendpointtermofubig}, 
    $$
     \frac{\|V_{n+1}(t=t_{n+1})\|_{L^{\infty}(\T^2)}}{\|U_n(t=t_{n})\|_{L^{\infty}(\T^2)}} = \e^{-C \,2^{n_0+n-1}}.
    $$
    Therefore \eqref{googogogoogogog} becomes, for all $t \in [t_n, t_{n+1}],$ 
\begin{align}
     \|u\|^2_{L^{\infty}(\T^2)} \leq2\pi^2 E  \|U_n(t=t_n)\|^2_{L^{\infty}(\T^2)}.
\end{align}

Finally, since $u(t=t_n)=U_n(t=t_n),$ we finally get
$$
\sup_{x,y, t\in [t_n, t_{n+1}]} |u|\leq E' \sup_{x,y}|u(t=t_n)|
$$
for some $E'>0.$ This finishes the proof.
\end{proof}
\end{itemize}


}

\comment{
\section{Finishing the proof of Theorem \ref{sharpnessdecaypara}}\label{proofclaimsharpnessparabolic}
In this section, we will prove Claims 1, 2, 3 and 4 in Theorem \ref{sharpnessdecaypara}. We recall Claim 1:
\\

\textbf{Claim 1:} The following holds:
\begin{align}
    \dot{H}(t) &= -2 \int_{\T^2} |\nabla u|^2 + 2 \Re\left( \int_{\T^2} B \nabla u \,\bar{u}\right),\\
    \dot{I}(t) & =-2 \int_{\T^2}|\Delta u|^2 - 2 \Re\left( \int_{\T^2} B \nabla u \, \overline{\Delta u} \right)    
\end{align}
for all $t \in [t_0, t_0+\tau].
$
\begin{proof}[Proof of Claim 1]
\begin{itemize}
    \item Recall that $H(t) = \int_{\T^2}|u|^2$. Hence, 
    \begin{align*}
        \dot{H}(t) = 2 \Re\left( \int_{\T^2} \dot{u} \, \bar{u} \right).
    \end{align*}
    Since $u$ solves $\dot{u} = \Delta u + B \nabla u$, it comes that
    \begin{align*}
        \dot{H}(t) = 2 \Re\left( \int_{\T^2} \Delta u \bar{u} \right) + 2 \Re\left( \int_{\T^2} B \nabla u\, \bar{u} \right) .
    \end{align*}
    Integrating by part the first term leads to 
    \begin{align}
        \dot{H}(t) =  -2  \int_{\T^2} |\nabla u|^2  + 2 \Re\left( \int_{\T^2} B \nabla u \bar{u} \right) 
    \end{align}
as claimed.
    \item Recall that $I(t) = \int_{\T^2} |\nabla u|^2$. Differentiating in $t$ and integrating by parts in the space variables leads to 
    \begin{align*}
        \dot{I}(t) = -2 \Re \left( \int_{\T^2} \dot{u}\,  \overline{\Delta u} \right). 
    \end{align*}
    Since $u$ solves $\dot{u}= \Delta u + B \nabla u$, we get
    \begin{align}
        \dot{I}(t) = -2\int_{\T^2} |\Delta u|^2 - 2 \Re \left( \int_{\T^2} B \nabla u \, \overline{\Delta u} \right)
    \end{align}
as claimed. This finishes the proof of Claim 1.
\end{itemize}
\end{proof}

We will now prove Claim 2. We recall it for convenience. 
\\

\textbf{Claim 2:} By denoting $\tilde u:= -\Delta u - N(t) u$, it holds that for all $t \in [t_0, t_0+\tau],$
\begin{align}
    \dot{N}(t) =  2\, \frac{ \Re\left( \int_{\T^2} B\nabla u \, \overline{\tilde u} \right)-\int_{\T^2} |\tilde u|^2}{\int_{\T^2} |u|^2} 
\end{align}

\begin{proof}
Recall that $N(t) = \frac{I(t)}{H(t)}$. Therefore, $\dot{N} = \frac{\dot{I} H - I \dot{H}}{H^2}.$ By Claim 1, we get
\begin{align}\label{claimoneimpliespraba}
    \frac{-\dot{N}(t)}{2} =  \frac{(\Delta u, \Delta u) (u,u) - (\nabla u, \nabla u)^2}{(u,u)^2} + \frac{\Re\big[(B\nabla u, \Delta u)\big] (u,u) + \Re \big[ (B\nabla u, u)\big] (\nabla u, \nabla u)}{(u,u)^2}.
\end{align}

\textbf{Intermediate Claim:} The following holds:
\begin{align*}
    (\tilde u, \tilde u) (u,u) &= (\Delta u, \Delta u) (u,u) - (\nabla u, \nabla u)^2 \\
    \Re(\big[B\nabla u, \tilde u)\big] (u,u)& = -\Re\big[(B\nabla u, \Delta u)\big] (u,u) - \Re \big[ (B\nabla u, u)\big] (\nabla u, \nabla u)
\end{align*}

It is clear that by combining \eqref{claimoneimpliespraba} and the Intermediate Claim, Claim 2 follows. 
\end{proof}

Hence, we are left with proving the Intermediate Claim.

\begin{proof}[Proof of Intermediate Claim]
Recall that $\tilde u = - \Delta u + N(t) u$.

\begin{itemize}
    \item  We start with the first equality. We have, by the definition of $\tilde u$,
    \begin{align*}
        (\tilde u, \tilde u) (u,u) 
        = (\Delta u, \Delta u) (u,u) + 2 N(t) \Re \big[(\Delta u, u) \big](u,u) +N^2(t) ( u,  u)^2. 
    \end{align*}
An integration by parts yields
\begin{align*}
    2 N(t) \Re \big[(\Delta u, u) \big](u,u) = -2N(t) (\nabla u, \nabla u) (u,u).
\end{align*}
    By definition, $N(t) (u,u) = (\nabla u, \nabla u)$. Therefore, we get
    \begin{align}
        (\tilde u, \tilde u) (u,u) = (\Delta u, \Delta u) (u,u) -(\nabla u, \nabla u)^2
    \end{align}
    as claimed.
    \item We prove the second equality now. Since by definition $N(t) (u,u)=(\nabla u, \nabla u),$ it comes that
    \begin{align*}
        -\Re\big[(B\nabla u, \Delta u)\big] (u,u) - \Re \big[ (B\nabla u, u)\big] (\nabla u, \nabla u) 
        = -(u,u) \Re\big[(B\nabla u, \Delta u + N(t) u) \big].
    \end{align*}
    By definition, $-\tilde u = \Delta u + N(t) u$, therefore, this last term is equal to
\begin{align*}
    (u,u) \Re \big[(B\nabla u, \tilde u) \big]
\end{align*}
    as claimed.
\end{itemize}
    This finishes the proof of the Intermediate Claim and therefore of Claim 2.
\end{proof}

We prove Claim 3 now. We recall it.
\\

\textbf{Claim 3:} The following bound holds:
\begin{align}
     \Re\left( \int_{\T^2} B\nabla u \, \overline{\tilde u} \right)-\int_{\T^2} |\tilde u|^2 \leq \frac{ \|B\|^2_{\infty}\int_{\T^2} |\nabla u|^2}{2}.
\end{align}
\begin{proof}[Proof of Claim 3]
    By Cauchy-Schwarz, 
    \begin{align*}
        \Re\left( \int_{\T^2} B\nabla u \, \overline{\tilde u} \right)-\int_{\T^2} |\tilde u|^2  \leq \|B\nabla u\|_{L^2}\|\tilde u\|_{L^2} - \|\tilde u\|^2_{L^2}.
    \end{align*}

    Note that for $a, b>0$, $$ab-b^2 \leq \frac{a^2+b^2}{2} - b^2 \leq \frac{a^2}{2}.$$
       Since $B$ is uniformly bounded, this finishes the proof of Claim 3.
\end{proof}

It remains to prove Claim 4. We recall it.
\\

\textbf{Claim 4:} For all $t \in [t_0, t_0+\tau]$, it holds that 
\begin{align}
    \frac{\dot{H}(t)}{H(t)} \geq -C_1 N(t) - C_2 
\end{align}
for some $C_1, C_2>0.$

\begin{proof}[Proof of Claim 4]
In Claim 1, we proved that
\begin{align*}
    \dot{H}(t) = -2  \int_{\T^2} |\nabla u|^2  + 2 \Re\left( \int_{\T^2} B \nabla u \, \bar{u} \right) .
\end{align*}
By Cauchy-Schwarz, 
\begin{align*}
    \left|\Re\left( \int_{\T^2} B \nabla u \, \bar{u} \right)  \right| & \leq \frac{\|B\|^2_{\infty}\|\nabla u\|^2_{L^2}}{2} + \frac{1}{2} \|u\|^2_{L^2}.
\end{align*}
Therefore, 
\begin{align*}
   \dot{H}(t) \geq -C_1  \|\nabla u\|^2_{L^2} - C_2 \|u\|^2_{L^2}
\end{align*}
    for some $C_1, C_2>0.$ Since by definition $N=\frac{\|\nabla u\|^2_{L^2}}{\|u\|^2_{L^2}}$ and since by definition $H=\|u\|^2_{L^2}$ the claim is proved. 
\end{proof}

}

\comment{
\begin{thm}
    With the hypothesis above, $u$ cannot decay faster than super-exponentially.
\end{thm} 
\begin{rem}
    If $A$ is not $C^1$ in time, there exist compactly supported (in time) solutions (i.e., there exists a solution that reaches zero in finite time \cite{M74}).
\end{rem}

As for the example of the $\Delta$ operator earlier, we define the relevant quantities:
\begin{align}\label{relevantqtittiyheat}
\left\{
\begin{array}{rl}
H(t) :=& \int_{\T^2} u^2 \,\ud{x}, \\
I(t) :=& \int_{\T^2} (A \nabla u, \nabla u) \,\ud{x}, \\
N(t):=& \frac{I(t)}{H(t)}.
\end{array}
\right.
\end{align}

To make the computations easier to follow, we also define $\tilde A u:= - \div(A\nabla u) \ud{x}$ and $w:=B\nabla u + Cu$. With this notation, $I=\int_{T^2} (\tilde A u) u$ and $u$ solves $\partial_t u + \tilde A u = w$.


We argue by contradiction and assume that $u$ decay faster than super-exponentially and is not identically zero. In particular, there exists $t_0 \in [0, \infty)$ such that $H(t_0) >0.$ By continuity, there exists $\tau>0$ such that $1/H$ is well defined on $[t_0, t_0+\tau]$ and therefore $N$ makes sense on this interval.

A simple computation shows that 
\begin{align}\label{hprimeheat}
\partial_t H(t) = -2 \int_{\T^2} u (\tilde A u) \, \ud{x} + 2 \int_{\T^2} u w \, \ud{x}.
\end{align}

Moreover, using that $A$ is bounded in the $C^1$ norm, we can see that
 
\begin{align}\label{partialtiforheat}
     \partial_t I(t) = -2\int_{\T^2} (\tilde A u)^2 \, \ud{x}+2\int_{\T^2} (\tilde A u) w \, \ud{x} +  \mathcal{O}(I(t)).
\end{align}

Therefore, we get that
\begin{align}\label{eqtfornparawithb}
    \partial_t N = 2  \frac{-(\tilde A u, \tilde A u) (u,u) + (u, \tilde A u)^2}{(u,u)^2} + 2 \frac{(\tilde A u, w) (u,u) - (u,w)(\tilde A u,u)}{(u,u)^2} + \mathcal{O}(N(t))
\end{align}
where we introduced the notation $(f,g):= \int_{\T^2} f g \, \ud{x}$.

Recall that by definition, $N (u,u) = (\tilde A u, u).$ Let us introduce $\tilde u := \tilde A u - N u$. A direct computation shows the following:
\begin{align}\label{firsttermindtnparawithb}
(\tilde A u, \tilde A u)(u,u)-(\tilde Au,u)^2=(\tilde u, \tilde u) (u,u).
\end{align} 

Similarly, a direct computation shows that
\begin{align}\label{secondtermindtnparabolicwithb}
    (\tilde A u, w) (u,u) - (u,w)(\tilde A u,u) = (\tilde u, w) (u,u).
\end{align}

Therefore, combining these two relations with \eqref{eqtfornparawithb} gives us
\begin{align}\label{neweqtfornaftertwoclaimbparabolic}
    \partial_t N =   \frac{2}{(u,u)}\bigg((\tilde u, w) - (\tilde u, \tilde u) \bigg) + \mathcal{O}(N(t)).
\end{align}

Now, we can see that
\begin{align}\label{almostlastineqinpartialnforparawithb}
    (\tilde u, w) - (\tilde u, \tilde u) \leq \frac{\|w\|^2}{2} \leq C_1(u,u) + C_2 (\tilde A u,u) 
\end{align}
for some constants $C_1, C_2>0.$

Therefore, by \eqref{neweqtfornaftertwoclaimbparabolic} and \eqref{almostlastineqinpartialnforparawithb}, we get that
\begin{align}
    \partial_t N \leq D_1 \left(N+\frac{D_2}{D_1}\right)
\end{align}
for some $D_1, D_2>0.$ Now, Grönwall's inequality implies that
\begin{align}\label{ineqfornheatwithb}
    N(t) \leq \left(N(t_0) + \frac{D_2}{D_1}\right)\e^{D_1 (t-t_0)}-\frac{D_2}{D_1}
\end{align}
for all $t \in [t_0, t_0+\tau].$

Now, recall \eqref{hprimeheat} and note that $\int_{\T^2} u w \,\ud{x} =\mathcal{O}\left( H(t)\right)+\mathcal{O}\left( I(t)\right)$. Hence, \eqref{hprimeheat} yields
\begin{align*}
    \partial_t H(t) \geq -E_1 I - E_2 H    
\end{align*}
for some $E_1, E_2>0$, which implies, using \eqref{ineqfornheatwithb}, that
\begin{align}\label{ineqforHheatwithb}
    H(t) \geq H(t_0) D \e^{-E\e^{F t}}
\end{align}
for all $t \in [t_0, t_0+\tau]$ and for some $D, E, F>0$.

Let $t_0+\tau<t_1<+\infty$ be the first time after $t_0+\tau$ that $H$ reaches zero: $H(t_1)=0.$ Clearly, the exact same computation that we just did shows that \eqref{ineqforHheatwithb} holds for all $t \in [t_0, t_1)$. But then, $H(t_1)$ cannot be equal to zero since it will have to contradict \eqref{ineqforHheatwithb} by continuity. Hence, $H$ is never zero on $[t_0, \infty)$ and \eqref{ineqforHheatwithb} holds for all $t \geq t_0.$ 

But then, recall that we assumed, by contradiction, that $u$ decays faster than super exponentially, i.e., for all $\delta >0$, there exists $\gamma >0$ such that $u=o(\e^{-\gamma \e^{\delta t}})$  for all $t$ big enough. This contradicts \eqref{ineqforHheatwithb}. 

This finishes the proof of the theorem.
}

\comment{\subsection{Sharpness in the elliptic case}
\label{sharp}
\begin{thm}\label{mainth} 
Let $A$ be a real symmetric uniformly elliptic matrix with coefficients uniformly bounded in the $C^1$ norm and with eigenvalues at least 1. Let $B$ be a bounded vector field and let $C$ be a bounded function.

\textcalor{blae}{try to look at p laplace and higher order pde like bi-laplace and see if we can use this ode approach to get (decay) estimates}
\textcalor{red}{The $p$-laplacian has power $p-1.$ In order to get appropriate results, we'll need to have the $p$-laplacian equal to $O(|\nabla u|^{p-1}).$ Otherwise we'll end up with the $p$-laplacian having a rapidly increasing or decreasing ratio towards the function $u$ not just in frequency, but in time as well.}

 Let $u \in C^2(\T^2 \times [M, +\infty)) \cap H^2(\T^2 \times [M, +\infty))$ such that $\div(A \nabla u) + B \nabla u + Cu=0$ on $\T^2\times [M, +\infty)$ for some $M>0$.

If for all $\delta >0$, there exists $\gamma >0$ such that $u=o(\e^{-\gamma \e^{\delta t}})$  for all $t$ big enough then $u \equiv 0.$
\end{thm}

Consider
\begin{align*}
  H(r) = \int_{t=r}u^2A_{tt}dx, \hspace{0.5cm}  I_A(r)= \int_{t\geq r} (A\nabla u, \nabla u)dxdt, \hspace{0.5cm} S(r) = \int_{t\geq r}H(t)dt.
\end{align*}

We argue by contradiction and assume that $u \neq 0$ and that for all $\delta >0$, there exists $\gamma >0$ such that $u=o(\e^{-\gamma \e^{\delta t}})$  for all $t$ big enough. 

Let $K>0$ be a large constant to be chosen later. We can see that $He^{e^{Kt}}$ has a maximum in $[0;\infty).$ Suppose that it's attained at $t_0=t_0(K)$:
\begin{align}\label{hthtzero}
H(t)\leq H(t_0)e^{e^{Kt_0}-e^{Kt}}.  
\end{align}

 Remark that since $H \e^{\e^{Kt}}$ attains its maximum at $t_0$ and since $u$ is not identically zero by assumption, we have that $1/H$ is well defined on $[t_0, t_0+\tau]$ for some $\tau >0$.

Hence, 
\begin{align}\label{sthtzero}
S(t)\leq  H(t_0)e^{e^{Kt_0}}e^{-e^{Kt}}\frac{e^{-Kt}}{K} \hspace{0.5cm} \mbox{ and } \hspace{0.5cm} S(t_0)\leq H(t_0)\frac{e^{-Kt_0}}{K}.    
\end{align}

Note also that
\begin{align}\label{ineqforh}
    |H(r)|=|H(\infty)-H(r)|&=\left|\int_r^{\infty} H'(s) \,\ud{s}\right|\nonumber\\
    & \leq \int_r^{\infty} |H'(s)| \,\ud{s}\nonumber\\
    & \leq C\sqrt{I_A(r) S(r)} + D S(r)
\end{align}
for some constants $C, D>0.$

Therefore, 
\begin{align}\label{hprimefortintzeroplustau}
    H'(t) = -2I_A + \mathcal{O}\left(S(t)\right)+\mathcal{O}\left(\sqrt{I_A(t)S(t)}\right)
\end{align}
for all $t \geq t_0$.

We will now look at $I_A'.$ By the computation presented in Appendix \ref{}, we have that
\begin{align*}
    I_A'(r) &= -2\int_{t=r}\frac{(\nabla u, Ae_t)^2}{A_{tt}}dx+\mathcal{O}(I_A(r))+\mathcal{O}\left(\sqrt{I_A(r)S(r)}\right)
\end{align*}
for all $r>0.$ The Cauchy-Schwarz inequality implies that

\begin{align}\label{iaprimenewattemptlooksgood}
  I_A'(r) &\leq -2\frac{\left(\int_{t=r}u(A \nabla u, e_t)dx\right)^2}{H(r)}+\mathcal{O}(I_A(r))+\mathcal{O}\left(\sqrt{I_A(r) S(r)}\right)\nonumber\\
  &=-2\frac{\left(I_A(r)+\mathcal{O}(S(r))+\mathcal{O}\left(\sqrt{S(r)I_A(r)}\right)\right)^2}{H(r)}+\mathcal{O}(I_A(r))+\mathcal{O}\left(\sqrt{I_A(r) S(r)}\right) \nonumber\\
  &= -2\frac{I_A(r)^2}{H(r)}+\mathcal{O}(I_A(r))+\mathcal{O}\left(\sqrt{I_A(r) S(r)}\right)+\mathcal{O}\left(\frac{S(r)^2}{H(r)}\right)\\
  &+\mathcal{O}\left(\frac{I_A(r)^{3/2}S(r)^{1/2}}{H(r)}\right)+\mathcal{O}\left(\frac{I_A(r) S(r)}{H(r)}\right) + \mathcal{O}\left(\frac{S(r)^{3/2} I_A(r)^{1/2}}{H(r)}\right) \nonumber
\end{align}
for all $r>0$ such that $1/H$ is well defined.

Let us recall the following weighted AM-GM inequality:
\begin{align}\label{amgmineqtheoric}
    a^{1-x}b^{x} \lesssim a+b
\end{align}
for $a, b>0$ and $0<x<1$. Using \eqref{amgmineqtheoric} with $x=2/3$, we get that 
\begin{align}\label{iasinequalityamgm}
I_A S \lesssim S^2 + I_A^{3/2} S^{1/2}.    
\end{align}

By \eqref{ineqforh}, $\mathcal{O}(I_A)=\mathcal{O}\left(I_A \frac{\sqrt{I_A S} +S}{H}\right)$ and $\mathcal{O}\left(\sqrt{I_A S}\right) = \mathcal{O}\left(\sqrt{I_A S} \frac{\sqrt{I_A S}+S}{H} \right)$ for all $t \in [t_0, t_0+\tau]$.

Therefore, by \eqref{iaprimenewattemptlooksgood} and \eqref{iasinequalityamgm}, we get that,

\begin{align}\label{iaprimeniceonsmallinterval}
   I_A'(t) \leq  -2\frac{I_A(t)^2}{H(t)} + \mathcal{O}\left( \frac{S^2(t)}{H(t)}\right)+ \mathcal{O}\left( \frac{I_A^{3/2}(t) S^{1/2}(t)}{H(t)}\right)
\end{align}
for all $t \in [t_0, t_0+\tau]$. Let us define 
\begin{align}\label{defhsNHzero}
 N:=\frac{I_A}{H} \hspace{0.5cm}, \hspace{0.5cm}   H_0:=H(t_0), \hspace{0.5cm} h:=\frac{H}{H_0} \hspace{0.5cm} \mbox{ and } \hspace{0.5cm}    s:=\frac{S}{H_0}.
\end{align}

Then, by \eqref{sthtzero}
\begin{align}\label{ineqforsnice}
s(t)\leq e^{e^{Kt_0}}e^{-e^{Kt}}\frac{e^{-Kt}}{K}  
\end{align}
for all $t>0.$

Moreover, by \eqref{hprimefortintzeroplustau}, we have for all $t \in [t_0, t_0 +\tau],$
\begin{align}\label{eqtforhprimelittleonsmallinterval}
    h'(t) = -2 N(t) h(t) + \mathcal{O}\left(s(t)\right)+\mathcal{O}\left(\sqrt{N(t) h(t) s(t)}\right).
\end{align}

Also, by \eqref{hprimefortintzeroplustau},  \eqref{iasinequalityamgm} and \eqref{iaprimeniceonsmallinterval} , we have that
\begin{align}\label{ineqfornprimenicesmallinter}
N'(t) &\leq \mathcal{O}\left(\frac{S^2(t)}{H^2(t)} \right) + \mathcal{O}\left( \frac{N(t) \sqrt{I_A(t) S(t)}}{H(t)} \right) \nonumber\\
&=\mathcal{O}\left(\frac{s^2(t)}{h^2(t)} \right) + \mathcal{O}\left( N(t) \sqrt{\frac{N(t) s(t)}{h(t)}} \right)
\end{align}
for all $t\in [t_0, t_0+\tau].$

We note that $h'(t_0)=-e^{Kt_0}K.$ Now, we claim that there exists a constant $c$, depending only on the smoothness of $A$, such that $N(t_0)\leq\frac{Ke^{Kt_0}}{2}+c.$ In particular, this constant is independent of $K$.

Indeed, \eqref{eqtforhprimelittleonsmallinterval} implies that
\begin{align}\label{eqtforhprimeattzeroforioverh}
    -e^{Kt_0}K = -2N(t_0)+\mathcal{O}(s(t_0)) + \mathcal{O}\left( \sqrt{N(t_0) s(t_0)} \right).
\end{align}

Now, we can see that
\begin{align}\label{stzeroissmallforioverh}
        s(t_0) &= \mathcal{O}(1)
\end{align}
and 
\begin{align}\label{sqrtntzerostzeroissmallforioverh}
        \sqrt{N(t_0) s(t_0)} &= \mathcal{O}(1).
\end{align}

Indeed, \eqref{stzeroissmallforioverh} is clear from \eqref{ineqforsnice}.

To see \eqref{sqrtntzerostzeroissmallforioverh}, we argue as follows:
Let $\epsilon >0$ a small enough constant to be choosen later. Then, 
\begin{align*}
    \sqrt{N(t_0) s(t_0)} \leq \epsilon N(t_0) + \frac{1}{\epsilon} s(t_0).
\end{align*}
Therefore, \eqref{eqtforhprimeattzeroforioverh} becomes
\begin{align*}
    N(t_0) (2-D \epsilon) \leq K \e^{K t_0} + C \frac{\e^{-K t_0}}{K} + \frac{D \e^{-K t_0}}{K\epsilon}
\end{align*}
where $C$ and $D$ depends only on (the $C^1$ norm of) $A$. Choosing $\epsilon =\frac{1}{D}$ yields
\begin{align}\label{ineqforntzeroforntzerostzero}
    N(t_0) \leq K \e^{Kt_0} + \tilde C
\end{align}
for some $\tilde C>0$ independent of $K$. Therefore
\eqref{ineqforntzeroforntzerostzero} implies that
\begin{align*}
    N(t_0) s(t_0) \leq 1 + \tilde C.
\end{align*}

Therefore\eqref{sqrtntzerostzeroissmallforioverh} is justified and we get the claim: There exists a constant $c$ depending only on (the $C^1$ norm of) $A$ such that 
\begin{align}
    N(t_0) \leq \frac{K \e^{Kt_0}}{2} + c.
\end{align}
Finally, note also that $h(t_0)=1$ and $s(t_0) \leq \frac{\e^{-Kt_0}}{K}$. We summarize the relations we got so far below: note that all the constants depends only on $A, B$ and $C$.

\begin{equation}
    \begin{cases}
    \label{summarydiffinequality}
        s(t)&\leq e^{e^{Kt_0}}e^{-e^{Kt}}\frac{e^{-Kt}}{K} \mbox{ for all }t>0,\\
        N'(t) &\leq \mathcal{O}\left(\frac{s^2(t)}{h^2(t)} \right) + \mathcal{O}\left( N(t) \sqrt{\frac{N(t) s(t)}{h(t)}} \right)  \hspace{0.5cm}\mbox{ for all }t \in [t_0, t_0+\tau],\\
        h'(t) &= -2 N(t) h(t) + \mathcal{O}\left(s(t)\right)+\mathcal{O}\left(\sqrt{N(t) h(t) s(t)}\right) \hspace{0.5cm}\mbox{ for all }t \in [t_0, t_0+\tau], \\
        h'(t_0) & = -K \e^{Kt_0},  \\
        N(t_0) & \leq  \frac{K\e^{Kt_0}}{2}+c,  \\
h(t_0) &=1. 
    \end{cases}
\end{equation}

By \eqref{summarydiffinequality}, there exists $C_1, C_2, C_3$ and $C_4$ such that
\begin{align}\label{summaryineqdiffwithcsts}
\left\{
\begin{array}{ll}
N'(t) &\leq C_1\frac{s^2(t)}{h^2(t)} + C_2 N(t) \sqrt{\frac{N(t) s(t)}{h(t)}} \\
h'(t) &\geq -2 N(t) h(t) -C_3 s(t) - C_4\sqrt{N(t) h(t) s(t)} ,
\end{array}
\right.  
\end{align}
for all $t \in [t_0, t_0+\tau]$. Let $\tilde s:= e^{e^{Kt_0}}e^{-e^{Kt}}\frac{e^{-Kt}}{K}$. By \eqref{summarydiffinequality}, $s\leq \tilde s$ on $[0, \infty).$ Therefore  \eqref{summaryineqdiffwithcsts} implies with $\tilde s$ in place of $s$:
\begin{align}\label{summaryineqdiffwithcststildes}
\left\{
\begin{array}{ll}
N'(t) &\leq C_1\frac{\tilde s^2(t)}{h^2(t)} + C_2 N(t) \sqrt{\frac{N(t) \tilde s(t)}{h(t)}} \\
h'(t) &\geq -2 N(t) h(t) -C_3 \tilde s(t) - C_4\sqrt{N(t) h(t) \tilde s(t)} ,\\
N(t_0) & \leq  \frac{K\e^{Kt_0}}{2}+c,  \\
h(t_0) &=1. 
\end{array}
\right.  
\end{align}
for all $t \in [t_0, t_0+\tau]$.

We now define two functions $h_{min}$ and $N_{max}$ solving the following coupled system of ODEs:

\begin{align}\label{summaryineqdiffwithcststildeshminnmax}
\left\{
\begin{array}{ll}
N_{max}'(t) &= C_1\frac{\tilde s^2(t)}{h_{min}^2(t)} + C_2 N_{max}(t) \sqrt{\frac{N_{max}(t) \tilde s(t)}{h_{min}(t)}} \\
h_{min}'(t) &= -2 N_{max}(t) h_{min}(t) -C_3 \tilde s(t) - C_4\sqrt{N_{max}(t) h_{min}(t) \tilde s(t)},\\
N_{max}(t_0)&=\frac{K\e^{Kt_0}}{2}+c,\\
h_{min}(t_0)& =1.
\end{array}
\right.  
\end{align}

By Picard's theorem, there exists a unique $C^1$ solution $(N_{max}, h_{min})$ defined on $[t_0, t_0+b]$ where $b$ depends on $t_0=t_0(K)$ and (the norm of) $A, B$ and $ C$.

\textbf{Claim 1:} This solution exists globally on $[t_0,\infty)$.
\begin{proof}[Proof of Claim 1]

First note that $h_{min}(t_0)=1, N_{max}(t_0) = Ke^{Kt_0}/2+c;$ $\tilde s(t_0)=e^{-Kt_0}K^{-1}.$ Let $X, Y \gg 1$ independent of $K$ and to be chosen later. By continuity, there exists $\tau >0$ such that on $[t_0, t_0+\tau]$,

\begin{align}\label{ineqforstildeoverhminandnmaxlocalltogetglobal}
 \frac{\tilde s}{h_{min}}\leq Xe^{-Kt}K^{-1} \hspace{0.5cm} \mbox{ and } \hspace{0.5cm}   N_{max}\leq (Ke^{Kt_0}/2+2c)e^{Y(t-t_0)}
\end{align}
.

It then follows, using \eqref{summaryineqdiffwithcststildeshminnmax} and \eqref{ineqforstildeoverhminandnmaxlocalltogetglobal}, that,
\begin{align}\label{ineqhminprimeoverhforclaimone}
\frac{h'_{min}}{h_{min}} \geq  -(Ke^{Kt_0}+4c)e^{Y(t-t_0)}-C_3Xe^{-Kt}K^{-1} - C_4\sqrt{Xe^{-K(t-t_0)}e^{Y(t-t_0)}}
\end{align}
by taking $K$ large enough.

Similarly, we can also see that
\begin{align}
N'_{max} \leq D X^{1/2} K \e^{K t_0} \e^{(3Y-K)(t-t_0)/2}    
\end{align}
by taking $K$ large enough and for some constant $D>0$ independent of $K$.

Recall that $\wt{s} = e^{e^{Kt_0}}e^{-e^{Kt}}e^{-Kt}K^{-1}.$ Therefore,
\begin{align}\label{soverhminsmalliffhminbigforglobalexist}
  \frac{s}{h_{min}}\leq Xe^{-Kt}K^{-1} \iff h_{min}\geq e^{e^{Kt_0}}e^{-e^{Kt}}X^{-1}.  
\end{align}

\comment{$\frac{s}{h_{min}}\leq Xe^{-Kt}K^{-1}$ iff $h_{min}\geq e^{e^{Kt_0}}e^{-e^{Kt}}X^{-1}.$}

We now show that the relations given in \eqref{ineqforstildeoverhminandnmaxlocalltogetglobal} have to hold on $[t_0, t_0+b]$, that is they should be true on the whole domain of existence given by Picard's theorem. Indeed, assume first that the right hand side inequality in \eqref{soverhminsmalliffhminbigforglobalexist} fails. More precisely, assume the inequality is true up to $t=t_1<b$ and fails right after (for $t>t_1$ close to $t_1$).

At $t=t_1$, $\ln(h_{min})(t_1) = \e^{K t_0} - \e^{K t_1} - \ln(x)$. For $t>t_1,$ we have $\ln(h_{min})(t) < \e^{K t_0} - \e^{K t} - \ln(x)$. It directly follows that $\ln(h_{min})'(t_1) \leq -K \e^{K t_1}.$ But now, using \eqref{ineqhminprimeoverhforclaimone} at $t=t_1$ and by taking $K \gg X, Y$, we reach a contradiction. Therefore, \eqref{soverhminsmalliffhminbigforglobalexist} has to holds on the whole domain of existence $[t_0, t_0+b]$.

A similar argument (choosing $K \gg Y \gg X^{1/2}$) allows us to ensure that the estimate for $N_{max}$ given in \eqref{ineqforstildeoverhminandnmaxlocalltogetglobal} holds on $[t_0, t_0+b].$

Therefore, we get that, on $[t_0, t_0+b]$, the solution to the system \eqref{summaryineqdiffwithcststildeshminnmax} lives in the compact set 

\begin{align*}
\left\{ e^{e^{Kt_0}}e^{-e^{Kt}}X^{-1}\leq h_{min} \leq 2, \; \frac{K\e^{Kt_0}}{4} \leq N_{max} \leq \left(\frac{K\e^{Kt_0}}{2}+2c\right)\e^{Y(t-t_0)}\right\}.    
\end{align*}
Standard ODE theory ensures that the solution is continuable after $t_0+b.$ Repeating this argument on the new domains of existence ensures that the solution $(h_{min}, N_{max})$ is well defined on $[t_0, \infty)$ and that the estimates \eqref{ineqforstildeoverhminandnmaxlocalltogetglobal} and \eqref{soverhminsmalliffhminbigforglobalexist} are satisfied.

This finishes the proof of \textbf{Claim 1}.
\end{proof}

\textbf{Claim 2:} For all $t \geq t_0$, we have that $h\geq h_{min}$ and $N \leq N_{max}$ where $h$ and $N$ were defined in \eqref{defhsNHzero} and satisfy the differential inequalities \eqref{summaryineqdiffwithcststildes}.

\begin{proof}[Proof of \textit{claim 2}]
Let us recall the differential inequalities satisfied by $N$ and $h$ on $[t_0, t_0+\tau]$ were $\tau$ was so that $1/H$ is well-defined:
\begin{align}\label{recallineqsystemdiffnicegood}
\left\{
\begin{array}{ll}
N'(t) &\leq C_1\frac{\tilde s^2(t)}{h^2(t)} + C_2 N_(t) \sqrt{\frac{N(t) \tilde s(t)}{h(t)}} \\
h'(t) &\geq -2 N(t) h(t) -C_3 \tilde s(t) - C_4\sqrt{N(t) h(t) \tilde s(t)},\\
N(t_0)&\leq\frac{K\e^{Kt_0}}{2}+c,\\
h(t_0)& =1.
\end{array}
\right.  
\end{align}

Let us also consider the solution $(N_{max}, h_{min})$ the solution to the following system of coupled odes:
\begin{align}
\left\{
\begin{array}{ll}
N_{max}'(t) &= C_1\frac{\tilde s^2(t)}{h_{min}^2(t)} + C_2 N_{max}(t) \sqrt{\frac{N_{max}(t) \tilde s(t)}{h_{min}(t)}} \\
h_{min}'(t) &= -2 N_{max}(t) h_{min}(t) -C_3 \tilde s(t) - C_4\sqrt{N_{max}(t) h_{min}(t) \tilde s(t)},\\
N_{max}(t_0)&=\frac{K\e^{Kt_0}}{2}+c,\\
h_{min}(t_0)& =1.
\end{array}
\right.  
\end{align}
By \textit{Claim 1}, we know $(h_{min}, N_{max})$ exists on $[t_0, \infty).$

Finally, denote by $(N_n, h_n)$ the solution to the following system of coupled odes:
\begin{align}
\left\{
\begin{array}{ll}
N_n'(t) &= C_1\frac{\tilde s^2(t)}{h_{n}^2(t)} + C_2 N_{n}(t) \sqrt{\frac{N_{n}(t) \tilde s(t)}{h_{n}(t)}} + \frac{1}{n} \\
h_{n}'(t) &= -2 N_{n}(t) h_{n}(t) -C_3 \tilde s(t) - C_4\sqrt{N_{n}(t) h_{n}(t) \tilde s(t)} -\frac{1}{n},\\
N_{n}(t_0)&=\frac{K\e^{Kt_0}}{2}+c,\\
h_{n}(t_0)& =1.
\end{array}
\right.  
\end{align}

By standard ODE theory, and up to making $\tau$  smaller,
$(N_n, h_n)$ exists as a solution to the above system on $[t_0, t_0+\tau]$, for all $n \geq n_0$ for some $n_0>0$ and converges uniformly to $(N_{max}, h_{min})$.

We will now show that 
\begin{align}\label{twoineq}
    N \leq N_n \hspace{0.5cm} \mbox{ and } \hspace{0.5cm} h \geq h_n \hspace{0.5cm}\forall t \in [t_0, t_0+\tau]
\end{align}
for all $n \geq n_0$.  Suppose it fails for some $n$. Without loss of generality, assume that it is the equation for $N$ that fails. Then, there must exists $t_0<t_2<t_1\leq t_0+\tau$ such that $N_n(t_2)=N(t_2)$ and $N>N_n$ on $(t_2, t_1)$.  But then, we get 
\begin{align*}
    N(t)-N(t_2)>N_n(t)-N_n(t_2)
\end{align*}
and therefore, by denoting $F(t, N, h)$ the right hand side of the inequality for $N$ in  \eqref{recallineqsystemdiffnicegood} 
\begin{align*}
    N'(t_2) \geq N'_n(t_2)&= F(t_2, N_n(t_2), h_n(t_2)) +\frac{1}{n} \\
    &\geq F(t_2, N_n(t_2), h(t_2)) +\frac{1}{n} \hspace{0.5cm} \mbox{ since } h \geq h_n\\
    &=  F(t_2, N(t_2), h(t_2)) +\frac{1}{n}\\
    &>F(t_2, N(t_2), h(t_2))
\end{align*}
which is a contradiction. Clearly, if only the equation for $(h, h_n)$ fails, the same argument works. Finally, if both equation fail, we argue starting with the one that breaks first and we reach a contradiction as well. 

Hence, \eqref{twoineq} holds for all $n \geq n_0$. By taking the limit in \eqref{twoineq}, we get that 
\begin{align}\label{twoineqlocal}
    N \leq N_{max} \hspace{0.5cm} \mbox{ and } \hspace{0.5cm} h \geq h_{min} \hspace{0.5cm}\forall t \in [t_0, t_0+\tau].
\end{align}

Note that since $(N_n, h_n)$ converges to $(N_{max}, h_{min}),$ and since $(N_{max}, h_{min}),$ stays in a compact set throughout its domain of existence, the solution $(N_n, h_n)$ is continuable and therefore will exist on $[t_0, \infty)$ for all $n$ big enough.

Now, let $\infty>t_1>t_0+\tau$ be the first time $H=0.$  In particular $H \neq 0$ on $[t_0, t_1).$ The same argument that we just did shows that \eqref{twoineqlocal} holds on $[t_0, t_1)$. But then $H$ cannot reach zero at $t_1$ since on $[t_0, t_1)$, $h \geq h_{min} \geq e^{e^{Kt_0}}e^{-e^{Kt}}X^{-1} $ by \eqref{soverhminsmalliffhminbigforglobalexist}. Hence, $H$ is never zero on $[t_0, \infty)$, $N$ is well defined on $[t_0, \infty)$ and \eqref{twoineqlocal} holds on $[t_0, \infty)$. This finishes the proof of \textbf{Claim 2}.

\end{proof}
}

\begin{biblist}
\begin{bibdiv}

\comment{\bib{A66}{article}{author = {Agmon, Shmuel},
address = {Montréal},
isbn = {0840500580},
publisher = {Les Presses de l'Université de Montréal},
journal = {Séminaire de mathématiques supérieures de l'Université de Montréal}, number={13},
title = {Unicité et convexité dans les problèmes différentiels},
year = {1966},}
}

\comment{\bib{A79}{article}{author={Almgren Jr., F.J.}, title={Dirichlet’s problem for multiple valued functions and the regularity of mass
minimizing integral currents}, year={1979}, journal={Minimal submanifolds and geodesics (Proc. Japan-United
States Sem., Tokyo, 1977)}, pages={1-6}}
}

\bib{AKS21}{webpage}{
author = {Armstrong, Scott}, author={Kuusi, Tuomo}, author={Smart, Charles},
issn = {2331-8422},
journal = {arXiv.org},
title = {Optimal unique continuation for periodic elliptic equations on large scales},
year = {2021},
url={https://arxiv.org/abs/2107.14248}}

\bib{AKS23}{article}{author = {Armstrong, Scott}, author={Kuusi, Tuomo}, author={Smart, Charles},
issn = {0010-3640},
journal = {Communications on pure and applied mathematics},
number = {1},
pages = {73-113},
publisher = {John Wiley & Sons Australia, Ltd},
title = {Large‐Scale Analyticity and Unique Continuation for Periodic Elliptic Equations},
volume = {76},
year = {2023},}

\bib{BK05}{article}{author = {Bourgain, Jean}, author={Kenig, Carlos},
address = {Heidelberg},
copyright = {Springer-Verlag 2005},
issn = {0020-9910},
journal = {Inventiones mathematicae},
number = {2},
pages = {389-426},
publisher = {Springer Nature B.V},
title = {On localization in the continuous Anderson-Bernoulli model in higher dimension},
volume = {161},
year = {2005}}

\bib{BLMS22}{article}{author = {Buhovsky, Lev}, author={Logunov, Alexander}, author={Malinnikova, Eugenia}, author={Sodin, Mikhail},
issn = {0012-7094},
journal = {Duke mathematical journal},
number = {6},
title = {A discrete harmonic function bounded on a large portion of $\mathbb{Z}^2$ is constant},
volume = {171},
year = {2022},}

\bib{CM2022}{article}{
author = {Colding, Tobias Holck}, author={ Minicozzi II, William P},
issn = {1073-7928},
journal = {International Mathematics Research Notices},
number = {15},
pages = {11878-11890},
title = {Parabolic Frequency on Manifolds},
volume = {2022},
year = {2022},
}

\bib{D14}{article}{author = {Davey, Blair},
copyright = {Copyright Taylor & Francis Group, LLC 2014},
issn = {0360-5302},
journal = {Communications in partial differential equations},
number = {5},
pages = {876-945},
publisher = {Taylor & Francis Group},
title = {Some Quantitative Unique Continuation Results for Eigenfunctions of the Magnetic Schrödinger Operator},
volume = {39},
year = {2014},}

\bib{D23}{webpage}{author = {Davey, Blair},
issn = {2331-8422},
journal = {arXiv.org},
title = {On Landis' conjecture in the plane for potentials with growth},
year = {2023},url={https://arxiv.org/abs/2305.05088}}

\bib{DKW17}{article}{author = {Davey, B.}, author={Kenig, C.}, author={Wang, J.-N.},
issn = {0002-9947},
journal = {Transactions of the American Mathematical Society},
number = {11},
pages = {8209-8237},
publisher = {American Mathematical Society},
title = {The Landis conjecture for variable coefficient second-order elliptic PDEs},
volume = {369},
year = {2017},}

\comment{\bib{DKW20}{article}{author={Davey, B.}, author={Kenig, C.}, author={Wang, J.-N.},
issn = {1061-0022},
journal = {St. Petersburg mathematical journal},
number = {2},
pages = {337-353},
title = {On Landis' conjecture in the plane when the potential has an exponentially decaying negative part},
volume = {31},
year = {2020}}
}

\bib{DS20}{article}{author = {Ding, Jian}, author={Smart, Charles K.},
address = {Berlin/Heidelberg},
copyright = {Springer-Verlag GmbH Germany, part of Springer Nature 2019},
issn = {0020-9910},
journal = {Inventiones mathematicae},
number = {2},
pages = {467-506},
publisher = {Springer Berlin Heidelberg},
title = {Localization near the edge for the Anderson Bernoulli model on the two dimensional lattice},
volume = {219},
year = {2020},}

\bib{DG23}{webpage}{author = {Duerinckx, Mitia}, author={Gloria, Antoine},
issn = {2331-8422},
publisher = {Cornell University Library, arXiv.org},
title = {Large-scale dispersive estimates for acoustic operators: homogenization meets localization},
year = {2023},
url={https://arxiv.org/abs/2304.14046}}

\bib{E20}{article}{author = {Elton, Daniel M.},
address = {Cambridge},
copyright = {Copyright © 2019 The Royal Society of Edinburgh},
issn = {0308-2105},
journal = {Proceedings of the Royal Society of Edinburgh. Section A. Mathematics},
number = {3},
pages = {1113-1126},
publisher = {Cambridge University Press},
title = {Decay rates at infinity for solutions to periodic Schrödinger equations},
volume = {150},
year = {2020},}


\bib{EKPV06}{article}{author = {Escauriaza, L.}, author={Kenig, C.E.}, author={Ponce, G.}, author={Vega, L.},
issn = {0360-5302},
journal = {Communications in partial differential equations},
number = {12},
pages = {1811-1823},
publisher = {Taylor & Francis Group},
title = {On Uniqueness Properties of Solutions of Schrödinger Equations},
volume = {31},
year = {2006}}

\bib{F01}{article}{author={Filonov, N.}, title={Second-Order Elliptic Equation of Divergence Form Having a Compactly Supported Solution}, journal={Journal of Mathematical Sciences}, number={106}, pages={3078–3086}, year={2001}}

\bib{FK23}{webpage}{author = {Filonov, N.}, author={Krymskii, S.},
issn = {2331-8422},
journal = {arXiv.org},
title = {On the Landis conjecture in a cylinder},
year = {2024}, url={https://arxiv.org/abs/2311.14491}}

\bib{GL86}{article}{author = {Garofalo, NICOLA}, author={Lin, FANG-HUA},
copyright = {1986 Department of Mathematics, Indiana University},
issn = {0022-2518},
journal = {Indiana University mathematics journal},
number = {2},
pages = {245-268},
publisher = {Department of Mathematics INDIANA UNIVERSITY},
title = {Monotonicity Properties of Variational Integrals, ApWeights and Unique Continuation},
volume = {35},
year = {1986},}

\bib{GL87}{article}{author={Garofalo, NICOLA}, author={Lin, FANG-HUA},
address = {New York},
copyright = {Copyright © 1987 Wiley Periodicals, Inc., A Wiley Company},
issn = {0010-3640},
journal = {Communications on pure and applied mathematics},
number = {3},
pages = {347-366},
publisher = {Wiley Subscription Services, Inc., A Wiley Company},
title = {Unique continuation for elliptic operators: A geometric-variational approach},
volume = {40},
year = {1987},}

\comment{
\bib{GM84}{article}{author = {Gurarii, V. P.}, author={Matsaev, V. I.},
issn = {0090-4104},
journal = {Journal of Soviet Mathematics},
number = {1},
pages = {2470-2473},
title = {Completeness of the plans of sequential estimation for Wiener spaces with a drift and some uniqueness theorems},
volume = {27},
year = {1984},}
}

\bib{G79}{article}{author = {Gusarov, A. L.},
issn = {0036-0279},
journal = {Russian mathematical surveys},
number = {2},
pages = {233-234},
publisher = {IOP Publishing},
title = {Liouville theorems for elliptic equations in a cylinder},
volume = {34},
year = {1979}}

\bib{K06}{article}{author={Kenig, C.}, title={Some recent quantitative unique continuation theorems}, journal={Séminaire Équations aux dérivées partielles (Polytechnique)}, year={2006}, pages={Exp. No. 20, 10 p}}

\bib{KSW15}{article}{
author = {Kenig, Carlos }
author={Silvestre, Luis}
author={Wang, Jenn-Nan},
issn = {0360-5302},
journal = {Communications in partial differential equations},
number = {4},
pages = {766-789},
publisher = {Taylor & Francis Group},
title = {On Landis' Conjecture in the Plane},
volume = {40},
year = {2015}}

\bib{KZZ22}{article}{author = {Kenig, Carlos }, author={Zhu, Jiuyi}, author={Zhuge, Jinping},
copyright = {2021 Taylor & Francis Group, LLC 2021},
issn = {0360-5302},
journal = {Communications in partial differential equations},
number = {3},
pages = {549-584},
publisher = {Taylor & Francis},
title = {Doubling inequalities and nodal sets in periodic elliptic homogenization},
volume = {47},
year = {2022},}


\bib{KL88}{article}{
author = {Kondratiev, V.A.},
author={Landis, E.M.},
title={Qualitative theory of second order linear partial differential equations},
journal={Partial differential equations-3, Itogi Nauki i Tekhniki. Ser. Sovrem. Probl. Mat. Fund. Napr.},
year={1988},
volume={32},
pages={99-215}}


\comment{\bib{K04}{book}{author={Kuchment, P.}, editor = {Conca, Carlos}, editor={Manásevich, Raúl}, editor={Uhlmann, Gunther}, editor={Vogelius, Michael S}, 
address = {United States},
series = {Partial Differential Equations and Inverse Problems},
isbn = {0821834487},
publisher = {American Mathematical Society},
title = {On Some Spectral Problems of Mathematical Physics},
volume = {362},
year = {2004},}
}

\bib{K12}{book}{author = {Kuchment, P.},
isbn = {3764329017},
publisher = {Birkhauser},
series = {Operator Theory: Advances and Applications},
title = {Floquet Theory for Partial Differential Equations},
volume = {60},
year = {2012},}

\bib{K16}{article}{author = {Kuchment, P.},
address = {Providence},
copyright = {Copyright 2016, American Mathematical Society},
issn = {0273-0979},
journal = {Bulletin of the American Mathematical Society},
number = {3},
pages = {343-414},
publisher = {American Mathematical Society},
title = {An overview of periodic elliptic operators},
volume = {53},
year = {2016},}

\comment{
\bib{K16}{article}{author = {Kuchment, P.},
address = {Providence},
copyright = {Copyright 2016, American Mathematical Society},
issn = {0273-0979},
journal = {Bulletin of the American Mathematical Society},
number = {3},
pages = {343-414},
publisher = {American Mathematical Society},
title = {An overview of periodic elliptic operators},
volume = {53},
year = {2016},}
}

\bib{K98}{article}{author = {Kukavica, Igor},
issn = {0012-7094},
journal = {Duke mathematical journal},
number = {2},
pages = {225-240},
publisher = {DUKE University Press},
title = {Quantitative uniqueness for second-order elliptic operators},
volume = {91},
year = {1998},}

\bib{L63}{article}{author = {Landis, E. M.},
issn = {0036-0279},
journal = {Russian mathematical surveys},
number = {1},
pages = {1-62},
publisher = {IOP Publishing},
title = {Some problems of the qualitative theory of second order elliptic equations (case of several independent variables)},
volume = {18},
year = {1963},}

\bib{LB20}{webpage}{author = {Le Balc'h, Kévin},
address = {Ithaca},
issn = {2331-8422},
journal = {arXiv.org},
keywords = {Elliptic functions ; Estimates ; Mathematical analysis},
language = {eng},
title = {Exponential bounds for gradient of solutions to linear elliptic and parabolic equations},
year = {2020}, url={https://arxiv.org/abs/2006.04582}}

\bib{LB24}{webpage}{author = {Le Balc'h, Kévin}, 
author = {Souza, Diego}
language = {eng},
title = {Quantitative unique continuation for real-valued solutions to second order elliptic equations in the plane},
year = {2024}, url={https://arxiv.org/abs/2401.00441}}

\comment{
\bib{L19}{book}{author = {Lerner, Nicolas},
booktitle = {Carleman Inequalities},
copyright = {Springer Nature Switzerland AG}, year={2019},
isbn = {9783030159924},
issn = {0072-7830},
publisher = {Springer International Publishing},
series = {Grundlehren der mathematischen Wissenschaften},
title = {Carleman Inequalities: An Introduction and More},
volume = {353},}
}

\bib{LZ22}{article}{author = {Li, Linjun}, author={Zhang, Lingfu},
issn = {0012-7094},
journal = {Duke mathematical journal},
number = {2},
title = {Anderson–Bernoulli localization on the three-dimensional lattice and discrete unique continuation principle},
volume = {171},
year = {2022},}

\bib{LW14}{article}{author = {Lin, Ching-Lung}, author={Wang, Jenn-Nan},
copyright = {2014 Elsevier Inc.},
issn = {0022-1236},
journal = {Journal of functional analysis},
number = {8},
pages = {5108-5125},
publisher = {Elsevier Inc},
title = {Quantitative uniqueness estimates for the general second order elliptic equations},
volume = {266},
year = {2014},}


\bib{LMNN20}{webpage}{author = {Logunov, A.}, author={Malinnikova, E.}, author={Nadirashvili, N.}, author={Nazarov, F.},
address = {Ithaca},
issn = {2331-8422},
publisher = {Cornell University Library, arXiv.org},
title = {The Landis conjecture on exponential decay},
year = {2020},
url={https://arxiv.org/abs/2007.07034}}


\bib{M98}{article}{author={Mandache, N.}, journal={Mathematical Physics, Analysis and Geometry}, number={1}, pages={273–292}, year={1998}, title={On a Counterexample Concerning Unique Continuation for Elliptic Equations in Divergence Form}}

\comment{
\bib{M13}{article}{author = {Mangoubi, Dan},
issn = {0024-6107},
journal = {Journal of the London Mathematical Society},
number = {3},
pages = {645-662},
publisher = {Oxford University Press},
title = {The effect of curvature on convexity properties of harmonic functions and eigenfunctions},
volume = {87},
year = {2013},}
}

\bib{M92}{article}{author = {Meshkov, V. Z.},
issn = {0025-5734},
journal = {Mathematics of the USSR. Sbornik},
number = {2},
pages = {343-361},
publisher = {IOP Publishing},
title = {On the possible rate of decay at infinity of solutions of second order partial differential equations},
volume = {72},
year = {1992}}

\bib{M74}{article}{author = {Miller, Keith},
issn = {0003-9527},
journal = {Archive for rational mechanics and analysis},
number = {2},
pages = {105-117},
title = {Nonunique continuation for uniformly parabolic and elliptic equations in self-adjoint divergence form with Hölder continuous coefficients},
volume = {54},
year = {1974},
}

\bib{P63}{article}{title={On non-uniqueness in Cauchy problem for an elliptic second order differential equation},
  author={Plis, Andrzej},
  journal={Bull. Acad. Polon. Sci. S{\'e}r. Sci. Math. Astronom. Phys},
  volume={11},
  pages={95--100},
  year={1963}}

\bib{R21}{article}{author = {Rossi, Luca},
issn = {0022-2518},
journal = {Indiana University mathematics journal},
number = {1},
pages = {301-324},
title = {The Landis conjecture with sharp rate of decay},
volume = {70},
year = {2021},}

\bib{S22}{webpage}{author={Smart, C.}, title={Unique continuation for lattice Schrödinger operators}, year={2022}, note={ICM plenary talk}, url={https://www.youtube.com/watch?v=mdSbE3fPBw0}}

\bib{Z16}{article}{author = {Zhu, Jiuyi},
address = {Baltimore},
issn = {0002-9327},
journal = {American journal of mathematics},
number = {3},
pages = {733-762},
publisher = {Johns Hopkins University Press},
title = {Quantitative uniqueness of elliptic equations},
volume = {138},
year = {2016},}

\end{bibdiv}
\end{biblist}

\end{document}